\documentclass[11pt,reqno]{amsart}
\usepackage[a4paper,margin=30mm,headheight=14pt]{geometry}
\usepackage{amsmath,amssymb,amsthm,mathtools,mathrsfs,booktabs,longtable,array,needspace}
\usepackage{xcolor,enumitem,etoolbox,microtype}
\usepackage[T1]{fontenc}
\usepackage{lmodern}
\usepackage{hyperref}
\usepackage{aliascnt}
\usepackage[nameinlink,capitalize,noabbrev]{cleveref}
\hypersetup{
  colorlinks=true,
  linkcolor=blue!45!black,
  citecolor=blue!45!black,
  urlcolor=blue!45!black,
  pdftitle={On the Allen-Cahn equation},
  pdfauthor={Zhouyu Long and Wenming Zou},
  pdfsubject={Exterior-stable Allen-Cahn interfaces, natural energy growth and finite Morse index},
  pdfkeywords={Allen-Cahn equation, exterior stability, finite Morse index, natural energy growth, De Giorgi conjecture}
}
\numberwithin{equation}{section}
\newtheorem{theorem}{Theorem}[section]
\theoremstyle{plain}
\newaliascnt{lemma}{theorem}
\newtheorem{lemma}[lemma]{Lemma}
\aliascntresetthe{lemma}
\theoremstyle{plain}
\newaliascnt{proposition}{theorem}
\newtheorem{proposition}[proposition]{Proposition}
\aliascntresetthe{proposition}
\theoremstyle{plain}
\newaliascnt{corollary}{theorem}
\newtheorem{corollary}[corollary]{Corollary}
\aliascntresetthe{corollary}
\theoremstyle{definition}
\newaliascnt{definition}{theorem}

\aliascntresetthe{definition}
\theoremstyle{remark}
\newaliascnt{remark}{theorem}
\newtheorem{remark}[remark]{Remark}
\aliascntresetthe{remark}
\theoremstyle{remark}
\newaliascnt{example}{theorem}

\aliascntresetthe{example}
\theoremstyle{plain}
\crefname{theorem}{Theorem}{Theorems}
\Crefname{theorem}{Theorem}{Theorems}
\crefname{lemma}{Lemma}{Lemmas}
\Crefname{lemma}{Lemma}{Lemmas}
\crefname{proposition}{Proposition}{Propositions}
\Crefname{proposition}{Proposition}{Propositions}
\crefname{corollary}{Corollary}{Corollaries}
\Crefname{corollary}{Corollary}{Corollaries}
\crefname{definition}{Definition}{Definitions}
\Crefname{definition}{Definition}{Definitions}
\crefname{remark}{Remark}{Remarks}
\Crefname{remark}{Remark}{Remarks}
\crefname{example}{Example}{Examples}
\Crefname{example}{Example}{Examples}
\theoremstyle{plain}
\newtheorem*{mainresult}{Theorem A}
\newtheorem*{mainrigidity}{Corollary B}
\newtheorem*{maininteraction}{Theorem C}
\setlist{itemsep=3pt,topsep=5pt}
\newcommand{\R}{\mathbb R}
\newcommand{\Sph}{\mathbb S}

\newcommand{\fint}{\mathop{\mathchoice{\displaystyle\int\mkern-18mu-}{\textstyle\int\mkern-15mu-}{\scriptstyle\int\mkern-12mu-}{\scriptscriptstyle\int\mkern-10mu-}}\nolimits}
\allowdisplaybreaks[1]
\title[Energy growth and finite-index rigidity for Allen--Cahn]{$\text{\bf\Large  On the Allen--Cahn equation}$}
\author[Zhouyu Long]{\bfseries Zhouyu Long$^{*}$}
\thanks{$^{*}$Department of Mathematical Sciences, Tsinghua University,
Beijing 100084, China.\newline
E-mail: \href{mailto:longzy25@mails.tsinghua.edu.cn}{\nolinkurl{longzy25@mails.tsinghua.edu.cn}}}
\author[Wenming Zou]{\bfseries Wenming Zou$^{\dagger}$}
\thanks{$^{\dagger}$Department of Mathematical Sciences, Tsinghua University,
Beijing 100084, China.\newline
E-mail: \href{mailto:zou-wm@mail.tsinghua.edu.cn}{\nolinkurl{zou-wm@mail.tsinghua.edu.cn}}}

\makeatletter
\patchcmd{\@setauthors}{\centering\footnotesize}{\centering\large}{}%
  {\ClassError{amsart}{Unable to set author font}{Check the amsart author layout.}}
\patchcmd{\@setauthors}{\MakeUppercase{\authors}}{\authors}{}%
  {\ClassError{amsart}{Unable to preserve author case}{Check the amsart author layout.}}
\def\author@andify{\nxandlist{\unskip,\quad}{\unskip,\quad}{\unskip,\quad}}
\let\addresses\@empty
\makeatother
\date{}

\begin{document}
\raggedbottom
\maketitle
\vskip0.6in
\begin{center}
\begin{minipage}{130mm}
\textbf{Abstract:} Using the framework developed by
OpenAI~\cite{OpenAI2026}, we prove that finite Morse index implies
$O(R^{n-1})$ energy growth for bounded entire solutions of the quartic
Allen--Cahn equation in dimensions $4\leq n\leq9$, uniformly in the
center of the ball. Combined with the cited finite-index
classification results, this gives planar rigidity for $4\leq n\leq7$
and, for nonconstant solutions, multiplicity-one stable minimal cone
blowdowns through dimension nine. Within the framework, we derive an
additive adjacent-layer interaction estimate, a weighted curvature
and defect estimate with exponent $p=9/4$, and the localization needed
for exterior stability. The limiting interaction coefficient $48$ is
sharp in the unrestricted low-defect class with sufficiently large
logarithmic selection depth.

\vskip0.23in
\textbf{Keywords:} Allen--Cahn equation, exterior stability, finite Morse
index, natural energy growth, De Giorgi conjecture, interaction estimates.
\vskip0.231in
\textbf{MSC Classification (2020):} Primary 35J61; Secondary 35B35, 35B40, 53A10.
\vskip0.23in
\end{minipage}
\end{center}
\newpage
\tableofcontents
\newpage

\section{\bf Introduction}
\label{sec:s12-introduction}

We consider bounded entire solutions of the Allen--Cahn equation
\[
 \Delta u=u^3-u\qquad\hbox{in }\mathbb R^n.
\]
For the quartic potential $W(s)=(1-s^2)^2/4$, the associated energy is
\[
 E_u(A)=\int_A\left(\tfrac12|\nabla u|^2+W(u)\right)\,dx.
\]
The energy of a one-dimensional transition grows like the area of a
hypersurface. This surface-order scaling also underlies the connection
between diffuse-interface energies and perimeter~\cite{Modica1987}.
For general critical points, the varifold compactness theory starts from
a bound on the rescaled energies~\cite{HutchinsonTonegawa2000,TonegawaWickramasekera2012}.
The question addressed here is whether finite Morse index implies
\[
 E_u(B_R(x_c))\leq C_uR^{n-1}\qquad(R\geq1),
\]
with a constant independent of the observation center $x_c$.

\vskip0.12in
The relation with minimal hypersurfaces has long guided the study of
entire solutions. De Giorgi's conjecture asks whether bounded solutions
that are strictly monotone in one direction have planar level sets
when $n\leq8$~\cite{DeGiorgi1979}. Ghoussoub--Gui~\cite{GhoussoubGui1998} and
Ambrosio--Cabr\'e~\cite{AmbrosioCabre2000} proved the conjecture in
dimensions two and three, respectively. Savin proved the conclusion
through dimension eight under the additional limits
$u(x',x_n)\to\pm1$ as $x_n\to\pm\infty$~\cite{Savin2009}.
Wang subsequently gave a proof of the flat-level-set regularity theorem
based on decay of the tilt excess~\cite{WangSavin2017}.
Nonplanar monotone solutions exist in dimensions $n\geq9$ by
del Pino--Kowalczyk--Wei~\cite{DelPinoKowalczykWei2011}, while
nonplanar stable solutions already occur in dimension
eight~\cite{PacardWei2013,LiuWangWei2017}.

\vskip0.12in
Finite Morse index allows instability in a compact region and leads to
a broader geometric problem. In the plane, Wang--Wei obtained linear
energy growth and finitely many ends by proving curvature decay for
the transition layers~\cite{WangWeiFiniteEnds2019}. In dimension
three, the constructions of del Pino--Kowalczyk--Wei give nonplanar
finite-index solutions near dilations of certain complete embedded
minimal surfaces of finite total curvature, including the
catenoid~\cite{DelPinoKowalczykWei2013}.
In dimension four, Florit-Simon--Serra~\cite{FloritSimonSerra2025}
proved the stable classification under surface-order energy growth.
Under the same energy condition, Florit-Simon~\cite{FloritSimon2026}
extended this classification to finite Morse index and reduced the
problem in dimensions four through seven to the stable classification.
Liu--Luo--Wang--Wei--Wei--Wu~\cite{LiuSheeting2026} subsequently
obtained finite-index classification and blowdown results assuming
surface-order growth. Conjecture E in~\cite{FloritSimon2026} asks whether finite
Morse index alone implies one-dimensionality in dimensions
$4\leq n\leq7$.

\vskip0.12in
Our main result establishes this uniform energy bound in dimensions
$4\leq n\leq9$ for every bounded entire solution with finite Morse
index, or equivalently, stable outside a compact set. Combining it with the finite-index
classification in~\cite{LiuSheeting2026} gives planar rigidity for
$4\leq n\leq7$. For $4\leq n\leq9$, every nonconstant such solution
has multiplicity-one stationary stable minimal cone blowdowns.
Thus the growth estimate supplies the hypothesis needed to pass from
the spectral assumption to these geometric conclusions.

\vskip0.12in
To prove the growth estimate, one has to control the total contribution
of the transition layers on large balls. Individual layers can be
close to one-dimensional profiles even when several layers lie near
one another. Wang--Wei's analysis of clustering interfaces identifies
their interaction through a Toda system; their second-order estimates
start from prior geometric control~\cite{WangWei2019}.
Here the local estimates must be combined with exterior stability
before an area bound is available. In particular, the errors from
exceptional regions and from cutting off near the unstable core must
be controlled at the same scale as the interaction terms.

\vskip0.12in
We first estimate the defect from the one-dimensional profile and
derive an additive interaction bound involving both adjacent layers.
A weighted curvature and defect estimate with exponent $p=9/4$ then
controls the exceptional regions and the cutoff errors. This exponent
leaves a strict margin in the radial comparison at the largest
dimension considered. We use the same amplitude in the penalized-reach
inequality and in the ambient stability test. If the normalized energy
were unbounded, this comparison would give an upper bound for a
weighted interface area incompatible with the lower bound obtained
from the energy and a normal-tube argument.

\vskip0.12in
The main proof framework used in this paper is due to
OpenAI~\cite{OpenAI2026}. Its positive normal equation, penalized
reach, root-sum-squares (RSS) gluing, and radial comparison provide
the architecture of our argument. The present treatment adapts this
framework to exterior stability and develops the adjacent-layer
interaction estimates and the $p=9/4$ localization used below.
The precise main results and proof outline are given in
Section~\ref{sec:s12-main-results}. The sources of the individual
constructions and the external results used in the proof are specified
in Section~\ref{sec:s1-external-inputs}.

\vskip0.62in
\section{\bf Main results}
\label{sec:s12-main-results}

We consider the quartic potential $W(s)=(1-s^2)^2/4$. For a bounded
entire solution, set
\[
 e_u=\tfrac12|\nabla u|^2+W(u),\qquad E_u(A)=\int_A e_u\,dx,
 \qquad Q_u(\xi)=\int_{\mathbb R^n}\bigl(|\nabla\xi|^2+(3u^2-1)\xi^2\bigr)\,dx.
\]
We also write $\mu_u=e_u\,dx$ and $E(u;A)=E_u(A)$. Throughout
the paper, $n=m+1$ denotes the ambient dimension and $m$ the
dimension of the interface.
Positive constants denoted by $C$ and $c$ may change from line to line.
Their allowed dependencies are specified in the corresponding section or
statement. Constants assigned a value or used as fixed parameters keep
that value.

\vskip0.32in
\subsection{Finite Morse index solutions}

The Morse index of $u$ is the maximal dimension of a subspace of
$C_c^\infty(\mathbb R^n)$ on which $Q_u$ is negative definite.
A solution is stable outside a compact set if, for some fixed ball
$B_{R_0}(0)$, $Q_u(\xi)\geq0$ for every
$\xi\in C_c^\infty(\mathbb R^n\setminus\overline B_{R_0}(0))$.
We call this exterior stability.
For the smooth potential $3u^2-1$, bounded below by $-1$, it is
equivalent to finite Morse index; see
\cite[Lemma~1.1 and Theorem~1.2]{Devyver2012} and
Proposition~\ref{thm:r13-rigidity-reduction}. The center of an
observation ball is independent of the center of the fixed ball.

\vskip0.12in
Our first main result is the following energy estimate.

\vskip0.12in
\begin{mainresult}
Let $4\leq n\leq9$. Every bounded entire solution of
$\Delta u=u^3-u$ that is stable outside a fixed ball satisfies
\begin{equation}\label{eq:r11-intro-growth}
 \sup_{x_c\in\R^n}E_u(B_R(x_c))\leq C_uR^{n-1}\qquad(R>0).
\end{equation}
Here $C_u<\infty$ may depend on $n$ and $u$, but is independent of
$R$ and $x_c$.
\end{mainresult}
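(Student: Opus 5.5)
We argue by contradiction, following the architecture of the framework of~\cite{OpenAI2026} adapted to exterior stability. Suppose the normalized energy $\Theta(R)=\sup_{x_c}R^{1-n}E_u(B_R(x_c))$ is unbounded. Small radii are trivial: $|u|\le1$ and the gradient estimates give $e_u\le C$, so $E_u(B_R)\le CR^n\le CR^{n-1}$ for $R\le1$. For large radii we would first use an almost-monotonicity (doubling) selection to choose radii $R_k\to\infty$ and centers $x_k$ at which $\Theta$ is nearly maximal on the scale range $[R_k, L R_k]$, with $L$ a large logarithmic selection depth. Translating $u$ by $x_k$ moves the unstable core to an arbitrary position, but it stays a fixed ball of radius $R_0\ll R_k$. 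The hypothesis is then stability on $B_{LR_k}(x_k)\setminus B_{R_0}(0)$.

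\textbf{Step 1: local structure.} On this exterior region we would decompose the transition set $\{|u|\le 1-\delta\}$ into layers $\Gamma_i$, i.e.\ graphs of nearby level sets. We measure the defect $\mathcal D=\bigl||\nabla u|-\sqrt{2W(u)}\bigr|$ relative to the one-dimensional profile. For each layer we write the positive normal equation for the distance function to the neighboring layers. This gives the additive interaction estimate: the mean curvature $H_i$ of $\Gamma_i$ satisfies $H_i\approx 48\,(e^{-\sqrt2 d_i^+}-e^{-\sqrt2 d_i^-})$ up to defect errors. We would then combine it with a stability-derived $L^p$ curvature estimate with weight and $p=9/4$, obtained by inserting $|A|^{p-1}$-type test functions times cutoffs into $Q_u\ge0$ (Schoen--Simon--Yau style). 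Summing over layers by root-sum-squares gluing controls the exceptional regions where the layer decomposition fails, including the region near the core.

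\textbf{Step 2: upper bound on weighted area.} We would test exterior stability with $\xi=|\nabla u|\,\eta$, where $\eta$ is a radial amplitude that vanishes near $B_{R_0}(0)$ and is cut off at $LR_k$. The same amplitude is used in the penalized-reach inequality, which bounds the interaction terms by $\int|\nabla\eta|^2e_u$ plus defect. The radial comparison at $p=9/4$ requires the numerical inequality $(n-2)^2/4>$ the relevant constant involving $p$ for $n\le9$; this strict margin is what absorbs the core cutoff error and the exceptional-region errors. The result is that a weighted interface area $\int\eta^2\,d\mathcal H^{n-1}\llcorner\bigcup\Gamma_i$ is bounded by $C R_k^{n-1}$ times a lower-order power of $\Theta$.

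\textbf{Step 3: lower bound and conclusion.} A normal-tube (coarea) argument shows the energy in $B_{R_k}(x_k)$ is at most $(\sigma+o(1))$ times the interface area plus defect-controlled error, where $\sigma$ is the 1D energy. This gives weighted area $\gtrsim\Theta(R_k)R_k^{n-1}$, which contradicts Step 2 as $\Theta(R_k)\to\infty$. Uniformity in $x_c$ comes from the fact that every constant depends only on $n$, $\|u\|_\infty$ and $R_0$, not on the position of the core.

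\textbf{Main obstacle.} I expect Step 2 to be the hardest: localizing the stability inequality near the unstable core, and controlling clustering multi-layer regions where the interaction terms are of the same order as the cutoff errors. Here I would first try to exploit the sharp coefficient $48$ and the $p=9/4$ margin, choosing $L$ large enough that the logarithmic loss is absorbed.
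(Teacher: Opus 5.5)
There is a genuine gap: two load-bearing steps are wrong as stated, and the mechanism you propose for the contradiction is not available.

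\textbf{The ambient test and the interaction estimate.} The test $\xi=|\nabla u|\eta$ cannot work. By the Sternberg--Zumbrun identity its only gain is $\int(|\nabla u|^2a^2+|\nabla_T|\nabla u||^2)\eta^2$. This vanishes identically for parallel flat layers (e.g.\ the periodic solutions of Proposition~\ref{prop:periodic-sharpness48}), so it sees neither the defect nor the interaction. Yet the plateau branch of the penalized-reach inequality produces the trace term $G_1(|H(z)|+H_{N_y}(y)_+)$, which is linear in the defect. There is no inequality $|H|\lesssim a^2$, only $|H|\le\sqrt m\,a$, so a curvature gain cannot absorb it. The paper instead tests with $q\eta$, $q=1-u^2$. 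From $(-\Delta+3u^2-1)q=-q^2D$ one gets $Q_u(q\eta)=\int q^2|\nabla\eta|^2-\int q^3D\eta^2$, and the fixed-window normal integral of $q^3D$ is $\sigma I_P$.

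You also misstate Theorem~C. It is not a two-sided relation $H\approx48(\mu_+-\mu_-)$ up to defect errors; errors of order $D$ would destroy the sharp constants, which must fit the margin $g_0<28/27$ at $m=8$. It is the one-sided domination $|H-12\sqrt2(\mu_+-\mu_-)|+12\sqrt2(\mu_++\mu_-)\le(h_*+\varepsilon)I_P$, and $48$ is the mass coefficient in $I_P\ge(48-\varepsilon)(\mu_++\mu_-)$. Its consequence $|H|+24\sqrt2\min(\mu_+,\mu_-)\le .355I_P$ does two jobs at once. It bounds the plateau trace, and it pays the cost $25\mathscr N_B$ of interpolating the $q$-test across adjacent bands. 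This leaves the density $J_R\ge .737I_P$ used on both sides: $.998\mathscr J_B\le1.005\mathscr D_B+\cdots$ and $\mathscr P_B\le .718\mathscr J_B+\cdots$.

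\textbf{The defect moments.} Your Simons/SSY step controls only $\int a^{2p}$ modulo $\varepsilon\int D^p$. Since parallel layers have $a=0<D$, the $p=9/4$ moments of $D$ and $\widehat E$ need separate inputs; they are used, e.g., to count the rough set $\{D\ge\rho^{-\alpha}\}$ in dyadic shells. The paper obtains them from the pointwise comparison $D\le C(J_\lambda+E)$, $E\le C(J_\lambda+D)$ together with a second RSS test weighted by a reach profile $f$ with $f^2\asymp\widehat E^{5/4}$. Neither appears in your plan.

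\textbf{The closing mechanism.} Your Step~2 claims an upper bound by $CR^{n-1}$ times a lower power of $\Theta$, but no such gain exists. The cutoff errors near the unstable core are of the same order in $K(R)$ as the main term. The missing device is the reach variable. The paper's amplitude is $\psi=\zeta_T^pm_Hm_B\,\rho^{-(m-2)/2}\sqrt W$, with $W=t^{-\vartheta}f_*(c\log\rho-b_L(t))$ a function of the capped reach $t$. It is built at the scale $L=R/\sqrt{K(R)}$, a reach threshold below $R$, not a range of radii above it.

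The ground-state identity gives $\mathscr P_B-\gamma\mathscr D_B\ge c_*\mathscr V_B+\cdots$, where $\mathscr V_B=\int\varphi\mathfrak m^2FT_\rho/\rho^2$ and $F$ is the primitive of $W$. Combined with the two inequalities above and $\delta_J=\gamma(.998)/1.005-.718>0$, this yields $c_*\mathscr V_B\le C\eta^{m-2}\mathscr A+o(\mathscr A)$ with $\mathscr A=K(R)L^{1-\vartheta}$.

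For the matching lower bound, a normal-tube estimate of the energy is not enough. You must also discard base points of actual reach $>L/2$. The tube-injectivity bound $\mathcal H^m(\{d>\ell\}\cap B_{2R}(x_c))\le CR^{m+1}/\ell$ permits this precisely because $\ell=L/2$ costs only $O(\sqrt{K}R^m)=o(KR^m)$. On the retained set $F\ge cL^{1-\vartheta}$, hence $\mathscr V_B\ge c\mathscr A$. The contradiction then comes from fixing the core fraction $\eta$ in $T=\eta R$ small, with $C$ and $c$ independent of $\eta$, not from a power gap in $\Theta$.
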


\vskip0.12in
The proof is given in Section~\ref{sec:r13-actual-growth}. It uses
the local estimates and the weighted stability estimates proved in
Parts~\ref{part:s12-local} and~\ref{part:s12-stability}, respectively.
As a consequence, we obtain the following classification and
blowdown statements.

\vskip0.12in
\begin{mainrigidity}
For $4\leq n\leq7$, every exterior-stable bounded entire solution is
\[
 u\equiv\pm1\quad\hbox{or}\quad
 u(x)=\tanh((e\cdot x-s_0)/\sqrt2),\qquad e\in\Sph^{n-1}.
\]
Its Morse index is zero, and
\[
 E_u(B_R(x_c))\leq\frac{2\sqrt2}{3}\omega_{n-1}R^{n-1}.
\]
For $4\leq n\leq9$, every nonconstant exterior-stable bounded entire
solution has multiplicity-one stationary stable minimal cone blowdowns.
\end{mainrigidity}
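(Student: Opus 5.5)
The plan is to derive Corollary~B from Theorem~A together with the finite-index classification and blowdown results of Liu--Luo--Wang--Wei--Wei--Wu~\cite{LiuSheeting2026} (and Florit-Simon~\cite{FloritSimon2026}). Those results assume surface-order energy growth $E_u(B_R)\leq CR^{n-1}$, and Theorem~A supplies it.

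\textbf{Step 1 (reduction).} Let $u$ be a bounded entire solution that is stable outside $\overline B_{R_0}(0)$. By~\cite[Lemma~1.1 and Theorem~1.2]{Devyver2012}, this is equivalent to finite Morse index; the paper records this as Proposition~\ref{thm:r13-rigidity-reduction}, which I take as given. Theorem~A gives $E_u(B_R(x_c))\leq C_uR^{n-1}$ uniformly in $x_c$, and in particular for balls centered at the origin. Hence $u$ is a finite-index solution with natural energy growth. This is exactly the hypothesis class of~\cite{LiuSheeting2026}, and for $n=4$ also of~\cite{FloritSimon2026,FloritSimonSerra2025}.

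\textbf{Step 2 (rigidity for $4\leq n\leq7$).} I would invoke the classification theorem of~\cite{LiuSheeting2026}: in these dimensions, a finite-index solution with surface-order growth is constant or one-dimensional. The constant bounded solutions are $0,\pm1$. The solution $u\equiv0$ must be excluded. Its quadratic form is $Q_u(\xi)=\int(|\nabla\xi|^2-\xi^2)$, and this is negative on functions supported in arbitrarily large balls far from the origin. So $u\equiv0$ has infinite index and is not exterior stable. A one-dimensional solution $u=g(e\cdot x)$ solves $g''=g^3-g$ with $|g|$ bounded. Combining the classification with finite energy growth (periodic solutions have infinite index) leaves only the heteroclinic $\tanh((t-s_0)/\sqrt2)$ up to the sign, and the sign can be absorbed into $e$.

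The Morse index of this heteroclinic is zero, since $g'>0$ is a positive Jacobi field and hence the solution is stable. The energy bound comes from the equipartition identity $\tfrac12g'^2=W(g)$. This gives $E_u(B_R(x_c))=\int_{\R}2W(g)\,|B^{n-1}\cap\{\text{slice}\}|\,dt$. The slice at height $t$ is an $(n-1)$-ball of radius at most $R$, so its measure is at most $\omega_{n-1}R^{n-1}$. Here I read $\omega_{n-1}$ as the volume of the unit $(n-1)$-ball. The remaining factor is $\int_{\R}g'^2\,dt=\int_{-1}^1\sqrt{2W(s)}\,ds=\int_{-1}^1(1-s^2)/\sqrt2\,ds=\tfrac{2\sqrt2}{3}$, which yields the stated constant.

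\textbf{Step 3 (blowdowns for $n\leq9$).} For nonconstant $u$, consider the blowdowns $u_R(x)=u(Rx)$, with rescaled energy measures $R^{1-n}\mu_u(R\,\cdot)$. These are uniformly bounded by Theorem~A. Hutchinson--Tonegawa~\cite{HutchinsonTonegawa2000} then gives subsequential convergence to a stationary integral varifold. The monotonicity formula makes the limit a cone, and exterior stability passes to the limit (Tonegawa--Wickramasekera~\cite{TonegawaWickramasekera2012}), so the cone is stable. The multiplicity-one statement for $n\leq9$ is the blowdown theorem of~\cite{LiuSheeting2026}, applied once growth is known. I also need the cone to be nontrivial. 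For nonconstant $u$, one cannot have $u\to\pm1$ uniformly off a compact set, and the density lower bound then gives a positive limit mass.

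The main obstacle is bookkeeping rather than new analysis. One must check that the hypotheses of~\cite{LiuSheeting2026} (potential normalization, definition of index, and centered versus uniform growth) match exactly the output of Theorem~A and Proposition~\ref{thm:r13-rigidity-reduction}. One must also rule out the constant $0$ and the periodic one-dimensional solutions cleanly.
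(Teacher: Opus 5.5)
Your proposal is correct and takes the paper's route. Theorem A supplies natural growth, Devyver's criterion gives finite index, and \cite[Corollary~6.8 and Theorem~1.3]{LiuSheeting2026} give the classification for $4\leq n\leq7$ and the multiplicity-one stable cone blowdowns through $n=9$. Your exclusion of $u\equiv0$, the positive Jacobi field $\partial_eu$ for index zero, and the slicing bound with $\int_{\mathbb R}(g')^2=2\sqrt2/3$ match Proposition~\ref{thm:r13-rigidity-reduction} and Corollary~\ref{cor:s1-routine-consequences}. The differences are cosmetic: the paper cites \cite{LiuSheeting2026} for the whole varifold and energy convergence rather than assembling it from Hutchinson--Tonegawa and Tonegawa--Wickramasekera, and it proves the cone is nonempty directly from a zero of $u$, the fixed-ball energy lower bound and monotonicity, so your detour through ``$u\not\to\pm1$ off a compact set'' is unnecessary.
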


\vskip0.12in
Here $\omega_{n-1}$ is the volume of the unit ball in $\mathbb R^{n-1}$.
Corollary B follows from Theorem A and
\cite[Corollary~6.8 and Theorem~1.3]{LiuSheeting2026}. We give the
transfer in Proposition~\ref{thm:r13-rigidity-reduction}. The
conclusions in dimensions eight and nine concern energy growth and
cone compactness; planar classification is excluded by the examples
cited in the introduction.

\vskip0.32in
\subsection{Interaction between adjacent interfaces}

We next state the local estimate used in the proof of Theorem A.
Let $z$ be a regular zero in the buffered low-defect region of
Theorem~\ref{thm:r13-local-master}, where the zero set consists of
ordered graphs. Select any subset of the two first adjacent graphs
whose shortest gaps from $z$ satisfy $\ell_\pm\leq C_0\log N$,
where $N$ is the clean scale and $C_0$ is fixed. The signs refer to the
two sides of the gradient-oriented normal. We use the notation
\[
 \begin{gathered}
 q=1-u^2,\qquad U=\sqrt2\operatorname{arctanh}u,\qquad
 D=1-|\nabla U|^2,\qquad
 \nu(z)=\frac{\nabla U(z)}{|\nabla U(z)|},\\
 A=d\nu|_{T_z\{u=0\}},\qquad H=\operatorname{tr}A,\qquad
 \sigma=\frac{4\sqrt2}{3},\\
 I_P(z)=\frac1\sigma\int_{-P}^P
 q(z+s\nu(z))^3D(z+s\nu(z))\,ds.
 \end{gathered}
\]
These quantities are defined for nonconstant solutions with $|u|<1$;
the constant wells are treated separately. Thus $D$ measures the
defect from the one-dimensional profile, and $I_P$ is its weighted
integral on a fixed normal interval. Our convention for mean curvature
is $H=\operatorname{tr}A$ with $A=d\nu$. The normalization $\sigma$ is
twice the one-dimensional transition energy $\sigma_1=2\sqrt2/3$.

\vskip0.32in

\needspace{10\baselineskip}
\begin{maininteraction}
Fix $m\geq2$. Under the clean adjacent-window hypotheses of
Theorem~\ref{thm:r13-local-master}, let
$\mu_\pm=e^{-\sqrt2\ell_\pm}$ on selected first-neighbor sides,
and set $\mu_\pm=0$ on omitted sides. For every $\varepsilon>0$,
there is $P_\varepsilon<\infty$ such that, for each fixed
$P\geq P_\varepsilon$, a sufficiently large clean scale gives
\begin{equation}\label{eq:r12-intro-diamond}
 |H-12\sqrt2(\mu_+-\mu_-)|+12\sqrt2(\mu_++\mu_-)
 \leq\bigl(1/(2\sqrt2)+\varepsilon\bigr)I_P.
\end{equation}
\end{maininteraction}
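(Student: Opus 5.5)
The plan is to derive an ODE identity along the normal line through $z$ for the transformed function $U=\sqrt2\operatorname{arctanh}u$, and then to integrate it against the weight $q^3$.

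\textbf{Step 1: the normal equation.} A direct computation from $\Delta u=u^3-u$ gives
\[
 \Delta U=\sqrt2\,\frac{u}{q}\,D,\qquad\text{equivalently}\qquad \Delta U=\tanh(U/\sqrt2)\,\frac{\sqrt2\,D}{1-\tanh^2(U/\sqrt2)}.
\]
This is the positive normal equation of the framework. On the line $s\mapsto z+s\nu(z)$, split $\Delta U=\partial_s^2U+H_s\partial_sU+\text{(tangential terms)}$. At the regular zero $z$ the tangential Laplacian of $U$ restricted to the level set carries the mean curvature: $\Delta U(z)=|\nabla U|H+\partial_\nu^2U$. Since $u(z)=0$, we have $\Delta U(z)=0$.

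\textbf{Step 2: test with $q^3$.} Multiply the identity $\partial_s(|\nabla U|^2)$-type relation by the weight $q^3=\operatorname{sech}^6$, which is the square of the derivative of the profile in the $U$ variable, and integrate over $[-P,P]$. On the exact profile $U=s$ we have $D=0$ and $\int q^3\,ds=\sigma$ up to normalization. This yields
\[
 \sigma H=\text{(boundary terms at }\pm P)+\int q^3\cdot(\text{terms linear in }D)\,ds+\text{errors}.
\]
The errors come from tilt and from second fundamental form variation, and they are controlled by the clean-scale curvature estimates of Theorem~\ref{thm:r13-local-master}. They are $o(1)$ relative to $I_P$ plus $\mu_\pm$ once $N$ is large.

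\textbf{Step 3: the interaction terms.} The neighboring graphs at distance $\ell_\pm$ enter through the tail of $U$. Near $s\approx\ell_\pm$ the function $u$ is approximately a superposition of the profile with a reflected kink. Expanding $D$ there gives a contribution concentrated at the gap, whose leading term is $\mp12\sqrt2\,\mu_\pm$ multiplied by an explicit integral. The constant $12\sqrt2=\tfrac{1}{\sigma}\cdot$ (the standard Toda coefficient) is computed from the one-dimensional kink tails $1\mp u\sim2e^{-\sqrt2|s|}$. For this step I would use the log-scale selection $\ell_\pm\le C_0\log N$ so that the quadratic corrections $\mu^2$ are negligible.

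\textbf{Step 4: sign and absolute value.} To obtain the additive form with $+12\sqrt2(\mu_++\mu_-)$ on the left, I would use a second identity obtained by reflecting $s\mapsto-s$. Applying the estimate on each half-line separately gives two one-sided inequalities,
\[
 \pm\bigl(H-12\sqrt2(\mu_+-\mu_-)\bigr)+12\sqrt2(\mu_++\mu_-)\le\Bigl(\tfrac{1}{2\sqrt2}+\varepsilon\Bigr)I_P.
\]
These hold because $D\ge0$ in the low-defect region, by a Modica-type inequality, and the interaction contributions on each side of $z$ have a definite sign. Combining the two signs gives the absolute value in \eqref{eq:r12-intro-diamond}. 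The factor $1/(2\sqrt2)$ comes from bounding the pointwise weight $\sqrt2\,u/q$ times the kernel by its supremum against $q^3$.

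\textbf{Main obstacle.} I expect the hardest step to be the sharp constant $1/(2\sqrt2)$ together with the choice of $P_\varepsilon$. One has to show that the $D$-weighted kernel produced by integration by parts is pointwise at most $(1/(2\sqrt2)+\varepsilon)\,q^3/\sigma$ on $[-P,P]$. One also has to show that the boundary terms at $\pm P$ decay like $e^{-cP}$ uniformly, independently of $N$. I would first try an explicit one-dimensional Green's function computation for the linearized operator about the kink, orthogonalized against $\operatorname{sech}^2$, and then take the limit $P\to\infty$.
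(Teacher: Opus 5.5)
Your outline has two genuine gaps: a one-dimensional reduction along the normal ray cannot give the sharp constant, and there is no mechanism that carries the interaction into the fixed window.

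\textbf{The problem is not one-dimensional.} After dividing by $D(z)$ (Lemma~\ref{lem:normalized-defect-limit}), $H/D(z)\to W_s(0)$ and $I_P/D(z)\to\frac2\sigma\int_{-P}^PG^2W(0,s)\,ds$. Here $W\geq0$ solves $L_0W=G^2|V|^2\geq0$ on all of $\mathbb R^{m+1}$, with $L_0=-\Delta+2-G$.

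The restriction of $W$ to the normal line is not a supersolution of the one-dimensional operator, because $\Delta_{x'}W$ has no sign. For example, the functions $e^{b\cdot x'}\psi_a^\pm(s/\sqrt2)$ with $a^2=4-2|b|^2$ are positive $L_0$-harmonic and satisfy $\Delta_{x'}W=|b|^2W$. So integrating by parts along the normal ray against $q^3$, or bounding a kernel by its supremum, cannot produce $1/(2\sqrt2)$. Since equality holds for the Jost modes $\varphi_\pm$, there is also no slack for crude bounds.

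The paper gets the constant from Theorem~\ref{thm:positive-even-source}:
\begin{itemize}
\item the part generated by the even nonnegative source is a minimal potential with vanishing normal derivative at $s=0$;
\item the positive $L_0$-harmonic remainder is handled by identifying its extreme rays $e^{b\cdot x'}\psi_a^\pm$, $1\leq a\leq2$;
\item one proves $M_P(\psi_a^\pm)\geq(a^2-1)\tanh(P/\sqrt2)$ via the convexity of $K_Y$, and extends to all positive solutions by Krein--Milman.
\end{itemize}

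Your error bookkeeping fails for the same reason. Curvature and tilt terms are not $o(I_P)$: one only has $a^2\leq CD$, and $a^2$ is generally of the same order as $I_P$ (compare $I_P\geq(1-\epsilon)a^2$ in Lemma~\ref{lem:r3-normal-mass}). The clean-window bound $|A|\leq CN^{-\omega/2}$ is absolute and says nothing relative to $D(z)$ or to $\mu_\pm$. In the paper these terms are never estimated; they enter as a nonnegative even source, which only helps. Two smaller points: appealing to Theorem~\ref{thm:r13-local-master} to control them is circular, and the correct identity is $\Delta U=-\sqrt2\,uD$, not $\sqrt2\,uD/q$.

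\textbf{The interaction mechanism is missing.} $P$ is fixed while $\ell_\pm\geq c\log N\to\infty$. Your Step~3 therefore works entirely outside the window, and the window identity of Step~2 sees only $D$ on $[-P,P]$ plus boundary terms you discard. The $\mu_\pm$ terms can enter only through a quantitative lower bound on the defect inside the window by the growing modes, with the exact coefficient.

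That bound is nonlocal, and the paper obtains it as follows.
\begin{itemize}
\item Lemma~\ref{lem:positive-directional-source} gives $w=q(1-\boldsymbol p\cdot n_0)=\mathcal K*F_{n_0}$ with $F_{n_0}\geq0$.
\item At the oppositely oriented adjacent sheets, $F_{n_0}\approx6G^2$. The exact horizontal mass $(2\sqrt2)^{-1}e^{-\sqrt2|h|}$ of $\mathcal K$ then gives $w\geq(a_Q-o(1))\sum_\alpha\mu_\alpha e^{\alpha\sqrt2 s}$ near $z$, with $a_Q\to8$ (Lemma~\ref{lem:r13-positive-incoming}).
\item In the normalized limit, with $r_\pm=\lim\mu_\pm/D(z)$, a maximum principle upgrades this to $W\geq8(r_+\varphi_++r_-\varphi_-)$.
\item Applying both functionals $\Lambda_{P,\pm}$ to $W-8r_+\varphi_+-8r_-\varphi_-$ on the same window gives \eqref{eq:r12-limit-diamond}, with $12\sqrt2=8d_0$. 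A contradiction argument then returns to large clean scale.
\end{itemize}

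The additive term $12\sqrt2(\mu_++\mu_-)$ comes precisely from subtracting both modes simultaneously at one base point on one window. Separate sign arguments on each half-line do not produce it.
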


\vskip0.12in
In particular, after decreasing the error tolerance, this gives
\[
 I_P\geq(48-\varepsilon)(\mu_++\mu_-),\qquad
 |H|+24\sqrt2\min(\mu_+,\mu_-)
 \leq\bigl(1/(2\sqrt2)+\varepsilon\bigr)I_P.
\]
For a fixed $P>0$, the coefficient on the right of
\eqref{eq:r12-intro-diamond} can be replaced by $C_P+\varepsilon$,
where $C_P=[2\sqrt2\tanh(P/\sqrt2)]^{-1}$. The corresponding local
mass coefficient is $48\tanh(P/\sqrt2)$, up to an arbitrarily small
error at large scale. Both incoming modes are subtracted at the same
base point and on the same normal interval. This is what gives the
sum of the two interactions. The periodic examples in Proposition~\ref{prop:periodic-sharpness48}
give sharpness in its stated parameter range.

\vskip0.32in

\subsection{Outline of the proof}
\label{sec:s12-proof-outline}

For $R\geq1$, write
\[
 K(R)=1+\sup_{x_c\in\mathbb R^n}R^{-m}E_u(B_R(x_c)).
\]
If $K$ is unbounded, choose $R_j\to\infty$ with
$K(R_j)\to\infty$. The elementary bound $K(R)\leq1+CR$ then shows
that $L_j=R_j/\sqrt{K(R_j)}\to\infty$, while $L_j/R_j\to0$.
We derive a contradiction along this sequence. The local analysis controls normal derivatives, contact
points, and adjacent-layer interactions. The weighted $p=9/4$
estimate then controls the errors in the ambient stability test.
Using the same amplitude in this test and in the reach inequality
gives an upper bound for a weighted area. Energy comparison and a
normal-tube argument give a positive lower bound for the same quantity.

\vskip0.12in
The exponent $9/4$ is chosen for a quantitative reason. In the radial
family of Theorem~\ref{thm:s2-exact-feasibility}, the moment condition
is expressed by
\[
 \frac{8-4/p}{m-2}.
\]
For $m=8$, this is $1$ at $p=2$ and $28/27$ at $p=9/4$.
The coefficient required in our comparison satisfies
\[
 \begin{gathered}
 1<g_0\leq\frac{72159}{99800}\sqrt2<1.02253<\frac{28}{27},
 \\g_0=\kappa(1.005)(.718)/.998,\quad
 \kappa=\frac{\sqrt2}{G_1},\quad1\leq G_1\leq1.001.
 \end{gathered}
\]
Thus the larger exponent leaves a strict margin at the largest
dimension considered. This condition belongs to the chosen radial
comparison. Here $G_1$ is the constant value of the angular penalty on
its plateau. The curvature-power argument uses the identities
of~\cite{Simons1968,SchoenSimonYau1975}.

\vskip0.12in
The paper is divided into three parts. In
Part~\ref{part:s12-applications} we prove Theorem A and Corollary B
using the local estimates, the weighted moment estimate, and the
energy-growth criterion stated in the later parts.
Part~\ref{part:s12-local}
contains the preliminary results, the local normal equation, contact
geometry, and the interaction estimate. In
Part~\ref{part:s12-stability} we use exterior stability to obtain
large-scale regularity, prove the $p=9/4$ estimate, and establish the
growth criterion. The isolated-layer mean-curvature calculation and
the scalar parameter calculation are given in the appendices.
Appendix~\ref{sec:r13-final-gap} discusses the dimension restriction
and the additional estimate that would be needed beyond the stated
range.

\vskip0.32in
\part{Finite Morse index solutions}
\label{part:s12-applications}
In this part we prove the main conclusions using the estimates
established in Parts~\ref{part:s12-local} and
\ref{part:s12-stability}. We first derive energy growth from the
quantitative criterion, and then apply the finite-index classification
results.

\vskip0.32in
\section{\bf Energy growth: Proof of Theorem A}
\label{sec:r13-actual-growth}
We prove the energy estimate using the local estimates in
Proposition~\ref{prop:r13-nine-dimensional-local-package}, the weighted
moment estimate in Theorem~\ref{thm:r13-actual-nine-quarter-budget},
and the criterion in Theorem~\ref{thm:r13-moment-radial-master}.
Their proofs are given in Parts~\ref{part:s12-local} and
\ref{part:s12-stability}. The parameter choice required by the
criterion is verified in Corollary~\ref{cor:r13-uniform-growth-certificate}.
Here $K$ is the normalized energy defined in
Section~\ref{sec:s12-proof-outline}.

\vskip0.32in
\begin{corollary}
\label{thm:r13-growth-49}\label{thm:r13-actual-growth}
\label{thm:r11-growth-48}\label{thm:r11-growth-47}
\label{thm:r10-growth-45}\label{thm:r9-actual-growth}\label{thm:r8-main-growth}
Let $4\leq n\leq9$ and let $u:\mathbb R^n\to\mathbb R$ be a
bounded solution of $\Delta u=u^3-u$, stable outside one fixed ball.
Then
\begin{equation}\label{eq:r13-growth-49}
 \sup_{x_c\in\mathbb R^n}E_u(B_R(x_c))\leq C_uR^{n-1}
 \qquad(R>0).
\end{equation}
\end{corollary}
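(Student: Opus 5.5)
The argument is by contradiction, combining the three results the section announces: the local package of Proposition~\ref{prop:r13-nine-dimensional-local-package}, the weighted moment estimate of Theorem~\ref{thm:r13-actual-nine-quarter-budget}, and the growth criterion of Theorem~\ref{thm:r13-moment-radial-master}, with the parameter choice supplied by Corollary~\ref{cor:r13-uniform-growth-certificate}.

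\textbf{Reductions.} First dispose of the trivial cases. If $u\equiv\pm1$, or more generally if $|u|\equiv1$ somewhere, the strong maximum principle gives $u\equiv\pm1$ and the energy vanishes. For $0<R<1$ the bound follows from $|u|\leq1$ (maximum principle) and interior gradient estimates, which give $E_u(B_R(x_c))\leq CR^n\leq CR^{n-1}$ uniformly in $x_c$. So it suffices to show that the normalized energy $K(R)$ of Section~\ref{sec:s12-proof-outline} is bounded for $R\geq1$. The same elliptic bounds give the elementary estimate $K(R)\leq1+CR$.

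\textbf{Contradiction sequence.} Suppose $K$ is unbounded, and pick $R_j\to\infty$ with $K(R_j)\to\infty$. Choose centers $x_j$ almost realizing the supremum defining $K(R_j)$. Set $L_j=R_j/\sqrt{K(R_j)}$, so that $L_j\to\infty$ and $L_j/R_j\to0$. At the scale $L_j$, the local package applies on clean windows: it gives normal-derivative control, the contact geometry, and the interaction estimate of Theorem~C. The moment estimate with $p=9/4$ bounds the exceptional-region and cutoff errors by a small multiple of the interaction terms.

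Since the observation centers $x_j$ may lie far from the unstable core $B_{R_0}(0)$, the cutoff near $B_{R_0}(0)$ must be handled separately. Exterior stability is only available for test functions supported away from that ball. I would cut off a fixed neighborhood of $B_{R_0}(0)$ and absorb the resulting error into the $O(R_j^{m})$ budget. This is harmless once $K(R_j)\to\infty$, because the core contributes at most $C_u$ while the normalized energy blows up.

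\textbf{Main step.} Feed these inputs into the criterion of Theorem~\ref{thm:r13-moment-radial-master}. Using the same amplitude in the ambient stability test and in the penalized-reach inequality yields an upper bound on a weighted interface area. Energy comparison together with the normal-tube argument yields a positive lower bound on the same quantity, proportional to $K(R_j)$. These two bounds are incompatible exactly when the radial moment condition holds. By Corollary~\ref{cor:r13-uniform-growth-certificate} it does hold for $4\leq n\leq9$, with the margin $g_0<28/27$ at $m=8$.

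I expect the main obstacle to be checking that every hypothesis of the criterion holds along the sequence $(x_j,R_j,L_j)$, uniformly in the center. This includes the clean-scale largeness required by Theorem~C and the admissibility of $p=9/4$. Once that is verified, the contradiction gives $\sup_R K(R)<\infty$, and \eqref{eq:r13-growth-49} follows with $C_u=\sup_R K(R)$ for $R\geq1$, combined with the small-$R$ bound above.
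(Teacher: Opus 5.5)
Your outline is the paper's: reduce to a nonplanar solution, then apply Theorem~\ref{thm:r13-moment-radial-master} with the local package, the $p=9/4$ moment estimate in its density-envelope form, and the parameters of Corollary~\ref{cor:r13-uniform-growth-certificate}. Two things need repair.

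\emph{The case reduction is incomplete.} Besides $u\equiv\pm1$, the constant $u\equiv0$ is a bounded solution with $E_u(B_R)=|B_R|/4\sim R^n$. The statement is false for it unless exterior stability is used. You must check that $Q_0(\xi)=\int|\nabla\xi|^2-\xi^2$ is negative on a dilated bump, which can then be translated outside $\overline B_{R_0}(0)$. You also need to remove the zero-free and planar cases before invoking the criterion. The reach, contact and radial machinery requires $\Sigma\neq\varnothing$ and $D\not\equiv0$, which gives finite reach by Proposition~\ref{prop:r4-actual-contact-regularity}. A zero-free $u$ with $|u|<1$ is impossible by Lemma~\ref{lem:r4-phase-ball} applied on arbitrarily large balls. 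If $D$ vanishes somewhere, Lemma~\ref{lem:audited-relative-defect} gives a planar heteroclinic, whose bound follows by slicing.

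\emph{Your treatment of the unstable core is not correct as stated, and it is not needed.} Exterior stability is used not only in the final ambient test but in balls of radius comparable to the reach around each base point. The sheeting step (Proposition~\ref{prop:r11-nearreach-flat-bridge}) needs $Q_u\geq0$ on $B_{h_0}(z)$ with $h_0=d(z)^{1-o(1)}$. The weak reach inequality (Proposition~\ref{prop:r7-actual-weak-reach}) is available only at guarded base points with $d(z)<\epsilon\rho_o(z)$ and $\rho_o(z)>\eta R$, where Lemma~\ref{lem:physical-cap-admissibility} keeps $B_{Ad(z)}(z)$ away from the core. For base points whose reach is comparable to their distance from $B_{R_0}(0)$, no fixed cutoff supplies this. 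Your observation that the core carries at most $C_u$ energy does not address it.

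The paper instead caps the reach at $\epsilon\rho_o$, imposes the guard, and cuts off at $T=\eta R$. The resulting errors are of order $C\eta^{m-2}\mathscr A$, not $o(\mathscr A)$. They are beaten only because the retained-area lower bound $c'_{\rm ret}\mathscr A$ is independent of small $\eta$: one fixes $\eta$ first, then lets $B\to\infty$, then $R\to\infty$. All of this is internal to Theorem~\ref{thm:r13-moment-radial-master}. Once the reductions are done, simply invoke it, after fixing the contact accuracy $\omega\leq1/32000$ before the final good threshold, rather than adding your own core cutoff.
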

\begin{proof}
The maximum principle gives $|u|\leq1$: translate a sequence
approaching a supremum greater than one and use elliptic compactness
to contradict $\Delta u=u^3-u>0$ at a maximum of the limit.
The infimum is analogous. If either endpoint is attained, the strong maximum principle gives
a constant well. Otherwise $|u|<1$.
The constant zero is not exterior stable, since a sufficiently large
scaled compact test is negative and can be translated outside any
fixed ball. If $u$ is zero-free, apply Lemma~\ref{lem:r4-phase-ball}
to $u$ or $-u$ on arbitrarily large balls to obtain $u\equiv1$ or
$u\equiv-1$. If $D$ vanishes somewhere,
Lemma~\ref{lem:audited-relative-defect} gives a planar heteroclinic,
whose growth follows by slicing.

\vskip0.12in
It remains to verify the radial inputs for a nonplanar solution.
Put $m=n-1$. Proposition~\ref{prop:r13-nine-dimensional-local-package}
supplies the local estimates with $G_1\leq1.001$, $J_*\leq1.01$
and the required common finite normal window. Choose contact
accuracy at most $1/32000$ before the final good threshold.
Theorem~\ref{thm:r13-actual-nine-quarter-budget} and
Corollary~\ref{cor:s8-density-envelope-moment} supply the moment estimate. Their full-power transport and
Theorem~\ref{lem:near-critical-phase-decay} fix the decay
rate. Corollary~\ref{cor:r13-nine-dimensional-fixed-window}
verifies every radial parameter condition.

\vskip0.12in
Theorem~\ref{thm:r13-moment-radial-master} therefore bounds $K$ for $R\geq1$.
For $R<1$, $e_u\leq1/2$ by \eqref{eq:audited-q} gives the same bound after enlarging $C_u$.
\end{proof}

\vskip0.32in

\section{\bf Geometric consequences: Proof of Corollary B}
\label{sec:r10-corollaries}

Theorem A supplies the surface-order energy bound required by the
classification and blowdown results for finite-index solutions.

\begin{proposition}
\label{thm:r13-rigidity-reduction}
\label{cor:r13-blowdowns-through-nine}
Let $4\leq n\leq9$, and let $u$ be a bounded entire solution of
$\Delta u=u^3-u$. Suppose that $u$ is exterior-stable, or has
finite Morse index, and that
\[
 E_u(B_R(0))\leq K R^{n-1}\qquad(R\geq1).
\]
Then $u$ has finite Morse index. If $4\leq n\leq7$, then
\begin{equation}\label{eq:r10-profile-class}
 u\in\mathcal P_n:=\{-1,1\}\cup
 \left\{x\mapsto\tanh\frac{e\cdot x-s_0}{\sqrt2}:
          e\in\mathbb S^{n-1},\ s_0\in\mathbb R\right\}.
\end{equation}
For every nonconstant such $u$, put $m=n-1$ and
$\sigma_1=2\sqrt2/3$. For
$\Phi\in C_c(\mathbb R^n\times G(n,m))$, where $G(n,m)$ is the
Grassmannian of unoriented $m$-planes, define
\[
 V_R(\Phi)=\frac1{\sigma_1R^m}
   \int\Phi\bigl(x/R,(\nabla u(x))^\perp\bigr)|\nabla u(x)|^2\,dx,
 \qquad \mu_R(A)=\frac{E_u(RA)}{\sigma_1R^m}.
\]
Every sequence $R_j\to\infty$ has a subsequence for which
\[
 V_{R_j}\rightharpoonup|C|,\qquad
 \mu_{R_j}\rightharpoonup\mathcal H^m\!\lfloor C,
\]
where $C$ is a nonempty stationary stable minimal hypercone with
multiplicity one. It is a hyperplane if $n\leq7$.
Consequently, Corollary~\ref{thm:r13-actual-growth} gives these
conclusions for every exterior-stable bounded entire solution in
the stated dimensions.
\end{proposition}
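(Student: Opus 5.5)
The proof is a transfer argument: we check the hypotheses of the cited results and then quote them. It has three steps.

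\emph{Step 1: exterior stability is equivalent to finite Morse index.} The potential $V=3u^2-1$ is smooth and satisfies $V\geq-1$, since $|u|\leq1$ by the maximum principle argument in the proof of Corollary~\ref{thm:r13-actual-growth}. For the Schr\"odinger operator $-\Delta+V$ with such a potential, \cite[Lemma~1.1 and Theorem~1.2]{Devyver2012} gives that the form $Q_u$ has finite index if and only if $Q_u\geq0$ on $C_c^\infty(\mathbb R^n\setminus\overline B_{R_0})$ for some $R_0$. One direction is elementary. If the index is $k$, take a maximal negative subspace spanned by compactly supported $\xi_1,\dots,\xi_k$ inside some $B_{R_0}$. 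Any $\xi$ supported outside $\overline B_{R_0}$ is $Q_u$-orthogonal to every $\xi_i$, because the supports are disjoint and $Q_u$ is local. So $Q_u(\xi)<0$ would enlarge the negative subspace, contradicting maximality. The converse direction, exterior stability implying finite index, is the substantive part of Devyver's theorem, and we use it as a black box. We must check that the form-domain and criticality conventions there match $C_c^\infty$ test functions and our sign of $V$. We expect this bookkeeping to be the only delicate point.

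\emph{Step 2: reduce to the origin-centred bound.} After Step 1 the hypotheses are exactly those of \cite{LiuSheeting2026}: a bounded entire solution with finite Morse index and $E_u(B_R(0))\leq KR^{n-1}$. The uniformity in $x_c$ from Theorem A is not needed here. It implies the origin-centred bound, and that bound is all the cited results require.

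For $4\leq n\leq7$ we apply \cite[Corollary~6.8]{LiuSheeting2026}, which gives one-dimensionality. We then classify one-dimensional bounded solutions of $v''=v^3-v$. The phase-plane first integral $\tfrac12v'^2=W(v)+c$ combined with boundedness forces one of three outcomes: $v$ is constant, $v$ is periodic, or $v$ is the heteroclinic $\tanh((t-s_0)/\sqrt2)$, up to reflection.
\begin{itemize}
\item The constants $\pm1$ are stable and lie in $\mathcal P_n$. The constant $0$ and the other equilibria are excluded, since we showed in the proof of Corollary~\ref{thm:r13-actual-growth} that $0$ is not exterior stable.
\item Periodic profiles have energy growth of order $R^n$ in $\mathbb R^n$, which contradicts the surface-order bound. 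Alternatively, they are excluded directly because they are unstable on arbitrarily far translates.
\end{itemize}
This leaves exactly $\mathcal P_n$. For $\tanh$ profiles the Morse index is zero, since $\partial_e u>0$ is a positive Jacobi field. Slicing then gives the energy constant $\sigma_1\omega_{n-1}R^{n-1}$.

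\emph{Step 3: blowdowns for $n\leq9$.} For nonconstant $u$ we invoke \cite[Theorem~1.3]{LiuSheeting2026}. It gives the subsequential convergence $V_{R_j}\rightharpoonup|C|$ and $\mu_{R_j}\rightharpoonup\mathcal H^m\lfloor C$, with $C$ a stationary, stable, multiplicity-one minimal cone. We must confirm that their normalization by $\sigma_1R^m$, and their use of the Grassmannian of unoriented planes, agree with the definitions of $V_R$ and $\mu_R$ above.

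The cone $C$ is nonempty. Indeed $u$ has a zero, since otherwise Lemma~\ref{lem:r4-phase-ball} forces $u\equiv\pm1$. Energy density near a zero is bounded below, and monotonicity then gives a positive lower bound for $\mu_R(B_1)$.

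When $n\leq7$, Simons' theorem \cite{Simons1968} makes stable minimal cones flat, so $C$ is a hyperplane. The final sentence of the statement follows by combining with Corollary~\ref{thm:r13-actual-growth}, which supplies the energy hypothesis.
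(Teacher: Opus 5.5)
Your proposal is correct and follows the paper's proof. Exterior stability gives finite index by Devyver's criterion; your maximal-negative-subspace argument for the converse is a tidy version of the paper's disjoint-support remark. Theorem~1.3 and Corollary~6.8 of the cited sheeting paper then give the blowdowns and the classification, and $C$ is nonempty because $u$ has a zero, the energy of a fixed ball around it is bounded below, and monotonicity applies.

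The paper makes explicit a few points that you leave implicit:
\begin{enumerate}
\item The full-energy convergence $\mu_{R_j}\rightharpoonup\mathcal H^m\lfloor C$ relies on equipartition, which the paper takes from the cited Lemma~6.1.
\item An arbitrary fixed plane is assigned on $\{\nabla u=0\}$, where $(\nabla u)^\perp$ is undefined; this does not affect $V_R$ because the weight $|\nabla u|^2$ vanishes there.
\item Ball inclusion transfers the lower bound from the zero's centre to $B_R(0)$, and testing against a cutoff equal to one on $B_1(0)$ passes it to the limit.
\end{enumerate}
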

\begin{proof}
The maximum-principle reduction in the proof of
Corollary~\ref{thm:r13-actual-growth} leaves the constant wells
or $-1<u<1$; the constant zero violates natural growth.

\vskip0.12in
For $H=-\Delta+3u^2-1$, the potential is smooth and bounded below by
$-1$, so the operator is bounded below on the complete space
$\mathbb R^n$. Exterior stability gives a positive solution of
$H\phi=0$ outside a compact set by
\cite[Lemma~1.1]{Devyver2012}. Extending it as a positive smooth
function across a larger compact set, if necessary, allows
\cite[Theorem~1.2]{Devyver2012} to give finite Morse index. Conversely, finite index implies exterior stability: otherwise, one
constructs arbitrarily many test functions with pairwise disjoint supports
and negative quadratic form.

\vskip0.12in
The quartic potential is smooth and nonnegative, has precisely
the critical points $-1,0,1$, and satisfies
$W''(\pm1)=2$ and $W''(0)=-1$. Thus all potential, dimension,
index and growth assumptions of
\cite[Theorem~1.3]{LiuSheeting2026} hold, with its ambient
dimension $N=n$ and its $\sigma_0=\sigma_1$. That theorem,
together with its Lemma~6.1, gives the simultaneous varifold and
full-energy convergence above. Its Corollary~6.8 gives
\eqref{eq:r10-profile-class} for $4\leq n\leq7$.
For a nonconstant solution, Lemma~\ref{lem:r4-phase-ball} guarantees
at least one zero. The fixed-ball lower energy bound and monotonicity
in Lemmas~\ref{lem:r9-zero-baseline} and~\ref{lem:r9-density-baseline},
followed by ball inclusion, give a positive lower bound for
$R^{-m}E_u(B_R(0))$ at large $R$. Testing the energy-measure convergence
against a nonnegative compactly supported function equal to one on $B_1(0)$ then
shows that $C$ is nonempty.
Assign a fixed $m$-plane on $\{\nabla u=0\}$; its choice does not
affect $V_R$ because the weight vanishes there. The wells have zero
blowdown. The final assertion follows
by inserting Corollary~\ref{thm:r13-actual-growth}.
\end{proof}

\vskip0.12in
\begin{corollary}
\label{cor:s1-routine-consequences}
\label{cor:r9-index-dichotomy}\label{cor:r10-index-dichotomy}
\label{cor:r9-disjoint-instability}\label{cor:r10-disjoint-instability}
\label{cor:r9-sharp-universal-energy}\label{cor:r10-sharp-universal-energy}
\label{cor:r9-finite-index-compactness}\label{cor:r10-finite-index-compactness}
For fixed $4\leq n\leq7$, the following statements hold.
\begin{enumerate}[label=\textup{(\roman*)}]
\item Exterior stability, finite Morse index and global stability
are equivalent for bounded entire solutions. They hold exactly
on $\mathcal P_n$, and every bounded entire solution has index
zero or infinity.
\item If $u\notin\mathcal P_n$, every compact
$K\subset\mathbb R^n$ admits a smooth compactly supported
$\varphi$ outside $K$ with $Q_u(\varphi)<0$. Such tests may be
chosen with disjoint supports escaping to infinity. For every
$N\geq1$, the first $N$ span a space on which $Q_u$ is negative
definite. This applies,
in particular, to a solution with a critical zero, a nonempty
compact zero component, or a nonempty zero set that is not an
affine hyperplane.
\item Every exterior-stable bounded entire solution satisfies
\begin{equation}\label{eq:r10-sharp-universal-energy}
 E_u(B_R(x_c))\leq\sigma_1\omega_{n-1}R^{n-1},\qquad
 \sigma_1=\frac{2\sqrt2}{3},\quad R>0,\quad x_c\in\mathbb R^n.
\end{equation}
The optimal coefficients for $n=4,5,6,7$, respectively, are
\[
 \frac{8\pi\sqrt2}{9},\quad \frac{\pi^2\sqrt2}{3},\quad
 \frac{16\pi^2\sqrt2}{45},\quad \frac{\pi^3\sqrt2}{9}.
\]
At a fixed center the normalized energy tends to zero for a
well and to $\sigma_1\omega_{n-1}$ for a transition.
\item The class of all bounded entire finite-index solutions is compact in
$C^\infty_{\rm loc}(\mathbb R^n)$. Every limit is a well or a single
planar transition.
\end{enumerate}
\end{corollary}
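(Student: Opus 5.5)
The plan is to reduce every item to Corollary~\ref{thm:r13-actual-growth} together with Proposition~\ref{thm:r13-rigidity-reduction}. Those two results identify the exterior-stable bounded entire solutions in dimensions $4\leq n\leq7$ with the class $\mathcal P_n$.

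\emph{Item (i).} First check that every element of $\mathcal P_n$ is globally stable. The wells have potential $3u^2-1=2>0$. For a planar transition $u=\tanh((e\cdot x-s_0)/\sqrt2)$, the positive function $\partial_e u$ solves the linearized equation. The standard Picone or log-derivative argument then gives $Q_u(\xi)\geq0$ for all compactly supported $\xi$. Global stability trivially implies finite index, and finite index implies exterior stability. By Corollary~\ref{thm:r13-actual-growth} and Proposition~\ref{thm:r13-rigidity-reduction}, exterior stability forces $u\in\mathcal P_n$, which closes the cycle. For the dichotomy, suppose the index is finite. Then $u$ is exterior stable, hence in $\mathcal P_n$, hence stable, so the index is zero. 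Otherwise the index is infinite.

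\emph{Item (ii).} Take $u\notin\mathcal P_n$. By (i), $u$ is not exterior stable outside any ball. So for every compact $K$ there is a test function $\varphi$ supported outside $K$ with $Q_u(\varphi)<0$; by density it may be taken smooth. I would construct the tests inductively: given $\varphi_1,\dots,\varphi_{k}$, choose a ball containing their supports and pick $\varphi_{k+1}$ outside it. This gives disjoint supports escaping to infinity. Since the supports are disjoint, $Q_u$ is diagonal on their span with negative entries, hence negative definite. For the listed special cases, each forces $u\notin\mathcal P_n$:
\begin{itemize}
\item a critical zero cannot occur for $\tanh$, whose gradient never vanishes on the zero set;
\item a nonempty compact zero component cannot occur, since zero sets in $\mathcal P_n$ are empty or hyperplanes;
\item a nonempty zero set that is not an affine hyperplane is likewise excluded.
\end{itemize}

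\emph{Item (iii).} Again use Corollary~\ref{thm:r13-actual-growth} and Proposition~\ref{thm:r13-rigidity-reduction} to reduce to $\mathcal P_n$. For a well, the energy is zero. For a transition, use the slicing and coarea bound. Write $x=(t,y)$ with $t=e\cdot x-s_0$. Since $e_u=\frac12u'(t)^2+W(u(t))=u'(t)^2$ along the profile, we get
\[
E_u(B_R(x_c))=\int e_u(t)\,\mathcal H^{n-1}\bigl(B_R(x_c)\cap\{e\cdot x-s_0=t\}\bigr)\,dt\leq\omega_{n-1}R^{n-1}\int_{\mathbb R}u'^2\,dt.
\]
The last integral equals $\int_{-1}^1\sqrt{2W}=\sigma_1$. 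Optimality and the limit statement come from the same formula. At a fixed center the slices fill $\omega_{n-1}R^{n-1}$ up to $O(R^{n-3})$ times a finite moment of $e_u$, so the normalized energy tends to $\sigma_1\omega_{n-1}$. Evaluating $\sigma_1\omega_{n-1}$ with $\omega_3=4\pi/3$, $\omega_4=\pi^2/2$, $\omega_5=8\pi^2/15$ and $\omega_6=\pi^3/6$ gives the four listed constants.

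\emph{Item (iv).} This is the step needing care. Take a sequence of finite-index solutions $u_k$. By (i) each lies in $\mathcal P_n$, so it is $\pm1$ or $\tanh((e_k\cdot x-s_k)/\sqrt2)$. The family $\mathcal P_n$ has uniform $C^j$ bounds, so Arzelà--Ascoli gives a $C^\infty_{\rm loc}$-convergent subsequence. Pass to a subsequence with $e_k\to e$ and $s_k\to s_0\in[-\infty,\infty]$. If $s_0$ is finite, the limit is a planar transition. If $s_k\to\pm\infty$, the limit is a well $\mp1$, locally uniformly together with all derivatives. Limits of the constant wells are wells. The only obstacle is to confirm that the infinite-shift case converges in $C^\infty_{\rm loc}$ rather than only pointwise. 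This follows from the exponential decay of all derivatives of $\tanh$ away from its zero.
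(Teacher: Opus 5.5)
Your proposal is correct and follows the paper's proof essentially step for step. You identify the exterior-stable class with $\mathcal P_n$ via Corollary~\ref{thm:r13-actual-growth} and Proposition~\ref{thm:r13-rigidity-reduction}, use the positive solution $\partial_e u$ of the linearized equation to get stability of transitions, choose negative tests successively outside the previous supports for (ii), slice parallel to the transition for (iii), and pass to limits of the parameters $(e_k,s_k)$ with $s_k\in[-\infty,\infty]$ for (iv); the only cosmetic difference is that you control the slice deficit by an explicit $O(R^{n-3})$ second-moment bound where the paper uses dominated convergence.
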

\begin{proof}
Corollary~\ref{thm:r13-actual-growth} and
Proposition~\ref{thm:r13-rigidity-reduction} identify the
exterior-stable class with $\mathcal P_n$.
Finite index implies exterior stability, and the wells are stable.
For a transition, $v=\partial_eu>0$ solves $Hv=0$, so
\[
 Q_u(\varphi)=\int v^2|\nabla(\varphi/v)|^2\geq0.
\]
This proves \textup{(i)}. Its contrapositive gives a negative
test outside every compact set. Successively avoiding the
previous supports proves \textup{(ii)}, since disjoint supports
annihilate all quadratic cross terms.

\vskip0.12in
For \textup{(iii)}, the wells are immediate. For a transition,
write $b=e\cdot x_c-s_0$ and
$e_g(t)=\tfrac12\operatorname{sech}^4(t/\sqrt2)$. Since
$\int_{\mathbb R}e_g=\sigma_1$, slicing gives
\[
 E_u(B_R(x_c))=\omega_{n-1}
   \int_{-R}^R e_g(t+b)(R^2-t^2)^{(n-1)/2}\,dt
 \leq\sigma_1\omega_{n-1}R^{n-1}.
\]
Dominated convergence after $v=t+b$ proves the density limit
and sharpness. Finally, a sequence with infinitely many wells
has a constant subsequence. Otherwise \textup{(iv)} follows
from the transition parameters: pass to $e_j\to e$ and
$s_j\to s\in[-\infty,+\infty]$. A finite limit gives a
transition, while the two infinite limits give wells, locally
smoothly. The locally smooth topology is metrizable.
\end{proof}

\vskip0.32in

\part{Local estimates on interfaces}
\label{part:s12-local}
We begin with the results used from the literature and the equation for
the defect. We then study contact points and adjacent interfaces. Except
for the transfer of exterior stability to compactly supported interface
tests, these estimates follow from the bounded entire equation.

\vskip0.32in
\section{\bf Preliminary results}
\label{sec:s12-preliminaries}
\label{sec:s2-coefficient-theorem}
\label{sec:s1-external-inputs}

The local sheeting result gives the regular graphs used in
Part~\ref{part:s12-stability}. We apply the finite-index classification
in Section~\ref{sec:r10-corollaries}, once the energy estimate is available.

\vskip0.12in
All numbered references to~\cite{LiuSheeting2026} refer to
arXiv:2610.08499v1, dated 6 October 2026. We use two distinct
parts of that paper. Its Theorem~1.1 is a local input to the
growth argument; its finite-index results are used only after
growth has been proved.

\vskip0.12in
For the local input, let $2\leq N\leq10$ and let
$u_j:B_1^N\to[-1,1]$ be stable smooth solutions of
$\varepsilon_j^2\Delta u_j=W'(u_j)$, where
$\varepsilon_j\downarrow0$ and $W$ is the quartic potential.
The hypothesis required by that theorem is
\[
 \varepsilon_j|\nabla u_j|^2\,dx
 \rightharpoonup Q\sigma_1\mathcal H^{N-1}\!\lfloor\{x_N=0\},
 \qquad Q\in\mathbb N,\qquad \sigma_1=\frac{2\sqrt2}{3}.
\]
Here convergence is locally in $B_1^N$.
It gives exactly $Q$ ordered smooth zero graphs, with uniform
interior $C^{2,\theta}$ bounds for every $0<\theta<1$.
Its application requires both a stable observation domain and
this planar measure limit; slab containment alone does not
supply the latter. The near-reach argument below must establish
these hypotheses before invoking sheeting.

\vskip0.12in
The variational link between diffuse-interface energies and perimeter
is classical~\cite{Modica1987}; the convergence framework for critical
phase interfaces is developed in~\cite{HutchinsonTonegawa2000}.
For context, stable phase-interface limit regularity is developed in
\cite{TonegawaWickramasekera2012}, while the second-order transition
layer estimates of \cite{WangWei2019} require prior graph or curvature
control. Our local graph entry is the stated sheeting theorem; the
stronger mean-derivative jets used for reach are proved in the appendix.

\vskip0.12in
The growth-to-geometry implication is proved in Proposition~\ref{thm:r13-rigidity-reduction}:
it combines the exterior positive-solution criterion and finite-index
theorem in~\cite[Lemma~1.1 and Theorem~1.2]{Devyver2012} with
\cite[Theorem~1.3 and Corollary~6.8]{LiuSheeting2026}.

\vskip0.12in
The proof architecture and the principal constructions in the
following sections are taken from~\cite{OpenAI2026}. More precisely,
we adapt its positive response and parity/reach construction
(Sections~5.5--5.6 and~7.3--7.6), its lifted form and exact-rate
comparison (Section~8.2 and Propositions~8.5, 8.10), its RSS
construction (Section~9.2), and its radial comparison
(Sections~10.4--10.8). Within this framework, the arguments below
include the exterior-core cutoff terms and mean-curvature errors,
derive the joint adjacent-layer contribution and its limiting
coefficient $48$, and establish the full-power $p=9/4$ localization.
All references to that work use the repository revision pinned in
the bibliography. Its final classification theorems are not used as
inputs.

\vskip0.12in
\begin{remark}
\label{rem:s1-observation-centers}
Ball inclusion and energy monotonicity
\eqref{eq:audited-monotonicity} give, as extended limits,
\[
 \lim_{r\to\infty}r^{-m}E_u(B_r(x))
 =\lim_{r\to\infty}r^{-m}E_u(B_r(0))=:M_\infty.
\]
If $M_\infty<\infty$, centered monotonicity yields
$E_u(B_R(x))\leq M_\infty R^m$ for all centers and radii.
The proof uses the uniform-center envelope before this finiteness is known.
\end{remark}

\vskip0.32in

\section{\bf Defect estimates and the normal equation}
\label{sec:s12-defect}
Starting from the equation on the whole space, we estimate the defect
and derive its limiting normal equation. Positivity of the limiting
operator then gives estimates for the normal response. We also lift
compactly supported interface functions to transfer exterior stability.
The defect and response estimates apply before any energy-growth or
integrated moment bound has been established.

\vskip0.32in
\subsection{Defect estimates and the density envelope}
\label{sec:r10-analytic-module}\label{sec:r9-baseline-core}
Fix $3\leq m\leq8$, $n=m+1$, and a bounded entire solution of
$\Delta u=u^3-u$. Unless indicated otherwise, constants in this section
depend only on $m$; dependence on windows, tolerances and derivative orders
is recorded explicitly. The analytic estimates below hold in every fixed
finite dimension. Apart from the constant wells,
$|u|<1$ by the strong maximum principle. Put
\[
 g(t)=\tanh(t/\sqrt2),\quad U=g^{-1}(u),\quad q=1-u^2,
 \quad\boldsymbol p=\nabla U,\quad D=1-|\boldsymbol p|^2,
 \quad P=q^3D.
\]
Modica's estimate~\cite{Modica1985} gives $0\leq D\leq1$. Direct calculation
gives
\begin{align}
 \nabla u&=q\boldsymbol p/\sqrt2,&\Delta U&=-\sqrt2uD,&
 e_u&=\tfrac14q^2(2-D),\label{eq:audited-transformed}\\
 2e_u&\leq q^2\leq4e_u,&e_u&\leq\tfrac12,&
 (-\Delta+3u^2-1)q&=-q^2D.\label{eq:audited-q}
\end{align}
Consequently, for compactly supported Lipschitz $\eta$,
\begin{equation}\label{eq:audited-exact-q-test}
 Q_u(q\eta)=\int q^2|\nabla\eta|^2-\int P\eta^2.
\end{equation}
Exterior stability therefore gives $\int P\eta^2\leq\int q^2|\nabla\eta|^2$
whenever the support avoids the unstable core.

\vskip0.12in
\begin{lemma}
\label{lem:audited-relative-defect}\label{lem:r3-defect-jets}
For each fixed $r<\infty$ and integer $k\geq0$, there is a constant
$C_{r,k}$ depending only on $m,r,k$ such that
\begin{equation}\label{eq:r3-fixed-defect-jets}
 \|D\|_{C^k(B_r(z))}\leq C_{r,k}D(z),\qquad
 \|\nabla^2U\|_{C^k(B_r(z))}\leq C_{r,k}\sqrt{D(z)}.
\end{equation}
In particular,
\begin{equation}\label{eq:audited-relative-defect}
 |\nabla^2U|^2\leq CD,\qquad |\nabla D|\leq CD,
 \qquad|\nabla\log q|\leq\sqrt2.
\end{equation}
Either $D\equiv0$ and $u$ is a planar heteroclinic, or $D>0$ and
\[
 e^{-Cr}D(z)\leq D(x)\leq e^{Cr}D(z)\quad(x\in B_r(z)),
 \qquad|\nabla\log P|\leq C.
\]
\end{lemma}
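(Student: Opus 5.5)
The plan is to treat $D$ as a nonnegative solution of a linear elliptic equation with bounded coefficients, and to obtain all the stated bounds from Harnack and Schauder estimates. The one genuinely nonlinear input is the pointwise bound $|\nabla^2U|^2\le CD$.

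\textbf{Uniform jets of $U$.} Since $|\boldsymbol p|\le1$ by Modica's estimate and $\Delta U=-\sqrt2uD$ with $|uD|\le1$, interior $W^{2,s}$ estimates give $C^{1,\alpha}$ bounds on $\nabla U$ on unit balls. The constants depend only on $m$, because $U$ enters only through its derivatives and $u=g(U)$ is a smooth bounded function of $U$. Bootstrapping, $\nabla^jU$, $D$ and $u$ have bounded $C^k$ norms on all of $\mathbb R^n$, with constants depending only on $m,k$.

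\textbf{Equation for $D$.} Using $\nabla D=-2\nabla^2U\,\boldsymbol p$, Bochner's formula, $\Delta U=-\sqrt2 uD$ and $\boldsymbol p\cdot\nabla u=q(1-D)/\sqrt2$, one computes
\[
 \Delta D-2\sqrt2\,u\,\boldsymbol p\cdot\nabla D-2q(1-D)D=-2|\nabla^2U|^2\le0 .
\]
The coefficients are bounded, and $0\le 2q(1-D)\le2$. Thus $D\ge0$ is a supersolution. The strong minimum principle then gives the dichotomy: either $D\equiv0$ or $D>0$ everywhere. The weak Harnack inequality gives the lower bound $D(x)\ge e^{-Cr}D(z)$, by chaining over unit balls.

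\textbf{Key claim: $|\nabla^2U|^2\le CD$.} Once this holds, the right-hand side above equals $-c(x)D$ with $0\le c\le C$. Then $D$ solves a linear equation $LD=0$ with bounded coefficients, and the full Harnack inequality, chained over unit balls, gives the two-sided bound $e^{-Cr}D(z)\le D\le e^{Cr}D(z)$. Schauder estimates for this linear equation, whose coefficients are $C^k$-bounded by the first step, give $\|D\|_{C^k(B_r(z))}\le C_{r,k}D(z)$.

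For the Hessian, differentiate $\Delta U=-\sqrt2uD$. Then $\nabla^2U$ satisfies an elliptic system whose source $\nabla^2(uD)$ is bounded by $C\,(D+|\nabla D|+|\nabla^2D|)$. Applying interior estimates together with the $L^\infty$ bound $|\nabla^2U|\le C\sqrt D$ yields $\|\nabla^2U\|_{C^k(B_r(z))}\le C_{r,k}\sqrt{D(z)}$, because $D\le\sqrt D$.

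\textbf{Remaining assertions.} These are direct.
\begin{itemize}
\item $\nabla q=-2u\nabla u=-\sqrt2\,uq\,\boldsymbol p$, so $|\nabla\log q|\le\sqrt2$.
\item $\nabla\log P=3\nabla\log q+\nabla\log D$, which is bounded once $|\nabla D|\le CD$.
\item If $D\equiv0$, then $|\nabla U|=1$ and $\Delta U=0$. The Bochner identity gives $|\nabla^2U|^2=0$, so $U$ is affine with unit gradient and $u$ is a planar heteroclinic.
\end{itemize}

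\textbf{Main obstacle: proving the key claim.} Since $D$ has bounded $C^2$ norm and $D\ge0$, one only gets $|\nabla D|^2\le CD$ for free. That is not enough.

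I would argue by contradiction and compactness. Suppose there are points $z_j$ (translated to $0$) with $|\nabla^2U_j(0)|^2\ge jD_j(0)$. By the jet bounds we may pass to a limit $U_\infty$.
\begin{itemize}
\item If $D_\infty(0)>0$, the ratio stays bounded, which is a contradiction.
\item Otherwise the strong minimum principle gives $D_\infty\equiv0$, so $U_\infty$ is affine.
\end{itemize}
In the second case, normalize by $\varepsilon_j=\sup_{B_2}|\nabla^2U_j|$ and linearize. The rescaled Hessians converge to $\nabla^2 h$, where $h$ solves the limiting linearized equation around the planar profile. The normalized defect $D_j/\varepsilon_j$ converges to the quantity $-2e\cdot\nabla h$ at the first order, with a quadratic correction of size $\varepsilon_j^2$.

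I would then compare the second-order term $|\nabla^2U|^2$ with $D$ through the supersolution equation integrated against a cutoff:
\[
 \int_{B_1}|\nabla^2U|^2\le C\sup_{B_2}D .
\]
This mean-value control, combined with the uniform jet bounds and the weak Harnack lower bound on $D$, should upgrade to the pointwise bound. I expect making this upgrade rigorous, without circularity between $\sup D$ and $D(z)$, to be the hardest step.
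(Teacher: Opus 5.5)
Most of your proposal is sound. The uniform jets, the defect equation
$\Delta D-2\sqrt2u\boldsymbol p\cdot\nabla D-2q(1-D)D=-2|\nabla^2U|^2$, the dichotomy from the strong minimum principle, and the remaining assertions are all fine. However, the key claim $|\nabla^2U|^2\le CD$ carries the whole content of the lemma, and you have not proved it.

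\textbf{The gap.} $D$ is only a supersolution, so nothing you have bounds $\sup_{B_2(z)}D$ by $D(z)$. The cutoff estimate $\int_{B_1}|\nabla^2U|^2\le C\sup_{B_2}D$ therefore cannot be turned into a pointwise bound in terms of $D(z)$. The circularity you flag is real. The compactness and linearization sketch only restates the problem in the limit and does not break it.

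For the same reason, your earlier claim that weak Harnack gives $D(x)\ge e^{-Cr}D(z)$ is unjustified. Weak Harnack bounds $\inf D$ by an $L^p$ average of $D$, not by the value at $z$. For a supersolution with bounded $C^2$ norm, the point value can exceed nearby averages by an unbounded factor.

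A smaller point: once you write $-2|\nabla^2U|^2=-c(x)D$, the coefficient $c$ is only bounded measurable. So Schauder estimates must be applied with $|\nabla^2U|^2$ kept as a source term.

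\textbf{The missing idea.} Stop working with $D$ and use the fixed direction $e=\boldsymbol p(z)/|\boldsymbol p(z)|$. Complete it to an orthonormal frame $e,e_2,\dots,e_n$ and set $w=1-\partial_eU$, $t_j=\partial_{e_j}U$, $b=2\sqrt2u\boldsymbol p$. Differentiating $\Delta U=-\sqrt2uD$ and using $\nabla u=q\boldsymbol p/\sqrt2$ gives the linear equations
\[
 \Delta w-b\cdot\nabla w=qD(1-w),\qquad
 \Delta t_j-b\cdot\nabla t_j=-qDt_j,
\]
together with the identity $D=2w-w^2-\sum_jt_j^2$.

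\begin{enumerate}
\item Since $|\boldsymbol p|\le1$, you have $w\ge0$. Hence $D\le2w$, $\sum_jt_j^2\le2w$, and $|qD(1-w)|\le2w$.
\item So $w$ is a nonnegative \emph{solution} of a linear equation with the bounded measurable coefficient $qD(1-w)/w$. The full Harnack inequality applies to $w$ with no a priori information.
\item Since $w(z)=1-|\boldsymbol p(z)|\le D(z)$, you get $\sup_{B_{r+3}(z)}w\le C_rD(z)$. Hence $D\le C_rD(z)$ and $|t_j|\le C\sqrt{D(z)}$ on that ball. This is the upper Harnack bound for $D$ that you were missing.
\item Interior estimates for the smooth-coefficient $t_j$-equation give $\|t_j\|_{C^{k,\gamma}}\le C\sqrt{D(z)}$.
\item Rewrite the $w$-equation as
\[
 \Delta w-b\cdot\nabla w-q(1-w)(2-w)w=-q(1-w)\sum_jt_j^2 .
\]
Its source is $O(D(z))$ in every fixed H\"older norm, so Schauder gives $\|w\|_{C^{k,\gamma}}\le CD(z)$.
\item The Hessian columns are $-\nabla w,\nabla t_2,\dots,\nabla t_n$. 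This gives both bounds in \eqref{eq:r3-fixed-defect-jets}.
\item The lower bound $D(x)\ge e^{-Cr}D(z)$ then follows by exchanging the roles of $x$ and $z$, or by integrating $|\nabla\log D|\le C$.
\end{enumerate}
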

\begin{proof}
We first establish uniform bounds for the positive-order derivatives of $U$.
On $B_2(z)$, the function $U-U(z)$ is bounded by $2$ and solves
$\Delta U=-\sqrt2\tanh(U/\sqrt2)(1-|\nabla U|^2)$.
Interior $W^{2,s}$ estimates with $s>n$ and successive Schauder estimates
give these bounds. Indeed, all derivatives of the nonlinearity are bounded
for $|\nabla U|\leq1$, uniformly in $U(z)$.

\vskip0.12in
Choose a fixed orthonormal basis with $e=\boldsymbol p(z)/|\boldsymbol p(z)|$
if possible, and put $v=\partial_eU$, $w=1-v$,
$t_j=\partial_{e_j}U$, $b=2\sqrt2u\boldsymbol p$. Then
\[
 \Delta w-b\cdot\nabla w=qD(1-w),\quad
 \Delta t_j-b\cdot\nabla t_j=-qDt_j,\quad
 D=2w-w^2-\sum_{j=2}^nt_j^2.
\]
Here $w\geq0$, $D\leq2w$, $w(z)\leq D(z)$ and
$|qD(1-w)|\leq2w$. Harnack~\cite{GilbargTrudinger2001}, applied with
the bounded measurable coefficient $qD(1-w)/w$ (zero at zeros), gives
$\sup_{B_{r+3}(z)}w\leq C_rD(z)$. Since $|t_j|^2\leq2w$,
interior estimates give $\|t_j\|_{C^{k,\gamma}(B_{r+2})}\leq C\sqrt{D(z)}$.
For $w$ use its smooth-coefficient equation
\[
 \Delta w-b\cdot\nabla w-q(1-w)(2-w)w
 =-q(1-w)\sum_{j=2}^nt_j^2.
\]
The right hand side is $O(D(z))$ in every fixed H\"older norm. Therefore
Schauder estimates give $\|w\|_{C^{k,\gamma}(B_{r+1})}\leq CD(z)$.
We now use the displayed identity for $D$ and write the Hessian columns as
$-\nabla w,\nabla t_2,\ldots,\nabla t_n$. This proves
\eqref{eq:r3-fixed-defect-jets}. Also $\nabla q=-\sqrt2uq\boldsymbol p$.
If $D(z)=0$, Harnack forces $w=t_j=0$ on every fixed ball, giving an
affine unit-gradient $U$. Otherwise $|\nabla\log D|\leq C$ integrates
along segments, and $\nabla\log P=3\nabla\log q+\nabla\log D$.
\end{proof}

\begin{lemma}
\label{lem:positive-directional-source}\label{lem:r12-positive-directional-source}
For any fixed unit vector $n_0$, the bounded nonnegative function
$w=q(1-\boldsymbol p\cdot n_0)=q-\sqrt2\,\partial_{n_0}u$ satisfies
\begin{equation}\label{eq:r12-positive-directional-source}
 (-\Delta+2)w=F_{n_0}:=qw+q^2|\boldsymbol p-n_0|^2\geq qw,
 \qquad w=\mathcal K*F_{n_0},
\end{equation}
where $\mathcal K$ is the positive fundamental solution of $-\Delta+2$.
\end{lemma}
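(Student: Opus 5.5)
The identity in \eqref{eq:r12-positive-directional-source} is a direct computation from the two equations satisfied by $q$ and by a directional derivative of $u$. The convolution formula then follows from a Liouville argument for bounded solutions of $(-\Delta+2)h=0$.

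\emph{The function $w$.} By \eqref{eq:audited-transformed}, $\nabla u=q\boldsymbol p/\sqrt2$, so $\sqrt2\,\partial_{n_0}u=q\,\boldsymbol p\cdot n_0$ and $w=q(1-\boldsymbol p\cdot n_0)$. Modica's estimate gives $|\boldsymbol p|\leq1$, hence $w\geq0$. Also $w\leq2q\leq2$, and $w$ is smooth with bounded derivatives.

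\emph{The equation for $w$.} I treat the two pieces of $w$ separately.
\begin{itemize}
\item For $q$: since $\Delta u=u^3-u=-uq$,
\[
 \Delta q=-2u\Delta u-2|\nabla u|^2=2u^2q-q^2|\boldsymbol p|^2 .
\]
Using $2q-2u^2q=2q^2$, this gives
\[
 (-\Delta+2)q=2q^2+q^2|\boldsymbol p|^2 .
\]
As a check, this is consistent with $(-\Delta+3u^2-1)q=-q^2D$ from \eqref{eq:audited-q}.
\item For $v=\partial_{n_0}u$: differentiating the equation gives $\Delta v=(3u^2-1)v$. Hence $(-\Delta+2)v=3qv$, and therefore
\[
 (-\Delta+2)(\sqrt2 v)=3q^2\,\boldsymbol p\cdot n_0 .
\]
\end{itemize}
Subtracting the second from the first,
\[
 (-\Delta+2)w=q^2\bigl(2+|\boldsymbol p|^2-3\boldsymbol p\cdot n_0\bigr).
\]
Expanding the claimed right-hand side gives the same expression:
\[
 qw+q^2|\boldsymbol p-n_0|^2
 =q^2(1-\boldsymbol p\cdot n_0)+q^2\bigl(|\boldsymbol p|^2-2\boldsymbol p\cdot n_0+1\bigr).
\]
So $F_{n_0}=qw+q^2|\boldsymbol p-n_0|^2$, and $F_{n_0}\geq qw$ because the second term is nonnegative.

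\emph{The representation $w=\mathcal K*F_{n_0}$.} The kernel $\mathcal K$ is positive and decays exponentially, so $\mathcal K\in L^1(\mathbb R^n)$ with $\int\mathcal K=1/2$. Since $F_{n_0}$ is bounded and smooth, $\mathcal K*F_{n_0}$ is a bounded classical solution of $(-\Delta+2)(\cdot)=F_{n_0}$.

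Then $h=w-\mathcal K*F_{n_0}$ is bounded and solves $\Delta h=2h$ on $\mathbb R^n$. I will show $h\equiv0$ by a barrier argument. Fix $x_0$ and $\delta>0$, and compare $h$ with
\[
 \delta\sum_{i=1}^n\cosh\bigl(\sqrt{2/n}\,(x_i-x_{0,i})\bigr),
\]
which also solves $\Delta\phi=2\phi$ and tends to infinity at infinity. The difference satisfies $\Delta(h-\delta\phi)=2(h-\delta\phi)$ and is negative outside a large ball. At an interior positive maximum we would have $\Delta\leq0$ but $2(h-\delta\phi)>0$, a contradiction, so $h-\delta\phi\leq0$ everywhere. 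Letting $\delta\to0$ gives $h(x_0)\leq0$. The same argument applied to $-h$ gives $h(x_0)\geq0$, so $h\equiv0$.

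The only step needing care is this last one. Boundedness of $w$, which follows from $|u|\leq1$, is exactly what makes the Liouville argument apply. Everything else is algebra.
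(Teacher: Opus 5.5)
Your derivation of the equation is correct and matches the paper's computation up to bookkeeping. The paper applies $L=-\Delta+2-3q$ to $w$, using $L\partial_{n_0}u=0$, $Lq=-q^2D$ and $D=2(1-\boldsymbol p\cdot n_0)-|\boldsymbol p-n_0|^2$. You instead compute $(-\Delta+2)q$ and $(-\Delta+2)\partial_{n_0}u$ directly. Both routes give $q^2(2+|\boldsymbol p|^2-3\boldsymbol p\cdot n_0)$.

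\textbf{An error in the Liouville step.} The barrier $\phi=\sum_i\cosh(\sqrt{2/n}\,(x_i-x_{0,i}))$ satisfies $\Delta\phi=(2/n)\phi$, not $\Delta\phi=2\phi$. So $g=h-\delta\phi$ does not satisfy $\Delta g=2g$ as you claim. What it does satisfy is
\[
\Delta g=2h-(2/n)\delta\phi\geq 2g,
\]
and this inequality is all your maximum-principle argument needs: at a positive interior maximum one would have $0\geq\Delta g\geq 2g>0$. Alternatively, use $\prod_i\cosh(\sqrt{2/n}\,(x_i-x_{0,i}))$ or $\sum_i\cosh(\sqrt2\,(x_i-x_{0,i}))$, which genuinely solve $\Delta\phi=2\phi$. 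With either fix your proof is complete.

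\textbf{Comparison for the representation.} Here the paper takes a different route. It writes the Duhamel identity
\[
w=\int_0^Te^{-2t}H_t*F_{n_0}\,dt+e^{-2T}H_T*w
\]
and lets $T\to\infty$; the last term vanishes uniformly because $w$ is bounded. That one formula gives existence and uniqueness together and identifies $\mathcal K=\int_0^\infty e^{-2t}H_t\,dt$. Your route splits the step into two parts. First, $\mathcal K*F_{n_0}$ is a bounded classical solution; this is fine because $F_{n_0}$ is smooth with bounded derivatives, though it needs checking. Second, a barrier Liouville theorem kills bounded solutions of $\Delta h=2h$. This avoids the semigroup entirely but costs that separate regularity check on the convolution.
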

\begin{proof}
For $L=-\Delta+2-3q$, $L\partial_{n_0}u=0$, $Lq=-q^2D$ and
$D=2(1-\boldsymbol p\cdot n_0)-|\boldsymbol p-n_0|^2$ give the equation.
Since the solution is bounded, the heat representation gives
$w=\int_0^Te^{-2t}H_t*F_{n_0}\,dt+e^{-2T}H_T*w$.
The last term tends to zero uniformly as $T\to\infty$.
\end{proof}

\begin{lemma}
\label{lem:r11-kernel-horizontal-mass}
Write $a=\sqrt2$. The kernel above satisfies
\begin{equation}\label{eq:r11-exact-horizontal-mass}
 \int_{\mathbb R^m}\mathcal K(\tau,h)\,d\tau=(2a)^{-1}e^{-a|h|}.
\end{equation}
As $\ell\to\infty$, for each fixed $M<\infty$ and uniformly on
$|h-\ell|\leq M$,
\begin{equation}\label{eq:r11-horizontal-truncation}
 \int_{|\tau|\leq\ell^{2/3}}\mathcal K(\tau,h)\,d\tau
 =(1-o(1))(2a)^{-1}e^{-ah}.
\end{equation}
For each fixed $c_0>0$ and all sufficiently large $\ell$,
\begin{equation}\label{eq:massive-coarse-disk}
 \int_{|\tau|\leq c_0\sqrt\ell}\mathcal K(\tau,h)\,d\tau
 \geq c_{m,c_0,M}e^{-\sqrt2\ell}.
\end{equation}
A uniform additive $o(1)$ perturbation of $(\tau,h)$ changes the lower
bound in \eqref{eq:r11-horizontal-truncation} by a factor $1-o(1)$.
\end{lemma}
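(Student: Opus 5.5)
The strategy is to integrate out the horizontal variables and use the exact one-dimensional kernel of $-\partial_h^2+2$. Write $x=(\tau,h)\in\mathbb R^m\times\mathbb R$, so that $\mathcal K(\tau,h)$ is the fundamental solution of $-\Delta+2$ in $\mathbb R^n$, $n=m+1$. Then define $k(h)=\int_{\mathbb R^m}\mathcal K(\tau,h)\,d\tau$.

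\emph{The identity \eqref{eq:r11-exact-horizontal-mass}.} Pair $-\Delta\mathcal K+2\mathcal K=\delta$ with test functions of the form $\psi(h)\chi(\tau/R)$ and let $R\to\infty$. This uses the integrability of $\mathcal K$ and of $\nabla\mathcal K$ away from the origin, which follows from the exponential decay $\mathcal K(x)\leq C|x|^{(1-n)/2}e^{-a|x|}$ for $|x|\geq1$ together with local integrability near $0$. The result is that $k$ solves $-k''+2k=\delta_0$ on $\mathbb R$ in the distributional sense. Moreover $k$ is even, positive and integrable, and $\int_{\mathbb R}k=\int_{\mathbb R^n}\mathcal K=1/2$. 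The only integrable solution of this equation is $k(h)=(2a)^{-1}e^{-a|h|}$ with $a=\sqrt2$. Alternatively, one can Fourier transform in $\tau$ at frequency zero: $\widehat{\mathcal K}(0,\eta)=(|\eta|^2+2)^{-1}$, and the inverse one-dimensional transform of this is exactly $(2a)^{-1}e^{-a|h|}$.

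\emph{The truncation estimate \eqref{eq:r11-horizontal-truncation}.} This is the main quantitative step. I would use the standard asymptotics of $\mathcal K$, a modified Bessel function, for $|x|\to\infty$:
\[
\mathcal K(x)=c_n|x|^{-(n-1)/2}e^{-a|x|}\bigl(1+O(|x|^{-1})\bigr).
\]
For $|h-\ell|\leq M$, the horizontal integral is then a Laplace-type integral around $\tau=0$. Since $|x|-h\approx|\tau|^2/(2h)$, the mass concentrates on $|\tau|\lesssim\sqrt{h}\sim\sqrt\ell$. The tail $|\tau|\geq\ell^{2/3}$ satisfies $|x|-h\gtrsim\ell^{4/3}/\ell=\ell^{1/3}\to\infty$. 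Its contribution relative to $e^{-ah}$ is therefore bounded by
\[
\int_{|\tau|\geq\ell^{2/3}}|x|^{-(n-1)/2}e^{-a(|x|-h)}\,d\tau .
\]
Splitting this at $|\tau|=h$ and using $|x|-h\geq c\min(|\tau|^2/h,|\tau|)$, the bound is $o(1)$. Combined with the exact identity from the first step, the truncated integral equals $(1-o(1))(2a)^{-1}e^{-ah}$, uniformly for $|h-\ell|\leq M$.

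\emph{The coarse lower bound \eqref{eq:massive-coarse-disk}.} On $|\tau|\leq c_0\sqrt\ell$ with $|h-\ell|\leq M$ we have $|x|-\ell\leq M+c_0^2/2+o(1)$ and $|x|^{-(n-1)/2}\geq c\,\ell^{-m/2}$. Integrating over a disk of volume comparable to $c_0^m\ell^{m/2}$ then gives a lower bound $c_{m,c_0,M}e^{-\sqrt2\ell}$.

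\emph{The perturbation remark.} A shift of $(\tau,h)$ by $o(1)$ changes $|x|$ by $o(1)$ and the prefactor $|x|^{-(n-1)/2}$ by a factor $1+o(\ell^{-1})$. Hence the integrand changes pointwise by a factor $e^{o(1)}=1-o(1)$, and this factor passes through the integral.

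The main obstacle is the uniform tail control in \eqref{eq:r11-horizontal-truncation}. It requires the two-sided Bessel asymptotics with relative error $O(|x|^{-1})$, together with the geometric lower bound on $|x|-h$ holding uniformly in the window $|h-\ell|\leq M$.
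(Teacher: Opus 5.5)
Your proposal is correct and follows the same outline as the paper: the exact horizontal mass, two-sided bounds $\mathcal K(x)\asymp |x|^{-m/2}e^{-\sqrt2|x|}$ for $|x|\geq1$, the tail split at $|\tau|=h$ using $|x|-h\geq c\min\{|\tau|^2/h,|\tau|\}$, the disk-volume lower bound, and a pointwise kernel-ratio argument for the perturbation. The only differences are in how the kernel facts are obtained: you get the mass identity from the ODE $-k''+2k=\delta_0$ (or a partial Fourier transform), and the bounds and perturbation stability from the classical modified Bessel asymptotics; the paper instead integrates the heat-kernel representation horizontally, derives the two-sided bounds by a Laplace-method estimate on the heat integral, and gets perturbation stability from the Harnack bound $|\nabla\log\mathcal K|\leq C$ on $|x|\geq1$.
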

\begin{proof}
Horizontal integration gives
$\int_0^\infty(4\pi t)^{-1/2}e^{-2t-h^2/(4t)}dt$. This is the
one-dimensional Green function whose derivative has jump $-1$ at zero,
and proves
\eqref{eq:r11-exact-horizontal-mass}. For radial distance $r\geq1$, put $t=rs$ in the full heat formula.
The phase $2s+(4s)^{-1}$ has minimum $a$, quadratic near
$s=(2a)^{-1}$, and grows linearly at infinity and as $1/s$ at zero.
The resulting Gaussian and complementary tail bounds give
\begin{equation}\label{eq:r11-kernel-two-sided}
 cr^{-m/2}e^{-ar}\leq\mathcal K(r)\leq Cr^{-m/2}e^{-ar},\qquad r\geq1.
\end{equation}
For $R=\ell^{2/3}$, the inequalities
$\sqrt{h^2+|\tau|^2}-h\geq c|\tau|^2/h$ when $R<|\tau|\leq h$,
and $\geq c|\tau|$ when $|\tau|>h$, make the omitted mass
$O(e^{-ah}\operatorname{poly}(\ell)e^{-c\ell^{1/3}})$.
On $|\tau|\leq c_0\sqrt\ell$, instead, $|(\tau,h)|\leq\ell+C$;
the lower kernel bound times the disk volume proves
\eqref{eq:massive-coarse-disk}. Finally Harnack and interior derivative
estimates for $(-\Delta+2)\mathcal K=0$ give
$|\nabla\log\mathcal K|\leq C$ for $|x|\geq1$, proving perturbation stability.
\end{proof}

\begin{lemma}
\label{lem:r9-zero-baseline}
Let $\Sigma=\{u=0\}$ and $\Sigma_g=\{u=0,D<\delta\}$, $\delta\leq1/4$.
Every good zero has a unique smooth zero graph in a universal ball
$B_{r_*}$, with uniform fixed-order derivatives and area at most $Cr_*^m$.
On $\Sigma_{\rm reg}$,
\[
 H=\frac{\partial_\nu D}{2(1-D)};
 \qquad a^2\leq CD,\quad |H|+|\nabla_\Sigma H|\leq CD
 \quad\hbox{on }\Sigma_g.
\]
For $r\geq1$,
\[
 \mathcal H^m(\Sigma_g\cap B_r(x))\leq CE(u;B_{r+1}(x))
 \leq C(1+r)^{m+1}.
\]
Every zero, including a singular zero, has energy at least $c$ in its
$r_*$-ball. Thus every unit-separated zero family has at most
$CE(u;B_{r+1}(x))$ points in $B_r(x)$.
\end{lemma}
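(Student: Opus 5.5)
The plan is to work near a good zero $z\in\Sigma_g$, where $D(z)<\delta\le1/4$, so $|\boldsymbol p|\geq\sqrt3/2$ on a fixed ball around $z$; this ball is fixed once and for all by \eqref{eq:audited-relative-defect}, since $D$ changes by at most a factor $e^{Cr}$ on $B_r(z)$. On that ball $\nabla u=q\boldsymbol p/\sqrt2$ with $q\geq c$ (because $|u|$ stays below $1$ near a zero: $|\nabla\log q|\le\sqrt2$ and $q(z)=1$). The implicit function theorem then gives a unique zero graph over $T_z\Sigma$ in a universal ball $B_{r_*}(z)$. The uniform $C^k$ bounds on $U$ from the first step of the proof of Lemma~\ref{lem:audited-relative-defect} give fixed-order derivative bounds for this graph. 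Comparing with the projection onto the tangent plane gives the area bound $Cr_*^m$.

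For the curvature identity I would use $\Sigma=\{U=0\}$ and $\nu=\boldsymbol p/|\boldsymbol p|$. The standard formula
\[
 H=\operatorname{div}\nu=\frac{\Delta U-\nabla^2U(\nu,\nu)}{|\boldsymbol p|}
\]
applies. On $\Sigma$ we have $u=0$, so $\Delta U=-\sqrt2uD=0$. Moreover $\partial_\nu D=-2\nabla^2U(\boldsymbol p,\nu)=-2|\boldsymbol p|\nabla^2U(\nu,\nu)$. Substituting this and $|\boldsymbol p|^2=1-D$ gives $H=\partial_\nu D/(2(1-D))$. The second fundamental form is $\nabla^2U|_{T\Sigma}/|\boldsymbol p|$, so $|A|^2\le C|\nabla^2U|^2\le CD$ by \eqref{eq:audited-relative-defect}. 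Since $|\nabla D|\le CD$, the identity gives $|H|\le CD$. Differentiating the identity tangentially and using $|\nabla^2D|\le C_{r,2}D$ from \eqref{eq:r3-fixed-defect-jets} yields $|\nabla_\Sigma H|\le CD$.

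For the area bound, I would cover $\Sigma_g\cap B_r(x)$ by a maximal $r_*/2$-separated family of good zeros $\{z_i\}$. Each $B_{r_*}(z_i)$ carries area at most $Cr_*^m$, and the half-balls are disjoint. Thus $\mathcal H^m\le C\cdot\#\{z_i\}$, and the count reduces to the energy lower bound for zeros. Every zero $z$ has energy at least $c$ in $B_{r_*}(z)$: since $|\nabla u|\le C$ (elliptic bounds for bounded solutions), $|u|\le1/2$ on $B_{\rho}(z)$ for a universal $\rho$. There $W(u)\ge 9/64$, which gives energy $\ge c\rho^n$. This argument needs no regularity at $z$, so it also covers singular zeros. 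Disjointness of the balls $B_{r_*/2}(z_i)\subset B_{r+1}(x)$ (taking $r_*\le1$) bounds the count by $CE(u;B_{r+1}(x))$. The cruder bound $E\le C(1+r)^{m+1}$ follows from $e_u\le1/2$. The same packing argument, applied with unit separation and balls of radius $\min(r_*,1/2)$, gives the last assertion.

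The main obstacle I expect is the step from the local graph to the curvature and $|\nabla_\Sigma H|$ estimates. It requires care in keeping constants depending only on $m$, in particular the choice of $r_*$ uniform in $U(z)$. That uniformity comes from the translation-invariant derivative bounds in Lemma~\ref{lem:audited-relative-defect}.
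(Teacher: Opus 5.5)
Your proposal is correct and follows essentially the paper's route. The graph comes from the implicit function theorem at good zeros, using the lower gradient bound and the uniform derivative bounds. The curvature identities come from $\Delta U=0$ on $\Sigma$ and $\nabla D=-2\nabla^2U\,\boldsymbol p$, together with the relative defect jets. The area and counting bounds come from a maximal separated cover plus a fixed-ball energy lower bound at every zero. You obtain that energy bound from a uniform gradient bound and $W(u)\geq 9/64$ where $|u|\leq 1/2$; the paper instead uses $q(z)=1$, $|\nabla\log q|\leq\sqrt2$ and $e_u\geq q^2/4$. One small slip: an $r_*/2$-separated family only has disjoint balls of radius $r_*/4$, not $r_*/2$. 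Either use those smaller balls, since your energy lower bound holds at any fixed radius, or use bounded overlap as the paper does.
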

\begin{proof}
At good zeros, $|\nabla u|\geq\sqrt{3/8}$. The uniform derivative
estimates for entire solutions keep one directional derivative positive
on a fixed ball. Thus the implicit function theorem gives a unique graph
and the stated derivative bounds.
On a regular zero, $\nabla^2U|_{T\Sigma}=|\boldsymbol p|A$ and
$\Delta U=0$, so
$H=-\nabla^2U(\nu,\nu)/|\boldsymbol p|$ and
$\partial_\nu D=-2|\boldsymbol p|\nabla^2U(\nu,\nu)$.
The relative derivative estimates give the assertions on the good set.
On the full regular set, the bound obtained in this way is only
$a^2\leq CD/(1-D)$. At every zero $q=1$;
$|\nabla\log q|\leq\sqrt2$ and $e_u\geq q^2/4$ give the fixed-ball
energy bound. A maximal $r_*/4$-separated good family covers the good
surface by its graph balls, with bounded overlap of smaller energy
balls in $B_{r+1}$. Summation proves the area estimate. Disjoint smaller
balls give the unit-separated counting estimate for arbitrary zeros.
\end{proof}

\begin{lemma}
\label{lem:r9-density-baseline}
Set $M(R,x)=R^{-m}E(u;B_R(x))$ and $K(R)=1+\sup_xM(R,x)$, $R\geq1$.
For each fixed $x$, $M(R,x)$ and $K(R)$ are nondecreasing in $R$, and
\[
 \begin{gathered}
 K(R)\leq1+CR,\quad K(s)\leq C(s/R)K(R)\ (s\geq R\geq1),
 \\
 E(u;B_s(x))\leq K(R)s^m\ (0<s\leq R).
 \end{gathered}
\]
If $K$ is unbounded, every sufficiently large $R$ admits $x_R$ with
$E(u;B_R(x_R))\geq K(R)R^m/4$, and $L_R=R/\sqrt{K(R)}$ obeys
$L_R\geq c\sqrt R\to\infty$, $L_R/R\to0$ and $L_R/\log R\to\infty$.
\end{lemma}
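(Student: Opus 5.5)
The monotonicity claims come from the standard Allen--Cahn monotonicity formula; everything else is elementary covering and a choice of near-maximizers.

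\textbf{Monotonicity and the bound $K(R)\leq1+CR$.} For a bounded entire solution the discrepancy $\tfrac12|\nabla u|^2-W(u)$ is nonpositive. This is Modica's estimate, equivalently $D\geq0$ in \eqref{eq:audited-transformed}. The monotonicity formula then gives that $R\mapsto R^{-m}E(u;B_R(x))$ is nondecreasing; this is presumably the formula \eqref{eq:audited-monotonicity} referred to in Remark~\ref{rem:s1-observation-centers}. A supremum over $x$ of nondecreasing functions is nondecreasing, so $K$ is nondecreasing. Since $e_u\leq\tfrac12$ by \eqref{eq:audited-q}, we get $E(u;B_R(x))\leq CR^{m+1}$, and hence $K(R)\leq1+CR$.

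\textbf{The comparison $K(s)\leq C(s/R)K(R)$ for $s\geq R\geq1$.} Cover $B_s(x)$ by $N\leq C(s/R)^{n}$ balls of radius $R$. Each has energy at most $(K(R)-1)R^m$, so
\[
E(u;B_s(x))\leq C(s/R)^{m+1}R^mK(R)=C(s/R)\,s^mK(R).
\]
Dividing by $s^m$ gives the claim.

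\textbf{Small radii $0<s\leq R$.} Monotonicity at the fixed center gives $s^{-m}E(u;B_s(x))\leq R^{-m}E(u;B_R(x))\leq K(R)$. I would check that the edge case $s<1$ needs nothing beyond this centered monotonicity. That is the only subtle point in this part.

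\textbf{Near-maximizing centers when $K$ is unbounded.} Suppose $K$ is unbounded. Then $K(R)\to\infty$, because $K$ is monotone. For large $R$ we have $K(R)-1\geq K(R)/2$. By the definition of the supremum there is $x_R$ with $M(R,x_R)\geq(K(R)-1)/2\geq K(R)/4$, which is the required lower bound $E(u;B_R(x_R))\geq K(R)R^m/4$.

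\textbf{Properties of $L_R=R/\sqrt{K(R)}$.}
\begin{itemize}
\item From $K(R)\leq1+CR\leq C'R$ we get $L_R\geq c\sqrt R\to\infty$.
\item Since $K(R)\to\infty$, we get $L_R/R=K(R)^{-1/2}\to0$.
\item Since $c\sqrt R/\log R\to\infty$, we get $L_R/\log R\to\infty$.
\end{itemize}

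The main (mild) obstacle is making sure the monotonicity formula applies with a nonpositive discrepancy for entire solutions, so that the boundary term has the right sign. This is supplied by Modica's inequality $0\leq D$ quoted above.
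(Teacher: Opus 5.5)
Your proposal is correct and follows essentially the same route as the paper. Modica's inequality makes the discrepancy term in the stress-tensor monotonicity identity nonnegative. The bound $e_u\leq\tfrac12$ gives $K(R)\leq1+CR$, and covering $B_s$ by $C(s/R)^{m+1}$ balls of radius $R$ gives the comparison between scales. A near-maximizer with $M(R,x_R)\geq(K(R)-1)/2\geq K(R)/4$ then yields the remaining scale conclusions. The edge case $s<1$ that you flag is harmless, since the monotonicity identity holds for every radius.
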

\begin{proof}
The divergence-free stress tensor $e_u\delta_{ij}-u_iu_j$ gives the
classical monotonicity identity~\cite{Modica1989}
\begin{equation}\label{eq:audited-monotonicity}
 RE'(B_R(x))-mE(B_R(x))
 =R\int_{\partial B_R(x)}|\partial_ru|^2
 +\int_{B_R(x)}(W(u)-\tfrac12|\nabla u|^2)\geq0.
\end{equation}
The last inequality follows from Modica's estimate. Integrating $e_u\leq1/2$ over a ball proves the first density bound.
For the other bound on $K$, cover a ball of radius $s$ by
$C(s/R)^{m+1}$ balls of radius $R$ and sum their energies. If $K\to\infty$, choose
$M(R,x_R)\geq(K(R)-1)/2\geq K(R)/4$; all three scale conclusions follow.
\end{proof}

\subsection{The limiting normal equation}
Use $\nu=\nabla U/|\nabla U|$, $A=d\nu$, $a=|A|$ and $H=\operatorname{tr}A$
on the regular zero set, and exclude the planar case. Put
\[
 G(s)=1-g(s)^2,\quad\sigma=\int_{\mathbb R}G^2\,ds=\frac{4\sqrt2}3,
 \quad I_P(z)=\sigma^{-1}\int_{-P}^Pq(z+s\nu)^3D(z+s\nu)\,ds.
\]
The estimates for the normal equation and the lifted test function are based on
\cite[Sections 8.1--8.2]{OpenAI2026}. We include the cutoff near the
physical core in the stability argument below. Fixed-window estimates
may depend on $m$ and on the specified windows and tolerances. We choose
these parameters before the defect threshold.

\begin{lemma}
\label{lem:normalized-defect-limit}
For zeros $z_j$ with $\epsilon_j=D_j(z_j)\downarrow0$, rotate and translate
to $z_j=0$, $\nu_j(0)=e_n$. A subsequence converges smoothly locally:
\begin{gather*}
 U_j\to s,\quad D_j/\epsilon_j\to\mathcal D>0,\quad
 \nabla^2U_j/\sqrt{\epsilon_j}\to\mathcal T(x'),\\
 V_j=(\boldsymbol p_j-e_n)/\sqrt{\epsilon_j}\to V(x'),\quad
 W_j=q_j(1-\boldsymbol p_j\cdot e_n)/\epsilon_j\to W\geq0.
\end{gather*}
Here $\mathcal T$ is tangential, harmonic and trace-free,
$V$ is tangential, $\nabla'V=\mathcal T$, $V(0)=0$,
$\mathcal D(0)=1$, $W(0)=1/2$. With $F=G\mathcal D$,
\begin{equation}\label{eq:normalized-model-sources}
 L_0F=2G|\mathcal T|^2,\quad L_0W=G^2|V|^2,
 \qquad L_0=-\Delta+2-G(s).
\end{equation}
All zeros in each fixed ball form one graph tending to $s=0$, and
\begin{equation}\label{eq:normalized-model-traces}
 \begin{aligned}
 a_j(0)^2/\epsilon_j&\to|\mathcal T(0)|^2,\\
 H_j(0)/\epsilon_j&\to W_s(0)=F_s(0)/2,\\
 \nabla_{\Sigma_j}H_j(0)/\epsilon_j&\to\nabla'F_s(0)/2,\\
 I_{P,j}(0)/\epsilon_j&\to\sigma^{-1}\int_{-P}^PG^2F(0,s)\,ds\\
 &=2\sigma^{-1}\int_{-P}^PG^2W(0,s)\,ds>0
 \end{aligned}
\end{equation}
for each fixed $P>0$.
\end{lemma}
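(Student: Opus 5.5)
The plan is a blow-up compactness argument that rests entirely on the relative jet bounds of Lemma~\ref{lem:audited-relative-defect}. The limit objects are identified from exact algebraic identities, namely $D=1-|\boldsymbol p|^2$, $\Delta U=-\sqrt2uD$, the $q$-equation in \eqref{eq:audited-q}, and the directional source identity of Lemma~\ref{lem:positive-directional-source} with $n_0=e_n$. All of these are divided by the correct power of $\epsilon_j$ before passing to the limit.

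\emph{Step 1: compactness.} After the normalization $z_j=0$, $\nu_j(0)=e_n$, we have $U_j(0)=0$ and $|\nabla U_j|\le1$. By \eqref{eq:r3-fixed-defect-jets}, on every fixed ball the families $D_j/\epsilon_j$ and $\nabla^2U_j/\sqrt{\epsilon_j}$ are bounded in every $C^k$. Hence $\nabla U_j\to e_n$ locally uniformly, and $U_j\to s:=x_n$ smoothly. The lower Harnack bound $D_j\ge e^{-Cr}\epsilon_j$ makes the limit $\mathcal D$ positive, with $\mathcal D(0)=1$.

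For $V_j$, note that $\boldsymbol p_j(0)=\sqrt{1-\epsilon_j}\,e_n$, so $V_j(0)=O(\sqrt{\epsilon_j})\to0$. Also $\nabla V_j=\nabla^2U_j/\sqrt{\epsilon_j}$ is bounded in $C^k$. A diagonal subsequence therefore gives the limits $V$ and $\mathcal T$ with $\nabla V=\mathcal T$ and $V(0)=0$. Since $u_j\to G$-profile data, $q_j\to G(s)$, and $C^1$ convergence of $U_j$ to $x_n$ together with the implicit function theorem shows that the zeros in each fixed ball form one graph converging to $\{s=0\}$.

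\emph{Step 2: structure of $V$ and $\mathcal T$.} This is the step I expect to be the main obstacle, because the signs and orders of the error terms must be tracked carefully. Expanding $D_j=1-|e_n+\sqrt{\epsilon_j}V_j|^2$ gives
\[
 V_j\cdot e_n=-\tfrac{\sqrt{\epsilon_j}}2\bigl(D_j/\epsilon_j+|V_j|^2\bigr)\to0,
\]
so $V$ is tangential. Hence $\mathcal T e_n=\partial_nV$ is both tangential and, by the symmetry of $\mathcal T$, equal to $\nabla(V\cdot e_n)=0$. So $\mathcal T$ is tangential and $V=V(x')$.

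Trace-freeness follows from $\operatorname{tr}\nabla^2U_j/\sqrt{\epsilon_j}=-\sqrt2u_jD_j/\sqrt{\epsilon_j}=O(\sqrt{\epsilon_j})$. For harmonicity, I would differentiate $\Delta U=-\sqrt2uD$ twice and divide by $\sqrt{\epsilon_j}$. The dangerous terms contain $\nabla D_j=-2\nabla^2U_j\,\boldsymbol p_j=O(\sqrt{\epsilon_j})$. After division they converge to multiples of $\mathcal T e_n=0$ and its derivatives, while all remaining terms are $O(\sqrt{\epsilon_j})$. Thus $\Delta\mathcal T=0$, and then $\Delta' V=\operatorname{div}'\mathcal T=\nabla'\operatorname{tr}\mathcal T=0$.

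\emph{Step 3: the model equations.} From $1-\boldsymbol p_j\cdot e_n=\tfrac{\epsilon_j}2(D_j/\epsilon_j+|V_j|^2)$ we obtain $W=\tfrac12G(\mathcal D+|V|^2)\ge0$ and $W(0)=1/2$. Dividing \eqref{eq:r12-positive-directional-source} by $\epsilon_j$ gives $(-\Delta+2)W=GW+G^2|V|^2$, which is $L_0W=G^2|V|^2$.

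The limit of the $q$-equation gives $-G''+2G-3G^2=0$, hence $L_0G=2G^2$. Since $V$ is harmonic in $x'$, $\Delta'|V|^2=2|\mathcal T|^2$. Therefore
\[
 L_0(G|V|^2)=2G^2|V|^2-2G|\mathcal T|^2,
\]
and using $F=2W-G|V|^2$ we get $L_0F=2G|\mathcal T|^2$. This proves \eqref{eq:normalized-model-sources}.

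\emph{Step 4: the traces.} On the zero graph we have $a^2=|\nabla^2U|_{T\Sigma}|^2/|\boldsymbol p|^2$, which gives the first limit in \eqref{eq:normalized-model-traces}. By Lemma~\ref{lem:r9-zero-baseline}, $H=\partial_\nu D/(2(1-D))$, so $H_j/\epsilon_j\to\partial_s\mathcal D/2$. This equals $F_s(0)/2$ because $G'(0)=0$ and $G(0)=1$. It also equals $W_s(0)$, since $\partial_s(G|V|^2)=G'|V|^2$ vanishes at $s=0$. The same computation, applied after tangential differentiation along the converging graphs, gives the limit of $\nabla_{\Sigma_j}H_j$.

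Finally, $q_j^3D_j/\epsilon_j\to G^3\mathcal D=G^2F$ uniformly on the normal segments, and $F(0,s)=2W(0,s)$ because $V(0)=0$. The integral is positive since $W\ge0$ is continuous and $W(0)=1/2$.
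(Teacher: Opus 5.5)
Your argument is correct. Its skeleton matches the paper: relative-jet compactness, then identification of the limits through exact identities, including $2(1-\boldsymbol p\cdot e_n)=D+|\boldsymbol p-e_n|^2$ for the compactness of $W_j$ and for $F=2W-G|V|^2$.

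The genuine difference is how you obtain $L_0F=2G|\mathcal T|^2$. The paper divides the exact defect equation $(-\Delta+2\sqrt2u\boldsymbol p\cdot\nabla+2q|\boldsymbol p|^2)D=2|\nabla^2U|^2$ by $\epsilon_j$ and conjugates by $G$, using $G'=-\sqrt2gG$ and $G''=(2-3G)G$. You never touch the defect PDE. Instead you derive the $F$ equation from the $W$ equation through $F=2W-G|V|^2$, $L_0G=2G^2$ and $\Delta'|V|^2=2|\mathcal T|^2$.

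What each buys: your route is shorter and needs only the directional-source identity. However, it makes the $F$ equation depend on $\Delta'V=\nabla'\operatorname{tr}\mathcal T=0$, that is, on trace-freeness, and on the limiting structure of $V$. The paper's conjugation is a direct pointwise computation that survives away from the limit with an explicit error term. That is exactly how it is reused for \eqref{eq:r5-even-source} in the tangent-ball theorem.

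Similarly, you get tangentiality of $\mathcal T$ from $V\cdot e_n\to0$ and the symmetry of $\nabla V_j$. The paper uses $(\nabla^2U)\boldsymbol p=-\nabla D/2$ and the transport identity for $(\boldsymbol p\cdot\nabla)\nabla^2U$. Both are fine.

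One small correction concerns the harmonicity step. The terms containing $\nabla D_j$ are not merely $O(\sqrt{\epsilon_j})$ but $O(\epsilon_j)$, by \eqref{eq:r3-fixed-defect-jets}. Hence $\nabla^2(u_jD_j)=O(\epsilon_j)$ in every $C^k$, and $\Delta\mathcal T=0$ follows without invoking $\mathcal Te_n=0$.
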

\begin{proof}
The relative derivative estimates give local compactness and $U_j\to s$.
The identities
\[
 (\nabla^2U)\boldsymbol p=-\nabla D/2,\qquad
 (\boldsymbol p\cdot\nabla)\nabla^2U=-(\nabla^2U)^2-\nabla^2D/2
\]
show that $\mathcal T$ is tangential and independent of $s$; differentiating
$\Delta U=-\sqrt2uD$ gives its trace and harmonicity.
Divide the exact defect equation
$(-\Delta+2\sqrt2u\boldsymbol p\cdot\nabla+2q|\boldsymbol p|^2)D
=2|\nabla^2U|^2$ by $\epsilon_j$. Conjugation by $G$, using
$G'=-\sqrt2gG$, $G''=(2-3G)G$, gives the first model equation.
The tangential components of $\boldsymbol p_j$ are $O(\sqrt{\epsilon_j})$;
$p_{j,n}=\sqrt{1-D_j-|\boldsymbol p_j^\top|^2}=1+O(\epsilon_j)$
locally in $C^k$ for each fixed $k$. Symmetry gives
$\partial_s\boldsymbol p_j=O(\epsilon_j)$, hence compactness of $W_j$ and
the assertions on $V$. The equation for $w$ in Lemma~\ref{lem:positive-directional-source} gives $L_0W=G^2|V|^2$.
The graph convergence follows from the implicit function theorem. The
curvature limits then follow from the zero-level identities established
above. Finally,
$qD=2q(1-p_n)-q|\boldsymbol p-e_n|^2$ gives
$F=2W-G|V|^2$; $V(0,s)=0$ yields the equal, positive column masses.
\end{proof}

\subsubsection{Estimates for the positive linearized operator}
\label{sec:r3-mean-absorption}\label{sec:sharp-mean-transfer}
\begin{theorem}
\label{thm:positive-even-source}\label{lem:r3-horizontal-positive}\label{lem:sharp-mean-positive}
Suppose $F\geq0$ is smooth on $\mathbb R^m\times\mathbb R$,
$L_0F=\mathcal S\geq0$, and $\mathcal S$ is even in $s$. The zero-Dirichlet inverses on $B_j^m\times(-j,j)$ increase
to the minimal nonnegative potential $E\leq F$; $E$ is even and
$F-E$ is nonnegative $L_0$-harmonic. Put
\[
 M_P(F)=\sigma^{-1}\int_{-P}^PG^2F(0,s)\,ds,\quad
 T_P=\tanh(P/\sqrt2),\quad h_*=(2\sqrt2)^{-1},\quad C_P=h_*/T_P.
\]
For every $P>0$ the sharp positive functionals are
\begin{equation}\label{eq:sharp-mean-positive}
 \Lambda_{P,\pm}(F)=C_PM_P(F)\mp F_s(0)/2\geq0.
\end{equation}
If also $\mathcal S\geq2G\mathcal B$ with
$\mathcal B=\mathcal B(x')\geq0$ subharmonic on $\mathbb R^m$, then
\begin{equation}\label{eq:positive-subharmonic-source}
 M_P(F)\geq\beta_P\mathcal B(0),\qquad
 \beta_P=[1-(1-T_P)^2(T_P+2)]_+.
\end{equation}
For every $S>0$,
\begin{align}
 \tfrac12|\nabla'F_s(0)|&\leq\frac{3\sqrt{m+2}}{8S}
                  \fint_{B_S^m}F(x',0)\,dx',\label{eq:r3-positive-horizontal}\\
 \tfrac14|\nabla'F_s(0)|^2&\leq\frac{9(m+2)}{64\omega_mS^{m+2}}
                  \int_{B_S^m}F(x',0)^2\,dx'.\label{eq:r3-positive-horizontal-L2}
\end{align}
The modes $\varphi_\pm=e^{\pm\sqrt2s}(2\mp g(s))$, $d_0=3/\sqrt2$, satisfy
\begin{equation}\label{eq:r12-Jost-normal-mass}
 L_0\varphi_\pm=0,\quad\varphi_\pm'(0)=\pm d_0,\quad
 M_P(\varphi_\pm)=3T_P,\quad\int_{\mathbb R}G^2\varphi_\pm=3\sigma.
\end{equation}
If $F\geq c_+\varphi_++c_-\varphi_-$ globally, $c_\pm\geq0$, then
\begin{equation}\label{eq:positive-mode-subtraction}
 \tfrac12|F_s(0)-d_0(c_+-c_-)|+\tfrac{d_0}2(c_++c_-)
 \leq C_PM_P(F).
\end{equation}
For each $\varepsilon>0$, one common finite $P_\varepsilon$ makes
$C_P\leq h_*+\varepsilon$ and $\beta_P\geq1-\varepsilon$ for all
$P\geq P_\varepsilon$. No growth bound on $F$ or $\mathcal S$ is assumed.
\end{theorem}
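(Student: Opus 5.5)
The plan is to reduce every statement to one-dimensional facts about the operator $\ell=-\partial_s^2+2-G(s)$ together with horizontal averaging. The basic observation is that $\ell G=0$, since $G''=(2-3G)G$ implies $-G''+2G-G^2=G^2\cdot0$; more precisely $-G''+(2-G)G=3G^2-G^2\cdot 1$ must be checked. In fact $G$ is the kernel element of $-\partial_s^2+2-3G$, so for $L_0$ itself the natural positive supersolution is $G$, with $L_0G=2G^2>0$. Using the heat representation exactly as in Lemma~\ref{lem:positive-directional-source}, the Dirichlet inverses on $B_j^m\times(-j,j)$ are monotone, bounded by $F$ through the maximum principle with $F\geq0$ as a supersolution, and they converge to the minimal potential $E$. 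Evenness of $\mathcal S$ and uniqueness of the Dirichlet problem on symmetric boxes give that $E$ is even. It follows that $F-E\geq0$ and $L_0(F-E)=0$.

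For \eqref{eq:sharp-mean-positive} I will split $F=E+h$. Since $E_s(0)=0$ and $M_P(E)\geq0$, it suffices to treat nonnegative $L_0$-harmonic $h$. I will average $h$ over horizontal spheres centred at $0$ and use Martin-type reasoning in $s$: the spherical means $\bar h(r,s)$ are again $L_0$-harmonic and nonnegative, and a positive $L_0$-harmonic function of $s$ alone is a nonnegative combination of the two modes $\varphi_\pm$. The one-dimensional space $\ker(-\partial_s^2+2-G)$ is spanned by $\varphi_\pm$, which is verified by direct differentiation, and only these grow like $e^{\pm\sqrt2 s}$.

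I would try to prove that the extremal rays of the cone $\{h\geq0,\ L_0h=0\}$ are of the form $e^{\xi\cdot x'}\psi_\xi(s)$, and that only $\xi=0$ matters at the origin after averaging. Concretely, I will evaluate $M_P$ and $h_s(0)$ on $\bar h(0,\cdot)=h(0,\cdot)$, because the mean over a sphere of radius $0$ is just the value. Then I will show that $h(0,\cdot)$ is a one-dimensional positive supersolution dominating $c_+\varphi_++c_-\varphi_-$. The ratio computation rests on $\varphi_\pm'(0)=\pm d_0$ and $M_P(\varphi_\pm)=3T_P$, both obtained by integrating $G^2\varphi_\pm$ by parts using $(G^2\varphi)'$ identities. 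This gives $d_0/(2\cdot3T_P)=h_*/T_P=C_P$, which is sharp on the modes.

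The remaining estimates follow the same pattern. The mode subtraction \eqref{eq:positive-mode-subtraction} follows by applying \eqref{eq:sharp-mean-positive} to $F-c_+\varphi_+-c_-\varphi_-$, which is $L_0$-supersolution-nonnegative, and adding the exact mode identities. For \eqref{eq:positive-subharmonic-source}, I will compare with $\mathcal B(0)\Phi$, where $\Phi$ solves $\ell\Phi=2G$ evenly. The candidate is $\Phi=1-(\cdots)$; computing $M_P(\Phi)$ should produce $1-(1-T_P)^2(T_P+2)$. The mean-value inequality for subharmonic $\mathcal B$ transfers the bound to the origin via horizontal averaging of $E$. The horizontal gradient bounds come from differentiating the averaged representation in $x'$, with the constant $3\sqrt{m+2}/(8S)$ coming from the optimal gradient of the harmonic-type kernel on $B_S^m$ combined with $h_*$, followed by Cauchy--Schwarz for the $L^2$ version. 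The limits $C_P\to h_*$ and $\beta_P\to1$ are immediate from $T_P\to1$.

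I expect the main obstacle to be the positive-harmonic step. Proving that a nonnegative $L_0$-harmonic $h$ without growth bounds satisfies $|h_s(0)|\leq 2C_PM_P(h)$ needs a Liouville/Martin classification or a Harnack-based averaging argument. I would first try horizontal Fourier–Laplace analysis, namely Widder-type representation of positive solutions as integrals over exponentials $e^{\xi\cdot x'}$, and then show that each $\psi_\xi$ obeys the same one-dimensional inequality.
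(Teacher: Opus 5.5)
You handle the minimal potential $E$, the mode identities, the mode subtraction and the subharmonic source much as the paper does (radially symmetric cylinder Green kernel plus the mean-value inequality, then pairing the bounded even solution of $\ell f=2G$ with $G$). One correction there: the last step gives an inequality, not an exact value. From $\int_{\mathbb R}G^2f=\sigma$ and $0\le f\le2$ you get $M_P(f)\ge\beta_P$.

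The genuine gap is the core claim \eqref{eq:sharp-mean-positive} for a nonnegative $L_0$-harmonic $h$, on which \eqref{eq:r3-positive-horizontal}, \eqref{eq:r3-positive-horizontal-L2} and \eqref{eq:positive-mode-subtraction} also depend. Your stated reduction cannot work, for three reasons.

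First, the trace $h(0,\cdot)$ is not a one-dimensional solution. It satisfies $\ell h(0,\cdot)=\Delta_{x'}h(0,\cdot)$, whose sign you cannot control without already knowing the structure of $h$.

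Second, and more seriously, even with a nonnegative source, a positive one-dimensional supersolution dominating $c_+\varphi_++c_-\varphi_-$ need not obey the inequality. Since $\varphi_+'\varphi_--\varphi_+\varphi_-'=4d_0$, for $0<t<P$ the Green function $g(s)=\varphi_+(\min\{s,t\})\varphi_-(\max\{s,t\})/(4d_0)$ satisfies $\ell g=\delta_t\ge0$ and $g'(0)=\varphi_-(t)/4$. Because $\varphi_+/\varphi_-$ is increasing, $g\le\varphi_-(t)\varphi_+/(4d_0)$, strictly on $(t,P]$. Hence $M_P(g)<3T_P\varphi_-(t)/(4d_0)$, and so $\Lambda_{P,+}(g)<\varphi_-(t)/8-\varphi_-(t)/8=0$. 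Only an even source, as carried by $E$, is harmless.

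Third, the claim that ``only $\xi=0$ matters at the origin after averaging'' is false. Horizontal spherical averaging does not remove the modes $e^{b\cdot x'}$, and at $x'=0$ they contribute normal profiles different from $\varphi_\pm$.

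What is needed, and what the paper does, is an extreme-point argument on the compact convex set $\mathcal K=\{h\ge0:\ L_0h=0,\ h(0)=1\}$. Horizontal Harnack chains show that an extreme point has horizontal translates proportional to itself. Hence $h=e^{b\cdot x'}f(s/\sqrt2)$ with $-f''-2\operatorname{sech}^2y\,f=-a^2f$ and $a^2=4-2|b|^2$. The ground-state identity forces $1\le a\le2$, and then $f=\psi_a^\pm=e^{\pm ay}(a\mp\tanh y)$, or $f=\operatorname{sech}y$ at $a=1$.

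On these profiles, as functions of $y=s/\sqrt2$, \eqref{eq:sharp-mean-positive} becomes the finite-window inequality $M_P(\psi_a^\pm)\ge(a^2-1)T_P$ for \emph{every} $a\in[1,2]$. Your computation with $\varphi_\pm$ covers only the endpoint $a=2$, where equality holds. The rest of the range needs its own argument: the paper shows that $K_Y(a)=M_P(\psi_a^+)/a$ is convex with $K_Y(2)=3T_P/2$ and $K_Y'(2)\le9T_P/8$, and uses the tangent line at $a=2$. Krein--Milman then extends these continuous linear inequalities from extreme points to all of $\mathcal K$.

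The horizontal bounds come from the same classification. One uses $|h_s(0)|\le3h(0)/(2\sqrt2)$ (because $(a^2-1)/a\le3/2$), $\nabla'h_s(0)=b\,h_s(0)$, and $\fint_{B_S^m}e^{b\cdot x'}\,dx'\ge1+|b|^2S^2/(2(m+2))$. Your appeal to an optimal gradient of a harmonic-type kernel does not produce the constant $3\sqrt{m+2}/(8S)$.

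Your fallback, a Widder-type representation followed by a one-dimensional inequality for each $\psi_\xi$, points in this direction. But until you identify the admissible decay rates $a\in[1,2]$ and prove the finite-window inequality on that whole range, the central step remains unproved.
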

\begin{proof}
Since $2-G\geq1$, the comparison principle gives even potentials
$0\leq E_j\leq F$ increasing to $E$. Comparing with any nonnegative
supersolution also proves minimality. It follows that $h=F-E\geq0$
is harmonic and $E_s(x',0)=0$.

For \eqref{eq:positive-subharmonic-source}, the finite-cylinder Green
kernel at $(0,s)$ is horizontally radial. Spherical averaging therefore
gives $E_j(0,s)\geq\mathcal B(0)f_j(0,s)$, where $L_0f_j=2G$ with
zero boundary data. The increasing limit $0\leq f\leq2$ is one dimensional
by uniqueness of bounded solutions. Pairing its equation with
$(-\partial_s^2+2-G)G=2G^2$ yields $\int G^2f=\sigma$. Hence
$1-M_P(f)\leq2\sigma^{-1}\int_{|s|>P}G^2=(1-T_P)^2(T_P+2)$.

It remains to prove the estimates for positive harmonic functions.
The normalized convex set
$\mathcal K=\{h\geq0:L_0h=0,\ h(0)=1\}$ is compact in the
locally smooth topology by Harnack and interior estimates. Since the
coefficients are horizontally translation invariant and $1\leq2-G\leq2$,
horizontal Harnack chains give $h(x'+t,s)\leq C_th(x',s)$ with
$C_t$ independent of $(x',s)$ and $h\in\mathcal K$. At an extreme point, subtracting a sufficiently small multiple of
any horizontal translate leaves a nonnegative harmonic function.
Extremality therefore makes the translate proportional to $h$. The
proportionality factors are positive continuous characters. Hence
\[
 h=e^{b\cdot x'}f(s/\sqrt2),\qquad
 -f''-2\operatorname{sech}^2y\,f=-a^2f,\qquad a^2=4-2|b|^2.
\]
The positive-solution ground-state identity, tested on compact
approximations to the eigenfunction $\operatorname{sech}y$, gives
$1\leq a\leq2$. For $a>1$, every positive solution is a nonnegative
combination of $a^{-1}e^{\pm ay}(a\mp\tanh y)$, since the coefficients
have the signs of the dominant asymptotics at the two ends; an extreme
point uses one term. At $a=1$, reduction of order leaves only
$\operatorname{sech}y$: its companion
$\operatorname{sech}y\int_0^y\cosh^2t\,dt$ has opposite end signs.

For $Y=P/\sqrt2$, $T=\tanh Y$, and
$\psi_a^\pm=e^{\pm ay}(a\mp\tanh y)$, the finite-window inequality is
\begin{equation}\label{eq:sharp-mean-ratio}
 M_P(\psi_a^\pm)\geq(a^2-1)T,\qquad1\leq a\leq2.
\end{equation}
Indeed, reflection reduces to $+$ and integration by parts gives
\begin{equation}\label{eq:s8-window-convexity}
 K_Y(a):=\frac{M_P(\psi_a^+)}a
 =\frac98\int_0^Y\operatorname{sech}^4y\cosh(ay)\,dy
 +\frac38\operatorname{sech}^4Y\int_0^Y\cosh(ay)\,dy.
\end{equation}
Thus $K_Y$ is convex, $K_Y(2)=3T/2$, and direct differentiation gives
\begin{equation}\label{eq:s8-window-endpoint-slope}
 K_Y'(2)=\frac98T-\frac3{16}(1-T^2)[(5-T^2)Y+T]\leq\frac98T.
\end{equation}
The tangent bound implies
$K_Y(a)\geq(9a/8-3/4)T\geq(a-a^{-1})T$, the last difference being
$(a-2)(a-4)T/(8a)\geq0$. Since
$|\partial_s\psi_a^\pm(0)|=(a^2-1)/\sqrt2$, this proves the sharp
normal inequality on extreme points.

On these points $|h_s(0)|\leq3h(0)/(2\sqrt2)$ and
$|\nabla'h_s(0)|\leq3|b|h(0)/(2\sqrt2)$, while
\[
 \fint_{B_S^m}e^{b\cdot x'}\geq1+\frac{|b|^2S^2}{2(m+2)},\qquad
 \frac{|b|}{1+|b|^2S^2/[2(m+2)]}\leq\frac{\sqrt{2(m+2)}}{2S}.
\]
These estimates give the first horizontal inequality. For fixed $P,S$,
each signed directional inequality compares continuous linear functionals
on $\mathcal K$. By the Krein--Milman theorem, it holds on all of
$\mathcal K$. Scaling and adding $E$ prove the assertions for $F$.
The second horizontal inequality follows from Cauchy--Schwarz. This
argument extends only functionals on fixed finite windows.

The $a=2$ modes and $\varphi_++\varphi_-=4/G$ give
\eqref{eq:r12-Jost-normal-mass}. Moreover
$\Lambda_{P,+}(\varphi_+)=\Lambda_{P,-}(\varphi_-)=0$ and
$\Lambda_{P,+}(\varphi_-)=\Lambda_{P,-}(\varphi_+)=d_0$,
proving optimality. Apply both functionals to
$F-c_+\varphi_+-c_-\varphi_-$ to obtain
\eqref{eq:positive-mode-subtraction}. Finally
$C_P-h_*=2h_*/(e^{\sqrt2P}-1)$ and
$(1-T_P)^2(T_P+2)\leq12e^{-2\sqrt2P}$; one may take
\[
 P_\varepsilon=\max\{2^{-1/2}\log(1+2h_*/\varepsilon),
                  (2\sqrt2)^{-1}\log(12/\varepsilon)\}.
\]
\end{proof}

\begin{lemma}
\label{lem:r3-normal-mass}\label{prop:sharp-mean-actual}
For every $\epsilon>0$, there is $1\leq P_\epsilon<\infty$ such that,
for each fixed $P\geq P_\epsilon$, there is a threshold
$\delta_{\epsilon,P}=\delta(m,\epsilon,P)>0$ for which every actual zero
with $0<D(z)<\delta_{\epsilon,P}$ satisfies
\[
 cD(z)\leq I_P(z)\leq C_{m,P}D(z),\qquad I_P(z)\geq(1-\epsilon)a(z)^2,
\]
\begin{equation}\label{eq:sharp-mean-actual}
 |H(z)|\leq(h_*+\epsilon)I_P(z).
\end{equation}
\end{lemma}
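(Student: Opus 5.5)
The plan is a compactness/contradiction argument that transfers the model inequalities of Theorem~\ref{thm:positive-even-source} to actual zeros via the blow-up of Lemma~\ref{lem:normalized-defect-limit}.

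Fix $\epsilon>0$. Choose $P_\epsilon\geq1$ from the last assertion of Theorem~\ref{thm:positive-even-source} with tolerance $\epsilon/2$. Then $C_P\leq h_*+\epsilon/2$ and $\beta_P\geq1-\epsilon/2$ for all $P\geq P_\epsilon$.

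Now fix $P\geq P_\epsilon$ and suppose no threshold $\delta_{\epsilon,P}$ exists. Then there are zeros $z_j$ with $\epsilon_j=D(z_j)\downarrow0$ at which one of the four inequalities fails. Normalize as in Lemma~\ref{lem:normalized-defect-limit} and pass to a subsequence. This gives a model triple $(F,\mathcal T,W)$ with $F=G\mathcal D\geq0$ and $L_0F=2G|\mathcal T|^2$. The normalized traces converge by \eqref{eq:normalized-model-traces}:
\[
I_{P,j}(z_j)/\epsilon_j\to M_P(F),\qquad H_j/\epsilon_j\to F_s(0)/2,\qquad a_j^2/\epsilon_j\to|\mathcal T(0)|^2 .
\]
The two-sided bound $cD\leq I_P\leq C_{m,P}D$ comes from smooth local convergence. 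The upper bound is immediate from $\mathcal D$ being locally bounded, via Lemma~\ref{lem:audited-relative-defect}. For the lower bound I would use $\mathcal D(0)=1$ together with the Harnack-type bound $D(x)\geq e^{-Cr}D(z)$ and $q\geq c_P$ on the window. This makes the lower bound uniform, with $c$ depending only on $m$ and no contradiction needed, since $P\geq1$.

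For the mean-curvature bound, the source $\mathcal S=2G|\mathcal T|^2$ is nonnegative and even in $s$, because $\mathcal T$ depends only on $x'$. So \eqref{eq:sharp-mean-positive} applies and gives $|F_s(0)|/2\leq C_PM_P(F)\leq(h_*+\epsilon/2)M_P(F)$. Since $M_P(F)>0$, the limit is strictly better than the tolerance $h_*+\epsilon$. Passing the normalized quantities to the limit therefore contradicts $|H(z_j)|>(h_*+\epsilon)I_P(z_j)$ for large $j$.

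For the curvature bound, take $\mathcal B=|\mathcal T|^2$. It is subharmonic because $\mathcal T$ is harmonic: $\Delta|\mathcal T|^2=2|\nabla\mathcal T|^2\geq0$. Then \eqref{eq:positive-subharmonic-source} gives $M_P(F)\geq(1-\epsilon/2)|\mathcal T(0)|^2$, which again contradicts $I_P<(1-\epsilon)a^2$ in the limit. When $\mathcal T(0)=0$ the failing inequality reads $I_P<(1-\epsilon)a^2$ with $a^2/\epsilon_j\to0$, while $I_P/\epsilon_j\to M_P(F)>0$, so the contradiction persists.

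The main obstacle is that the threshold must be uniform over all zeros with small defect, not only along one blow-up sequence. The contradiction argument handles this, provided the limit object always satisfies the hypotheses of Theorem~\ref{thm:positive-even-source}. In particular we need smoothness and global nonnegativity of $F$ on all of $\mathbb R^m\times\mathbb R$, with no growth assumption; the theorem explicitly allows this. We also need that the finite-window functional $M_P$ passes to the limit, which holds because it only uses values on a compact normal segment.

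The normalizing rotation has to align $\nu_j(z_j)$ with $e_n$, so that the normal segment in $I_P$ maps to $\{x'=0\}\times[-P,P]$. Lemma~\ref{lem:normalized-defect-limit} records exactly this.
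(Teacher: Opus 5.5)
Your proposal is correct and follows essentially the same route as the paper: the comparison $I_P\asymp D$ comes directly from the relative defect estimates and $q(z)=1$, and the other two bounds follow from normalizing a failure sequence via Lemma~\ref{lem:normalized-defect-limit}. You then apply \eqref{eq:sharp-mean-positive} and \eqref{eq:positive-subharmonic-source} with $\mathcal B=|\mathcal T|^2$ at tolerance $\epsilon/2$, and use positivity of the limiting mass to exclude $\mathcal T(0)=0$ in a curvature failure, exactly as the paper does.
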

\begin{proof}
The relative derivative estimates and $q(z)=1$ give $I_P\asymp_PD$.
Choose the window in the preceding theorem with tolerance $\epsilon/2$.
If either $I_P(z)\geq(1-\epsilon)a(z)^2$ or
$|H(z)|\leq(h_*+\epsilon)I_P(z)$ failed, normalize a failure sequence as in
Lemma~\ref{lem:normalized-defect-limit}. Its limiting mass is positive,
and $\mathcal B=|\mathcal T|^2$ is subharmonic. Applying the two model
estimates contradicts the trace limits. Curvature failure forces $|\mathcal T(0)|>0$ since
$I_P\geq cD$. Letting $P\to\infty$ also gives
\begin{equation}\label{eq:r3-source-mass}
 \sigma^{-1}\int_{\mathbb R}G^2F(0,s)\,ds\geq|\mathcal T(0)|^2.
\end{equation}
\end{proof}

\begin{proposition}
\label{prop:r3-small-mean}
Fix $p\geq1$, $S\geq1$ and a slope tolerance $0<\theta<1/10$, and set
$c_{{\rm av},m,p}=1+[3\sqrt{m+2}/8]^p/\omega_m$.
There is $\delta_{S,p}=\delta(m,S,p,\theta)>0$ such that every zero
with $0<D(z)<\delta_{S,p}$ lies on a unique zero graph in $B_{8S}(z)$,
with slope at most $\theta$, and satisfies
\begin{equation}\label{eq:r3-small-mean-actual}
 |\nabla_\Sigma H(z)|^p\leq
 \frac{c_{{\rm av},m,p}}{S^{m+p}}\int_{\Sigma\cap B_{2S}(z)}D^p\,dS.
\end{equation}
Thus choosing $S^p\geq c_{{\rm av},m,p}/\varepsilon$ gives a prescribed
$\varepsilon S^{-m}$ coefficient; the graph and the estimate both follow
from small defect at the base point.
\end{proposition}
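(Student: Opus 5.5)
The plan is to argue by contradiction and compactness, transferring the model horizontal estimate \eqref{eq:r3-positive-horizontal} of Theorem~\ref{thm:positive-even-source} to actual zeros, as in the proof of Lemma~\ref{lem:r3-normal-mass}.

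The graph statement comes first. If $D(z)<\delta$ with $\delta$ small, the relative estimates of Lemma~\ref{lem:audited-relative-defect} give $D\leq e^{C\cdot 8S}\delta$ and $|\nabla^2U|\leq C_S\sqrt\delta$ on $B_{16S}(z)$. Thus $\boldsymbol p$ stays within $C_S\sqrt\delta$ of $\nu(z)$ there. Since $U$ has almost unit gradient and almost constant direction, the implicit function theorem shows that $\{U=0\}\cap B_{8S}(z)$ is a single graph over $T_z\Sigma$ with slope $O_S(\sqrt\delta)\leq\theta$. Uniqueness holds because $\partial_{\nu(z)}U\geq1/2$ along every normal segment in the ball.

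For \eqref{eq:r3-small-mean-actual}, suppose the inequality fails along a sequence of zeros $z_j$ with $\epsilon_j=D(z_j)\downarrow0$. Normalize as in Lemma~\ref{lem:normalized-defect-limit}. Dividing the failed inequality by $\epsilon_j^p$, the left side converges to $|\nabla'F_s(0)/2|^p$ by \eqref{eq:normalized-model-traces}. The zero graphs converge smoothly to $\{s=0\}\cap B_{2S}^m$, and the area elements converge to one. On $\Sigma_j$ we have $q=1$, so $D_j/\epsilon_j\to\mathcal D(x',0)=F(x',0)$ because $G(0)=1$. The right side therefore converges to $c_{{\rm av},m,p}S^{-m-p}\int_{B^m_{2S}}F(x',0)^p\,dx'$. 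This produces the model inequality
\[
|\tfrac12\nabla'F_s(0)|^p\geq c_{{\rm av},m,p}S^{-m-p}\int_{B_{2S}^m}F(x',0)^p\,dx',
\]
and the remaining step is to show that this is false.

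To contradict it, apply \eqref{eq:r3-positive-horizontal} with radius $2S$. This gives $\tfrac12|\nabla'F_s(0)|\leq\frac{3\sqrt{m+2}}{16S}\fint_{B^m_{2S}}F(x',0)\,dx'$. Jensen (or H\"older) then yields
\[
|\tfrac12\nabla'F_s(0)|^p\leq\Bigl(\frac{3\sqrt{m+2}}{16S}\Bigr)^p\frac{1}{\omega_m(2S)^m}\int_{B_{2S}^m}F^p\leq\frac{[3\sqrt{m+2}/8]^p}{\omega_mS^{m+p}}\int_{B_{2S}^m}F^p .
\]
The final constant is strictly smaller than $c_{{\rm av},m,p}$, since the latter contains the extra $1$. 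The model inequality can hold only if $\int_{B^m_{2S}}F^p=0$, and that is impossible because $F(0,0)=\mathcal D(0)=1$ and $F$ is continuous.

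The main obstacle is keeping the strict margin through the limit. One must use that the limit of the right side is strictly positive, since $F(0,0)=1$. This is what turns the non-strict failure inequality into a genuine contradiction with the strictly smaller model constant.

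The final sentence of the proposition follows by choosing $S$. Taking $S^p\geq c_{{\rm av},m,p}/\varepsilon$ makes the coefficient $c_{{\rm av},m,p}S^{-m-p}\leq\varepsilon S^{-m}$. The threshold $\delta_{S,p}$ depends only on $m,S,p,\theta$ because both the compactness argument and the graph argument do.
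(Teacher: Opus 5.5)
Your argument is correct and is essentially the paper's proof. Both use a normalized failure sequence, the trace limit $\nabla_\Sigma H/\epsilon_j\to\nabla'F_s(0)/2$, the horizontal estimate \eqref{eq:r3-positive-horizontal} with Jensen, and the extra $1$ in $c_{{\rm av},m,p}$ together with $F(0,0)=1$ to make the contradiction strict. The differences are cosmetic: the paper applies \eqref{eq:r3-positive-horizontal} on $B_S^m$ and uses only a lower limit of the surface integral over $\Sigma\cap B_{2S}$, whereas you work on $B_{2S}^m$ with the full limit, which even gains a factor $2^{-(m+p)}$. The paper also obtains the graph assertion from normalized convergence on $B_{16S}$, while you derive it directly from the relative jets of Lemma~\ref{lem:audited-relative-defect}.
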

\begin{proof}
For a failure sequence the normalized model gives graphs over $B_S^m$
and $\nabla_\Sigma H/\epsilon_j\to\nabla'F_s(0)/2$.
By \eqref{eq:r3-positive-horizontal} and Jensen,
\[
 2^{-p}|\nabla'F_s(0)|^p\leq
 \frac{[3\sqrt{m+2}/8]^p}{\omega_mS^{m+p}}\int_{B_S^m}F(x',0)^p\,dx'.
\]
The lower limit of the normalized surface integral is at least the
positive integral on the right. The additional $1$ in
$c_{{\rm av},m,p}$ makes this incompatible with a failure of the stated
estimate. Convergence on $B_{16S}$ gives the graphical assertion.
\end{proof}

\subsubsection{Stability on a compact interface patch}
\label{sec:r3-exterior-surface}
\begin{proposition}
\label{prop:r3-actual-transfer}
Assume $Q_u\geq0$ outside $\overline B_{R_0}$. For every $\epsilon>0$
there are $P_\epsilon\geq1$ and $C_\epsilon<\infty$, depending only
on $m,\epsilon$, such that for each fixed $P\geq P_\epsilon$ there
exist $Q\geq P$ and $\delta>0$ with the following property.
Every compactly supported Lipschitz $\psi$ on
$\Sigma\cap\{D<\delta,\ |z|-R_0>8Q\}$ satisfies
\begin{equation}\label{eq:r3-actual-transfer}
 (1-\epsilon)\int_\Sigma\frac{a^2+I_P}{2}\psi^2\,dS
 \leq\int_\Sigma|\nabla_\Sigma\psi|^2\,dS
 +C_\epsilon\int_\Sigma e^{-(|z|-R_0)/8}\psi^2\,dS.
\end{equation}
In particular, at smaller tolerances,
\begin{equation}\label{eq:r3-beta99}
 .99\int_\Sigma a^2\psi^2\,dS\leq\int_\Sigma|\nabla_\Sigma\psi|^2\,dS
 +C\int_\Sigma e^{-(|z|-R_0)/8}\psi^2\,dS.
\end{equation}
\end{proposition}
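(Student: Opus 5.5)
We lift $\psi$ to an ambient test function and apply exterior stability through the exact identity \eqref{eq:audited-exact-q-test}. Given $\psi$ on the good part of $\Sigma$, we use the unique zero graphs and normal coordinates $x=y+s\nu(y)$, $y\in\Sigma$, $|s|\leq Q$. On the tubular neighbourhood of width $Q$ we define
\[
 \eta(y+s\nu(y))=\psi(y)\chi(s),
\]
where $\chi$ is a cutoff equal to $1$ on $|s|\leq Q-1$ and vanishing at $|s|=Q$. Proposition~\ref{prop:r3-small-mean} gives uniformly small slope, and Lemma~\ref{lem:r9-zero-baseline} gives $a^2\leq CD<C\delta$. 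Hence for $\delta$ small (depending on $Q$) the normal map is a diffeomorphism on this tube, with Jacobian $1+O(sa)$. The support condition $|z|-R_0>8Q$ keeps $\operatorname{supp}(q\eta)$ outside $\overline B_{R_0}$, so
\[
 \int P\eta^2\leq\int q^2|\nabla\eta|^2 .
\]
Nearby sheets are not excluded; the window may contain other zeros. We handle this by defining $\eta$ only through the nearest-point projection to the given sheet. Contributions at distance of order $Q$ are absorbed using the exponential decay of $q^2$ away from zeros, $q\leq Ce^{-\sqrt2\,\mathrm{dist}}$.

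\textbf{Right side.} Coarea in normal coordinates gives
\[
 \int q^2|\nabla\eta|^2=\int_\Sigma|\nabla_\Sigma\psi|^2\Bigl(\int\chi^2 q^2\,ds\Bigr)dS+\int_\Sigma\psi^2\Bigl(\int q^2\chi'^2\,ds\Bigr)dS ,
\]
up to Jacobian errors. Along normals $q\approx G(s)$ by Lemma~\ref{lem:normalized-defect-limit}, so $\int q^2\chi^2\,ds\leq(1+\epsilon)\sigma$. The term $\int q^2\chi'^2$ is $O(e^{-2\sqrt2 Q})$, which is small relative to $a^2+I_P$ only if $Q$ is chosen large. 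Where it is not absorbed, this error is what produces the $C_\epsilon e^{-(|z|-R_0)/8}$ term: we choose $Q$ comparable to $(|z|-R_0)/8$, letting the window depend on the point, and require $Q\geq P$.

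\textbf{Left side.} We expand $\int P\eta^2=\int_\Sigma\psi^2\int q^3D\chi^2 J\,ds\,dS$. The window $[-P,P]$ contributes $\sigma I_P$. For the tangential gradient, the Jacobian $J=\prod(1+s\kappa_i)$ expanded to second order produces curvature terms. To recover the $a^2/2$ part, we use the equality case of the stability computation, namely the Schoen–Simon–Yau type identity $\sigma^{-1}\int_{\mathbb R}G^2F\geq|\mathcal T(0)|^2$ in \eqref{eq:r3-source-mass}. This shows that the full-line defect mass dominates $a^2$, and Lemma~\ref{lem:r3-normal-mass} gives $I_P\geq(1-\epsilon)a^2$. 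So the left side is at least $(1-\epsilon)\sigma\int\psi^2(a^2+I_P)/2$, after taking half of $I_P$ and converting the other half into $a^2$. After dividing by $\sigma$ this gives \eqref{eq:r3-actual-transfer}. The bound \eqref{eq:r3-beta99} then follows by dropping $I_P/2$ and using $I_P\geq(1-\epsilon)a^2$ once more.

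\textbf{Main obstacle.} The hardest step is uniform control of the normal profile $q\approx G$ and of the defect integral across the whole window $|s|\leq Q$ when another sheet may be near. I would first argue by compactness, normalizing failure sequences as in Lemma~\ref{lem:normalized-defect-limit}. The fixed-window quantities converge there, while the tails beyond $P$ are controlled by exponential decay of $q$ together with the interaction positivity $F\geq c_\pm\varphi_\pm$, so that they only add positive mass.
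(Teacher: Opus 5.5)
\textbf{There is a genuine gap.} Your route through the $q$-identity \eqref{eq:audited-exact-q-test} cannot close with a factor $1-\epsilon$; the failure is on the gradient side.

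\emph{Fixed window.} With $Q$ fixed, the normal cutoff costs $\int q^2\chi'^2\,ds\approx\int_{Q-1}^{Q}G^2$. This is a fixed positive multiple of $\psi^2$. It is not absorbed by $a^2+I_P=O(D)$, which tends to zero with $\delta$. Nor is it absorbed by $C_\epsilon e^{-(|z|-R_0)/8}$, since $|z|$ is unbounded on the support while $Q$ is fixed.

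\emph{Growing window.} Letting the window grow like $(|z|-R_0)/8$ is the right scale, but then the rest of your setup breaks. First, $\delta$ can no longer depend on the window. Second, $|s|a$ is not small on the tube, so the normal map need not be locally invertible and $J$ may change sign. Third, and decisively, $D<\delta$ is assumed only on $\operatorname{supp}\psi$. Other sheets, possibly with large defect or singular zeros, can therefore sit inside the tube at any distance $\ell$ between roughly $\log(1/\delta)$ and $(|z|-R_0)/8$. On such a ray $q$ is far from $G$: the second transition adds about another $\sigma$ to $\int q^2\chi^2J\,ds$, doubling the gradient coefficient.

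\emph{Nearest-point restriction.} Restricting $\eta$ to the nearest-point region of one sheet does not repair this, because $\psi\circ\pi$ jumps across the cut locus. Along each normal segment of the band, any Lipschitz cutoff there costs at least $\bigl(\int q^{-2}\,dt\bigr)^{-1}$, by Cauchy--Schwarz. Here $q^2$ is of order $e^{-\sqrt2\ell}$ near the midpoint and grows exponentially away from it. So the cost is a fixed multiple of $e^{-\sqrt2\ell}\psi^2$. In the interaction-dominated regime this is of the same order as $I_P$ itself (Theorem~\ref{thm:s2-pointwise-master}), not a small fraction of it. It is also not bounded by $e^{-(|z|-R_0)/8}$. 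The compactness and interaction-positivity remarks in your last paragraph concern the left side, where extra $q^3D$ mass is harmless; they do not help here.

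\textbf{How the paper avoids this.} The paper never puts the actual $q$ into the amplitude; it lifts with the explicit profile. The test is the root sum of squares, over all inverse branches of $\Phi(z,s)=z+s\nu(z)$, of $G(s)\beta\gamma\psi(z)$.
\begin{itemize}
\item The curvature cutoff $\beta=\beta_0(s\sqrt{D(z)}/\alpha)$ keeps $|s|a\leq C\alpha$, so $J>0$ and $\Phi$ is locally invertible. Its cost is $o_\delta(D)\psi^2$, by the explicit decay of $G^2$.
\item The cutoff $\gamma=\gamma_0(s/r)$, $r=(|z|-R_0)/4$, keeps the support outside the core. Its cost is $Cr^{-2}e^{-\sqrt2 r}\leq Ce^{-(|z|-R_0)/8}$.
\item The area formula makes the potential term exact, and $|\nabla(\sum_j\xi_j^2)^{1/2}|^2\leq\sum_j|\nabla\xi_j|^2$ handles the gradient, so no injectivity is needed.
\end{itemize}
Since $-G''+(3g^2-1)G=0$, integration by parts in $s$ gives the exact form \eqref{eq:r3-lift-exact}. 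The solution enters only through $3(u(\Phi)^2-g(s)^2)G^2J$. This is $\leq0$ everywhere because $|U(z+s\nu)|\leq|s|$, so nearby sheets of any quality can only add gain.

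The $a^2/2$ then appears explicitly from $\int_{\mathbb R}-J_sGG'\,ds=\sigma(H^2-a^2)/2+O(a^3)$. The $I_P/2$ comes from $t-U_\pm(t)\geq\tfrac12\int_0^tD$ on the fixed window, with $Q$ chosen after $P$ to remove the endpoint term $G(Q)^3\int_{-P}^PD$.

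Your final step, converting half of $I_P$ into $a^2$ via Lemma~\ref{lem:r3-normal-mass}, is fine algebraically. The problem is that your construction does not deliver the full-$I_P$ inequality it starts from.
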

\begin{proof}
For small fixed $\alpha>0$, choose even smooth cutoffs $\beta_0,\gamma_0$
equal to one on $[-1/2,1/2]$ and supported in $(-1,1)$. Put
\[
 \begin{gathered}
 \Phi(z,s)=z+s\nu(z),\quad J=\det(I+sA),\quad
 \beta=\beta_0(s\sqrt{D(z)}/\alpha),\\
 d_c=|z|-R_0,\quad r=d_c/4,\quad\gamma=\gamma_0(s/r).
 \end{gathered}
\]
Since $a\leq C\sqrt D$, on the support of $\beta$ we have
$|s|a\leq C\alpha$. Consequently $J>0$, the map $\Phi$ is locally
invertible, and its inverse tangent metric is $I+O(\alpha)$. The lifted amplitude $G\beta\gamma\psi$ has compact
support outside the core, since $|\Phi|-R_0\geq d_c-|s|\geq3d_c/4$.
We define an ambient test function by taking the root sum of squares of
the amplitudes over all inverse branches. At each ambient point the
relevant fiber is discrete in the compact lifted support, and is therefore
finite. In a sufficiently small ambient neighborhood, these preimages
determine all relevant inverse branches, each counted once;
the branch amplitudes extend by zero across their support boundaries.
Their fiberwise root sum of squares is locally Lipschitz. The area
formula gives exactly the sum of branchwise potential energies, while
$|\nabla\sqrt{\sum_j\xi_j^2}|^2\leq\sum_j|\nabla\xi_j|^2$ almost
everywhere by Cauchy--Schwarz. The lifted quadratic form is therefore nonnegative by ambient stability.
For $f=\beta\gamma\psi$, integration by parts in the normal
variable writes this form exactly as
\begin{equation}\label{eq:r3-lift-exact}
 \int_{\Sigma\times\mathbb R}\!
 \{G^2J|\nabla f|^2+f^2[-J_sGG'+3(u(\Phi)^2-g^2)G^2J]\}\,dS\,ds.
\end{equation}
Here gradients are taken in the pullback of the Euclidean metric. The
calculation is made on the normal bundle and introduces no chart boundary
terms.

\vskip0.16in

The base gradient coefficient is at most $(1+C\alpha+\eta)\sigma$ for a
fixed Young tolerance $\eta>0$. On the derivative support of $\beta$,
$|s|\sqrt D\geq\alpha/2$, and $|\nabla D|\leq CD$ gives
$|\nabla\beta|\leq C_\alpha(1+|s|)\sqrt D$ in the pullback metric.
Its contribution, including the cross terms from Young's inequality,
is $o_\delta(D)\psi^2$, by the exponential decay of $G^2$. Likewise $|\nabla r|\leq1/4$,
$|\nabla\gamma|\leq C/r$, and $r\geq2Q\geq1$ give
$\int G^2J|\nabla\gamma|^2\leq Cr^{-2}e^{-\sqrt2r}\leq Ce^{-d_c/8}$,
which also controls the associated cross terms.

\vskip0.16in

The polynomial $J$ and evenness give
$\int_{\mathbb R}-J_sGG'=\sigma(H^2-a^2)/2+O(a^3)$.
The $\beta$ tail is superpolynomial in $D$, the $\gamma$ tail is
$O(e^{-d_c/8})$, and $H^2+a^3=o_\delta(D)$. Thus the resulting curvature term is
$-\sigma a^2/2$ up to the indicated errors.

\vskip0.16in

We next estimate the profile term. It is nonpositive everywhere, since
$|\nabla U|\leq1$ implies $|U(z+s\nu)|\leq|s|$. On a fixed window
it contributes at least $(1-\eta)\sigma I_P/2$ to the negative part.
To prove this, set $U_\pm(t)=\pm U(z\pm t\nu)$.
For fixed $Q$, relative jets give
\[
 U_\pm(t)=t+O_Q(D(z)),\qquad
 t-U_\pm(t)\geq\tfrac12\int_0^tD(z\pm s\nu)\,ds.
\]
Taylor expansion of $g^2$ therefore bounds
$3(g^2-u(\Phi)^2)G^2$ below by
$-\tfrac12(G^3)'(t)\int_0^tD(z\pm s\nu)ds$, with $o_\delta(D(z))$
integrated error. Restrict the integral of $D$ to $|s|\leq P$ and integrate in $t$ up to
$Q\geq P$. More explicitly, for
$A_\pm(t)=\int_0^{\min\{t,P\}}D(z\pm s\nu)\,ds$,
\[
 -\tfrac12\int_0^Q(G^3)'(t)A_\pm(t)\,dt
 =\tfrac12\int_0^P[G(s)^3-G(Q)^3]D(z\pm s\nu)\,ds.
\]
Thus this contribution is half the $G^3D$ mass on this
interval, less the endpoint term $G(Q)^3\int_{-P}^PD/2$. Relative jets bound the latter integral by
$C_{m,P}D(z)$; choose $Q$ large after $P$. On this fixed interval,
$q^3=G^3+O_Q(D)$, $J=1+o_\delta(1)$, and $\beta=\gamma=1$
after decreasing $\delta$, since $d_c>8Q$.
Finally, $I_P\geq cD$ absorbs all errors that vanish with the defect.
Choose $\eta$, then $\alpha$, and then $\delta$. Divide the nonnegative
form by its gradient coefficient, which is close to $\sigma$. This proves
\eqref{eq:r3-actual-transfer}; Lemma~\ref{lem:r3-normal-mass} gives
\eqref{eq:r3-beta99}.
\end{proof}

\vskip0.16in

For a fixed application choose the common normal window $P$, then $Q$
and the horizontal averaging radius $S$, before decreasing $\delta$.
The additional interaction requirement on $P$ is recorded in
Section~\ref{sec:r13-nine-dimensional-local}.

\vskip0.16in

\subsection{Curvature identities and normal rays}
\begin{lemma}
\label{lem:nonminimal-kato-simons}
For a smooth Euclidean hypersurface of dimension $m\geq2$, with
$a=|A|$, $H=\operatorname{tr}A$, and every $e>0$,
\begin{equation}\label{eq:nonminimal-kato-family}
 |\nabla A|^2\geq\kappa_{m,e}|\nabla a|^2-C_{m,e}|\nabla H|^2,
 \quad\kappa_{m,e}=\frac{m+2}{m(1+e)},\quad
 C_{m,e}=\frac3{m+2}\{\kappa_{m,e}(1+e^{-1})-1\}.
\end{equation}
For $e=1/(m+1)$ this becomes
\begin{equation}\label{eq:r10-kato-general}
 \kappa_m=1+1/m,\qquad C_{K,m}=3(m+1)/m-3/(m+2).
\end{equation}
The exact Simons identity is
\begin{equation}\label{eq:nonminimal-simons-identity}
 \tfrac12\Delta_\Sigma a^2=|\nabla A|^2+A_{ij}H_{ij}
                         +H\operatorname{tr}A^3-a^4.
\end{equation}
These classical identities~\cite{Simons1968,SchoenSimonYau1975} are used
with their actual mean errors in Section~\ref{sec:r12-higher-moment-attack}.
\end{lemma}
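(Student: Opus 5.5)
The plan is to prove both statements pointwise, with the Codazzi equation as the only geometric input. For a Euclidean hypersurface, $T_{ijk}=\nabla_kA_{ij}$ is fully symmetric, and its traces are $T_{iik}=\nabla_kH$.

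\emph{Simons identity.} I would commute covariant derivatives:
\[
\Delta A_{ij}=\nabla_k\nabla_kA_{ij}=\nabla_k\nabla_iA_{kj}=\nabla_i\nabla_kA_{kj}+\mathrm{Rm}*A.
\]
The first term on the right equals $\nabla_i\nabla_jH=H_{ij}$, using Codazzi twice. The Gauss equation $R_{ijkl}=A_{ik}A_{jl}-A_{il}A_{jk}$ turns the curvature terms into $HA_{ik}A_{kj}-a^2A_{ij}$; this is the classical computation. Contracting with $A_{ij}$ and using $\tfrac12\Delta a^2=|\nabla A|^2+A_{ij}\Delta A_{ij}$ gives \eqref{eq:nonminimal-simons-identity} directly.

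\emph{Kato family.} The inequality only needs proof where $a>0$; on the interior of $\{a=0\}$ we have $\nabla a=0$, and the remaining null set is irrelevant almost everywhere. I would split $T=T^\circ+\tau$, where
\[
\tau_{ijk}=\frac1{m+2}\bigl(H_i\delta_{jk}+H_j\delta_{ik}+H_k\delta_{ij}\bigr)
\]
is the unique fully symmetric pure-trace tensor with the same traces. Then $T^\circ$ is fully symmetric and totally trace-free, $T^\circ\perp\tau$, and $|\tau|^2=\tfrac{3}{m+2}|\nabla H|^2$, so
\[
|T|^2=|T^\circ|^2+\tfrac3{m+2}|\nabla H|^2.
\]
Since $a\nabla_ka=A_{ij}T_{ijk}$, we get $|\nabla a|\le x+y$ with
\[
x=\Bigl|\frac{A_{ij}}{a}T^\circ_{ijk}\Bigr|,\qquad y=\Bigl|\frac{A_{ij}}{a}\tau_{ijk}\Bigr|.
\]
I would bound the two pieces separately.
\begin{itemize}
\item For $x$, I would use the sharp Kato bound for trace-free Codazzi tensors, $x^2\le\frac{m}{m+2}|T^\circ|^2$. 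To prove it, fix a unit vector $\xi$ realizing the norm, and compute the supremum of $\sum_k(B_{ij}T^\circ_{ijk})\xi_k$ over unit symmetric $B$. This is a finite-dimensional Lagrange-multiplier (Cauchy--Schwarz) computation in which trace-freeness and symmetry force the extremal configuration $T^\circ_{11k}$-type.
\item For $y$, I would compute
\[
y\le\frac{|H||\nabla H|+2|A\nabla H|}{(m+2)a}\le\frac{\sqrt m+2}{m+2}|\nabla H|,
\]
using $|H|\le\sqrt m\,a$. Then $y^2\le\frac3{m+2}|\nabla H|^2$ follows from $m+4\sqrt m+4\le3m+6$, i.e.\ $4\sqrt m\le2m+2$.
\end{itemize}

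Young's inequality $(x+y)^2\le(1+e)x^2+(1+e^{-1})y^2$ then gives
\[
\kappa_{m,e}|\nabla a|^2\le|T^\circ|^2+\kappa_{m,e}(1+e^{-1})\tfrac3{m+2}|\nabla H|^2 .
\]
Substituting $|T^\circ|^2=|\nabla A|^2-\tfrac3{m+2}|\nabla H|^2$ yields exactly \eqref{eq:nonminimal-kato-family} with the stated $C_{m,e}$. For $e=1/(m+1)$, we get $\kappa=(m+1)/m$ and $C=3\kappa-3/(m+2)$, which is \eqref{eq:r10-kato-general}.

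The main obstacle is the sharp trace-free Kato constant $m/(m+2)$. I would prove it by diagonalizing $A$ at the point and reducing to maximizing $\sum_i\lambda_iT^\circ_{iik}$ under the trace constraints $\sum_iT^\circ_{iik}=0$. If the algebra becomes messy, I would instead cite the classical statement from \cite{SchoenSimonYau1975}.
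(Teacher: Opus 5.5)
Your proposal is correct and follows essentially the paper's proof. Both arguments use the same orthogonal splitting of the Codazzi tensor $\nabla A$ into its totally trace-free part and the pure-trace part built from $\nabla H$. Both then prove $|\nabla a|\le\sqrt{m/(m+2)}\,|T^\circ|+|\tau|$, apply Young's inequality, substitute $|T^\circ|^2=|\nabla A|^2-\tfrac3{m+2}|\nabla H|^2$, and obtain Simons by commuting Codazzi with Gauss. The paper gets the sharp constant $m/(m+2)$ by computing the norm of the trace-free projection of $\operatorname{sym}(A_0\otimes v)$, which is the dual form of your maximization. Your eigenbasis fallback also works, because the trace condition gives $(T^\circ_{kkk})^2\le(m-1)\sum_{i\ne k}(T^\circ_{iik})^2$.

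One small correction: $\partial\{a=0\}$ need not be a null set. You do not need it to be, because at any point where $a=0$ and $a$ is differentiable, $\nabla a=0$ since $a\ge0$ has a minimum there. At such points the inequality $\kappa_{m,e}|\nabla a|^2\le|T^\circ|^2+\kappa_{m,e}(1+e^{-1})|\tau|^2$ holds trivially.
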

\begin{proof}
The symmetric Codazzi tensor decomposes orthogonally as $\nabla A=T_0+S$,
where $S_{ijk}=(H_ig_{jk}+H_jg_{ik}+H_kg_{ij})/(m+2)$ and
$|S|^2=3|\nabla H|^2/(m+2)$. For trace-free $A_0$ and unit $v$,
the trace-free projection of $\operatorname{sym}(A_0\otimes v)$ has norm squared
$|A_0|^2/3+2m|A_0v|^2/[3(m+2)]\leq m|A_0|^2/(m+2)$.
The trace part of $A$ contracts trivially with $T_0$. Thus
$|\nabla a|\leq\sqrt{m/(m+2)}|T_0|+|S|$; Young's inequality
gives \eqref{eq:nonminimal-kato-family}, using regularized norms at $a=0$.
Commuting Codazzi with the Gauss equation gives
$\Delta A_{ij}=H_{ij}+HA_{ik}A_{kj}-a^2A_{ij}$; contraction proves Simons.
\end{proof}

\vskip0.16in

\subsubsection{Derivative estimates along normal rays}
\label{sec:r9-accurate-rays}
We now extend the profile comparison to growing normal intervals and prove
uniqueness of the corresponding preimage. These facts will be used when
combining the normal test functions; compare
\cite[Lemma 5.1 and Section 9.2]{OpenAI2026}.
\begin{lemma}
\label{lem:r4-near-critical-growth}
For every $\gamma>0$ there is $C_\gamma=C(m,\gamma)$ such that every
nonplanar bounded entire solution satisfies
\begin{equation}\label{eq:r4-near-critical-growth}
 D(y)\leq C_\gamma e^{(2\sqrt2+\gamma)|y-z|}D(z).
\end{equation}
Consequently, for each fixed $k$,
\[
 |\nabla^kD(y)|\leq C_{k,\gamma}e^{(2\sqrt2+\gamma)|y-z|}D(z),\quad
 |\nabla^k\nabla^2U(y)|\leq
 C_{k,\gamma}e^{(\sqrt2+\gamma/2)|y-z|}\sqrt{D(z)}.
\]
\end{lemma}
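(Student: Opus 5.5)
The plan is to prove the equivalent lower bound $D(z)\geq c_\gamma e^{-(2\sqrt2+\gamma)|y-z|}D(y)$ by a barrier comparison for the exact defect equation used in the proof of Lemma~\ref{lem:normalized-defect-limit},
\[
 LD:=-\Delta D+2\sqrt2u\,\boldsymbol p\cdot\nabla D+2q|\boldsymbol p|^2D=2|\nabla^2U|^2\geq0 .
\]
Thus $D$ is a nonnegative supersolution of $L$. Since $u$ is nonplanar, Lemma~\ref{lem:audited-relative-defect} gives $D>0$ everywhere together with the fixed-ball Harnack bound $D\geq e^{-C\rho}D(y)$ on $B_\rho(y)$.

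For the barrier I take $\psi(x)=e^{-\lambda\rho}$ with $\rho=|x-y|$ and $\lambda=2\sqrt2+\gamma$. Then $-\Delta\psi=(-\lambda^2+\lambda m/\rho)\psi$, using $n-1=m$. The drift term is at most $2\sqrt2|u|\lambda\psi$, because $|\boldsymbol p|\leq1$, and the zeroth-order term is at most $2(1-u^2)\psi$. The key elementary point is that $\max_{0\leq t\leq1}\bigl(2\sqrt2\lambda t+2(1-t^2)\bigr)=2\sqrt2\lambda$ whenever $\lambda\geq\sqrt2$: the derivative $2\sqrt2\lambda-4t$ stays positive, so the maximum is attained at $t=1$. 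Consequently
\[
 L\psi\leq\lambda\bigl(-\gamma+m/\rho\bigr)\psi<0\qquad(\rho>\rho_\gamma:=2m/\gamma).
\]
Now fix $r=|z-y|$ with $r\geq\rho_\gamma+1$; smaller $r$ are handled directly by the Harnack bound. Put $\phi=\psi-\psi(r+1)$. Since $L$ applied to a positive constant $k$ gives $2q|\boldsymbol p|^2k\geq0$, we get $L\phi\leq L\psi<0$ on the annulus $\rho_\gamma<\rho<r+1$. Moreover $\phi\leq0\leq D$ on the outer sphere, and $\phi\leq\psi(\rho_\gamma)$ on the inner sphere. The zeroth-order coefficient is nonnegative, so the weak maximum principle applies on this bounded domain. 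It yields
\[
 D\geq\frac{\min_{\partial B_{\rho_\gamma}(y)}D}{\psi(\rho_\gamma)}\,\phi,
\]
and by Harnack the minimum here is at least $e^{-C\rho_\gamma}D(y)$. At $\rho=r$ we have $\phi\geq(1-e^{-\lambda})e^{-\lambda r}$, which gives $D(z)\geq c_\gamma e^{-\lambda r}D(y)$. Exchanging the roles of $y$ and $z$ gives \eqref{eq:r4-near-critical-growth}.

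The derivative bounds follow by applying the fixed-ball relative jets \eqref{eq:r3-fixed-defect-jets} centered at $y$ with $r=1$. These give $|\nabla^kD(y)|\leq C_kD(y)$ and $|\nabla^k\nabla^2U(y)|\leq C_k\sqrt{D(y)}$, and we then insert \eqref{eq:r4-near-critical-growth}. Taking the square root turns the rate into $\sqrt2+\gamma/2$.

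The main obstacle is the sign bookkeeping in the barrier. The drift and the zeroth-order coefficient must be combined through the pointwise optimization in $u$, rather than bounded separately. Bounding them separately would add $+2$ to the exponent and spoil the sharp rate $2\sqrt2$. The boundary subtraction must also preserve subsolution status, which works because the zeroth-order coefficient $2q|\boldsymbol p|^2$ is nonnegative.
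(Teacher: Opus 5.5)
Your proof is correct and follows the paper's argument. It uses the same barrier $e^{-\lambda|x-y|}$ with $\lambda=2\sqrt2+\gamma$, the joint bound $2\sqrt2\lambda|u|+2(1-u^2)\leq2\sqrt2\lambda$, Harnack on a fixed inner sphere, comparison on an annulus after subtracting a constant (your choice $\psi(r+1)$ simply replaces the paper's $\varepsilon\downarrow0$ limit), the exchange of $y$ and $z$, and unit-ball relative jets for the derivatives. One cosmetic remark: bounding the drift and the potential separately would raise the threshold rate from $2\sqrt2$ to $2+\sqrt2$, not add $2$ to it.
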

\begin{proof}
For $T=\nabla^2U$,
$\mathscr L=-\Delta+2\sqrt2u\boldsymbol p\cdot\nabla+2q|\boldsymbol p|^2$
satisfies $\mathscr LD=2|T|^2\geq0$ and has a nonnegative zeroth-order coefficient.
For $\lambda=2\sqrt2+\gamma$, $r=|x-y|$,
\[
 e^{\lambda r}\mathscr L(e^{-\lambda r})
 \leq-\lambda^2+m\lambda/r+2\sqrt2\lambda|u|+2(1-u^2)
 \leq-\lambda(\lambda-2\sqrt2)+m\lambda/r<0
\]
outside a fixed radius. Harnack makes $c_\gamma D(y)e^{-\lambda r}\leq D(x)$
on that sphere. Subtract $\varepsilon$, compare on an annulus whose outer
sphere has negative subsolution, and let $\varepsilon\downarrow0$.
Harnack covers the inner ball; interchanging $x,y$ proves the claim.
Unit-ball relative jets give the derivative bounds.
\end{proof}

\vskip0.16in

\begin{lemma}
\label{lem:r4-accurate-ray}
Fix $\xi,\alpha>0$. There are $b_\xi=b(m,\xi,\alpha)>0$ and
$C_\xi=C(m,\xi)<\infty$ such that, if $z\in\Sigma$, $x=z+s\nu(z)$ and
$B_z(s):=D(z)e^{(2\sqrt2+\xi)|s|}\leq b_\xi$, then $z$ is regular,
$|s|a(z)<\alpha/2$, and
\begin{equation}\label{eq:r4-accurate-profile}
 |\boldsymbol p(x)-\nu(z)|+|U(x)-s|
 \leq C_\xi B_z(s)e^{-\xi|s|/2}.
\end{equation}
Each ambient point $x$ has at most one such preimage $(z,s)$. In its inverse chart,
\begin{equation}\label{eq:r4-accurate-ratio}
 \left|q(x)/G(s)-1\right|+
 \left|\nabla_x\log(G(s)/q(x))\right|
 \leq C_\xi B_z(s)e^{-\xi|s|/2}.
\end{equation}
Both signs of $s$ are included.
\end{lemma}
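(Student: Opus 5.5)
Lemma~\ref{lem:r4-accurate-ray} will be derived from the growth estimate of Lemma~\ref{lem:r4-near-critical-growth}, applied with exponent $\gamma=\xi/2$. The idea is to integrate the profile ODE along the normal ray from $z$.

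\emph{Regularity and curvature.} First, $D(z)\leq B_z(s)\leq b_\xi$. Taking $b_\xi\leq1/4$ makes $z$ a good zero, so $|\nabla U(z)|^2\geq3/4$ and $z$ is regular. Lemma~\ref{lem:r9-zero-baseline} gives $a(z)\leq C\sqrt{D(z)}$, hence
\[
|s|a(z)\leq C|s|\sqrt{D(z)}\leq C|s|e^{-(\sqrt2+\xi/2)|s|}\sqrt{B_z(s)}\leq C\sqrt{b_\xi}.
\]
This is at most $\alpha/2$ once $b_\xi$ is small depending on $\alpha$.

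\emph{Profile comparison along the ray.} Put $x(t)=z+t\nu(z)$ for $t$ between $0$ and $s$. Lemma~\ref{lem:r4-near-critical-growth} with $\gamma=\xi/2$ gives
\[
|\nabla^2U(x(t))|\leq C_\xi e^{(\sqrt2+\xi/4)|t|}\sqrt{D(z)}.
\]
Since $\boldsymbol p(z)=\sqrt{1-D(z)}\,\nu(z)$, integration gives
\[
|\boldsymbol p(x(t))-\nu(z)|\leq D(z)+C_\xi e^{(\sqrt2+\xi/4)|t|}\sqrt{D(z)}.
\]
This bound is too weak: it carries only $\sqrt D$. The improvement comes from the normal derivative $\partial_t\boldsymbol p=\nabla^2U\,\nu$. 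Write $\nu=\boldsymbol p/|\boldsymbol p|+(\nu-\boldsymbol p/|\boldsymbol p|)$. The identity $(\nabla^2U)\boldsymbol p=-\nabla D/2$ bounds the first part by $C|\nabla D|\leq C_\xi e^{(2\sqrt2+\xi/2)|t|}D(z)$. The second part is quadratic: $|\nabla^2U|\cdot|\nu-\boldsymbol p/|\boldsymbol p||$. A Gronwall/bootstrap on $\epsilon(t)=|\boldsymbol p(x(t))-\nu(z)|$ then gives
\[
\epsilon'\leq C D(z)e^{(2\sqrt2+\xi/2)|t|}+C\sqrt{D(z)}\,e^{(\sqrt2+\xi/4)|t|}\epsilon .
\]
The integrating factor is $\exp\bigl(C\sqrt{D(z)}\,e^{(\sqrt2+\xi/4)|s|}\bigr)\leq\exp(C\sqrt{b_\xi})$, which is bounded. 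Hence
\[
\epsilon(t)\leq C_\xi D(z)e^{(2\sqrt2+\xi/2)|t|}=C_\xi B_z(t)e^{-\xi|t|/2}.
\]
Next, $U(x(t))-t$ has derivative $\boldsymbol p\cdot\nu-1$. Integrating the bound for $\epsilon$ gives the same order for $|U(x(s))-s|$, since the integral of an exponentially growing function is comparable to its endpoint value. This proves \eqref{eq:r4-accurate-profile}, uniformly for both signs of $s$.

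\emph{Ratio estimate.} We have $q=1-g(U)^2$ and $G(s)=1-g(s)^2$. The function $\log(1-g^2)$ has derivative bounded by $\sqrt2$, so
\[
|\log(q/G)|\leq\sqrt2\,|U-s|.
\]
For the gradient, $\nabla_x\log q=-\sqrt2 u\boldsymbol p$. In the inverse chart, $\nabla_x s$ equals $\nu(z)$ plus the tangential correction from $d\Phi^{-1}$. That correction is $O(|s|a)$ times tangential components, and the tangential components of $\nabla\log G(s)$ vanish because $\nabla_x s$ is the unit normal of the parallel foliation; I expect the chart to give exactly $\nabla_xs=\nu(z)$. Then
\[
\nabla\log(G/q)=-\sqrt2\,g(s)\nu+\sqrt2\,g(U)\boldsymbol p,
\]
which is controlled by $|U-s|+|\boldsymbol p-\nu|$. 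This gives \eqref{eq:r4-accurate-ratio}.

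\emph{Uniqueness of preimages (main obstacle).} Suppose $x=z_1+s_1\nu_1=z_2+s_2\nu_2$, both with $B\leq b_\xi$. By the profile estimate, $U(x)$ is within $o(1)$ of both $s_1$ and $s_2$, so $|s_1-s_2|$ is small. Also $\boldsymbol p(x)$ is close to both normals, so $\nu_1\approx\nu_2$. Then $z_1$ and $z_2$ are close zeros. The $C^{2}$ graph of Lemma~\ref{lem:r9-zero-baseline} near a good zero, together with $|s|a<\alpha/2$, makes the normal exponential map injective on the set $\{|s|a<\alpha/2\}$ over that graph ball, so $(z_1,s_1)=(z_2,s_2)$. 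The delicate point is large $|s|$. Closeness of $z_1,z_2$ must be quantified as $|z_1-z_2|\lesssim|s|\,|\nu_1-\nu_2|+|s_1-s_2|$, and it has to be shown that this quantity is small relative to the graph radius. I would first try to use $B_z(s)e^{-\xi|s|/2}$, which decays in $|s|$, to absorb the factor $|s|$. Failing that, I would replace the direct comparison by a focal-distance argument along the graph with curvature bounded by $a\leq C\sqrt{D}$.
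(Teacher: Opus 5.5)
Your regularity check, the bound on $|s|a(z)$, the profile estimate and the ratio estimate follow the paper's argument. You apply the growth lemma with loss $\xi/2$, run Gronwall for $Y=\boldsymbol p-\nu(z)$ along the ray with integrating factor $\exp(C\sqrt{b_\xi})$, and integrate $(U-t)'=Y\cdot\nu(z)$. The paper writes $\nu(z)=\boldsymbol p-Y$, so $Y'=-\nabla D/2-(\nabla^2U)Y$ directly; this avoids your division by $|\boldsymbol p|$. Your hedged chart identity is exact: differentiating $x=z+s\nu(z)$ gives $dx=(I+sA)\,dz+\nu(z)\,ds$ with $(I+sA)\,dz$ tangent, so $\nabla_xs=\nu(z)$.

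The gap is the uniqueness step, which you leave open. Your first idea is the right one. Comparing both normals with $\boldsymbol p(x)$ gives $|\nu(z)-\nu(\widetilde z)|\leq C_\xi b_\xi e^{-\xi|s|/2}$, and $U(x)$ gives $|s-\widetilde s|\leq C_\xi b_\xi$. Hence $z-\widetilde z=(\widetilde s-s)\nu(\widetilde z)+s(\nu(\widetilde z)-\nu(z))$ yields $|z-\widetilde z|\leq C_\xi b_\xi(1+|s|)e^{-\xi|s|/2}\leq C_\xi b_\xi$, so both bases lie in one good graph.

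Your closing claim, however, is not valid: the graph together with $|s|a<\alpha/2$ does not make the normal map injective. First, $\alpha$ is arbitrary, and for $\alpha>2$ the set $\{|s|a<\alpha/2\}$ contains focal points. Second, a curvature bound at one base point says nothing about where the normal lines of two different bases meet.

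What is needed is the relative Harnack inequality $D\leq CD(z)$ on $B_1(z)$. Combined with $a^2\leq CD$ along the graph, it gives $|\nu(z)-\nu(\widetilde z)|\leq C\sqrt{D(z)}\,|z-\widetilde z|$. Now project $z+s\nu(z)=\widetilde z+\widetilde s\nu(\widetilde z)$ onto $T_z\Sigma$, using that the chord is nearly tangential. This gives $.9|z-\widetilde z|\leq|\widetilde s|\,|\nu(\widetilde z)-\nu(z)|\leq C|\widetilde s|\sqrt{D(z)}\,|z-\widetilde z|$. Since $\sqrt{D(z)}\leq\sqrt{b_\xi}\,e^{-(\sqrt2+\xi/2)|s|}$ and $|\widetilde s|\leq|s|+1$, the coefficient is at most $C\sqrt{b_\xi}$. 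For small $b_\xi$ this forces $z=\widetilde z$, and then $s=\widetilde s$. The required smallness comes from $b_\xi$, not from $\alpha$.
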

\begin{proof}
Apply the preceding lemma with loss $\xi/2$. Along $z+t\nu(z)$,
$Y=\boldsymbol p-\nu(z)$ satisfies
$Y'=-\nabla D/2-TY$, $|Y(0)|\leq D(z)$, and
\[
 D+|\nabla D|\leq C_\xi D(z)e^{(2\sqrt2+\xi/2)|t|},\qquad
 |T|\leq C_\xi\sqrt{D(z)}e^{(\sqrt2+\xi/4)|t|}.
\]
The integral of $T$ on $[0,s]$ is at most $C_\xi\sqrt{b_\xi}$.
Gronwall's inequality bounds $Y$ by the first exponential. We then
integrate $(U-t)'=Y\cdot\nu(z)$ to obtain \eqref{eq:r4-accurate-profile}.
Also $|s|a(z)\leq C\sqrt{b_\xi}|s|e^{-(\sqrt2+\xi/2)|s|}<\alpha/2$.

\vskip0.16in

For two accurate preimages, the profile estimate gives
$|s-\widetilde s|\leq C_\xi b_\xi$. Comparing both normals with
$\boldsymbol p(x)$ and using $z=x-s\nu(z)$ then gives
$|z-\widetilde z|\leq C_\xi b_\xi(1+|s|)e^{-\xi|s|/2}$.
For small $b_\xi$ both bases lie in one fixed good graph; Harnack and
$a\leq C\sqrt D$ give
$|\nu(z)-\nu(\widetilde z)|\leq C\sqrt{D(z)}|z-\widetilde z|$.
Tangential projection of their equal images yields
$.9|z-\widetilde z|\leq C|\widetilde s|\sqrt{D(z)}|z-\widetilde z|$,
whose right coefficient is less than $.9$ for small $b_\xi$.
Thus the preimages agree. Finally $\nabla_xs=\nu(z)$,
$\nabla\log q=-\sqrt2u\boldsymbol p$, and
$\nabla\log G=-\sqrt2g(s)\nu(z)$ give the ratio estimate.
\end{proof}

\vskip0.16in

\section{\bf Penalized reach and contact points}
\label{sec:s12-contact}
In this section we define a penalized distance to a competing zero and
derive the corresponding contact estimates. We use the relative defect
estimates of Section~\ref{sec:s12-defect} and one fixed angular penalty.
The arguments in this section and the next two sections require neither
stability nor an energy-growth bound.

\vskip0.16in

\label{sec:r10-reach-geometry}\label{sec:r9-reach-geometry}\label{sec:r4-reach-gate}
Write $n=m+1$, $3\leq m\leq8$, $\Sigma=\{u=0\}$,
$\nu=\nabla U/|\nabla U|$, $A=d\nu|_{T\Sigma}$, $a=|A|$ and
$H=\operatorname{tr}A$ on $\Sigma_{\rm reg}$. For another orientation
$N$ use $A_N,H_N$. Balls are ambient $n$-balls and surface measure is
$\mathcal H^m$. Constants may depend on dimension and the fixed penalty.
The contact formulas themselves hold for every fixed $m\geq1$.
Constant wells are separate; $D\equiv0$ is the planar profile by the
baseline rigidity. We therefore assume $D\not\equiv0$ when discussing
finite reaches. We prove the contact estimates of \cite[Sections~7.5--7.6]{OpenAI2026}
directly for the zero set, using the sign convention $A=d\nu$.

\vskip0.16in

Fix a smooth nonincreasing $\Gamma:(0,1]\to(0,\infty)$ with
\begin{equation}\label{eq:r4-penalty-assumptions}
 \begin{gathered}
 s^{-1}\leq\Gamma(s)\leq C_\Gamma s^{-1},\qquad
 \Gamma(s)=s^{-1}\quad(s\leq1-\delta_1),\\
 \Gamma(s)=G_1\quad(s\geq1-\delta_2),\qquad
 0<\delta_2<\delta_1<\tfrac1{10}.
 \end{gathered}
\end{equation}
The actual penalized reach is
\begin{equation}\label{eq:r4-actual-reach}
 d(z)=\inf_{y\in\Sigma\setminus\{z\}}
 |y-z|\,\Gamma\left(\left|\nu(z)\cdot\frac{y-z}{|y-z|}\right|\right),
\end{equation}
where the objective is assigned the value $+\infty$ when
$\nu(z)\cdot(y-z)=0$. An attaining distinct zero is an
active off-diagonal contact, and is a plateau contact if
$|\nu(z)\cdot(y-z)|/|y-z|\geq1-\delta_2$. A limiting tangent direction is a diagonal contact.
Differential comparisons for $d$ below are upper-touching comparisons.

\vskip0.16in

\subsection{One fixed angular penalty}

\begin{lemma}
\label{lem:r7-small-penalty}
For every $\varepsilon_1>0$ there is a penalty of the form used in
\eqref{eq:r4-penalty-assumptions}, with
\begin{equation}\label{eq:r7-penalty-bounds}
 \begin{gathered}
 s^{-1}\leq\Gamma(s)\leq(1+C\delta_1)s^{-1},\quad
 1\leq G_1\leq\Gamma(s),\quad
 |\Gamma'(s)|\leq C_\Gamma s^{-2}\quad(0<s\leq1),\\
 -(1-\delta_1)^{-2}\leq\Gamma'(s)\leq0\quad(s\geq1-\delta_1),\\
 0\leq(1-s)\Gamma''(s)\leq\varepsilon_1
                \quad(s\geq1-\delta_1).
 \end{gathered}
\end{equation}
The number $\delta_1$ can first be prescribed sufficiently small;
the smaller positive plateau width $\delta_2$ is fixed afterwards.
\end{lemma}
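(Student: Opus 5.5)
We will build $\Gamma$ explicitly by gluing $s^{-1}$ on $(0,1-\delta_1]$ to the constant $G_1$ on $[1-\delta_2,1]$. The key observation is that $s^{-1}$ is convex and decreasing with $s^{-1}\geq1$, and its derivative $-s^{-2}$ lies in $[-(1-\delta_1)^{-2},-1]$ on $[1-\delta_1,1]$. So the only delicate requirement is the one-sided second-derivative bound $0\leq(1-s)\Gamma''\leq\varepsilon_1$. We therefore work with $\Gamma'$ on $[1-\delta_1,1]$. We require $\Gamma'$ to be nondecreasing, to equal $-s^{-2}$ near $1-\delta_1$, and to vanish on $[1-\delta_2,1]$. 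Then $\Gamma$ is convex and nonincreasing, and $G_1=\Gamma(1)\geq1$. The bound $G_1\geq1$ will follow from $\Gamma(s)\geq s^{-1}$ at $s=1$, which in turn follows from $\Gamma'\geq-s^{-2}$ on $[1-\delta_1,1]$ after integration from $1-\delta_1$, where $\Gamma=s^{-1}$.

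The construction is to take $\Gamma'(s)=-s^{-2}\chi(s)$ on $[1-\delta_1,1]$. Here $\chi$ is smooth and nonincreasing, equals $1$ near $1-\delta_1$, and equals $0$ on $[1-\delta_2,1]$. Then $\Gamma''=2s^{-3}\chi-s^{-2}\chi'$, and both terms are nonnegative, so the lower bound holds. Also $-(1-\delta_1)^{-2}\leq\Gamma'\leq0$, and $\Gamma\geq s^{-1}$ holds since $\Gamma'\geq-s^{-2}$. For the upper bound, $\Gamma(s)-s^{-1}=\int_{1-\delta_1}^s s'^{-2}(1-\chi)\,ds'\leq C\delta_1\leq C\delta_1 s^{-1}$. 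The bound $|\Gamma'|\leq C_\Gamma s^{-2}$ is immediate.

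The main point is the weighted bound $(1-s)\Gamma''\leq\varepsilon_1$. The first term is $(1-s)2s^{-3}\chi\leq3\delta_1$, which is at most $\varepsilon_1/2$ once $\delta_1$ is small; this is why $\delta_1$ is chosen first. For the second term we need $(1-s)|\chi'(s)|\leq\varepsilon_1/4$ while $\chi$ drops from $1$ to $0$. A linear-in-$s$ drop fails, but a logarithmic cutoff works. We take $\chi$ to be a smoothing of $\log\bigl((1-s)/\delta_2\bigr)/\log(\delta_1'/\delta_2)$, clipped to $[0,1]$, where $\delta_1'=\delta_1/2$. Then $(1-s)|\chi'|\lesssim1/\log(\delta_1/(2\delta_2))$. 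This is at most $\varepsilon_1/4$ if $\delta_2\leq(\delta_1/2)e^{-C/\varepsilon_1}$, which is exactly why $\delta_2$ is fixed afterwards. Concretely, we set $\chi(s)=\rho\bigl(\log(1-s)\bigr)$ with $\rho$ a smooth monotone step whose derivative is at most $(1+o(1))/\log(\delta_1/(2\delta_2))$. This keeps the smoothing errors under control and makes $\Gamma$ smooth at the junctions.

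Finally, extending by $s^{-1}$ on $(0,1-\delta_1]$ gives a smooth, nonincreasing $\Gamma$ satisfying \eqref{eq:r4-penalty-assumptions} with $C_\Gamma=1+C\delta_1$, and the plateau value $G_1\in[1,1+C\delta_1]$. The one step that forces the order of choices is the logarithmic cutoff; all other bounds are routine integrations.
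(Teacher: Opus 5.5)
Your proposal is correct and is essentially the paper's construction. The paper also sets $\Gamma'=-s^{-2}\chi$ on $[1-\delta_1,1]$, with $\chi$ a smooth cutoff in the logarithmic variable $\log(\delta_1/(1-s))/L$. This gives $(1-s)\Gamma''\leq 2\delta_1(1-\delta_1)^{-3}+\|\chi'\|_\infty/(L(1-\delta_1)^2)$, and the paper fixes $\delta_1$ first and then $\delta_2=\delta_1e^{-L}$ with $L$ large, which is exactly your order of choices.
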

\begin{proof}
Choose smooth nonincreasing $\chi:\R\to[0,1]$, equal to one on
$(-\infty,0]$ and zero on $[1,\infty)$.  For $s\geq1-\delta_1$ set
\[
 \Gamma(s)=\frac1{1-\delta_1}
       -\int_{1-\delta_1}^s u^{-2}
       \chi\left(\frac{\log(\delta_1/(1-u))}{L}\right)\,du,
\]
and set $\Gamma(s)=1/s$ below that interval.  The integrand is zero
near one.  With $v=\log(\delta_1/(1-s))/L$,
\[
 \Gamma'=-s^{-2}\chi(v),\qquad
 \Gamma''=2s^{-3}\chi(v)-\frac{s^{-2}\chi'(v)}{L(1-s)}\geq0.
\]
Consequently
\[
 (1-s)\Gamma''\leq
 \frac{2\delta_1}{(1-\delta_1)^3}
       +\frac{\|\chi'\|_\infty}{L(1-\delta_1)^2}.
\]
First choose $\delta_1$ small and then $L$ large.  The penalty is
constant on $s\geq1-\delta_2$, where $\delta_2=\delta_1e^{-L}$.
All derivatives match at both endpoints because $\chi$ is smooth
and constant on the indicated half-lines.  Its derivative is less
negative than that of $1/s$, so $\Gamma\geq1/s$; its upper bound
follows from $\Gamma\leq(1-\delta_1)^{-1}$ on the modified interval.
Monotonicity proves the assertion about $G_1$.
\end{proof}

\vskip0.16in

We now fix all angular tolerances needed below, including the target
$0<\omega<1/4$ in Proposition~\ref{prop:r7-actual-weak-reach}.
Choose $\varepsilon_1,\delta_1$ sufficiently small and then fix
$L,\delta_2,\Gamma$. These penalty parameters remain fixed as the
reach tends to infinity.

\vskip0.16in

\subsection{Phase clearing and tangent contact balls}

\begin{lemma}
\label{lem:r4-phase-ball}
There is a dimensional $C$ such that every positive solution of
$\Delta v=v^3-v$, $0<v\leq1$, on a neighborhood of the closed ball
$\overline B_r(x)$ satisfies
\[
 v(x)\geq\sqrt{1-\lambda_1/r^2}\geq1-C/r^2
 \quad(r^2\geq2\lambda_1),
\]
where $\lambda_1$ is the first Dirichlet eigenvalue of the unit
$n$-dimensional ball.
\end{lemma}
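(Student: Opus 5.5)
We compare $v$ with the scaled first Dirichlet eigenfunction of $B_r(x)$, using the fact that the equation $\Delta v=v^3-v$ forces $v$ to behave like a supersolution of a linear eigenvalue problem.

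Let $\phi>0$ be the first Dirichlet eigenfunction of $B_r(x)$, with $-\Delta\phi=\lambda_1r^{-2}\phi$ and $\phi=0$ on $\partial B_r(x)$. It is radial and maximal at the center, normalized so that $\phi(x)=1$. Set $\theta^2=1-\lambda_1/r^2$, which is positive since $r^2\ge2\lambda_1$.

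The natural comparison function is $w=\theta\phi$. However, $w^3-w$ is not directly comparable with a multiple of $w$ in the right direction. Instead, we treat $v$ as a supersolution of a linear equation with a frozen coefficient, via a sliding argument. For $t\in(0,1]$, consider $t\phi$. Wherever $t\phi\le\theta$, we have
\[
-\Delta(t\phi)-(t\phi)(1-(t\phi)^2)=t\phi\bigl(\lambda_1r^{-2}-1+t^2\phi^2\bigr)\le t\phi(\lambda_1r^{-2}-1+\theta^2)=0,
\]
since $\phi\le1$. So for every $t\le\theta$, $t\phi$ is a subsolution of $-\Delta w=w-w^3$ on $B_r(x)$. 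Meanwhile $v>0$ solves this equation, and $v>0$ on the compact closed ball, hence $v\ge c>0$ there.

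Define $t_*=\sup\{t\in[0,\theta]: t\phi\le v\text{ on }\overline B_r(x)\}$. Then $t_*>0$, because $\phi$ is bounded and $v\ge c$. Suppose $t_*<\theta$. Then $t_*\phi\le v$ with a touching point. The touching point is interior, since $\phi=0<v$ on the boundary. The difference $d=v-t_*\phi\ge0$ satisfies $-\Delta d\ge f(v)-f(t_*\phi)$ with $f(s)=s-s^3$, which is Lipschitz on $[0,1]$. Writing $f(v)-f(t_*\phi)=c(x)d$ with $c$ bounded gives $-\Delta d-c\,d\ge0$. By the strong maximum principle for nonnegative supersolutions, applied after absorbing the sign of $c$, $d\equiv0$ on the component. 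This contradicts $d>0$ on the boundary. Hence $t_*=\theta$, and evaluating at the center gives $v(x)\ge\theta\phi(x)=\theta$.

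Finally, $\sqrt{1-s}\ge1-s$ for $s\in[0,1]$ gives $v(x)\ge1-\lambda_1/r^2$, so $C=\lambda_1$ works. The only delicate step is the touching/strong maximum principle argument. It is handled by the standard form $-\Delta d+c^-d\ge c^+d\ge0$ for $d\ge0$, together with Hopf's strong maximum principle; a positive minimum is not needed, since the minimum of $d$ is zero and therefore nonpositive.
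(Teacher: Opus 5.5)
Your proof is correct, but the maximum-principle step is organized differently from the paper's. The paper fixes $w=\theta\phi$, which is a subsolution by your computation with $t=\theta$, and scales the solution rather than the subsolution. With $\Lambda=\max_{\overline B_r(x)}w/v$, if $\Lambda>1$ then $w-\Lambda v$ has an interior maximum $0$ at a contact point. There $-\Delta(w-\Lambda v)\le(\Lambda v-\Lambda^3v^3)-\Lambda(v-v^3)=-\Lambda(\Lambda^2-1)v^3<0$, which contradicts the second-derivative test.

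That one-shot argument uses the strict sublinearity $\Lambda f(v)>f(\Lambda v)$ of $f(s)=s-s^3$. It therefore needs neither a continuity parameter nor the strong maximum principle. Your sweep through the subsolutions $t\phi$, $0<t\le\theta$, uses only that $f$ is Lipschitz. It is thus the more robust method with respect to the nonlinearity, at the cost of the touching-point analysis.

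Two small points. First, when $t_*<\theta$, say why a touching point exists: otherwise $\min(v-t_*\phi)=\epsilon>0$, and $(t_*+\epsilon)\phi\le v$ because $\phi\le1$, contradicting maximality. Second, your own identity shows that for $t<\theta$ the function $t\phi$ is a \emph{strict} subsolution in the open ball, since $t\phi(\lambda_1r^{-2}-1+t^2\phi^2)\le t\phi(t^2-\theta^2)<0$ there. So at an interior touching point $p$ you get $-\Delta(v-t_*\phi)(p)>f(v(p))-f(t_*\phi(p))=0$. This directly contradicts $\Delta(v-t_*\phi)(p)\ge0$ and lets you drop the strong maximum principle.
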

\begin{proof}
Let $\varphi$ be the positive radial first eigenfunction of the unit
ball, with $\varphi(0)=\|\varphi\|_\infty=1$, and set
$w=\sqrt{1-\lambda_1/r^2}\,\varphi((\cdot-x)/r)$. Then $w=0$
on the boundary and $-\Delta w\leq w-w^3$. If
$\Lambda=\max_{\overline B_r}w/v>1$, the maximum is attained in the
interior. At a point of contact $w=\Lambda v$ the function
$w-\Lambda v\leq0$ has maximum zero, whereas
\[
 -\Delta(w-\Lambda v)\leq
 (\Lambda v-(\Lambda v)^3)-\Lambda(v-v^3)
 =-\Lambda(\Lambda^2-1)v^3<0,
\]
a contradiction. Thus $w\leq v$, giving the assertion at the center.
If only an open zero-free phase ball is specified in an application,
apply the assertion on its concentric ball of half the radius. After choosing the sign of the phase, the solution is positive on a neighborhood
of the closure of this smaller ball, and the same $Cr^{-2}$ bound
follows after changing the fixed dimensional constant.
\end{proof}

\vskip0.16in

\begin{theorem}
\label{lem:r5-distance-energy}\label{lem:near-critical-phase-decay}
Let $u$ be a bounded entire Allen--Cahn solution with nonempty zero
set $\Sigma$, and put $a=\sqrt2$. For every $0<\varepsilon<a$ there is
$C_\varepsilon=C(m,\varepsilon)$ such that
\begin{equation}\label{eq:near-critical-phase-decay}
 1-|u(x)|+q(x)\leq C_\varepsilon
 e^{-(a-\varepsilon)\operatorname{dist}(x,\Sigma)},\qquad
 e_u(x)\leq C_\varepsilon
 e^{-2(a-\varepsilon)\operatorname{dist}(x,\Sigma)}.
\end{equation}
For every fixed $k$, the corresponding bound with exponent
$a-\varepsilon$ holds for $|\nabla^k(u-\operatorname{sgn}u)|$
on $B_{1/2}(x)$ when $\operatorname{dist}(x,\Sigma)\geq2$, with a
constant depending only on $m,k,\varepsilon$.
For $x\notin\Sigma$, every nearest zero is regular and its chord
to $x$ is normal.
\end{theorem}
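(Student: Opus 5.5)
The plan is a barrier comparison on a zero-free ball, followed by interior regularity for the derivative bounds and a first-variation argument for the last assertion. Fix $x$ with $d=\operatorname{dist}(x,\Sigma)$ large (small $d$ is absorbed into $C_\varepsilon$, since $1-|u|+q\leq3$ and $e_u\leq1/2$). The ball $B_d(x)$ is zero free, so after replacing $u$ by $-u$ we may assume $0<u<1$ there. Put $w=1-u\in(0,1)$. It solves
\[
-\Delta w=-w(1-u)(1+u)\cdot\tfrac{u}{1-u}\cdot\ldots,
\]
more usefully $\Delta w=u(1+u)w=c(x)\,w$ with $c=u(1+u)$.

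The first step is to make $c$ close to $2$ on most of the ball. Lemma~\ref{lem:r4-phase-ball} gives $u\geq1-C/r^2$ at every point whose distance to $\Sigma$ is at least $r$. Choose $r_\varepsilon$ so large that $c\geq(a-\varepsilon/2)^2$ wherever $\operatorname{dist}(\cdot,\Sigma)\geq r_\varepsilon$. In particular this holds on the ball $B_{d-r_\varepsilon}(x)$.

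The second step is a comparison with the radial supersolution. Take
\[
\phi(y)=\frac{\cosh\bigl(\lambda|y-x|\bigr)}{\cosh(\lambda\rho)}\ \ \text{(or the modified Bessel analogue)},\qquad \lambda=a-\varepsilon,\quad \rho=d-r_\varepsilon .
\]
Then $\Delta\phi\leq(\lambda^2+(n-1)\lambda)\phi$. The dimensional first-order term spoils a pure $\cosh$, so I would use the exact radial solution $\Delta\phi=\lambda'^2\phi$ with $\lambda'^2=(a-\varepsilon/2)^2$. Its boundary-normalized value at the center is at most $C e^{-\lambda'\rho}\rho^{(n-1)/2}$, which is at most $C_\varepsilon e^{-(a-\varepsilon)\rho}$. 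Since $\Delta(w-\phi)\geq\lambda'^2(w-\phi)$ and $w\leq1=\phi$ on the boundary, the maximum principle gives $w\leq\phi$. Evaluating at the center gives $1-|u(x)|\leq C_\varepsilon e^{-(a-\varepsilon)d}$, and then $q=(1-|u|)(1+|u|)\leq2(1-|u|)$.

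The third step gives the gradient and derivative bounds. Interior Schauder estimates on $B_1(y)$ for the linear equation $\Delta w=cw$ (the coefficient $c$ has bounded derivatives of every order) give $|\nabla^kw|\leq C_k\sup_{B_1(y)}w$. The centers $y\in B_{1/2}(x)$ have distance at least $d-2$ to $\Sigma$, so $|\nabla^k(u-\operatorname{sgn}u)|\leq C e^{-(a-\varepsilon)d}$. Combining this with $W(u)\leq q^2$ yields $e_u\leq Ce^{-2(a-\varepsilon)d}$.

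The final assertion is the most delicate, since it needs a first-variation argument at a possibly critical zero. Let $y$ be a nearest zero to $x$. Then $B_d(x)$ is zero free and touches $\Sigma$ at $y$, with $u>0$ inside, say. Hopf's lemma applies to $u$, which satisfies $\Delta u=(u^2-1)u\leq0$ in the ball and vanishes at the boundary point $y$. It gives $\partial u/\partial n_{\rm out}(y)<0$, hence $\nabla u(y)\neq0$, so $y$ is regular. The zero set is then a smooth hypersurface near $y$ and is tangent to the sphere there, so $\nabla u(y)$ is parallel to $x-y$. This shows that the chord from $x$ to $y$ is normal. The main difficulty I expect is choosing the radial barrier so that the polynomial prefactor $\rho^{(n-1)/2}$ is absorbed by the $\varepsilon$ loss, while also keeping the constant uniform in the solution; the latter follows from the universal $r_\varepsilon$.
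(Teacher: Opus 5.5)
Your proposal is correct and follows essentially the same route as the paper. The shared steps are: phase-ball clearing to get $u(1+u)\geq(a-\varepsilon/2)^2$ outside a fixed collar of $\Sigma$; comparison of $w=1-u$ with the exact radial solution of $\Delta\phi=\lambda'^2\phi$ (this is the paper's spherical-average barrier $\Phi_R$, and its $\rho^{(n-1)/2}$ prefactor is absorbed by the $\varepsilon$ loss); interior estimates for the derivatives; and Hopf's lemma plus distance stationarity at a nearest zero.

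The differences are cosmetic and both of your variants are valid. You apply Hopf to $u$ using $\Delta u\leq0$ on the ball, rather than to $\Delta-1$ as the paper does. You bound $e_u$ through the gradient estimate, rather than through Modica's inequality $e_u\leq q^2/2$. The garbled first display and the discarded $\cosh$ aside play no role in your final argument.
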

\begin{proof}
Put $r=\operatorname{dist}(x,\Sigma)$, and flip the sign so that
$u>0$ on $B_r(x)$. Choose $a-\varepsilon<k_0<a$ and
$\delta>0$ with $(1-\delta)(2-\delta)>k_0^2$.
Lemma~\ref{lem:r4-phase-ball} supplies a fixed $M=M_\delta$
so that $u\geq1-\delta$ on $B_{r-M}(x)$ for $r\geq2M$.
With $R=r-M$ and $w=1-u$, therefore,
\[
 (-\Delta+k_0^2)w\leq0\quad\text{on }B_R(x),\qquad0\leq w\leq1.
\]
A positive comparison function is
\[
 \Phi_R(y)=C_{k_0}(1+R)^{(n-1)/2}e^{-k_0R}
 \fint_{\mathbb S^{n-1}}e^{k_0\omega\cdot(y-x)}\,d\omega.
\]
It solves $(-\Delta+k_0^2)\Phi_R=0$. On $|y-x|=R$, a cap of
angular radius $(1+R)^{-1/2}$ has normalized measure at least
$c_n(1+R)^{-(n-1)/2}$ and satisfies
$\omega\cdot(y-x)\geq R-1/2$. Thus one fixed $C_{k_0}$ makes
$\Phi_R\geq1$ on the boundary. The maximum principle and
$k_0>a-\varepsilon$ give
\[
 w(x)\leq C_{k_0}(1+R)^{(n-1)/2}e^{-k_0R}
 \leq C_\varepsilon e^{-(a-\varepsilon)r}.
\]
Use $q\leq2w$ and Modica's $e_u\leq q^2/2$; boundedness treats
$r<2M$. Fixed-radius interior elliptic estimates for
$\Delta w=u(1+u)w$, iterated with the global fixed-order bounds,
give all stated derivatives, since distance changes by at most one.
At a nearest zero $z\in\partial B_r(x)$ the Hopf lemma for
$\Delta-1$ applies to the positive function $u$, because
$(\Delta-1)u=u^3-2u<0$. Hence $\nabla u(z)\ne0$.
Stationarity of squared distance along this regular zero sheet
then gives $x-z\perp T_z\Sigma$.
\end{proof}

\vskip0.16in

\begin{lemma}
\label{lem:r5-Poisson}
Put $G(s)=1-g(s)^2$, $g(s)=\tanh(s/\sqrt2)$, and
$\mathscr L=-\Delta+2-G(x_n)$ on $B_P\subset\mathbb R^n$.
Let $h\geq0$ solve $\mathscr Lh=0$ with nonnegative boundary data $f$
on $\partial B_P\cap\{x_n>0\}$ and zero data on the lower hemisphere.
For every fixed $\zeta>0$ there is $P_\zeta=P(m,\zeta)$ such that,
for $P\geq P_\zeta$ and $x_*=(P/2)e_n$, one has
\begin{equation}\label{eq:r5-Poisson}
 h(0)\leq CP^{-m/2} e^{-P}\int_{\partial B_P\cap\{x_n>0\}}f\,dS,
 \qquad
 h(x_*)\geq c_\zeta
 e^{-(\sqrt2+\zeta)\sqrt5P/2}
 \int_{\partial B_P\cap\{x_n>0\}}f\,dS.
\end{equation}
Consequently $h(0)\leq C_\zeta P^C e^{\kappa P}h(x_*)$, where
$\kappa=(\sqrt2+\zeta)\sqrt5/2-1$.
\end{lemma}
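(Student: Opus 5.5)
We compare $h$ with explicit super- and subsolutions built from the free operator $-\Delta+2$ and its variant $-\Delta+1$. Since $0\le G\le1$, the potential $2-G(x_n)$ lies in $[1,2]$. It is close to $2$ away from the slab $|x_n|\le M$, with $2-G\ge 2-Ce^{-\sqrt2|x_n|}$.

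\textbf{Upper bound.} We use $2-G\ge1$, so $h$ is a subsolution of $-\Delta+1$. The maximum principle then bounds $h\le h_1$, where $h_1$ solves $(-\Delta+1)h_1=0$ in $B_P$ with data $f$ on the upper hemisphere and zero data on the lower one. Hence
\[
 h(0)\le\int_{\partial B_P}\mathcal P_1(0,y)f(y)\,dS(y),
\]
where $\mathcal P_1$ is the Poisson kernel of $-\Delta+1$ on $B_P$. This kernel is radially constant at the center, and equals $(|\partial B_P|\,\phi(P))^{-1}$ for the radial solution $\phi$ with $\phi(0)=1$. From $\phi(P)\asymp P^{-(n-1)/2}e^{P}$ and $|\partial B_P|\asymp P^{m}$ we get
\[
 \mathcal P_1(0,\cdot)\asymp P^{-m}P^{m/2}e^{-P}=P^{-m/2}e^{-P}.
\]
This is exactly the claimed factor $P^{-m/2}e^{-P}$, which explains the exponent $1$ in $\kappa$.

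\textbf{Lower bound.} We now use $2-G\le2$, so $h$ is a supersolution of $-\Delta+2$. Comparison gives $h\ge h_2$, where $h_2$ is the $(-\Delta+2)$-harmonic function in $B_P$ with the same boundary data. It then suffices to bound the Poisson kernel of $-\Delta+2$ from below, $\mathcal P_2(x_*,y)$, for $y$ on the upper hemisphere. Since $|x_*-y|\le\sqrt{P^2/4+P^2}=\sqrt5P/2$, the target is
\[
 \mathcal P_2(x_*,y)\ge c_\zeta e^{-(\sqrt2+\zeta)\sqrt5P/2}.
\]
To obtain it, write $\mathcal P_2(x_*,y)=-\partial_{\nu_y}G_B(x_*,y)$ with $G_B$ the Dirichlet Green function of $-\Delta+2$ on $B_P$, and show $G_B(x_*,w)\gtrsim \mathcal K(x_*-w)$ for $w$ at distance about $1$ inside the boundary near $y$.

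\textbf{Main obstacle: near-boundary lower bound for $G_B$.} Compare $G_B(x_*,\cdot)$ with the free kernel $\mathcal K(x_*-\cdot)$ minus a correction: the correction is the $(-\Delta+2)$-harmonic function with boundary values $\mathcal K(x_*-\cdot)$, and it must be shown to be a small fraction of $\mathcal K$ at points a unit distance from $\partial B_P$. I would first try a Harnack-chain argument:
\begin{enumerate}[label=(\alph*)]
 \item a chain of unit balls from $x_*$ to $y$ of length about $\sqrt5P/2$;
 \item the $(\sqrt2+\zeta)$ rate per unit length from \eqref{eq:r11-kernel-two-sided}, since $|\nabla\log\mathcal K|\le\sqrt2+o(1)$;
 \item the Hopf lemma at unit scale to pass from interior values to the normal derivative at $y$.
\end{enumerate}
Any polynomial losses are absorbed into the $\zeta$ margin once $P\ge P_\zeta$.

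\textbf{Conclusion.} Dividing the upper bound for $h(0)$ by the lower bound for $h(x_*)$ gives
\[
 h(0)\le C_\zeta P^{C}e^{\kappa P}h(x_*),
\]
with the polynomial factors collected into $P^C$.
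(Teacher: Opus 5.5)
Your reduction is the paper's. Since $(-\Delta+1)h=-(1-G)h\le 0$ and $(-\Delta+2)h=Gh\ge 0$, $h$ is sandwiched between the two constant-potential problems. Your center-density computation $\bigl(|\partial B_P|\,\phi(P)\bigr)^{-1}\asymp P^{-m/2}e^{-P}$ is exactly how the paper proves the upper bound.

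\textbf{The gap is the lower bound} $\mathcal P_2(x_*,y)\ge c_\zeta e^{-(\sqrt2+\zeta)|x_*-y|}$, which is the real content of the lemma. The rate must be $\sqrt2+\zeta$ with $\zeta$ small, because Theorem~\ref{thm:r5-positive-trace} needs $\kappa<\sqrt2/2$, i.e.\ roughly $\zeta<0.11$. Your three proposed mechanisms do not deliver this.
\begin{itemize}
\item A Harnack chain of unit balls for $G_B(x_*,\cdot)$ loses a fixed Harnack constant $C_H$ per ball. It therefore gives only $e^{-(\log C_H)|x_*-y|}$, with $\log C_H$ unrelated to $\sqrt2$. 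That is an exponential loss which no polynomial factor or $\zeta$-margin absorbs.
\item The bound $|\nabla\log\mathcal K|\le\sqrt2+o(1)$ is a property of the free kernel. It says nothing about the Dirichlet Green function, whose logarithmic gradient even blows up at $\partial B_P$.
\item The ``free kernel minus correction'' step is not proved. Near $\partial B_P$ the correction has the same exponential order as $\mathcal K$; in the half-space model their ratio is the fixed number $e^{-2\sqrt2\cos\theta}$. Showing it is a fraction below one therefore needs sharp near-boundary upper bounds for the correction, which you do not supply. The only cheap comparison, $B_P\subset\{x\cdot y<P^2\}$, bounds $G_B$ from above, not below.
\end{itemize}

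\textbf{The missing idea} is a comparison on a subdomain with an explicit solution decaying at a controlled rate. The paper takes a cylinder of fixed cross-sectional radius $B$ around the chord $[x_*,y]$. After fixed end pieces are removed, it lies in $B_P$, because $x_*$ has depth $P/2$ and the chord meets the sphere transversally. In this cylinder, $v=\phi_B(\text{transverse})\sinh(\mu s)$ solves $(-\Delta+2)v=0$. Here $s$ is arclength along the chord from the end near $y$, $\phi_B$ is the first Dirichlet eigenfunction of the $m$-disk of radius $B$, and $\mu=\sqrt{2+\lambda_1(B)}$. Choosing $B$ large makes $\mu<\sqrt2+\zeta$. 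The bound then follows from domain monotonicity of Green functions, fixed-ball Harnack at both ends, and an interior tangent-ball barrier at $y$ (your Hopf step).

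Alternatively, your chain idea can be rescued by a sharp Cheng--Yau estimate. Applying Bochner's formula to $\log v$ shows that positive solutions of $\Delta v=2v$ on $B_r$ satisfy $|\nabla\log v|\le\sqrt2+C/r$ on $B_{r/2}$. Apply this to $G_B(x_*,\cdot)$ along the chord, at the scale of the distance to $\partial B_P\cup\{x_*\}$. The loss is then only a power of $P$. Without one of these two ingredients, neither the lower bound nor the stated value of $\kappa$ is established.
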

\begin{proof}
For $1\leq V\leq2$, maximum-principle comparison sandwiches the
Dirichlet Poisson density between those for $-\Delta+1$ and
$-\Delta+2$. The former has center density
$[|\mathbb S^m|P^m M(P)]^{-1}$, where
$M(t)=\fint_{\mathbb S^m}e^{t\theta_1}\,d\theta$.
A cap of radius $P^{-1/2}$ gives $M(P)\geq cP^{-m/2}e^P$,
proving the upper bound.

\vskip0.16in

For the lower density from $x_*=(P/2)e_n$ to an upper-hemisphere
point $\xi$, choose a fixed transverse disk of radius $B$ with
$\sqrt{2+\lambda_1(B)}<\sqrt2+\zeta$. Propagate the positive Green
function along the chord $[x_*,\xi]$ using its first transverse
Dirichlet mode and the longitudinal hyperbolic-sine solution. The
cylinder lies inside $B_P$ after fixed end portions are removed:
$x_*$ has depth $P/2$ and the incidence cosine at $\xi$ is at least
$1/\sqrt5$. Fixed-ball Harnack comparisons at the two ends and an
interior tangent-ball barrier at $\xi$ therefore give inward Green
derivative at least $c_\zeta e^{-(\sqrt2+\zeta)|x_*-\xi|}$.
Here $B=B(m,\zeta)$ is fixed, and the end comparisons are uniform
in $P$ and in the upper-hemisphere point $\xi$, including the equator. Since $|x_*-\xi|\leq\sqrt5P/2$, integration against $f$
proves both bounds and their ratio. Nonnegative bounded data follow
by monotone approximation.
\end{proof}

\vskip0.16in

\begin{theorem}
\label{thm:r5-positive-trace}\label{lem:r4-tangent-phase-ball}
There are dimensional constants $r_0,C<\infty$ and $\gamma>0$ with
the following property.  Suppose $u:\mathbb R^n\to(-1,1)$ is an
entire Allen--Cahn solution, $B_r(c)$ contains no zero, $r\geq r_0$,
and $x\in\partial B_r(c)\cap\{u=0\}$.  Let $N=(c-x)/r$ be the
inward normal.  Then $x$ is regular and, with the sign of $u$ chosen
positive inside the ball,
\begin{equation}\label{eq:r5-positive-trace}
 D(x)+|A(x)|+|H_N(x)|\leq C/r,
 \qquad H_N(x)\leq Cr^{-1-\gamma}.
\end{equation}
Moreover, along the inward radius,
\begin{equation}\label{eq:r5-inward-U}
 U(x+sN)\geq s-C(1+s)/r\qquad(0\leq s\leq\log r).
\end{equation}
Only the mean-curvature conclusion is one-sided; no assertion
$|H_N|\leq Cr^{-1-\gamma}$ is made.
\end{theorem}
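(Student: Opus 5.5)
The plan is to compare $u$ with the positive radial Dirichlet solution of the ball and transfer the information to the tangent point. Let $w_r$ be the maximal (equivalently, the unique positive) solution of $\Delta w=w^3-w$ in $B_r(c)$ with $w=0$ on $\partial B_r(c)$.

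\emph{Step 1: ordering and regularity.} Sweep the family $\tau w_r$, $0<\tau\leq1$, as in the proof of Lemma~\ref{lem:r4-phase-ball}. Since $u>0$ on a neighborhood of the closed ball after shrinking slightly, the strong maximum principle gives $u\geq w_r$ in $B_r(c)$.

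A one-dimensional ODE analysis of the radial equation $w''+\frac{m}{\rho}w'=w^3-w$ gives
\[
 w_r(c+(r-s)\theta)\geq g(s)-C(1+s)/r\qquad(0\leq s\leq\log r),
\]
together with $\partial_Nw_r(x)\geq g'(0)-C/r=1/\sqrt2-C/r$. The drift term $m/\rho$ is $O(1/r)$ on this range and perturbs the profile linearly.

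Regularity of $x$ follows from the Hopf lemma exactly as in Theorem~\ref{lem:near-critical-phase-decay}. Since $u(x)=w_r(x)=0$ and $u\geq w_r$, we get $\partial_Nu(x)\geq\partial_Nw_r(x)$. Hence $|\nabla U(x)|=\sqrt2|\nabla u(x)|\geq1-C/r$ and $D(x)\leq C/r$. Monotonicity of $g^{-1}$ applied to $u\geq w_r$ along the inward radius gives \eqref{eq:r5-inward-U}.

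\emph{Step 2: the full second fundamental form.} Since $D(x)\leq C/r$, for $r\geq r_0$ the point lies in the good set of Lemma~\ref{lem:r9-zero-baseline}. There $|H|\leq CD\leq C/r$.

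The zero set lies outside $B_r(c)$ and touches it at $x$. With respect to the normal $N$, the smooth zero graph lies on the outer side of the sphere, so every principal curvature of $A_N$ is at least $-1/r$ in the convention $A=d\nu$. Since the trace is at most $C/r$, each eigenvalue is at most $C/r+(m-1)/r$. Therefore $|A(x)|\leq C/r$, and the two-sided bound in \eqref{eq:r5-positive-trace} follows.

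\emph{Step 3: the one-sided bound $H_N\leq Cr^{-1-\gamma}$.} This is the main obstacle; the two-sided information only gives $O(1/r)$. If $D(x)\leq r^{-1-\gamma}$ we are done by $|H|\leq CD$, so assume $\epsilon=D(x)\geq r^{-1-\gamma}$.

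By Lemma~\ref{lem:normalized-defect-limit}, $H_N$ is given up to $o(\epsilon)$ by $\tfrac12\partial_N$ of the normalized defect $F$, which satisfies $L_0F\geq0$. I would split $F$ on a ball $B_P(x)$ with $P=c_1\log r$ into a source potential and an $L_0$-harmonic part $h$. The source potential contributes a nonnegative sign by the evenness argument of Theorem~\ref{thm:positive-even-source}. For $h$, apply Lemma~\ref{lem:r5-Poisson} with $e_n=N$.

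The data on the inward hemisphere are controlled by the phase comparison of Step 1. At depth $P/2$, Lemma~\ref{lem:r4-accurate-ray} and the profile bound $U\geq s-C(1+s)/r$ force the defect at $x_*=x+(P/2)N$ to be at most $C(1+P)r^{-1}$ relative to the profile, without the $e^{2\sqrt2 P}$ growth allowed by Lemma~\ref{lem:r4-near-critical-growth}. The Poisson ratio $h(0)\leq C_\zeta P^Ce^{\kappa P}h(x_*)$ with $\kappa<\sqrt2$ then bounds the inward contribution to $\partial_Nh(0)$ by $r^{-1}P^Ce^{(\kappa-\sqrt2)P}\cdot(\text{weights})$, which is $r^{-1-\gamma}$ for small $c_1$ and $\zeta$.

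The outward contribution enters $\partial_N F$ with the favorable sign. This mirrors the signed functional $\Lambda_{P,+}$, which is nonnegative on outward-growing modes. Combining the two contributions gives $H_N\leq Cr^{-1-\gamma}$.

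I expect the delicate points to be the quantitative control of the $o(\epsilon)$ errors on a logarithmic window, where one must use Lemma~\ref{lem:r4-accurate-ray} rather than fixed-ball jets, and the bookkeeping of which hemisphere feeds which sign. If the Poisson route fails, the fallback is a second-order expansion of $u-w_r\geq0$ at $x$ in the tangential directions. That inequality directly controls the mean of $A_N+I/r$ from one side, and the gain $r^{-\gamma}$ would have to come from the exponential approach of $u$ to $g$ along the inward normal.
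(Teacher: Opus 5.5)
Your Step 2 is the paper's argument, and Step 3 follows the paper's outline: split $F=GD$ on $B_P$ with $P\sim\log r$, use evenness for the source part, use reflection for the outward hemisphere, and apply Lemma~\ref{lem:r5-Poisson} from $x_*$ at depth $P/2$. Two steps fail as written, however.

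\textbf{Step 1: the profile error must be relative to $g'$.} From $w_r\geq g(s)-C(1+s)/r$ you cannot obtain \eqref{eq:r5-inward-U}.
\begin{itemize}
\item The slope of the inverse is $(g^{-1})'(g(s))=1/g'(s)\approx e^{\sqrt2 s}/(2\sqrt2)$. At $s=\log r$ the error $C(1+\log r)/r$ exceeds $1-g(s)\approx2r^{-\sqrt2}$, so monotone inversion gives only $U\geq(\log r)/\sqrt2-O(\log\log r)$.
\item The same loss breaks Step 3. For $a=P/2+1$ you get only $a-U(0,a)\lesssim Pe^{\sqrt2P/2}/r$. Hence $D(x_*)\lesssim Pe^{\sqrt2P/2}/r$ and $F(x_*)=G(P/2)D(x_*)\lesssim P/r$. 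The factor $e^{-\sqrt2P/2}$ that has to beat $e^{\kappa P}$ is cancelled.
\end{itemize}
What you need is $u(x+sN)\geq g(s)-C(1+s)g'(s)/r-Ce^{-c\sqrt r}$. This does not follow from a generic statement that the drift ``perturbs the profile linearly''; it needs a barrier. The paper's barrier:
\begin{itemize}
\item Take $w_0(s)=g'(s)\int_0^s g'(t)^{-2}\int_t^\infty g'(v)^2\,dv\,dt$. It solves $(-\partial_s^2+3g^2-1)w_0=g'$, $w_0(0)=0$, with $|w_0|+|w_0'|\leq C(1+s)g'$.
\item For $C_0>2m$, $g-C_0w_0/r$ is a strict subsolution of $v''-\frac m{r-s}v'+v-v^3=0$ for depths $0<s<\sqrt r$.
\item After multiplying by $1-C_1e^{-c\sqrt r}$, phase decay puts it below $u$ on the inner sphere. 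The amplitude sweep (strict subsolution in the interior, Hopf at the outer sphere) then gives the relative bound.
\end{itemize}
If you keep $w_r$ as an intermediate, you must prove this same relative bound for it, so it buys nothing.

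\textbf{Step 3: no power gain without explicit error bounds.} This is the point you flagged as delicate, and as proposed it fails.
\begin{itemize}
\item Lemma~\ref{lem:normalized-defect-limit} is a compactness statement on fixed windows. Its $o(\epsilon)$ has no rate, so when $\epsilon=D(x)\sim r^{-1}$ it cannot give $O(r^{-1-\gamma})$.
\item $B_P$ with $P\to\infty$ is not a fixed window.
\item The actual $F$ does not satisfy $L_0F\geq0$ with an even source.
\end{itemize}
You need the un-normalized equation with explicit errors:
\begin{itemize}
\item Chaining relative jets over $B_{2P}$ with $P=\alpha\log r$ gives $\|D\|_{C^k}\leq C_kr^{-1+C_k\alpha}$ and $\|T\|_{C^k}+\|U-x_n\|_{C^k}\leq C_kr^{-1/2+C_k\alpha}$, where $T=\nabla^2U$.
\item The identity $(\boldsymbol p\cdot\nabla)T=-T^2-\tfrac12\nabla^2D$ gives $\partial_nT=O(r^{-1+C\alpha})$.
\item Replace the drift and potential by their profile values, and replace $|T(y,s)|^2$ by $|T(y,0)|^2$. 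This gives $(-\Delta+2-G)F=2G(s)|T(y,0)|^2+\mathcal E$ with $\|\mathcal E\|_\infty\leq Cr^{-3/2+C\alpha}$.
\item The frozen source is even, so its Dirichlet potential has zero normal derivative at the center.
\item The error potential contributes $O(r^{-3/2+C\alpha})$ to $F_s(0)$. It also enters $h_+(x_*)\leq F(x_*)+Cr^{-3/2+C\alpha}$, where it is multiplied by $e^{\kappa P}$; this is why $\alpha$ must be small.
\end{itemize}

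\textbf{Minor corrections.}
\begin{itemize}
\item Since $x_*$ sits at depth $P/2$, the gain is $e^{(\kappa-\sqrt2/2)P}$. The condition is therefore $\kappa<\sqrt2/2$, not $\kappa<\sqrt2$; it holds for small $\zeta$.
\item The outward sign comes from reflection comparison $h_-(y,s)\leq h_-(y,-s)$ on the inward half-ball, which gives $\partial_sh_-(0)\leq0$. It does not come from $\Lambda_{P,+}$, which concerns global positive solutions on $\mathbb R^m\times\mathbb R$.
\item With these estimates in place, the case split on $D(x)$ is unnecessary.
\end{itemize}
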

\begin{proof}
Translate $x=0$ and rotate $N=e_n$, choosing $u>0$ in the ball.
The phase-decay theorem gives, with fixed-order derivatives,
\begin{equation}\label{eq:r5-exp-phase}
 1-u(y)\leq Ce^{-c\operatorname{dist}(y,\Sigma)}.
\end{equation}
For inward radial depth $0<s<Q=\sqrt r$, put
\[
 w_0(s)=g'(s)\int_0^s(g'(t))^{-2}
                   \int_t^\infty(g'(v))^2\,dv\,dt.
\]
Then $w_0(0)=0$, $(-\partial_s^2+3g^2-1)w_0=g'$ and
$|w_0|+|w_0'|\leq C(1+s)g'$. For fixed $C_0>2m$, the positive
function $v_*=g-C_0w_0/r$ obeys
\[
 v_*''-\frac m{r-s}v_*'+v_*-v_*^3>0\qquad(0<s<Q).
\]
Indeed the leading term is $(C_0/r-m/(r-s))g'$, and the remainder
is bounded by $Cr^{-2}[(1+s)g'+(1+s)^2(g')^2]$.
Multiplication by $1-C_1e^{-cQ}$ preserves this strict inequality
and, by \eqref{eq:r5-exp-phase}, puts the inner boundary below $u$.
The trace on the outer boundary is zero. We compare by increasing the
amplitude of this subsolution. Positivity and the Hopf lemma permit a
small initial amplitude. The strict differential inequality excludes an
interior first contact, and the Hopf lemma excludes a first obstruction
on the boundary.
Smaller amplitudes remain strict because
$\mathcal F(av_*)=a\mathcal F(v_*)+(a-a^3)v_*^3>0$.
Thus
\[
 u(se_n)\geq g(s)-Cr^{-1}(1+s)g'(s)-Ce^{-c\sqrt r}
 \qquad(0\leq s\leq\log r).
\]
Inversion of $g$ gives \eqref{eq:r5-inward-U}; near zero use the
zero trace and the same radial comparison. At zero it gives
$\partial_Nu\geq1/\sqrt2-C/r$, hence regularity and $D\leq C/r$.
The zero-level identity and relative jets give $|H_N|\leq CD$.
The tangent ball gives $A_N\geq-r^{-1}\operatorname{Id}$; the trace
bound therefore yields $|A|\leq C/r$.

\vskip0.16in

For the improved upper trace take $P=\alpha\log r$, with $\alpha$
fixed small below. The relative defect estimates on unit balls imply, for
each derivative order used below,
\begin{equation}\label{eq:r5-growing-jets}
 \begin{gathered}
 \|D\|_{C^k(B_{2P})}\leq C_kr^{-1+C_k\alpha},\\
 \|T\|_{C^k(B_{2P})}+\|U-x_n\|_{C^k(B_{2P})}
 \leq C_kr^{-1/2+C_k\alpha},\quad T=D^2U.
 \end{gathered}
\end{equation}
Here $\boldsymbol p(0)=\sqrt{1-D(0)}e_n$ also controls the affine
part of $U$. Differentiating $D=1-|\boldsymbol p|^2$ gives
$\nabla_pT=-T^2-\tfrac12D^2D$; hence
$\partial_nT=O(r^{-1+C\alpha})$ on $B_P$.
For $F(y,s)=G(s)D(y,s)$ the exact defect equation implies
\begin{equation}\label{eq:r5-even-source}
 (-\Delta+2-G)F=2G(s)|T(y,0)|^2+\mathcal E,
 \qquad\|\mathcal E\|_\infty\leq Cr^{-3/2+C\alpha}.
\end{equation}
Indeed the drift and potential before conjugation are
$2\sqrt2u\boldsymbol p$ and $2q(1-D)$; replacing them by
$2\sqrt2g e_n$ and $2G$ costs
$O(r^{-1/2+C\alpha})(|D|+|\nabla D|)$.
The identities $G'/G=-\sqrt2g$, $G''=2G-3G^2$ give the displayed
operator, and the bound on $\partial_nT$ controls the replacement of
$|T(y,s)|^2$ by $|T(y,0)|^2$ on the right hand side.

\vskip0.16in

On $B_P$ decompose $F=F_{\rm ev}+F_{\rm err}+h_++h_-$.
The first two terms solve the zero-Dirichlet problems with right hand sides
$2G(s)|T(y,0)|^2$ and $\mathcal E$, respectively. The functions
$h_\pm$ solve the homogeneous equation and take the nonnegative trace
of $F$ on the corresponding hemisphere.
Thus $F_{\rm ev}\geq0$ is even,
$\|F_{\rm err}\|_\infty+|\nabla F_{\rm err}(0)|\leq
Cr^{-3/2+C\alpha}$. On the upper half-ball, reflection comparison
gives $h_-(y,s)\leq h_-(y,-s)$: on its curved boundary the left
side is zero and the right side is nonnegative, and the traces agree
on $s=0$. Hence $\partial_sh_-(0)\leq0$. Local Harnack gives
$|\partial_sh_+(0)|\leq Ch_+(0)$.
At $x_*=(P/2)e_n$, positivity gives
$h_+(x_*)\leq F(x_*)+Cr^{-3/2+C\alpha}$.
Since $D\leq2(1-\partial_sU)$, put $a=P/2+1$ and integrate
\eqref{eq:r5-inward-U} to obtain
\[
 \int_0^aD(0,s)\,ds\leq2[a-U(0,a)]\leq C(1+P)/r.
\]
Applying the relative Harnack inequality on unit balls along the segment
$[P/2,P/2+1]e_n$ gives
\[
 D(x_*)\leq C(1+P)/r,\qquad
 F(x_*)\leq C(1+P)r^{-1}e^{-\sqrt2P/2}.
\]
Lemma~\ref{lem:r5-Poisson} yields
\[
 h_+(0)\leq CP^Cr^{-1}e^{-(\sqrt2/2-\kappa)P}
       +CP^Cr^{-3/2+C\alpha}e^{\kappa P}.
\]
Choose $\zeta>0$ so that $\kappa<\sqrt2/2$, then $\alpha>0$ so
small that $(C+\kappa)\alpha<1/4$ for all fixed losses above.
Absorbing powers of $P$ gives $F_s(0)\leq Cr^{-1-\gamma}$ for
some fixed $\gamma>0$. Finally $G(0)=1$, $G'(0)=0$ and
$H_N=D_s(0)/(2|\boldsymbol p(0)|^2)$ prove the assertion.
\end{proof}

\vskip0.16in

\begin{proposition}
\label{prop:r4-actual-contact-regularity}
At every regular zero $d(z)>0$, and
\begin{equation}\label{eq:r4-chord-bounds}
 \max_i|\lambda_i(z)|\leq2/d(z),\qquad
 |\nu(z)\cdot(y-z)|\leq C_\Gamma |y-z|^2/d(z)
 \quad(y\in\Sigma).
\end{equation}
Here $1/\infty=0$. There is $d_*=d_*(m,\Gamma)$ such that, whenever
$d_*\leq d(z)<\infty$,
\begin{equation}\label{eq:r4-contact-defect}
 D(z)+a(z)\leq C_\Gamma/d(z).
\end{equation}
For the same large-reach threshold, every active off-diagonal contact
$y$ is regular and satisfies
\begin{equation}\label{eq:r4-receiving-defect}
 D(y)+a(y)\leq C_\Gamma/d(z).
\end{equation}
If $D\not\equiv0$, then $d$ is finite on $\Sigma_{\rm reg}$.  Moreover,
for every fixed physical ball $B_T$ there is $L_{u,T}<\infty$ such that
every sufficiently large-reach active pair with $y\in B_T$ satisfies
\begin{equation}\label{eq:r4-core-contact-localization}
 d(z)\leq L_{u,T},\qquad z\in B_{T+L_{u,T}}.
\end{equation}
In particular, active pairs whose receiving foot lies in the fixed unstable core
have uniformly bounded reach at their base points.
\end{proposition}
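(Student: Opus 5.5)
The argument proceeds in four steps.

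\emph{Step 1: positivity and the chord bounds.} At a regular zero $z$, the zero set near $z$ is a smooth graph over $T_z\Sigma$. A nearby $y$ therefore satisfies $|\nu(z)\cdot(y-z)|\leq C|y-z|^2$, and \eqref{eq:r4-penalty-assumptions} gives
\[
 |y-z|\,\Gamma\geq|y-z|^2/|\nu(z)\cdot(y-z)|\geq c.
\]
Distant zeros contribute at least their distance. Hence $d(z)>0$.

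The inequality $|\nu(z)\cdot(y-z)|\leq C_\Gamma|y-z|^2/d(z)$ follows directly from the definition. Writing $s=|\nu\cdot(y-z)|/|y-z|$, we have $d\leq|y-z|\Gamma(s)\leq C_\Gamma|y-z|/s$, which is the claim.

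For the principal curvatures, use the local graph $y=z+x'+f(x')\nu$ and the lower bound $\Gamma(s)\geq 1/s$. Then $d\leq|y-z|^2/|f(x')|$, so $|f(x')|\leq|x'|^2(1+o(1))/d$. Letting $x'\to0$ along each principal direction gives $|\lambda_i|\leq2/d$.

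\emph{Step 2: the large-reach defect bound \eqref{eq:r4-contact-defect}.} The key observation is that a large penalized reach forces large empty balls tangent at $z$ on both sides. Indeed, if $|y-z|\Gamma\geq d$ for all $y\in\Sigma$, then no zero lies in the region $\{|y-z|^2<d\,|\nu\cdot(y-z)|\}$. This region contains the two tangent balls of radius $d/2$ centred at $z\pm(d/2)\nu$.

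Apply Theorem~\ref{thm:r5-positive-trace} with $r=d/2\geq r_0$ to either ball. This gives
\[
 D(z)+|A(z)|\leq C/d,
\]
which is \eqref{eq:r4-contact-defect}. The threshold is $d_*=2r_0$. The positive trace theorem also shows that the contact point is regular, which is where we need $d_*$ large.

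\emph{Step 3: regularity and defect at the receiving foot \eqref{eq:r4-receiving-defect}.} At an active off-diagonal contact $y$, minimality says the level set $\{x:|x-z|\Gamma(|\nu(z)\cdot(x-z)|/|x-z|)<d\}$ contains no zeros, and $y$ lies on its boundary. I plan to show that this penalized level set contains a ball of radius $c\,d$ through $y$, with constant depending only on $\Gamma$. Near the plateau the set is a ball-like region. Away from the plateau $\Gamma=1/s$, so the set is a union of two tangent balls of radius $d/2$, and any boundary point $y$ admits an interior ball of comparable radius. The bounds $\Gamma\leq C_\Gamma/s$ and $|\Gamma'|\leq C_\Gamma s^{-2}$ control the curvature of the level set's boundary by $C/d$. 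Once this is in place, Theorem~\ref{thm:r5-positive-trace} applied at $y$ gives regularity and $D(y)+a(y)\leq C_\Gamma/d(z)$.

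I expect this interior-ball construction, uniformly near the transition region $1-\delta_1\leq s\leq1-\delta_2$ and near the angular boundary $s\to0$, to be the main obstacle. The smoothness of $\Gamma$ from Lemma~\ref{lem:r7-small-penalty} is what should rescue it.

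\emph{Step 4: finiteness and localization.}

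For finiteness, suppose $d(z)=\infty$ at some regular zero. Then zero-free half-spaces on both sides of the plane through $z$ force $\Sigma$ to be that hyperplane. Applying Theorem~\ref{thm:r5-positive-trace} with $r\to\infty$ gives $D(z)=0$, and Lemma~\ref{lem:audited-relative-defect} then gives $D\equiv0$, contradicting $D\not\equiv0$.

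For the localization \eqref{eq:r4-core-contact-localization}, argue by contradiction. Take active pairs with $y_j\in B_T$ and $d(z_j)\to\infty$. By \eqref{eq:r4-receiving-defect}, $D(y_j)\to0$. Compactness gives $y_j\to y\in\overline B_T$, and continuity gives $D(y)=0$, which Lemma~\ref{lem:audited-relative-defect} excludes for a nonplanar solution. Hence $d(z)\leq L_{u,T}$.

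Finally, $|z-y|\leq d(z)$ because $\Gamma\geq1$. This gives $z\in B_{T+L_{u,T}}$.
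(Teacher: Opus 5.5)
Your outline follows the paper's proof step for step: the chord bound, then zero-free tangent balls and Theorem~\ref{thm:r5-positive-trace} at $z$; a rolling ball inside the penalized region at the receiving foot; $d=\infty$ giving $D(z)=0$ plus relative Harnack; and $\min_{\overline B_T}D>0$ for the localization. However, Step~2 as written rests on a false inclusion.

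From $\Gamma(s)\geq 1/s$ you get $|x-z|\Gamma\geq|x-z|^2/|\nu\cdot(x-z)|$. So the zero-free set $\{|x-z|\Gamma<d\}$ is \emph{contained in} $\{|x-z|^2<d\,|\nu\cdot(x-z)|\}$, not the other way round. The balls of radius $d/2$ need not be zero-free. For two parallel sheets at distance $\ell$ (e.g.\ the periodic one-dimensional solutions) one has $d=G_1\ell$. The ball of radius $d/2$ centred at $z+\frac d2\nu(z)$ then contains the opposite foot $z+\ell\nu(z)$ whenever $G_1>1$, which holds for the penalty constructed in Lemma~\ref{lem:r7-small-penalty}.

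The repair is already in your Step~1. The chord bound $|\nu\cdot(y-z)|\leq C_\Gamma|y-z|^2/d$ uses the \emph{upper} bound $\Gamma\leq C_\Gamma s^{-1}$. It says exactly that the balls of radius $d/(2C_\Gamma)$ centred at $z\pm\frac{d}{2C_\Gamma}\nu(z)$ contain no zeros. This is the paper's $c_\Gamma d(z)$, and the threshold becomes $d_*=2C_\Gamma r_0$. The same direction slip appears, harmlessly, in your principal-curvature bound: what you need there is $\Gamma(s)=s^{-1}$ for small $s$ (an upper bound), not $\Gamma\geq s^{-1}$.

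Step~3, which you leave as a plan, is closed in the paper by scaling. The set $\mathcal O_z=\{z+te:N\cdot e>0,\ 0<t<d(z)/\Gamma(N\cdot e)\}$ is the $d(z)$-dilate of one fixed bounded domain depending only on $\Gamma$. Its boundary has three pieces:
\begin{itemize}
\item near $z$, the sphere $|x-z|^2=d\,N\cdot(x-z)$, since $\Gamma=s^{-1}$ there;
\item near the axis, the sphere of radius $d/G_1$ about $z$, since $\Gamma$ is constant on the plateau;
\item in between, the smooth radial graph $1/\Gamma(s)$.
\end{itemize}
After scaling it is therefore a fixed smooth compact embedded hypersurface.

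A curvature bound of order $1/d$, which is what you propose to prove, does not by itself give a rolling ball. You also need that the rest of the boundary stays a fixed distance away from each boundary point, and compactness of the fixed rescaled domain supplies this. A uniform interior ball radius $c_\Gamma$ for the fixed domain then scales to a zero-free tangent ball of radius $c_\Gamma d(z)$ at $y$. Theorem~\ref{thm:r5-positive-trace} then gives regularity of $y$ and \eqref{eq:r4-receiving-defect}. Steps~1 and~4 are otherwise as in the paper.
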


\vskip0.16in

\begin{proof}
For nearby points, a local graph gives a positive lower bound for the
chord objectives. Outside the graph neighborhood, the chord length itself
gives a positive lower bound.
Along a principal geodesic the argument of $\Gamma$ tends to zero,
while the objective tends to $2/|\lambda_i|$, proving the first inequality in
\eqref{eq:r4-chord-bounds}.  The upper bound on $\Gamma$ proves the second.
Consequently the two balls with centers $z\pm c_\Gamma d(z)\nu(z)$ and
radius $c_\Gamma d(z)$ contain no zeros, for a sufficiently small fixed
$c_\Gamma$.  Theorem~\ref{lem:r4-tangent-phase-ball} proves
\eqref{eq:r4-contact-defect}.

For an active off-diagonal chord choose $N=\pm\nu(z)$ so that
$N\cdot(y-z)>0$.  The open set
\[
 \mathcal O_z=\{z+te:N\cdot e>0,\ 0<t<d(z)/\Gamma(N\cdot e)\}
\]
contains no zero by definition of $d$.  Its boundary contains $y$.
After scaling by $d(z)$ it is a fixed smooth compact embedded
hypersurface.  Near $z$ its equation is the sphere equation
$|x-z|^2=d(z)N\cdot(x-z)$, since $\Gamma(s)=1/s$ near zero.
Elsewhere it is the smooth positive radial graph $1/\Gamma(s)$.
It therefore has an interior rolling ball of a fixed positive radius
depending only on $\Gamma$.  To see that the radius is uniform, use the local $C^2$ graph bounds
near each boundary point. After removing a fixed neighborhood of the
point, the remaining compact boundary has positive separation from it.  Decreasing the local
ball radius then prevents contact with any other part of the boundary.
Scaling back gives a zero-free tangent ball at $y$ of radius
$c_\Gamma d(z)$.  The same theorem proves
\eqref{eq:r4-receiving-defect}.

\vskip0.16in

If $d(z)=\infty$, the base chord inequality supplies tangent phase
balls of every radius.  The lemma then gives $D(z)=0$.  The relative
Harnack estimate forces $D\equiv0$.  Thus $D\not\equiv0$ excludes
infinite reaches.  In this case $D>0$ everywhere, again by relative
Harnack.  Set $b_{u,T}=\min_{\overline B_T}D>0$.
Equation~\eqref{eq:r4-receiving-defect} gives
$d(z)\leq C_\Gamma/b_{u,T}$ whenever $y\in B_T$; the chord length
is at most $d(z)/G_1\leq d(z)$, giving
\eqref{eq:r4-core-contact-localization}.
\end{proof}

\vskip0.16in

\subsection{Ordinary tubes}

\begin{lemma}\label{lem:r5-ordinary-tubes}
There is $c_\Gamma>0$ such that the normal map
\[
 (z,t)\longmapsto z+t\nu(z),\qquad
 z\in\Sigma_{\rm reg},\quad |t|<c_\Gamma d(z),
\]
is jointly injective. Every image has $z$ as its unique nearest zero. Its Jacobian satisfies fixed positive upper and lower bounds. Consequently, for $R\geq1$ and $L>0$,
\begin{equation}\label{eq:r5-reach-area}
 \mathcal H^m(\{z\in\Sigma_{\rm reg}\cap B_R:d(z)\geq L\})
 \leq C_\Gamma R^n/\min\{L,R\}.
\end{equation}
\end{lemma}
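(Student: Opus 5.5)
The plan is to use the zero-free regions supplied by the penalized reach: these control the curvature, force the nearest-point structure, and turn the area bound into a volume comparison. The bound \eqref{eq:r5-reach-area} then follows by counting disjoint tube volume inside a slightly larger ball.

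\textbf{Step 1: zero-free balls and nearest points.} For regular $z$, \eqref{eq:r4-chord-bounds} and the construction in the proof of Proposition~\ref{prop:r4-actual-contact-regularity} show that the two balls $B_{c d(z)}(z\pm c d(z)\nu(z))$ contain no zeros, for a small fixed $c=c(\Gamma)$. Take $c_\Gamma\leq c/2$. For $x=z+t\nu(z)$ with $|t|<c_\Gamma d(z)$, the ball $B_{|t|}(x)$ lies inside the zero-free ball on the same side and touches $\Sigma$ only at $z$. Hence $\operatorname{dist}(x,\Sigma)=|t|$ and $z$ is the unique nearest zero.

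\textbf{Step 2: injectivity and Jacobian bounds.} Injectivity follows directly from Step 1. If $z+t\nu(z)=\tilde z+\tilde t\nu(\tilde z)$, then both $z$ and $\tilde z$ are the unique nearest zero of the common image, so $z=\tilde z$ and then $t=\tilde t$. For the Jacobian, note that $J=\det(I+tA)$ and \eqref{eq:r4-chord-bounds} gives $|\lambda_i|\leq2/d(z)$. Therefore $|t\lambda_i|\leq2c_\Gamma\leq1/2$ once $c_\Gamma\leq1/4$, so $J$ lies between $2^{-m}$ and $(3/2)^m$.

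\textbf{Step 3: the area estimate.} Set $\ell=c_\Gamma\min\{L,R\}$ and $S=\{z\in\Sigma_{\rm reg}\cap B_R:d(z)\geq L\}$. The tube map restricted to $S\times(-\ell,\ell)$ is injective by Step 2, and its image lies in $B_{R+\ell}\subset B_{2R}$. By the area formula and the lower Jacobian bound,
\[
2\ell\,2^{-m}\,\mathcal H^m(S)\leq|B_{2R}|\leq CR^n ,
\]
which gives $\mathcal H^m(S)\leq C_\Gamma R^n/\min\{L,R\}$. Measurability of $S$ holds because $d$ is upper semicontinuous on $\Sigma_{\rm reg}$: it is an infimum of continuous functions of $z$ (the chord objective varies continuously with $z$ through $\nu(z)$). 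If preferred, one can instead apply the area formula to compact subsets of $S$ and take a supremum.

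\textbf{Main obstacle.} The delicate point is Step 1: confirming that the zero-free balls really are tangent at $z$ with radius comparable to $d(z)$, uniformly, including on the plateau region of $\Gamma$. This follows from the second inequality in \eqref{eq:r4-chord-bounds}. Any zero $y$ satisfies $|\nu\cdot(y-z)|\leq C_\Gamma|y-z|^2/d$, whereas every point of the ball $B_\rho(z+\rho\nu)$ satisfies $\nu\cdot(y-z)>|y-z|^2/(2\rho)$. Choosing $\rho<d/(2C_\Gamma)$ therefore excludes zeros from that ball, and the same argument handles the opposite side.
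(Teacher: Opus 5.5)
Your proposal is correct and follows the paper's proof essentially step by step. The paper gets the unique nearest zero from the chord bound through the inequality $|z+t\nu(z)-y|^2-t^2\geq(1-2C_\Gamma|t|/d(z))|y-z|^2$, which is the same computation as your tangent zero-free ball argument, and then, as you do, deduces injectivity, the Jacobian bounds from $|\lambda_i|\leq2/d$, and the area estimate by the area formula on $|t|<c_\Gamma\min\{L,R\}$.
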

\begin{proof}
The actual chord bound gives, for any other zero $y$,
\[
 |z+t\nu(z)-y|^2-t^2
 \geq\left(1-2C_\Gamma |t|/d(z)\right)|y-z|^2>0
\]
when $c_\Gamma$ is sufficiently small. Thus $z$ is the unique closest zero. Two normal images can coincide only if their bases and signed depths agree. Since $|\lambda_i|\leq2/d$, the $m$ factors $1+t\lambda_i$ stay between two fixed positive constants. Apply the area formula to the normal interval $|t|<c_\Gamma\min\{L,R\}$ over the indicated bases. Its image lies in $B_{(1+c_\Gamma)R}$, and its volume is at least a fixed multiple of $\min\{L,R\}$ times the surface area. This proves \eqref{eq:r5-reach-area}. The countable regular-chart area formula gives the same conclusion for measurable subsets.
\end{proof}

\subsection{Semiconcavity of the penalized reach}
\label{sec:coarse-actual-reach-gradient}

\begin{theorem}
\label{lem:r4-reach-semiconcavity}
\label{prop:coarse-actual-reach-gradient}
Suppose $D\not\equiv0$. The actual reach is locally the infimum
of a compact family of smooth functions of the base point, with a common local
upper Hessian bound. Thus it is locally semiconcave and locally
Lipschitz on $\Sigma_{\rm reg}$, and the singular part of its
surface Laplacian is nonpositive. Moreover, there is
$C_\Gamma=C(m,\Gamma)$ such that
\begin{equation}\label{eq:coarse-actual-reach-gradient}
 |\nabla_\Sigma d|\leq C_\Gamma(1+d)^2
 \quad\text{almost everywhere on }\{D\leq1/4\}\cap\Sigma.
\end{equation}
\end{theorem}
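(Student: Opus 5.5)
We represent the reach near a fixed regular zero $z_0$ as an infimum of smooth functions of the base point. Since $D\not\equiv0$, Proposition~\ref{prop:r4-actual-contact-regularity} gives $0<d(z_0)<\infty$, and the chord bound \eqref{eq:r4-chord-bounds} keeps competitors at angles bounded away from tangency except near the diagonal. On a small regular graph patch $\Sigma_0\ni z_0$, with normal field $\nu$, we split the competitors $y$ into two families. The first family consists of zeros with $|y-z_0|\geq\rho$, for a small fixed $\rho$. Only those with $|y-z_0|\leq 2C_\Gamma d(z_0)$ matter, since $\Gamma\geq1$, so they form a compact subset of $\Sigma$; singular zeros are allowed here. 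The second family consists of nearby zeros lying on the graph itself.

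For a far competitor, the objective
\[
 f_y(z)=|y-z|\,\Gamma\bigl(|\nu(z)\cdot(y-z)|/|y-z|\bigr)
\]
is smooth in $z$ wherever the angle is nonzero. On the plateau region $s\geq1-\delta_2$ it equals $G_1|y-z|$. Away from zero angle, the absolute value is resolved by choosing a sign locally. Competitors at nearly zero angle have objective $\geq|y-z|/s\to\infty$, so they never attain the infimum and may be discarded uniformly. The $C^2$ norms of $f_y$ are then bounded uniformly in $y$ over the compact family, using $|\Gamma'|\leq C_\Gamma s^{-2}$, a bounded $\Gamma''$ on the relevant angle range, and the smooth graph bounds for $\nu$.

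For near competitors $y=\exp_z(v)$ with $|v|<\rho$, the objective tends to $2/|\lambda|$ in principal directions. Here we parametrize by $(z,\hat v,|v|)$, which handles the diagonal limit. Writing $\nu(z)\cdot(y-z)=\tfrac12 A(v,v)+O(|v|^3)$ and using $\Gamma(s)=1/s$ near zero, the objective becomes $2|v|^2/|A_z(v,v)+O(|v|^3)|$. This is smooth in $(z,v)$ down to $v=0$ as a blow-up function, with diagonal values $2/|A_z(\hat v,\hat v)|$. We must check that this extension is smooth rather than merely continuous. Where $A_z(\hat v,\hat v)$ is near zero the objective is large and irrelevant, so only a compact parameter set with a uniform Hessian bound matters. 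The infimum of a compact family with uniform upper Hessian bounds is semiconcave. Semiconcavity gives local Lipschitz continuity, and by Alexandrov the distributional surface Laplacian has a nonpositive singular part.

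For \eqref{eq:coarse-actual-reach-gradient}, at a differentiability point the gradient of $d$ equals $\nabla_z f$ at any attaining contact, by the envelope argument. On $\{D\leq1/4\}$ Lemma~\ref{lem:r9-zero-baseline} gives $a\leq C\sqrt D\leq C$. Differentiating $f_y$ in $z$, the $|y-z|$ factor contributes $\Gamma\leq C_\Gamma$. The angle factor contributes $|y-z|\,|\Gamma'|\,|\nabla_z s|$. Here $|\nabla_z s|\leq C(a+|y-z|^{-1})$, and at an active contact $s\geq|y-z|/(C_\Gamma d)$, so $|\Gamma'|\leq C_\Gamma d^2/|y-z|^2$. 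Combined with $|y-z|\leq d$, the near-diagonal case should give at most $C_\Gamma(1+d)^2$. Diagonal contacts are handled the same way through the blow-up parametrization, since $\nabla_z(2/|A_z(\hat v,\hat v)|)$ involves $|\nabla A|/a^2$. That quantity is controlled by $d^2$ because $a\geq 2/d$ there, and by the relative jet bounds $|\nabla A|\leq C\sqrt D\leq C$.

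The main obstacle is the uniform smoothness of the near-diagonal family as $v\to0$, together with the uniform Hessian bound across the transition between the $\Gamma=1/s$ zone and the plateau zone. The plan is to invoke the lemma's guarantee that $\Gamma=1/s$ exactly for small $s$, so that the blow-up is explicit. For $s$ bounded below, $f_y$ is a composition of smooth maps with uniformly bounded derivatives.
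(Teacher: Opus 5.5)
Your semiconcavity argument follows the paper's route. You split competitors into two kinds. For long chords, $\Gamma(s)\geq1/s$ forces $s\geq\ell/(2d)$ on the relevant competitors, so the objective is smooth with uniform $C^2$ bounds and no regularity at $y$ is needed. For short chords on the base graph, $\Gamma=1/s$ turns the objective into a quotient $N/|K|$, which is smooth down to the diagonal after dividing numerator and denominator by the square of the chord parameter.

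The paper works in graph coordinates $y=X(v+t\omega)$, with $N=|\int_0^1DX(v+\theta t\omega)\omega\,d\theta|^2\geq1$ and $K=\nu(X(v))\cdot\int_0^1(1-\theta)D^2X(v+\theta t\omega)[\omega,\omega]\,d\theta$. Your exponential-map version is equivalent once you do two things. First, write out this integral Taylor remainder; that settles the smooth-versus-continuous question you raise. Second, fix the relevant parameters by a value cutoff at $z_0$ and shrink the patch once, so that $|K|$ stays uniformly bounded below. A small correction: the lower bound on the angle comes from $\Gamma(s)\geq1/s$, not from the chord bound, which is an upper bound.

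The genuine gap is in \eqref{eq:coarse-actual-reach-gradient} at short off-diagonal contacts, where $0<\ell=|y-z|\ll1$. There you differentiate $f_y$ with $y$ frozen and bound the two terms separately.
\begin{itemize}
\item At an active contact $\Gamma(s)=d/\ell$, not $\Gamma\leq C_\Gamma$, so the first term is $d/\ell$.
\item Your angle term is at most $Cd^2(a/\ell+1/\ell^2)$. Even with the sharper $|\nabla_zs|\leq a+s/\ell$ and $s=\ell/d$, it is $d^2(a+1/d)/\ell$.
\end{itemize}
Both terms are unbounded as $\ell\to0$, and $|y-z|\leq d$ does not help. The true gradient is bounded only because these two $O(1/\ell)$ terms cancel. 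That cancellation rests on first-order optimality of the contact direction, which the triangle inequality discards.

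The repair uses your own blow-up. At a differentiability point, any smooth branch touching $d$ from above with equality has gradient $\nabla_\Sigma d$. So use the branch $N/|K|$ for every short chord, not only at $v=0$. On the active branch $N\geq1$ and $|K|=N/d$, and the uniform fourth-order graph bounds on $\{D\leq1/4\}$ give $|\nabla N|+|\nabla K|\leq C$. Hence $|\nabla(N/|K|)|\leq|\nabla N|\,d/N+|\nabla K|\,d^2/N\leq C(d+d^2)$.

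Finally, for the constant to depend only on $m$ and $\Gamma$, your splitting radius $\rho$ must be the universal graph radius available on $\{D\leq1/4\}$. Then the long-chord bound $d/\ell+Cd^2(a/\ell+1/\ell^2)\leq C_\rho(1+d)^2$ is uniform.
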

\begin{proof}
On a compact regular graph of base points $X(v)=(v,h(v))$, local graph
geometry and a fixed competitor give $0<c\leq d\leq C$.
Relevant long chords $\ell=|y-X(v)|\geq\rho$ have
$s\geq\ell/(2C)\geq\rho/(2C)$ and $\ell\leq2C/G_1$; hence
$\ell\Gamma(s)$ has uniformly bounded derivatives with respect to $v$ through
order two, without any regularity assumption at $y$.
For short chords, choose $\rho$ so the penalty is $1/s$ and write
\[
 \phi_{t,\omega}(v)=\frac{N(v,t,\omega)}{|K(v,t,\omega)|},
 \quad |\omega|=1,\quad |t|\leq\rho,
\]
where the smooth extensions to $t=0$ are
\[
 \begin{aligned}
 N&=\left|\int_0^1DX(v+\theta t\omega)\omega\,d\theta\right|^2\geq1,\\
 K&=\nu(X(v))\cdot\int_0^1(1-\theta)
 D^2X(v+\theta t\omega)[\omega,\omega]\,d\theta.
 \end{aligned}
\]
A branch of value at most $2C$ has $|K|\geq1/(2C)$, so $K$ has a fixed sign
and the branch has uniformly bounded first and second derivatives with respect to $v$.
The compact parameters include all diagonal limits. Enlarging the
value cutoff by a fixed factor and shrinking the graph patch once
preserves these denominator bounds by uniform continuity. The resulting
compact family represents every minimizing sequence, including its
off-diagonal or tangent-direction limit. Subtract a common quadratic
from its branches: their infimum is concave. This proves local
semiconcavity and Lipschitz continuity. The singular Hessian is a
nonpositive matrix measure, so its metric trace is nonpositive; the
lower-order metric term is locally bounded.

When $D\leq1/4$, universal graph radius $\rho$ and fourth-order
graph bounds follow from $|\nabla U|\geq\sqrt3/2$.
At a differentiability point an active branch has derivative $\nabla d$.
For a long chord, $s\geq\ell/d$, $|\nabla\ell|\leq1$,
$|\nabla s|\leq C+\ell^{-1}$ and
$|\Gamma'|\leq C_\Gamma s^{-2}$ give
\[
 |\nabla(\ell\Gamma(s))|\leq
 C_\Gamma(d/\ell+d^2/\ell+d^2/\ell^2)
 \leq C_{\Gamma,\rho}(1+d)^2.
\]
For a short or diagonal branch, $N\geq1$,
$|\nabla N|+|\nabla K|\leq C$ and $|K|=N/d$ give
$|\nabla\phi_{t,\omega}|\leq C(d+d^2)$.
\end{proof}

\begin{corollary}
\label{cor:coarse-gradient-profile-exhaustion}
Let $f:[t_{\min},\infty)\to(0,1]$ be locally Lipschitz,
$|f'|\leq C_f f$, and a positive multiple of $e^{-\kappa t/2}$
for all sufficiently large $t$, with fixed $\kappa>0$. On any good
set $\mathcal G\subset\Sigma_g\cap\{d\geq t_{\min}\}$, the restriction
$f(d)$ is locally Lipschitz and
\[
 |\nabla_\Sigma f(d)|
 \leq C_\Gamma(1+d)^2|f'(d)|\leq C_{\Gamma,f}<\infty
 \quad\hbox{almost everywhere}.
\]
Consequently $\int_{\mathcal G}\chi^2|\nabla_\Sigma f(d)|^2\,dS<\infty$
for every fixed spatial weight satisfying
$0<\chi(z)\leq C_\chi e^{-c_\chi|z|}$, with $c_\chi>0$.
In the planar case put $f(\infty)=0$.
\end{corollary}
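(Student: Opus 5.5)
The plan is to deduce the corollary from the semiconcavity theorem (Theorem~\ref{lem:r4-reach-semiconcavity}) by the chain rule for Lipschitz functions. Then we control the resulting factor $(1+d)^2|f'(d)|$ using the exponential profile of $f$.

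\emph{Local Lipschitz property.} By Theorem~\ref{lem:r4-reach-semiconcavity}, $d$ is locally Lipschitz on $\Sigma_{\rm reg}$ (when $D\not\equiv0$), and $\mathcal G\subset\Sigma_g\subset\Sigma_{\rm reg}$. On $\mathcal G$ we have $d\geq t_{\min}$, so $d$ takes values in the domain of $f$. On each compact piece, $d$ ranges over a compact subinterval, since $d$ is locally bounded there. Hence the composition $f\circ d$ of two locally Lipschitz maps is locally Lipschitz. The set $\mathcal G$ may fail to be open. In that case we understand the statement for a Lipschitz extension of $f$ to $(t_{\min}-1,\infty)$ composed with $d$ on a neighborhood. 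Alternatively we restrict to density points, where tangential gradients of the restriction agree a.e.

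\emph{Chain rule and the bound.} For Lipschitz $f$ and Lipschitz $d$, the chain rule $\nabla_\Sigma f(d)=f'(d)\nabla_\Sigma d$ holds a.e. There is one subtlety: on the level sets $\{d=t\}$ with $t$ in the null set where $f$ is not differentiable, we have $\nabla_\Sigma d=0$ a.e. So we interpret the product as zero there, and the identity holds a.e. On $\mathcal G\subset\{D<\delta\}\subset\{D\leq1/4\}$, estimate~\eqref{eq:coarse-actual-reach-gradient} gives $|\nabla_\Sigma d|\leq C_\Gamma(1+d)^2$ a.e. This proves the first inequality. For the second, note that $|f'|\leq C_f f\leq C_f$, and for $t\geq T_0$ we have $f=c\,e^{-\kappa t/2}$, so $|f'(t)|=(\kappa/2)c\,e^{-\kappa t/2}$. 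Then $\sup_{t}(1+t)^2|f'(t)|$ is finite: on $[t_{\min},T_0]$ the function is bounded by $C_f(1+T_0)^2$, and on $[T_0,\infty)$ the polynomial is beaten by the exponential. This gives $C_{\Gamma,f}$.

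\emph{Finiteness of the weighted integral.} The integrand is bounded by $C_{\Gamma,f}^2C_\chi^2e^{-2c_\chi|z|}$. We integrate over dyadic shells $\{k\leq|z|<k+1\}$. Lemma~\ref{lem:r9-zero-baseline} gives $\mathcal H^m(\Sigma_g\cap B_{k+1})\leq C(2+k)^{m+1}$, so the integral is at most $C\sum_k(2+k)^{m+1}e^{-2c_\chi k}<\infty$. In the planar case, $d\equiv\infty$ on $\Sigma$ and $f(\infty)=0$, so $f(d)\equiv0$ and all claims are trivial.

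The only delicate point is the a.e.\ chain rule together with the possible non-openness of $\mathcal G$. I would handle it by the standard fact that $\nabla_\Sigma d=0$ a.e.\ on preimages of null sets, applied in regular graph charts.
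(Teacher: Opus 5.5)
Your proposal is correct and follows essentially the same route as the paper. Both arguments combine the coarse reach-gradient bound \eqref{eq:coarse-actual-reach-gradient} with the Lipschitz chain rule, using $\nabla_\Sigma d=0$ almost everywhere on level sets over the null set of corners of $f$. Both then use the exponential tail of $f$ to bound $(1+t)^2|f'(t)|$, and polynomial area growth of $\Sigma_g$ against the exponential spatial weight to obtain integrability.
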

\begin{proof}
The coarse gradient bound and the Lipschitz chain rule give the displayed
estimates. At a corner of the profile, the gradient of $d$ vanishes
almost everywhere on the corresponding level set.  Uniform good charts and
an ambient unit-ball cover give
$\mathcal H^m(\Sigma_g\cap B_r)\leq C(1+r)^n$, so the fixed
exponential spatial weight makes the bound integrable. This integrability
argument allows dependence on the fixed profile and spatial weight.
\end{proof}

\subsection{Oriented mean curvature at contact points}

\begin{theorem}
\label{thm:r4-positive-foot-transfer}
Choose a measurable family of actual active plateau pairs $(z,y(z))$,
one pair for each base point $z$ in a measurable subset of
$\Sigma_{\rm reg}$.  Suppose $d(z)$ exceeds the fixed threshold of
Proposition~\ref{prop:r4-actual-contact-regularity}.  Put
\[
 \ell=|z-y|=d(z)/G_1,\qquad N_y=(z-y)/\ell,\qquad
 s=|N_y\cdot\nu(z)|\geq1-\delta_2.
\]
The receiving normal is $N_y$, and $d(y)\leq d(z)$.  Restricted to pairs
with $H_{N_y}(y)>0$, each receiving foot is paired with at most one base point.
For every nonnegative measurable function $F$ of a selected pair,
\begin{equation}\label{eq:r4-positive-foot-area}
 \int F(z,y(z))\mathbf1_{\{H_{N_y}(y(z))>0\}}\,dS_z
 =\int_{Y_+}F(z(y),y)
       \frac{|\det(\operatorname{Id}+\ell A_{N_y}(y))|}{s}\,dS_y.
\end{equation}
Here $Y_+$ consists of the selected receiving feet $y$ with $H_{N_y}(y)>0$;
null critical images may be omitted on the left.  The inverse Jacobian obeys
\begin{equation}\label{eq:r4-positive-foot-J}
 J_{y\to z}\leq C_\Gamma,\qquad
 J_{y\to z}\leq\frac{(1+\ell a(y))^m}{1-\delta_2}.
\end{equation}
Thus $J_{y\to z}=1+O_m(\delta_2+\varepsilon)$ as an upper bound when
$\ell a(y)\leq\varepsilon\leq1$.
\end{theorem}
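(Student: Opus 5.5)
The plan is to realize the base point $z$ as a function of the receiving foot $y$ through the normal exponential map at $y$, and then apply the area formula. The first step fixes the geometry of an active plateau pair. Since $y$ minimizes $x\mapsto|x-z|\Gamma(|\nu(z)\cdot(x-z)|/|x-z|)$ over $\Sigma$, and $\Gamma$ is constant $G_1$ on the plateau $s\geq1-\delta_2$, the objective near $y$ is simply $G_1|x-z|$. This holds as long as the angle stays on the plateau, which is an open condition if $s>1-\delta_2$; at the boundary $s=1-\delta_2$ I use one-sided comparison with $\Gamma'\leq0$. So $y$ is a local minimizer of the distance to $z$ on $\Sigma$. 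By Proposition~\ref{prop:r4-actual-contact-regularity}, $y$ is regular. First-order stationarity then gives $z-y\perp T_y\Sigma$, so $z=y+\ell N_y$ with $N_y=\pm\nu(y)$.

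For $d(y)\leq d(z)$, I test $d(y)$ with the competitor $z$: the angle at $y$ is $|\nu(y)\cdot(z-y)|/\ell=1$, so the objective is $\ell G_1=d(z)$.

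Second-order minimality of $|x-z|^2$ along $\Sigma$ at $y$ gives $\operatorname{Id}+\ell A_{N_y}(y)\geq0$ with the sign convention $A=d\nu$ oriented by $N_y$. This is the standard focal-point condition: the zero-free set $\mathcal O_z$ contains a ball of radius comparable to $\ell$ touching at $y$. Hence every eigenvalue of $A_{N_y}(y)$ is at least $-1/\ell$.

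Next I show injectivity on the set where $H_{N_y}(y)>0$. If two base points $z_1,z_2$ share the foot $y$ with the same receiving normal, then $z_i=y+\ell_iN$ with $\ell_i=d(z_i)/G_1$. Suppose $\ell_1<\ell_2$. The point $y$ is a distance minimizer from $z_2$, so the ball $B_{\ell_2}(z_2)$ contains no zeros in its local plateau cone. This makes the minimization from $z_1$ strictly nondegenerate: $\operatorname{Id}+\ell_1A_N>0$ strictly. I then compare $d(z_1)$ and $d(z_2)$ through the nested balls, using that both contacts are active. If instead the two normals are opposite, $N_{1}=-N_{2}$, the condition $H_N>0$ can hold for at most one of them, because $H_{-N}=-H_N$. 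In the same-side case I expect to use $H_N(y)>0$ together with $\operatorname{Id}+\ell_2 A_N\geq0$ to rule out a strictly closer active base. I expect this to be the main obstacle. My first approach: the tangent-ball theorem (Theorem~\ref{lem:r4-tangent-phase-ball}) gives $H_N(y)\leq C\ell^{-1-\gamma}$ and also forces the defect to be small. Two distinct $z$ along the same normal ray from $y$, both realizing active reach, would force one of them to have a competing zero strictly closer than its own reach. That contradiction comes from the strict inequality at the nearer point.

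For the area formula, I parametrize $Y_+$ by $y\mapsto z(y)=y+\ell(y)N_y$ with $\ell=d(z)/G_1$. The Jacobian of the normal map $(y,t)\mapsto y+tN_y$ restricted to the graph $t=\ell(y)$ is $\det(\operatorname{Id}+\ell A_{N_y})$. Projecting onto $T_z\Sigma$, whose normal makes angle $s$ with $N_y$, divides by $s$. The coarea and area formulas then give \eqref{eq:r4-positive-foot-area}, with the critical-image null set discarded by Sard. The measurability of $\ell(y)$ comes from the measurable selection.

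For \eqref{eq:r4-positive-foot-J}, I bound each factor by $1+\ell|\lambda_i(y)|\leq1+\ell a(y)$, and use $s\geq1-\delta_2$. The first bound follows from \eqref{eq:r4-receiving-defect}: $a(y)\leq C_\Gamma/d(z)$ gives $\ell a(y)\leq C$. The final expansion $J_{y\to z}=1+O_m(\delta_2+\varepsilon)$ is immediate from the second bound when $\ell a(y)\leq\varepsilon\leq1$.
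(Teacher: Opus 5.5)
Most of your outline agrees with the paper. The plateau first variation makes the chord normal at $y$. The reverse normal chord from $y$ gives $d(y)\le d(z)$. Projecting $T_z\Sigma$ onto $T_y\Sigma$ shows that the Jacobian of $y\mapsto z$ is $|\det(\operatorname{Id}+\ell A_{N_y}(y))|/s$. Finally, \eqref{eq:r4-receiving-defect} yields \eqref{eq:r4-positive-foot-J}. You also correctly use $H_{-N}=-H_N$ to place every positive-trace base point of a given foot on the single oriented ray $y+tN_y$, $t>0$.

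\textbf{The gap: the same-ray case.} You leave this as the ``main obstacle'' and attack it with tools that do not yield a contradiction and are not needed: strict nondegeneracy $\operatorname{Id}+\ell_1A_N>0$, the tangent-ball bound on $H_N(y)$, and small defect. The missing observation is that the nearer base point is itself a competing zero for the farther one. Suppose $z_i=y+\ell_iN_y$ with $0<\ell_1<\ell_2$. Then $z_1\in\Sigma$ lies on the chord from $z_2$ to $y$. The direction $(z_1-z_2)/|z_1-z_2|=(y-z_2)/\ell_2$ has the same angle with $\nu(z_2)$, hence the same plateau penalty $G_1$, so
\[
 d(z_2)\le G_1(\ell_2-\ell_1)<G_1\ell_2=d(z_2).
\]
Equivalently, $z_1$ lies in the zero-free region $\mathcal O_{z_2}$. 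You wrote down exactly this fact about the plateau cone of $B_{\ell_2}(z_2)$, but never applied it to $z_1$.

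The contradiction sits at the farther point, not the nearer one. Seen from $z_1$, the zero $z_2$ has objective $G_1(\ell_2-\ell_1)$, which need not be smaller than $G_1\ell_1$. In particular, $H_{N_y}(y)>0$ enters only to fix the orientation of the ray, never quantitatively.

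\textbf{A second, more technical omission: the area formula.} Your map $y\mapsto z(y)$ is defined through a measurable selection, so it is not a priori Lipschitz. Feet close to $y$ may also have base points on other sheets. Sard's theorem and the area formula therefore cannot be applied to it directly.

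The paper first localizes in the pair relation. Near a selected regular pair, the ray $y'+tN_{y'}$ crosses the sheet through $z$ transversally, since the transverse derivative has absolute value proportional to $s>0$. The implicit function theorem then gives a smooth inverse branch $y'\mapsto y'+\ell(y')N_{y'}$ containing all selected pairs in that neighbourhood. The paper covers the relation by countably many such charts and partitions the selected graph into measurable pieces subordinate to them. It applies the area formula on each piece, discarding the null critical image, and sums, with multiplicity one excluding overlaps.
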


\begin{proof}
The plateau objective is $G_1|y-z|$.  Its first variation in $y$ along
the smooth receiving sheet vanishes, so the chord is normal at $y$.
The zero-free region $\mathcal O_z$ shows that its inward normal is
$N_y$.  The reverse normal chord at $y$ has objective $G_1\ell$,
which proves $d(y)\leq d(z)$.

At a point with $H_{N_y}>0$ there is only one choice of normal giving
positive trace.  Every base point whose selected receiving trace is positive at this
foot must therefore lie on the same oriented ray $y+tN_y$.
If two occurred at lengths $0<\ell_1<\ell_2$, the nearer base point
$z_1$ would be a competing zero for the farther base point $z_2$.
Its chord direction from $z_2$ is the same as the direction towards
$y$, but its length is $\ell_2-\ell_1<\ell_2$.  Its penalty is unchanged,
contradicting activity of $(z_2,y)$.  This proves multiplicity one.

Near a selected regular pair the implicit function theorem applied
to the sheet through $z$ supplies a smooth inverse branch
$z(y)=y+\ell(y)N_y$, since its transverse derivative has absolute
value proportional to $s>0$.  Tangential projection of its derivative
onto $T_y\Sigma$ is $\operatorname{Id}+\ell A_{N_y}$: the term
$(d\ell)N_y$ projects to zero.  Orthogonal projection from
$T_z\Sigma$ to $T_y\Sigma$ has area factor $s$.  This proves the
displayed Jacobian exactly.

To apply this calculation globally, cover the regular pair relation by
countably many inverse charts. Partition the selected graph into disjoint
measurable pieces subordinate to this cover.  The area formula on each chart gives the equality above;
the image of its Jacobian-zero set has zero surface measure.
When we sum over these pieces, the ray argument and the choice of one
contact at each base point exclude overlaps.
Finally \eqref{eq:r4-receiving-defect} gives $\ell a(y)\leq C_\Gamma$,
and the determinant bound follows from its singular values.
\end{proof}

A measurable selection exists on the plateau-contact set. Indeed $d$
is continuous on regular patches by the preceding theorem, so the active
plateau relation is Borel. Its fibers are bounded by $|y-z|\leq d/G_1$
and closed; they cannot approach the diagonal, since
$|\nu(z)\cdot(y-z)|/|y-z|\to0$ as $y\to z$ along the regular graph,
whereas plateau contacts have this quantity at least $1-\delta_2$.
Thus the fibers are compact. The measurable selection
theorem on a countable regular-patch exhaustion gives a selection for
the completed surface measure.

\begin{corollary}
\label{cor:r4-weighted-foot-transfer}
Let $W$ be nonnegative and nonincreasing.  Let $\chi>0$ satisfy
$|\nabla\log\chi|\leq R^{-1}$ in physical space.  On selected plateau
pairs with $\ell\leq L$, for every nonnegative receiving function $h$,
\begin{equation}\label{eq:r4-weighted-foot-transfer}
 \int\chi(z)^2W(d(z))h(y(z))\mathbf1_{\{H_{N_y}>0\}}\,dS_z
 \leq C_\Gamma e^{2L/R}
        \int\chi(y)^2W(d(y))h(y)\,dS_y.
\end{equation}
If $\ell a(y)\leq\varepsilon$ throughout the selected pairs, replace
$C_\Gamma$ by $(1+\varepsilon)^m/(1-\delta_2)$.
Pairs landing in the fixed physical core have their base points in one fixed
enlargement of that core, by \eqref{eq:r4-core-contact-localization}.
For bounded multipliers their integrals are consequently finite
physical-core errors, independent of the exhaustion radius.  More
precisely, there is a fixed $R_{u,T}<\infty$ such that the contribution
of pairs landing in $B_T$ is at most
\begin{equation}\label{eq:r4-core-foot-located}
 C_{u,T,W,h}\,\chi(0)^2 e^{2R_{u,T}/R},
\end{equation}
provided $W(d(z))h(y(z))$ is bounded on those pairs.  The estimate
locates the error at the physical core for translated exponential cutoffs.
\end{corollary}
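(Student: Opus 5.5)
The estimate follows from the exact transfer formula \eqref{eq:r4-positive-foot-area} of Theorem~\ref{thm:r4-positive-foot-transfer}, applied with $F(z,y)=\chi(z)^2W(d(z))h(y)$. We restrict to selected plateau pairs with $\ell\leq L$ and $H_{N_y}(y)>0$. The formula rewrites the left side as an integral over receiving feet $y\in Y_+$, with base point $z(y)$ and Jacobian $J_{y\to z}=|\det(\operatorname{Id}+\ell A_{N_y}(y))|/s$. The multiplicity-one assertion of that theorem is what keeps the map $y\mapsto z(y)$ from double counting, so no further overlap argument is needed.

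Next I compare the three factors at $z(y)$ with the corresponding quantities at $y$.
\begin{enumerate}[label=\textup{(\roman*)}]
\item For the weight, monotonicity suffices. The theorem gives $d(y)\leq d(z)$, and since $W$ is nonincreasing, $W(d(z))\leq W(d(y))$.
\item For the cutoff, integrate $|\nabla\log\chi|\leq R^{-1}$ along the straight segment from $y$ to $z$, which has length $\ell\leq L$. This gives $\chi(z)\leq e^{L/R}\chi(y)$, hence $\chi(z)^2\leq e^{2L/R}\chi(y)^2$.
\item For the Jacobian, use \eqref{eq:r4-positive-foot-J}: $J_{y\to z}\leq C_\Gamma$. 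Under the extra hypothesis $\ell a(y)\leq\varepsilon$, the refined bound $J_{y\to z}\leq(1+\varepsilon)^m/(1-\delta_2)$ applies instead.
\end{enumerate}
Substituting these into the transferred integral, and then enlarging the domain from $Y_+$ to all of $\Sigma$ (the integrand is nonnegative), yields \eqref{eq:r4-weighted-foot-transfer}.

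For the core statement, take pairs whose foot $y$ lies in $B_T$ with large reach. By \eqref{eq:r4-core-contact-localization}, $d(z)\leq L_{u,T}$ and $z\in B_{T+L_{u,T}}$. Pairs below the large-reach threshold of Proposition~\ref{prop:r4-actual-contact-regularity} have $\ell\leq d(z)$ bounded by a fixed constant, so their base points also lie in a fixed ball. In both cases $|z|\leq R_{u,T}$.

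The same logarithmic-gradient integration, now from $0$ to $z$, gives $\chi(z)^2\leq\chi(0)^2e^{2R_{u,T}/R}$. The local area bound of Lemma~\ref{lem:r9-zero-baseline} gives $\mathcal H^m(\Sigma_{\rm reg}\cap B_{R_{u,T}})<\infty$ for the good part. For regular zeros in the compact core that are not in the good set, finiteness follows from a fixed compact-region area bound for the regular zero set of the given $u$, which is allowed to depend on $u$ and $T$. Multiplying by $\sup W(d(z))h(y(z))$ gives \eqref{eq:r4-core-foot-located}.

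The step I expect to need the most care is this finiteness of the base-point area near the core when $D$ is not small. Since the constant may depend on $u$ and $T$, it should be enough that the regular zero set has locally finite $\mathcal H^m$-measure, which holds because it is a smooth embedded hypersurface meeting a compact set. The only delicate point is that the selection is measurable, and this is supplied by the selection remark preceding the corollary.
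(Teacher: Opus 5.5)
Your argument for the main inequality is the paper's: apply \eqref{eq:r4-positive-foot-area} with $F=\chi(z)^2W(d(z))h(y)$, then bound the three factors. For $W$, combine $d(y)\le d(z)$ with monotonicity. For the cutoff, use $\chi(z)^2\le e^{2\ell/R}\chi(y)^2$. For the Jacobian, use \eqref{eq:r4-positive-foot-J}. Your location of core base points via \eqref{eq:r4-core-contact-localization}, and the bound $\chi(z)^2\le\chi(0)^2e^{2R_{u,T}/R}$, also match the paper.

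The one step that is wrong as written is your justification that base points near the core have finite area. You handle ``non-good'' regular zeros by saying the regular zero set is a smooth embedded hypersurface meeting a compact set, so its area there is finite. That principle is false. The set $\{u=0,\ \nabla u\neq0\}$ is not closed, and its sheets can accumulate at singular zeros with infinite total area. For example, for the smooth function $e^{-1/x_1^2}\sin(1/x_1)$, the regular sheets $\{x_1=1/(k\pi)\}$ have infinite total area in the unit ball. For this particular $u$ you could repair it through real analyticity, but that is an extra input you do not need.

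The case is in fact empty. Theorem~\ref{thm:r4-positive-foot-transfer} only selects pairs with $d(z)$ above the threshold of Proposition~\ref{prop:r4-actual-contact-regularity}, so your ``below-threshold'' pairs never occur. At every selected base point, \eqref{eq:r4-contact-defect} gives $D(z)\le C_\Gamma/d(z)$. The base points therefore have uniformly nonvanishing gradient, i.e.\ they are good zeros. Fixed-radius graph charts and a finite cover of $B_{R_{u,T}}$, or Lemma~\ref{lem:r9-zero-baseline}, then give finite area. This is exactly how the paper concludes.
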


\begin{proof}
Use \eqref{eq:r4-positive-foot-area},
$\chi(z)^2/\chi(y)^2\leq e^{2\ell/R}$, and
$W(d(z))\leq W(d(y))$.  For the final assertion, both endpoints of
a large-reach pair have uniformly nonvanishing gradient by
\eqref{eq:r4-contact-defect}--\eqref{eq:r4-receiving-defect}.
The part of their zero sets in the fixed enlarged ball therefore has
finite area, by fixed-radius graph representations and a finite cover.
All base points in question lie in $B_{R_{u,T}}$, so
$\chi(z)^2\leq\chi(0)^2e^{2R_{u,T}/R}$, which proves
\eqref{eq:r4-core-foot-located}.
\end{proof}

\vskip0.36in

\section{\bf Comparison between defect and interaction}
\label{sec:s12-comparison}
\label{sec:r10-comparison}\label{sec:r9-comparison}
We next prove $D\leq C(J_\lambda+E)$ and $E\leq C(J_\lambda+D)$
on the same logarithmic window. For the first inequality, we compare the
two inward slopes with those of auxiliary half-space solutions. For the
second, we use the nonnegative source $F_{n_0}$ in \eqref{eq:r12-positive-directional-source}. The chamber construction
below verifies the local geometric hypotheses needed in
\cite[Proposition~8.5]{OpenAI2026}. We denote a unit plane normal by
$\mathbf N$ and its horizontal component by $\mathbf N'$.

\subsection{Logarithmic graph ordering}
\label{sec:r5-sharp-lower}

Fix the penalty in \eqref{eq:r4-penalty-assumptions}.  In particular
$G_1=\min\Gamma$, and $\Gamma=G_1$ on a fixed neighbourhood of $1$.
For $0<\delta\leq1/4$, put
\[
 X_\delta=\{y\in\Sigma:D(y)\geq\delta\},\qquad
 \bar a_\delta(y)=\begin{cases}1,&y\in X_\delta,\\a(y),&y\notin X_\delta,
 \end{cases}
 \qquad
 J_\lambda(z)=\sup_{y\in\Sigma}e^{-\lambda|y-z|}\bar a_\delta(y)^2.
\]
Every singular zero belongs to $X_\delta$, so this definition never asks
for its second fundamental form.  Recall that $d$ is the actual reach
\eqref{eq:r4-actual-reach}, and set $E=e^{-\sqrt2 d/G_1}$, with $E=0$
when $d=\infty$.

\begin{lemma}
\label{lem:r5-log-ordering}
Let $P_j\to\infty$ and $\varepsilon_jP_j^2\to0$.  Suppose that all zeros
in $B_{10P_j}(0)$ are regular, that $D\leq1/4$ there, and that
$|A|\leq\varepsilon_j$ there.  Assume $0\in\Sigma$ and $\nu(0)=e_n$.
For each sufficiently small fixed $\eta>0$, and all large $j$, the
following assertions hold.

The sheet through $0$ is a graph $t=h_0(x')$ on $|x'|<2P_j$, with
$h_0(0)=0$, $\nabla h_0(0)=0$, and
$|\nabla^2h_0|\leq C\varepsilon_j$.  Every zero in $B_{\eta P_j}$ has
unoriented normal at angle $O(\eta)+o(1)$ from $e_n$.  Every such zero
in a sufficiently smaller fixed box belongs to a graph over a common
horizontal disk of radius $cP_j$.  The graphs meeting a closed smaller
box are finite in number and are ordered.  Here $c>0$ can be fixed
after $\eta$.

If one of these graphs has axis intercept $t>0$ with
$t<c\eta P_j$, its distance $\ell$ from $0$, minimized in this graph
patch, is attained at an interior foot $y$.  With $\mathbf N=y/\ell$,
\begin{equation}\label{eq:r5-chord-angle}
 \ell\asymp t,\qquad
 |\mathbf N-e_n|\leq C t/P_j+C P_j\varepsilon_j.
\end{equation}
The analogous assertions hold below $0$.
\end{lemma}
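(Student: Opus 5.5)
The plan is to work entirely with the regular zero set in $B_{10P_j}$, using the uniform graph bounds available when $D\leq1/4$ (as in Lemma~\ref{lem:r9-zero-baseline}: $|\nabla u|\geq\sqrt{3/8}$ gives a universal graph radius $r_*$ and fixed-order derivative bounds), and to integrate the curvature bound $|A|\leq\varepsilon_j$ over distances of order $P_j$. Since $\varepsilon_jP_j^2\to0$, the normal can turn by at most $O(\varepsilon_jP_j)=o(P_j^{-1})$ along any path in one sheet of length $O(P_j)$, and the height can deviate from its tangent plane by $O(\varepsilon_jP_j^2)=o(1)$. That is the whole mechanism.

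\emph{The sheet through $0$.} Start from the local graph over $T_0\Sigma$ given by the implicit function theorem, and continue it radially. As long as the slope is small, the curvature bound controls $|\nabla^2h_0|\leq C\varepsilon_j$, hence $|\nabla h_0|\leq C\varepsilon_j|x'|\leq C\varepsilon_jP_j=o(1)$ on $|x'|<2P_j$. A continuity (open-closed) argument in the radius, with the uniform graph radius $r_*$, then extends the graph to the full disk; the extension stays in $B_{10P_j}$ because $|h_0|=o(1)$.

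\emph{Normals of nearby zeros.} This is the step I expect to be the main obstacle, since a priori another sheet could be tilted. I would argue by the phase-ball geometry. Let $y\in B_{\eta P_j}$ be a zero whose normal makes an angle $\theta\gg\eta$ with $e_n$. Its sheet is nearly flat (curvature $\varepsilon_j$) over a disk of radius about $P_j$, so it is an almost affine piece of size $\sim P_j$ through $B_{\eta P_j}$. Tilted by $\theta$, such a piece must cross the plane $\{t=0\}$ within distance $O(\eta P_j/\theta)\ll P_j$. But the sheet through $0$ lies within $o(1)$ of that plane over $|x'|<2P_j$, and distinct sheets of a regular zero set cannot cross. (A crossing would give a zero at which two graph descriptions disagree, contradicting uniqueness of the local graph.) Hence $\theta=O(\eta)+o(1)$.

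\emph{Common disk and ordering.} With all normals within $O(\eta)$ of $e_n$, each sheet meeting a smaller box is a graph with slope $O(\eta)$. Repeating the continuation argument, it extends over a common horizontal disk of radius $cP_j$, where $c$ is chosen after $\eta$ so the continuation stays inside $B_{\eta P_j}$. Disjoint graphs over a common disk are ordered. Finiteness follows from the unit-separated counting in Lemma~\ref{lem:r9-zero-baseline}, because distinct graphs are separated vertically by at least a fixed amount through the graph radius.

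\emph{Chord estimate.} Take a graph $t=h(x')$ with $0<h(0)=t<c\eta P_j$. The squared distance $|x'|^2+h(x')^2$ is at most $t^2$ at $x'=0$, and at least $|x'|^2$ near the boundary of the disk. So the minimum lies at $|x'|\leq t\ll cP_j$, i.e. at an interior foot $y$, and $\ell\leq t$. The lower bound $\ell\geq t/2$ comes from the small slope, since $|h(x')-t|\leq C\eta|x'|$.

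At the minimum, $\mathbf N=y/\ell$ is normal to the graph at $y$. The normal of $h$ at $y$ differs from its normal at $0$ by $O(\varepsilon_j\ell)$. To compare the normal at $0$ with $e_n$, I would use the ordering: the sheet through $0$ and the one through $y$ stay disjoint over radius $cP_j$ with gap about $t$. Integrating the curvature bound, two nearly flat disjoint sheets at vertical distance $t$ over radius $cP_j$ must have relative tilt at most $Ct/P_j+CP_j\varepsilon_j$, since otherwise they would meet before the disk edge. This gives \eqref{eq:r5-chord-angle}. The case below $0$ is the same after reflecting $t\mapsto-t$.
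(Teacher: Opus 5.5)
Your proposal is correct and follows essentially the same route as the paper. You continue the base sheet using the integrated curvature bound. You exclude tilted sheets by forcing an intersection with the base graph, at which the two tangent planes would disagree. You continue nearby pieces over a common disk of radius $cP_j$, with $c$ chosen after $\eta$. Finally, you apply the Taylor argument to $h-h_0>0$ a distance $cP_j$ opposite to $\nabla h(0)$ to get the chord-angle bound. The only differences are technical: you use an open--closed continuation in the radius where the paper uses path lifting with straight-line homotopies, and you get finiteness from the separated-zero counting bound of Lemma~\ref{lem:r9-zero-baseline} where the paper uses accumulation at a regular zero.
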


\begin{proof}
At each zero, $|\nabla u|\geq\sqrt{3/8}$ and universal interior
bounds give one uniform implicit-graph radius. Tangent projection
from any zero in $B_{P_j}$ continues along radial segments to radius
$2P_j$: while the projected path has length at most $2P_j$, its lift
has length at most $3P_j$ and normal variation at most
$3\varepsilon_jP_j=o(1)$. It stays in $B_{10P_j}$ and cannot lose
invertibility. A terminal limit is a regular zero, so continuation
cannot stop. Straight-line homotopies between the radial path and
its short endpoint perturbations have length $O(P_j)$ and the same
property. Uniqueness of local lifts then gives a single smooth graph,
with slope $O(\varepsilon_jP_j)$ and tangent-plane height error
$O(\varepsilon_jP_j^2)=o(1)$.

For a zero $y\in B_{\eta P_j}$ put
$\beta=|\operatorname{proj}_{T_y\Sigma}e_n|$, and choose tangent
unit $\tau$ with $e_n\cdot\tau=\beta$. Its graph points at
$\pm c_0P_j\tau$ have heights relative to the base graph
$y_n\pm c_0P_j\beta+o(1)$. If $\beta>C\eta+o(1)$, the signs
are opposite, so the joining path meets the base graph. At that
intersection the normal differs by $o(1)$ from both its original
normal and $e_n$, a contradiction. This argument also excludes an
initially horizontal unoriented normal without presupposing a
horizontal graph for that sheet.

In a smaller box, every piece therefore extends as a horizontal graph
over one disk of radius $cP_j$. By local uniqueness and continuation,
two such graphs either coincide or are disjoint. Thus they are ordered. Infinitely many meeting a closed
smaller box would accumulate at a regular zero, contradicting its
uniform local graph radius. The number need not be uniform in $j$.

For a graph $h$ with $h(0)=t>0$, apply Taylor's formula to
$h-h_0>0$ a distance $cP_j$ opposite to $\nabla h(0)$:
\[
 |\nabla h(0)|\leq Ct/P_j+CP_j\varepsilon_j.
\]
Since the graph slope is small, its distance from zero is comparable
to $t$ and at most $t$. Its boundary is at distance at least $cP_j\gg t$,
so the minimizing foot is interior and the chord is normal there.
The Hessian bound propagates the same slope estimate to the foot,
giving \eqref{eq:r5-chord-angle}. Reflection treats the lower side.
\end{proof}

\subsection{The linearized equation on two half-spaces}
Put $g(t)=\tanh(t/\sqrt2)$, $V_g=3g^2-1$, and let
\[
 L_g=-\Delta_y-\partial_t^2+V_g(t),\qquad t>0.
\]
Fix $p=\max\{12,n+1\}>n=m+1$. Weighted uniformly local norms will mean
\[
 \|f\|_{k,p,b}
 =\sup_{(y,t)\in\mathbb R^m\times[0,\infty)}
 e^{-b|y|}\|f\|_{W^{k,p}(B_1(y,t)\cap\{t>0\})}.
\]
The $k=0$ norm denotes $L^p$. For $n\leq9$ we use $p=12$.
Inverse and Sobolev constants may depend on $m,p$ and on the displayed
weight exponent $b$.

\begin{lemma}
\label{lem:r4-paired-slopes}
There are $b_*>0$, $\epsilon_*>0$ and $C<\infty$ with the following
property. Let $h\in C^\infty(\mathbb R^m)$ have bounded derivatives,
\[
 h(0)=0,\quad\nabla h(0)=0,\qquad
 \|\nabla h\|_\infty+\|D^2h\|_\infty\leq\epsilon_*.
\]
For $0<b\leq b_*$ set
\[
 M_b=\sup_y e^{-b|y|}(|\nabla h(y)|+|D^2h(y)|).
\]
There are positive bounded solutions $v_h^\pm=g+\phi_\pm$, with zero
trace at $t=0$, such that
\[
 x=(y,h(y)\pm t),\qquad
 v_h^\pm(y,t)\text{ solves }\Delta_xv=v^3-v
\]
in the corresponding upper or lower graph half-space. They satisfy
$0<v_h^\pm\leq1$, are the unique solutions in a fixed small uniformly
local $W^{2,p}$ neighborhood of $g$, and obey
\begin{equation}\label{eq:r4-paired-slope-bound}
 \|\phi_\pm\|_{2,p,b}\leq CM_b,
 \qquad
 |\partial_t\phi_+(0,0)+\partial_t\phi_-(0,0)|
 \leq C M_b^2.
\end{equation}
The estimate constant $C$ depends only on $m,p,b$.
\end{lemma}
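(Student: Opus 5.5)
The solutions will be built by a perturbative fixed-point argument in the flattened coordinates. Write $x=(y,h(y)\pm t)$ and pull back the Laplacian: for $v(y,t)$ one has
\[
 \Delta_xv=\Delta_yv+(1+|\nabla h|^2)\partial_t^2v\mp2\nabla h\cdot\nabla_y\partial_tv\mp(\Delta h)\partial_tv .
\]
The equation for $v=g+\phi$ becomes $L_g\phi=\mathcal R_h(\phi)$. The right side has two parts. The first is a linear-in-$h$ forcing $\pm(\Delta h)g'-|\nabla h|^2g''$, which is $O(M_b)$ in $\|\cdot\|_{0,p,b}$. The second consists of terms bilinear in ($\nabla h$ or $D^2h$) and $D^2\phi$, together with the nonlinearity $3g\phi^2+\phi^3$.

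\textbf{Linear step.} I will show that $L_g$ with zero Dirichlet trace at $t=0$ is invertible from $\|\cdot\|_{0,p,b}$ to $\|\cdot\|_{2,p,b}$ for $0<b\le b_*$. On the half-line, $-\partial_t^2+V_g$ with Dirichlet condition is positive. Indeed, the odd extension relates it to the full-line operator, whose only nonpositive mode $g'$ is even, so the Dirichlet spectrum is bounded below by $\lambda_0>0$; near infinity $V_g\to2$. Hence $L_g\ge\lambda_0$ on the half-space. The horizontal exponential weight $e^{-b|y|}$ is handled by conjugating with a smoothed $e^{b\langle y\rangle}$. This produces first-order perturbations of size $O(b)$, which are absorbed for $b\le b_*$. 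Uniformly local $L^p$ estimates then follow from the positive Green function, which decays like $e^{-\sqrt{\lambda_0}|\cdot|}$, combined with local $W^{2,p}$ Calderón–Zygmund estimates up to the flat boundary. Since $p>n$, $W^{2,p}$ embeds in $C^1$, so the quadratic terms are controlled. A contraction on a small $\|\cdot\|_{2,p,b}$-ball then gives existence, uniqueness in that neighborhood, and $\|\phi_\pm\|_{2,p,b}\le CM_b$. Here I use that $\|\nabla h\|_\infty\le\epsilon_*$ makes the coefficient perturbations small, even though the weight only sees $M_b$. Positivity and $v\le1$ will follow from the maximum principle: $v$ is a solution in the graph half-space with zero trace, $1$ is a supersolution, and $v>0$ by smallness near the boundary together with the strong maximum principle.

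\textbf{Paired slopes.} Write $\phi_\pm=\pm\psi_1+\psi_2^\pm$, where $\psi_1$ solves the linear problem $L_g\psi_1=(\Delta h)g'$. The sign $\pm$ in front of $\Delta h$ and in the mixed term $\nabla h\cdot\nabla_y\partial_t$ flips between the two half-spaces, while $|\nabla h|^2g''$ and the nonlinear terms do not flip. The remainders then satisfy $\|\psi_2^\pm-\psi_2^{(0)}\|\le CM_b^2$, where $\psi_2^{(0)}$ solves $L_g\psi=-|\nabla h|^2g''$. This requires a second-order expansion in which $\psi_2^{(0)}$ is the common quadratic response and the odd parts cancel in the sum. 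It gives
\[
 \partial_t\phi_++\partial_t\phi_-=2\partial_t\psi_2^{(0)}+O(M_b^2),
\]
and $\|\psi_2^{(0)}\|_{2,p,b}\le C\sup e^{-b|y|}|\nabla h|^2$. The weight here is problematic, because $M_b^2$ carries $e^{-2b|y|}$. I will therefore evaluate at the origin using the localized Green function decay $e^{-c|y|}$: with $b_*$ small compared to $c$, the quadratic source at the origin is bounded by $\int e^{-c|y|}e^{2b|y|}M_b^2\le CM_b^2$. The same localization applies to the genuinely quadratic remainder. The trace of $\partial_t$ at $(0,0)$ is controlled by the $W^{2,p}$ norm on $B_1$ via $p>n$.

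\textbf{Main obstacle.} I expect the hardest part to be this last step: showing that the pointwise quantity at the origin is quadratic in the \emph{weighted} $M_b$ rather than in $\|\nabla h\|_\infty$. This needs the exact odd/even sign bookkeeping of the linear terms under $t\mapsto-t$, together with the exponential localization of the inverse near the origin. I would first attempt it with a Green-kernel pointwise bound instead of a norm bound.
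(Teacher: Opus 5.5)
Your proposal is correct and follows the paper's architecture. You flatten the graph, invert $L_g$ with zero trace on exponentially weighted uniformly local spaces, and solve by contraction. You then add the two flattened equations so that the linear sources $\mp(\Delta h)g'$ cancel exactly. This leaves for $S=\phi_++\phi_-$ only quadratic sources, which are $O(M_b^2)$ in the weight $e^{-2b|y|}$.

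The real difference is how you obtain the linear inverse. The paper uses the Darboux factorization $-\partial_t^2+V_g=\mathcal D^*\mathcal D$, $\mathcal D\mathcal D^*=-\partial_t^2+2-G$. After odd extension, the equation for $Z=\mathcal Dv$ has potential $2-G\geq1$, so it is solved by a sup-norm Neumann series against the kernel of $-\Delta+2$. The paper then recovers $v=g'\int_0^tZ/g'$ and proves uniqueness with an elementary barrier for $-\Delta+2$. You instead use coercivity of the Dirichlet (odd) sector, whose bottom is $3/2$, together with positivity and exponential decay of the Dirichlet Green function.

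Your route is more conceptual and would work for any profile with a coercive odd sector. It does, however, rest on two facts you only cite. The first is Green-kernel bounds. The second is a Liouville statement for zero-trace solutions that are merely bounded, or grow like $e^{b|y|}$; this is what makes the inverse injective and gives the asserted uniqueness in a uniformly local neighborhood. The Liouville statement follows by conjugating with small exponential weights in both $y$ and $t$, since the first-order term leaves no boundary contribution under the Dirichlet condition. The paper's route gives explicit constants, and the same factorization is reused for the mean-curvature trace identity in Appendix~\ref{app:isolated-parity}.

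Two small corrections. First, the contraction cannot be run in a weighted ball alone, because $\phi^2$ lies only in the weight $2b$. Run it in a small unweighted ball, as the paper does, and obtain $\|\phi_\pm\|_{2,p,b}\leq CM_b$ afterwards by absorbing the small unweighted coefficients. Second, the splitting through $\psi_1$ and $\psi_2^{(0)}$ is unnecessary. Your ``main obstacle'' disappears once $b_*$ is chosen so that the inverse is also bounded at weight $2b$. That is exactly your Green-function localization, stated as a norm bound.
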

\begin{proof}
We first construct the inverse of the linear operator. Write
$\mathcal D=\partial_t+\sqrt2g$ and
$\mathcal D^*=-\partial_t+\sqrt2g$, so that
\begin{equation}\label{eq:s7-darboux-factorization}
 -\partial_t^2+V_g=\mathcal D^*\mathcal D,\qquad
 \mathcal D\mathcal D^*=-\partial_t^2+2-G,
 \qquad\mathcal Dg'=0,
 \quad G=1-g^2.
\end{equation}
Odd-extend a half-space source $f$ to an $L^p_{\rm loc}$ function $F$,
and let $K$ be the positive whole-space fundamental solution of $-\Delta+2$.
Its heat-kernel representation gives $\int K=1/2$, exponential tails,
and $|\nabla K(X)|\leq C|X|^{1-n}$ near zero.
Since $p>n$, H\"older's inequality and a unit-block sum give bounded
maps $K*,\nabla K*:L^p_{\rm uloc,b}\to L^\infty_b$.
In the norm
$\|Z\|_{\infty,b}=\sup_{y,t}e^{-b|y|}|Z(y,t)|$, solve
\begin{equation}\label{eq:s7-darboux-resolvent}
 Z=(\partial_tK)*F+\sqrt2K*(gF)+K*(GZ).
\end{equation}
The last term has norm at most
$\int K(X)e^{b|X'|}\,dX=1/2+o_{b\downarrow0}(1)<3/4$ for small
fixed $b$.  Contraction therefore gives an even function $Z$ with
$\|Z\|_{\infty,b}\leq C\|f\|_{0,p,b}$ and, in distributions,
$(-\Delta_y-\partial_t^2+2-G)Z=\mathcal DF$.
The nonsmooth extension $F$ enters this estimate only through its norm.

Define
\begin{equation}\label{eq:s7-pinned-recovery}
 v(y,t)=g'(t)\int_0^t\frac{Z(y,s)}{g'(s)}\,ds.
\end{equation}
For $s$ between $0$ and $t$,
$g'(t)/g'(s)\leq4e^{-\sqrt2(|t|-|s|)}$; hence
$\|v\|_{\infty,b}\leq2\sqrt2\|Z\|_{\infty,b}$.
The function $v$ is odd, vanishes at $t=0$, and satisfies
$\mathcal Dv=Z$.
Thus $\mathcal D(L_gv-F)=0$ in distributions.  Its residual is
$C(y)g'(t)$; it is also odd, whereas $g'$ is even, so the residual
vanishes.  Unit-scale elliptic estimates on the odd full-space
equation $L_gv=F$ now give
\begin{equation}\label{eq:s7-halfspace-inverse-bound}
 \|L_g^{-1}f\|_{2,p,b}\leq C\|f\|_{0,p,b}.
\end{equation}
Regularity across $t=0$ follows after the distributional equation.

For uniqueness, the odd extension of a homogeneous Dirichlet solution
in the weighted uniformly local $W^{2,p}$ class introduces no boundary
distribution.  Its transform
$Z=\mathcal Dv$ is weighted bounded and solves the homogeneous partner
equation.  The residual $Z-K*(GZ)$ solves the homogeneous equation
for $-\Delta+2$.  The barrier $e^{c\sqrt{1+|y|^2+t^2}}$, with $b<c$
sufficiently small, forces this weighted bounded residual to vanish.
The contraction estimate gives $Z=0$. The zero boundary value in the
recovery formula then gives $v=0$.

Flattening the graph gives the fixed point equations
\begin{align}\label{eq:r4-artificial-equation}
 L_g\phi_\pm={}&|\nabla h|^2(g''+\partial_{tt}\phi_\pm)
 \mp2\nabla h\cdot\nabla_y\partial_t\phi_\pm
 \mp\Delta h(g'+\partial_t\phi_\pm)\\
 &-3g\phi_\pm^2-\phi_\pm^3.\nonumber
\end{align}
The inverse estimate and the ambient-dimensional embedding
$W^{2,p}_{\rm uloc}(\mathbb R^n_+)\subset C^{1,\alpha}$,
$\alpha=1-n/p>0$, make the right side followed by $L_g^{-1}$ a contraction in a
small unweighted ball when $\epsilon_*$ is small. The same estimates
with the small unweighted coefficient terms absorbed prove
$\|\phi_\pm\|_{2,p,b}\leq CM_b$. The solutions are positive by
$C^1$ smallness near zero and $C^0$ smallness for $t$ bounded away from
zero. For the bound by one put $w=(v_h^\pm-1)_+$ in physical
coordinates. Kato's inequality gives $(-\Delta+2)w\leq0$, since
$v^3-v\geq2(v-1)$ for $v\geq1$. The bounded maximum principle
for $-\Delta+2$, with zero bottom trace, gives $w=0$.

The sharper estimate for the sum of the two slopes follows from the
cancellation of the linear graph error. To see this, add the two equations. Put
$S=\phi_++\phi_-$ and $T=\phi_+-\phi_-$. The linear sources
$\mp\Delta h\,g'$ cancel exactly, leaving
\[
 L_gS=|\nabla h|^2(2g''+S_{tt})
       -2\nabla h\cdot\nabla_yT_t-\Delta h\,T_t
       -3g(\phi_+^2+\phi_-^2)-(\phi_+^3+\phi_-^3).
\]
Every term has weighted $L^p$ norm with exponent $2b$ at most $CM_b^2$:
the first derivatives of $\phi_\pm$ are bounded by the $W^{2,p}$
embedding; a cubic has one unweighted small factor. Choose $b_*$ so that
the inverse is bounded also at $2b$. Consequently
$\|S\|_{2,p,2b}\leq CM_b^2$, and boundary Sobolev embedding gives
\eqref{eq:r4-paired-slope-bound}.
\end{proof}

\subsection{Comparison across a chamber}
\label{sec:r5-sharp-upper}
We incorporate the decaying potential error into a positive
one-dimensional comparison function to obtain the exponent $\sqrt2$
throughout a long chamber. The local hypotheses for the chamber argument
in \cite[Proposition~8.5, Step~4]{OpenAI2026} are given below.

Write $a=\sqrt2$, $g(t)=\tanh(t/\sqrt2)$, and $V_g=3g^2-1$.
Let $Q_R=\{y\in\mathbb R^m:|y|_\infty<R\}$ and fix
$\vartheta_m=[100(1+\sqrt m)]^{-1}$.
The chamber estimates use the norm conversion
$|y|\leq\sqrt m|y|_\infty$, with dimensional factors included in the constants.

\begin{lemma}
\label{lem:r5-sharp-barrier}
Fix $0<\gamma<a$. There are $\theta_0,\eta_0,\kappa_0,c,C>0$,
depending only on $m,\gamma$, and smooth functions $\varphi,v_0$ on
$[-\theta_0,\infty)$ such that
\[
 c\leq\varphi,v_0\leq C,\quad
 |\varphi'|\leq Ce^{-\gamma t},\quad
 |v_0'|+|v_0''|\leq C.
\]
Suppose $t_1=x_n$, $t_2=\ell-\mathbf N\cdot x$, $|\mathbf N|=1$,
$\mathbf N_n\geq1-\kappa_0$, and
\[
 V(x)\geq V_g(t_1)-\eta_0 e^{-\gamma t_1}.
\]
On $t_1\geq-\theta_0$, both
$e^{-at_2}\varphi(t_1)$ and $v_0(t_1)$ are supersolutions of
$-\Delta+V$, the latter strictly so. If $l$ is any inward affine unit
face distance, $v_0(t_1)e^{-c_0l}$ is also a supersolution for a fixed
sufficiently small $c_0>0$.
\end{lemma}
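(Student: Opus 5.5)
The plan is to build both functions from explicit one-dimensional ODEs in $t_1$, and then compute $(-\Delta+V)$ directly, using $V\geq V_g(t_1)-\eta_0e^{-\gamma t_1}$ so that the only non-one-dimensional contribution comes from the tilt of $\mathbf N$.

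\textbf{The function $v_0$.} I first construct $v_0$ as a positive strict supersolution of the one-dimensional operator $-\partial_t^2+V_g(t)-\eta_0e^{-\gamma t}$ on $[-\theta_0,\infty)$. Since $V_g\to2$ at infinity and $V_g\geq-1$ near zero, I will take $v_0=1+\beta\,\psi(t)$, where $\psi$ is bounded, $\psi''$ is positive and large near $[-\theta_0,T]$, and $\psi$ is constant for $t\geq T+1$. Equivalently, $v_0$ is a convex bump that dominates the negative part of $V_g$ on the compact region where $V_g<1$. A cleaner option is $v_0=2-\tfrac12 G(t)$, or a suitable combination with $\cosh$ restricted to a compact set. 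Its defect is
\[
 -v_0''+V_gv_0-\eta_0e^{-\gamma t}v_0 .
\]
I choose $\beta$ so that this defect is at least some $c_1>0$ everywhere. This reduces to a compact check on $[-\theta_0,T]$, together with $V_g\geq1$ for $t\geq T$. Then $\eta_0$ is taken small so that the perturbation is absorbed. Since $v_0$ depends only on $t_1$, the ambient Laplacian equals $\partial_{t_1}^2$, so strict supersolution follows. For the face version, $l$ is affine with unit gradient, and
\[
 (-\Delta+V)(v_0e^{-c_0l})=e^{-c_0l}\bigl[(-\Delta+V)v_0+2c_0\nabla l\cdot\nabla v_0-c_0^2v_0\bigr].
\]
The extra terms are $O(c_0)$ because $|v_0'|\leq C$, so they are absorbed by $c_1$ once $c_0$ is small.

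\textbf{The function $\varphi$.} For $e^{-at_2}\varphi(t_1)$ I compute, using $\nabla t_2=-\mathbf N$, $|\mathbf N|=1$ and $\partial_{t_1}=\partial_{x_n}$,
\[
 e^{at_2}(-\Delta+V)(e^{-at_2}\varphi)
 =-\varphi''-2a\mathbf N_n\varphi'-a^2\varphi+V\varphi .
\]
The key step is to solve the model equation with $\mathbf N_n=1$,
\[
 -\varphi''-2a\varphi'+(V_g-2-\eta_0e^{-\gamma t})\varphi\geq\eta_1e^{-\gamma t}\varphi ,
\]
with a positive solution $\varphi$ that converges exponentially to a constant. Note that $V_g-2=-3G=O(e^{-2at})$, so the potential perturbation is integrable. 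I will write $\varphi=\exp(\int\rho)$ and solve the Riccati equation $-\rho'-\rho^2-2a\rho=f$ with $\rho\to0$, where $f=3G+(\eta_0+\eta_1)e^{-\gamma t}$. For large $t$ this is a contraction with $\rho=O(e^{-\gamma t})$, because $\gamma<a<2a$. The obstacle is the region near $t=0$, where $f$ is not small. There I use the explicit kernel of the partner operator: since $L_0$-type operators arise from the Darboux factorization \eqref{eq:s7-darboux-factorization}, the exact mode $\varphi_-e^{\sqrt2 s}=2+g(s)$ is positive and annihilates the model with $\eta_0=0$. Indeed, $L_0\varphi_-=0$ with $\varphi_-=e^{-at}(2+g)$ gives
\[
 -\varphi''-2a\varphi'+(2-G-2)\varphi=0
\]
for $\varphi=2+g$. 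However, our operator has $V_g=3g^2-1=2-3G$, not $2-G$. So I instead use the Jost-type solution of $-\partial^2+V_g$, namely $e^{-at}\tilde\varphi$ with $\tilde\varphi$ built by the same Darboux transform. Concretely, $\mathcal D^*$ applied to $\varphi_-$ gives $\tilde\varphi=c(1+\sqrt2 g+g^2)$-type expressions; the precise form is to be verified, but it is positive on $[-\theta_0,\infty)$ for small $\theta_0$.

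\textbf{Absorbing the perturbations.} With the exact positive mode $\tilde\varphi$ in hand, I multiply it by $1-\epsilon\int_t^\infty e^{-\gamma s}ds$, or more precisely by $\exp\bigl(-K\int_t^\infty e^{-\gamma s}\,ds\bigr)$. The added term $K\gamma$-type derivative produces a positive surplus $\geq cKe^{-\gamma t}\varphi$, using $\gamma<2a$ together with the drift. This surplus absorbs both the $\eta_0$ perturbation and the tilt error. The tilt error is $2a(1-\mathbf N_n)\varphi'$, which is bounded by $C\kappa_0e^{-\gamma t}\varphi$ because $|\varphi'|\leq Ce^{-\gamma t}$. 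It is therefore absorbed once $\kappa_0$ and $\eta_0$ are small relative to $K$.

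I expect the main obstacle to be positivity of this exact mode down to $t_1=-\theta_0$ together with the sign of the surplus near $t=0$. There I would fall back on choosing $\theta_0$ small and using continuity, since all estimates are compact there.
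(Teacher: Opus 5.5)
Your reduction to one-dimensional inequalities is right. The two conjugation identities, for $e^{-at_2}\varphi(t_1)$ and for $v_0e^{-c_0l}$, are correct, and so is the final absorption of the tilt and $\eta_0$ terms. But both profiles you propose fail, for a common reason you do not address: $\mathcal L:=-\partial_t^2+V_g$ is \emph{critical} on the line, with the positive zero mode $A=g'$.

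\textbf{The function $v_0$.} Since $V_g(0)=-1$, a positive strict supersolution needs $v_0''(0)<-v_0(0)<0$. A convex bump therefore has the wrong sign. Your ``cleaner option'' gives $-v_0''+V_gv_0=2(2-3G)=4-6G$, which equals $-2$ at $t=0$. More fundamentally, for $v,v'$ bounded one has $(Av'-A'v)'=-A\,\mathcal Lv$, hence $Av'-A'v=\int_t^\infty A\,\mathcal Lv\,ds$. So $\mathcal Lv\geq c>0$ forces $v/g'$ to be strictly increasing. This is impossible for any even profile, since the left side vanishes at $t=0$, and impossible on the whole line. The barrier exists only because of the endpoint $t_*=-\theta_0$ and must be asymmetric; it is not a compact check in a free parameter $\beta$. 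The paper takes $v_0=v_D+g'$, where $\mathcal Lv_D=1$ and $v_D(t_*)=0$, built by variation of constants from $g'$ and $Y_0=g'\int_{t_*}^t(g')^{-2}$.

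\textbf{The function $\varphi$.} Put $\mathcal M\varphi=-\varphi''-2a\varphi'+(V_g-2)\varphi=e^{-at}\mathcal L(e^{at}\varphi)$. A $\varphi$ bounded between positive constants means $e^{at}\varphi\asymp e^{at}$ must perturb the \emph{growing} solution of $\mathcal L$. The Darboux route cannot supply it. One computes $\mathcal D^*\varphi_-=3g'$ and $\mathcal D^*\varphi_+=-3g'$; indeed $\mathcal D^*(\varphi_++\varphi_-)=\mathcal D^*(4/G)=0$. So you only recover the decaying translation mode. The function this produces, $\tilde\varphi=e^{at}\cdot3g'=\tfrac3{\sqrt2}(1+g)^2$, satisfies $\mathcal M\tilde\varphi=-4a\tilde\varphi'<0$, so it is a subsolution. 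The slip is already visible in your check: $2+g$ solves $-\varphi''+2a\varphi'-G\varphi=0$, with $+2a$, not $-2a$.

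The missing ingredient is reduction of order, $Y=g'\bigl(1+\int_{t_*}^t(g')^{-2}\bigr)$. It is positive on $[t_*,\infty)$ by construction and $\asymp e^{at}$, so positivity at $-\theta_0$ is automatic rather than a continuity argument.

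Your correction factor also has the wrong monotonicity. If $\mathcal M\varphi_0=0$, then $\mathcal M(\varphi_0k)=-\varphi_0[(2a+2\varphi_0'/\varphi_0)k'+k'']$. With $k=1-\epsilon\int_t^\infty e^{-\gamma s}\,ds$, or its exponential version, this is approximately $-(2a-\gamma)\epsilon e^{-\gamma t}\varphi_0<0$ for large $t$. The correction must decrease instead. The paper solves $(Y^2k')'=-Y^2e^{-\gamma t}$ exactly, so that $\mathcal M(e^{-at}Yk)=e^{-at}Ye^{-\gamma t}\geq ce^{-\gamma t}$ on all of $[t_*,\infty)$, with no separate treatment near $t=0$.
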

\begin{proof}
Put $A(t)=g'(t)$ and $t_*=-\theta_0$. Since $A>0$,
\[
 Y(t)=A(t)\left(1+\int_{t_*}^t A(s)^{-2}\,ds\right)>0,
 \qquad (-\partial_t^2+V_g)Y=0.
\]
Moreover $Y(t)\asymp e^{at}$ as $t\to\infty$. Define
\[
 k'(t)=-Y(t)^{-2}\int_{t_*}^tY(s)^2e^{-\gamma s}\,ds.
\]
Then $|k'|\leq Ce^{-\gamma t}$ and $k'$ is integrable. Choose the
additive constant so that $1\leq k\leq C$, and set
$\varphi=e^{-at}Yk$. Direct differentiation gives
\begin{equation}\label{eq:r5-phi-reserve}
 -\varphi''-2a\varphi'+(V_g-2)\varphi
   =e^{-at}Y e^{-\gamma t}\geq ce^{-\gamma t}.
\end{equation}
The explicit formula for $g'$ also gives
$(e^{-at}Y)'=O((1+t)e^{-at})$, so the asserted bounds for
$\varphi,\varphi'$ follow. Consequently
\[
 (-\Delta+V)(e^{-at_2}\varphi(t_1))
 =e^{-at_2}\{-\varphi''-2a\mathbf N_n\varphi'+(V-2)\varphi\}\geq0
\]
when $\kappa_0,\eta_0$ are sufficiently small, by
\eqref{eq:r5-phi-reserve}.

We also need a strictly positive supersolution for the remaining boundary
terms. We construct $v_0$ explicitly.
Set $Y_0(t)=A(t)\int_{t_*}^t A(s)^{-2}\,ds$ and
\[
 v_D(t)=A(t)\int_{t_*}^tY_0(s)\,ds
             +Y_0(t)\int_t^\infty A(s)\,ds,
 \qquad v_0=v_D+A.
\]
Then $v_D(t_*)=0$, $v_D>0$ for $t>t_*$,
$(-\partial_t^2+V_g)v_D=1$, and $v_D\to1/2$ at infinity.
These facts and the displayed formulas give all the stated bounds
for $v_0$, including its strictly positive lower bound. Thus
$(-\Delta+V)v_0\geq1-C\eta_0\geq1/2$.
Applying the operator to $v_0e^{-c_0l}$ adds terms bounded by
$C(c_0+c_0^2)e^{-c_0l}$, proving the last assertion.
\end{proof}

\begin{lemma}
\label{lem:r5-polytope-clearing}
Let $l_1,\ldots,l_k$ be inward affine unit distances and let their
bounded positive intersection be contained in a positive phase of an
Allen--Cahn solution $0<v\leq1$. There are constants $M,C$, depending
only on $k$ and dimension, such that, whenever $\min_jl_j(x)\geq M$,
\[
 1-v(x)\leq C\sum_{j=1}^k e^{-a l_j(x)}.
\]
The same assertion holds for a bounded positive artificial graph
half-space solution on such an intersection.
\end{lemma}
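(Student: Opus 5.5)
We compare $w=1-v$ with a sum of exponential barriers, one for each face, on the polytope $\Omega_M=\{\min_jl_j>M\}$. As a first step, we use Lemma~\ref{lem:r4-phase-ball} to make $v$ close to one. Each point $x$ with $\min_jl_j(x)\geq M$ is the center of a ball of radius $M$ contained in the positive intersection, hence in the positive phase. Applied on this ball, Lemma~\ref{lem:r4-phase-ball} gives $v\geq1-C/M^2$ there. Fix $\delta>0$ small with $(1-\delta)(2-\delta)\geq a^2-\tau$, where $\tau>0$ is chosen below. Taking $M_0$ large, we get $v\geq1-\delta$ on $\{\min_jl_j\geq M_0\}$. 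Since $\Delta w=v(1+v)w$ and $v(1+v)\geq(1-\delta)(2-\delta)$ there, the function $w$ satisfies
\[
(-\Delta+k^2)w\leq0,\qquad k^2=(1-\delta)(2-\delta),\qquad 0\leq w\leq1 .
\]

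A single exponential of an affine function, $e^{-k l_j}$, is an exact solution of $-\Delta+k^2$ because $|\nabla l_j|=1$. We therefore take the barrier $\Psi=e^{kM_0}\sum_je^{-k l_j}$ on $\Omega_{M_0}$. Each boundary point has some $l_j=M_0$, so $\Psi\geq1\geq w$ on $\partial\Omega_{M_0}$. The region is bounded because the intersection is, so the maximum principle for $-\Delta+k^2$ gives $w\leq\Psi$. This proves the estimate with exponent $k<a$, that is, with a loss $e^{(a-k)l_j}$. The exact exponent $a$ requires removing that loss. The plan is to use the first estimate to improve the coefficient. Since $1-v\leq C\sum e^{-kl_j}$, we have
\[
v(1+v)\geq2-3(1-v)\geq a^2-C\sum_je^{-kl_j}.
\]
So $w$ is a subsolution of $-\Delta+a^2-\mathcal V$ with a decaying potential error $\mathcal V\leq C\sum e^{-kl_j}$. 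For each face we then build the supersolution $e^{-al_j}\varphi_j(l_j)$, modelled on Lemma~\ref{lem:r5-sharp-barrier}. Here $\varphi_j$ solves a one-dimensional ODE with a source term absorbing $e^{-\gamma l_j}$, and $\varphi_j$ stays bounded above and below.

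The main obstacle is the cross terms: near an edge where two distances $l_i,l_j$ are both moderate, the error $e^{-kl_i}$ multiplies $e^{-al_j}$, and a purely one-dimensional $\varphi_j$ does not absorb it. To handle this, I would first try to replace $\varphi_j(l_j)$ by $\varphi(\min_il_i)$, or by a product $\prod_i\varphi(l_i)$. The product works because $\varphi'\leq0$, $|\varphi'|\leq Ce^{-\gamma t}$, and $\varphi$ is close to a constant. As a result, all mixed derivative terms are $O(e^{-\gamma l_i})$ times the barrier and have a favorable or absorbable sign. This happens once $M$ is large depending on $k$ and dimension, and the number $k$ of faces enters only through this sum. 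A maximum principle on $\Omega_M$ then gives $w\leq C\sum e^{-al_j}$, with the boundary comparison as before.

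For the artificial graph half-space solutions of Lemma~\ref{lem:r4-paired-slopes}, the same argument applies verbatim. The only additional input is phase clearing, which holds there because those solutions satisfy $0<v\leq1$ and solve the same equation, so Lemma~\ref{lem:r4-phase-ball} applies in the interior of the intersection.
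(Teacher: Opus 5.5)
Your first step is sound: phase clearing on balls of radius $M_0$, together with the linear barrier $e^{\lambda M_0}\sum_je^{-\lambda l_j}$ for $-\Delta+\lambda^2$, gives $1-v\le C\sum_je^{-\lambda l_j}$ for any fixed rate $\lambda<a$. (You call this rate $k$, which clashes with the number of faces.)

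The gap is in the upgrade to the exact rate. Since $|\nabla l_j|=1$,
\[
 (-\Delta+2-\mathcal V)\bigl(e^{-al_j}\varphi(l_j)\bigr)
 =e^{-al_j}\bigl(2a\varphi'-\varphi''-\mathcal V\varphi\bigr).
\]
So where only face $j$ is near, you need $2a\varphi'-\varphi''\ge\mathcal V\varphi>0$ along a long interval. Integrate from $M$ to $t$: if $\varphi'\le0$, then $-\varphi'(t)$ is bounded below by a positive constant for large $t$. This contradicts $\varphi$ being bounded below, so with the sign you assert the barrier fails already for a single face.

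That sign comes from Lemma~\ref{lem:r5-sharp-barrier}. There, however, $\varphi$ is evaluated at $t_1$ while the exponential is in $t_2$, with $\nabla t_2\approx-\nabla t_1$, which is the opposite geometry.

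If you take $\varphi$ increasing instead, say $A-Be^{-\gamma t}$ with $\gamma\le\lambda<a$, the self-terms become favorable. But for nearly opposite faces ($\nabla l_i\cdot\nabla l_j<0$), the mixed terms $2a\varphi'(l_i)\,\nabla l_i\cdot\nabla l_j$ in the $j$th summand are unfavorable. They are also larger than the absorbing term $-\varphi''(l_i)=\gamma\varphi'(l_i)$. Showing that the sum over $j$ compensates needs a quantitative argument that the proposal does not contain, for instance taking $\gamma$ close to $a$ depending on $k$ and using $|\sum_je^{-al_j}\nabla l_j|^2\ge0$. The alternative weight $\varphi(\min_il_i)$ meets the same obtuse cross terms.

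The cleaner repair is to absorb the error additively rather than through a multiplicative weight. By Young's inequality, $e^{-\lambda l_i-al_j}\le e^{-(a+\lambda)l_i}+e^{-(a+\lambda)l_j}$. Also $(-\Delta+2)(-e^{-bl_j})=(b^2-2)e^{-bl_j}>0$ for $b>a$. Hence $\sum_j(e^{-al_j}-Ke^{-(a+\lambda)l_j})$, with $K$ large and then $M$ larger, is a positive supersolution of $-\Delta+2-3w$. Meanwhile $(-\Delta+2-3w)w=-w^3\le0$, so comparison after scaling by $2e^{aM}$ gives the exact rate.

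The paper does this in one step, without the preliminary rate. Since $(-\Delta+2)w=3w^2-w^3\le3w^2$ and $w\le\zeta/2$ on $\{\min_jl_j\ge M\}$, it takes $W=\sum_j(r_j-kr_j^2)$ with $r_j=\zeta e^{-a(l_j-M)}$. By Cauchy--Schwarz, $(-\Delta+2)W=6k\sum_jr_j^2\ge6(\sum_jr_j)^2\ge3W^2$. It then compares $w$ with $W$ while both are at most $1/8$. The faster-decaying correction and Cauchy--Schwarz do the job you wanted the product weight to do, with no sign bookkeeping across faces.
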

\begin{proof}
At every point where all $l_j\geq M$, the phase contains its ball of
radius $M$ (or every slightly smaller closed ball). The phase-ball
clearing lemma therefore gives $w=1-v\leq C/M^2$ there.
Choose $\zeta>0$ small with $k\zeta<1/8$, and then choose $M$ so
that $w\leq\zeta/2$. On the inner polytope $\{l_j>M\}$ put
\[
 r_j=\zeta e^{-a(l_j-M)},\qquad
 W=\sum_{j=1}^k(r_j-kr_j^2).
\]
One has $0<W\leq k\zeta$, and on each boundary face
$W\geq\zeta-k\zeta^2\geq\zeta/2$. Since $|\nabla l_j|=1$,
\[
 (-\Delta+2)W=6k\sum_jr_j^2
      \geq6\left(\sum_jr_j\right)^2\geq3W^2.
\]
The deficit $w$ satisfies
$(-\Delta+2)w=3w^2-w^3$. The zeroth-order coefficient in the equation
for $w-W$ is positive while both functions are at most $1/8$.
The maximum principle yields $w\leq W$, which proves the claim.
\end{proof}

\begin{theorem}
\label{thm:r5-local-sharp-intercept}
Fix $b>0$ sufficiently small for
Lemma~\ref{lem:r4-paired-slopes}. There exist fixed
$\kappa_{\rm fin},\epsilon,\delta>0$ and $\ell_0,C,c>0$ with the following
property. Let $L\geq1$ and let $h$ be a smooth function on
$Q_{4L}$ whose graph lies in $\{u=0\}$ and which satisfies
\begin{equation}\label{eq:r5-flat-trap}
 \begin{gathered}
 h(0)=0,\quad \nabla h(0)=0,\quad
 D(y,h(y))\leq\delta\quad(y\in Q_{2L}),\\
 \|h\|_\infty+\|\nabla h\|_\infty+\|D^2h\|_\infty\leq\epsilon.
 \end{gathered}
\end{equation}
Orient the solution so that its upper chamber is positive and its
lower chamber is negative. Suppose that on each side, after using the inward coordinates
$x=(y,s)$ with $s=x_n$ on the upper side and $s=-x_n$ on the lower
side, one of the following alternatives holds.

\begin{enumerate}
\item There is a smooth adjacent actual zero graph $s=k_\pm(y)$ on $Q_{4L}$
with no zero between it and the base graph, and the actual solution has
the expected sign throughout that chamber. There are a unit vector
$\mathbf N_\pm$ with $(\mathbf N_\pm)_n>0$ and an intercept $\ell_\pm$ such that
\begin{equation}\label{eq:r5-neighbour-trap}
 \ell_0\leq\ell_\pm\leq\kappa_{\rm fin} L,\qquad
 |(\mathbf N_\pm)'|L\leq\vartheta_m\ell_\pm,\qquad
 |\mathbf N_\pm\cdot(y,k_\pm(y))-\ell_\pm|\leq\epsilon.
\end{equation}
Here the base graph is $s=h(y)$ or $s=-h(y)$, respectively.
\item The actual solution has the expected constant sign on the
empty buffered region between the base graph and $s=2\kappa_{\rm emp} L$,
for all $y\in Q_{4L}$. In this case put $\ell_\pm=+\infty$.
\end{enumerate}
Here $\kappa_{\rm emp}=\kappa_{\rm fin}/100$ is fixed independently
of $L$. The separation of these constants permits a near/far neighbor
split without a threshold gap.
Define the actual curvature-square envelope
\[
 \mathcal J_b=\sup_{y\in Q_{4L}}
                  e^{-2b|y|}|A(y,h(y))|^2.
\]
Then
\begin{equation}\label{eq:r5-sharp-defect-upper}
 D(0)\leq C\mathcal J_b
       +C\bigl(e^{-\sqrt2\ell_+}+e^{-\sqrt2\ell_-}\bigr)
       +Ce^{-cL}.
\end{equation}
All constants are independent of $L$ and of the adjacent intercepts.
The exponential constant $c$ can be chosen independently of sufficiently
small $b$; only $C$ in front of $\mathcal J_b$ may depend on $b$.
The $L^\infty$ height bound in \eqref{eq:r5-flat-trap} is part of the
hypothesis; it has not been inferred from a small slope on a long box.
\end{theorem}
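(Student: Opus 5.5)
The plan is to bound $D(0)=1-|\nabla U(0)|^2$ by comparing the two inward normal slopes of $u$ at the base point with those of the artificial half-space solutions $v_h^\pm$ of Lemma~\ref{lem:r4-paired-slopes}. At a zero, $|\nabla u(0)|=|\boldsymbol p(0)|/\sqrt2$ and $\partial_tg(0)=1/\sqrt2$, so $D(0)\leq 2(1-|\boldsymbol p(0)|)\leq C\bigl(g'(0)-\partial_su(0)\bigr)$ measured in the upward direction. The strategy is to replace this quantity by a sum of two one-sided slope deficits, one from each chamber. Then Lemma~\ref{lem:r4-paired-slopes} cancels the linear curvature contribution of the two half-space models, leaving only $CM_b^2$, and $CM_b^2$ is controlled by $\mathcal J_b$. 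Here we use $|D^2h|\lesssim|A|$ at the graph, and $|\nabla h(y)|\leq\int_0^{|y|}|D^2h|$, which is absorbed with a weight loss $b\to2b$ after shrinking $b$.

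\textbf{Step 1 (one-sided comparisons).} In the upper chamber, write $w_+=u-v_h^+$ in graph coordinates. Both functions vanish on the base graph, and $u$ is positive there. The difference solves a linear equation $-\Delta w+Vw=0$ with $V$ between $3u^2-1$ and $3(v_h^+)^2-1$. By Lemma~\ref{lem:r5-polytope-clearing} and the phase decay theorem, $V\geq V_g(t_1)-\eta_0e^{-\gamma t_1}$ away from the adjacent graph. The negative part of $w_+$ is then dominated by a supersolution combination
\[
 \Psi=C_1e^{-at_2}\varphi(t_1)+C_2e^{-cL}v_0(t_1)e^{-c_0l}
\]
from Lemma~\ref{lem:r5-sharp-barrier}. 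Here $t_2=\ell_+-\mathbf N_+\cdot x$ is controlled on the adjacent graph by \eqref{eq:r5-neighbour-trap}, where $u=0$ and $v_h^+\leq1$, so there $w_+\geq-1\geq-Ce^{at_2}$-scale data. The lateral faces of $Q_{4L}$ are handled by the $v_0e^{-c_0l}$ factor, which gives $e^{-cL}$. In the empty alternative, only the face term and the top at height $2\kappa_{\rm emp}L$ appear, giving $e^{-cL}$ again. Since $\Psi$ and $w_+$ vanish at the base point and $\Psi$ has bounded slope there, the Hopf or boundary-gradient comparison yields
\[
 \partial_s u(0)\geq \partial_tv_h^+(0)-C\bigl(e^{-\sqrt2\ell_+}+e^{-cL}\bigr).
\]
Symmetrically, in the lower chamber we get $-\partial_su(0)$ bounded below by $\partial_tv_h^-(0)$ minus the corresponding error.

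\textbf{Step 2 (combining).} Using the upper bound $|\nabla u|\leq|\boldsymbol p|/\sqrt2$ with $|\boldsymbol p|\leq1$, we need a \emph{lower} bound on $|\nabla u(0)|$. Averaging the two one-sided inequalities gives
\[
 |\nabla u(0)|\geq\tfrac12\bigl(\partial_tv_h^+(0)+\partial_tv_h^-(0)\bigr)-C(\mu_++\mu_-+e^{-cL}),
\]
where $\partial_tv_h^\pm(0)=g'(0)+\partial_t\phi_\pm(0)$. The paired-slope estimate \eqref{eq:r4-paired-slope-bound} gives $\partial_t\phi_+(0)+\partial_t\phi_-(0)\geq-CM_b^2$. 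Hence $1-|\boldsymbol p(0)|\leq C(M_b^2+\mu_++\mu_-+e^{-cL})$, and $D(0)\leq2(1-|\boldsymbol p(0)|)$ yields \eqref{eq:r5-sharp-defect-upper}.

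\textbf{Main obstacle.} The main obstacle is Step 1: keeping the comparison operator's potential within $\eta_0e^{-\gamma t_1}$ of $V_g$ up to distance $\ell_+$, uniformly in the chamber. The potential must also stay positive near the adjacent graph, where $u$ is not close to $g$. My first attempt would be to run the barrier only on the region $t_1\leq\ell_+-M$ using polytope clearing. At the cut $t_1=\ell_+-M$, I would bound $|w_+|\leq Ce^{-a\ell_+}e^{aM}$ through the estimate $1-u\leq C\sum e^{-al_j}$ together with the analogous bound for $v_h^+$. This gives exactly the $e^{-\sqrt2\ell_+}$ boundary data for $e^{-at_2}\varphi$, with the tilt condition on $\mathbf N_+$ absorbed by $\kappa_0$.
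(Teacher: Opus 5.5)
Your architecture matches the paper's. You use the artificial half-space solutions of Lemma~\ref{lem:r4-paired-slopes}, a barrier comparison for $w=u-v_h^\pm$ built from Lemma~\ref{lem:r5-sharp-barrier}, cancellation of the two artificial slopes up to $CM_b^2$, and $M_b^2\le C_b\mathcal J_b$ by radial integration. That last step needs $h$ extended to all of $\mathbb R^m$, which is where $\|h\|_\infty\le\epsilon$ is used. Using only one-sided lower slope bounds on each side is legitimate, since $\partial_{x_n}u(0)=|\nabla u(0)|$.

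The gap is in how you resolve the obstacle you correctly flagged. You propose to compare on $\{t_1\le\ell_+-M\}$ with $M$ fixed. Above the bisector $\{t_1=t_2\}$ one has $1-u\approx 2e^{-\sqrt2 t_2}$. For bounded $t_2$ and $t_1\approx\ell_+$ this is far larger than $\eta_0e^{-\gamma t_1}$. So the hypothesis $V\ge V_g(t_1)-\eta_0e^{-\gamma t_1}$ of Lemma~\ref{lem:r5-sharp-barrier} fails there, and $e^{-at_2}\varphi(t_1)$ is not a supersolution: its reserve is only of order $e^{-\gamma t_1}$, while $(V-2)\varphi$ is of order $-e^{-at_2}$. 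Your cut data $Ce^{-a\ell_+}e^{aM}$ is also wrong; near that cut $1-u$ has size $e^{-aM}$. The same failure occurs in the empty alternative if you compare up to height $2\kappa_{\rm emp}L$, where $u$ is only known to be positive.

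The missing idea is to stop the comparison at the bisector $t_1=t_2$ over $Q_L$, and at $t_1=\kappa_{\rm emp}L$ in the empty case. On the bisector, polytope clearing on $\{t_1>\epsilon,\ t_2>\epsilon,\ y\in Q_{4L}\}$ gives $1-u,\ 1-v_h^+\le Ce^{-at_2}+Ce^{-cL}$. This is exactly the size of $e^{-at_2}\varphi(t_1)+e^{-cL}v_0$, provided $\ell_0$ exceeds the clearing threshold. Below the bisector, $t_1\le t_2$, and $t_1\le 3L$ after shrinking $\kappa_{\rm fin}$, so $1-u\le Ce^{-at_1}$ at large depth. That verifies the potential bound there.

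At bounded depth you also need $|u-g(t_1)|\le C_T(\sqrt\delta+\epsilon)$ from the relative jets. This is where the hypothesis $D\le\delta$ on $Q_{2L}$ enters, and your sketch omits it.

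Two smaller repairs:
\begin{itemize}
\item The lateral barriers must be $v_0(t_1)e^{-c_0(L\pm y_j)}$, of order one on the faces of $Q_L$, with no $e^{-cL}$ prefactor.
\item Since $\Psi(0)\neq0$, the slope bound does not come from ``$\Psi$ vanishing at the base''. It comes from a sup bound on $w_-$ near $0$, followed by a boundary barrier or boundary gradient estimate for $(-\Delta+V)w=0$ with zero trace on the graph.
\end{itemize}
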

\begin{proof}
Extend $h$ by a cutoff equal to one on $Q_{2L}$ and supported in
$Q_{3L}$, with derivatives $O(L^{-1}),O(L^{-2})$. The global graph
$\widetilde h$ has unweighted $C^2$ size $O(\epsilon)$ and supplies
the two artificial solutions $v_*^\pm$ of
Lemma~\ref{lem:r4-paired-slopes}. Radial integration from
$h(0)=\nabla h(0)=0$, $|D^2h|\leq C|A|$, and
$\int_0^r(r-s)e^{bs}\,ds\leq C_be^{br}$ give
\begin{equation}\label{eq:r5-envelope-extension}
 M_b^2\leq C_b\mathcal J_b.
\end{equation}
Indeed $e^{-b|y|}(|h|+|\nabla h|+|D^2h|)\leq
C_b\mathcal J_b^{1/2}$ on $Q_{3L}$, and the cutoff preserves this
bound since $L\geq1$. Using the whole graph window in this extension makes the rate of the
boundary error independent of $b$.

We prove the comparison in the upper chamber. Reflection gives the
lower-chamber estimate with the same extension.
Set $t_1=x_n$. In the finite case set $t_2=\ell-\mathbf N\cdot x$
and take the outer polytope
$\mathcal P=\{t_1>\epsilon,t_2>\epsilon,y\in Q_{4L}\}$.
In the empty case take
$\mathcal P=\{\epsilon<t_1<2\kappa_{\rm emp}L,y\in Q_{4L}\}$.
The graph traps put both polytopes in the actual positive phase and
above the artificial graph. The inner domain $\Omega$ lies over
$Q_L$, above the base, and below $t_1=t_2$ in the finite case or
$t_1=\kappa_{\rm emp}L$ in the empty case.
In its finite top, $t_1,t_2\asymp\ell$ by the tilt bound. Taking
$\ell_0$ larger than the fixed clearing threshold gives
\begin{equation}\label{eq:r5-midpoint-data}
 1-u,\ 1-v_*^+\leq Ce^{-at_2}+Ce^{-cL}
 \qquad(t_1=t_2,\ y\in Q_L)
\end{equation}
by Lemma~\ref{lem:r5-polytope-clearing}.

The difference $w=u-v_*^+$ has zero bottom trace and satisfies
$(-\Delta+V_*)w=0$, with
$V_*=u^2+uv_*^++(v_*^+)^2-1$. Uniformly in chamber length,
\begin{equation}\label{eq:r5-decaying-potential}
 V_*\geq V_g(t_1)-\eta_0e^{-\gamma t_1}\quad\hbox{on }\Omega.
\end{equation}
To check this, fix $0<\gamma<a$ first. On every fixed depth interval
$[-\epsilon,T]$, the base relative jets give
$|u-g(t_1)|\leq C_T(\sqrt\delta+\epsilon)$: integrate
$\boldsymbol p=\nabla U$, using
$\boldsymbol p=\sqrt{1-D}\,\nu$ on the zero graph and
$|D^2U|\leq C_T\sqrt\delta$. The artificial solution has the same
fixed-depth convergence by its construction.
For increasing depth, the bottom and top face distances of
$\mathcal P$ in $\Omega$ are at least $t_1-\epsilon$, while lateral
distances are at least $3L$. Decreasing $\kappa_{\rm fin}$ ensures
$t_1\leq3L$. Polytope clearing therefore yields
$1-u+1-v_*^+\leq Ce^{-at_1}$ once $t_1$ exceeds a fixed threshold.
First choose $T$ so $Ce^{-(a-\gamma)T}\leq\eta_0/2$, then choose
$\epsilon,\delta$ so the fixed-depth error is at most
$(\eta_0/2)e^{-\gamma T}$. This proves
\eqref{eq:r5-decaying-potential} in both chambers.

The tilt gives $1-\mathbf N_n\leq C\kappa_{\rm fin}^2$.
Lemma~\ref{lem:r5-sharp-barrier} thus supplies the finite-top barrier
$e^{-at_2}\varphi(t_1)$ and, for both chambers, the side barriers
$v_0(t_1)e^{-c_0(L\pm y_j)}$. Add $e^{-cL}v_0$ for the remainder
in \eqref{eq:r5-midpoint-data}. In the empty case replace the
finite-top barrier by
$v_0(t_1)e^{-c_0(\kappa_{\rm emp}L-t_1)}$.
A fixed multiple of the sum dominates $|w|$ on every face.
Division by the strictly positive supersolution $v_0$ gives a positive
zeroth-order coefficient, justifying comparison despite the sign of $V_*$.
Consequently $|w|\leq C(e^{-a\ell}+e^{-cL})$ near zero, omitting
the interaction term in the empty case. Uniform boundary gradient
estimates give
\begin{equation}\label{eq:r5-slope-transmission}
 |\partial_nu(0)-\partial_tv_*^+(0,0)|
 \leq C(e^{-a\ell}+e^{-cL}).
\end{equation}
The required local coefficient and bottom-graph norms are uniformly
bounded by the actual solution estimates and relative jets; enlarging
$C$ covers bounded $L$ if necessary.

Apply \eqref{eq:r5-slope-transmission} on both sides. The actual
inward slopes coincide, whereas the artificial perturbation slopes
sum to $O(M_b^2)$ by Lemma~\ref{lem:r4-paired-slopes}.
Since $D(0)=1-2\partial_nu(0)^2$, \eqref{eq:r5-envelope-extension}
proves \eqref{eq:r5-sharp-defect-upper}.
\end{proof}

\subsection{Envelope localization and pointwise comparison}
\label{sec:r5-global-comparison}
Both comparison inequalities use the following logarithmic window.
After excluding bad zeros and continuing the local graphs, we distinguish
nearby adjacent graphs from empty buffered chambers.

\begin{lemma}
\label{lem:s2-envelope-localization}\label{lem:r5-adjacent-realization}
Fix an angular tolerance $0<\vartheta<1/10$. There are
$A_0,c_1,\lambda>0$, depending only on $m,\Gamma,\vartheta$,
with the following property for every $\delta\in(0,1/4]$. Put
\[
 B=J_\lambda(z)+E(z),\qquad P=A_0\log(1/B),\qquad L=c_1P.
\]
For $0<B$ sufficiently small, after translating $z$ to zero and rotating
its normal to $e_n$, the base graph on $Q_{4L}$ satisfies the height,
slope, Hessian and low-defect assumptions of
Theorem~\ref{thm:r5-local-sharp-intercept}.
On each side, the chamber of constant sign adjacent to the base graph
satisfies either the finite adjacent-graph or the empty-buffer alternative
of that theorem.
Each finite side has
$e^{-\sqrt2\ell_\pm}\leq E(z)$, and its full-window curvature envelope
satisfies $\mathcal J_b\leq J_\lambda(z)$.
Moreover $e^{-cL}\leq B$ for the lateral rate $c$ of that theorem.

If $E(z)\geq J_\lambda(z)$, one of these adjacent sides realizes
\begin{equation}\label{eq:r5-sharp-reach-realization}
 d(z)=G_1\ell,\qquad E(z)=e^{-\sqrt2\ell}.
\end{equation}
The receiving gradient-oriented normal on that side makes angle at most
$\vartheta$ with $-\nu(z)$.
\end{lemma}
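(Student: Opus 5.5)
The plan is to turn the smallness of $B=J_\lambda(z)+E(z)$ into the hypotheses of Theorem~\ref{thm:r5-local-sharp-intercept} in three steps. First, smallness of $J_\lambda$ gives curvature control on a logarithmic ball. Second, smallness of $E$ gives a large reach, which clears the adjacent chambers. Third, Lemma~\ref{lem:r5-log-ordering} supplies the graph structure.

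\textbf{Step 1: curvature and defect control on the window.} By definition of $J_\lambda$, every $y\in\Sigma$ with $|y-z|\leq 10P$ satisfies
\[
 \bar a_\delta(y)^2\leq e^{\lambda|y-z|}J_\lambda(z)\leq B^{1-10\lambda A_0}.
\]
Choose $\lambda$ small after $A_0$, so that $10\lambda A_0<1/2$. Then $\bar a_\delta<1$ on $B_{10P}(z)$, so no zero there lies in $X_\delta$. Hence all zeros are regular, satisfy $D<\delta\leq1/4$, and have $|A|\leq\varepsilon:=B^{1/4}$. Since $\varepsilon P^2\to0$ as $B\to0$, Lemma~\ref{lem:r5-log-ordering} applies with $P_j\sim P$. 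It gives the base graph $h$ over a disk of radius $cP$, with $|\nabla^2h|\leq C\varepsilon$, height $O(\varepsilon P^2)=o(1)$ and slope $o(1)$. Taking $L=c_1P$ with $c_1$ small puts $Q_{4L}$ inside that disk.

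The envelope bound $\mathcal J_b\leq J_\lambda(z)$ then holds for $b\geq\lambda\sqrt m/2$, which is compatible with choosing $\lambda$ small relative to the fixed $b$. The small-defect bound $D\leq\delta$ on $Q_{2L}$ comes from Lemma~\ref{lem:r4-near-critical-growth}, or directly from membership outside $X_\delta$. Finally, $e^{-cL}\leq B$ amounts to $cc_1A_0\geq1$, so $A_0$ must be large once $c_1$ is fixed. This creates the main tension: $\lambda$ must be tiny relative to $1/A_0$, and this ordering is consistent.

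\textbf{Step 2: the adjacent chambers.} Consider first the side above $z$. Take the nearest ordered graph from Lemma~\ref{lem:r5-log-ordering} whose axis intercept is below $c\eta P$.

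If such a graph exists, \eqref{eq:r5-chord-angle} gives a normal foot $y$ at distance $\ell\asymp t$, with a tilt $|\mathbf N-e_n|\leq Ct/P+o(1)$. This yields the trap \eqref{eq:r5-neighbour-trap} once $\kappa_{\rm fin}$ is small, since $t\leq c\eta P$ and the factor $\vartheta_m$ is absorbed by choosing $\eta$. The chord has angle cosine at least $1-\delta_2$, so it lies on the penalty plateau, and
\[
 d(z)\leq G_1\ell,\qquad\text{hence}\qquad e^{-\sqrt2\ell}\leq e^{-\sqrt2 d/G_1}=E(z).
\]
Large reach forces $\ell\geq d(z)/(C_\Gamma G_1)$, which gives $\ell\geq\ell_0$.

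If no such graph exists, the buffered slab up to height $2\kappa_{\rm emp}L$ over $Q_{4L}$ contains no zero. This uses $\kappa_{\rm emp}L\ll c\eta P$ together with the ordering, so the sign is constant and the empty alternative holds. The lower side is treated by reflection. The number of graphs need not be uniform, but only the first neighbor matters.

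\textbf{Step 3: realization when $E(z)\geq J_\lambda(z)$.} Then $E(z)\geq B/2$, so
\[
 d(z)\leq G_1\log(2/B)/\sqrt2,
\]
which is $O(P/A_0)$ and far inside the window. An active contact $y$ exists within distance $d(z)/G_1$. Off-plateau contacts have a cosine bounded away from $1$, but every zero in $B_{\eta P}$ has normal within $O(\eta)$ of $e_n$ and lies on a nearly flat ordered graph. A chord from $z$ to such a graph at a non-normal angle therefore cannot minimize, since moving to the normal foot reduces both the length and the penalty. Hence the contact is a plateau contact at the normal foot of the first adjacent graph on one side. This gives $d(z)=G_1\ell$ and $E(z)=e^{-\sqrt2\ell}$.

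The receiving normal is $-\nu(z)$ up to $O(\eta)+o(1)$. This is because gradient orientation alternates between consecutive sheets: the chamber has constant sign, so the two bounding sheets have opposite upward gradient orientation. Choosing $\eta$ small relative to $\vartheta$ gives the angle bound.

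I expect the main obstacle to be this last step, namely excluding active contacts that are not the first adjacent normal foot, for example chords to farther sheets or to off-axis points. The argument uses $\Gamma\geq1/s$ and $G_1\leq\Gamma$ to compare penalties against the normal-foot competitor.
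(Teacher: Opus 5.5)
Your outline follows the paper's proof closely. It uses the same order of constants ($\eta$, $c_1$, $A_0$, then $\lambda$ with $10\lambda A_0<1/2$), excludes bad zeros from $B_{10P}$, applies the ordering lemma, takes the first intercept on each side, and realizes the reach at a plateau normal foot, with sign alternation giving the angle. The genuine gap is the low-defect hypothesis of Theorem~\ref{thm:r5-local-sharp-intercept}. The $\delta$ there is a \emph{fixed small} constant produced by that theorem; it is what gives $|u-g(t_1)|\le C_T(\sqrt\delta+\epsilon)$ on fixed depths. The present lemma must hold for every $\delta\in(0,1/4]$, including $\delta=1/4$, and lying outside $X_\delta$ only gives $D<1/4$. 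Lemma~\ref{lem:r4-near-critical-growth} does not help. It propagates $D(z)$ with a factor $e^{(2\sqrt2+\gamma)|y-z|}$, which over $|y|\lesssim L=c_1A_0\log(1/B)$ (with $cc_1A_0\ge1$) is a negative power of $B$. The only a priori bound on $D(z)$, namely $C_\Gamma/d(z)\lesssim1/\log(1/B)$, cannot beat that factor. Nor does $|A|\le\varepsilon$ suffice: closely spaced flat sheets (the periodic profiles) have $A=0$ and $D$ bounded below. The missing step is to use the chambers you construct to get large reach at \emph{every} base point $w=(y,h(y))$ with $y\in Q_{2L}$:
\begin{itemize}
\item other zeros in the cylinder are at distance $\gtrsim\log(1/B)$, by the finite affine traps or the empty buffer;
\item zeros outside the cylinder are at distance $\gtrsim L$;
\item chords within the base sheet have objective $\ge c/\varepsilon$, by the Hessian bound and $\Gamma(s)\ge1/s$.
\end{itemize}
Hence $d(w)\ge c\log(1/B)$, and Proposition~\ref{prop:r4-actual-contact-regularity} gives $D(w)\le C_\Gamma/\log(1/B)\to0$, independently of the original $\delta$. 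The defect bound must therefore come after Step~2, not in Step~1.

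There are also fixable bookkeeping errors in Step~2.
\begin{itemize}
\item Splitting the alternatives at intercept $c\eta P$ does not match the trap requirement $\ell_\pm\le\kappa_{\rm fin}L=\kappa_{\rm fin}c_1P$. Here $\kappa_{\rm fin}$ is fixed by the theorem, and shrinking it makes the requirement harder, not easier. Split at $\kappa_{\rm fin}L/2$ instead.
\item With that split, in the empty case the coarse slope $O(\eta)$ no longer excludes a zero below $2\kappa_{\rm emp}L$. You need the refined bound $|\nabla k(0)|\le Ct/P+CP\varepsilon$ from Lemma~\ref{lem:r5-log-ordering} to show its intercept obeys $t\le2\kappa_{\rm emp}L+Cc_1t+CP^2\varepsilon\le3\kappa_{\rm emp}L$.
\item The tilt condition $|\mathbf N'|L\le\vartheta_m\ell$ comes from $|\mathbf N'|L\le Cc_1t+Cc_1P^2\varepsilon$, i.e.\ from choosing $c_1$ small, not $\eta$.
\item You should also verify $|\mathbf N\cdot(y,k(y))-\ell|\le C\varepsilon L^2=o(1)$ by Taylor expansion at the foot.
\end{itemize}
In Step~3, explicitly discard base-sheet chords and diagonal limits (their objective is $\ge c/\varepsilon\gg P$) before extracting an off-diagonal minimizer. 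Also take $A_0$ large enough that the realized foot falls below the finite threshold.
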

\begin{proof}
Fix first the inverse exponent $b$ and chamber constants
$\kappa_{\rm fin},\kappa_{\rm emp},c$. Choose the ordering fraction
$\eta$ sufficiently small in terms of $\vartheta$, and then $c_1$ so small that the box
$Q_{4c_1P}\times[-4c_1P,4c_1P]$ lies in the common graph window,
and every factor $Cc_1$ below is at most
$\min\{1/1000,\vartheta_m/100\}$. The intercepts used below must
also lie in the small-intercept range of
Lemma~\ref{lem:r5-log-ordering}. Choose $A_0$ large so
$cc_1A_0\geq1$ and, in the regime $E\geq J_\lambda$, every
competitor of objective at most $2d$ lies in that window with
axis intercept below $\kappa_{\rm fin}L/2$; here
$d=O(\log(1/B))$. Finally choose
$0<\lambda\leq b$ with $10\lambda A_0<1/2$.
These choices precede $\delta$.

For $B\ll1$ set $\varepsilon=B^{1/4}$. There is no bad zero in
$B_{10P}$, since one would imply
$J_\lambda(0)\geq e^{-10\lambda P}>B$.
All zeros there are regular with $D<\delta\leq1/4$, and
$a^2\leq J_\lambda(0)e^{10\lambda P}\leq B^{1/2}$.
The ordering lemma applies since $\varepsilon P^2\to0$.
It gives the base graph on $Q_{4L}$ with
\begin{equation}\label{eq:r5-global-base-flatness}
 h(0)=\nabla h(0)=0,\qquad
 \|h\|_\infty+\|\nabla h\|_\infty+\|D^2h\|_\infty
 \leq C(1+P)^2\varepsilon=o(1),
\end{equation}
and finitely many ordered graphs in the smaller box. Their coarse
slopes are $O(\eta)+o(1)$; at a small positive intercept $t$ the
stronger bound is
\begin{equation}\label{eq:r5-global-intercept-slope}
 |\nabla k(0)|\leq Ct/P+CP\varepsilon,\qquad
 \|D^2k\|\leq C\varepsilon.
\end{equation}

On the upper side suppose there is an intercept
$0<t_*\leq\kappa_{\rm fin}L/2$, and choose the smallest one.
Its distance $\ell\asymp t_*$ from zero is attained at an interior
foot, and its upward chord normal obeys
$|\mathbf N'|\leq Ct_*/P+CP\varepsilon$.
Thus the chord lies in the fixed plateau and
\begin{equation}\label{eq:r5-global-ell-reach}
 d(0)\leq G_1\ell,\qquad e^{-\sqrt2\ell}\leq E(0),\qquad
 \ell\geq(\log(1/B))/\sqrt2\longrightarrow\infty.
\end{equation}
Also $|\mathbf N'|L\leq Cc_1t_*+CP^2\varepsilon\leq
\vartheta_m\ell$, and Taylor expansion at the foot gives
$|\mathbf N\cdot(y,k(y))-\ell|\leq C\varepsilon P^2=o(1)$
on $Q_{4L}$. No intervening graph can occur, by minimality of
$t_*$; connectivity gives the positive chamber. This is the finite
alternative of the sharp-intercept theorem.

Otherwise the region up to $2\kappa_{\rm emp}L$ is zero-free.
Indeed a zero there would extend to an ordered graph with positive
intercept $t\leq2\kappa_{\rm emp}L+C\eta L+o(1)$.
This is within the ordering lemma's small-intercept range, so
\eqref{eq:r5-global-intercept-slope} improves it to
$t\leq2\kappa_{\rm emp}L+Cc_1t+CP^2\varepsilon
\leq3\kappa_{\rm emp}L<\kappa_{\rm fin}L/2$, a contradiction.
Thus the two alternatives cover all possibilities and give an empty
positive chamber in this case. Reflection gives the corresponding
negative chamber below the base.

For $w=(y,h(y))$, $y\in Q_{2L}$, every non-base zero in the common
cylinder is at distance at least $c\log(1/B)$, by the finite affine
traps or the empty buffers; zeros outside it are at distance at least
$cL$. Base-graph chords satisfy
$|\nu(w)\cdot(v-w)|\leq C\varepsilon|v-w|^2$ and hence have
objective at least $c/\varepsilon$. Therefore
\begin{equation}\label{eq:s7-comparison-window-reach}
 d(w)\geq c\min\{\varepsilon^{-1},\log(1/B)\}
 \geq c\log(1/B).
\end{equation}
Proposition~\ref{prop:r4-actual-contact-regularity} gives
$D(w)\leq C/\log(1/B)\to0$, which supplies the theorem's fixed
low-defect threshold independently of the original $\delta$.
Since $|(y,h(y))|\leq\sqrt2|y|$ and $\lambda\leq b$,
$e^{-2b|y|}a(y,h(y))^2\leq J_\lambda(0)$ on all of $Q_{4L}$.
Also $e^{-cL}\leq B$ by the choice of $A_0$.

If $E\geq J_\lambda$, then $E\leq B\leq2E$ and
$d=O(\log(1/B))$. All competitors of objective at most $2d$ are
in the common window, but base-sheet chords and diagonal limits have
objective at least $c/\varepsilon\gg P$. Compactness therefore
gives an off-diagonal minimizer on one of its regular graphs.
That graph's shortest chord has an interior foot in the plateau;
its objective is no larger, so $d=G_1\ell$. An intervening graph
would meet this segment earlier and have a smaller plateau objective,
contradicting minimality. It is therefore a selected adjacent side.
The sign change at each regular crossing makes its gradient-oriented
normal oppose the base normal. Choosing the fixed ordering angle
sufficiently small in terms of $\vartheta$ gives the stated angle bound.
\end{proof}

\paragraph*{The positive massive kernel}

\begin{lemma}
\label{lem:r5-massive-kernel}
Under the interaction-dominated conclusion of
Lemma~\ref{lem:s2-envelope-localization}, namely $E\geq J_\lambda$ and
$d=G_1\ell$ on its selected adjacent side,
\begin{equation}\label{eq:r5-disk-lower}
 D(0)\geq c e^{-\sqrt2\ell}.
\end{equation}
The constant is independent of $\ell$ and $\delta$.
\end{lemma}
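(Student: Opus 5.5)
We will bound $D(0)$ from below through the positive directional source of Lemma~\ref{lem:positive-directional-source}, using the kernel mass estimate \eqref{eq:massive-coarse-disk}. Take $n_0=e_n=\nu(0)$. Then $w=q(1-\boldsymbol p\cdot e_n)$ satisfies $w=\mathcal K*F_{e_n}$ with $F_{e_n}\geq0$. At the base point $q(0)=1$ and $\boldsymbol p(0)=\sqrt{1-D(0)}\,e_n$, so
\[
 w(0)=1-\sqrt{1-D(0)}\leq D(0).
\]
It therefore suffices to show $w(0)\geq ce^{-\sqrt2\ell}$. For this we need a lower bound for $F_{e_n}\geq q^2|\boldsymbol p-e_n|^2$ on a region located at distance about $\ell$ from $0$ along $e_n$.

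The selected adjacent side supplies this region. By Lemma~\ref{lem:s2-envelope-localization}, the receiving graph $s=k(y)$ sits near the affine plane $\mathbf N\cdot x=\ell$, with $|\mathbf N'|L\leq\vartheta_m\ell$. Its gradient-oriented normal makes angle at most $\vartheta$ with $-e_n$. At each zero $x$ of this graph we have $q(x)=1$ and $\boldsymbol p(x)=\sqrt{1-D(x)}\,\nu(x)$, with $\nu(x)\approx-e_n$. Hence $|\boldsymbol p-e_n|^2\geq 4-C\vartheta\geq 3$ there. By the relative derivative estimates of Lemma~\ref{lem:audited-relative-defect} ($|\nabla\log q|\leq\sqrt2$ and bounded $C^1$ norm of $\boldsymbol p$), the bound $F_{e_n}\geq c$ persists on a unit ambient neighbourhood of every such zero.

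Restrict to the horizontal disk $|y|\leq c_0\sqrt\ell$. Since $\ell\leq\kappa_{\rm fin}L$ and $L\asymp\log(1/B)$, this disk lies inside $Q_{L}$. On it, the trap $|\mathbf N\cdot(y,k(y))-\ell|\leq\epsilon$ together with $|\mathbf N'|\leq\vartheta_m\ell/L$ gives
\[
 |k(y)-\ell|\leq C+C|\mathbf N'||y|\leq C+C\ell^{3/2}/L .
\]
This is not $O(1)$ in general, so the naive bound is too weak and the estimate has to be refined.

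The first refinement uses the intercept slope bound \eqref{eq:r5-global-intercept-slope} at the foot, $|\nabla k|\leq Ct/P+CP\varepsilon$, together with $\|D^2k\|\leq C\varepsilon$. We will also recenter the horizontal disk at the foot $y_0$ of the chord, which lies at horizontal distance $O(\ell|\mathbf N'|)$. On $|y-y_0|\leq c_0\sqrt\ell$ the height variation is then $O(\ell^{3/2}/P+P\varepsilon\sqrt\ell+\varepsilon\ell)$. If this is still not bounded, we will rather rotate coordinates so that $e_n$ is replaced by the chord direction $\mathbf N$, applying the lemma with $n_0=\mathbf N$. This costs a factor $1-\boldsymbol p(0)\cdot\mathbf N\leq D(0)+|\mathbf N-e_n|^2$, and \eqref{eq:r5-chord-angle} controls the error $|\mathbf N-e_n|^2\lesssim \ell^2/P^2+P^2\varepsilon^2$. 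In the rotated frame the receiving graph lies within $O(1)$ of the plane $\mathbf N\cdot x=\ell$ over the whole disk, by the trap in \eqref{eq:r5-neighbour-trap}.

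Given this, integrate the kernel over the unit-thickened graph patch. By the perturbation-stability clause of Lemma~\ref{lem:r11-kernel-horizontal-mass} and \eqref{eq:massive-coarse-disk} with $M=O(1)$,
\[
 w(0)\geq c\int_{|\tau|\leq c_0\sqrt\ell}\mathcal K(\tau,h)\,d\tau\geq ce^{-\sqrt2\ell}.
\]
We then check that the rotation error is absorbed, i.e.\ that $\ell^2/P^2+P^2\varepsilon^2\ll e^{-\sqrt2\ell}$. This fails when $\ell\asymp P$, so in that case we instead use $D\leq2(1-\boldsymbol p\cdot e_n)$ directly without rotating and accept the height spread. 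I expect this geometric alignment, keeping the receiving sheet within $O(1)$ of the plane at distance $\ell$ over a $\sqrt\ell$-disk, to be the main obstacle. The backup is to use only a disk of radius $c_0\sqrt{\ell}\,(L/\ell)^{1/2}$-truncated to where the tilt error is $O(1)$. Its radius is $\gtrsim \min\{\sqrt\ell,L/\ell\}$, and since $\ell\leq\kappa_{\rm fin}L$ this still yields a polynomially bounded loss, which we would absorb by adjusting $\kappa_{\rm fin}$ relative to the constants. Independence from $\delta$ holds because only the fixed chamber constants and the universal kernel bounds enter.
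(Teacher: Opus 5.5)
Your reduction is sound. You have $w(0)=1-\sqrt{1-D(0)}\le D(0)$, and because the adjacent sheet's gradient normal is close to $-e_n$, the source $F_{e_n}$ is bounded below on a fixed thin neighbourhood of that sheet. The gap is in the geometric step. You measure the patch by its $e_n$-height so that you can apply \eqref{eq:massive-coarse-disk} in the base frame, and you assume that passing to the chord frame forces you to replace $n_0$ by $\mathbf N$. In the interaction-dominated regime $\ell\asymp\log(1/B)\asymp P$, so the available tilt bound $|\mathbf N-e_n|\lesssim\ell/P+P\varepsilon$ is only a fixed small constant. As a result every branch of your plan fails:
\begin{itemize}
\item an $e_n$-horizontal disk of radius $c_0\sqrt\ell$, whether centred on the axis or at the foot, has height spread of order $|\mathbf N'|\sqrt\ell$, which the available estimates do not keep bounded;
\item the switch $n_0=\mathbf N$ only bounds $w_{\mathbf N}(0)\le D(0)+|\mathbf N-e_n|^2/2$ from below, which is useless since $|\mathbf N-e_n|^2\gg e^{-\sqrt2\ell}$;
\item the backup disk of radius $\min\{\sqrt\ell,L/\ell\}=O(1)$ yields only $w(0)\ge c\ell^{-m/2}e^{-\sqrt2\ell}$.
\end{itemize}
That polynomial loss cannot be absorbed by adjusting $\kappa_{\rm fin}$. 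Here $\ell\to\infty$ while the lemma requires $c$ independent of $\ell$, and the exact rate without a polynomial factor is precisely what the reverse comparison in Theorem~\ref{thm:r5-actual-two-sided} uses.

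The missing observation is that $\mathcal K$ is radial, so $w(0)=\int\mathcal K(|x|)F_{e_n}(x)\,dx$. The source direction $n_0=e_n$ is therefore decoupled from the frame in which you integrate the kernel, and only $|x|$ matters.

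Keep $n_0=e_n$ and integrate over the tangent disk of the adjacent graph at the minimizing foot $y$. Since the chord is normal there, points $y+\tau$ with $\tau\perp y$ and $|\tau|\le c_0\sqrt\ell$ satisfy $|y+\tau|=(\ell^2+|\tau|^2)^{1/2}\le\ell+c_0^2/2$. The graph leaves this plane by at most $Ca|\tau|^2\le CB^{1/4}\ell=o(1)$.

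A fixed-thickness neighbourhood of this patch has volume at least $c\ell^{m/2}$, carries $F_{e_n}\ge c$, and lies in $\{|x|\le\ell+C\}$. There \eqref{eq:r11-kernel-two-sided} gives $\mathcal K\ge c\ell^{-m/2}e^{-\sqrt2\ell}$, hence $w(0)\ge ce^{-\sqrt2\ell}$. Equivalently, you may apply \eqref{eq:massive-coarse-disk} in the frame whose vertical axis is $\mathbf N=y/\ell$, which rotation invariance permits, without touching $n_0$.

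Your recentred second attempt already has the right geometry. It fails only because you test it by $e_n$-height rather than by distance to the origin.
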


\begin{proof}
Take $n_0=\nu(0)=e_n$. Lemma~\ref{lem:positive-directional-source}
gives $w=q(1-\boldsymbol p\cdot n_0)\geq0$ and
$w=\mathcal K*F_{n_0}\geq\mathcal K*(qw)$.
We apply the lower bound in
Lemma~\ref{lem:r11-kernel-horizontal-mass}. The power is
$r^{-m/2}$, corresponding to a tangent disk of dimension $m$.

Let $y$ be the adjacent minimizing foot.  The opposing orientation
and $D\leq1/4$ imply $qw\geq c>0$ at the zeros of the adjacent
graph in a tangent disk of radius $c_0\sqrt\ell$ centered at $y$.
Their normals change by at most
$C B^{1/4}\sqrt\ell=o(1)$.  Universal local derivative bounds then
give $qw\geq c/2$ in a fixed thin neighbourhood of this disk.  This
neighbourhood can be parametrized as a graph over $T_y\Sigma$ with
a fixed interval in the $\mathbf N=y/\ell$ direction; it is injective and
has $n$-dimensional volume at least $c_m\ell^{m/2}$ because the
tangent disk has $m$ dimensions. Thus integration over this neighborhood cancels the polynomial factor in
the massive kernel in every fixed ambient dimension.

Every point $x$ in it satisfies $|x|\leq\ell+C$.  Indeed its tangent
displacement is $\tau\perp y$, with $|\tau|\leq c_0\sqrt\ell$;
thus $|y+\tau|\leq\ell+C$.  Its graph error is bounded by
$C B^{1/4}|\tau|^2=o(1)$, and the normal thickness is fixed.
For small $B$, $\ell\to\infty$ and $\ell\asymp\log(1/B)$.
The shared convolution and kernel lower bound therefore give
\[
 w(0)\geq c\ell^{m/2}\ell^{-m/2}e^{-\sqrt2\ell}
       =c e^{-\sqrt2\ell}.
\]
At the base zero, $q=1$ and $\boldsymbol p=|\boldsymbol p|n_0$, so
$D(0)=(1-|\boldsymbol p|)(1+|\boldsymbol p|)\geq w(0)$.  This proves
\eqref{eq:r5-disk-lower}.
\end{proof}

\begin{theorem}
\label{thm:s2-pointwise-master}\label{thm:r5-actual-two-sided}
\label{thm:r5-sharp-lower}
There are $\lambda>0$ and $C<\infty$, depending only on dimension and
the fixed penalty, such that every bounded entire Allen--Cahn solution
and every $0<\delta\leq1/4$ satisfy
\begin{equation}\label{eq:r5-actual-two-sided}
 D(z)\leq C\{J_\lambda(z)+E(z)\},\qquad
 E(z)\leq C\{J_\lambda(z)+D(z)\}
 \quad(z\in\Sigma\setminus X_\delta).
\end{equation}
\end{theorem}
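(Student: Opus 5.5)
The plan is to reduce both inequalities to the small-$B$ regime of Lemma~\ref{lem:s2-envelope-localization}, with $B=J_\lambda(z)+E(z)$, and then read them off from Theorem~\ref{thm:r5-local-sharp-intercept} and Lemma~\ref{lem:r5-massive-kernel}. Fix $\lambda$, $A_0$, $c_1$ as in Lemma~\ref{lem:s2-envelope-localization} for a fixed small angular tolerance $\vartheta$, and let $B_0>0$ be the threshold below which that lemma applies.

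\emph{Large-$B$ regime.} If $B\geq B_0$, both inequalities are trivial. Indeed $D\leq1$ gives $D(z)\leq B_0^{-1}B$. Also, on $\Sigma\setminus X_\delta$ we have $E\leq e^{-\sqrt2 d/G_1}\leq1$. If $E(z)\geq J_\lambda(z)$, then $E(z)\geq B/2\geq B_0/2$, so $d(z)$ is bounded by a constant depending on $B_0$. For $d(z)$ above the threshold $d_*$ of Proposition~\ref{prop:r4-actual-contact-regularity} this is compatible with $D(z)$ small, so a lower bound for $D$ cannot be extracted directly. I would therefore split the second inequality according to whether $E\leq J_\lambda$. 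If $E\leq J_\lambda$, then $E\leq J_\lambda\leq J_\lambda+D$ and there is nothing to prove. Hence only the case $E>J_\lambda$ needs work, and I would treat it uniformly in $B$ by the kernel argument below.

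\emph{Small-$B$ regime, first inequality.} Translate $z$ to $0$ and rotate $\nu(z)$ to $e_n$. Lemma~\ref{lem:s2-envelope-localization} places the base graph and both adjacent chambers in the hypotheses of Theorem~\ref{thm:r5-local-sharp-intercept}, with $\mathcal J_b\leq J_\lambda(0)$, $e^{-\sqrt2\ell_\pm}\leq E(0)$ on finite sides, and $e^{-cL}\leq B$. Substituting these into \eqref{eq:r5-sharp-defect-upper} gives $D(0)\leq C(J_\lambda+2E+B)\leq C'(J_\lambda+E)$. Constants here are independent of $\delta$, because the localization lemma supplies its own low-defect threshold through \eqref{eq:s7-comparison-window-reach}.

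\emph{Second inequality when $E>J_\lambda$.} For small $B$, the lemma realizes $d=G_1\ell$ and $E=e^{-\sqrt2\ell}$ on an adjacent side. Lemma~\ref{lem:r5-massive-kernel} then gives $D(0)\geq cE$, i.e.\ $E\leq C D$.

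For $B\geq B_0$ with $E>J_\lambda$, the reach $d(z)$ is bounded. If a bad zero lies within a bounded distance of $z$, then $J_\lambda(z)\geq e^{-\lambda C}$, so $E\leq1\leq CJ_\lambda$. Otherwise all zeros within a fixed radius are good, and the active contact $y$ lies at bounded distance with an opposing normal. The convolution identity of Lemma~\ref{lem:positive-directional-source} then gives $w(0)\geq c$ using only a fixed-size neighbourhood of $y$, hence $D(0)\geq c\geq cE$. A diagonal contact with bounded $d$ forces $a(z)\geq 2/d$ by \eqref{eq:r4-chord-bounds}, so $J_\lambda\geq a^2\geq c$.

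The main obstacle I expect is this intermediate, bounded-reach regime: making the $qw\geq c$ lower bound near the contact foot hold uniformly without the logarithmic graph window. I would handle it by compactness. A sequence with $D(z_j)\to0$, bounded reach, no nearby bad zeros, and $J_\lambda\to0$ converges locally smoothly to a solution containing a zero with $D=0$. Relative Harnack (Lemma~\ref{lem:audited-relative-defect}) forces that limit to be planar, contradicting the existence of a second contact sheet at bounded distance.
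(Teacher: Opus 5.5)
Your proposal follows the paper's proof. For small $B$ you combine Lemma~\ref{lem:s2-envelope-localization} with Theorem~\ref{thm:r5-local-sharp-intercept} for the first inequality and with Lemma~\ref{lem:r5-massive-kernel} for the second, with the case $E\le J_\lambda$ trivial. For $B\ge B_0$ you use $D\le1$, $E\le1$ and a compactness argument showing that bounded reach forces a lower bound on $D$. However, two steps need repair before the argument is complete.

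\textbf{The case $B=0$ is not covered.} The localization lemma requires $B>0$, since its window $P=A_0\log(1/B)$ is otherwise infinite. At $B=0$ the first inequality asserts $D(z)=0$, and nothing in your argument gives this. The fix is short: $E(z)=0$ means $d(z)=\infty$. Proposition~\ref{prop:r4-actual-contact-regularity} then supplies zero-free tangent balls of every radius and gives $D(z)=0$. The second inequality is trivial there because $E=0$.

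\textbf{The contradiction in your compactness step is misidentified.} Once $D(z_j)\to0$, the relative jets of Lemma~\ref{lem:audited-relative-defect} give $U_j\to x_n$ locally smoothly. Hence all zeros in a ball of radius larger than $2M/G_1$ form a single graph through $z_j$ whose Hessian tends to zero. There is no second sheet. The active contact realizing $d\le M$ therefore lies on the base sheet itself. It may be an off-diagonal point on that sheet, not only the diagonal case you treat via $a\ge 2/d$.

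What must be ruled out is a bounded objective on that sheet. Taylor's formula gives $|\nu\cdot(y-z)|\le o(1)\,|y-z|^2$ there. Together with $\Gamma(s)\ge1/s$, every chord objective is at least $|y-z|^2/|\nu\cdot(y-z)|\to\infty$, contradicting $d\le M$.

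This single argument yields $d\le M\Rightarrow D\ge c_M$ directly, hence $E\le c_M^{-1}D$. That is exactly how the paper closes the large-$B$ regime. It also makes your separate subcases unnecessary: bad zeros nearby, diagonal contacts, the extra hypothesis $J_\lambda\to0$, and the abandoned fixed-neighbourhood convolution bound with an "opposing normal".
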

\begin{proof}
First suppose $0<B=J_\lambda(0)+E(0)$ is sufficiently small.
Apply Lemma~\ref{lem:s2-envelope-localization} with the fixed angular
tolerance $\vartheta=1/100$. It provides both actual chambers,
the low-defect base and the full-window envelope bound. Applying the
sharp-intercept theorem gives
\[
 D(0)\leq C\mathcal J_b+
       C(e^{-\sqrt2\ell_+}+e^{-\sqrt2\ell_-})+Ce^{-cL}
 \leq C\{J_\lambda(0)+E(0)\}.
\]
The auxiliary half-space solutions use the same extension on the full
window. The cancellation of their slopes and the transmission estimate
with the exact rate were proved above. For the reverse comparison,
$E\leq J_\lambda$ is immediate. Otherwise the interaction-dominated
conclusion of the localization lemma and the positive massive-kernel
lemma give $D(0)\geq cE(0)$.

For $B\geq b_0>0$, the first inequality follows from $D\leq1$.
For the second, if $J_\lambda\geq b_0/2$ use $E\leq1$.
Otherwise $E\geq b_0/2$, so
$d\leq M=G_1\log(2/b_0)/\sqrt2$.
There is $c_M>0$, depending only on $M$, the dimension and the fixed
penalty, such that $d\leq M$ forces $D\geq c_M$. Indeed, if $D(z_j)\to0$, translate and rotate at $z_j$.
The relative defect estimates and elliptic compactness give
$U_j\to x_n$, equivalently $u_j\to g(x_n)$, in local $C^2$.
In a ball larger than $2M/G_1$, all zeros are consequently a single
horizontal graph whose Hessian tends to zero.
Every chord of objective at most $2M$ lies in that graph and has
length at most $2M/G_1$. Taylor's formula and $\Gamma(s)\geq1/s$
make these objectives tend to infinity, a contradiction.
Thus $E\leq c_M^{-1}D$ also in this regime.

If $B=0$, then $d(0)=\infty$ and
Proposition~\ref{prop:r4-actual-contact-regularity} gives $D(0)=0$.
This completes both comparisons.
\end{proof}

\begin{corollary}
\label{cor:s2-pointwise-moments}
For every fixed $r\geq1$ there is $C_r=C(m,\Gamma,r)$ such that
the same actual quantities satisfy
\begin{equation}\label{eq:s2-pointwise-moments}
 D^r+E^r\leq C_r\{J_\lambda^r+DE^{r-1}\}.
\end{equation}
At $r=1$ take $E^0=1$.
\end{corollary}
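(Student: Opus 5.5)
The plan is to derive \eqref{eq:s2-pointwise-moments} from the two linear comparisons of Theorem~\ref{thm:s2-pointwise-master}, using only elementary inequalities for nonnegative numbers. We may use $0\leq D\leq1$, $0\leq E\leq1$ and $0\leq J_\lambda\leq1$; the last holds because $\bar a_\delta\leq1$ off $X_\delta$ would need $a^2\leq CD$, so more safely we do not rely on it and keep $J_\lambda$ general.

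For $r=1$ the claim is just $D+E\leq C_1(J_\lambda+D)$. This follows by adding the second inequality of \eqref{eq:r5-actual-two-sided} to the trivial bound $D\leq D$.

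For $r>1$ we bound $D^r$ and $E^r$ separately.

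\emph{Bound for $E^r$.} From $E\leq C(J_\lambda+D)$ we get
\[
 E^r=E^{r-1}E\leq C E^{r-1}J_\lambda+C DE^{r-1}.
\]
Young's inequality with exponents $r/(r-1)$ and $r$ gives
\[
 CE^{r-1}J_\lambda\leq\tfrac12E^r+C_rJ_\lambda^r.
\]
Absorbing the $\tfrac12E^r$ term yields
\[
 E^r\leq C_r\bigl(J_\lambda^r+DE^{r-1}\bigr).
\]

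\emph{Bound for $D^r$.} From $D\leq C(J_\lambda+E)$ we get
\[
 D^r\leq C_r\bigl(J_\lambda^r+E^r\bigr).
\]
Inserting the bound for $E^r$ just obtained gives the same right-hand side. Summing the two estimates proves the corollary.

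The only point needing care is the absorption step: it requires $E^r<\infty$, which holds since $E\leq1$. It also requires that the constants be independent of $\delta$, which they are, since Theorem~\ref{thm:s2-pointwise-master} holds uniformly for $0<\delta\leq1/4$. The conventions $E=0$ at $d=\infty$ (where $D=0$ as well) and $E^0=1$ make all terms well defined in the degenerate cases. No step here should be a real obstacle; the substance lies entirely in Theorem~\ref{thm:s2-pointwise-master}.
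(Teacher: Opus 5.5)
Your argument is correct and is essentially the paper's: both derive the estimate from the two linear comparisons of Theorem~\ref{thm:s2-pointwise-master} using elementary inequalities for nonnegative numbers. The paper uses a case split instead of Young's inequality with absorption. If $D>2CE$ it gets $D\leq2CJ_\lambda$, and otherwise $D^r\leq(2C)^{r-1}DE^{r-1}$ directly from the upper comparison; it treats $E$ against $2CD$ the same way. You, by contrast, absorb for $E^r$ and then route the $D^r$ bound through it, which is equally valid.
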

\begin{proof}
Take the constant $C\geq1$ of the theorem.
If $D>2CE$, the upper comparison gives $D\leq2CJ_\lambda$;
otherwise $D^r\leq(2C)^{r-1}DE^{r-1}$.
If $E>2CD$, the reverse comparison gives $E\leq2CJ_\lambda$;
otherwise $E^r\leq2CDE^{r-1}$.
Add the two alternatives.
\end{proof}

\vskip0.32in

\section{\bf Interaction between adjacent interfaces}
\label{sec:s12-interaction}
We now choose common local coordinates for adjacent interfaces and
estimate their exponential interaction. The main result is
Theorem~\ref{thm:r13-local-master}, which proves the local estimate
stated in Theorem C. We first establish the required projections and
normal bands, then estimate the incoming normal modes.

\subsection{Local graphs and normal bands}
\label{sec:r11-interpolation-geometry}

The relative defect estimates and the fixed angular penalty describe
the local zero set of a bounded entire solution in $\mathbb R^{m+1}$
whose defect is small on a polynomial window.

\begin{lemma}
\label{lem:s3-buffered-projection}
Let $f$ be a graph on $B_r^m$ with $|Df|\leq\theta\leq1/10$,
$|D^2f|\leq\kappa$. Suppose $x'\in B_{r/2}$,
$v=|x_n-f(x')|\leq L$, $4\theta L<r$, and
$\kappa(L+2\theta r)\leq1/4$. Its closest point $p(x)$ is unique,
interior and smooth, and its distance $s_f$ satisfies
\begin{equation}\label{eq:s3-projection-calculus}
 |p'(x)-x'|\leq2\theta v,\qquad
 s_f\leq v\leq\sqrt{1+\theta^2}\,s_f.
\end{equation}
If $0<l=f(0)\leq L$, $x=(0,s)$, $0\leq s<l$, and
$\ell=s_f(0)$, then
\begin{equation}\label{eq:s3-normal-ray-calculus}
 s_f'(s)=-1+O(\theta^2),\qquad
 s+s_f(s)=\ell+O(\theta^2l),\qquad
 |p(0,s)|=\ell+O(\theta^2l).
\end{equation}
For a graph through zero tangent to the horizontal plane, zero is
the foot of its normal segment $0\leq s\leq L$.
\end{lemma}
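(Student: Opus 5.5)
The plan is to treat this as an elementary graph-projection statement and argue by minimization plus the implicit function theorem. Fix $x=(x',x_n)$ with $x'\in B_{r/2}$ and $v=|x_n-f(x')|\leq L$.

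\emph{Step 1: localize the minimizer.} The competitor $(x',f(x'))$ shows that $s_f(x)\leq v$. Any point $(y',f(y'))$ with $|y'-x'|>2\theta v$ is farther away, because its horizontal displacement alone, combined with the slope bound, exceeds $v$. This confines minimizers to $B_{2\theta v}(x')\subset B_{r/2+2\theta L}\subset B_r$, using $4\theta L<r$. Hence every minimizer is interior, and the first inequality in \eqref{eq:s3-projection-calculus} follows.

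\emph{Step 2: lower bound on $s_f$.} At an interior minimizer the chord is normal to the graph. The vertical distance $v$ from $x$ to the graph is at most the normal distance divided by the cosine of the normal angle, which gives $v\leq\sqrt{1+\theta^2}\,s_f$.

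\emph{Step 3: uniqueness and smoothness of $p$.} The squared-distance function $\phi(y')=|y'-x'|^2+(f(y')-x_n)^2$ has Hessian
\[
2(I+Df\otimes Df)+2(f-x_n)D^2f .
\]
On the localized ball, $|f(y')-x_n|\leq v+2\theta\cdot2\theta v\leq L+2\theta r$. The hypothesis $\kappa(L+2\theta r)\leq1/4$ therefore makes this Hessian at least $\tfrac32 I$, so $\phi$ is strictly convex there. This gives a unique critical point, and the implicit function theorem makes $p$ smooth.

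\emph{Step 4: the normal-ray calculus \eqref{eq:s3-normal-ray-calculus}.} Since $s_f$ is a distance function at a unique smooth foot, $\nabla s_f=(x-p)/s_f$. Along the segment $x=(0,s)$ we have $s_f'(s)=-e_n\cdot\nabla_x s_f$. The normal $(x-p)/s_f$ is the graph's unit normal at $p$, and its horizontal part has size at most $\theta$. So $e_n\cdot\nabla s_f=1/\sqrt{1+|Df|^2}=1-O(\theta^2)$, and the point lies below the graph, which gives $s_f'=-1+O(\theta^2)$. Integrating over $[0,s]$ gives $s+s_f(s)=\ell+O(\theta^2 s)$, which is $\ell+O(\theta^2 l)$ since $s<l$.

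For $|p(0,s)|$, I combine $|p-x|=s_f$ with the fact that the chord is nearly vertical. Then $|p|^2=s^2+s_f^2+2s s_f\cos\alpha$ with $1-\cos\alpha=O(\theta^2)$, so $|p|=s+s_f+O(\theta^2 l)$. The previous relation then yields $|p|=\ell+O(\theta^2 l)$.

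\emph{Step 5: the final assertion.} Here $f(0)=0$, $Df(0)=0$, and $x=(0,s)$ with $0\leq s\leq L$. The origin is a critical point of $\phi$, because $\nabla\phi(0)=2f(0)Df(0)=0$. By the strict convexity of Step 3, applied with $v=s\leq L$, it is the unique minimizer. So zero is the foot of the whole normal segment.

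The delicate point is Step 3. One must check that the Hessian lower bound holds on the entire localized ball and not just near $x'$, and this is exactly where the combination $L+2\theta r$ in the hypothesis enters. I would verify the height bound $|f(y')-x_n|\leq v+\theta|y'-x'|$ carefully there; everything else is bookkeeping of $O(\theta^2)$ terms.
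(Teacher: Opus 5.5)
Your proof is correct. It uses the same ingredients as the paper: convexity of the squared distance, the implicit function theorem, Cauchy--Schwarz, and integration of $s_f'$ along the axis. The order of the argument is different, however.

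The paper first proves $\text{Hessian}\ge\tfrac34 I$ for the half squared distance on all of $B_r$. This is where $\kappa(L+2\theta r)\le1/4$ genuinely enters, since $|x_n-f(y')|\le v+\tfrac32\theta r$ there. It then notes that the gradient at $x'$ has norm at most $\theta v\le\theta L$, so the inward derivative is negative on $\partial B_r$ and the minimizer is unique and interior. Only afterwards does it derive $|p'-x'|\le2\theta v$, from the stationarity equation $p'-x'=(x_n-f(p'))Df(p')$, which gives $|p'-x'|\le\theta(v+\theta|p'-x'|)$.

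You instead localize first by direct comparison. For $d=|y'-x'|$ one has $d^2+(v-\theta d)_+^2>v^2$ once $d>2\theta v/(1+\theta^2)$, and compactness of $\overline B_{2\theta v}(x')\subset B_r$ then also gives existence of a minimizer. After that you need convexity only on this small ball, which is a much weaker curvature requirement.

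What the paper's global convexity buys is that the squared distance has exactly one critical point in all of $B_r$. This makes the identification of the local implicit-function branch at nearby points with the actual foot automatic. In your version you should add one line for this: uniqueness plus compactness gives continuity of $p$, so the branch through $p(x)$ is the minimizer at nearby points. Your Step 5 is fine as written, since $0$ is the center of its own localization ball.

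For $|p(0,s)|$, the paper uses $|p'|\le2\theta l$ and $f(p')=l+O(\theta^2l)$, so both $|p(0,s)|$ and $\ell$ equal $l+O(\theta^2l)$. Your law-of-cosines computation is an equally valid alternative.

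Three small repairs are needed.
\begin{itemize}
\item In Step 2, dividing the normal distance by the cosine of the normal angle at $p$ is not literally correct for a curved graph. A concave graph seen from above violates it at second order. Write instead $v\le|x_n-f(p')|+\theta|p'-x'|\le\sqrt{1+\theta^2}\,s_f$ by Cauchy--Schwarz, as the paper does.
\item In Step 4, the correct chain is $\tfrac{d}{ds}s_f(0,s)=e_n\cdot\nabla s_f=-1/\sqrt{1+|Df(p')|^2}$ below the graph. Your two sign slips cancel, so the conclusion stands.
\item In Step 5, the gradient is $\nabla\phi(0)=2(f(0)-s)Df(0)$, not $2f(0)Df(0)$; it still vanishes because $Df(0)=0$.
\end{itemize}
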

\begin{proof}
Half squared distance has Hessian
$I+Df\otimes Df-(x_n-f)D^2f\geq3I/4$ and gradient at $x'$
of norm at most $\theta L$. Its inward derivative is negative on
$\partial B_r$, so its unique minimum is interior. The implicit
function theorem and the stationary equation
$p'-x'=(x_n-f(p'))Df(p')$ give smoothness and
$|p'-x'|\leq\theta(v+\theta|p'-x'|)$. Cauchy--Schwarz then gives
$v\leq|x_n-f(p')|+\theta|p'-x'|\leq\sqrt{1+\theta^2}s_f$.
Below the upper graph,
$s_f'=-1/\sqrt{1+|Df(p')|^2}$; integrate and use
$|p'|\leq2\theta l$, $f(p')=l+O(\theta^2l)$ to obtain
\eqref{eq:s3-normal-ray-calculus}. The final assertion follows
from stationarity at zero and uniqueness.
\end{proof}

\begin{lemma}
\label{lem:r11-clean-window}\label{lem:r11-clean-window-geometry}
Fix $\omega>0$, $C_0,k<\infty$ and a fractional interior margin.
The constants and the threshold for $N$ below depend only on
$m,\Gamma,\omega,C_0,k$ and this margin. Suppose
\begin{equation}\label{eq:r11-clean-window-hypothesis}
 z_0\in\Sigma,\qquad
 D\leq N^{-\omega}\quad\hbox{on }\Sigma\cap B_{N^\omega}(z_0),
 \qquad N\longrightarrow\infty.
\end{equation}
At each base point $z$ in the interior region determined by the prescribed
margin, the zero set in the cylinder of lateral radius $(1+\log N)^k$
and height $C_0\log N$ consists of a finite ordered family of graphs
over $T_z\Sigma$. They extend over disks of radius
$cN^{\omega/8}$ with a comparable buffer and curvature
$\kappa_N=CN^{-\omega/2}$. On the logarithmic cylinders their slopes
and differences of unoriented normals are at most
\begin{equation}\label{eq:r11-clean-angle}
 \theta_N=C(N^{-\omega/8}\log N+N^{-3\omega/8}).
\end{equation}
Distinct graphs have Euclidean separation at least $c_0\log N$.
For each graph, the nearest-point projection is unique and smooth
on the smaller cylinder. On fixed logarithmic-power disks, the errors in replacing Euclidean
distances by vertical distances and in the affine approximations are
\begin{equation}\label{eq:r11-additive-error}
 O\bigl(N^{-c}(1+\log N)^{C_k}\bigr)=o(1),\qquad c=c(\omega)>0.
\end{equation}
Normal metrics and Jacobians at logarithmic depths are $1+o(1)$.
The families and projections agree on smaller overlaps. If another graph lies within $C_0\log N$ of $z$, let $\ell_{\min}(z)$
be the least Euclidean distance from $z$ to an adjacent graph. Then
\begin{equation}\label{eq:r11-actual-gap-realization}
 d(z)=G_1\ell_{\min}(z).
\end{equation}
This identity uses actual, uncapped reach.
\end{lemma}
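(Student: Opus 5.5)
The plan has four steps: curvature control from small defect, graph continuation as in Lemma~\ref{lem:r5-log-ordering}, the projection calculus of Lemma~\ref{lem:s3-buffered-projection}, and the reach identity via the argument in the second half of Lemma~\ref{lem:s2-envelope-localization}.

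\emph{Step 1: curvature on the window.} By Lemma~\ref{lem:r9-zero-baseline}, the hypothesis $D\le N^{-\omega}\le 1/4$ on $\Sigma\cap B_{N^\omega}(z_0)$ makes every zero there regular. It also gives $a^2\le CD\le CN^{-\omega}$, so $a\le\kappa_N=CN^{-\omega/2}$, and uniform local graph radii. For a base point $z$ in the interior region, apply the continuation argument of Lemma~\ref{lem:r5-log-ordering} with $P_j\sim N^{\omega/8}$ and $\varepsilon_j=\kappa_N$. The condition $\varepsilon_jP_j^2\sim N^{-\omega/4}\to0$ holds. Hence the sheet through $z$ is a graph over $T_z\Sigma$ on a disk of radius $cN^{\omega/8}$, with slope $O(N^{-3\omega/8})$ and height error $O(N^{-\omega/4})$.

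\emph{Step 2: the other sheets.} Every other zero in the logarithmic cylinder lies within $C_0\log N+(1+\log N)^k\ll N^{\omega/8}$ of $z$. The tilt argument of Lemma~\ref{lem:r5-log-ordering} then shows its sheet is also a graph over the same disk with opposite-signs exclusion. Taylor comparison with the base graph, as in \eqref{eq:r5-chord-angle}, gives slope $O(t/P+P\kappa_N)$ for intercept $t\le C_0\log N$. This is exactly $\theta_N=C(N^{-\omega/8}\log N+N^{-3\omega/8})$ after taking $P\sim N^{\omega/8}$. Normal differences follow from the Hessian bound $\kappa_N$ times the lateral radius.

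Finiteness and ordering follow from local uniqueness, as in that lemma. The separation $c_0\log N$ comes from Proposition~\ref{prop:r4-actual-contact-regularity}. Two distinct graphs at distance $\ell$ produce a plateau chord, because their normals are nearly parallel. Hence $d\le G_1\ell$, and \eqref{eq:r4-contact-defect} in reverse does not directly bound $\ell$ from below.

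Instead, Lemma~\ref{lem:r5-massive-kernel}, applied via Theorem~\ref{thm:s2-pointwise-master}, gives $e^{-\sqrt2\ell}\le E\le C(J_\lambda+D)$. On the window, $J_\lambda\le C N^{-c\omega}$ plus $e^{-\lambda N^\omega}$ from bad zeros outside, and $D\le N^{-\omega}$. Hence $\ell\ge c_0\log N$.

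\emph{Step 3: projections and additive errors.} Lemma~\ref{lem:s3-buffered-projection} applies with $r\sim N^{\omega/8}$, $L=C_0\log N$, $\theta=\theta_N$ and $\kappa=\kappa_N$; its hypotheses hold for large $N$. This gives unique smooth projections. The errors in \eqref{eq:s3-projection-calculus} and \eqref{eq:s3-normal-ray-calculus} are $O(\theta_N^2\log N)$, and the affine errors are $\kappa_N(\log N)^{2k}$. Both are of the form \eqref{eq:r11-additive-error}. Since the factors $1+s\lambda_i$ equal $1+O(\kappa_N\log N)$, normal metrics and Jacobians are $1+o(1)$. Agreement on overlaps follows from uniqueness of the local graphs and projections.

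\emph{Step 4: the reach identity.} This step is the main obstacle. Every chord of $z$ to its own sheet has objective at least $c/\kappa_N\gg\log N$ by Taylor's formula. So once an adjacent graph lies within $C_0\log N$, the infimum is attained off-diagonally. Chords leaving the window have length at least $cN^{\omega/8}$, and their objective is at least $G_1$ times that, which is larger still.

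Among the chords to adjacent graphs, the shortest is normal at an interior foot; its direction is within $\theta_N$ of $\nu(z)$, so it lies in the plateau. Any competitor has objective at least $G_1|y-z|\ge G_1\ell_{\min}$, since $\Gamma\ge G_1$. Hence $d(z)=G_1\ell_{\min}(z)$.

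The subtle point is excluding zeros outside the cylinder but still near $z$ in the window. Step 2's graph description on the disk of radius $cN^{\omega/8}$ covers this: any such zero lies on one of the ordered graphs, at Euclidean distance no smaller than its vertical distance up to $1+o(1)$.
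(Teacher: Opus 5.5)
Your proposal is correct. Apart from one step it follows the paper:
\begin{itemize}
\item continuation via Lemma~\ref{lem:r5-log-ordering} with $P\sim N^{\omega/8}$ and $\varepsilon=CN^{-\omega/2}$;
\item slope refinement by Taylor comparison of ordered graphs, which is the same mechanism as the paper's two-disk crossing estimate;
\item the projection calculus of Lemma~\ref{lem:s3-buffered-projection};
\item the reach identity, from base-sheet objectives $\gtrsim\kappa_N^{-1}$ together with a plateau chord to the nearest adjacent graph.
\end{itemize}

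The difference is in how you get the separation $c_0\log N$. The paper argues locally. The bound $|\nabla\log D|\leq C$ keeps $D\leq CN^{-\omega/2}$ on a ball of radius $4c_0\log N$ about any participating zero. Hence $|D^2U|\leq CN^{-\omega/4}$ there, and $\nabla U$ stays near its initial unit normal. Strict monotonicity of $U$ in that direction then leaves room for only one sheet.

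You instead combine three facts: the plateau competitor $d\leq G_1\ell$, i.e.\ $E\geq e^{-\sqrt2\ell}$; the reverse comparison $E\leq C(J_\lambda+D)$ of Theorem~\ref{thm:s2-pointwise-master}; and $J_\lambda\leq CN^{-\omega}+Ce^{-cN^\omega}$, where the fractional margin makes zeros outside $B_{N^\omega}(z_0)$ negligible. This is legitimate, since that theorem is proved earlier for all bounded entire solutions and does not use the present lemma.

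What your route buys is the essentially sharp bound $\ell\geq(\omega/\sqrt2)\log N-C$. This is consistent with the periodic examples and with the threshold $C_0>\omega/\sqrt2$ in Proposition~\ref{prop:periodic-sharpness48}. The price is that a purely local atlas statement now rests on the sharp-intercept and massive-kernel machinery, whereas the paper needs only the relative defect jets.

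You should drop the abandoned attempt via Proposition~\ref{prop:r4-actual-contact-regularity}. You should also recentre the separation argument at a point of either sheet, so that it also separates two non-base sheets.

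Two further details need a sentence each.
\begin{itemize}
\item A sheet meeting the cylinder far out laterally has, a priori, only the coarse slope $O(\eta)+o(1)$. Its axis intercept is therefore only known to be $C_0\log N+O(\eta)(1+\log N)^k$. Since this is still $\ll N^{\omega/8}$, one preliminary Taylor comparison brings it to $C_0\log N+o(1)$, after which your $\theta_N$ applies. The paper handles this by enlarging the depth constant.
\item In Step~4, a competitor $y$ on a nonadjacent graph satisfies $|y-z|\geq\ell_{\min}$ because the segment $[z,y]$ crosses an intervening graph at a shorter distance. Say so explicitly.
\end{itemize}
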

\begin{proof}
Relative jets give $|A|\leq C\sqrt D\leq\kappa_N$ and
$|\nabla u|\geq\sqrt{(1-N^{-\omega})/2}$ at zeros. Interior
estimates therefore provide uniform local graphs and fixed-order
chart derivatives. Put $r=N^{\omega/8}$. Lemma~\ref{lem:r5-log-ordering},
with $P=cr$ and $\varepsilon=CN^{-\omega/2}$, continues these
graphs to buffered tangent disks because
$\varepsilon P^2=O(N^{-\omega/4})$. Their normal variation is
$O(\kappa_Nr)$ and affine error is $O(\kappa_Nr^2)$.

\vskip0.12in

Two such disks with bases a distance $b=o(r)$ apart have
unoriented normal angle at most
\begin{equation}\label{eq:r11-two-disk-angle}
 C(b/r+\kappa_Nr).
\end{equation}
Indeed, a larger tilt makes the second graph cross the first along
the diameter in the direction of maximal tilt. The base displacement
contributes $b$ and the affine error contributes $C\kappa_Nr^2$. Take the diameter of length $cr$ with
$c>0$ sufficiently small; its projected points lie inside the first
disk because the base displacement is $o(r)$. At the resulting regular zero their
tangents agree, although each has varied only $C\kappa_Nr$,
a contradiction. This also applies to pieces of one connected component.
For bases $O(\log N)$ apart it gives \eqref{eq:r11-clean-angle};
continuation and local uniqueness make the common graphs identical
or disjoint, hence ordered.

\vskip0.12in
We next check that the family includes every zero in the required
logarithmic-power cylinder.
A zero there is $O((1+\log N)^{\max\{k,1\}})$ from $z$.
First apply \eqref{eq:r11-two-disk-angle} at this distance and
continue its graph to the normal axis through $z$: the height changes
by $O(N^{-c}(1+\log N)^{C_k})$. Its intercept is therefore within
$(C_0+1)\log N$, where the refined angle estimate applies. Enlarge
the fixed depth constant to include it. Infinitely many distinct graphs
in a closed smaller cylinder would accumulate at a regular zero,
contradicting its uniform implicit chart.

\vskip0.12in
On lateral radius $L=(1+\log N)^k$, the common-plane error is at
most $C(\kappa_NL^2+\theta_NL)$, giving
\eqref{eq:r11-additive-error}. Apply the preceding projection lemma
on a smaller polynomial disk: its slope is $O(\theta_N)$,
its vertical competitor has length $O(\log N)$, and both required
smallness conditions tend to zero. The horizontal foot displacement
is $O(\theta_N\log N)$ and the distance error is
$O(\theta_N^2\log N)$. A point outside that disk cannot be closer than the vertical competitor.
Every closer zero therefore belongs to the graph family already constructed.

\vskip0.12in
For separation, the relative Harnack estimate $|\nabla\log D|\leq C_n$
gives $D\leq CN^{-\omega/2}$ in $B_{4r_0}$ about any participating
zero, with $r_0=c_0\log N$ sufficiently small. Thus
$|D^2U|\leq CN^{-\omega/4}$ and $\nabla U$ stays close to its
initial unit normal. Strict monotonicity in that direction makes all
zeros in a smaller cylinder one graph. Two graphs closer than a small
multiple of $r_0$ would enter that cylinder and coincide by continuation.
The assertion is immediate in the planar case. Finally,
$I+sA=I+o(1)$ gives the metric and Jacobian estimates at
$|s|=O(\log N)$.

\vskip0.12in

A logarithmically close graph has an interior shortest foot whose
chord lies in the plateau of $\Gamma$, supplying objective
$G_1\ell=O(\log N)$. Every better competitor also has logarithmic
length since $\Gamma\geq1$, so is in the complete family. Chords
within the base graph have normal displacement $O(\kappa_Nl^2)$
and objective at least $c\kappa_N^{-1}\gg\log N$, including
diagonal limits. Each other graph's shortest chord attains its lower
bound $G_1$ times length; a chord to a nonadjacent graph crosses an
intervening graph at shorter length. This proves
\eqref{eq:r11-actual-gap-realization}. Uniqueness of the zero pieces and
their perpendicular feet gives agreement on overlaps.
\end{proof}

\vskip0.12in

\begin{lemma}
\label{lem:r11-radial-interpolation-geometry}
\label{lem:r11-moving-band-geometry}
Under the clean-window hypotheses, for each fixed distance tolerance
and all sufficiently large $N$, at any point in the smaller cylinder
at most two local graphs have distances within that tolerance of the minimum.
If there are two, they are adjacent and lie on opposite sides of that point. Fix a base
$z_i$ and a normal side. Its only possible partner is the first
adjacent graph $j$, with shortest gap $\ell_{ij}\leq C_0\log N$.
Along $x=z_i+s\nu_i$ before reaching it, the specified distances obey
\begin{equation}\label{eq:r11-band-distance-calculus}
 s_i=s,\qquad s_i+s_j=\ell_{ij}+o(1),\qquad
 \frac{d}{ds_i}\frac{s_i-s_j}{2}=1+o(1).
\end{equation}
Each band defined by a fixed bound on $|s_i-s_j|$ is traversed at most
once per side; the adjacent graph and $\ell_{ij}$ stay fixed, while
the receiving foot may move. There,
\begin{equation}\label{eq:r11-moving-foot-reach}
 |z_j(x)-z_i|=\ell_{ij}+o(1),\qquad
 d(z_j(x))\leq G_1(\ell_{ij}+o(1)).
\end{equation}
Normal parametrization, volume and base-gradient pullback have
factors $1+o(1)$.
\end{lemma}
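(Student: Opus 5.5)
The plan is to work entirely inside the clean-window geometry of Lemma~\ref{lem:r11-clean-window}. In a suitable rotation the relevant zero pieces are ordered graphs over $T_{z_i}\Sigma$, with slopes and normal differences at most $\theta_N$, curvature at most $\kappa_N$, and pairwise Euclidean separation at least $c_0\log N$. Vertical distances replace Euclidean ones up to the additive error \eqref{eq:r11-additive-error}, and nearest-point projections onto each graph are unique and smooth by Lemma~\ref{lem:s3-buffered-projection}.

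\emph{Step 1: at most two near-minimizers.} Fix a point $x$ in the smaller cylinder and write its distances to the graphs as vertical gaps to the ordered heights $f_1<f_2<\cdots$, up to $o(1)$. Suppose three graphs had distances within the tolerance $\tau$ of the minimum. Two of them would then lie on the same side of $x$. The farther one is at least $c_0\log N-o(1)$ farther away than the nearer one, because of the separation bound and the vertical comparison. This contradicts the tolerance once $N$ is large.

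The same argument, applied to two near-minimizers on the same side, shows that any two near-minimizers lie on opposite sides of $x$. Nothing lies strictly between them, since a graph in between would be strictly closer by a logarithmic amount. Hence they are adjacent.

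For a fixed base $z_i$ and a chosen normal side, the same ordering shows that along the ray $z_i+s\nu_i$ only the first adjacent graph $j$ on that side can compete, before the ray reaches it. The hypothesis that some graph lies within $C_0\log N$ gives $\ell_{ij}\leq C_0\log N$, with $\ell_{ij}$ the shortest gap.

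\emph{Step 2: distance calculus along the ray.} I will apply \eqref{eq:s3-normal-ray-calculus} with $f=$ graph $j$ expressed over $T_{z_i}\Sigma$. Its slope at the foot is $O(\theta_N)$ and its intercept is $l=\ell_{ij}+O(\theta_N^2\log N)$. This gives $s_j'(s)=-1+O(\theta_N^2)$ and $s+s_j(s)=\ell_{ij}+O(\theta_N^2\log N)=\ell_{ij}+o(1)$.

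The final assertion of Lemma~\ref{lem:s3-buffered-projection} gives $s_i=s$ exactly, because $z_i$ is the foot of its own normal segment. Differentiating $(s_i-s_j)/2$ in $s$ then yields $1+o(1)$. Monotonicity of this quantity means each band $\{|s_i-s_j|\leq b\}$ is met in a single interval per side. Throughout that interval the graph $j$, and hence $\ell_{ij}$, stays fixed.

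\emph{Step 3: the moving foot and reach.} Let $z_j(x)=p_j(x)$ be the receiving foot. By \eqref{eq:s3-projection-calculus}, its horizontal displacement from the foot of the shortest gap is $O(\theta_N\log N)=o(1)$. With the small slope, this gives $|z_j(x)-z_i|=\ell_{ij}+o(1)$.

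For the reach bound I will use $z_i$ as a competitor in \eqref{eq:r4-actual-reach} at the base point $z_j(x)$. The chord is $z_i-z_j(x)$, and its angle with $\nu(z_j(x))$ is $O(\theta_N)+o(1)$, since the unoriented normals differ by at most $\theta_N$. It therefore lies in the plateau where $\Gamma=G_1$, and its objective is $G_1(\ell_{ij}+o(1))$.

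\emph{Step 4: Jacobians.} The statements about normal parametrization, volume and base-gradient pullback follow from $J=\det(I+sA)=1+O(\kappa_N\log N)$ and the metric estimate at logarithmic depth in Lemma~\ref{lem:r11-clean-window}.

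\emph{Main obstacle.} I expect the delicate point to be the uniformity of the $o(1)$ errors while the receiving foot moves. The foot may travel laterally, so I have to check that it stays inside the disk where the projection lemma's hypotheses $4\theta L<r$ and $\kappa(L+2\theta r)\leq1/4$ hold with $L=C_0\log N$. I also need the angle between the chord and the normal at the moving foot to stay below $\delta_2$.

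I will handle both by working with radius $r=N^{\omega/8}$ and using \eqref{eq:r11-clean-angle} together with the two-disk angle bound. Every error is then of the form $N^{-c}(\log N)^C$, uniformly in $x$.
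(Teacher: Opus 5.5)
Your proposal is correct and follows essentially the same route as the paper. The ordered graphs with $c_0\log N$ separation exclude same-side and nonadjacent competitors, and Lemma~\ref{lem:s3-buffered-projection} gives the ray calculus with $O(\theta_N^2\log N)$ errors together with the moving-foot distance. The fixed base $z_i$ is a plateau competitor at $z_j(x)$, and the atlas supplies the metric factors.

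The one step the paper states that you leave implicit is what happens after the ray crosses graph $j$. There the point is closer to graph $j$ than to the base by $\ell_{ij}+o(1)\to\infty$, which rules out a second traversal of a band. Your monotonicity argument alone only covers the segment $0\le s<l$, where the projection lemma's ray calculus applies.
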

\begin{proof}
Ordered normal intercepts are separated by $c_0\log N$ and their
distances differ from true distances by $o(1)$. Two on the same
side cannot compete, and an intervening intercept excludes a
nonadjacent pair. This also excludes three competitors.
The projection lemma gives \eqref{eq:r11-band-distance-calculus}
with error $O(\theta_N^2\log N)=o(1)$ and positive band derivative.
Before crossing the first graph it excludes farther partners;
after crossing it is closer than the base by
$\ell_{ij}+o(1)\to\infty$, excluding a second band.
The same lemma gives the first part of
\eqref{eq:r11-moving-foot-reach}; the chord is in both angular
plateaux, so the fixed base is an actual competitor at the moving
foot. Metric bounds follow from the atlas. The distance is
differentiated with the adjacent graph fixed.
\end{proof}

\vskip0.12in

\begin{corollary}
\label{cor:r11-rough-mask-clean-window}
Let $\rho(x)=\sqrt{R^2+|x-x_c|^2}$ and, for fixed $\alpha,\beta>0$,
$Z_R=\{z\in\Sigma:D(z)\geq\rho(z)^{-\alpha}\}$.
For sufficiently large $R$, every zero with
$\operatorname{dist}(z,Z_R)>\rho(z)^\beta/3$ has the preceding
atlas, projection and band conclusions on fixed logarithmic-depth
and logarithmic-power lateral windows. Errors are uniform for
$\rho\geq R$, independently of the observation center and outer cutoff.
\end{corollary}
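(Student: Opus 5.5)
The plan is to reduce the statement to Lemma~\ref{lem:r11-clean-window} and Lemma~\ref{lem:r11-radial-interpolation-geometry}. Both lemmas are uniform once the clean-window hypothesis \eqref{eq:r11-clean-window-hypothesis} holds with a clean scale $N\to\infty$ and a fixed exponent $\omega$. Fix a zero $z$ with $\operatorname{dist}(z,Z_R)>\rho(z)^\beta/3$, and take $N=\rho(z)$. I would choose $\omega=\min\{\alpha,\beta\}/2$, or a smaller fixed number if needed, so that the clean window fits inside the defect-free region.

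The main step is to show that $D\leq N^{-\omega}$ on $\Sigma\cap B_{N^\omega}(z)$.

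\begin{itemize}
\item Let $y\in\Sigma\cap B_{N^\omega}(z)$. Since $N^\omega\leq\rho(z)^\beta/3$ for large $R$, the point $y$ lies at distance greater than $0$ from $Z_R$, so $y\notin Z_R$.
\item Hence $D(y)<\rho(y)^{-\alpha}$.
\item Because $|\nabla\rho|\leq1$ and $\rho\geq R$, we have $\rho(y)\geq\rho(z)-N^\omega\geq\rho(z)/2$ once $\omega<1$ and $R$ is large.
\item Therefore $D(y)\leq2^\alpha\rho(z)^{-\alpha}\leq N^{-\omega}$ for $R$ large, since $\omega<\alpha$.
\end{itemize}

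This verifies the hypothesis at $z_0=z$. The atlas lemma then applies with the fixed constants $C_0,k$ and the interior margin. Since $z$ is itself the center, it lies in the interior region, and one can also recenter at nearby points in the same ball. This gives the ordered graphs, the projections, the separation, the additive errors \eqref{eq:r11-additive-error}, and the reach identity \eqref{eq:r11-actual-gap-realization}. The band lemma then gives \eqref{eq:r11-band-distance-calculus} and \eqref{eq:r11-moving-foot-reach} on the logarithmic depth and logarithmic-power lateral windows.

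Uniformity comes from the dependence of the earlier lemmas. Their constants and their largeness threshold for $N$ depend only on $m$, $\Gamma$, $\omega$, $C_0$, $k$ and the margin. The error terms are functions of $N$ that tend to zero, for example $N^{-c}(1+\log N)^{C_k}$. Since $N=\rho(z)\geq R$, every error is bounded by its value at $N=R$, and this bound does not involve $x_c$.

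The outer cutoff enters only through the weight $\rho$, and this argument does not use it. The two-sided comparison $\rho(y)\asymp\rho(z)$ on the window follows from $|\nabla\rho|\leq1$ alone, which is independent of the center.

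I expect the main obstacle to be the bookkeeping of the nested radii. The atlas lemma uses disks of radius $cN^{\omega/8}$ and relative-Harnack balls inside $B_{N^\omega}$, so every point used in the proof must remain in the $Z_R$-free region. One must also check that the interior margin applies at the recentered points. Both points are handled by taking $\omega$ small relative to $\beta$ and then $R$ large.
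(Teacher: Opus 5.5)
Your proposal is correct and follows the paper's proof: set $N=\rho(z)$, fix $\omega<\min\{\alpha,\beta,1\}$, note that the ball about $z$ avoids $Z_R$ and has $\rho=N+O(N^\omega)$, so $D\le CN^{-\alpha}\le N^{-\omega}$ at its zeros, and invoke the clean-window lemma. Two minor remarks: your default $\omega=\min\{\alpha,\beta\}/2$ must still be capped below $1$ when $\alpha,\beta$ are large, which your ``or a smaller fixed number'' covers; and the paper checks the hypothesis on $B_{2N^\omega}(z)$ and applies the lemma with an interior margin, whereas you center the window at $z$, which places $z$ in the interior region just as well.
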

\begin{proof}
Put $N=\rho(z)$ and choose $0<\omega<\min\{\alpha,\beta,1\}$.
The ball $B_{2N^\omega}(z)$ avoids $Z_R$ and has
$\rho=N+O(N^\omega)$, hence $D\leq CN^{-\alpha}\leq N^{-\omega}$
at its zeros. Apply the clean-window lemma with an interior margin.
\end{proof}

\vskip0.12in
The reach identities above are identities for $d$. They apply to
$\widehat d$ only on the branch where the physical cap is inactive.
The graph charts may meet the fixed unstable core. Accordingly, subsequent
stability tests must include an ambient cutoff that vanishes near the core,
and the terms from the cap branch must be estimated separately.

\vskip0.12in
\subsection{Profiles near adjacent interfaces}
\label{sec:r11-local-profile-primitives}
We use the variables of Section~\ref{sec:r10-analytic-module}.
The numerical leading coefficients below are independent of dimension;
auxiliary constants and thresholds may depend on the fixed dimension $m\geq2$.

\begin{lemma}
\label{lem:r11-local-jets}
At a zero with $D(z)<1/4$, let
$\nu(z)=\boldsymbol p(z)/|\boldsymbol p(z)|$. For every fixed $P$,
uniformly on $|s|\leq P$,
\begin{equation}\label{eq:r11-fixed-column}
 |\boldsymbol p(z+s\nu)-\nu|+|U(z+s\nu)-s|
 +\left|\frac{q(z+s\nu)}{G(s)}-1\right|\leq C_{m,P}D(z).
\end{equation}
\end{lemma}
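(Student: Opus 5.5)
The plan is to integrate the first-order system along the normal segment $s\mapsto z+s\nu$, $|s|\leq P$, using the relative defect jets of Lemma~\ref{lem:audited-relative-defect} on the fixed ball $B_{P+1}(z)$. The constants in \eqref{eq:r3-fixed-defect-jets} give
\[
\|D\|_{C^1(B_{P+1}(z))}\leq C_{m,P}D(z),\qquad \|\nabla^2U\|_{C^0(B_{P+1}(z))}\leq C_{m,P}\sqrt{D(z)},
\]
so the derivative of $\boldsymbol p$ along the ray is not yet $O(D(z))$. The key is the identity $(\nabla^2U)\boldsymbol p=-\nabla D/2$, already used in Lemma~\ref{lem:normalized-defect-limit}.

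\emph{Gradient step.} Write $Y(s)=\boldsymbol p(z+s\nu)-\nu$. Then
\[
Y'=\nabla^2U\,\nu=-\tfrac12\nabla D-\nabla^2U\,Y .
\]
At the base, $\boldsymbol p(z)=\sqrt{1-D(z)}\,\nu$, so $|Y(0)|\leq D(z)$. The forcing satisfies $|\nabla D|\leq C_{m,P}D(z)$, and the coefficient satisfies $|\nabla^2U|\leq C_{m,P}\sqrt{D(z)}\leq C_{m,P}$. Gronwall's inequality on $[0,P]$ and $[-P,0]$ then gives $|Y(s)|\leq C_{m,P}D(z)$. This is the same argument as in Lemma~\ref{lem:r4-accurate-ray}, but on a fixed window, where no exponential loss arises.

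\emph{Profile step.} Since $U(z)=0$ and $(U(z+s\nu)-s)'=Y\cdot\nu$, integrating over $|s|\leq P$ gives $|U(z+s\nu)-s|\leq C_{m,P}D(z)$.

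\emph{Ratio step.} We have
\[
q=1-\tanh^2(U/\sqrt2)=G(U),\qquad \frac{q}{G(s)}=\frac{G(U)}{G(s)}.
\]
Using $|(\log G)'|=\sqrt2|g|\leq\sqrt2$, we get $|\log(q/G(s))|\leq\sqrt2\,|U-s|\leq C_{m,P}D(z)$. Because $D(z)<1/4$, the exponent is bounded, so this converts to $|q/G(s)-1|\leq C_{m,P}D(z)$. The three bounds together give \eqref{eq:r11-fixed-column}.

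The only delicate point is the gradient step. The naive bound $|\nabla^2U|\leq C\sqrt{D(z)}$ gives merely $O(\sqrt{D(z)})$ for $Y$. The fix is to isolate the normal component through $(\nabla^2U)\boldsymbol p=-\nabla D/2$, leaving $\nabla^2U$ acting only on the already small vector $Y$. Everything else is routine.
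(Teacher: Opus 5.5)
Your proposal is correct and matches the paper's proof essentially step for step. Both use the identity $(\nabla^2U)\boldsymbol p=-\nabla D/2$ to write $Y'=-\nabla D/2-(\nabla^2U)Y$ with $|Y(0)|\leq D(z)$, apply the fixed-ball relative jets and Gronwall, integrate to control $U-s$, and conclude with $q=G(U)$ and $|(\log G)'|\leq\sqrt2$.
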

\begin{proof}
For $Y(s)=\boldsymbol p(z+s\nu)-\nu$,
$(\nabla^2U)\boldsymbol p=-\nabla D/2$ gives
$Y'=-\nabla D/2-(\nabla^2U)Y$, $|Y(0)|\leq D(z)$.
Relative jets and Gronwall give $Y=O_{m,P}(D(z))$.
Integrate to control $U-s$, then use
$q=G(U)$ and $|(\log G)'|\leq\sqrt2$.
\end{proof}

\subsubsection{Sharp chamber tails}
\label{sec:r11-sharp-tail}
\begin{proposition}
\label{prop:r11-sharp-tail4}
In a fractional interior of a clean window, let $x$ lie between
adjacent actual graphs at distances $s_i,s_j\leq C_0\log N$.
For every $\varepsilon>0$, uniformly at sufficiently large $N$,
\begin{equation}\label{eq:r11-sharp-tail4}
 q(x)\leq(4+\varepsilon)
       (e^{-\sqrt2s_i}+e^{-\sqrt2s_j}).
\end{equation}
\end{proposition}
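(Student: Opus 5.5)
We will prove \eqref{eq:r11-sharp-tail4} by comparing $w=1-|u|$ (equivalently $q\leq 2w$) in the chamber between the two adjacent graphs with an explicit supersolution built from the two one-dimensional tails. Orient $u>0$ in the chamber; this is allowed because adjacent graphs bound a zero-free region of constant sign, by the clean-window atlas of Lemma~\ref{lem:r11-clean-window}. Near each graph the one-dimensional profile satisfies $1-g(s)=2e^{-\sqrt2 s}/(1+e^{-\sqrt2 s})\leq 2e^{-\sqrt2 s}$, so $G(s)=1-g^2\leq 4e^{-\sqrt2 s}$. The constant $4$ is therefore the natural single-profile constant. The task is to show that in the middle of the chamber the two tails only add, up to a factor $1+\varepsilon$.

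We split by depth. Fix a large $T=T_\varepsilon$.

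\textbf{Shallow region.} If $\min(s_i,s_j)\leq T$, say $s_i\leq T$, Lemma~\ref{lem:r11-local-jets} at the foot $p_i(x)$ applies. The defect there is $O(N^{-\omega})$, and $x$ lies on the normal ray through that foot at depth $s_i$ by the projection lemma. This gives $q(x)=G(s_i)(1+o(1))\leq(4+\varepsilon)e^{-\sqrt2 s_i}$ for large $N$.

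\textbf{Deep region.} When both distances exceed $T$, we use the equation $(-\Delta+2)w=3w^2-w^3\leq 3w^2$, or more conveniently $(-\Delta+2-\eta)w\leq0$ wherever $w\leq\eta/3$. The bound $w\leq\eta/3$ holds deep in the chamber by phase-ball clearing, Lemma~\ref{lem:r4-phase-ball}. As comparison function we take
\[
\Psi=(2+\varepsilon/2)\bigl(e^{-a_\eta s_i}+e^{-a_\eta s_j}\bigr)+\text{(lateral barrier)},\qquad a_\eta=\sqrt{2-\eta},
\]
with the lateral term of the form $v_0e^{-c_0(\cdot)}$ as in Lemma~\ref{lem:r5-sharp-barrier}. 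Here $s_i,s_j$ are the smooth graph distances, whose Hessians are $O(\kappa_N)$ on the window, so $|\nabla s|=1$ and $\Delta s=o(1)$. Hence each exponential is a supersolution of $-\Delta+2-\eta$ up to $o(1)$ relative error, which is absorbed by a slight lowering of the exponent.

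On the boundary surfaces $s_i=T$ and $s_j=T$, the shallow estimate gives $w\leq(2+\varepsilon/4)e^{-\sqrt2T}$, which $\Psi$ dominates. On the lateral boundary at logarithmic-power distance, $w\leq 1$ is beaten by the lateral barrier. That barrier is $O(N^{-c})$ at $x$, and $N^{-c}$ is negligible relative to $e^{-\sqrt2 C_0\log N}$ only if $c>\sqrt2C_0$. So instead the lateral radius is taken as $(1+\log N)^k$ with $k>1$ and the barrier rate $c_0$ fixed; this makes the barrier's contribution at $x$ superpolynomially small compared with $e^{-\sqrt2 s}$ for $s\leq C_0\log N$.

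The maximum principle then gives $w\leq\Psi$. Using $q\leq 2w$ we get $q\leq(4+\varepsilon)(e^{-a_\eta s_i}+e^{-a_\eta s_j})$.

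\textbf{Main obstacle.} The loss from $\sqrt2$ to $a_\eta$ is fatal at depth $\log N$, since $e^{(\sqrt2-a_\eta)s}$ is then a power of $N$. To remove it, we replace the crude potential bound by the decaying potential barrier of Lemma~\ref{lem:r5-sharp-barrier}. We use the one-sided factor $e^{-\sqrt2 t_2}\varphi(t_1)$, with $\varphi$ normalized so that $\varphi\to1$ at infinity, in each direction, and sum the two. The linearized potential for $w$ is $V\geq V_g(t_1)-\eta_0e^{-\gamma t_1}$ near each graph, which holds by polytope clearing (Lemma~\ref{lem:r5-polytope-clearing}). This yields the exact rate $\sqrt2$, with $\varphi(t)$ tending to a constant arbitrarily close to $1$ after renormalizing at depth $T$, so that the constant stays $4+\varepsilon$. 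The tilt hypothesis $|\mathbf N'|L\leq\vartheta_m\ell$ is supplied by the angle bound \eqref{eq:r11-clean-angle}.

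The delicate bookkeeping is that the constant in front is not just bounded but $4(1+o_T(1))$. This requires the barrier's boundary value at depth $T$ to match $G(T)/2\cdot(1+o(1))$. We match it by choosing $T$ large first and then $N$, uniformly in the fractional interior.
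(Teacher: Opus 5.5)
Your outer architecture matches the paper's. You handle shallow points by the fixed-column Lemma~\ref{lem:r11-local-jets}, run a comparison for $w=1-u$ beyond a fixed depth with face data $w=1-g(T)+o_N(1)$, put lateral faces at polylogarithmic distance, and finish with $q\le2w$. The gap is the decisive deep-region barrier.

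The linear potential of $w$ is $V=u(1+u)=2-w(3-w)$. Take the summand $\Psi_i=(2+\varepsilon)e^{-\sqrt2 s_i}\varphi(s_j)$ built from Lemma~\ref{lem:r5-sharp-barrier}. Using $\nabla s_i\cdot\nabla s_j\approx-1$ and the ODE for $\varphi$, one gets
\[
 (-\Delta+V)\Psi_i\approx(2+\varepsilon)e^{-\sqrt2 s_i}\bigl\{(V-V_g(s_j))\varphi(s_j)+O(e^{-\gamma s_j})\bigr\}.
\]
At its own face $s_i=T$ one has $s_j\approx\ell-T$, $V_g(s_j)\approx2$ and $V-2\approx-3w\approx-6e^{-\sqrt2T}$. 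So the right side is about $-12e^{-2\sqrt2T}$. The positive contributions there are $O(e^{-\sqrt2T-\gamma(\ell-T)})$ from the reserve of $\varphi$ and $O(e^{-\sqrt2(\ell-T)})$ from the other summand $\Psi_j$. Both are exponentially smaller once $\ell\gg T$. Since $\ell$ ranges up to $C_0\log N$ while $T$ is fixed, the sum is not a supersolution and the maximum principle does not close.

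The underlying reason is that the hypothesis $V\ge V_g(t_1)-\eta_0e^{-\gamma t_1}$ of that lemma fails near the opposite graph. This is why Theorem~\ref{thm:r5-local-sharp-intercept} runs that barrier only up to the midpoint $t_1=t_2$ and imports the midpoint value from polytope clearing. Here the midpoint value is exactly what must be proved. Moreover, the deficit to absorb is the self-interaction $3w\,e^{-\sqrt2 s_i}$ at the emitting face, which needs a correction in $s_i$ itself, not in $s_j$.

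The paper avoids linearizing altogether. It uses affine face distances $l_b$: the tangent planes at the two feet shifted by $\delta_N$, and lateral faces $L\pm y_a$ with $L=(\log N)^2$. Put $r_i=(2+\eta)e^{-\sqrt2 l_i}$, $r_j=(2+\eta)e^{-\sqrt2 l_j}$, $r_{a,\pm}=\zeta e^{\sqrt2(M-l_{a,\pm})}$ and $R=\sum_br_b$. Then $(-\Delta+2)R=0$ exactly, and $W=R-2R^2$ satisfies $(-\Delta+2)W=4R^2+4|\nabla R|^2\ge3W^2$. So $W$ is a supersolution of the semilinear inequality $(-\Delta+2)w=3w^2-w^3\le3w^2$ obeyed by $w$; the quadratic correction absorbs the $-3w$ deficit without touching the rate. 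The comparison then closes as follows:
\begin{enumerate}
\item It is legitimate on $\{l_b>M\ \forall b\}$, because $w,W\le1/8$ keeps the zeroth-order coefficient positive.
\item The face data $r_M-2r_M^2>1-g(M)$ hold for a large fixed $M$.
\item Since $W\le R$, the bound at $x$ has rate exactly $\sqrt2$ and constant $(2+\eta)e^{\sqrt2\delta_N}$.
\end{enumerate}

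If you prefer a linear barrier, the correction must sit in each face's own distance. First use the exact-rate, nonsharp bound $w\le C\sum_be^{-\sqrt2 l_b}$ of Lemma~\ref{lem:r5-polytope-clearing}. Then take summands $A_be^{-\sqrt2 l_b}(1-e^{-\gamma l_b})$ with $0<\gamma<\sqrt2$. Each has reserve $A_b\gamma(\gamma+2\sqrt2)e^{-(\sqrt2+\gamma)l_b}$ under $-\Delta+2$. Beyond a large depth $T$ this absorbs $3w\Psi$: each cross term is absorbed by the reserve of the nearer face, with all $A_b$ bounded independently of $T$. The principal coefficient $(2+\eta)/(1-e^{-\gamma T})$ tends to $2+\eta$, so the sharp constant survives.
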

\begin{proof}
Make the chamber positive and put $w=1-u$. Choose coordinates at
the lower foot so that $x=(0,s_i)$ and $y$ is horizontal. At the two
feet from $x$, take inward affine unit distances $l_i^0,l_j^0$, so
$l_i^0(x)=s_i$, $l_j^0(x)=s_j$. In a lateral box of size
$L=(\log N)^2$ the atlas bounds the tangent-plane errors by
$\delta_N=o(1)$. Increasing this error slightly makes the polytope
\begin{equation}\label{eq:r11-tail-inner-polytope}
 \mathcal P=\{l_i>0,l_j>0,|y_a|<L\},\qquad
 l_i=l_i^0-\delta_N,\quad l_j=l_j^0-\delta_N,
\end{equation}
lie inside the actual chamber. Opposite principal normals make it
bounded, and its heights remain $O(\log N)$. True distances differ
from these affine distances by $o(1)$ throughout its fixed inner
faces, with feet inside the polynomial buffer. Consequently for fixed $M$,
\begin{equation}\label{eq:r11-tail-principal-face}
 w=1-g(M)+o_N(1)\quad\hbox{on }l_i=M\hbox{ or }l_j=M,
\end{equation}
uniformly on the complete faces, including their intersections.

\vskip0.12in
There are $k=2m+2$ unit face distances $l_b$, including
$L\pm y_a$. Choose $k\zeta<1/8$, then $\eta>0$ small in terms of
$\varepsilon$, then a fixed $M$ so large that
\begin{equation}\label{eq:r11-tail-M-choices}
 C/M^2<\zeta/2,\qquad r_M=(2+\eta)e^{-\sqrt2M}<\zeta,
 \qquad r_M-2r_M^2>1-g(M).
\end{equation}
The last strict inequality follows from
$1-g(M)=2e^{-\sqrt2M}+O(e^{-2\sqrt2M})$.
After fixing these parameters, we increase $N$. On $\mathcal P_M=\{l_b>M\ \forall b\}$ put
\[
 \begin{gathered}
 r_i=(2+\eta)e^{-\sqrt2l_i},\quad r_j=(2+\eta)e^{-\sqrt2l_j},\\
 r_{a,\pm}=\zeta e^{\sqrt2(M-l_{a,\pm})},\quad
 R=\sum_b r_b,\quad W=R-2R^2.
 \end{gathered}
\]
Here $R\leq k\zeta<1/8$. Every point of
$\overline{\mathcal P_M}$ has a phase ball of radius arbitrarily
close to $M$, so Lemma~\ref{lem:r4-phase-ball} gives
$w\leq C/M^2<\zeta/2$. The map $r\mapsto r-2r^2$ is increasing
on $[0,1/8]$. Thus a lateral face has
$W\geq\zeta-2\zeta^2\geq\zeta/2>w$, and a principal face has
$W\geq r_M-2r_M^2>w$ by
\eqref{eq:r11-tail-principal-face}--\eqref{eq:r11-tail-M-choices}.
Since $(-\Delta+2)R=0$,
\[
 (-\Delta+2)W=4R^2+4|\nabla R|^2\geq3W^2,
 \qquad (-\Delta+2)w=3w^2-w^3.
\]
With $w,W\leq1/8$, comparison on this bounded polytope gives
$w\leq W\leq R$; its weak Dirichlet form needs no corner regularity.
At the original point this yields
\[
 w(x)\leq(2+\eta)e^{\sqrt2\delta_N}
 (e^{-\sqrt2s_i}+e^{-\sqrt2s_j})
 +2m\zeta e^{\sqrt2M-\sqrt2(\log N)^2}.
\]
The last term is of lower order than either principal exponential.
If instead $\min(s_i,s_j)\leq M+1$, the fixed-column lemma gives
$q(x)=G(\min(s_i,s_j))+o_N(1)$, and
$G(t)\leq4e^{-\sqrt2t}$ supplies the fixed positive slack.
In the remaining case $x\in\mathcal P_M$ for large $N$; use the
displayed bound and $q\leq2w$. With the already fixed small $\eta$,
increasing $N$ proves \eqref{eq:r11-sharp-tail4}. This proves the phase
tail estimate of \cite[Proposition 8.10]{OpenAI2026} using one scalar
quadratic correction for all boundary faces.
\end{proof}

\vskip0.12in

\subsection{Proof of the interaction estimate}
\label{sec:r13-local-master}
\label{sec:r12-additive-interaction}
\label{sec:r12-central-feedback48}
\label{sec:r12-signed-residual-cone}
We now combine the normalized defect limit with the positive directional
source to estimate the normal response. The positive source argument is
related to \cite[Sections 5 and 8]{OpenAI2026}.
Put $\kappa_0=\sqrt2$, $A_*=8d_0=12\sqrt2=48h_*$ and
$f_\pm=\varphi_\pm$ from Theorem~\ref{thm:positive-even-source}.
The rates $\kappa_0$ and $\kappa=\kappa_0/G_1$ are measured with respect
to the Euclidean gap and the penalized reach, respectively.

\begin{lemma}
\label{lem:r13-positive-incoming}
\label{lem:r12-central-incoming}
Suppose
\begin{equation}\label{eq:r13-master-clean-ball}
 D\leq N^{-\omega}\quad\hbox{on }\Sigma\cap B_{N^\omega}(z_0),
 \qquad z\in\Sigma\cap B_{(1-\theta)N^\omega}(z_0),
\end{equation}
with fixed $\omega>0$, $\theta\in(0,1)$. Select any subset of the
two distinct first-adjacent graphs at shortest gaps
$\ell_\alpha\leq C_0\log N$. Set
$\mu_\alpha=e^{-\kappa_0\ell_\alpha}$ on selected sides and zero
otherwise; rotate to $z=0$, $n_0=\nu(z)=e_{m+1}$.
For each fixed $Q,b>0$, there are pairwise disjoint physical patches
$\Omega_\alpha$ outside $C_b=B_b^m\times(-b,b)$ for sufficiently large $N$.
Their positive source contribution from Lemma~\ref{lem:positive-directional-source} obeys
\begin{equation}\label{eq:r12-central-incoming}
 \begin{gathered}
 J_N=\sum_\alpha\mathcal K*(\mathbf1_{\Omega_\alpha}F_{n_0})
 \geq(a_Q-o_N(1))\sum_\alpha\mu_\alpha e^{\alpha\kappa_0s},
 \\
 a_Q=\frac3{\kappa_0}\int_{-Q}^Qe^{\kappa_0t}G(t)^2dt\to8.
 \end{gathered}
\end{equation}
uniformly on $C_b$. Also $q/G(s)=1+o_N(1)$ there. The incoming
error is relative to each side's own exponential, independently
of the gap ratio.
\end{lemma}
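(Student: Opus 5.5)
The plan is to use the representation $w=\mathcal K*F_{n_0}$ of Lemma~\ref{lem:positive-directional-source} with $F_{n_0}\geq q^2|\boldsymbol p-n_0|^2$. Near an adjacent graph with opposite orientation we have $\boldsymbol p\approx-n_0$, so $|\boldsymbol p-n_0|^2\approx4$.

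For each selected side $\alpha=\pm$, let $y_\alpha$ be the shortest-gap foot on the first adjacent graph, at distance $\ell_\alpha$ from $0$. Its chord normal is within $\theta_N$ of $\alpha e_{m+1}$ by Lemma~\ref{lem:r11-clean-window}. Define
\[
 \Omega_\alpha=\{y_\alpha+\tau+t\,\alpha\mathbf N_\alpha:\ |\tau|\leq\ell_\alpha^{2/3},\ \tau\perp\mathbf N_\alpha,\ |t|\leq Q\},
\]
built in the graph's own normal coordinates, where $t$ is the signed distance from that graph. We use $\ell^{2/3}$ laterally so that the truncation estimate \eqref{eq:r11-horizontal-truncation} applies. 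Since $\ell_\alpha\geq c_0\log N\to\infty$ by the separation in Lemma~\ref{lem:r11-clean-window}, these patches are disjoint from each other and from $C_b$. They also stay inside the logarithmic-power cylinder where the atlas and projection errors \eqref{eq:r11-additive-error} are $o(1)$.

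On $\Omega_\alpha$, Lemma~\ref{lem:r11-local-jets} at the receiving feet gives three facts: $q=G(t)(1+o(1))$, $\boldsymbol p=-\alpha n_0+o(1)$ (the gradient-oriented normal is opposite across the zero), and hence $F_{n_0}\geq(4-o(1))G(t)^2$. The receiving feet have $D=o(1)$ because the window is clean.

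Next, for $x=(x',s)\in C_b$ we compute $\mathcal K*(\mathbf1_{\Omega_\alpha}F)(x)$. The vertical offset from $x$ to the layer at normal depth $t$ is $h=\ell_\alpha-\alpha s-t+o(1)$, uniformly for $|s|,|x'|\leq b$ and $|t|\leq Q$. The tilt contributes $o(1)$ over lateral radius $\ell^{2/3}$, since $\theta_N\ell^{2/3}\to0$. Apply \eqref{eq:r11-horizontal-truncation} together with the perturbation-stability clause of Lemma~\ref{lem:r11-kernel-horizontal-mass}, with unit Jacobian $1+o(1)$. Integrating first horizontally and then over $t$ gives
\[
 \mathcal K*(\mathbf1_{\Omega_\alpha}F)(x)\geq(1-o(1))\int_{-Q}^Q4G(t)^2\frac{e^{-\kappa_0(\ell_\alpha-\alpha s-t)}}{2\kappa_0}\,dt=(1-o(1))\frac{2}{\kappa_0}\mu_\alpha e^{\alpha\kappa_0s}\int_{-Q}^Qe^{\kappa_0t}G^2\,dt.
\]
This has the form $a_Q$ up to the numerical constant. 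The normalization $\tfrac{3}{\kappa_0}$ in the statement suggests the sharp kernel mass should be tracked more carefully, or that $F$ carries an extra factor of $3/2$; I would recheck this constant. In either case the limit $a_Q\to8$ follows from $\int_{\mathbb R}e^{\sqrt2t}\operatorname{sech}^4(t/\sqrt2)\,dt$. Summing over $\alpha$ uses disjointness, and each error is relative to its own $\mu_\alpha$ since the $o(1)$ factors are uniform in $\ell_\alpha$. The bound $q/G(s)=1+o(1)$ on $C_b$ is \eqref{eq:r11-fixed-column} at $z$.

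The main obstacle is uniformity. The $o(1)$ errors must not depend on the ratio of gaps or on $\ell_\alpha$ beyond $\ell_\alpha\to\infty$. This requires that within $\Omega_\alpha$ the true signed distance to graph $\alpha$, rather than the affine one, controls $q$ up to factor $1+o(1)$, and that the other graph does not reduce $q$ there. The second point holds because the other graph is at distance $\geq c\log N$, so the fixed-column lemma applies uninterrupted on $|t|\leq Q$.
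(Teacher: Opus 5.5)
Your construction is the same as the paper's. You use tangent disks of radius $\ell_\alpha^{2/3}$ and normal depth $Q$ about the shortest-gap feet, the truncated horizontal kernel mass $(2\kappa_0)^{-1}e^{-\kappa_0h}$ with its perturbation stability, the fixed-column lemma at the receiving feet, and a separate estimate for each side. As written, however, the argument does not reach the stated constant, and the gap is exactly the one you flagged.

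You keep only $F_{n_0}\geq q^2|\boldsymbol p-n_0|^2$ and discard $qw$, which is of the same order on the patches. Along each receiving column $\boldsymbol p=-n_\alpha+O(D)$ and $n_\alpha\cdot n_0=1-o(1)$. Hence $w=q(1-\boldsymbol p\cdot n_0)=(2-o(1))q$ and $qw=(2-o(1))G(t)^2$. Adding $q^2|\boldsymbol p-n_0|^2=(4-o(1))G(t)^2$ gives $F_{n_0}\geq(6-o(1))G(t)^2$, and $6/(2\kappa_0)=3/\kappa_0$ is precisely the prefactor in $a_Q$.

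With your factor $4$ the prefactor is only $2/\kappa_0$. Since $\int_{\mathbb R}e^{\kappa_0t}G^2\,dt=2\sigma=8\sqrt2/3$, your bound tends to $16/3$, not $8$, so your sentence ``in either case the limit $a_Q\to8$ follows'' is false. This is not cosmetic. The $8$ becomes $c_\pm=8r_\pm$ in Theorem~\ref{thm:r13-local-master}, hence $A_*=8d_0=12\sqrt2$ and the sharp coefficient $48$. With $16/3$ one would only obtain $32$.

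There is also a smaller slip: you write $\boldsymbol p=-\alpha n_0+o(1)$ on $\Omega_\alpha$. The sign of $u$ alternates across simple zeros, so $u$ decreases in the $n_0$ direction at \emph{both} first neighbours. Thus $\boldsymbol p\approx-n_\alpha\approx-n_0$ on both sides, as your opening paragraph correctly says. With your formula the lower patch would have $|\boldsymbol p-n_0|\approx0$ and $w\approx0$, i.e.\ no source at all. Both terms of $F_{n_0}$ are large on both patches precisely because of this common orientation.
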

\begin{proof}
The feet are $y_\alpha=\alpha\ell_\alpha n_\alpha$, with
$c\log N\leq\ell_\alpha\leq C_0\log N$ and
$|n_\alpha-n_0|\leq\theta_N$. Alternating simple zeros orient the
gradient normal at each adjacent graph as $-n_\alpha$.
Take its tangent disk $|\tau|\leq\ell_\alpha^{2/3}$ and normal
depths $|t|\leq Q$. The projection lemma makes this normal map
injective; its Jacobian is $1+o(1)$ and
\begin{equation}\label{eq:r12-additive-patch}
 X_\alpha(\tau,t)=\alpha\ell_\alpha n_\alpha+\tau-tn_\alpha+o(1).
\end{equation}
Indeed the position error is
$O(N^{-\omega/2}\ell_\alpha^{4/3})=o(1)$ and
$|\tau\cdot n_0|\leq\theta_N\ell_\alpha^{2/3}=o(1)$.
Thus the patches occupy opposite half-spaces and escape fixed boxes.
The fixed-column lemma gives, relatively on $[-Q,Q]$,
\begin{equation}\label{eq:r12-additive-source6}
 q(X_\alpha)=G(t)(1+o(1)),\qquad
 F_{n_0}(X_\alpha)\geq(6-o(1))G(t)^2.
\end{equation}
For $x\in C_b$, Lemma~\ref{lem:r11-kernel-horizontal-mass} gives
the horizontal kernel mass
$(1-o(1))\mu_\alpha e^{\alpha\kappa_0(s+t)}/(2\kappa_0)$.
The full horizontal mass is invariant under a bounded horizontal
translation. The relative tail outside the disk is
$\operatorname{poly}(\ell_\alpha)e^{-c\ell_\alpha^{1/3}}=o(1)$,
and the kernel logarithmic derivative absorbs the additive geometric
error. Integration in $t$ proves \eqref{eq:r12-central-incoming}
separately for each side. The identity
$\int_{\mathbb R}e^{\kappa_0t}G^2=2\sigma$ gives $a_Q\to8$.
Fixed-ball relative jets give $U=s+o(1)$ and hence $q/G=1+o(1)$.
\end{proof}

\vskip0.12in

\begin{theorem}
\label{thm:r13-local-master}
\label{thm:r12-actual-signed-residual-cone}
\label{cor:r12-compact-Jost-response}
\label{lem:r12-central-resolvent}
Under \eqref{eq:r13-master-clean-ball}, select sides as above.
For each fixed compact $K$ and $\delta>0$, there is
$N_0=N_0(m,\Gamma,\omega,\theta,C_0,K,\delta)$ such that, for $N\geq N_0$,
\begin{equation}\label{eq:r13-master-compact-response}
 w(x',s)\geq8\sum_\alpha\mu_\alpha f_\alpha(s)
                   -\delta\sum_\alpha\mu_\alpha\qquad(x\in K).
\end{equation}
For each fixed $P>0$ and $\eta>0$, there is
$N_1=N_1(m,\Gamma,\omega,\theta,C_0,P,\eta)$ such that, for $N\geq N_1$,
\begin{equation}\label{eq:actual-signed-residual-finite-window}
 |H_{n_0}(z)-A_*(\mu_+-\mu_-)|+A_*(\mu_++\mu_-)
 \leq(C_P+\eta)I_P(z),\qquad
 C_P=\frac{h_*}{\tanh(P/\sqrt2)},
\end{equation}
where $I_P=\sigma^{-1}\int_{-P}^Pq(z+s\nu)^3D(z+s\nu)\,ds$.
Thus for every $\varepsilon>0$ one finite $P_\varepsilon$ works
for every fixed $P\geq P_\varepsilon$, with
\begin{equation}\label{eq:r12-actual-signed-residual-cone}
 |H_{n_0}(z)-A_*(\mu_+-\mu_-)|+A_*(\mu_++\mu_-)
 \leq(h_*+\varepsilon)I_P(z).
\end{equation}
\end{theorem}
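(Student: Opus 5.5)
We will prove \eqref{eq:r13-master-compact-response} by a compactness argument built on Lemma~\ref{lem:r13-positive-incoming}. We then deduce \eqref{eq:actual-signed-residual-finite-window} from the mode-subtraction inequality \eqref{eq:positive-mode-subtraction} of Theorem~\ref{thm:positive-even-source}, applied in the normalized limit of Lemma~\ref{lem:normalized-defect-limit}.

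\textbf{Compact response.} By Lemma~\ref{lem:positive-directional-source}, $w=\mathcal K*F_{n_0}$ with $F_{n_0}\geq qw\geq0$. We split $F_{n_0}$ into the part on the patches $\Omega_\alpha$ and the remainder. Let $w_1$ be the convolution of the remainder outside the enlarged box $C_{b'}$, with $b'\gg b$, and discard it since it is nonnegative. What remains is
\[
 w\geq J_N+\mathcal K*(\mathbf1_{C_{b'}}qw).
\]
On the box the potential satisfies $q=G(s)+o(1)$. Iterating this inequality, equivalently comparing with the minimal positive solution of $(-\Delta+2-G)v=J_N$ on $C_{b'}$, gives a lower bound by the Dirichlet resolvent of $L_0$ applied to $(a_Q-o(1))\sum_\alpha\mu_\alpha e^{\alpha\kappa_0 s}$.

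As $b'\to\infty$ and $Q\to\infty$, this resolvent converges locally to the bounded-below positive solution of
\[
 L_0f=8\kappa_0^{\,2}\,\dots,
\]
and we identify it with $8\varphi_\alpha$. This identification is the main computation. Since $(-\partial_s^2+2)e^{\pm\kappa_0 s}=0$, we must check that $\varphi_\pm$ is precisely the solution of $(-\partial_s^2+2-G)\varphi=G\cdot(\text{incoming }e^{\pm\kappa_0 s})$ normalized so that $\varphi_\pm\sim e^{\pm\kappa_0 s}$ at the far end. This matches the normalization $\varphi_\pm=e^{\pm\sqrt2 s}(2\mp g)$, whose asymptotic amplitude is $1$ toward the source. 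Because the errors in Lemma~\ref{lem:r13-positive-incoming} are relative to each $\mu_\alpha$, we obtain
\[
 w\geq 8\sum_\alpha\mu_\alpha f_\alpha-\delta\sum_\alpha\mu_\alpha
\]
on $K$ once $Q,b'$ are fixed large and then $N$ is large.

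\textbf{Signed residual.} We argue by contradiction. Take a failure sequence and set $\epsilon_j=D(z_j)$. By Theorem~\ref{thm:s2-pointwise-master} and Lemma~\ref{lem:r11-clean-window}, $\mu_\alpha\leq E\lesssim J_\lambda+D$. The low-defect window makes $J_\lambda\lesssim D$, so $\mu_\alpha/\epsilon_j$ is bounded, and after passing to a subsequence $\mu_{\alpha,j}/\epsilon_j\to c'_\alpha\geq0$.

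Normalize as in Lemma~\ref{lem:normalized-defect-limit}, so that $W_j\to W$ and $F=2W-G|V|^2$. The compact response bound passes to the limit as $W\geq8\sum_\alpha c'_\alpha\varphi_\alpha$. Since $V(0,s)=0$, the quantities at $x'=0$, namely $M_P(F)$ and $F_s(0)$, coincide with those of $2W$. We therefore apply \eqref{eq:positive-mode-subtraction} to $2W$, whose source $2G^2|V|^2$ is even and nonnegative, with $c_\alpha=16c'_\alpha$ on the modes $\varphi_\alpha$.

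With $d_0=3/\sqrt2$ we get $\tfrac12 d_0\cdot16=12\sqrt2=A_*$. Together with $H/\epsilon\to F_s(0)/2$ and $I_P/\epsilon\to M_P(F)$, this yields the limit inequality with constant $C_P$, which contradicts the failure by the margin $\eta$. The final form \eqref{eq:r12-actual-signed-residual-cone} then follows from $C_P\to h_*$.

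\textbf{Main obstacle.} The delicate step is the sign and orientation bookkeeping. We must match $\alpha=\pm$, the sign of $H_{n_0}$, and which mode $\varphi_\pm$ receives which incoming exponential. If the normalization $8\varphi_\alpha$ came out with the wrong constant, we would recheck $a_Q\to8$ against the Wronskian identity $\int G^2\varphi_\pm=3\sigma$.
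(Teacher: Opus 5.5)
\textbf{A step that fails: boundedness of $\mu_{\alpha,j}/\epsilon_j$.} Your signed-residual step is the paper's argument. Applying \eqref{eq:positive-mode-subtraction} to $2W$ with $c_\alpha=16c'_\alpha$ is the paper's choice $F=W$, $c_\pm=8r_\pm$ after rescaling, and $V(0,s)=0$ correctly identifies the traces at $x'=0$. However, your reason for $\mu_{\alpha,j}/\epsilon_j$ being bounded is wrong. The clean-window hypothesis does not give $J_\lambda(z)\lesssim D(z)$. $J_\lambda$ is a supremum over all of $\Sigma$ with a small fixed rate $\lambda$. Nothing prevents a zero $y$ at distance $r$ (for instance a bad zero just outside $B_{N^\omega}(z_0)$, where $\bar a_\delta=1$) from contributing $e^{-\lambda r}\bar a_\delta(y)^2$. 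Meanwhile Lemma~\ref{lem:r4-near-critical-growth} allows $D(z)$ to be as small as $e^{-(2\sqrt2+\gamma)r}D(y)$. Since $\lambda<2\sqrt2$, the ratio $J_\lambda/D$ is uncontrolled. So Theorem~\ref{thm:s2-pointwise-master} only gives $\mu_\alpha\le E\lesssim J_\lambda+D$, which says nothing about $\mu_\alpha/D(z)$. The fix is the paper's. At the base point $q=1$ and $\boldsymbol p=|\boldsymbol p|n_0$, so $w(0)=1-\sqrt{1-\epsilon_j}\le\epsilon_j$. On the other hand $w\ge J_N$ (or your own compact response at $x=0$, where $f_\pm(0)=2$) gives $w(0)\ge(a_{Q_0}-o(1))\sum_\alpha\mu_\alpha$. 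Hence $\sum_\alpha\mu_\alpha\le C\epsilon_j$.

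\textbf{Compact response: a different route, with the identification misstated.} Your iteration itself is sound and differs from the paper, but the identification you describe is wrong as written. The comparison problem $(-\Delta+2-G)v=J_N$ does not follow from what you have; you only know $(-\Delta+2-q)w\ge0$. Its box solutions with right side $e^{\pm\kappa_0 s}$ also diverge as $b'\to\infty$, because $e^{\pm\kappa_0 s}$ is a homogeneous solution of $-\partial_s^2+2$. Likewise $\varphi_\pm$ solves $L_0\varphi_\pm=0$, not $L_0\varphi=Ge^{\pm\kappa_0 s}$. Here is what the iteration actually gives. Invoke Lemma~\ref{lem:r13-positive-incoming} with $b'$ in place of $b$, so the patches avoid $C_{b'}$ and the incoming bound holds there. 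Use $q\ge(1-o(1))G$ on $C_{b'}$ and set $T_{b'}v=\mathcal K*(\mathbf 1_{C_{b'}}Gv)$. Iterating $w\ge J_N+\mathcal K*(\mathbf 1_{C_{b'}}qw)$ then yields $(a_Q-o(1))\sum_\alpha\mu_\alpha\sum_{k\ge0}T_{b'}^k e^{\alpha\kappa_0 s}$. As $b'\to\infty$ this increases to $e^{\alpha\kappa_0 s}+\psi_\alpha$, where $\psi_\alpha$ is the bounded solution of $L_0\psi_\alpha=Ge^{\alpha\kappa_0 s}$. The series converges because $Ge^{\pm\kappa_0 s}=(1\pm g)^2$ is bounded and $\|\mathcal KG\|_{\infty\to\infty}\le1/2$. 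Since $L_0(1\pm g)=(1\pm g)^2$ and bounded solutions are unique ($2-G\ge1$), $\psi_\pm=1\pm g$. So the limit is exactly $\varphi_\pm$, and with $a_Q\uparrow8$ and Dini's theorem on $K$ you obtain \eqref{eq:r13-master-compact-response}, with errors relative to each $\mu_\alpha$.

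\textbf{Comparison with the paper.} The paper passes to the normalized limit and gets $W\ge8\sum_\alpha r_\alpha e^{\alpha\kappa_0 s}$ from the incoming bound. It then applies the bounded-below maximum principle to the supersolution $W-8\sum_\alpha r_\alpha f_\alpha$, which is bounded below because $f_\pm-e^{\pm\kappa_0 s}=1\pm g\in[0,2]$. The compact statement follows by contradiction, with $W>0$ covering the case $\sum_\alpha r_\alpha=0$. Your corrected route works at finite $N$, needs no normalization or separate case $\epsilon_j=0$, and gives $\sum_\alpha\mu_\alpha\lesssim\epsilon_j$ for free. The paper's route is shorter because it reuses the normalized limit it needs anyway for the curvature estimate, replacing your Neumann-series computation by a one-line maximum principle.
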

\begin{proof}
We use the bounded-below maximum principle: if $V\geq v_0>0$ and
$(-\Delta+V)R\geq0$ on all space, with $R$ bounded below, then
$R\geq0$. Otherwise minimize $R(x)+\eta|x-x_0|^2$ with
$R(x_0)<0$. At its minimum,
$0\leq(-\Delta+V)R\leq2(m+1)\eta+v_0R(x_0)<0$ for small $\eta$.

\vskip0.12in

Take any clean sequence $N_j\to\infty$ in the common coordinates.
If $\epsilon_j=D_j(0)=0$, planarity makes all selected weights zero.
Otherwise Lemma~\ref{lem:normalized-defect-limit} gives compact smooth limits
\[
 W_j=w_j/\epsilon_j\to W\geq0,\quad W(0)=\tfrac12,\quad
 L_0W=G^2|V(x')|^2\geq0,\quad L_0=-\Delta+2-G.
\]
The strong minimum principle gives $W>0$. Since $w_j\geq J_{N_j}$,
one fixed $Q_0>0$ in \eqref{eq:r12-central-incoming} gives
\[
 (a_{Q_0}-o(1))\sum_\alpha\mu_{\alpha,j}/\epsilon_j
 \leq w_j(0)/\epsilon_j
 =[1+\sqrt{1-\epsilon_j}]^{-1}\to\tfrac12.
\]
Hence, along a subsequence, $\mu_{\alpha,j}/\epsilon_j\to r_\alpha\geq0$.
First fix $Q,b$ and pass to $j\to\infty$ in the incoming bound;
then exhaust boxes and let $Q\to\infty$. It follows that
$W\geq8\sum r_\alpha j_\alpha$ on all space, where
$j_\alpha=e^{\alpha\kappa_0s}$. Each fixed incoming inequality holds
for all sufficiently large indices, which permits the limits in this order.
Since $L_0f_\pm=0$ and $f_\pm-j_\pm=1\pm g\in[0,2]$, the residual
$W-8\sum r_\alpha f_\alpha$ is a supersolution bounded below by
$-16\sum r_\alpha$. The maximum principle proves
\begin{equation}\label{eq:r13-limit-mode-domination}
 W\geq8(r_+f_++r_-f_-).
\end{equation}
A failure sequence for \eqref{eq:r13-master-compact-response},
divided by $\epsilon_j$, contradicts this if $\sum r_\alpha>0$;
if that sum vanishes it gives $W(x)\leq0$ at a compact limit point.
If no side is selected, the assertion reduces to $w\geq0$.

\vskip0.12in

For the curvature estimate, keep $P,\eta$ fixed. The same normalized
limit has an even source and
\begin{equation}\label{eq:r12-signed-column-limits}
 H_j(0)/\epsilon_j\to W_s(0),\qquad
 I_{P,j}(0)/\epsilon_j\to
 \mathcal I_P=\frac2\sigma\int_{-P}^PG^2W(0,s)\,ds>0.
\end{equation}
Apply \eqref{eq:positive-mode-subtraction} with $F=W$,
$c_\pm=8r_\pm$ and $M_P(W)=\mathcal I_P/2$. It yields
\begin{equation}\label{eq:r12-limit-diamond}
 |W_s(0)-A_*(r_+-r_-)|+A_*(r_++r_-)
 \leq C_P\mathcal I_P.
\end{equation}
The positive error allowance $\eta\mathcal I_P$ contradicts a
normalized failure of \eqref{eq:actual-signed-residual-finite-window}.
This also covers the case in which the limiting weights vanish. Choose $P_\varepsilon$ with
$C_P\leq h_*+\varepsilon/2$ for $P\geq P_\varepsilon$ and take
$\eta=\varepsilon/2$ to obtain the final assertion.
\end{proof}

\vskip0.12in

\begin{corollary}
\label{cor:r13-master-consequences}
\label{thm:r12-additive-mass48}
\label{thm:r12-additive-interaction}
\label{thm:r11-sharp-mass32}
\label{cor:r12-unsigned-two-sided-mean}
For every fixed $P>0$, $\varepsilon>0$, at sufficiently large clean scale,
\begin{align}
 I_P&\geq[48\tanh(P/\sqrt2)-\varepsilon](\mu_++\mu_-),
       \label{eq:r12-additive-mass48}\\
 |H|+24\sqrt2\min\{\mu_+,\mu_-\}&\leq(C_P+\varepsilon)I_P.
       \label{eq:r12-unsigned-two-sided-mean}
\end{align}
For every $\varepsilon>0$, one finite $P_\varepsilon$ gives the
same inequalities with $48-\varepsilon$ and $h_*+\varepsilon$
for every fixed $P\geq P_\varepsilon$. Selecting a single side gives
the weaker coefficient $32$ for sufficiently large fixed $P$.
\end{corollary}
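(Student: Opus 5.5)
Both inequalities will be read off directly from \eqref{eq:actual-signed-residual-finite-window} of Theorem~\ref{thm:r13-local-master}, applied with a fixed $P$ and a small tolerance $\eta$; after that only elementary manipulations of the left-hand side remain.

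\textbf{Mass bound \eqref{eq:r12-additive-mass48}.} I drop the nonnegative absolute-value term. This gives
\[
A_*(\mu_++\mu_-)\leq(C_P+\eta)I_P .
\]
Since $A_*=48h_*$ and $C_P=h_*/T_P$ with $T_P=\tanh(P/\sqrt2)$, dividing yields
\[
I_P\geq\frac{48h_*}{h_*/T_P+\eta}(\mu_++\mu_-).
\]
The coefficient tends to $48T_P$ as $\eta\to0$. So I choose $\eta$ small in terms of $\varepsilon$ and $P$, which gives the coefficient $48T_P-\varepsilon$.

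\textbf{Curvature bound \eqref{eq:r12-unsigned-two-sided-mean}.} By the triangle inequality,
\[
|H|\leq|H-A_*(\mu_+-\mu_-)|+A_*|\mu_+-\mu_-| .
\]
Adding $2A_*\min\{\mu_+,\mu_-\}$ to both sides, and using
\[
|\mu_+-\mu_-|+2\min\{\mu_+,\mu_-\}=\mu_++\mu_-,
\]
I get
\[
|H|+2A_*\min\{\mu_+,\mu_-\}\leq|H-A_*(\mu_+-\mu_-)|+A_*(\mu_++\mu_-)\leq(C_P+\eta)I_P .
\]
Here $2A_*=24\sqrt2$, so taking $\eta\leq\varepsilon$ gives \eqref{eq:r12-unsigned-two-sided-mean}. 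The sign of the orientation $H$ versus $H_{n_0}$ is irrelevant because only $|H|$ appears.

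\textbf{Uniform window and single-side remark.} For the uniform-window version I use \eqref{eq:r12-actual-signed-residual-cone} with tolerance $\varepsilon'$, together with $P_\varepsilon$ chosen so that $C_P\leq h_*+\varepsilon'$. Then $48h_*/(h_*+\varepsilon')\geq48-\varepsilon$ for $\varepsilon'$ small.

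The single-side remark needs care: with only $\mu_+$ selected, the inequality above still gives $48$, so the ``weaker $32$'' must refer to the older one-sided estimate. I would note that $32$ follows a fortiori from $48T_P-\varepsilon\geq32$ for large $P$.

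There is no real obstacle. The only point needing attention is the order of quantifiers: $\eta$ is fixed from $\varepsilon$ and $P$ before the clean-scale threshold $N_1$.
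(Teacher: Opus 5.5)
Your proposal is correct and is essentially the paper's proof. You drop the absolute value in the finite-window signed estimate of Theorem~\ref{thm:r13-local-master}, with slack $\eta$ chosen (before the clean-scale threshold) so that $A_*/(C_P+\eta)\geq48\tanh(P/\sqrt2)-\varepsilon$ and $\eta\leq\varepsilon$; you then use the triangle inequality with $\mu_++\mu_--|\mu_+-\mu_-|=2\min\{\mu_+,\mu_-\}$ and $2A_*=24\sqrt2$, and repeat the argument with the large-window cone estimate. Your a fortiori reading of the single-side coefficient $32$, via $48\tanh(P/\sqrt2)-\varepsilon\geq32$ for large $P$, fits the ``sufficiently large fixed $P$'' phrasing; the paper's proof does not address that remark separately.
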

\begin{proof}
Drop the absolute value in \eqref{eq:actual-signed-residual-finite-window}
and choose its slack so that
$A_*/(C_P+\eta)\geq48\tanh(P/\sqrt2)-\varepsilon$.
The triangle inequality gives the second estimate because
$\mu_++\mu_--|\mu_+-\mu_-|=2\min\{\mu_+,\mu_-\}$.
Use \eqref{eq:r12-actual-signed-residual-cone} in the same way
for the large-window assertions.
\end{proof}

\begin{proposition}
\label{prop:periodic-sharpness48}
Without stability, in clean classes with $C_0>\omega/\sqrt2$ the
optimal limiting mass coefficient at fixed $P>0$ is
$48\tanh(P/\sqrt2)$. Thus $48$ is optimal over all finite
window choices, but cannot replace $48-\varepsilon$ at a fixed
finite window. This optimality statement concerns the unrestricted
clean class and its local mass coefficient; the exterior-stable class
and global recovery coefficients fall outside its scope.
\end{proposition}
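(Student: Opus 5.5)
The plan is to match the lower bound of Corollary~\ref{cor:r13-master-consequences}, namely $I_P\geq[48\tanh(P/\sqrt2)-\varepsilon](\mu_++\mu_-)$ in every clean class, with an explicit family for which $I_P/(\mu_++\mu_-)\to48\tanh(P/\sqrt2)$. For that family I will use the one-dimensional periodic solutions $u_T(x)=v_T(x_{n})$ of $v''=v^3-v$. Here $v_T$ oscillates with half-period $\ell=\ell_T\to\infty$, so its zeros are parallel hyperplanes at mutual distance $\ell$. These solutions are bounded and entire, and stability plays no role, consistent with the statement. At any zero $z$ both first neighbours are selected and $\mu_+=\mu_-=e^{-\sqrt2\ell}$. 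The remaining claims (optimality over all windows, and impossibility of $48$ at a fixed window) then follow because $48\tanh(P/\sqrt2)<48$ and $\tanh\to1$.

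The first step is to compute $D$ and $I_P$ exactly via the first integral. Write $\tfrac12v'^2-W(v)=-E_0$ with $E_0>0$. Using $\nabla u=q\boldsymbol p/\sqrt2$ we get $|U'|^2=2v'^2/q^2=4(W-E_0)/q^2$. Since $W=q^2/4$, this gives $D=4E_0/q^2$, which is constant along level sets. Consequently $q^3D=4E_0q$ and
\[
 I_P(z)=\frac{4E_0}{\sigma}\int_{-P}^{P}q(z+s\nu)\,ds .
\]
By Lemma~\ref{lem:r11-local-jets}, $q(z+s\nu)=G(s)+O_P(D(z))$ on $|s|\leq P$. Since $\int_{-P}^PG=2\sqrt2\tanh(P/\sqrt2)$ and $\sigma=4\sqrt2/3$, we obtain $I_P=(6\tanh(P/\sqrt2)+o(1))E_0$.

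The second step is the asymptotics $E_0=(16+o(1))e^{-\sqrt2\ell}$, which I expect to be the only real computation. At the midpoint between two zeros, $v'=0$, so $E_0=W(v_{\max})=(1+o(1))(1-v_{\max})^2$. To evaluate $1-v_{\max}$, I would solve the autonomous ODE by quadrature: $\ell=\int_{-v_{\max}}^{v_{\max}}dv/\sqrt{2(W(v)-E_0)}$. I would compare this integral with the heteroclinic, whose tail satisfies $1-g(s)=2e^{-\sqrt2 s}+\dots$; this gives $1-v_{\max}=(4+o(1))e^{-\sqrt2\ell/2}$. Alternatively, the two-sided sharp tail of Proposition~\ref{prop:r11-sharp-tail4} supplies the upper bound $1-v\leq(4+o(1))e^{-\sqrt2\ell/2}$ at the midpoint, and a matching subsolution gives the lower bound. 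Then $I_P/(2\mu)\to 6\cdot16\tanh(P/\sqrt2)/2=48\tanh(P/\sqrt2)$.

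The last step is to check that these examples lie in the clean class with the selection depth allowed. Since $D\equiv 4E_0$ on $\Sigma$, choose $N$ with $N^{-\omega}=4E_0$, which gives $\log N=(\sqrt2\ell+O(1))/\omega$. The clean hypothesis \eqref{eq:r13-master-clean-ball} then holds on every ball, and the gap condition $\ell\leq C_0\log N$ becomes $\ell\leq C_0(\sqrt2/\omega)\ell+O(1)$. This holds for large $\ell$ exactly when $C_0>\omega/\sqrt2$, which explains the hypothesis in the statement. As $T\to\infty$ we have $N\to\infty$, so the limiting coefficient is attained and the bound $48\tanh(P/\sqrt2)$ of the Corollary is optimal.
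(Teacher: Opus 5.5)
Your proposal is correct and follows the paper's argument. It uses the same horizontally extended one-dimensional periodic solutions and the first integral giving $D=4E_0/q^2$, which is the paper's $\beta^2/q_A^2$ with $\beta^2=4E_0$. From there it derives the same formula for $I_P$ and checks the clean class with $N^{-\omega}=4E_0=\beta^2$, which is exactly where $C_0>\omega/\sqrt2$ enters.

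The only difference is the period asymptotics. The paper evaluates the quadrature explicitly: $J(\lambda)=\log(4/\lambda)+o(1)$, so $\ell=\sqrt2\log(8/\beta)+o(1)$. Your alternative route is already self-sufficient. Since $E_0=q(\text{midpoint})^2/4$, Proposition~\ref{prop:r11-sharp-tail4} gives $E_0\leq(16+o(1))e^{-\sqrt2\ell}$, which is the only direction the obstruction needs. The reverse inequality comes from Corollary~\ref{cor:r13-master-consequences}, so the matching subsolution is unnecessary.
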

\begin{proof}
Let $u_A$ be the one-dimensional periodic solution of amplitude
$A\uparrow1$, with $u_A(0)=0$, $u_A'(0)>0$, extended horizontally.
Put $\beta=1-A^2$, $q_A=1-u_A^2$. The conserved ODE energy gives
\[
 (u_A')^2=\tfrac12(q_A^2-\beta^2),\quad
 D_A=\beta^2/q_A^2,\quad
 I_{P,A}=\frac{\beta^2}{\sigma}\int_{-P}^Pq_A(s)\,ds.
\]
The nearest zero on each side is at distance
\[
 \ell_A=2\sqrt2\int_0^A\frac{du}{\sqrt{(1-u^2)^2-\beta^2}}
 =\frac{2\sqrt2}{\sqrt{1+\beta}}J(\lambda),\qquad
 \lambda^2=\frac{2\beta}{1+\beta},
\]
where $u=A\sin\theta$, $x=\cos\theta$ give
\[
 J(\lambda)=\int_0^1
 \frac{dx}{\sqrt{1-x^2}\sqrt{\lambda^2+(1-\lambda^2)x^2}}
 =\log(4/\lambda)+o(1).
\]
To verify the asymptotic formula, subtract $\operatorname{arsinh}(1/\lambda)$.
Replacing the second denominator by $\sqrt{\lambda^2+x^2}$ gives an
error $O(\lambda^2|\log\lambda|)$. Dominated convergence then leaves
$\int_0^1[(1-x^2)^{-1/2}-1]dx/x=\log2$.
Thus $\ell_A=\sqrt2\log(8/\beta)+o(1)$ and
$\mu_A=e^{-\sqrt2\ell_A}\sim\beta^2/64$. Since $u_A\to g$
on fixed intervals,
\[
 \frac{I_{P,A}}{2\mu_A}\longrightarrow
 \frac{32}{\sigma}\int_{-P}^PG(s)\,ds
 =48\tanh(P/\sqrt2).
\]
All zero sheets have defect $\beta^2$ and curvature zero.
For $N=\beta^{-2/\omega}$ they satisfy the clean hypothesis and
$\ell_A\leq C_0\log N$ for any fixed $C_0>\omega/\sqrt2$.
The displayed limit proves every claimed obstruction; the preceding
corollary supplies the matching lower bounds.
\end{proof}

\vskip0.12in

\begin{remark}
\label{rem:r12-additive-network-interface}
Each selected first-adjacent edge contributes one weight, with the same base,
$I_P$ and normal interval. An edge named twice is counted once.
Only buffered sides with uncapped gaps contribute an exponential.
The theorem applies across the physical core; later stability tests
include a cutoff in their support. Its nonnegative density
$(h_*+\varepsilon)I_P-24\sqrt2\min\{\mu_+,\mu_-\}$ can be
transported by nonnegative comparison; projection, Jacobian,
moving-mask and higher-moment errors still require separate estimates.
\end{remark}

\vskip0.12in

\part{Stability and the energy estimate}
\label{part:s12-stability}
In this part we use exterior stability to obtain large-scale regularity
and a weighted curvature and defect estimate. We then construct one
amplitude for the reach and ambient stability inequalities. Comparing
the resulting estimates proves the energy-growth criterion used in
Part~\ref{part:s12-applications}.

\vskip0.12in

\section{\bf Large-scale geometry of exterior-stable interfaces}
\label{sec:s12-geometry}
We first obtain a bounded density on a scale slightly smaller than the
reach. Exterior stability and local sheeting then give an isolated graph
on this scale. After estimating the derivatives of its mean curvature,
we derive weak differential inequalities for the reach. Stability is used
only in balls centered at the base point and disjoint from the physical core. No integrated moment
estimate or natural growth bound is assumed.

\vskip0.12in

\subsection{Density and regularity below the reach scale}
\label{sec:r11-refined-reach}\label{sec:r9-refined-reach}\label{sec:r7-weak-capped-reach}
Throughout, $n=m+1$, $3\leq m\leq8$, and the actual reach and fixed
penalty are those of \eqref{eq:r4-actual-reach}. Write
\[
 M_r(z)=r^{-m}E(u;B_r(z)),\qquad K(r)=1+\sup_xM_r(x).
\]
Modica monotonicity and bounded energy density give $1\leq K(r)\leq1+Cr$
for $r\geq1$. The stress-tensor argument follows \cite[Section~7.2]{OpenAI2026}.
In the recurrence below, we keep the density at the larger radius.

\vskip0.12in

\subsubsection{A density bound below the reach scale}
\label{sec:near-reach-density-window}

\begin{theorem}
\label{thm:actual-near-reach-density-window}
There are $C,K_0,d_*<\infty$, depending only on $m$ and the fixed
penalty $\Gamma$, such that every bounded entire Allen--Cahn
solution and every regular zero $z$ with $d=d(z)\geq d_*$ finite obey
\begin{equation}\label{eq:actual-near-reach-density-window}
 h=d\exp\{-C\sqrt{\log(2+M_d(z))}\},\qquad M_h(z)\leq K_0.
\end{equation}
More precisely, there are fixed $B>1$, $c>0$ and an integer $j_*$
such that
\begin{equation}\label{eq:density-window-gaussian-relaxation}
 M_{dB^{-j}}(z)\leq C\{1+M_d(z)e^{-cj^2}\},
 \qquad j_*\leq j\leq\tfrac13\log_Bd.
\end{equation}
The density conclusion also holds with the smaller radius defined
using $K(d)+1$ in place of $2+M_d(z)$.
In particular $h=d^{1-o(1)}$ and $h\to\infty$, uniformly as
$d\to\infty$.  The constant $C$ can be increased arbitrarily without
invalidating the density conclusion.  If $d(z)=\infty$, the solution
is planar and all its normalized densities have a fixed universal bound.
\end{theorem}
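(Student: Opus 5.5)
We will derive the density bound from a one-step recurrence at geometric scales $r_j=dB^{-j}$, each step combining Modica monotonicity with the zero-free geometry supplied by the reach. Stability is not used; only the penalized reach and the stress tensor enter. First we reduce to the recurrence \eqref{eq:density-window-gaussian-relaxation}. Granting it, choose $j$ of order $\sqrt{\log(2+M_d(z))}/\sqrt c$, so that $M_d(z)e^{-cj^2}\leq1$. Then $r_j=d\exp\{-C\sqrt{\log(2+M_d)}\}$ with $C=\log B/\sqrt c$ enlarged. If this $j$ lies below $j_*$, the bound follows directly from monotonicity, since $M_{r}\leq B^{mj_*}M_d$ would be too crude; in that case we use instead that $M_d$ is bounded, so we simply take $j=j_*$ and get $M_{r_{j_*}}\leq C$. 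The upper limit $j\leq\frac13\log_Bd$ holds because $M_d\leq Cd$ forces $j\lesssim\sqrt{\log d}\ll\log d$. Consequently $h=d^{1-o(1)}$. Enlarging $C$ only shrinks $h$, and monotonicity of $M_r(z)$ in $r$ preserves the bound, since the radius $s\leq h$ gives $M_s\leq M_h$. The variant with $K(d)+1$ follows because $M_d(z)\leq K(d)$. In the planar case $d=\infty$, the solution is a heteroclinic by Proposition~\ref{prop:r4-actual-contact-regularity}, and slicing as in Corollary~\ref{cor:s1-routine-consequences}(iii) gives a universal density bound.

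The core step is the recurrence, which we take in the form $M_{r/B}(z)\leq C_1+\theta_j M_{r}(z)$ with $\theta_j\leq e^{-c'j}$. Iterating such a bound gives $\prod_{i\leq j}e^{-c'i}=e^{-c'j^2/2}$, which is the Gaussian relaxation. We intend to obtain the improving factor from the zero-free lens regions $\mathcal O_y$. By \eqref{eq:r4-chord-bounds}, every zero $y\in B_r(z)$ with $r\ll d$ lies in a slab $|\nu(z)\cdot(y-z)|\leq C_\Gamma r^2/d$. At scale $r_j$ this slab has relative thickness $r_j/d=B^{-j}$. The plan is to combine this with Theorem~\ref{lem:near-critical-phase-decay}. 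Away from the slab, $e_u$ decays like $e^{-2(\sqrt2-\varepsilon)\operatorname{dist}}$. Inside the slab, the energy counts stacked sheets, and ball inclusion together with Lemma~\ref{lem:r9-zero-baseline} bounds their number. At this point we keep the larger-radius density $M_r$: the sheets in the slab over $B_{r/B}$ number at most the total over $B_r$ rescaled.

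The genuine improvement must come from the stress tensor. We integrate the identity $\operatorname{div}(e_u\delta_{ij}-u_iu_j)=0$ against the linear field $x\mapsto(\nu\cdot(x-z))\nu$ on a cylinder adapted to the slab. In a thin slab the tangential flux $\int(e_u-u_\nu^2)$ is controlled by the normal-excess terms, which have size $O(r^2/d)\,M_r r^{m-1}$. For a multi-sheet configuration, the deficit $\int(W-\tfrac12|\nabla u|^2)$ in \eqref{eq:audited-monotonicity} combines with the exponential tail between sheets. The step here is \cite[Section~7.2]{OpenAI2026}, whose form we must reproduce. The aim is to show that passing from radius $r$ to $r/B$ multiplies the excess density above a universal constant by a factor comparable to $r/d=B^{-j}$.

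We expect this factor extraction to be the main obstacle. It requires showing that the number of sheets inside a thin slab cannot remain large across consecutive scales unless the slab is tilted. Tilt is excluded by the uniform-in-direction chord bound \eqref{eq:r4-chord-bounds}, applied at every zero in the ball rather than only at $z$. This transfer needs the contact regularity of Proposition~\ref{prop:r4-actual-contact-regularity} to hold for the nearby zeros, which is the reason for the threshold $d_*$. Our first attempt will be to average the monotonicity identity in $r$ over $[r/B,r]$ and bound the radial term $\int|\partial_ru|^2$ by the normal component. We then use the slab geometry to show that $|\partial_ru|^2\geq(1-CB^{-j})|\nabla u|^2$ fails on most of the mass, which produces the factor $e^{-c'j}$.
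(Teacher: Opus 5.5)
There is a genuine gap at the step you yourself flag as the main obstacle, and the mechanism you propose for it runs the wrong way. The ingredients you list are the ones the paper uses: the slab from the chord bound at $z$, phase decay off a $\sqrt r$-thickening of it, the stress tensor tested with $x_ne_n\zeta^2$, and the monotonicity identity \eqref{eq:audited-monotonicity} averaged over $[R/B,R]$, with $|\partial_ru|^2$ and the discrepancy bounded by $q^2\{1-p_n^2+(x_n/R)^2\}$. What they prove, however, is that the density is nearly \emph{constant} across scales:
\[
 0\le M_R-M_{R/B}\le C\{(R/d)^2+R^{-1}\}M_{BR}\qquad(R_{\rm b}\le R\le c_\Gamma d).
\]
That is a lower bound for $M_{R/B}$, not a contraction. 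Flatness on $[r/B,Br]$ is compatible with $M_{r/B}\approx M_r$ with both large. So no analysis at scale $r$ alone can give $M_{r/B}\le C_1+\theta_jM_r$.

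Your closing idea, that $|\partial_ru|^2\ge(1-CB^{-j})|\nabla u|^2$ fails on most of the mass, again only says the radial term, and hence the increment, is small. The sheet-counting and tilt discussion is also not needed. The chord bound at $z$ alone puts every zero of $B_{3r}(z)$ in the slab, with no regularity or contact analysis at other zeros. Moreover, Lemma~\ref{lem:r9-zero-baseline} bounds the number of sheets only in terms of energy, which is circular here.

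The missing idea is an anchor at a fixed small scale, followed by telescoping upward. Put $R_j=dB^{-j}$, $k_j=M_{R_j}$ and $b_j=C(B^{-2j}+B^j/d)$. Let $N$ be the last index with $R_N\ge R_{\rm f}$, for a fixed large $R_{\rm f}$. Since $e_u\le1/2$, trivially $k_N\le CBR_{\rm f}$.

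Telescope the delayed recurrence $k_i-k_{i+1}\le b_ik_{i-1}$ from $i=j$ to $N-1$. For $i>j$ use $k_{i-1}\le k_j$, together with $\sum_i b_i\le1/4$; this requires a fixed lower index and large $R_{\rm f}$. This gives $k_j\le2k_N+2b_jk_{j-1}$. That is exactly your one-step form, with $C_1=2k_N$ and $\theta_j=2b_j$, and only now does the iteration yield $\prod2b_i\le Ce^{-cj^2}$.

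Note also that $b_j$ contains the term $R^{-1}=B^j/d$. It comes from the $\sqrt R$-neighbourhood of the slab needed to apply phase decay, and your flux estimate $O(r^2/d)M_rr^{m-1}$ omits it. This term is what restricts \eqref{eq:density-window-gaussian-relaxation} to $j\le\frac13\log_Bd$, where $B^j/d\le B^{-2j}$; that upper limit is not automatic. Your reductions from the relaxation estimate to the choice of $h$, the $K(d)+1$ variant, and the planar case are fine.
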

\begin{proof}
Translate $z$ to zero and rotate $\nu(z)$ to $e_n$. By the
chord bound, without any regularity assumption on the other zeros,
\begin{equation}\label{eq:density-window-slab}
 \Sigma\cap B_{3r}\subset\{|x_n|\leq C_\Gamma r^2/d\}.
\end{equation}
Put $q=1-u^2$, $\boldsymbol p=\nabla U$ and
$\mathbb S=e_uI-\nabla u\otimes\nabla u$. Then $\operatorname{div}\mathbb S=0$ and
\[
 \mathbb S_{nn}=\tfrac14q^2(1+|\boldsymbol p|^2-2p_n^2)
 \geq\tfrac14q^2(1-p_n^2),\qquad
 |\mathbb Se_n|^2\leq Cq^2\mathbb S_{nn}.
\]
Testing with $x_ne_n\zeta^2$, for a cutoff from $B_r$ to $B_{2r}$,
and applying Young gives
\begin{equation}\label{eq:density-window-stress}
 \int_{B_r}q^2(1-p_n^2)\leq C\int_{B_{2r}}q^2(x_n/r)^2.
\end{equation}
Outside $|x_n|\leq C_\Gamma r^2/d+\sqrt r$ in $B_{2r}$, the slab
puts zeros in $B_{3r}$ at vertical distance at least $\sqrt r$;
other zeros are at distance at least $r$. Phase decay therefore gives
\begin{equation}\label{eq:density-window-height}
 \int_{B_{2r}}q^2(x_n/r)^2
 \leq C\{(r/d)^2+r^{-1}\}E(B_{2r})+Cr^{m+1}e^{-c\sqrt r}.
\end{equation}
In the integrated monotonicity formula on $[R/B,R]$, the discrepancy
is at most $q^2(1-p_n^2)/4$, and
$|\partial_ru|^2\leq C_Bq^2\{1-p_n^2+(x_n/R)^2\}$.
Thus, for one fixed sufficiently large $B\geq16$, the preceding two
bounds on a fixed larger ball imply
\begin{equation}\label{eq:density-window-delayed-step}
 0\leq M_R-M_{R/B}\leq
 C\{(R/d)^2+R^{-1}\}M_{BR}
 \quad(R_{\rm b}\leq R\leq c_\Gamma d).
\end{equation}
We have absorbed the remainder $CR e^{-c\sqrt R}$ into
$CR^{-1}M_{BR}$. Indeed, the universal Lipschitz bound at a zero gives
a positive lower bound for the potential energy on a fixed ball. By
monotonicity, $M_r\geq m_0>0$ for $r\geq1$.

\vskip0.12in

Set $R_j=dB^{-j}$, $k_j=M_{R_j}$ and
$b_j=C(B^{-2j}+B^j/d)$. Choose fixed $a,R_{\rm f}$ large, and let
$N$ be the largest index with $R_N\geq R_{\rm f}$. Then
\[
 k_j-k_{j+1}\leq b_jk_{j-1}\quad(a\leq j<N),\qquad b_j\leq1/8,
\]
where $BR_j\leq d$ and $R_j\leq c_\Gamma d$ at every used index.
Choose $a$ and $R_{\rm f}$ so that the two geometric series are
sufficiently small. Then
\begin{equation}\label{eq:density-window-endpoint}
 0\leq k_j\leq M_d,\qquad k_N\leq CBR_{\rm f},\qquad
 \sum_{i=a-1}^N b_i\leq1/4.
\end{equation}
Since $R_N\in[R_{\rm f},BR_{\rm f})$, one has
$N>\frac13\log_Bd$ for all sufficiently large $d$.
Telescoping the delayed recurrence and using monotonicity gives
\[
 k_j\leq k_N+b_jk_{j-1}
             +k_j\sum_{i=j+1}^{N-1}b_i
 \quad(a\leq j<N).
\]
Hence $k_j\leq2k_N+2b_jk_{j-1}$. Iterating this first-order inequality,
with $2b_j\leq1/4$, yields
\[
 k_j\leq\tfrac83 k_N+M_dq_j,\qquad q_j=\prod_{i=a}^j2b_i.
\]
For $a\leq i\leq j\leq\frac13\log_Bd$, $b_i\leq2CB^{-2i}$;
increasing the fixed $a$ gives
\begin{equation}\label{eq:density-window-middle-b}
 q_j\leq\prod_{i=a}^jB^{-i}\leq Ce^{-cj^2}.
\end{equation}
This proves \eqref{eq:density-window-gaussian-relaxation}.
Take $J=\lceil A\sqrt{\log(2+M_d)}\rceil$ with fixed $A$ large;
then $J\geq a$, $M_de^{-cJ^2}\leq1$, and, because $M_d\leq Cd$,
$J\leq\frac13\log_Bd<N$ for large $d$. Choosing $C$ so that
$h\leq dB^{-J}$ proves $M_h\leq K_0$ by monotonicity.
The same monotonicity permits increasing $C$ or replacing $2+M_d$
by $K(d)+1$, while $h\geq d e^{-C\sqrt{\log(2+Cd)}}=d^{1-o(1)}$.

\vskip0.12in

If $d=\infty$, the chord bound puts all zeros in $\{x_n=0\}$.
Arbitrarily large zero-free balls tangent at $0$ give $D(0)=0$
by the tangent phase-ball lemma. Relative-defect rigidity makes $U$
affine with unit gradient; direct integration gives the planar density bound.
\end{proof}

\vskip0.12in

\subsubsection{A truncation of the reach near the fixed ball}
\label{sec:physical-core-capped-reach}

Fix $0<\epsilon\leq1/8$, and put
\[
 \rho_o(z)=\sqrt{1+|z|^2},\qquad
 r_\epsilon(z)=\epsilon\rho_o(z),\qquad
 \widehat d(z)=\min\{d(z),r_\epsilon(z)\}.
\]
Here $\rho_o$ is centered at the fixed origin; $\widehat d$
is a function on the entire regular zero set.

\vskip0.12in

\begin{lemma}
\label{lem:physical-cap-admissibility}
Fix $A\geq1$ and $\eta>0$, and take
$0<\epsilon\leq1/(8A)$. There is $R_*=R_*(R_0,\eta)$ such that,
for $R\geq R_*$, every base point $z$ with $\rho_o(z)\geq\eta R$ and
$d(z)<r_\epsilon(z)$ satisfies
\[
 B_{Ad(z)}(z)\cap\overline B_{R_0}=\varnothing.
\]
Its density window therefore lies in a stable ball. The cap branch
is treated by the explicit calculus below.

\vskip0.12in

For an actual active plateau pair based at such a point,
\begin{equation}\label{eq:physical-cap-receiving-order}
 \widehat d(y)\leq d(y)\leq d(z)=\widehat d(z).
\end{equation}
The signed positive-foot area formula applies with separate spatial
weights at the base point $z$ and the receiving point $y$.
\end{lemma}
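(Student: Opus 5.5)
The first assertion is elementary geometry and uses only the triangle inequality. On the cap-inactive branch, $d(z)<r_\epsilon(z)=\epsilon\rho_o(z)$. For any $x\in B_{Ad(z)}(z)$ we get
\[
 |x|\geq|z|-Ad(z)\geq|z|-A\epsilon\rho_o(z)\geq|z|-\rho_o(z)/8 .
\]
Since $\rho_o(z)\leq1+|z|$, this gives $|x|\geq\tfrac78|z|-\tfrac18$. The hypothesis $\rho_o(z)\geq\eta R$ gives $|z|\geq\eta R-1$. Hence $|x|\geq\tfrac78(\eta R-1)-\tfrac18>R_0$ as soon as $R\geq R_*:=(8R_0+8)/(7\eta)+2/\eta$, say. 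This $R_*$ depends only on $R_0$ and $\eta$, as claimed. The ball $B_{Ad(z)}(z)$ therefore avoids $\overline B_{R_0}$. The density window of Theorem~\ref{thm:actual-near-reach-density-window} has radius $h\leq d(z)\leq Ad(z)$, so it lies in a ball where $Q_u\geq0$ for tests supported there. No analytic input is needed beyond this.

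For \eqref{eq:physical-cap-receiving-order}, I would argue as follows. The first inequality $\widehat d(y)\leq d(y)$ is the definition of the minimum. The middle inequality $d(y)\leq d(z)$ is the reverse-chord argument of Theorem~\ref{thm:r4-positive-foot-transfer}: the plateau chord is normal at $y$, and $z$ is a competitor for $y$ with objective $G_1\ell=d(z)$. The last equality holds because we are on the branch $d(z)<r_\epsilon(z)$. Regularity of $y$ needs $d(z)$ above the threshold of Proposition~\ref{prop:r4-actual-contact-regularity}. I would ensure this by noting that the statement is applied only at large reach, or else by shrinking the relevant class; otherwise the active pair is handled by the finite-reach bound. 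I would also record that $|y-z|=d(z)/G_1\leq d(z)$, so $y\in B_{d(z)}(z)$, which is again outside the core by the first part with $A\geq1$.

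For the area-formula assertion, the point to check is that \eqref{eq:r4-positive-foot-area} is a pointwise change of variables along the selected pairs, so arbitrary nonnegative measurable $F(z,y)$ are allowed. I would take
\[
 F(z,y)=\chi_1(z)\chi_2(y)\,\mathbf 1_{\{d(z)<r_\epsilon(z),\ \rho_o(z)\geq\eta R\}}
\]
with independent weights $\chi_1,\chi_2$. The cap condition is Borel, because $d$ and $\rho_o$ are continuous on regular patches. Multiplicity one on $\{H_{N_y}>0\}$ is unaffected by restricting the base set. The Jacobian bounds \eqref{eq:r4-positive-foot-J} also carry over, using $\ell a(y)\leq C_\Gamma$ from \eqref{eq:r4-receiving-defect}.

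The only delicate point is that $\widehat d(y)$ might be strictly below $d(y)$ because of the cap at $y$. This does not affect the inequality chain, but it should be flagged for later weighted transfers: a weight evaluated through $\widehat d$ at the foot can be larger than one evaluated through $d$. Under a nonincreasing $W$, the ordering $W(\widehat d(z))\leq W(\widehat d(y))$ still follows from \eqref{eq:physical-cap-receiving-order}.
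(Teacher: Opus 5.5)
Your proposal is correct and follows the paper's argument. Admissibility comes from the triangle inequality with $Ad(z)<A\epsilon\rho_o(z)\leq\rho_o(z)/8$; the paper writes $Ad(z)<2A\epsilon|z|\leq|z|/4$ for large $|z|$. The chain \eqref{eq:physical-cap-receiving-order} comes from the reverse normal chord together with the fact that capping only decreases $d(y)$.

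Your explicit $R_*$, your note that regularity of $y$ needs the large-reach threshold, and your choice of $F=\chi_1(z)\chi_2(y)\mathbf1_{\{\cdots\}}$ in \eqref{eq:r4-positive-foot-area} only spell out details the paper leaves implicit. One small correction: when $G_1=1$, the foot $y$ lies in the closed ball $\overline B_{d(z)}(z)$ rather than the open one, but your lower bound for $|x|$ covers that case as well.
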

\begin{proof}
For large $|z|$, $Ad(z)<2A\epsilon|z|\leq|z|/4$; the guard makes
$|z|>2R_0$, proving admissibility. The reverse normal chord gives
$d(y)\leq d(z)$ on an actual plateau pair, and capping only decreases $d(y)$.
\end{proof}

\vskip0.12in

\subsubsection{Sheeting in stable balls}
\label{sec:r11-nearreach-flat-bridge}
\label{sec:guarded-near-reach-exterior}
\label{subsec:graph-defect-bootstrap}

\begin{proposition}
\label{prop:r11-nearreach-flat-bridge}
\label{cor:r11-density-flat-bridge}
\label{prop:actual-guarded-near-reach-isolation}
\label{cor:guarded-capped-near-reach-window}
\label{lem:actual-isolated-graph-defect-bootstrap}
Fix $K_0<\infty$ and $0<\epsilon_0\leq1/10$.
There exist $\theta_*,h_*,c_*,c>0$ and $C\geq1$, depending only on
$m,K_0,\epsilon_0$ and the fixed penalty $\Gamma$, with the following
property. For each integer $k\geq0$, the constant $C_k$ may additionally
depend on $k$. Let $z$ be a regular zero of a bounded
entire solution, with finite actual reach $d=d(z)$, and suppose
\begin{equation}\label{eq:r11-flat-bridge-hyp}
 Q_u\geq0\text{ on }C_c^1(B_{h_0}(z)),\qquad
 M_{h_0}(z)\leq K_0,\qquad h_0\geq h_*,\qquad
 h_0/d\leq\theta_*.
\end{equation}
In tangent coordinates at $z$, put $h=c_*h_0$.
The whole zero set in $\mathcal C_{4h}=B_{4h}^m\times(-4h,4h)$
is one regular graph $x_n=f(y)$, with
\begin{equation}\label{eq:r11-flat-bridge-graph}
 f(0)=0,\quad Df(0)=0,\quad |Df|\leq\epsilon_0,\qquad
 h\|D^2f\|_\infty+h^{3/2}[D^2f]_{C^{1/2}}\leq C.
\end{equation}
On a fixed smaller concentric portion,
\begin{equation}\label{eq:r11-flat-bridge-zero}
 |\nabla u|\geq c>0,
\end{equation}
\begin{equation}\label{eq:actual-graph-defect-bootstrap}
 d\geq ch,\qquad D\leq Ch^{-2},\qquad
 \|H\|_{C^k_{\rm physical}}\leq C_kh^{-2}
 \quad(k\text{ fixed}),
\end{equation}
\begin{equation}\label{eq:actual-graph-curvature-holder-bootstrap}
 |A|+h^{1/2}[A]_{C^{1/2}}\leq C/h.
\end{equation}
\end{proposition}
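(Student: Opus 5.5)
We argue by contradiction and compactness at scale $h_0$, with the local sheeting theorem of \cite{LiuSheeting2026} as the essential input. Rescale by $\varepsilon=1/h_0$: $u_\varepsilon(x)=u(z+h_0x)$ is a stable solution of $\varepsilon^2\Delta u_\varepsilon=W'(u_\varepsilon)$ on $B_1$, with $\varepsilon|\nabla u_\varepsilon|^2dx$ of mass at most $CK_0$ by $M_{h_0}(z)\leq K_0$. Suppose the conclusion fails along a sequence with $h_0\to\infty$ and $h_0/d\to0$ (shrinking $\theta_*$, enlarging $h_*$). By \cite{HutchinsonTonegawa2000} a subsequence of the energy measures converges to an integral stationary varifold, and stability passes to the limit \cite{TonegawaWickramasekera2012}. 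The chord bound \eqref{eq:r4-chord-bounds} and the slab \eqref{eq:density-window-slab} put all zeros of $u_\varepsilon$ in $B_3$ into $\{|x_n|\leq C_\Gamma h_0/d\}\to\{x_n=0\}$. Phase decay (Theorem~\ref{lem:near-critical-phase-decay}) then shows that the limit is supported in the hyperplane. The constancy theorem gives $Q\sigma_1\mathcal H^{m}\lfloor\{x_n=0\}$, with $Q\geq1$ because $0$ is a zero carrying energy $\geq c$ at unit scale by Lemma~\ref{lem:r9-zero-baseline}. The stress-tensor estimate \eqref{eq:density-window-stress}--\eqref{eq:density-window-height} shows the equipartitioned measure $\varepsilon|\nabla u_\varepsilon|^2$ has the same planar limit.

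With this hypothesis verified, sheeting yields exactly $Q$ ordered graphs in $B_{1/2}$ with uniform $C^{2,1/2}$ bounds. The main obstacle is to show $Q=1$, and here the reach is used. The base graph through $0$ has $Df(0)=0$. Any other graph lies at height $o(1)$ in rescaled units, hence at physical distance $o(h_0)$ with a chord nearly normal to $\nu(z)$, since the graphs are $C^2$-close to horizontal. That chord lies in the plateau of $\Gamma$ and has objective $G_1\cdot o(h_0)<d$, contradicting the definition of $d(z)$ once $h_0/d\leq\theta_*$. So $Q=1$. Choosing $c_*$ small, the whole zero set in $\mathcal C_{4h}$ is then one graph with $|Df|\leq\epsilon_0$, and the scaled $C^{2,1/2}$ bounds give \eqref{eq:r11-flat-bridge-graph} and \eqref{eq:actual-graph-curvature-holder-bootstrap}. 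The multiplicity-one convergence also gives $u_\varepsilon\to g(x_n/\varepsilon)$-type profile control near the graph, and hence \eqref{eq:r11-flat-bridge-zero}. For this step we use the fact that each zero on an isolated sheet has the $1$D profile at unit scale.

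For \eqref{eq:actual-graph-defect-bootstrap}, we work at each zero $w$ of the smaller portion. Isolation up to height $4h$ and the chord bound at scale $h$ give $d(w)\geq ch$; at $w=z$ this is immediate from $h\leq\theta_*d$. The tangent phase balls of radius $\asymp h$ at $w$ then yield $D(w)\leq C/h$ from Theorem~\ref{lem:r4-tangent-phase-ball}, which is too weak. To bootstrap to $h^{-2}$, we compare with Theorem~\ref{thm:s2-pointwise-master}: $D\leq C(J_\lambda+E)$. Here $E\leq e^{-ch}$ because no other sheet lies within $ch$, and $J_\lambda$ is at most $Ch^{-2}$ from $|A|\leq C/h$ on the graph window plus $e^{-\lambda ch}$ from outside. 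Singular zeros are excluded from the window, since $D$ is small there. This gives $D\leq Ch^{-2}$. Finally, $H=\partial_\nu D/(2(1-D))$ with the relative jets \eqref{eq:r3-fixed-defect-jets} gives $\|H\|_{C^k}\leq C_kD\leq C_kh^{-2}$. The delicate point is ensuring the envelope $J_\lambda$ does not see far-away bad zeros; this is why $\lambda$ is fixed first and $h_*$ is taken large afterwards.
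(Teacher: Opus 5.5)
Your proposal is correct and follows the paper's proof essentially step for step. The paper also uses rescaled Hutchinson--Tonegawa compactness with the chord slab and phase decay to obtain a planar integer-multiplicity limit, applies stable sheeting, and excludes $Q>1$ by the vertical competitor chord against $h_0/d\to0$. It then obtains $d\geq ch$, gets $D\leq C/h$ from tangent phase balls, uses the upper comparison $D\leq C(J_\lambda+E)$ to improve this to $h^{-2}$, and finishes with $H=\partial_\nu D/(2(1-D))$ and the relative jets.

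Two small points to tighten. First, the nonzero limit needs monotonicity in addition to the fixed-ball energy bound at $z$, since that bound alone scales like $h_0^{-m}$ in the rescaled picture. Second, the gradient bound is more directly read off, as in the paper, from $|\nabla u|^2=(1-D)/2$ at zeros once $D\leq C/h$ is known, rather than from a profile-convergence claim.
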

\begin{proof}
We argue by contradiction. If the graph conclusion did not hold uniformly,
we could choose $h_{0,j}\to\infty$, $h_{0,j}/d_j\to0$ and counterexamples
for the candidate graph constants $C=j$.
After rotating coordinates so that $\nu(z_j)=e_n$, set $v_j(x)=u_j(z_j+h_{0,j}x)$ and
$\varepsilon_j=h_{0,j}^{-1}$. These solutions are stable on $B_1$,
$|v_j|\leq1$, $\varepsilon_j^2\Delta v_j=v_j^3-v_j$, and their full
$\varepsilon_j$-energies are bounded by $K_0$. The chord estimate gives
\begin{equation}\label{eq:r11-flat-bridge-slab}
 \{v_j=0\}\cap B_1\subset
 \{|x_n|\leq C_\Gamma h_{0,j}/d_j\}.
\end{equation}
Hutchinson--Tonegawa compactness and equipartition
\cite[Theorem~1]{HutchinsonTonegawa2000}, with zero Lagrange multiplier,
give a stationary integral $m$-varifold. Phase decay away from the slab
puts its support on the plane. The energy lower bound on fixed-radius balls centered at $z_j$ and
monotonicity make the limit nonzero. Tangential stationarity makes
its integer multiplicity constant on the plane disk, so for some $Q\geq1$,
\begin{equation}\label{eq:r11-flat-bridge-planar-measure}
 \varepsilon_j|Dv_j|^2\,dx\rightharpoonup
 Q\sigma_0\mathcal H^m\lfloor\{x_n=0\},\qquad
 \sigma_0=\int(g')^2=2\sqrt2/3.
\end{equation}
Equipartition gives this normalization because
$\int_{\mathbb R}\tfrac12(g')^2=\sigma_0/2$ and the potential
energy has the same limit. Thus the hypotheses of local stable planar sheeting
\cite[Theorem~1.1]{LiuSheeting2026} hold in every dimension considered.
It gives $Q$ ordered zero graphs on
$B_{1/2}^m\times(-1/3,1/3)$ with uniform interior $C^{2,1/2}$ bounds;
the slab makes their slopes tend to zero. If $Q>1$, another graph meets
the normal line through $z_j$ at an actual competitor of distance at most
$2h_{0,j}/3$, giving $d_j\leq2G_1h_{0,j}/3$, a contradiction.
Hence $Q=1$, and rescaling proves the graph and curvature bounds,
with $c_*=1/32$ and fixed inner margins.

\vskip0.12in

On the inner graph, zero-free tangent phase balls and Hopf give regular
zeros. The $C^2$ bound yields
$|\nu(w)\cdot(v-w)|\leq C|v-w|^2/h$ for nearby points.
Since $\Gamma(s)\geq1/s$, their reach objectives are at least $ch$;
other zeros are at distance at least $ch$ by the margin. Thus $d(w)\geq ch$.
The infinite-reach case is planar. Otherwise
Proposition~\ref{prop:r4-actual-contact-regularity} first gives
$D\leq C/h$, so $|\nabla u|^2=(1-D)/2\geq c$ and all inner zeros
are good for threshold $1/4$ when $h$ is large. Split the comparison
envelope at distance $c_1h$: local curvature is $O(h^{-1})$ and modified
curvature is globally bounded, giving
\[
 J_\lambda\leq Ch^{-2}+Ce^{-c_1\lambda h},\qquad
 E=e^{-\sqrt2d/G_1}\leq e^{-ch}.
\]
Applying the upper comparison in Theorem~\ref{thm:r5-actual-two-sided},
which was proved independently of sheeting, improves this bound to
$D\leq Ch^{-2}$. Relative defect jets then give
$\|D\|_{C^k}\leq C_kh^{-2}$ on fixed balls, and
$H=\partial_\nu D/[2(1-D)]$, together with uniform implicit-graph
bounds, gives bounds for the derivatives of $H$ through each fixed order in unit graph charts. At each step we work on a smaller region with a fixed fractional margin.

\vskip0.12in

For the exterior-stable application, take $K_0$ from
Theorem~\ref{thm:actual-near-reach-density-window} and fix
$\eta>0$, $0<\epsilon\leq1/8$. Choose $C_{\rm den}$ at least as large
as its density-window coefficient and so that
$e^{-C_{\rm den}\sqrt{\log2}}\leq\theta_*$. Set
\[
 h_0=d\exp[-C_{\rm den}\sqrt{\log(K(d)+1)}].
\]
At a base point $z$ with $\rho_o(z)\geq\eta R$ and
$d<\epsilon\rho_o(z)$, Lemma~\ref{lem:physical-cap-admissibility}
places $B_d(z)$ outside the core for $R\geq R_*(R_0,\eta)$.
The density-window theorem gives $M_{h_0}(z)\leq K_0$, while
$K(d)\leq1+C_nd$ gives $h_0=d^{1-o(1)}\to\infty$ as $d\to\infty$.
Thus \eqref{eq:r11-flat-bridge-hyp} holds for $d\geq d_*$ after
increasing $d_*$ in terms of the fixed local data. Since $h=c_*h_0$,
the graph radius also satisfies $h=d^{1-o(1)}$.
\end{proof}

\vskip0.12in

\subsubsection{Mean curvature of an isolated interface}
\label{subsec:actual-parity-jets}

For the quartic potential, the Darboux factorization reduces the normal
parity argument in \cite[Section~7.4]{OpenAI2026} to an equation with a
strictly positive potential. We use it to improve the mean-curvature
derivative estimates.

\vskip0.12in

\begin{theorem}
\label{thm:actual-isolated-parity-jets}
Let $u$ be a bounded entire Allen--Cahn solution whose whole zero
set in $\mathcal C_{4h}$ is one regular graph with small slope.
Assume on fixed inner portions the curvature H\"older bound
\eqref{eq:actual-graph-curvature-holder-bootstrap} and the $D,H$
bounds in \eqref{eq:actual-graph-defect-bootstrap}, with constants
independent of $h$.  In the guarded case,
Proposition~\ref{prop:r11-nearreach-flat-bridge} supplies these local
hypotheses.  For every fixed
$0<\gamma<1/2$, there are $h_*,c,C>0$, depending only on $m,\gamma$,
the fixed geometric bounds and $\Gamma$, such that for $h\geq h_*$,
on a fixed smaller concentric graph portion,
\begin{equation}\label{eq:actual-isolated-parity-jets}
 |\nabla_\Sigma H|+|\nabla_\Sigma^2H|
       \leq Ch^{-3-\gamma},\qquad
 |\nabla_\Sigma A|\leq Ch^{-2}.
\end{equation}
For every fixed $\varepsilon_1>0$, the stronger estimate
$|H|\leq C_{\varepsilon_1}h^{-3+\varepsilon_1}$ holds for
$h\geq h_{\varepsilon_1}$; these two constants may additionally
depend on $\varepsilon_1$.
\end{theorem}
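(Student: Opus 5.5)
Throughout, let $F=G(s)D$ in Fermi coordinates $(y,s)$ over the isolated graph, as in the proof of Theorem~\ref{lem:r4-tangent-phase-ball}. The plan is to reduce the estimates on $H$ to bounds on the odd part of $F$ in the normal variable. By the zero-level identity $H=\partial_\nu D/[2(1-D)]$ and $G'(0)=0$, the mean curvature and its tangential derivatives are determined by $\partial_sF(y,0)$. Lemma~\ref{lem:r11-local-jets} and the hypotheses give $D=O(h^{-2})$ on the inner portion, and the relative jets give $\|F\|_{C^k}=O_k(h^{-2})$ on unit balls.

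The first step is to write the exact defect equation in Fermi coordinates and conjugate by $G$. As in \eqref{eq:r5-even-source}, this gives
\[
 (-\Delta_y-\partial_s^2+2-G)F=2G|A(y)|^2+\mathcal E .
\]
The error $\mathcal E$ comes from the metric $I+sA$, the drift replacement $2\sqrt2u\boldsymbol p\to2\sqrt2g\,e_n$, and the $s$-dependence of $|\nabla^2U|^2$. Since $|A|\leq C/h$ and $D\leq Ch^{-2}$, each of these costs one extra factor $h^{-1}$ (times $1+|s|$). So $\mathcal E=O((1+|s|)^Ch^{-3})$ with its derivatives, on normal depths $|s|\leq\alpha\log h$. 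The main source $2G|A(y)|^2$ is even in $s$.

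The second step decomposes $F=F_{\rm ev}+F_{\rm odd}$ by reflection $s\mapsto-s$. The even source produces nothing in the odd part. Hence $F_{\rm odd}$ solves the same operator, with source equal to the odd part of $\mathcal E$ plus a commutator term. That commutator is $O(h^{-3})$, because the operator is even up to curvature corrections. This is where the Darboux factorization \eqref{eq:s7-darboux-factorization} is used: $L_0=\mathcal D\mathcal D^*$ has the strictly positive potential $2-G\geq1$. Consequently odd solutions on a slab $|s|\leq P$, $|y|\leq h/2$, with zero trace at $s=0$, are controlled by the source plus boundary data damped by $e^{-P}$ and $e^{-c h}$. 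The boundary data are only $O(h^{-2})$, so take $P=\alpha\log h$. The top boundary then contributes $h^{-2}e^{-P}\leq h^{-2-\alpha}$, and the lateral boundary is exponentially small. This is the same mechanism as the improved upper trace in Theorem~\ref{lem:r4-tangent-phase-ball}. Here there is no phase-ball asymmetry, so we get two-sided control: $|\partial_sF_{\rm odd}(y,0)|\leq C h^{-2-\alpha}+C(\log h)^Ch^{-3}$.

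The third step bootstraps to reach $h^{-3+\varepsilon_1}$ for $H$. Differentiating the equation tangentially preserves its structure, since the coefficients are controlled by $|\nabla_\Sigma A|$. First obtain $|\nabla_\Sigma A|\leq Ch^{-2}$ from the curvature H\"older bound and Codazzi/Simons elliptic estimates on the graph (Schauder on scale $h$). Then $\nabla_y^kF_{\rm odd}$ satisfies the same odd problem with sources $O(h^{-3-\gamma})$, using the $C^{1/2}$ gain on $A$. The boundary term is improved by iterating: feed the improved bound for $F_{\rm odd}$ on the top face $s=P$ back in through relative jets along normal rays (Lemma~\ref{lem:r4-accurate-ray}). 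Each iteration gains a factor $h^{-\alpha}$ until the source floor $h^{-3}$ up to logarithms is reached. That gives $|H|\leq C_{\varepsilon_1}h^{-3+\varepsilon_1}$. For the derivatives, the tangential derivative of the even source $2G\nabla|A|^2$ remains even, so only derivatives of $\mathcal E$ enter; this yields $|\nabla_\Sigma H|+|\nabla_\Sigma^2H|\leq Ch^{-3-\gamma}$ with $\gamma<1/2$ limited by the H\"older exponent.

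I expect the main obstacle to be the precise accounting of the odd part of $\mathcal E$. One must check that every term violating parity carries at least $h^{-3}$ (and $h^{-3-\gamma}$ after a tangential derivative). This includes the $sH$ and $s^2|A|^2$ terms in the Jacobian and the drift correction $\sqrt2(u-g)$, which is only $O(h^{-2})$ in size but enters multiplied by $\partial D=O(h^{-2})$. I would first try to verify this through the transformed-variable identities $(\nabla^2U)\boldsymbol p=-\nabla D/2$ and $U=s+O(D)$.
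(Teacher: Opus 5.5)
Your mechanism is the paper's, written in different variables. Linearizing $D=1-2|\nabla u|^2/q^2$ in the profile error $\phi=u(X(y)+t\nu)-g(t)$ gives $F=GD\approx-2\sqrt2\,\mathcal D\phi$, so to leading order your $F$ is the Darboux variable $Z$ of Lemma~\ref{lem:s7-normal-darboux}. It solves the same positive operator $-\Delta+2-G$, the parity of the leading source is the same observation, and your trace $H=\partial_sF_{\rm odd}(y,0)/[2(1-D)]$ plays the role of \eqref{eq:s7-darboux-mean-trace}.

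The argument does not close, however, because nothing you cite controls the data on the top face $s=\pm P$ of a logarithmic slab.
\begin{itemize}
\item Relative growth (Lemma~\ref{lem:r4-near-critical-growth}) gives only $D(y,\pm P)\le Ce^{(2\sqrt2+\xi)P}h^{-2}$, i.e.\ $F\lesssim e^{(\sqrt2+\xi)P}h^{-2}$. This cancels even the optimal odd-slab damping $e^{-\sqrt2P}$, so there is no gain.
\item The trivial bound $F\le G$ (from $D\le1$) beats $h^{-2}$ only for $P>\log h/\sqrt2$. That is exactly where the accurate-ray control of Lemma~\ref{lem:r4-accurate-ray}, and hence your control of $\mathcal E$, ends.
\item The proposed iteration has no mechanism. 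Relative jets and accurate rays bound $F$ along normal rays in terms of $D$ at the base point, not in terms of $H$ or of the odd part. An improved $\partial_sF_{\rm odd}(y,0)$ therefore says nothing about $F(y,P)-F(y,-P)$.
\end{itemize}
The missing ingredient is an a~priori bound on a box of \emph{polynomial} normal size. The paper works with $\phi$ itself on $|t|\le R_h=h^\epsilon$. Phase decay makes $\phi=o(1)$ there and superpolynomially small at $|t|\asymp R_h$. Comparison with the positive supersolution $v_0$ of Lemma~\ref{lem:r5-sharp-barrier}, together with face barriers, then yields \eqref{eq:parity-profile-initial}. After that the normal cutoff errors are $O(h^Me^{-cR_h})$, and no iteration is needed.

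The derivative step also fails as written. You cannot take $|\nabla_\Sigma A|\le Ch^{-2}$ as a first step. Schauder at scale $h$ for $\nabla A$ (through Simons or the differentiated graph equation) needs H\"older control of $\nabla H$ at that scale, essentially $\nabla^2H=O(h^{-3})$. That is part of the conclusion; the hypothesis gives only $O(h^{-2})$. What the hypotheses do yield, by unit-scale Schauder applied to $f$ minus its osculating quadric, is $\|\partial_y^kA\|_{C^{0,1/2}}\le Ch^{-3/2}$, see \eqref{eq:parity-curvature-interpolation}. This is the real source of the $h^{-1/2}$ gain; the H\"older seminorm of $A$ is not itself a derivative bound. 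The bound $|\nabla_\Sigma A|\le Ch^{-2}$ comes only at the end, by rescaled Schauder once $hH$ is bounded in $C^{1,1}$.

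Moreover, parity gives odd sources of size $h^{-7/2+C\epsilon}$ after tangential differentiation only after a further step. Every horizontal derivative of the full profile must first be improved to $h^{-5/2+C\epsilon}$ on the whole box, as in \eqref{eq:parity-first-error}--\eqref{eq:parity-positive-profile-gain}, using $\partial_y|A|^2=O(h^{-5/2})$ and the positive linear estimate. Without it, terms such as $sA\,\partial_y^3F$ or $A\,\partial_y\partial_s^2D$ remain $O(h^{-3})$, and $h^{-3-\gamma}$ is out of reach.
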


\vskip0.12in

\begin{proof}
Appendix~\ref{app:isolated-parity} uses the Darboux factorization of
Lemma~\ref{lem:s7-normal-darboux} to remove the translational mode.
The resulting equation has a positive potential and controls the profile error.
The trace identity \eqref{eq:s7-darboux-mean-trace} estimates the required derivatives of $H$
from the even part of the error and its normal derivative. The inner portions are chosen with fixed fractional margins.
\end{proof}

\vskip0.12in

\subsection{Differential estimates at contact points}
We next compute the second variation for an off-diagonal contact.
Its special cases give the plateau and unmodified comparisons. For a
diagonal contact we choose the extension of a top eigenvector to optimize
the gradient term.

\vskip0.12in

\subsubsection{The second variation at distinct contact points}
\label{sec:r5-nonplateau}

Work at an Alexandrov point of $d$ with a regular active receiver,
automatic at large reach by Proposition~\ref{prop:r4-actual-contact-regularity}.

\begin{proposition}
\label{prop:r5-transition-exact}
At an Alexandrov point of $d$, let $y$ be a regular active
off-diagonal receiver. Choose $N=\pm\nu(z)$ so that
$N\cdot(y-z)>0$, and put
\[
 e=(y-z)/\ell,\quad s=N\cdot e>0,\quad b=e-sN,\quad
 c_1=\Gamma-s\Gamma',\quad Z=c_1e+\Gamma'N.
\]
The receiving inward normal is $N_y=-Z/|Z|$, and
\begin{equation}\label{eq:r5-transition-gradient}
 \nabla_\Sigma d=(-c_1\operatorname{Id}+\ell\Gamma'A_N)b.
\end{equation}
Set $h_Z=|Z|$, $\alpha_Z=Z\cdot N$, $t_b=|b|$ and
\[
 K_Z=\Gamma/h_Z-s,\qquad \delta_Z=1-\alpha_Z/h_Z.
\]
For $t_b>0$ let $v=b/t_b$.  Then
\begin{align}\label{eq:s7-offdiagonal-uniform}
 \Delta_\Sigma d\leq{}&h_ZH_{N_y}(y)+\alpha_ZH_N(z)
  +\ell\Gamma'b\cdot\nabla_\Sigma H_N-\ell\Gamma'sa^2\notag\\
 &+\frac{c_1K_Z^2}{\ell}-2\Gamma'\delta_Z A_N(v,v)
  +\ell\Gamma''t_b^2\left|A_Nv-\frac{K_Z}{\ell}v\right|^2.
\end{align}
At $t_b=0$ this is interpreted by continuity; the fixed penalty
is constant there, so the last three terms vanish.

\vskip0.12in

For the fixed penalty constructed in Lemma~\ref{lem:r7-small-penalty},
write
\[
 \mathfrak q=-s\Gamma'/\Gamma\in[0,1],\qquad
 r_\Gamma=\sqrt{s^2+\mathfrak q^2t_b^2}.
\]
The angular factors have the exact expressions
\begin{equation}\label{eq:s7-angular-factors}
 K_Z=\frac{s(1-\mathfrak q^2)t_b^2}{r_\Gamma(1+r_\Gamma)},
 \qquad
 \delta_Z=\frac{(1+\mathfrak q)(r_\Gamma+\mathfrak q)t_b^2}
                    {r_\Gamma(1+r_\Gamma)},
\end{equation}
and in particular
\begin{equation}\label{eq:s7-angular-bounds}
 0\leq K_Z\leq1-s,\qquad 1-s\leq\delta_Z\leq2t_b^2.
\end{equation}
The same angular parameter gives the uniform small-curvature bound
\begin{equation}\label{eq:s7-uniform-small-ad}
 ad\leq1\quad\Longrightarrow\quad |\nabla_\Sigma d|\geq\Gamma t_b.
\end{equation}
\end{proposition}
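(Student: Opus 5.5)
The plan is to compare $d$ with smooth upper barriers and read off the second-order terms. At the base point $z$, $d(z)=f(z,y)$ with
\[
 f(x,y')=|y'-x|\,\Gamma\bigl(N(x)\cdot(y'-x)/|y'-x|\bigr),
\]
where $N(x)=\pm\nu(x)$ is extended smoothly along $\Sigma$ near $z$. For any smooth map $x\mapsto Y(x)\in\Sigma$ with $Y(z)=y$, the function $\phi(x)=f(x,Y(x))$ is an upper-touching barrier for $d$ at $z$. By Theorem~\ref{lem:r4-reach-semiconcavity}, at an Alexandrov point we get $\nabla_\Sigma d=\nabla_\Sigma\phi(z)$ and $\Delta_\Sigma d\leq\Delta_\Sigma\phi(z)$. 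The work is therefore a first and second variation of $f$, followed by a good choice of $Y$.

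\textbf{First variation and the gradient formula.} Use $\partial_{y'}\ell=e$ and $\partial_{y'}s=(N-se)/\ell$. Then
\[
 \partial_{y'}f=\Gamma e+\Gamma'(N-se)=c_1e+\Gamma'N=Z.
\]
\begin{enumerate}
\item Activity of $y$ makes $y'\mapsto f(z,y')$ stationary along the regular receiving sheet, so $Z\perp T_y\Sigma$.
\item The zero-free region $\mathcal O_z$ of Proposition~\ref{prop:r4-actual-contact-regularity} fixes the sign, giving $N_y=-Z/|Z|$.
\item In $x$, the $\ell$- and $e$-dependence contributes $-Z$, and differentiating $N(x)$ contributes $\ell\Gamma'\,(dN)^{T}e=\ell\Gamma'A_Nb$, since only the tangential part $b$ of $e$ sees $A_N$.
\item The tangential part of $-Z$ is $-c_1b$. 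This gives \eqref{eq:r5-transition-gradient}, with no dependence on the $Y$-variation by stationarity.
\end{enumerate}

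\textbf{Second variation.} I will take geodesics $x(t)=\exp_z(tv_i)$ for an orthonormal frame $v_i$ of $T_z\Sigma$. For each one I choose a receiving curve $Y(t)$ with $Y'(0)=w_i\in T_y\Sigma$, and then sum over $i$. The terms separate as follows.
\begin{enumerate}
\item The normal acceleration of $Y$ pairs with $\partial_{y'}f=Z$. Summed over an orthonormal family of $w_i$, it gives $h_ZH_{N_y}(y)$ from the second fundamental form of the receiver.
\item The acceleration of the base geodesic, $-A_N(v,v)N$, pairs with $-Z$. This gives $\alpha_ZH_N(z)$.
\item The second derivative of $N$ is $\nabla_vA_N\,v-|A_Nv|^2N$. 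Traced with Codazzi, it gives $\ell\Gamma'b\cdot\nabla_\Sigma H_N-\ell\Gamma's\,a^2$.
\item What remains is the Euclidean Hessian of $f$ in $(x,y')$, evaluated on $(v_i,w_i)$, plus mixed terms with $A_N$.
\end{enumerate}
For the last group I choose $w_i=v_i$ for $v_i\perp b$, and a corrected multiple of $v=b/t_b$ in the $b$-direction, chosen to minimize. Parallel translation of the chord costs nothing for $v\perp b$ beyond curvature terms. The $b$-direction then carries the terms $c_1K_Z^2/\ell$, $-2\Gamma'\delta_ZA_N(v,v)$, and the $\Gamma''$ term, which is quadratic in the angular change $t_b(A_Nv-K_Zv/\ell)$. \emph{I expect this bookkeeping, namely choosing $w$ so that exactly these three remainders appear with the claimed signs, to be the main obstacle.} I would first verify it in the plateau case $\Gamma'=\Gamma''=0$, where it must reduce to the classical contact computation, and then add the $\Gamma'$ and $\Gamma''$ perturbations. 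At $t_b=0$ the chord lies in the plateau, so these terms vanish and continuity gives the stated interpretation.

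\textbf{Angular factors.} Using $s^2+t_b^2=1$ and $\mathfrak q=-s\Gamma'/\Gamma$, a direct computation gives $|Z|^2=\Gamma^2(s^2+\mathfrak q^2t_b^2)/s^2$, so $h_Z=\Gamma r_\Gamma/s$ and $\alpha_Z=Z\cdot N=\Gamma(1-\mathfrak q^{\,2}\ldots)$ up to the corresponding simplification. Rationalizing $\Gamma/h_Z-s$ and $1-\alpha_Z/h_Z$ yields \eqref{eq:s7-angular-factors}. The range $\mathfrak q\in[0,1]$ follows from $\Gamma'\leq0$ together with $\Gamma(s)\geq1/s$ and the penalty bounds in Lemma~\ref{lem:r7-small-penalty}; the bounds \eqref{eq:s7-angular-bounds} are then elementary, using $r_\Gamma\geq s$.

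\textbf{Small-curvature gradient bound.} From \eqref{eq:r5-transition-gradient},
\[
 |\nabla_\Sigma d|\geq(c_1-\ell|\Gamma'|a)\,t_b .
\]
Since $\ell=d/\Gamma$ and $ad\leq1$, we have $\ell|\Gamma'|a\leq|\Gamma'|/\Gamma$. Because $\Gamma\geq1/s$, this is at most $s|\Gamma'|$, and $c_1=\Gamma+s|\Gamma'|$. Hence $c_1-\ell|\Gamma'|a\geq\Gamma$, which proves \eqref{eq:s7-uniform-small-ad}.
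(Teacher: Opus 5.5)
Your framework matches the paper's, and several pieces are correct: the upper barrier $f(x,Y(x))$, the first variation $\partial_{y'}f=Z$ (which gives $N_y=-Z/|Z|$ and \eqref{eq:r5-transition-gradient}), the endpoint accelerations producing $h_ZH_{N_y}(y)$ and $\alpha_ZH_N(z)$, and $\Delta_\Sigma N=\nabla_\Sigma H_N-a^2N$ producing the Codazzi terms. Your rationalization of $K_Z,\delta_Z$ and your proof of \eqref{eq:s7-uniform-small-ad} via $c_1-\ell|\Gamma'|a\geq\Gamma$ are also correct. However, the step you postpone is the entire content of \eqref{eq:s7-offdiagonal-uniform}, and the choice you sketch for it would not work. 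By your own item~1, the receiving velocities must form an orthonormal frame of $T_y\Sigma=Z^\perp$; otherwise the receiving term is not $h_ZH_{N_y}(y)$. Since $A_{N_y}$ has no sign, you cannot rescale and minimize. A multiple of $v=b/t_b$ is never admissible, because $Z\cdot v=c_1t_b\neq0$. The missing observation is that
\[
 Z=(\Gamma s+t_b^2\Gamma')N+c_1t_bv\in\operatorname{span}\{N,v\}.
\]
Hence the minimal rotation $R$ of this plane taking $N$ to $Z/h_Z$, extended by the identity on $\{N,v\}^\perp$, carries the base frame $\{v,v_2,\dots,v_m\}$ to an orthonormal frame of $T_y\Sigma$. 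Then $v_i'=v_i$ for $i\geq2$, and the only nonzero difference is
\[
 w_1=Rv-v=-\delta_Zv-\frac{c_1t_b}{h_Z}N.
\]
For it, $P_{e^\perp}w_1=K_Z(sv-t_bN)$, $(N-se)\cdot w_1=-t_bK_Z$ and $w_1\cdot A_Nv=-\delta_ZA_N(v,v)$.

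To finish, set $F(w,N)=|w|\Gamma(N\cdot w/|w|)$. The remaining second-order terms are $F_{ww}[w_i,w_i]+2F_{wN}[w_i,A_Nv_i]+F_{NN}[A_Nv_i,A_Nv_i]$, evaluated on the \emph{difference} $w_i=v_i'-v_i$. Here $F_{ww}=\ell^{-1}c_1P_{e^\perp}+\ell^{-1}\Gamma''(N-se)\otimes(N-se)$, $F_{wN}[w,n]=\Gamma'w\cdot n+\Gamma''(e\cdot n)\,(N-se)\cdot w$, and $F_{NN}=\ell\Gamma''e\otimes e$. The $\Gamma''$ parts complete to $\ell\Gamma''[b\cdot A_Nv_i+(N-se)\cdot w_i/\ell]^2$. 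The terms $\ell\Gamma''t_b^2A_N(v,v_i)^2$ for $i\geq2$ join the $i=1$ term to give $\ell\Gamma''t_b^2|A_Nv-(K_Z/\ell)v|^2$, while $c_1K_Z^2/\ell$ and $-2\Gamma'\delta_ZA_N(v,v)$ come from $i=1$ alone.

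Two smaller corrections are needed. First, $\alpha_Z=\Gamma(s^2-\mathfrak q t_b^2)/s$ is linear in $\mathfrak q$, so $\alpha_Z/h_Z=(s^2-\mathfrak q t_b^2)/r_\Gamma$. Second, the listed bound $\Gamma'\geq-(1-\delta_1)^{-2}$ in Lemma~\ref{lem:r7-small-penalty} does not give $\mathfrak q\leq1$ near $s=1$. You need the construction's identity $\Gamma'=-s^{-2}\chi$, that is $|\Gamma'|\leq s^{-2}$, together with $\Gamma\geq1/s$. Finally, besides $s\leq r_\Gamma$, the bounds \eqref{eq:s7-angular-bounds} also use $\mathfrak q\leq r_\Gamma\leq s+\mathfrak q$.
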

\begin{proof}
For $F(w,N)=|w|\Gamma(N\cdot w/|w|)$, one has
$F_w=Z$, $F_N=\Gamma'w$ and
\[
 F_{ww}=\frac{c_1}{\ell}P_{e^\perp}
       +\frac{\Gamma''}{\ell}(N-se)\otimes(N-se).
\]
Receiving stationarity makes $Z$ normal; the actual zero-free region
fixes $N_y=-Z/|Z|$.  First variation at $z$ gives
\eqref{eq:r5-transition-gradient}.  For orthonormal frames
$v_i$ at $z$ and $v_i'$ at $y$, let $w_i=v_i'-v_i$ and move both
endpoints geodesically.  The mixed and $N$-second derivatives combine
into a square, giving the exact upper-touching trace
\begin{align}\label{eq:r5-transition-exact}
 \Delta_\Sigma d\leq{}&|Z|H_{N_y}(y)
  +H_N(z)Z\cdot N+\ell\Gamma'b\cdot\nabla_\Sigma H_N
  -\ell\Gamma'sa^2\notag\\
 &+\sum_i\left\{\frac{c_1}{\ell}|P_{e^\perp}w_i|^2
       +2\Gamma'w_i\cdot A_Nv_i
       +\ell\Gamma''\left[b\cdot A_Nv_i+
                   \frac{(N-se)\cdot w_i}{\ell}\right]^2\right\}.
\end{align}
The first two trace terms come from the accelerations of the endpoints.
The remaining two follow from
$\Delta_\Sigma N=\nabla_\Sigma H_N-a^2N$.

\vskip0.12in

Take $v_1=v$ and transport the frame by the minimal plane rotation
$R:N\mapsto Z/h_Z$, fixing $\{N,v\}^\perp$.  This is a receiving
frame because its normal can be $-Z/h_Z$.  Only $w_1$ is nonzero:
\[
 w_1=-\delta_Zv-\frac{c_1t_b}{h_Z}N,\quad
 P_{e^\perp}w_1=K_Z(sv-t_bN),\quad
 (N-se)\cdot w_1=-t_bK_Z.
\]
Substitution gives \eqref{eq:s7-offdiagonal-uniform}.
The fixed construction in Lemma~\ref{lem:r7-small-penalty} gives
$0\leq\mathfrak q\leq1$.
Since $h_Z=\Gamma r_\Gamma/s$ and
$\alpha_Z/h_Z=(s^2-\mathfrak q t_b^2)/r_\Gamma$,
rationalization gives \eqref{eq:s7-angular-factors}.
The first bound in \eqref{eq:s7-angular-bounds} follows from
$s\leq r_\Gamma\leq1$; for the second put $u=-\Gamma'/\Gamma$ and use that
$(s-u t_b^2)/(1+u^2t_b^2)^{1/2}$ decreases for $0\leq u\leq1/s$,
with endpoint values $s$ and $2s^2-1$.
Finally, the first variation is
$-\Gamma\{(1+\mathfrak q)I+\mathfrak q dA_N/(\Gamma s)\}b$.
Since $\Gamma s\geq1$, the matrix has smallest eigenvalue at least
$1+\mathfrak q-\mathfrak q ad/(\Gamma s)\geq1$ when $ad\leq1$.
This proves \eqref{eq:s7-uniform-small-ad}.
\end{proof}

\vskip0.12in

\subsubsection{The exact plateau comparison}

\begin{proposition}
\label{prop:r4-plateau-reach-inequality}
At almost every base point $z$ whose reach exceeds the fixed threshold of
Proposition~\ref{prop:r4-actual-contact-regularity} and admits an active plateau contact,
let $N=\pm\nu(z)$ be chosen with $s=N\cdot e\geq1-\delta_2$,
where $e=(y-z)/\ell$, and put $b=e-sN$.  Then
\begin{equation}\label{eq:r4-plateau-gradient}
 \nabla_\Sigma d=-G_1 b,
\end{equation}
and the absolutely continuous part of the surface Laplacian satisfies
\begin{align}\label{eq:r4-plateau-exact}
 \Delta_\Sigma d
 &\leq G_1\{H_{-e}(y)+sH_N(z)\}
       +\frac{1-s}{1+s}\frac{|\nabla_\Sigma d|^2}{d}
 \notag\\
 &\leq G_1\{H_{-e}(y)_++|H(z)|\}
       +\frac{\delta_2}{2-\delta_2}
           \frac{|\nabla_\Sigma d|^2}{d}.
\end{align}
The singular part of $\Delta_\Sigma d$ has the favorable nonpositive
sign.  Thus on any open set where a plateau contact is active almost
everywhere, \eqref{eq:r4-plateau-exact} is also a weak measure inequality.

\end{proposition}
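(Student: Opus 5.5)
The plan is to obtain \eqref{eq:r4-plateau-gradient} and \eqref{eq:r4-plateau-exact} as the plateau specialization of Proposition~\ref{prop:r5-transition-exact}. Almost every base point is an Alexandrov point of $d$, because $d$ is locally semiconcave on $\Sigma_{\rm reg}$ by Theorem~\ref{lem:r4-reach-semiconcavity}. Above the threshold of Proposition~\ref{prop:r4-actual-contact-regularity}, every active receiver $y$ is regular. So at almost every base point with an active plateau contact, the hypotheses of Proposition~\ref{prop:r5-transition-exact} hold.

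On the plateau, $s\geq1-\delta_2$ and the penalty is constant, equal to $G_1$, on a neighbourhood of this angle up to the endpoint $s=1$. Hence $\Gamma=G_1$ and $\Gamma'=\Gamma''=0$ at $s$, and the specialization goes as follows.

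\begin{itemize}
\item The penalty quantities become $c_1=G_1$, $Z=G_1e$, $h_Z=G_1$, $\alpha_Z=G_1s$ and $K_Z=\Gamma/h_Z-s=1-s$.
\item The receiving normal is $N_y=-Z/|Z|=-e$.
\item \eqref{eq:r5-transition-gradient} reduces to $\nabla_\Sigma d=-G_1b$, which is \eqref{eq:r4-plateau-gradient}.
\item In \eqref{eq:s7-offdiagonal-uniform}, every term carrying $\Gamma'$ or $\Gamma''$ vanishes. What remains is
\[
 \Delta_\Sigma d\leq G_1H_{-e}(y)+G_1sH_N(z)+\frac{G_1(1-s)^2}{\ell}.
\]
\item The case $t_b=0$ is covered by the continuity convention stated there.
\end{itemize}

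Next I rewrite the last term using the plateau identities $d=G_1\ell$ and $t_b^2=|b|^2=1-s^2$. These give $|\nabla_\Sigma d|^2/d=G_1(1-s^2)/\ell$. Multiplying by $(1-s)/(1+s)$ returns exactly $G_1(1-s)^2/\ell$, which proves the first line of \eqref{eq:r4-plateau-exact}.

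The second line is elementary. I use $H_{-e}(y)\leq H_{-e}(y)_+$ and $sH_N(z)\leq|H(z)|$, the latter since $0<s\leq1$ and $|H_N|=|H|$. The map $s\mapsto(1-s)/(1+s)$ is decreasing, so on $s\geq1-\delta_2$ it is at most $\delta_2/(2-\delta_2)$.

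For the measure statement, I use the structure from Theorem~\ref{lem:r4-reach-semiconcavity}. Locally, $d$ is an infimum of smooth branches with a common Hessian bound. So $\Delta_\Sigma d$ is a signed measure whose singular part is nonpositive, and whose absolutely continuous density equals the Alexandrov Laplacian almost everywhere. On an open set where a plateau contact is active almost everywhere, the pointwise bound then holds for this density almost everywhere. The right-hand side is locally integrable: $d$ is locally Lipschitz, the curvatures are bounded on compact regular patches, and $d$ is bounded below by the threshold. Adding the nonpositive singular part gives the distributional inequality against nonnegative test functions.

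The main obstacle I expect is justifying the pointwise step at the Alexandrov point. One must check that the active receiver used in Proposition~\ref{prop:r5-transition-exact} may be taken to be the plateau contact itself. One must also check that the upper-touching comparison bounds the absolutely continuous Laplacian rather than merely some superdifferential element. I would handle this as follows. At an Alexandrov point, every smooth branch attaining the infimum touches $d$ from above. The second-order Taylor expansion of $d$ therefore lies below that branch's quadratic, so the Alexandrov Laplacian is bounded by the branch's trace computation. Measurability of the selected pair is supplied by the measurable-selection remark after Theorem~\ref{thm:r4-positive-foot-transfer}.
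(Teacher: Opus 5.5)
Your proposal is correct and follows the paper's proof: you specialize Proposition~\ref{prop:r5-transition-exact} to $\Gamma=G_1$, $\Gamma'=\Gamma''=0$, rewrite $G_1(1-s)^2/\ell$ using $d=G_1\ell$ and $|\nabla_\Sigma d|^2=G_1^2(1-s^2)$, and take the measure statement from the semiconcavity in Theorem~\ref{lem:r4-reach-semiconcavity}. One small correction: the local boundedness of the receiving term $H_{-e}(y)$, which you need for the weak inequality, comes from the receiving-defect bound \eqref{eq:r4-receiving-defect}, not from curvature bounds on compact patches of base points.
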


\begin{proof}
In Proposition~\ref{prop:r5-transition-exact}, put
$\Gamma=G_1$ and $\Gamma'=\Gamma''=0$.  Then
$N_y=-e$, $h_Z=G_1$, $\alpha_Z=G_1s$ and $K_Z=1-s$.
The first variation is \eqref{eq:r4-plateau-gradient}, and
\eqref{eq:s7-offdiagonal-uniform} gives
\[
 \Delta d\leq G_1\{H_{-e}(y)+sH_N(z)\}
                 +G_1(1-s)^2/\ell.
\]
Since $d=G_1\ell$ and $|\nabla d|^2=G_1^2(1-s^2)$,
the last term is $(1-s)|\nabla d|^2/((1+s)d)$.
Use $s\geq1-\delta_2$ for the second inequality.
Theorem~\ref{lem:r4-reach-semiconcavity} supplies the nonpositive
singular part and hence the weak-measure conclusion.
\end{proof}

\vskip0.12in

\subsubsection{Positive trace at actual receiving feet}
\label{sec:r5-positive-trace}
Let $\tau>0$ be the decay exponent supplied by
Theorem~\ref{thm:r5-positive-trace}. This exponent is independent of the
exponent $0<\gamma<1/2$ in the isolated-interface estimate. We apply the
tangent-ball estimate first at the receiving point $y$ and then at the base
point $z$ of an actual contact.

\begin{corollary}
\label{cor:r5-incoming-trace}
For every sufficiently large actual off-diagonal active reach pair
$(z,y)$, let $N_y$ be the inward normal of the zero-free region
$\mathcal O_z$ at its receiving foot.  Then
\begin{equation}\label{eq:r5-incoming-trace}
 (H_{N_y}(y))_+\leq C_\Gamma d(z)^{-1-\tau}.
\end{equation}
For plateau contacts $N_y=(z-y)/|z-y|$.  Thus the signed transfer
formula and all its multiplicity-one conclusions from
Theorem~\ref{thm:r4-positive-foot-transfer} remain valid with the
actual positive trace subject to \eqref{eq:r5-incoming-trace}.
\end{corollary}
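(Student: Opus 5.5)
The estimate will come from applying Theorem~\ref{thm:r5-positive-trace} at the receiving foot $y$. The zero-free region $\mathcal O_z$ supplies the tangent ball, and its radius is comparable to $d(z)$.

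First, exactly as in the proof of Proposition~\ref{prop:r4-actual-contact-regularity}, $\mathcal O_z$ rescaled by $d(z)$ is a fixed compact smooth domain with a uniform interior rolling-ball radius depending only on $\Gamma$. Hence there is a zero-free ball
\[
 B_r(c),\qquad r=c_\Gamma d(z),\qquad c=y+rN_y,
\]
tangent to $\partial\mathcal O_z$ at $y$. Its inward normal at $y$ is $N_y$, the inward normal of $\mathcal O_z$.

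Once $d(z)$ exceeds a threshold, we have $r\geq r_0$. Choose the sign of $u$ so that it is positive in the ball. Theorem~\ref{thm:r5-positive-trace} then gives that $y$ is regular and that the one-sided bound $H_{N_y}(y)\leq Cr^{-1-\gamma}$ holds. Set $\tau=\gamma$. Since $r=c_\Gamma d(z)$, this yields $(H_{N_y}(y))_+\leq C_\Gamma d(z)^{-1-\tau}$.

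The one delicate point is orientation: Theorem~\ref{thm:r5-positive-trace} measures $H$ with respect to the inward ball normal $N$, so we must check that this is $N_y$ and not the gradient normal.
\begin{itemize}
\item By Proposition~\ref{prop:r5-transition-exact}, receiving stationarity makes $Z$ normal at $y$, and the zero-free region fixes $N_y=-Z/|Z|$. This is the inward normal of $\mathcal O_z$ at $y$, and hence the inward normal of the tangent ball.
\item In the plateau case, Theorem~\ref{thm:r4-positive-foot-transfer} gives $N_y=(z-y)/|z-y|$.
\end{itemize}
So the two normals agree, and the sign convention $H_N=\operatorname{tr}A_N$ is the one used in both statements.

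Finally, the signed transfer formula and the multiplicity-one argument of Theorem~\ref{thm:r4-positive-foot-transfer} only used that the chord is normal at $y$ and that $H_{N_y}(y)>0$ on the selected set. Both remain valid, so only the quantitative positive part changes. I expect no real obstacle beyond verifying that the rolling-ball radius is uniform near the transition region of $\Gamma$. That uniformity was already established in Proposition~\ref{prop:r4-actual-contact-regularity}.
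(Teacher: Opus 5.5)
Your proposal is correct and follows the paper's own proof. The paper likewise takes the rolling ball of radius $c_\Gamma d(z)$ from Proposition~\ref{prop:r4-actual-contact-regularity}, tangent at $y$ with inward normal $N_y$, and applies Theorem~\ref{thm:r5-positive-trace} to it, keeping the same selection, orientation and Jacobian for the transfer formula; your explicit orientation check and the identification of $\tau$ with that theorem's exponent $\gamma$ just spell out what the paper leaves implicit.
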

\begin{proof}
The rolling ball from Proposition~\ref{prop:r4-actual-contact-regularity}
has radius $c_\Gamma d(z)$ and inward normal $N_y$ at $y$.
Apply Theorem~\ref{thm:r5-positive-trace} to this tangent ball, with the
same selection, orientation and Jacobian.
\end{proof}

\begin{corollary}
\label{cor:r5-source-trace}
At every sufficiently large finite actual reach,
\begin{equation}\label{eq:r5-source-trace}
 |H(z)|\leq C_\Gamma d(z)^{-1-\tau}.
\end{equation}
In particular the actual plateau inequality improves to
\[
 \Delta_\Sigma d\leq C_\Gamma d^{-1-\tau}
       +\frac{\delta_2}{2-\delta_2}
          \frac{|\nabla_\Sigma d|^2}{d}
\]
at almost every base point admitting a plateau contact, with the same
favorable singular part as in
Proposition~\ref{prop:r4-plateau-reach-inequality}.
\end{corollary}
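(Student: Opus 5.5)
The plan is to apply the tangent-ball estimate, Theorem~\ref{thm:r5-positive-trace}, at the base point $z$ itself, using the two zero-free balls that the chord bound produces on either side of $\Sigma$ at $z$. That theorem controls only one sign of the mean curvature at a time. So I will take both orientations $N=\pm\nu(z)$ and combine the two one-sided bounds into a bound for $|H(z)|$.

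By Proposition~\ref{prop:r4-actual-contact-regularity}, the balls with centers $z\pm c_\Gamma d(z)\nu(z)$ and radius $c_\Gamma d(z)$ contain no zeros. Both are tangent to $\Sigma$ at $z$. For $d(z)$ large, the radius $r=c_\Gamma d(z)$ exceeds the threshold $r_0$, so Theorem~\ref{thm:r5-positive-trace} applies to each ball.

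\begin{itemize}
\item For the ball in direction $N=\nu(z)$, with $u$ oriented positive inside, it gives $H_{\nu}(z)\leq Cr^{-1-\gamma}$.
\item For the ball in direction $N=-\nu(z)$, now with $-u$ positive inside that ball, it gives $H_{-\nu}(z)=-H_\nu(z)\leq Cr^{-1-\gamma}$.
\end{itemize}

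The one point to check is sign bookkeeping. The theorem's $H_N$ is the trace of $d\nu$ for the orientation $N$, and flipping $N$ flips the sign. Since $z$ is regular, the two orientations are exactly $\pm\nu(z)$, and the sign of $u$ differs on the two sides. Together the two bounds give $|H(z)|\leq C_\Gamma d(z)^{-1-\tau}$ with $\tau=\gamma$. This is \eqref{eq:r5-source-trace}.

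For the improved plateau inequality I substitute into Proposition~\ref{prop:r4-plateau-reach-inequality}. In its second line, $|H(z)|$ is now bounded by \eqref{eq:r5-source-trace}. The term $H_{-e}(y)_+$ is bounded by Corollary~\ref{cor:r5-incoming-trace}, since for a plateau contact $N_y=(z-y)/\ell=-e$. Both terms are therefore $C_\Gamma d^{-1-\tau}$. The singular part is unchanged, and Theorem~\ref{lem:r4-reach-semiconcavity} already gives it the favorable nonpositive sign. No step here looks genuinely hard; the only care needed is the orientation convention when invoking the tangent-ball theorem twice.
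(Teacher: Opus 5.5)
Your proposal is correct and follows the paper's proof essentially verbatim. The chord bound gives two zero-free tangent balls of radius $c_\Gamma d(z)$ with opposite inward normals $\pm\nu(z)$, and applying the one-sided bound of Theorem~\ref{thm:r5-positive-trace} to each (with $u$ or $-u$ positive inside, and using that $H_{-\nu}=-H_\nu$ is geometric) yields $|H(z)|\leq C_\Gamma d(z)^{-1-\tau}$; substituting this and the incoming bound $(H_{-e}(y))_+\leq C_\Gamma d(z)^{-1-\tau}$ into the second line of the plateau inequality gives the improved estimate, with the singular part unchanged.
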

\begin{proof}
The chord bound at $z$ gives zero-free tangent phase balls of radius
$c_\Gamma d(z)$ with opposite inward normals. Applying
Theorem~\ref{thm:r5-positive-trace} on both sides bounds $|H|$;
substitute this and \eqref{eq:r5-incoming-trace} in the plateau formula.
\end{proof}

\vskip0.12in

\subsubsection{Differential calculus for the physical cap}

\begin{lemma}
\label{lem:physical-cap-calculus}
Assume $D\not\equiv0$, so the actual reach is finite and locally
semiconcave on $\Sigma_{\rm reg}$.  Then $\widehat d$ is locally
semiconcave, with nonpositive singular part of its surface Laplacian.
On the cap branch $\{d\geq r_\epsilon\}$ one may use the smooth upper
branch $r_\epsilon$, including almost every tie point.  Its exact
calculus is
\begin{equation}\label{eq:physical-cap-calculus}
 \nabla_\Sigma r_\epsilon=\epsilon\frac{z^T}{\rho_o},\qquad
 \Delta_\Sigma r_\epsilon
 =\epsilon\left\{\frac m{\rho_o}
       -\frac{|z^T|^2}{\rho_o^3}
       -H\frac{z\cdot\nu}{\rho_o}\right\}.
\end{equation}
In particular $|\nabla r_\epsilon|\leq\epsilon$ and
$|\Delta r_\epsilon|\leq m\epsilon/\rho_o+\epsilon|H|$.
For sufficiently large cap reach the mean-curvature estimate at $z$ gives
\begin{equation}\label{eq:physical-cap-mean-error}
 |\Delta r_\epsilon|
 \leq\frac{m\epsilon}{\rho_o}
       +C_\Gamma\epsilon^{-\tau}\rho_o^{-1-\tau}
 \leq C_{\Gamma,\epsilon}/\rho_o.
\end{equation}
For fixed $\epsilon$, the first right side is at most
$(m+1)\epsilon/\rho_o$ once $\rho_o$ exceeds a further fixed threshold.

\end{lemma}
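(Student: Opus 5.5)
The plan is to treat $\widehat d=\min\{d,r_\epsilon\}$ as the infimum of two locally semiconcave functions. Theorem~\ref{lem:r4-reach-semiconcavity} represents $d$ locally as the infimum of a compact family of smooth branches with a common upper Hessian bound. The function $r_\epsilon=\epsilon\rho_o$ is smooth on the regular zero set, since $\rho_o$ is smooth on $\mathbb R^n$ and $\Sigma_{\rm reg}$ is a smooth hypersurface. Adjoining $r_\epsilon$ to the family keeps it compact and keeps a common local upper Hessian bound. The infimum is then locally semiconcave: subtracting a common quadratic gives a concave function, so the singular part of the Hessian measure is nonpositive. Its metric trace, the singular part of $\Delta_\Sigma\widehat d$, is therefore nonpositive.

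For the statement that the branch $r_\epsilon$ may be used on $\{d\geq r_\epsilon\}$, including tie points, I would argue as follows. At an Alexandrov point $\widehat d\leq r_\epsilon$, with equality on the cap branch, so $r_\epsilon$ is an upper-touching smooth function there. Upper-touching comparisons, which are the convention fixed in Section~\ref{sec:s12-contact}, therefore allow its gradient and Laplacian to be used. On the set where $d=r_\epsilon$, both functions are differentiable almost everywhere. Their difference vanishes on that set, so their gradients agree at almost every density point. The absolutely continuous Laplacians are bounded above by those of $r_\epsilon$ via touching.

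The exact calculus is the standard formula for restricting an ambient function $f$ to a hypersurface: $\nabla_\Sigma f=(\nabla f)^T$ and $\Delta_\Sigma f=\operatorname{tr}_{T\Sigma}\nabla^2f-H\,\partial_\nu f$, with $H=\operatorname{tr}d\nu$ under the paper's sign convention. Here $\nabla\rho_o=z/\rho_o$ and $\nabla^2\rho_o=\rho_o^{-1}I-\rho_o^{-3}z\otimes z$. Taking the tangential trace gives $m/\rho_o-|z^T|^2/\rho_o^3$, and the normal term is $-Hz\cdot\nu/\rho_o$. Multiplying by $\epsilon$ gives \eqref{eq:physical-cap-calculus}. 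The crude bounds follow from $|z^T|,|z\cdot\nu|\leq\rho_o$.

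For \eqref{eq:physical-cap-mean-error}, on the cap branch $d\geq r_\epsilon=\epsilon\rho_o$. The reach is therefore large once $\rho_o$ is large, and Corollary~\ref{cor:r5-source-trace} gives $|H(z)|\leq C_\Gamma d^{-1-\tau}\leq C_\Gamma(\epsilon\rho_o)^{-1-\tau}$. Multiplying by $\epsilon$ gives $C_\Gamma\epsilon^{-\tau}\rho_o^{-1-\tau}$, and this is at most $\epsilon\rho_o^{-1}$ once $\rho_o^{\tau}\geq C_\Gamma\epsilon^{-1-\tau}$. That proves the final refinement with coefficient $(m+1)\epsilon$.

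The main point needing care is the tie set: one must confirm that the semiconcave infimum's absolutely continuous Laplacian agrees almost everywhere with the branch chosen. I would handle this by working at Alexandrov points, where both $d$ and $r_\epsilon$ touch $\widehat d$ from above.
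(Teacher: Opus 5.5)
Your proposal is correct and follows the paper's proof essentially step for step. You obtain semiconcavity of the minimum by subtracting a common local quadratic, use the smooth cap as an upper-touching branch at Alexandrov ties, derive the calculus from $\Delta_\Sigma z=-H\nu$ and the Hessian of $\rho_o$, and bound the mean-curvature term by Corollary~\ref{cor:r5-source-trace} with $d\geq\epsilon\rho_o$, reaching the same threshold $C_\Gamma\epsilon^{-1-\tau}\rho_o^{-\tau}\leq1$. Your aside about gradients agreeing at almost every density point is harmless but unnecessary, since upper touching already forces $\nabla\widehat d=\nabla r_\epsilon$ at every Alexandrov tie.
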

\begin{proof}
Subtracting one common local quadratic makes $d$ and $r_\epsilon$
concave, hence also their minimum; its singular Laplacian is nonpositive.
At an Alexandrov tie the smooth cap is upper touching. Its calculus
follows from $\Delta_\Sigma z=-H\nu$ and the Hessian of $\rho_o$.
On the cap branch \eqref{eq:r5-source-trace} and $d\geq\epsilon\rho_o$
give \eqref{eq:physical-cap-mean-error}; the last bound requires only
$C_\Gamma\epsilon^{-1-\tau}\rho_o^{-\tau}\leq1$.
\end{proof}

\vskip0.12in

\subsubsection{The unmodified specialization}

\begin{proposition}
\label{prop:r5-unmodified-exact}
Suppose $D\not\equiv0$.  At an Alexandrov twice-differentiability point
$z$ of the finite reach $d$, suppose an active off-diagonal contact $y$
has $s=N\cdot e\leq1-\delta_1$, where $N=\pm\nu(z)$ is chosen so
$s>0$, $e=(y-z)/\ell$, and $\ell=|y-z|$.  Assume $y$ is regular; this
is automatic for sufficiently large $d$ by
Proposition~\ref{prop:r4-actual-contact-regularity}.  Set
\[
 r=d/2,\qquad b=e-sN,\qquad M=\operatorname{Id}+rA_N,
 \qquad\Lambda=\max_i|\lambda_i(A_N)|.
\]
The receiving inward normal is $N_y=N-2se$.  Then
\begin{equation}\label{eq:r5-unmodified-gradient}
 \nabla_\Sigma r=-Mb/s,
\end{equation}
and the absolutely continuous Laplacian obeys
\begin{align}\label{eq:r5-unmodified-laplacian}
 \Delta_\Sigma d\leq{}&a^2d+
 \left(2-\frac{2}{1+r\Lambda}\right)
              \frac{|\nabla_\Sigma d|^2}{d}
 \notag\\
 &+\frac{H_{N_y}(y)-(1-2s^2)H_N(z)
                   -\ell b\cdot\nabla_\Sigma H_N(z)}{s^2}.
\end{align}
In particular the displayed gradient coefficient is between zero and
one, and is at most $2r\Lambda\leq ad$.  If $ad$ is sufficiently
small, then $|\nabla_\Sigma d|\geq c_{\delta_1}>0$.
\end{proposition}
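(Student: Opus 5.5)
The plan is to obtain the statement as a direct specialization of Proposition~\ref{prop:r5-transition-exact}. On the angular range $s\leq1-\delta_1$, the fixed penalty of Lemma~\ref{lem:r7-small-penalty} is exactly $\Gamma(s)=1/s$. This holds on a neighbourhood of the contact angle, and by smoothness also at the endpoint, so the pointwise values there are
\[
 \Gamma=s^{-1},\qquad \Gamma'=-s^{-2},\qquad \Gamma''=2s^{-3}.
\]
I will substitute these into \eqref{eq:r5-transition-gradient} and \eqref{eq:s7-offdiagonal-uniform} and simplify. The chord bound \eqref{eq:r4-chord-bounds} will then control the gradient coefficient.

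\textbf{Step 1: the contact data.} From the values above,
\[
 c_1=\Gamma-s\Gamma'=2/s,\qquad Z=s^{-2}(2se-N).
\]
Since $|2se-N|^2=4s^2-4s^2+1=1$, we get $h_Z=s^{-2}$ and $N_y=-Z/h_Z=N-2se$. Also $d=\ell\Gamma(s)=\ell/s$, so $r=\ell/(2s)$. Inserting this into \eqref{eq:r5-transition-gradient} gives
\[
 \nabla_\Sigma d=-\tfrac2s\bigl(\operatorname{Id}+\tfrac{\ell}{2s}A_N\bigr)b=-\tfrac2s Mb,
\]
which is \eqref{eq:r5-unmodified-gradient}.

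\textbf{Step 2: the first-order trace terms.} The coefficients become
\[
 \alpha_Z=Z\cdot N=2-s^{-2}=-(1-2s^2)/s^2,\qquad h_ZH_{N_y}(y)=H_{N_y}(y)/s^2,
\]
\[
 \ell\Gamma'b\cdot\nabla_\Sigma H_N=-\ell s^{-2}\,b\cdot\nabla_\Sigma H_N,\qquad -\ell\Gamma's\,a^2=\ell a^2/s=a^2d.
\]
These reproduce the first term and the last fraction in \eqref{eq:r5-unmodified-laplacian}.

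\textbf{Step 3: the angular terms.} For the remaining three terms, $\mathfrak q=-s\Gamma'/\Gamma=1$. By \eqref{eq:s7-angular-factors} this gives $r_\Gamma=\sqrt{s^2+t_b^2}=1$, hence $K_Z=0$ and $\delta_Z=2t_b^2$. The leftover contribution is therefore
\[
 \frac{4t_b^2}{s^2}A_N(v,v)+\frac{2\ell t_b^2}{s^3}|A_Nv|^2
 =\frac{2t_b^2}{rs^2}\bigl(2rA_N(v,v)+r^2|A_Nv|^2\bigr)
 =\frac{2t_b^2}{rs^2}\bigl(|Mv|^2-1\bigr).
\]
On the other hand $|\nabla_\Sigma d|^2/d=2t_b^2|Mv|^2/(rs^2)$. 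So it suffices to show
\[
 1-|Mv|^{-2}\leq 2-\frac{2}{1+r\Lambda}.
\]
When $|Mv|\leq1$ the left side is nonpositive, so there is nothing to prove. Otherwise $|Mv|\leq1+r\Lambda$, and
\[
 1-(1+r\Lambda)^{-2}=\frac{2r\Lambda+r^2\Lambda^2}{(1+r\Lambda)^2}\leq\frac{2r\Lambda+2r^2\Lambda^2}{(1+r\Lambda)^2}=\frac{2r\Lambda}{1+r\Lambda}.
\]
At $t_b=0$ the statement is read by continuity as in Proposition~\ref{prop:r5-transition-exact}.

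\textbf{Step 4: the coefficient bounds and the gradient lower bound.} The chord bound gives $\Lambda\leq2/d$, so $r\Lambda\leq1$. Hence the coefficient $2r\Lambda/(1+r\Lambda)$ lies in $[0,1]$ and is at most $2r\Lambda=d\Lambda\leq ad$. If $ad$ is small, then $\|M-\operatorname{Id}\|\leq ad/2$, so $|Mb|\geq t_b/2$. Since $t_b^2=1-s^2\geq1-(1-\delta_1)^2$, this yields $|\nabla_\Sigma d|=2|Mb|/s\geq c_{\delta_1}$.

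I expect Step~3 to be the main obstacle. The key point is to recognize the angular remainder as the exact combination $|Mv|^2-1$ and compare it with $|\nabla_\Sigma d|^2/d$. The sign care needed there, in particular the case $|Mv|<1$ and the use of $\Lambda$ rather than $a$, is where I would check the details most carefully.
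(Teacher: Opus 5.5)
Your route is the paper's: specialize Proposition~\ref{prop:r5-transition-exact} to $\Gamma=1/s$. Steps 1, 2 and 4 are correct, and your direct lower bound via $\|M-\operatorname{Id}\|\leq ad/2$ is a fine substitute for \eqref{eq:s7-uniform-small-ad}. However, Step 3 contains an arithmetic slip, and the slip hides the one place where the argument needs an extra ingredient. Since $\ell=ds=2rs$,
\[
 \frac{2\ell t_b^2}{s^3}|A_Nv|^2=\frac{4rt_b^2}{s^2}|A_Nv|^2=\frac{2t_b^2}{rs^2}\cdot 2r^2|A_Nv|^2,
\]
not $\frac{2t_b^2}{rs^2}\,r^2|A_Nv|^2$. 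The leftover is therefore
\[
 \frac{4t_b^2}{rs^2}\bigl\{rA_N(v,v)+r^2|A_Nv|^2\bigr\}
 =\frac{4t_b^2}{rs^2}\bigl\{|Mv|^2-\langle Mv,v\rangle\bigr\}.
\]
This exceeds your $\frac{2t_b^2}{rs^2}(|Mv|^2-1)$ by $\frac{2t_b^2}{rs^2}|Mv-v|^2$. Its ratio to $|\nabla_\Sigma d|^2/d$ is $2\bigl(1-\langle Mv,v\rangle/|Mv|^2\bigr)$. The operator-norm bound $|Mv|\leq1+r\Lambda$, which is all your Step 3 uses, cannot control this ratio: it exceeds $2$ whenever $\langle Mv,v\rangle<0$, while the target coefficient is below $2$.

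The missing ingredient is positivity of $M$. The chord bound gives $r\Lambda\leq1$, which you invoke only in Step 4; it must already be used in Step 3. It yields $0\leq M\leq(1+r\Lambda)\operatorname{Id}$. In an eigenbasis, $\mu_i\geq\mu_i^2/(1+r\Lambda)$ for every $\mu_i\in[0,1+r\Lambda]$, so $\langle Mv,v\rangle\geq|Mv|^2/(1+r\Lambda)$. The ratio is then at most $2-2/(1+r\Lambda)$, which is exactly the paper's argument. This bound is attained when $v$ is an eigenvector of $M$ with eigenvalue $1+r\Lambda$. So the strict slack you found, $1-(1+r\Lambda)^{-2}<2r\Lambda/(1+r\Lambda)$, was itself a sign that a factor had been lost. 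With this correction the rest of your proof goes through unchanged.
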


\begin{proof}
With $\Gamma=1/s$, Proposition~\ref{prop:r5-transition-exact} gives
\[
 h_Z=s^{-2},\quad\alpha_Z=(2s^2-1)s^{-2},\quad
 K_Z=0,\quad\delta_Z=2(1-s^2),\quad N_y=N-2se.
\]
Its minimal rotation restricted to $T_z\Sigma$ is exactly reflection
across $e^\perp$.  The first variation is
\eqref{eq:r5-unmodified-gradient}. Here $r=d/2$ is an algebraic parameter;
the zero-free rolling ball is the smaller one supplied by the
contact-regularity theorem.
The chord bound gives $r\Lambda\leq1$, so
$0\leq M\leq(1+r\Lambda)\operatorname{Id}$.
The nontrace terms of \eqref{eq:s7-offdiagonal-uniform}, apart from
$a^2d$, equal
\[
 \frac4{s^2}\langle A_Nb,b\rangle+
 \frac{4r}{s^2}|A_Nb|^2
 =\frac4{rs^2}\{|Mb|^2-\langle Mb,b\rangle\}
 \leq\left(2-\frac2{1+r\Lambda}\right)
                      \frac{|\nabla d|^2}{d}.
\]
Here $\langle Mb,b\rangle\geq|Mb|^2/(1+r\Lambda)$ and
$\nabla d=-2Mb/s$.  The trace terms are precisely those in
\eqref{eq:r5-unmodified-laplacian}.  Finally,
\eqref{eq:s7-uniform-small-ad} and
$|b|^2\geq1-(1-\delta_1)^2$ prove the small-$ad$ assertion.
\end{proof}

\vskip0.12in

\subsubsection{The diagonal contact estimate}
\label{sec:r5-diagonal}

\begin{proposition}
\label{prop:r5-diagonal-exact}
At an Alexandrov point of the actual reach with an active diagonal contact,
choose the local orientation $N$ so that $S=-A_N$ has a top eigenvalue
$\lambda=2/d>0$.  Write $h=\operatorname{tr}S=-H_N$, and let $v$ be any
unit vector in its top eigenspace.  Then
\begin{equation}\label{eq:r5-diagonal-exact}
 \Delta_\Sigma d\leq a^2d+\frac{|\nabla_\Sigma d|^2}{d}
       -2h-\frac{d^2}{2}\nabla_\Sigma^2h(v,v).
\end{equation}
The statement includes a repeated top eigenvalue.  Its precise meaning is
an upper-touching branch inequality: whenever $d$ has an Alexandrov
second-order expansion, its trace is bounded above as displayed.  The favorable singular measure at switches follows from
Theorem~\ref{lem:r4-reach-semiconcavity}. On the guarded isolated-graph
portions, the mean-curvature terms are controlled by
Theorem~\ref{thm:actual-isolated-parity-jets}.
\end{proposition}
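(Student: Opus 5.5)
The plan is to bound $d$ above near $z$ by one smooth branch, namely a short-chord objective in a well-chosen moving direction, and then read off \eqref{eq:r5-diagonal-exact} from the Laplacian of that branch at $z$. The branch is exact at $z$ because the contact is diagonal.

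\textbf{Step 1: the comparison branch and a vanishing cubic term.} Near the diagonal the penalty is $\Gamma(s)=1/s$. So for a unit tangent field $V$ near $z$ and a small parameter $t$, the short-chord family $\phi_{t,V}$ of Theorem~\ref{lem:r4-reach-semiconcavity} satisfies
\[
 \phi_{0,V}(w)=\frac2{\kappa_V(w)},\qquad \kappa_V=S(V,V),
\]
and the chord from $w$ in direction $V$ gives $d(w)\leq\phi_{t,V}(w)$. I will first use the parameter $t$ at the point $z$ itself. Expanding the graph to third order gives
\[
 \phi_{t,v}(z)=\frac2\lambda-\frac{2t}{3\lambda^2}(\nabla_vS)(v,v)+O(t^2).
\]
The diagonal contact is active, so $t=0$ minimizes this objective over small $t$ of either sign. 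Hence $(\nabla_vS)(v,v)=0$. I expect this observation to be the crux: it removes the one gradient component that the eigenvector optimization in Step~3 cannot absorb.

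\textbf{Step 2: expanding the Laplacian of the branch.} Since $d\leq 2/\kappa_V$ with equality at $z$, the Alexandrov expansion of $d$ gives
\[
 \Delta d(z)\leq\Delta(2/\kappa_V)=-\frac{d^2}2\Delta\kappa_V+\frac{d^3}2|\nabla\kappa_V|^2,
 \qquad \nabla d=-\frac{d^2}2\nabla\kappa_V .
\]
Write $v=V(z)$. Because $Sv=\lambda v$ and $\nabla V\perp v$, we have $\nabla_i\kappa_V=(\nabla_iS)(v,v)$, so $\nabla\kappa_V$ does not depend on the extension. Using $v\cdot\Delta V=-|\nabla V|^2$, the second derivative is
\[
 \Delta\kappa_V=(\Delta S)(v,v)+4\sum_i(\nabla_iS)(v,\nabla_iV)-2\sum_i(\lambda-S)(\nabla_iV,\nabla_iV).
\]
Simons' identity in the form $\Delta S=\nabla^2h+hS^2-a^2S$ (Lemma~\ref{lem:nonminimal-kato-simons}, with $S=-A_N$) gives $(\Delta S)(v,v)=\nabla^2h(v,v)+h\lambda^2-a^2\lambda$. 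With $\lambda=2/d$, the terms $-\tfrac{d^2}2(h\lambda^2-a^2\lambda)$ become $-2h+a^2d$, which is exactly the curvature part of \eqref{eq:r5-diagonal-exact}.

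\textbf{Step 3: choosing the extension.} Set $\nabla_iV=(\lambda-S)^{-1}P_{v^\perp}(\nabla_iS)v$; if the top eigenvalue is repeated, restrict to the orthogonal complement of the top eigenspace, where $\lambda-S$ is invertible. This maximizes the last two terms of $\Delta\kappa_V$ and produces the gain
\[
 2\sum_i\bigl\langle(\nabla_iS)v,(\lambda-S)^{-1}(\nabla_iS)v\bigr\rangle_{v^\perp}.
\]
The chord bound \eqref{eq:r4-chord-bounds} gives $|\lambda_i|\leq2/d$, so $\lambda-S\leq4/d$ and $(\lambda-S)^{-1}\geq d/4$. Take the term $i=v$ and use Codazzi, $(\nabla_vS)(v,e_j)=(\nabla_jS)(v,v)$. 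The gain is then at least $\tfrac d2\sum_{j\perp v}(\nabla_jS)(v,v)^2$, so $-\tfrac{d^2}2$ times the gain is at most $-\tfrac{d^3}4\sum_{j\perp v}|\nabla_j\kappa|^2$. For the repeated-eigenvalue directions, note that Codazzi components of $\nabla S$ along the top eigenspace enter $\nabla\kappa$ only through gradients of the top eigenvalue, and treat these through the same minimization over $t$ and over $V$ within the eigenspace.

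\textbf{Step 4: combining.} By Step~1 the $v$-component of $\nabla\kappa$ vanishes. The total gradient coefficient is therefore at most
\[
 \frac{d^3}2|\nabla\kappa|^2-\frac{d^3}4|\nabla\kappa|^2=\frac{d^3}4|\nabla\kappa|^2=\frac{|\nabla d|^2}d .
\]
This yields \eqref{eq:r5-diagonal-exact}. The favorable sign of the singular part follows from Theorem~\ref{lem:r4-reach-semiconcavity}.

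The main obstacle is Step~1. One has to check rigorously that the Taylor coefficient of the one-sided short chord is $-(2/3)(\nabla_vS)(v,v)/\lambda^2$ in the $1/s$-penalty regime, and that minimality over $t$ and over the direction $V$ on a repeated top eigenspace really forces this third-order term to vanish.
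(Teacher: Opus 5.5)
Your proposal is correct and follows the paper's proof essentially step for step. It uses the same diagonal competitor $2/S(V,V)$, the same extension $\nabla_iV=(\lambda-S)^{-1}(\nabla_iS)v$ on the complement of the top eigenspace, the bound $\lambda-\lambda_j\leq2\lambda$ with the Codazzi swap at $i=v$, and the two-sided short-chord expansion forcing $(\nabla_vS)(v,v)=0$. Your treatment of a repeated top eigenvalue is vague, but the paper closes it along the line you hint at: apply Step~1 to every unit $w$ in the top eigenspace $V$, then polarize the totally symmetric (Codazzi) form $\nabla S$. This makes $\nabla S$ vanish on $V^3$, so $\nabla_j\kappa=(\nabla_jS)(v,v)=0$ for all $e_j\in V$, which is exactly what Step~4 needs.
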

\begin{proof}
Every eigenvalue $\lambda_j$ of $S$ belongs to $[-\lambda,\lambda]$,
by the diagonal chord bound.  Let $V$ be the top eigenspace.  For a principal
geodesic of signed length $L$ in a direction $w\in V$ one has
\[
 \frac{2N\cdot(y(L)-z)}{|y(L)-z|^2}
 =\lambda+\tfrac13(\nabla_wS)(w,w)L+O(L^2).
\]
The reciprocal chord bound gives an upper bound $\lambda$ for both signs
of $L$.  Thus $(\nabla_wS)(w,w)=0$.  Since $\nabla S$ is totally symmetric
by Codazzi, polarization shows that it vanishes on $V^3$.

\vskip0.12in

Choose an orthonormal principal frame $(v_j)_{j=1}^m$ with $v=v_1$
and extend $v_1$ as a local unit vector field whose first
derivatives at the point satisfy
\[
 \langle\nabla_iv_1,v_j\rangle
   =\frac{(\nabla_iS)_{1j}}{\lambda-\lambda_j}
       \quad(\lambda_j<\lambda),
 \qquad
 \langle\nabla_iv_1,v_j\rangle=0\quad(v_j\in V).
\]
For $f=S(v_1,v_1)$, the function $2/f$ is a local upper competitor for
$d$, with equality at the point.  Its derivatives obey
\begin{align*}
 \nabla_if&=(\nabla_iS)_{11},\\
 \Delta f&=(\Delta S)_{11}
       +2\sum_{i,\lambda_j<\lambda}
          \frac{|(\nabla_iS)_{1j}|^2}{\lambda-\lambda_j}.
\end{align*}
At a top eigenvector, the unit-length constraint accounts for the entire
contribution from the second derivatives of the extension.  By Codazzi,
$\nabla_jf=(\nabla_1S)_{1j}$; this vanishes for $v_j\in V$.
Since $\lambda-\lambda_j\leq2\lambda$, the full term $2\sum$ in the
last display is at least $|\nabla f|^2/\lambda$.
The exact nonminimal Simons identity is
\[
 \Delta S=\nabla^2h+hS^2-a^2S.
\]
Consequently
\[
 \Delta f\geq\nabla^2h(v,v)+h\lambda^2-a^2\lambda
                       +|\nabla f|^2/\lambda.
\]
Differentiate $2/f$ and use upper contact to obtain
\eqref{eq:r5-diagonal-exact}, including the exact signed mean term $-2h$.
\end{proof}

\subsection{A weak inequality for the truncated reach}
The mean-curvature derivative estimates make the remainders in the
nonplateau comparisons decay faster than the reciprocal reach. We control
all of them by one lower bound for the curvature and gradient terms.
The physical cap is treated separately as a smooth upper branch.

\subsubsection{The large-reach branches}

Fix $0<\omega<1/4$ and the penalty from Section~\ref{sec:r9-reach-geometry},
choosing its angular accuracies once to satisfy the estimates below.
All $C_\Gamma,c_\Gamma$ depend on these fixed choices, including $\delta_2$.

\begin{proposition}
\label{prop:r7-actual-weak-reach}
There are $d_*,\theta,c_*>0$ and constants $C_\Gamma,c_\Gamma>0$,
depending only on $m,\omega$ and the fixed penalty with its angular
accuracies. For each guard $\eta>0$, there is $R_*=R_*(R_0,\eta)$
such that the following holds for $R\geq R_*$. Consider a base point $z$
with $\rho_o(z)>\eta R$, $d(z)<r_\epsilon(z)$ and $d(z)>d_*$.
At almost every such base point one may select an active contact type.
For a nonplateau off-diagonal contact or a diagonal contact,
\begin{equation}\label{eq:r7-nonplateau-weak}
 \Delta_\Sigma d\leq
    (1+\omega+C_\Gamma d^{-\theta})a^2d
           +B\frac{|\nabla_\Sigma d|^2}{d},\qquad
 B\leq1+C_\Gamma d^{-\theta}.
\end{equation}
If $ad<c_*$, the same estimate may be used with
\begin{equation}\label{eq:r7-small-ad-drift}
 B\leq\omega+C_\Gamma ad+C_\Gamma d^{-\theta},
 \qquad |\nabla_\Sigma d|\geq c_\Gamma>0.
\end{equation}
An active plateau pair obeys
\begin{equation}\label{eq:r7-plateau-weak}
 \Delta_\Sigma d\leq
 G_1\{|H(z)|+H_{N_y}(y)_+\}
            +\omega\frac{|\nabla_\Sigma d|^2}{d}.
\end{equation}
One also has the locally uniform gradient bound
\begin{equation}\label{eq:r7-reach-gradient}
 |\nabla_\Sigma d|\leq
 C_\Gamma\exp\{C_\Gamma\sqrt{\log(K(d)+1)}\}.
\end{equation}
The inequalities concern the absolutely continuous Laplacian at
Alexandrov points.  The singular part has nonpositive sign.
\end{proposition}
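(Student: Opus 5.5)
We combine the branch calculus of Propositions~\ref{prop:r5-unmodified-exact}, \ref{prop:r5-diagonal-exact} and \ref{prop:r4-plateau-reach-inequality} with the guarded isolated-graph estimates. Fix $\eta$ and take $R\geq R_*(R_0,\eta)$ from Lemma~\ref{lem:physical-cap-admissibility}, so that $B_{Ad}(z)$ avoids the core. For $d>d_*$, Proposition~\ref{prop:r11-nearreach-flat-bridge} then applies at scale $h=c_*h_0=d^{1-o(1)}$, where $h_0=d\exp[-C\sqrt{\log(K(d)+1)}]$. It gives a single graph with $|A|\leq C/h$, $D\leq Ch^{-2}$ and $|\nabla_\Sigma A|\leq Ch^{-2}$. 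Theorem~\ref{thm:actual-isolated-parity-jets} adds $|\nabla_\Sigma H|+|\nabla_\Sigma^2H|\leq Ch^{-3-\gamma}$. Corollaries~\ref{cor:r5-incoming-trace} and~\ref{cor:r5-source-trace} give $|H(z)|+H_{N_y}(y)_+\leq C_\Gamma d^{-1-\tau}$. Since $h=d^{1-o(1)}$, all these mean-curvature quantities decay like $d^{-1-\theta}$ for a fixed $\theta>0$ after choosing $\gamma$. We select the contact type measurably, as in the measurable-selection remark following Theorem~\ref{thm:r4-positive-foot-transfer}. The singular part is nonpositive by Theorem~\ref{lem:r4-reach-semiconcavity}.

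For the plateau branch, \eqref{eq:r7-plateau-weak} is the second line of \eqref{eq:r4-plateau-exact}, using $\delta_2/(2-\delta_2)\leq\omega$ once $\delta_2$ is small. For the diagonal branch, \eqref{eq:r5-diagonal-exact} has gradient coefficient $1$. The remainder $-2h-\tfrac{d^2}{2}\nabla^2h(v,v)$ is at most $Cd^{-1-\tau}+Cd^2h^{-3-\gamma}$, which is $O(d^{-1-\theta})$ because $h^{-3-\gamma}d^2=d^{-1-\gamma+o(1)}$. We absorb this into $C_\Gamma d^{-\theta}a^2d$, using $a^2d\geq\lambda^2d=4/d$ at a diagonal contact. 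For a nonplateau off-diagonal contact with $s\leq1-\delta_1$, \eqref{eq:r5-unmodified-laplacian} gives coefficient $2-2/(1+r\Lambda)\leq\min\{1,ad\}$. The trace term is $O(s^{-2}(d^{-1-\tau}+\ell h^{-3-\gamma}))$, with $s\geq c_\Gamma$ because $\ell\leq d$ on an active nonplateau chord. In the transition zone $1-\delta_1<s<1-\delta_2$ we use the general formula \eqref{eq:s7-offdiagonal-uniform} together with \eqref{eq:s7-angular-bounds}. The term $c_1K_Z^2/\ell$ is at most $(1-s)|\nabla d|^2/d$ up to constants. The $\Gamma'\delta_Z A_N$ term is $O(\delta_1 ad)\cdot|\nabla d|^2/d$. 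The $\Gamma''$ term is controlled by $(1-s)\Gamma''\leq\varepsilon_1$ and \eqref{eq:s7-uniform-small-ad}. Together these give the coefficient $\omega+C_\Gamma ad$ once $\varepsilon_1,\delta_1$ are fixed small.

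The remaining trace errors, of order $d^{-1-\theta}$, must be absorbed. When $ad<c_*$, \eqref{eq:s7-uniform-small-ad} (or the small-$ad$ clause of Proposition~\ref{prop:r5-unmodified-exact}) gives $|\nabla d|\geq c_\Gamma$. Hence $d^{-1-\theta}\leq C d^{-\theta}|\nabla d|^2/d$, which yields \eqref{eq:r7-small-ad-drift}. When $ad\geq c_*$, we bound the error by $C d^{-\theta}a^2d$ instead, using $a^2d\geq c_*^2/d$. Combining the two regimes gives \eqref{eq:r7-nonplateau-weak} with $B\leq1+C_\Gamma d^{-\theta}$. The coefficient $1+\omega$ comes from Young's inequality on cross terms in the transition zone.

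Finally, \eqref{eq:r7-reach-gradient} follows from \eqref{eq:coarse-actual-reach-gradient} refined on the graph. The branch gradients are $G_1b$, $Mb/s$ and the transition formula \eqref{eq:r5-transition-gradient}, all bounded by $C(1+\ell|A|)\leq C d/h=C\exp\{C\sqrt{\log(K(d)+1)}\}$; on diagonal branches the gradient is $d^2|\nabla f|/2\leq Cd^2h^{-2}$, which is of the same form.

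The main obstacle is the transition zone. There the penalty derivatives interact with the curvature, and one must track $K_Z,\delta_Z$ carefully enough that the gradient coefficient stays at $\omega$ for small $ad$ rather than $O(1)$. The second difficulty is making the $h$-to-$d$ losses in the parity jets strictly subcritical, which requires choosing $\gamma$ and $\theta$ consistently.
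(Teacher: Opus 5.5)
Your proof has a genuine gap on the unmodified branch. You bound the quotient in \eqref{eq:r5-unmodified-laplacian} by $O(s^{-2}(d^{-1-\tau}+\ell h^{-3-\gamma}))$ and then assert $s\geq c_\Gamma$ ``because $\ell\leq d$''. On this branch $\Gamma(s)=1/s$, so activity means $\ell/s=d$, i.e.\ $\ell=sd$. The inequality $\ell\leq d$ therefore only says $s\leq1$ and gives no lower bound on $s$.

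Short, nearly tangent chords on the base sheet itself are active unmodified contacts with $s\to0$; these are exactly the off-diagonal approximants of diagonal contacts. For them your bound becomes $O(s^{-2}d^{-1-\tau}+s^{-1}dh^{-3-\gamma})$, which is not $O(d^{-1-\theta})$. The same defect affects your proof of \eqref{eq:r7-reach-gradient}. By \eqref{eq:r5-unmodified-gradient}, $|\nabla_\Sigma d|=2|Mb|/s$, and this is not bounded by $C(1+\ell|A|)$ when $s$ is small.

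The missing idea is a cancellation along the single isolated graph. Split at $s=d^{-\sigma}$ with $\sigma>0$ small.
\begin{itemize}
\item If $s\geq d^{-\sigma}$, the crude bound loses only $d^{2\sigma}$. The quotient is at most $C(d^{-1-\tau+2\sigma}+d^{2\sigma}h^{-2}+d^{1+\sigma}h^{-3-\gamma})$. This is $O(d^{-1-\theta})$ provided $2\sigma<\tau$ and $h\geq d^{1-\xi}$ with $\xi$ small relative to $\sigma$ and $\gamma$; increase $d_*$ to arrange the latter.
\item If $s<d^{-\sigma}$, then $\ell=sd\leq d^{1-\sigma}=o(h)$. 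Both feet lie on the graph of Proposition~\ref{prop:r11-nearreach-flat-bridge}, $N_y$ is the continuation of $N$, and the horizontal displacement is exactly $\ell b$. Expand to second order \emph{before} dividing by $s^2$: this gives $H_{N_y}(y)-H_N(z)-\ell b\cdot\nabla_\Sigma H_N(z)=O(\ell^2h^{-3-\gamma})$. The leftover $2s^2H_N(z)$ is $O(s^2h^{-2})$, so the quotient is $O(h^{-2}+d^2h^{-3-\gamma})$.
\end{itemize}
This second case is where the bound on $\nabla_\Sigma^2H$ from Theorem~\ref{thm:actual-isolated-parity-jets} is genuinely needed; you use it only at diagonal contacts.

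The gradient bound needs the same split.
\begin{itemize}
\item If $\ell\geq ch$, then $s=\ell/d\geq ch/d$ and $|\nabla_\Sigma d|\leq Cd/h$.
\item If $\ell<ch$, the normal Taylor expansion $-2sb=\ell A_Nb+O(\ell^2/h^2)$ gives $Mb=b+rA_Nb=O(\ell d/h^2)$. Hence $|\nabla_\Sigma d|\leq Cd^2/h^2$, which is still of the form $C\exp\{C\sqrt{\log(K(d)+1)}\}$.
\end{itemize}

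With these two additions, the rest of your outline matches the paper's argument: plateau, diagonal, transition zone, and the two-regime absorption via $a^2d+|\nabla d|^2/d\geq c_\Gamma/d$. Two minor corrections in the transition zone. The leading $a^2d$ coefficient comes from the exact term $-\ell\Gamma'sa^2=\mathfrak q\,a^2d$ with $\mathfrak q\leq1$, not from Young. Also, your use of \eqref{eq:s7-uniform-small-ad} to bound $t_b^4/d$ is valid only for small $ad$; when $ad\geq c_*$ you need the separate bound $t_b^4/d\leq C\delta_1^2a^2d$.
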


\begin{proof}
Fix $\gamma=1/4$ in Theorem~\ref{thm:actual-isolated-parity-jets}.
The guarded graph and parity estimates then give
$h=cd\exp[-C\sqrt{\log(K(d)+1)}]$. Thus $h=d^{1-o(1)}$, with
\begin{equation}\label{eq:r7-source-jets}
 |A|\leq C/h,\quad |\nabla_\Sigma A|\leq C/h^2,\quad
 |H|\leq C/h^2,\quad
 |\nabla_\Sigma H|+|\nabla_\Sigma^2H|\leq Ch^{-3-\gamma}.
\end{equation}
The coordinate Hessian obeys the same bound since its connection error
is $O(h^{-1})\nabla H$. Independently, the receiving rolling ball gives
\begin{equation}\label{eq:r7-receiving-trace}
 H_{N_y}(y)\leq C_\Gamma d^{-1-\tau}.
\end{equation}
Choose $0<\sigma<\min\{\gamma/8,\tau/8,1/8\}$ and then a fixed
$\xi<\min\{\sigma/4,\gamma/[8(3+\gamma)],1/32\}$.
Increase $d_*$ until $h\geq d^{1-\xi}$.

For an unmodified contact, write the last quotient in
\eqref{eq:r5-unmodified-laplacian} as $\mathcal T$.
If $s\geq d^{-\sigma}$, its upper bound is
\[
 \mathcal T\leq C_\Gamma
 (d^{-1-\tau+2\sigma}+d^{2\sigma}h^{-2}
                         +d^{1+\sigma}h^{-3-\gamma}).
\]
If $s<d^{-\sigma}$, then $\ell=ds=o(h)$; both feet lie on the
graph centered at $z$, the receiving normal continues $N$, and the horizontal
displacement is exactly $\ell b$. Taylor expansion before division gives
\[
 H_{N_y}(y)-H_N(z)-\ell b\cdot\nabla H_N(z)
       =O(\ell^2h^{-3-\gamma}),\qquad
 \mathcal T\leq Ch^{-2}+Cd^2h^{-3-\gamma}.
\]
Thus $\mathcal T\leq C_\Gamma d^{-1-\theta}$ for fixed $\theta>0$.
The exact algebraic gradient coefficient is
$B_0=2-2/(1+r\Lambda)\leq\min\{1,ad\}$.
At a diagonal contact the remainder in \eqref{eq:r5-diagonal-exact}
has the same bound, while $ad\geq2$ and the gradient coefficient is one.

\vskip0.12in

In the angular transition, put $t_b=|b|$. Equations
\eqref{eq:s7-offdiagonal-uniform}--\eqref{eq:s7-angular-bounds},
$\ell\asymp d$, $0\leq\mathfrak q\leq1$, and
$\Gamma''t_b^2\leq2\varepsilon_1$ bound the algebraic terms by
\[
 \mathfrak q a^2d+C\varepsilon_1a^2d
          +Ca t_b^2+C(1+\varepsilon_1)t_b^4/d.
\]
Young bounds the mixed term by an arbitrarily small multiple of $a^2d$
plus a fixed multiple of $t_b^4/d$.
For $ad<c_*\leq1$, \eqref{eq:s7-uniform-small-ad} gives
$t_b^4/d\leq C\delta_1|\nabla d|^2/d$; for $ad\geq c_*$ it is
at most $C\delta_1^2a^2d$. Fixing the accuracies small enough leaves
\[
 (1+\omega/2)a^2d+(\omega/2)|\nabla d|^2/d.
\]
The trace terms are bounded by
$C_\Gamma(d^{-1-\tau}+h^{-2}+dh^{-3-\gamma})
\leq C_\Gamma d^{-1-\theta}$, decreasing $\theta$ if necessary.

We have therefore bounded the error on every nonplateau branch by
$C_\Gamma d^{-1-\theta}$.
For $ad<c_*$, the unmodified or transition contact has
$t_b\geq\sqrt{1-(1-\delta_2)^2}>0$, so
\eqref{eq:s7-uniform-small-ad} gives $|\nabla d|\geq c_\Gamma$;
a diagonal contact is impossible. Otherwise $a^2d\geq c_*^2/d$.
Consequently
\[
 a^2d+|\nabla d|^2/d\geq c_\Gamma/d,
\]
and this lower bound absorbs the error. It proves
\eqref{eq:r7-nonplateau-weak} and \eqref{eq:r7-small-ad-drift}.
On the plateau, \eqref{eq:r4-plateau-exact} proves
\eqref{eq:r7-plateau-weak} upon fixing
$\delta_2/(2-\delta_2)\leq\omega$.

\vskip0.12in

For the gradient bound, unmodified contacts with $\ell\geq ch$ obey
$|\nabla d|\leq Cd/h$ by \eqref{eq:r5-unmodified-gradient}.
When $\ell<ch$, normal Taylor expansion gives
$-2sb=\ell A_Nb+O(\ell^2/h^2)$, hence
$|\nabla d|\leq Cd^2/h^2$. Diagonal inverse-curvature differentiation
gives the latter bound from $|\nabla A|\leq Ch^{-2}$.
Transition first variation is $O_\Gamma(1+da)=O_\Gamma(1)$ by the chord
bound $da\leq2\sqrt m$; the plateau gradient is at most $G_1$.
These give \eqref{eq:r7-reach-gradient}. Local semiconcavity, matching
penalty jets at angular endpoints and the nonpositive singular Laplacian
allow all active upper branches, including diagonal limits, without a
positive switch measure.
\end{proof}

\vskip0.12in

\subsubsection{The weak capped-reach inequality}

\begin{theorem}
\label{thm:r7-all-branch-capped-reach}
Keep the parameters and large-reach threshold above, and assume
$D\not\equiv0$.  On the open regular region
\[
 \mathcal U_{R}=\{z\in\Sigma_{\rm reg}:
               \rho_o(z)>\eta R,\quad\widehat d(z)>d_*\},
\]
choose $R$ sufficiently large for physical-core admissibility.
Partition its Alexandrov points into the cap set
$\mathcal C=\{d\geq r_\epsilon\}$ and the actual set
$\{d<r_\epsilon\}$.  On the latter select a plateau contact
where available, calling this measurable subset $\mathcal P$;
call the remaining points $\mathcal N$.  Then, as a weak measure
inequality on $\mathcal U_R$,
\begin{align}\label{eq:r7-all-branch-capped-reach}
 \Delta_\Sigma\widehat d\leq{}&
 \mathbf1_{\mathcal N}\left\{
 (1+\omega+C_\Gamma\widehat d^{-\theta})a^2\widehat d
       +B\frac{|\nabla\widehat d|^2}{\widehat d}\right\}
 \notag\\
 &+\mathbf1_{\mathcal P}\left\{
 G_1(|H(z)|+H_{N_y}(y)_+)
                 +\omega\frac{|\nabla\widehat d|^2}{\widehat d}
                       \right\}
 +\mathbf1_{\mathcal C}\,\mathcal F_{\rm cap},
\end{align}
where $B$ obeys \eqref{eq:r7-nonplateau-weak} and, for small
$a\widehat d$ on $\mathcal N$, \eqref{eq:r7-small-ad-drift}.
The explicit nonnegative cap term may be taken as
\begin{equation}\label{eq:r7-explicit-cap-expense}
 \mathcal F_{\rm cap}
 =\frac{m\epsilon}{\rho_o}+\epsilon|H|
 \leq\frac{m\epsilon}{\rho_o}
       +C_\Gamma\epsilon^{-\tau}\rho_o^{-1-\tau}.
\end{equation}
For fixed $\epsilon$, after a further fixed physical-radius
threshold, $\mathcal F_{\rm cap}\leq(m+1)\epsilon/\rho_o
=(m+1)\epsilon^2/\widehat d$ on $\mathcal C$.
There is no positive singular measure at actual/contact or
actual/cap switches.  Moreover
\begin{equation}\label{eq:r7-capped-gradient}
 |\nabla_\Sigma\widehat d|\leq
 C_\Gamma\exp\{C_\Gamma\sqrt{\log(K(\widehat d)+1)}\}
 \quad\text{almost everywhere on }\mathcal U_R.
\end{equation}
For positive contributions from plateau contacts, the signed multiplicity-one area
formula holds with $\widehat d(y)\leq\widehat d(z)$. The cap branch
contributes only the explicit cap term.
\end{theorem}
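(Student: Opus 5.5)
The plan is to assemble Theorem~\ref{thm:r7-all-branch-capped-reach} from the branchwise estimates already proved. The one new ingredient is the bookkeeping for the minimum $\widehat d=\min\{d,r_\epsilon\}$.

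The first step is regularity. By Lemma~\ref{lem:physical-cap-calculus}, $\widehat d$ is locally semiconcave on $\Sigma_{\rm reg}$, since it is a minimum of two functions that become concave after subtracting one common quadratic. Hence its distributional surface Laplacian splits into an absolutely continuous part, given almost everywhere by the Alexandrov trace, and a nonpositive singular part. It therefore suffices to bound the Alexandrov trace pointwise almost everywhere on $\mathcal U_R$. The singular part at actual/cap ties, and at switches between contact types, then automatically has the favorable sign.

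The second step is the cap branch. At an Alexandrov point of $\mathcal C$, including tie points, the smooth function $r_\epsilon$ touches $\widehat d$ from above. Its Laplacian from \eqref{eq:physical-cap-calculus} gives
\[
 \Delta\widehat d\leq\epsilon m/\rho_o+\epsilon|H|.
\]
This is $\mathcal F_{\rm cap}$. Because $d\geq r_\epsilon$ is large for $\rho_o>\eta R$, Corollary~\ref{cor:r5-source-trace} yields \eqref{eq:r7-explicit-cap-expense}. Once $C_\Gamma\epsilon^{-1-\tau}\rho_o^{-\tau}\leq1$, this improves to $(m+1)\epsilon/\rho_o=(m+1)\epsilon^2/\widehat d$.

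The third step is the actual branch. On $\{d<r_\epsilon\}$ we have $\widehat d=d$ on a neighborhood, by continuity of $d$. Lemma~\ref{lem:physical-cap-admissibility} places the density window outside the core once $R\geq R_*(R_0,\eta)$, so Proposition~\ref{prop:r7-actual-weak-reach} applies. Using the measurable selection of plateau contacts described after Theorem~\ref{thm:r4-positive-foot-transfer}, $\mathcal P$ receives \eqref{eq:r7-plateau-weak}, and $\mathcal N$ receives \eqref{eq:r7-nonplateau-weak} together with its small-$ad$ refinement \eqref{eq:r7-small-ad-drift}. On $\mathcal N$ and $\mathcal P$ we may replace $d$ by $\widehat d$ and $\nabla d$ by $\nabla\widehat d$ almost everywhere, since the two functions agree on an open set.

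The main obstacle is the Alexandrov points of $\{d<r_\epsilon\}$ where several contact types are active. These are handled as in the last paragraph of the proof of Proposition~\ref{prop:r7-actual-weak-reach}: every active upper branch gives a valid upper-touching bound, so any measurable selection yields a valid inequality. The set where $\widehat d=d$ equals $r_\epsilon$ without being in the interior of either set is assigned to $\mathcal C$, where the smooth upper branch still applies.

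The final step covers the gradient bound and the area formula. On $\mathcal C$ we have $|\nabla\widehat d|\leq\epsilon$, and on the actual branch \eqref{eq:r7-reach-gradient} holds with $d=\widehat d$. Monotonicity of $K$ then gives \eqref{eq:r7-capped-gradient}. For plateau pairs, \eqref{eq:physical-cap-receiving-order} gives $\widehat d(y)\leq\widehat d(z)$. Theorem~\ref{thm:r4-positive-foot-transfer} with Corollary~\ref{cor:r5-incoming-trace} supplies the signed multiplicity-one area formula.
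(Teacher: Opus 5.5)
Your proposal is correct and follows essentially the same route as the paper. Semiconcavity of $\widehat d$ gives a nonpositive singular part. The cap branch, including ties, is handled by the smooth upper branch $r_\epsilon$ with Lemma~\ref{lem:physical-cap-calculus} and Corollary~\ref{cor:r5-source-trace}. The actual branch is handled by Proposition~\ref{prop:r7-actual-weak-reach} with a measurable plateau-first selection, and the foot formula by \eqref{eq:physical-cap-receiving-order}.

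The only addition in the paper is the remark that the right-hand coefficients are locally bounded on $\mathcal U_R$, with the receiving traces controlled by \eqref{eq:r7-receiving-trace}. This makes the almost-everywhere branch bounds an inequality between locally finite measures, and is worth stating explicitly.
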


\begin{proof}
On $\mathcal N\cup\mathcal P$, apply Proposition~\ref{prop:r7-actual-weak-reach}.
On $\mathcal C$, including Alexandrov ties, use the cap as smooth upper
branch and Lemma~\ref{lem:physical-cap-calculus}; its trace and mean bound
give \eqref{eq:r7-explicit-cap-expense}. The cap gradient is at most
$\epsilon$, proving \eqref{eq:r7-capped-gradient} on this branch as well.

\vskip0.12in

The contact parameters admit a measurable selection on countable regular
patches: on compact patches their active set is compact after adding
diagonal limits, and the objectives and plateau condition are Borel.
Using completed surface measure, select a plateau contact whenever
available and any remaining active contact otherwise. Receiving traces are locally bounded by
\eqref{eq:r7-receiving-trace}; the other coefficients are locally bounded
on $\mathcal U_R$. Since the singular part of the Laplacian of $\widehat d$ is
nonpositive, the almost-everywhere branch estimates imply the stated
weak measure inequality. Finally \eqref{eq:physical-cap-receiving-order}
preserves the actual signed positive-foot formula and receiving order;
the cap creates no contact.
\end{proof}

\subsubsection{Errors for fixed exponential multipliers}

\begin{lemma}
\label{lem:physical-cap-weighted-error}
Fix a good threshold $\delta>0$, $\eta>0$, $\epsilon>0$, $\kappa>0$,
a fixed $p\geq2$, and an integer $N\geq0$. Let
\[
 \begin{gathered}
 \chi_R(z)=\exp[-\sqrt{R^2+|z-x_c|^2}/R],\\
 \mathcal C_R=\{z\in\Sigma_g:d(z)\geq r_\epsilon(z),\quad
                                      \rho_o(z)\geq\eta R\}.
 \end{gathered}
\]
Let $P$ be a fixed nonnegative locally Lipschitz multiplier with
$P(t)+|P'(t)|\leq C_{\rm mult}e^{-\kappa t}$ for almost every sufficiently
large $t$ (using its continuous representative for $P$).
At exceptional levels choose a bounded derivative representative; its
choice is immaterial in the terms multiplied by $|\nabla r_\epsilon|^2$.
Define the explicit cap error term
\[
 \mathfrak B_P=P(r_\epsilon)
        \left\{|\Delta r_\epsilon|
                  +\frac{|\nabla r_\epsilon|^2}{r_\epsilon}
                  +\frac{|\nabla r_\epsilon|}{R}\right\}
                    +|P'(r_\epsilon)||\nabla r_\epsilon|^2.
\]
Uniformly in the translating observation center, for sufficiently
large $R$,
\begin{equation}\label{eq:physical-cap-weighted-error}
 \int_{\mathcal C_R}\chi_R^2(1+\rho_o)^N\mathfrak B_P
 \leq C e^{-cR}K(R)R^m=o\bigl(K(R)R^{m-2p}\bigr).
\end{equation}
Here $C,c$ and the lower threshold for $R$ may depend on $m,\Gamma$,
the displayed fixed parameters and the multiplier $P$, and are independent
of $x_c$. The constants $C,c$ are independent of $R$.
The same conclusion holds for compact tests bounded by
$\chi_R^2$; a fixed guarded spatial cutoff with derivative bounded
by $C_\eta/R$ only changes $C$.  The estimate concerns exactly the displayed fixed-multiplier costs.
\end{lemma}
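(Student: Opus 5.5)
The estimate is a pointwise bound on $\mathcal C_R$ followed by a crude area count; no cancellation is needed. On the cap set, $r_\epsilon=\epsilon\rho_o\geq\epsilon\eta R$. By Lemma~\ref{lem:physical-cap-calculus},
\[
 |\nabla r_\epsilon|\leq\epsilon,\qquad
 |\Delta r_\epsilon|\leq m\epsilon/\rho_o+\epsilon|H|.
\]
We bound $|H|$ by Corollary~\ref{cor:r5-source-trace}, since $d\geq r_\epsilon$ is large there. Cruder alternatives also suffice: $|H|\leq CD\leq C$ on the good set $\Sigma_g$ by Lemma~\ref{lem:r9-zero-baseline}. For $R$ beyond a threshold depending on $\epsilon,\eta$, the argument $t=r_\epsilon$ is beyond the threshold where the multiplier decay holds. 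Hence
\[
 \mathfrak B_P\leq C(1+\epsilon+\epsilon^2+R^{-1})\,e^{-\kappa\epsilon\rho_o}\leq Ce^{-\kappa\epsilon\rho_o}
\]
pointwise on $\mathcal C_R$, with $C$ depending only on the fixed parameters and $C_{\rm mult}$. The choice of derivative representative at exceptional levels enters only through the bounded term $|P'||\nabla r_\epsilon|^2$, so it is harmless.

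Next we absorb the polynomial factor. Since $\rho_o\geq\eta R$ on $\mathcal C_R$,
\[
 (1+\rho_o)^Ne^{-\kappa\epsilon\rho_o}\leq C_Ne^{-\kappa\epsilon\rho_o/2}\leq C_Ne^{-\kappa\epsilon\eta R/4}e^{-\kappa\epsilon\rho_o/4}.
\]
This extracts a factor $e^{-cR}$ with $c=\kappa\epsilon\eta/4$, leaving the integrable spatial weight $e^{-\kappa\epsilon\rho_o/4}$ centered at the fixed origin. We then drop $\chi_R^2\leq1$, using nothing about the translating center $x_c$; this is what makes the bound uniform in $x_c$.

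It remains to bound $\int_{\Sigma_g}e^{-c'\rho_o}\,dS$. Decompose $\Sigma_g$ into dyadic annuli $A_k=\{2^k\leq\rho_o<2^{k+1}\}$. Lemma~\ref{lem:r9-zero-baseline} gives $\mathcal H^m(\Sigma_g\cap B_r(0))\leq CE(u;B_{r+1}(0))\leq C(1+r)^{m+1}$. Summing $e^{-c'2^k}2^{k(m+1)}$ over $k$ gives a finite constant $C_u'$ depending only on the fixed parameters, and in fact only on $m$ and the exponent, not on $u$. Therefore the left side of \eqref{eq:physical-cap-weighted-error} is at most $Ce^{-cR}$. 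Since $K(R)\geq1$, this is at most $Ce^{-cR}K(R)R^m$, and since $e^{-cR}R^{2p}\to0$, it is $o(K(R)R^{m-2p})$.

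The variants follow the same way. For compact tests bounded by $\chi_R^2$, the same pointwise domination applies. A guarded cutoff with gradient at most $C_\eta/R$ only multiplies the integrand by a bounded factor, so it changes $C$ and nothing else.

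The main obstacle is ensuring that every constant is independent of $x_c$ and $R$. The essential point is to discard $\chi_R$ entirely rather than use it, because all the decay comes from the cap condition $r_\epsilon\geq\epsilon\eta R$. One must also check that on $\mathcal C_R$ the mean-curvature bound used for $|\Delta r_\epsilon|$ holds without any assumption beyond membership in $\Sigma_g$. The good-set bound $|H|\leq CD$ provides this directly.
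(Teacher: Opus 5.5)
Your proof is correct, but it handles the integration differently from the paper.

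\textbf{The paper's route.} The paper keeps the observer weight. It first shows pointwise that $(1+\rho_o)^N\mathfrak B_P\leq Ce^{-cR}$ on $\mathcal C_R$. Half of $e^{-\kappa\epsilon\rho_o}$ absorbs the polynomial, and the other half gives $e^{-cR}$ via $\rho_o\geq\eta R$. It then integrates this constant against $\chi_R^2$ using the weighted area bound $\int_{\Sigma_g}\chi_R^2\leq CK(R)R^m$. That bound is proved by summing over dyadic shells $|z-x_c|\asymp2^jR$ with the envelope comparison $K(t)\leq C(t/R)K(R)$, and uniformity in $x_c$ comes from this shell count.

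\textbf{Your route.} You keep a quarter of the exponential as an integrable weight centred at the fixed origin and discard $\chi_R^2\leq1$. You then need only the unconditional growth $\mathcal H^m(\Sigma_g\cap B_r(0))\leq C(1+r)^{m+1}$ from Lemma~\ref{lem:r9-zero-baseline}. This is more elementary and makes uniformity in $x_c$ automatic. It also gives the stronger bound $Ce^{-cR}$ with no factor $K(R)R^m$. The paper's route instead yields a weighted area estimate in the $K(R)R^m$ normalization, which Corollary~\ref{cor:physical-cap-interaction} reuses. For the present lemma your shortcut loses nothing.

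\textbf{Two justifications to tighten.}
\begin{itemize}
\item The derivative representative at exceptional levels is immaterial, but not because the term is bounded: a merely bounded integrand would not decay like $e^{-cR}$. The real reason is the coarea formula: $\nabla_\Sigma r_\epsilon=0$ almost everywhere on the preimage of a null set of levels. That null set includes the levels where the exponential bound on $P'$ fails.
\item A guarded cutoff $\xi$ does not merely multiply the integrand. It enters through the cross term $P(r_\epsilon)\nabla r_\epsilon\cdot\nabla\xi$, which is bounded by $C_\eta P(r_\epsilon)|\nabla r_\epsilon|/R$. That is the third term already built into $\mathfrak B_P$, so your pointwise exponential domination still covers it.
\end{itemize}
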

\begin{proof}
Good-surface area comparison and ball covering give, uniformly in $x_c$,
\begin{equation}\label{eq:physical-cap-weighted-area}
 \int_{\Sigma_g}\chi_R^2\leq CK(R)R^m.
\end{equation}
Indeed covering gives $K(t)\leq C(t/R)K(R)$ for $t\geq R$;
the shell $|z-x_c|\asymp2^jR$ contributes at most
$CK(R)R^m2^{(m+1)j}e^{-c2^j}$, including fixed unit enlargements.
Cap calculus bounds $(1+\rho_o)^N\mathfrak B_P$ by a fixed polynomial
in $1+\rho_o$ times $e^{-\kappa\epsilon\rho_o}$.
Absorb the polynomial with half the exponential and use
$\rho_o\geq\eta R$ to obtain $Ce^{-cR}$; then
\eqref{eq:physical-cap-weighted-area} proves the estimate.
Its normalized ratio is $O(R^{2p}e^{-cR})\to0$.
The guarded-cutoff derivative contributes only the already included
$CP(r_\epsilon)|\nabla r_\epsilon|\chi_R^2/R$ term.
\end{proof}

\vskip0.12in

The preceding estimate is for a fixed multiplier. If an amplitude depends
on the observation scale and can remain constant on an interval, its
exponential bound must be proved uniformly before applying this estimate.

\begin{corollary}
\label{cor:physical-cap-interaction}
Put $\kappa=\sqrt2/G_1$,
$E=e^{-\kappa d}$ and $\widehat E=e^{-\kappa\widehat d}$.
Then $E\leq\widehat E$, with equality on the actual branch.
On $\mathcal C_R$,
\[
 \int_{\mathcal C_R}\chi_R^2
    \{\widehat E^2+D(\widehat E-E)\}
       \leq Ce^{-cR}K(R)R^m.
\]
Thus the replacement of actual interaction by capped interaction adds
the displayed exponentially small error on this guarded region;
the original mixed interaction and bad-net count are estimated separately.
\end{corollary}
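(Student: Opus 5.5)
The statement to prove is Corollary~\ref{cor:physical-cap-interaction}: on the cap region $\mathcal C_R$ the weighted integrals of $\widehat E^2$ and of $D(\widehat E-E)$ are exponentially small. I will obtain it as a direct instance of the fixed-multiplier cap estimate, Lemma~\ref{lem:physical-cap-weighted-error}, or more precisely of its proof.

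First I will dispose of the comparison $E\leq\widehat E$ and the equality on the actual branch. Since $\widehat d=\min\{d,r_\epsilon\}\leq d$ and $t\mapsto e^{-\kappa t}$ is decreasing, $E\leq\widehat E$. Where $d<r_\epsilon$ we have $\widehat d=d$, so $E=\widehat E$ there. The points with $d=\infty$ belong only to the planar case, where $E=0$ and the convention is consistent.

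Next I handle the integrand on $\mathcal C_R$. There $\widehat d=r_\epsilon=\epsilon\rho_o$, so $\widehat E=e^{-\kappa\epsilon\rho_o}$. Since $0\leq D\leq1$ and $0\leq E\leq\widehat E$,
\[
 \widehat E^2+D(\widehat E-E)\leq\widehat E^2+\widehat E\leq 2e^{-\kappa\epsilon\rho_o}.
\]
This uses only Modica's bound $D\leq1$ and needs no curvature information on the cap set.

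For the integration, $\rho_o\geq\eta R$ on $\mathcal C_R$ gives the pointwise bound $2e^{-\kappa\epsilon\eta R}$. It remains to bound $\int_{\Sigma_g}\chi_R^2$, and this is exactly the area estimate \eqref{eq:physical-cap-weighted-area} from the proof of Lemma~\ref{lem:physical-cap-weighted-error}: $\int_{\Sigma_g}\chi_R^2\leq CK(R)R^m$, uniformly in $x_c$. That estimate comes from Lemma~\ref{lem:r9-zero-baseline} on dyadic shells, together with $K(t)\leq C(t/R)K(R)$ from Lemma~\ref{lem:r9-density-baseline}. Multiplying yields $Ce^{-cR}K(R)R^m$ with $c=\kappa\epsilon\eta$. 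Alternatively, one can quote the lemma with the multiplier $P(t)=e^{-\kappa t}$, which satisfies its hypotheses, and read off the zeroth-order term.

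The only point that needs care is that $\mathcal C_R$ is defined as a subset of the good set $\Sigma_g$. The area bound is therefore available, and no singular zeros enter. Everything else is bookkeeping, so I do not expect a genuine obstacle.
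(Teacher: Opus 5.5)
Your proposal is correct and is essentially the paper's proof. Monotonicity of $t\mapsto e^{-\kappa t}$ gives the order and the equality; on $\mathcal C_R$ one then uses $\widehat E=e^{-\kappa\epsilon\rho_o}$, $0\le D\le 1$ and the weighted good-area bound $\int_{\Sigma_g}\chi_R^2\le CK(R)R^m$ from the proof of Lemma~\ref{lem:physical-cap-weighted-error}. You apply $\rho_o\ge\eta R$ directly in place of the paper's exponential absorption, which is needed there only because of the polynomial factors.

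One small caveat concerns your alternative route. You cannot read off a ``zeroth-order term'' from the \emph{statement} of that lemma, since $\mathfrak B_P$ contains no bare $P(r_\epsilon)$ term. Your main argument does not depend on this.
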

\begin{proof}
Monotonicity of the exponential proves the order and equality.
On the cap branch $\widehat E=e^{-\kappa\epsilon\rho_o}$ and
$0\leq D\leq1$.  Apply the weighted area estimate and the same
exponential absorption as in Lemma~\ref{lem:physical-cap-weighted-error}.
\end{proof}

\vskip0.12in

\subsection{Summary of the local estimates}
\label{sec:r13-nine-dimensional-local}

We conclude this section by recording the local estimates used below for
$3\leq m\leq8$.

\begin{proposition}
\label{prop:r13-nine-dimensional-local-package}
For $3\leq m\leq8$, a nonconstant exterior-stable bounded entire
Allen--Cahn solution has the relative defect jets, uniform good charts and
a uniform positive energy lower bound on fixed balls centered at zeros;
the normal mass, lifted quadratic form and
sharp mean estimates; the actual two-sided pointwise comparison;
the guarded weak capped-reach and selected positive-foot formulas;
and the clean adjacent atlas used below.
The penalty can be fixed with $G_1\leq1.001$, the flat radial
positive-foot constant with $J_*=1.01$, and the profile receiver
inflation with $1.05$. The additive mass and remaining-mass theorem
uses one common sufficiently large finite normal window $P$.
\end{proposition}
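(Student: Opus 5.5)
This proposition is an assembly statement, so I will prove it by collecting, item by item, the results already established in Sections~\ref{sec:s12-defect}--\ref{sec:s12-geometry}. I will check that each applies to a nonconstant exterior-stable solution for $3\leq m\leq8$, and then fix the numerical parameters in the correct order.

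First, the reduction. By the argument in the proof of Corollary~\ref{thm:r13-growth-49}, a nonconstant solution has $|u|<1$. If $D\equiv0$ the solution is planar and every item is trivial or explicit. Otherwise $D>0$ everywhere, by Lemma~\ref{lem:audited-relative-defect}.

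Next, the solution-independent items, which hold without stability. The relative defect jets are Lemma~\ref{lem:audited-relative-defect}. The uniform good charts and the fixed-ball energy lower bound at zeros are Lemma~\ref{lem:r9-zero-baseline}. The normal mass bounds and the sharp mean estimates are Lemma~\ref{lem:r3-normal-mass}, together with Proposition~\ref{prop:r3-small-mean} for the tangential derivative. The two-sided pointwise comparison is Theorem~\ref{thm:r5-actual-two-sided}. The clean adjacent atlas is Lemma~\ref{lem:r11-clean-window} with Corollary~\ref{cor:r11-rough-mask-clean-window}. The additive mass statement is Theorem~\ref{thm:r13-local-master} with Corollary~\ref{cor:r13-master-consequences}.

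Then the items that use stability. The lifted quadratic form comes from Proposition~\ref{prop:r3-actual-transfer}, which uses exterior stability only through tests supported outside $\overline B_{R_0}$. The guarded weak capped-reach inequality is Theorem~\ref{thm:r7-all-branch-capped-reach}. Its hypotheses are supplied by Lemma~\ref{lem:physical-cap-admissibility}, Theorem~\ref{thm:actual-near-reach-density-window} and Proposition~\ref{prop:r11-nearreach-flat-bridge}. For the last of these I must check that the sheeting input of \cite{LiuSheeting2026} is available in ambient dimension $n=m+1\leq9$, which lies within its range $N\leq10$. The selected positive-foot formulas are Theorem~\ref{thm:r4-positive-foot-transfer}, Corollary~\ref{cor:r4-weighted-foot-transfer} and Corollary~\ref{cor:r5-incoming-trace}.

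Finally, the numerical constants, which I expect to be the main obstacle because the choices are interdependent. I will fix them in this order.
\begin{enumerate}[label=\textup{(\roman*)}]
\item Apply Lemma~\ref{lem:r7-small-penalty} with $\delta_1$ so small that $(1-\delta_1)^{-1}\leq1.001$. Since $G_1=\Gamma(1)\leq(1-\delta_1)^{-1}$, this gives $G_1\leq1.001$. Then fix $L$ and $\delta_2$, keeping $\delta_2$ small enough for Proposition~\ref{prop:r7-actual-weak-reach}.
\item For the flat positive-foot constant, use the refined Jacobian bound in Theorem~\ref{thm:r4-positive-foot-transfer}, namely $J\leq(1+\ell a(y))^m/(1-\delta_2)$. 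On the guarded graph, $\ell a(y)\leq Cd/h=d^{o(1)}$, which is not small. So in the flat radial regime I will restrict to pairs with $\ell a(y)\leq\varepsilon$ and choose $\varepsilon,\delta_2$ so that $(1+\varepsilon)^8/(1-\delta_2)\leq1.01$.
\item For the receiver inflation $1.05$, I will take the factor $e^{2L/R}$ in Corollary~\ref{cor:r4-weighted-foot-transfer}, with $L/R$ small, multiplied by the $q/G$ profile errors of Lemma~\ref{lem:r4-accurate-ray}.
\item Choose one common finite window $P\geq P_\varepsilon$ serving Theorem~\ref{thm:r13-local-master}, Lemma~\ref{lem:r3-normal-mass} and Proposition~\ref{prop:r3-actual-transfer}. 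Only after that, choose $Q$, $S$ and the defect threshold, as prescribed after Proposition~\ref{prop:r3-actual-transfer}.
\end{enumerate}
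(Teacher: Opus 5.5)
\textbf{What matches the paper.} Your overall plan is the paper's: the paper also proves this proposition by assembling the earlier lemmas and then checking the numerical constants uniformly in $m$, at the worst case $m=8$. Your items (i), (ii) and (iv) agree with it. In (ii) your choice reproduces the paper's bound $(1.001)^8/(1-10^{-4})<1.009<1.01$, where the $1.001$ comes from the radial flatness threshold $a_1/G_1\le.001$.

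One minor point in (ii): the receiving foot of a plateau pair lies at distance $\ell=d(z)/G_1$, outside the guarded graph around $z$. The relevant bound there is \eqref{eq:r4-receiving-defect}, which gives $\ell a(y)\le C_\Gamma$. This is bounded but not small, so your conclusion stands; only the justification $Cd/h$ is wrong.

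\textbf{The gap is in (iii).} The constant $1.05$ is not a weight factor times $q/G$ profile errors.
\begin{itemize}
\item \emph{What $1.05$ bounds.} It is used in Lemma~\ref{lem:r12-receiver-own-reach} on selected pairs with $a(y)t(z)\le c_3$. There it bounds the inverse Jacobian $J_{y\to z}\le(1+\ell a(y))^m/(1-\delta_2)$ of \eqref{eq:r4-positive-foot-J}, \emph{together with} the cost of transferring the moment weight $\omega_p=\zeta_T^{2p}\chi_S^2$ from $z$ to $y$. The complementary pairs, with $a(y)t(z)>c_3$, are handled through $H_{N_y,+}\le C/t(z)$ as $C_{c_3}e_3a(y)^{2p}$.
\item \emph{What your recipe misses.} It omits the Jacobian factor altogether. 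The $q/G$ errors of Lemma~\ref{lem:r4-accurate-ray} play no role here; they enter only the partition cancellation in the RSS test.
\item \emph{Why $e^{2L/R}$ with $L/R$ small is unavailable.} Corollary~\ref{cor:r4-weighted-foot-transfer} needs a log-Lipschitz weight, but $\zeta_T$ vanishes on $B_T(0)$. Also, the chord length $\ell\le d(z)<\epsilon\rho_o(z)$ has no uniform bound small relative to $S$.
\item \emph{How the paper does the transfer.} It moves the cutoff additively: $\zeta_T(z)^{2p}\le(1+v)\zeta_T(y)^{2p}+C_v\min\{1,(|z-y|/T)^{2p}\}$, with the remainder sent to $V_p$ by \eqref{eq:physical-foot-remainder}. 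It splits the observer factor at $|z-y|\le\sqrt S$, where the inflation is $e^{2/\sqrt S}$, and pays for longer pairs with the exponential tail of the profile.
\item \emph{The resulting constant.} It is $(1+c_3)^8(1.001)^2/(1-\delta_2)$ with $c_3\le.005$ and $\delta_2\le10^{-4}$, that is, $(1.005)^8(1.001)^2/(1-10^{-4})<1.043<1.05$. This explains why two different constants appear. $J_*$ is the bare Jacobian under the stricter radial threshold, with weight inflation handled separately as $(1+v)^2$ in Section~\ref{sec:r13-physical-master}. The $1.05$ is the weight-inflated Jacobian under the looser threshold $c_3$ used in the moment argument.
\end{itemize}

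The repair is easy: apply your (ii) at threshold $c_3$ and multiply by this weight transfer. As written, however, (iii) bounds the wrong quantity, and the coefficient $.17$ in Lemma~\ref{lem:r12-receiver-own-reach} depends on the correct one.
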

\begin{proof}
We verify that the preceding estimates give all the stated conclusions
with compatible choices of constants. Relative jets, charts, the
positive energy lower bound at zeros, decay and the density envelope use fixed-dimensional
elliptic bounds with exponent $12>n$. The positive inverse, normal mass
and horizontal estimates give the lifted-form and averaged-mean bounds.
Its normal Jacobian has degree at most eight; higher terms are
$O(H^2+a^3)=o_\delta(D)$ on the fixed cutoffs.
The tangent-ball and tube comparisons preserve the exact exponential
rate: integration over a tangent disk cancels the polynomial factor of
the horizontal kernel. The paired inverse has the same normal spectral
gap, and its Sobolev exponent satisfies $12>9$.
For the guarded graph construction, integral compactness on the limiting
plane and stable sheeting give finitely many sheets. A chord along the normal at the base point
excludes more than one sheet.
The parity theorem and exact contact formulas give the weak capped reach
and signed positive-foot transfer. The relative defect derivative estimates, proved without stability or
energy-growth assumptions, give the polynomial clean atlas, and the additive mass theorem uses one common finite $P$.

\vskip0.12in

It remains to choose the receiving-point constants uniformly over the
stated dimension range.
For $c_3\leq.005$, $\delta_2\leq10^{-4}$ and cutoff/observer inflation
$(1.001)^2$, their worst case is $m=8$:
\[
 \frac{(1.005)^8(1.001)^2}{1-10^{-4}}<1.043<1.05,\qquad
 \frac{(1.001)^8}{1-10^{-4}}<1.009<1.01.
\]
The second bound uses radial flatness $a_1/G_1\leq.001$.
The finite dimension range permits a common lower reach for the
receiver constants. Choose the common normal windows, then the good
threshold, and use these choices in the moment argument. The higher-power Kato condition is
verified when it is used
in Section~\ref{sec:r12-higher-moment-attack}.
\end{proof}

\vskip0.12in

\section{\bf The \texorpdfstring{$p=9/4$}{p=9/4} curvature and defect estimate}
\label{sec:s12-moment}

\vskip0.12in

\subsection{The localization and the local measure}
\label{sec:r9-fourth-order-core}\label{sec:s1-physical-localization}
In this section we prove a weighted moment estimate with exponent
$p=9/4$. This estimate will be used in the proof of the energy growth
bound. Throughout this section, $3\leq m\leq8$, $n=m+1$, and $u$
is an entire solution stable outside $B_{R_0}(0)$. We use the local
estimates in Proposition~\ref{prop:r13-nine-dimensional-local-package}
and do not assume an integrated moment or energy-growth bound.
The normal lifting and partition argument follows \cite{OpenAI2026}.
Here we include the localization needed for stability outside a compact
set. See also \cite{SternbergZumbrun1998} for the reduction of diffuse
stability to interface geometry.
For the small good threshold $\delta$ chosen below, write
\[
 \begin{gathered}
 q=1-u^2,\quad U=\sqrt2\operatorname{arctanh}u,\quad D=1-|\nabla U|^2,
 \\
 \Sigma=\{u=0\},\quad\Sigma_g=\Sigma\cap\{D<\delta\},\quad X=\Sigma\setminus\Sigma_g.
 \end{gathered}
\]
The constant wells require no further argument. For a planar transition,
$D=a=0$, and the cap contributes only the exponential remainder based
at the origin, estimated below. In the remaining case $D>0$.
Fix $1\leq G_1\leq1.001$, $\kappa=\sqrt2/G_1$, the physical cap
$\epsilon$, and a maximal unit-separated net $\mathcal X$ of $X$.
Put $s_X=\operatorname{dist}(\cdot,X)$ and use the single local measure
\[
 d\mu_{\rm loc}=dS\lfloor\Sigma_g+e_u\,dx+
 \sum_{b\in\mathcal X}\delta_b,\qquad
 e_u=\tfrac12|\nabla u|^2+\tfrac14(1-u^2)^2.
\]
Good charts, uniform derivative bounds and separation give uniformly
bounded unit-ball masses, hence polynomial ball growth. All surface
integrals below are over $\Sigma_g$; ambient-volume integrals are
indicated explicitly. Set
\[
 \begin{gathered}
 t=\min(d,\epsilon\rho_o),\quad \rho_o=\sqrt{1+|z|^2},\quad
 E=e^{-\kappa d},\quad\widehat E=e^{-\kappa t},\quad \sigma=4\sqrt2/3,\\
 I=I_P=\sigma^{-1}\int_{-P}^Pq(z+s\nu)^3D(z+s\nu)\,ds,
 \qquad S_0=(a^2+I)/2.
 \end{gathered}
\]
Choose a smooth cutoff $\zeta_T$, flat at both ends, zero on $B_T(0)$,
one outside $B_{2T}(0)$, with
$|D^j\zeta_T|\leq C_jT^{-j}\mathbf1_{A_{T,2T}}$ for $1\leq j\leq4$.
For $2\leq p\leq9/4$, $S\geq16T$, define
\[
 \begin{gathered}
 \chi_S(z)=e^{-\sqrt{S^2+|z-x_c|^2}/S},\quad
 \omega_p=\zeta_T^{2p}\chi_S^2,\quad N_p=\sum_{b\in\mathcal X}\omega_p(b),\\
 h_{c_h,T}=e^{-c_hT-c_h\operatorname{dist}(\cdot,B_{4T}(0))},\qquad
 V_p=\mathcal V_{S,T,c_h}[\mu_{\rm loc}]\\
 :=S^{-2p}\int\chi_S^2d\mu_{\rm loc}
  +T^{-2p}\int_{B_{4T}}\chi_S^2d\mu_{\rm loc}
  +\int h_{c_h,T}\chi_S^2d\mu_{\rm loc}.
 \end{gathered}
\]
Prescribe the eventual profile rate $\lambda_f>0$ before fixing
$c_h>0$. For a fixed small safety factor $c_{\rm halo}>0$, choose
$c_h<c_{\rm halo}\min\{a_{\rm geo},2\epsilon\lambda_f\}$, where
$a_{\rm geo}>0$ is the minimum of the fixed normal-decay rates used
below. The factor $c_{\rm halo}$ is chosen small enough to allow the
finitely many displacement estimates used in the proof. Decreasing it
enlarges only the exponential remainder and leaves the leading coefficients
unchanged. The profiles constructed in Lemma~\ref{lem:r12-nine-quarter-profile}
have the common rate $\lambda_f=5\kappa/8$.
Write $\mathcal A_T=A_{T/2,3T}(0)$.

\begin{theorem}
\label{prop:r12-conditional-natural-p}
\label{thm:r13-natural-nine-quarter-budget}\label{thm:r13-actual-nine-quarter-budget}
Let $3\leq m\leq8$ and let $u:\mathbb R^{m+1}\to[-1,1]$ solve $\Delta u=u^3-u$
and be stable outside $B_{R_0}(0)$. For the good set, bad net, cap
and interaction rate defined above, fix the local choices below, including
the final reach profile and halo rate. There are $C_p,T_0<\infty$
such that, for every $T\geq T_0$,
$S\geq16T$ and observation center $x_c$,
\begin{equation}\label{eq:r12-conditional-natural-p}
 \int\omega_p(a^{2p}+D^p+\widehat E^p)+N_p\leq C_pV_p,
 \qquad p=9/4.
\end{equation}
The constants may depend on $m,u$ and the fixed local, profile and halo
choices. The threshold $T_0$ may additionally depend on $R_0$ and the
fixed exterior guard; these choices are independent of the radial
guard chosen below. Both constants are independent of $T,S,x_c$.
The right side consists of the two scale-weighted integrals of
$\mu_{\rm loc}$ and the exponential halo integral defining $V_p$.
\end{theorem}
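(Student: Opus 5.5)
The plan is to combine three estimates: the lifted stability inequality of Proposition~\ref{prop:r3-actual-transfer}, a Simons--Kato curvature-power argument in the style of Schoen--Simon--Yau, and a reach-profile test built from the weak capped-reach inequality of Theorem~\ref{thm:r7-all-branch-capped-reach}. Each will be tested against the common weight $\omega_p=\zeta_T^{2p}\chi_S^2$. Throughout, derivatives falling on $\zeta_T$ or $\chi_S$ produce the error terms $T^{-2p}\int_{B_{4T}}\chi_S^2\,d\mu_{\rm loc}$ and $S^{-2p}\int\chi_S^2\,d\mu_{\rm loc}$ after Young's inequality with exponents $p$ and $p/(p-1)$. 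Exponentially small contributions from the normal-decay halo near the cutoff annulus $\mathcal A_T$ are collected into the $h_{c_h,T}$ term.

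\textbf{Step 1: curvature moment.} Test \eqref{eq:r3-actual-transfer} with $\psi=a^{p}\zeta_T^{p}\chi_S\phi$, where $\phi$ is a smooth cutoff vanishing near $X$. Then combine this with the nonminimal Simons identity \eqref{eq:nonminimal-simons-identity} and the Kato inequality \eqref{eq:nonminimal-kato-family}, taking $e$ small. This gives
\[
 \int\omega_pa^{2p}+c\int\omega_p I\,a^{2p-2}
 \leq C\cdot(\text{cutoff terms})+C\int\omega_p\bigl(|\nabla H|^2a^{2p-4}+|H|\,a^{2p-1}\bigr).
\]
The admissibility condition on $p$ is the usual SSY range $p<2+\sqrt{2/m}$ improved by $\kappa_m$. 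At $p=9/4$ and $m\le8$ it still holds, because the lifted form carries the extra $I$ term and the Kato constant is $1+1/m$; this is the dimension check I would verify numerically first. The mean-curvature terms are absorbed using \eqref{eq:sharp-mean-actual} for $H$ and Proposition~\ref{prop:r3-small-mean}, summed over a bounded-overlap cover, for $\nabla_\Sigma H$. Both are controlled by $I$ and local $D^p$ averages with arbitrarily small coefficients.

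\textbf{Step 2: defect and interaction.} Corollary~\ref{cor:s2-pointwise-moments} with $r=p$ gives $D^p+E^p\le C(J_\lambda^p+DE^{p-1})$. The envelope $J_\lambda^p$ is localized by integrating against the weight, using $|\nabla\log\chi_S|\le S^{-1}$. This bounds it by the good-set $a^{2p}$ integral plus the bad-net count $N_p$ times $e^{-c\,s_X}$. The mixed term $DE^{p-1}$ is handled through the reach profile: apply the weak inequality \eqref{eq:r7-all-branch-capped-reach} to $f(\widehat d)$ with the profile rate $\lambda_f=5\kappa/8$, tested against $\omega_p$. The plateau terms are transported by the positive-foot formula, Corollary~\ref{cor:r4-weighted-foot-transfer}, using \eqref{eq:r5-incoming-trace} and \eqref{eq:r5-source-trace}. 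The cap branch costs only $e^{-cR}$, by Lemma~\ref{lem:physical-cap-weighted-error} and Corollary~\ref{cor:physical-cap-interaction}. The interaction mass lower bound \eqref{eq:r12-additive-mass48} then converts $\widehat E$ into $I$, and $I$ is already controlled by Step~1.

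\textbf{Step 3: bad net.} Each $b\in\mathcal X$ has $D\ge\delta$ near it, so $J_\lambda\ge e^{-\lambda s_X}$ there. The energy lower bound of Lemma~\ref{lem:r9-zero-baseline} then gives $\omega_p(b)\lesssim\int_{B_1(b)}\omega_p\,d\mu_{\rm loc}$. Closing this step requires showing that the good-set estimate near $X$ is not circular. I would handle it by contradiction with a compactness argument: a sequence of large-defect points with vanishing surrounding moment would converge to a stable solution outside a compact set with $D\ge\delta$ at a point and zero moment nearby. That is impossible by Lemma~\ref{lem:audited-relative-defect} and the sheeting rigidity.

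I expect the main obstacle to be the closure of Steps~1--2: checking that every absorbed coefficient, including the Kato margin at $p=9/4$, the profile rate $5\kappa/8$ against $E^{p-1}$, and the receiver inflation $J_*\le1.01$, stays strictly below one for $m=8$. Failing that, a small loss would have to be moved into $V_p$. If the sharp margin fails, I would first reduce $P$-window errors by taking $P$ large and $\delta$ small before $T_0$.
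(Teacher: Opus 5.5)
Your Step~1 matches the paper's curvature step: Simons and Kato combined with interface stability applied to $a^{p-1}\zeta$. Admissibility at $p=9/4$ is the condition $\beta(\kappa_K+2p-4)>(p-1)^2$, with $\beta=.99$ (obtained from $I\geq.999a^2$) and $\kappa_K=1+1/m$, and it has margin at least $37/800$. But that step bounds only curvature, namely $\int a^{2p}\omega_p\leq b\int(D^p+\widehat E^p)\omega_p+C_b(N_p+V_p)$, or at best a curvature-weighted quantity like $\int Ia^{2p-2}\omega_p$. It never bounds the defect.

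The genuine gap is your claim in Step~2 that $I$ is ``already controlled by Step~1''. Defect can be positive where curvature vanishes: the stacked flat layers of Proposition~\ref{prop:periodic-sharpness48} have $a=0$ and $D>0$. Once the mixed term $DE^{p-1}$ is rewritten through $E\leq C(J_\lambda+D)$, or through a mass bound $E\lesssim I$, it simply returns $C\int D^p\omega_p$ with a constant that is not small. The scheme is therefore circular.

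Testing the weak capped-reach inequality with a function of $\widehat d$ does not break the circle. It only bounds the reach-gradient quantity $\int f'(t)^2|\nabla t|^2\omega_p$ from above, by curvature, mean-curvature and incoming-trace terms. The missing idea is a second application of stability whose amplitude is the reach profile itself. This is the root-sum-squares test of Theorem~\ref{thm:r13-actual-p-rss}: it lifts $f(t)\zeta_T^p\chi_S$ along normal rays and glues it to the ambient test $q\zeta_T^p\chi_S\alpha_0$ near the bad set. The result is \eqref{eq:r13-p-rss-interface}, with $(1-\tau)\int S_0f^2\omega_p$ and $c_0N_p$ on the left and $(1+\tau)\int f'^2|\nabla t|^2\omega_p$ on the right.

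Only after combining this with the profile equation $-P'=f'^2+B_*P/t$ and the strict table of Lemma~\ref{lem:r12-receiver-own-reach} does $M_p=\int_{\{s_X\geq4L_0\}}Df^2\omega_p$ appear on the left, as in \eqref{eq:r12-nine-quarter-pay}. The worst row of that table, $.92a^2+.01I$, still leaves a margin of more than $.06I$ against $(1-\tau)S_0$. The rate $5\kappa/8$ is chosen so that $f^2\asymp\widehat E^{p-1}$ matches the mixed term. The proof then closes by playing \eqref{eq:r12-p-coupled} against \eqref{eq:r12-nine-quarter-pay}, with $e_4=v=c_p/(8C_A)$ fixed before the profile.

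Step~3 fails for the same reason. No local compactness argument can give $N_p\leq CV_p$, because $V_p$ itself contains $S^{-2p}$ times the bad-net count. Even where a bad point forces nearby $D^p$ mass, this yields only $N_p\lesssim\int D^p\omega_p+\dots$, which is exactly what you are trying to bound. You need a global inequality in which $N_p$ appears on the left with a fixed positive coefficient. In the paper this comes from the ambient part of the same RSS test, where $q^3D\geq c_\delta$ on disjoint fixed balls about the net. That coefficient is also what absorbs the $C_\lambda N_p$ produced when $J_\lambda^p$ is localized in Lemma~\ref{lem:r12-higher-moment-coupling}.

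Finally, \eqref{eq:r12-additive-mass48} is not available in this theorem. It needs the clean polynomial windows of Theorem~\ref{thm:r13-local-master}, which a fixed good threshold $\delta$ does not supply. The conversion between $\widehat E$ and $D$ must instead go through the pointwise comparison of Theorem~\ref{thm:r5-actual-two-sided}.
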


\begin{remark}
\label{rem:s5-nine-quarter-choice-order}
Fix the local parameters in the following order: the penalty,
normal windows, the tolerances in the curvature-tube estimates and the
RSS construction,
the $.91$, $.01$ profile tolerances, and a preliminary $t_{\min}$. Next choose the
small averaging-disk radius $\mu_{\rm env}$, the curvature tolerance
$b$, and its finite window $s_0$ for the two absorptions in
Lemma~\ref{lem:r12-higher-moment-coupling}. Then decrease the good
threshold $\delta$ and fix $L_0,t_*$, the positive bad-net coefficient in the RSS estimate and the
small-$at$ threshold $c_*$. This fixes $C_A,c_p>0$ independently
of the reach profile. Prescribe $e_4=v=\nu=c_p/(8C_A)$ and the tolerances for
the compact-interval flux and the large-reach curvature remainders.
Then choose the single shape parameter $\delta_f$ and construct
$t_0,t_e$ and $f$ as in Lemma~\ref{lem:r12-nine-quarter-profile}. Finally increase $T$, take
$S\geq16T$, and exhaust finite supports.

\vskip0.12in

The profile's upper comparison $H_f$, derivative and primitive
constants $C_f$, Young remainders and the lower bound for $T$ may
depend on $f$. The leading RSS coefficients, the bad-net coefficient, and $C_A,c_p$
do not. In particular $t_{\min}$, the good threshold and the averaging
windows are never increased after $C_A$ has been fixed.
\end{remark}
The proof has three steps. We first estimate the curvature in terms
of the defect. We then apply stability to an RSS test and estimate its
reach-dependent derivative term by a suitable profile. Combining these
inequalities proves the theorem in
Section~\ref{sec:r12-higher-moment-closure}. We begin with the
localization estimates used in all three steps.

\vskip0.12in

\subsubsection{Weighted localization estimates}
Every local measure $\nu$ used below is a finite sum of good area,
diffuse energy and bad-net counting measure. Thus
\begin{equation}\label{eq:r9-measure-ledger}
 \int\chi_S^2d\nu\leq C\int\chi_S^2d\mu_{\rm loc},\qquad
 \int_{\mathcal A_T}\chi_S^2d\nu
 \leq C\int_{B_{4T}}\chi_S^2d\mu_{\rm loc}.
\end{equation}
For each fixed list of displacement decay rates choose $c$ below
their reserved fractions and then $0<c_h<c$. Put
$H_T(z)=e^{-c(T+\operatorname{dist}(z,\mathcal A_T))}$.
Both rates may be decreased for a new fixed kernel; this only enlarges
$V_p$. The complete argument uses one finite list of rates, including
the values of $a,b$ selected for the estimates below. For
each such list, all estimates in this subsubsection are used for
$T\geq T_{\rm loc}$, where $T_{\rm loc}<\infty$ may depend on
that list, the reserved rates $c,c_h$ and the fixed cutoffs, uniformly
in $S\geq16T$ and $x_c$. A new fixed kernel may enlarge this
threshold. We shall repeatedly use the fact that the following three
densities have integral at most $CV_p$ against $\chi_S^2d\nu$:
\begin{align}\label{eq:r9-halo}
 &T^{-2p}e^{-b\operatorname{dist}(z,A_{T,2T})},\qquad H_T(z),\notag\\
 &(1+|z|)^Ne^{-b|z|}\mathbf1_{|z|\geq T-C_0},\qquad b>0.
\end{align}
Indeed, inside $B_{4T}$ they are $O(T^{-2p})$. Outside this ball,
half of the fixed exponential decay dominates
$h_{c_h,T}$, and the remaining half absorbs the polynomial. In the first density the
outside distance is at least $2T+\operatorname{dist}(z,B_{4T})$.
In particular the remainder envelope
\begin{equation}\label{eq:physical-cutoff-ideal}
 R_{p,T}=T^{-2p}\mathbf1_{\mathcal A_T}+H_T,\qquad
 \int\chi_S^2R_{p,T}\,d\nu\leq CV_p
\end{equation}
bounds all remainders arising from the physical cutoff below, with the
same local measure for every observation center $x_c$.

\begin{lemma}
\label{lem:physical-transport-calculus}\label{lem:r13-real-power-transport}
After fixing the displacement rates as above, uniformly for
$2\leq p\leq9/4$, every fixed $v>0$, and ambient $x,z$,
\begin{align}\label{eq:r13-full-power-transport}
 \zeta_T(x)^{2p}&\leq(1+v)\zeta_T(z)^{2p}
                +C_v\min(1,(|x-z|/T)^{2p}),\notag\\
 T^{-1}\zeta_T(x)^{2p-1}\mathbf1_{A_{T,2T}}(x)
 &\leq v\zeta_T(z)^{2p}+C_vT^{-2p}(1+|x-z|)^{2p},\\
 &\int(1+|s|)^Ne^{-a|s|}\zeta_T(z+s\nu)^{2p-2}
       |\nabla\zeta_T(z+s\nu)|^2ds\notag\\
 &\quad\leq CT^{-2}\zeta_T(z)^{2p-2}\mathbf1_{\mathcal A_T}(z)
       +CR_{p,T}(z).\notag
\end{align}
Positive remainders in the first two lines are needed only when the
cutoff changes or one endpoint is in its transition. For $j=2,4$,
\begin{equation}\label{eq:physical-young-power}
 T^{-j}\zeta^{2p-j}E^{p-j/2}
 \leq v\zeta^{2p}E^p+C_{p,v}T^{-2p},\qquad 0\leq\zeta,E\leq1,
\end{equation}
where $p=2,j=4$ means $T^{-4}$.

\vskip0.12in
For $d=|x-y|$, the ratio $\chi_S(x)^2/\chi_S(y)^2$ is at most $e^{2d/S}$. The
one-sided and symmetric weight comparisons are
\begin{equation}\label{eq:s8-one-sided-weight-pair}
 \omega_p(x)\leq e^{2d/S}\chi_S(y)^2
 \{(1+v)\zeta_T(y)^{2p}
   +C_v\min(1,(d/T)^{2p})\mathbf1_{B_{2T}}(y)\},
\end{equation}
\begin{equation}\label{eq:physical-weight-pair}
 \omega_p(x)\leq e^{2d/S}\chi_S(y)^2
 \{(1+v)\zeta_T(y)^{2p}
  +C_v\min(1,(d/T)^{2p})\mathbf1_{\{\zeta_T(x)\ne\zeta_T(y)\}}\}.
\end{equation}
Let $\nu_2$ be one of the local measures above and let $\nu_1$ have uniformly
bounded unit-ball masses; ambient volume is permitted for $\nu_1$.
If $k(x,y)\leq C(1+d)^Ne^{-ad}$ and $0\leq F\leq M$, then
\begin{equation}\label{eq:physical-same-weight-kernel}
 \int\omega_p(x)\int k(x,y)F(y)d\nu_2(y)d\nu_1(x)
 \leq C_k\int\omega_pF\,d\nu_2+C_{k,v}MV_p.
\end{equation}
The principal constant $C_k$ depends on the kernel and the unit-ball
mass bound for $\nu_1$, independently of any constant, such as $H_f$,
in an upper comparison for the reach profile.
Finally let $y=y(z)$ be measurable, $v(z)\geq|z-y(z)|$, and
$0\leq A(v)\leq C_f(1+v)^Ne^{-av}$. No inverse-map hypothesis is
needed for the cutoff commutator estimate integrated in $z$
\begin{equation}\label{eq:physical-foot-remainder}
 \int\chi_S(z)^2A(v(z))\min(1,(|z-y(z)|/T)^{2p})
 \mathbf1_{\{\zeta_T(z)\ne\zeta_T(y(z))\}}d\nu(z)\leq C_fV_p.
\end{equation}
For the observer transfer, $v\leq\sqrt S$ has inflation
$e^{2/\sqrt S}$, while the complementary contribution is bounded by
$C_fS^{-2p}\int\chi_S^2d\nu$. The principal foot transfer still
requires the signed area formula with the corresponding multiplicity.
\end{lemma}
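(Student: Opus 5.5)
The plan is to treat the lemma as a collection of elementary calculus facts about the cutoff $\zeta_T$ and the weight $\chi_S$, and to prove them in the order stated. The first is transport: it reduces to two cases. If $\zeta_T(x)=\zeta_T(z)$ there is nothing to prove. Otherwise, the mean value theorem gives
\[
 |\zeta_T(x)-\zeta_T(z)|\leq C\min(1,|x-z|/T).
\]
Write $\zeta_T(x)^{2p}\leq(\zeta_T(z)+\Delta)^{2p}$ and use the convexity inequality $(\alpha+\beta)^{2p}\leq(1+v)\alpha^{2p}+C_v\beta^{2p}$.

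The second line is a weighted Young inequality applied to $T^{-1}\zeta_T(x)^{2p-1}$ with exponents $2p/(2p-1)$ and $2p$. This gives
\[
 T^{-1}\zeta_T(x)^{2p-1}\leq v'\zeta_T(x)^{2p}+C_{v'}T^{-2p},
\]
after which the first line transfers $\zeta_T(x)^{2p}$ to $z$. The extra remainder $C_vT^{-2p}(1+|x-z|)^{2p}$ absorbs $\min(1,(|x-z|/T)^{2p})$. For the normal-line integral, first split the $s$-integral at $|s|\leq T/4$. Then:
\begin{itemize}
 \item On the inner part, $z+s\nu$ can lie in $A_{T,2T}$ only if $z\in\mathcal A_T$. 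There $|\nabla\zeta_T|\leq C/T$, and the first line transfers $\zeta_T^{2p-2}$ to $z$, producing $CT^{-2}\zeta_T(z)^{2p-2}$ plus a $T^{-2p}$ remainder.
 \item On the outer part, $e^{-a|s|}$ gives $e^{-aT/8}$ and a factor $e^{-a\,\mathrm{dist}(z,A_{T,2T})/2}$. This is dominated by the first halo density in \eqref{eq:r9-halo}, hence by $R_{p,T}$, once $c$ is below the reserved fraction of $a$.
\end{itemize}
Inequality \eqref{eq:physical-young-power} is Young with exponents $2p/(2p-j)$ and $2p/j$ applied to $(\zeta^{2p-j}E^{p-j/2})\cdot T^{-j}$. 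In the degenerate case $p=2$, $j=4$ the left side is already $T^{-4}$.

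The weight comparisons \eqref{eq:s8-one-sided-weight-pair} and \eqref{eq:physical-weight-pair} come from combining two facts. The first is the triangle inequality $\sqrt{S^2+|x-x_c|^2}\geq\sqrt{S^2+|y-x_c|^2}-d$, which yields the ratio $e^{2d/S}$. The second is the transport inequality with $(x,z)=(x,y)$. In the one-sided version, the remainder is only needed when $\zeta_T(x)\neq\zeta_T(y)$. If $y\notin B_{2T}$ and $x\in B_{2T}$, then $d\geq$ the distance to the annulus, and the term is bounded by $\min(1,(d/T)^{2p})\leq1$. I will therefore keep the indicator in its symmetric form and accept a reserved indicator in the one-sided case, possibly by restricting to $y\in B_{2T}$ after using symmetry of $\zeta_T=1$ outside.

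For the kernel estimate \eqref{eq:physical-same-weight-kernel}, insert the symmetric comparison and integrate in $x$ against $\nu_1$. Since $\nu_1$ has bounded unit-ball masses, $\int k(x,y)e^{2d/S}d\nu_1(x)\leq C_k$ uniformly, which gives the principal term $C_k\int\omega_pF\,d\nu_2$. The remainder is
\[
 M\chi_S(y)^2\int k\,\min(1,(d/T)^{2p})\mathbf1_{\{\zeta_T(x)\neq\zeta_T(y)\}}\,d\nu_1.
\]
To bound it, split according to whether $y$ lies in $\mathcal A_T$. Inside, the integral is at most $CT^{-2p}$. Outside, either $x$ lies in the transition annulus, which forces $d\geq\mathrm{dist}(y,A_{T,2T})$, or the two cutoffs differ, which forces the same lower bound on $d$. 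Half of the exponential then produces the halo density $e^{-a\,\mathrm{dist}/2}$, and \eqref{eq:r9-halo} with \eqref{eq:r9-measure-ledger} gives $MV_p$.

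The foot remainder \eqref{eq:physical-foot-remainder} is the identical pointwise argument with $d\leq v(z)$ and $A(v)$ in place of the kernel. No integration in $y$ occurs, so no inverse map is needed. The observer-transfer remark follows by splitting at $v\leq\sqrt S$ versus $v>\sqrt S$: in the second case $A(v)\leq C_fe^{-a\sqrt S/2}\leq C_fS^{-2p}$.

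The main obstacle is uniformity. All halo absorptions must hold with one fixed rate $c<$ the reserved fraction of the smallest displacement rate, uniformly in $x_c$ and $S\geq16T$. I will check this by making the bookkeeping explicit, distance to $A_{T,2T}$ versus distance to $B_{4T}$, using that outside $B_{4T}$ the former exceeds $2T+\mathrm{dist}(\cdot,B_{4T})$.
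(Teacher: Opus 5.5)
Most of your plan is the paper's argument. You use Lipschitz continuity of $\zeta_T$ with the convexity/Young inequality for the first two lines, and Young with conjugates $2p/(2p-j)$, $2p/j$ for \eqref{eq:physical-young-power}. You split the normal line at $|s|=T/4$: on the inner part the base is forced into $\mathcal A_T$, and on the outer part $|s|\geq c[T+\operatorname{dist}(z,\mathcal A_T)]$. You treat \eqref{eq:physical-foot-remainder} pointwise at the base point, so no inverse map is needed, and split at $v=\sqrt S$. Two small bookkeeping points. On the inner normal part you need the convexity inequality at exponent $2p-2\geq2$, not the first line itself. On the outer part, reserve a fraction of $e^{-a|s|}$ to integrate $(1+|s|)^N$ rather than spending all of it on $e^{-aT/8}e^{-a\operatorname{dist}/2}$.

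The step you do not actually prove is the one-sided comparison \eqref{eq:s8-one-sided-weight-pair}. Your case ``$y\notin B_{2T}$, $x\in B_{2T}$'' is beside the point. ``Accepting a reserved indicator'' would leave the stated inequality unproved. The missing observation is that a remainder is needed only when $\zeta_T(x)>\zeta_T(y)$, not merely when the two values differ. If $y\notin B_{2T}$, then $\zeta_T(y)=1\geq\zeta_T(x)$, so
\[
 \omega_p(x)\leq\chi_S(x)^2\leq e^{2d/S}\chi_S(y)^2\zeta_T(y)^{2p}\qquad(y\notin B_{2T})
\]
with no remainder at all. Hence a positive remainder forces $\zeta_T(y)<1$, i.e.\ $y\in B_{2T}$, and the transport inequality does the rest.

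With this fact, the paper proves the kernel estimate \eqref{eq:physical-same-weight-kernel} more directly than you do. It inserts the one-sided comparison and integrates in $x$ by Tonelli. The remainder is then $C_vM\chi_S(y)^2\mathbf 1_{B_{2T}}(y)\int k\,e^{2d/S}(d/T)^{2p}\,d\nu_1(x)\leq C_{k,v}MT^{-2p}\chi_S(y)^2\mathbf 1_{B_{2T}}(y)$. Its $\nu_2$-integral is bounded by $C_{k,v}MV_p$ directly through the core term of $V_p$, with no halo argument.

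Your symmetric route is also valid. You split at $y\in\mathcal A_T$; otherwise $d\geq\operatorname{dist}(y,A_{T,2T})\geq T/2$, and a part of the exponential supplies the missing $T^{-2p}$ while the rest produces the halo density in \eqref{eq:r9-halo}, after absorbing $e^{2d/S}$. It costs this extra distance bookkeeping, which the one-sided comparison makes unnecessary.
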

\begin{proof}
The first two lines of
\eqref{eq:r13-full-power-transport} and the two weight comparisons follow from
Lipschitz continuity and Young's inequality. In the one-sided formula a positive remainder requires an increasing
cutoff, hence $\zeta_T(y)<1$ and $y\in B_{2T}$. For the normal
line use $|\nabla\zeta_T|\leq C/T$ and
$\zeta_T(z+s\nu)^{2p-2}\leq C[\zeta_T(z)^{2p-2}+(|s|/T)^{2p-2}]$.
Equation~\eqref{eq:physical-young-power} uses the conjugates
$p/(p-j/2)$ and $2p/j$, with its stated endpoint interpreted directly.

\vskip0.12in
We estimate the remainders as follows. If a displacement $d\leq T/4$
changes the cutoff, or has a transition endpoint, its base lies in
$\mathcal A_T$; the error is $T^{-2p}$ times a fixed polynomial
in $d$. If $d>T/4$, the segment meets the transition annulus, so
$d\geq c[T+\operatorname{dist}(z,\mathcal A_T)]$. Reserving half
an exponential for this distance bounds the error by $CR_{p,T}(z)$.
The observer factor consumes at most half a fixed decay rate for
large $T$. This proves the normal estimate and the cutoff
commutator integrated in $z$, using $v\geq d$. On $v>\sqrt S$ the fixed tail
is $O_f(S^{-2p})$, giving the asserted bound for the integral over $z$.

\vskip0.12in

For the spatial kernel use the stronger one-sided comparison and
Tonelli. The polynomial-exponential moments of $\nu_1$ are uniformly
bounded, since its ball masses grow at most like $(1+r)^n$. Thus
its remainder is exactly
$C_{k,v}MT^{-2p}\int_{B_{2T}}\chi_S^2d\nu_2\leq C_{k,v}MV_p$.
For a transfer in the opposite direction, from the base point $z$ to
its receiving foot $y$, the one-sided core would be at $y$. There one
uses the symmetric cutoff commutator estimate integrated in $z$ instead;
no receiving-core indicator is pulled
back without an inverse-map justification.
\end{proof}

\begin{lemma}
\label{lem:s5-rss-growing-collars}
Put
\[
 N_{\rm core}=\sum_{v\in\mathcal X\cap B_{2T}}\chi_S(v)^2,
 \qquad R_{\rm loc}=N_p+T^{-2p}N_{\rm core}\leq N_p+V_p.
\]
For $\nu_{\rm out}=dS$ or $dx$, every $b\geq0$, and $N=2n+8$,
\begin{equation}\label{eq:s5-rss-growing-collar}
 \int_{\{s_X\leq b\}}\omega_p\,d\nu_{\rm out}
 \leq C(1+b)^Ne^{C(1+b)/S}R_{\rm loc}.
\end{equation}
For each fixed $a>0,j\geq0$, there is $T_a<\infty$ such that,
uniformly for $T\geq T_a$, $1\leq L\leq T$ and $M\geq1$,
\begin{align}\label{eq:s5-rss-two-tails}
 &\int_{\{s_X(x)\leq3L,\ \operatorname{dist}(x,\Sigma)>M\}}
                 q(x)^2\omega_p(x)dx\notag\\
 &\quad+\int_{\Sigma_g}\int_{\substack{|s|>M\\s_X(z+s\nu)\leq3L}}
 (1+|s|)^je^{-a|s|}\mathbf1_{|s|a(z)<\alpha}J(z,s)
                 \omega_p(z+s\nu)ds\,dS_z\notag\\
 &\hspace{20mm}\leq C(1+L+M)^{N'}e^{-a'M}R_{\rm loc}.
\end{align}
Here $J=\det(I+sA)>0$ on the curvature tube. For fixed $a,j$, the
constants $C,N',a'>0$ and the threshold $T_a$ depend on $a,j$
and the fixed local geometry, uniformly in $L,M,S,T,x_c$ and the
reach profile on the stated range.
The same bound holds for the tail with collar restriction $s_X(z)\leq3L$ at the base point
\begin{equation}\label{eq:s5-rss-source-collar-tail}
 \begin{aligned}
 &\int_{\{s_X(z)\leq3L\}}\int_{|s|>M}
 (1+|s|)^je^{-a|s|}\mathbf1_{|s|a(z)<\alpha}J(z,s)
       \omega_p(z+s\nu)ds\,dS_z
 \\
 &\quad\leq C(1+L+M)^{N'}e^{-a'M}R_{\rm loc}.
 \end{aligned}
\end{equation}
The same bounds hold after multiplication by masks taking values in
$[0,1]$.
\end{lemma}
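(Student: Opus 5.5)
The plan is to reduce both estimates to counting bad-net points near the integration region, with one bad point charged per unit ball of $X$. Cover $\{s_X\leq b\}$ by the balls $B_{b+1}(v)$, $v\in\mathcal X$. Because $\mathcal X$ is a maximal unit-separated net of $X$, every $x$ with $s_X(x)\leq b$ lies within $b+1$ of some $v$. The unit-ball mass bounds give
\[
 \nu_{\rm out}(B_{b+1}(v))\leq C(1+b)^n
\]
for both surface area on $\Sigma_g$ and ambient volume. On each such ball we compare $\omega_p(x)$ with the weight at its net point using \eqref{eq:physical-weight-pair} with $d\leq b+1$. This yields
\[
 \omega_p(x)\leq e^{2(b+1)/S}\chi_S(v)^2\bigl\{(1+v_0)\zeta_T(v)^{2p}+C\min(1,((b+1)/T)^{2p})\mathbf1_{B_{2T}}(v)\bigr\},
\]
where we use the one-sided form \eqref{eq:s8-one-sided-weight-pair} so that the remainder carries $\mathbf1_{B_{2T}}(v)$. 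Summing over $v$, the first term gives $N_p$. The second gives at most $C(1+b)^{2p}T^{-2p}N_{\rm core}$ when $b+1\leq T$, and $C N_{\rm core}\leq C(1+b)^{2p}T^{-2p}N_{\rm core}$ otherwise. Together with the $(1+b)^n$ volume factor, this proves \eqref{eq:s5-rss-growing-collar} with $N=2n+8\geq n+2p$. The inequality $T^{-2p}N_{\rm core}\leq V_p$ holds because bad-net points are part of $\mu_{\rm loc}$ and $B_{2T}\subset B_{4T}$.

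For the first tail in \eqref{eq:s5-rss-two-tails}, split $\{s_X\leq3L\}$ into the shells
\[
 \{M+k<\operatorname{dist}(x,\Sigma)\leq M+k+1\},\qquad k\geq0.
\]
Theorem~\ref{lem:near-critical-phase-decay} bounds $q^2$ on the $k$-th shell by $Ce^{-2(\sqrt2-\varepsilon)(M+k)}$. Applying the collar estimate with $b=3L$ to each shell and summing the geometric series gives $C(1+L)^Ne^{CL/S}e^{-a'M}R_{\rm loc}$. Since $L\leq T\leq S/16$, the factor $e^{CL/S}$ is bounded.

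For the normal-ray tails, the image $x=z+s\nu$ satisfies $s_X(x)\leq3L$. Use the area formula on the curvature tube (where $J>0$) to push the $(z,s)$ integral forward to ambient volume. The only subtlety is that the normal map need not be injective. I would handle this by bounding its multiplicity: on $|s|\leq k+1$, the number of preimages of an ambient point is at most $C(1+k)^{C}$. This follows from the uniform good-chart area bound $\mathcal H^m(\Sigma_g\cap B_{k+1}(x))\leq C(1+k)^{n}$ together with the lower bound on $J$ in the tube $|s|a<\alpha$. The weight $(1+|s|)^je^{-a|s|}$ on the layer $|s|\in[M+k,M+k+1]$ then multiplies a collar integral with $b=3L$. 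This gives $C(1+L)^N(1+M+k)^{j+C}e^{-a(M+k)}R_{\rm loc}$ per layer, and summing over $k$ yields the bound with some $a'<a$.

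The base-point collar version \eqref{eq:s5-rss-source-collar-tail} needs one extra step. There $s_X(z)\leq3L$ only at the base point, so the image satisfies $s_X(x)\leq3L+|s|$, and the collar estimate is applied with $b=3L+M+k+1$. The polynomial loss in $b$ is absorbed by half of the exponential $e^{-a|s|}$, and the factor $e^{C(1+b)/S}$ is absorbed using $S\geq16T$ and $T\geq T_a$ large, reserving a fixed fraction of $a$. Multiplying by masks with values in $[0,1]$ only decreases every integrand.

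I expect the main obstacle to be the multiplicity count for the normal map at large $|s|$. The tube is taken only where $|s|a<\alpha$, so overlapping fibers from distant sheets can occur, and the bound must be uniform in $L,M,S,T$ and in the reach profile. If the ball-area bound does not directly control the overlap, I would instead apply the kernel estimate \eqref{eq:physical-same-weight-kernel} with the kernel $k(x,y)=(1+|x-y|)^je^{-a|x-y|}$ between $\nu_1=dS\lfloor\Sigma_g$ and the counting measure on bad-net points. This controls the sum over sheets by unit-ball masses and bypasses injectivity altogether.
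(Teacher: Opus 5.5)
Your collar estimate (covering by balls about net points, the one-sided comparison \eqref{eq:s8-one-sided-weight-pair} at the net point, and $\min(1,((b+1)/T)^{2p})\le(1+b)^{2p}T^{-2p}$) is the paper's argument, and so is your phase-decay ambient tail. For the two normal tails you take the opposite route from the paper. You push the $(z,s)$-integral forward to ambient volume, where the weight already sits at the image point. The cost is a multiplicity count. The collar parameter is then $b=3L$ for the image-restricted tail and $b=3L+|s|$ for \eqref{eq:s5-rss-source-collar-tail}.

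The paper instead stays on the parametrizing tube and uses $J\le(1+\alpha)^m$. It transports $\omega_p(z+s\nu)$ to the base point by \eqref{eq:s8-one-sided-weight-pair} with displacement $|s|$. It then applies the surface collar bound with $b=3L+|s|$ for the image-restricted tail and $b=3L$ for the base-restricted one. The transport remainder $C\min\{1,(|s|/T)^{2p}\}\chi_S(z)^2\mathbf 1_{B_{2T}}(z)$ is absorbed by the companion bound $T^{-2p}\int_{B_{2T}\cap\{s_X\le b\}}\chi_S^2\,d\nu_{\rm out}\le C(1+b)^ne^{C(1+b)/S}R_{\rm loc}$, which comes from the same covering. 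Since each $(z,s)$ is integrated exactly once, overlapping normal rays never matter in the paper's route. Your route saves the weight transport but must control overlaps.

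That is where your write-up has a gap. An area bound on $\Sigma_g\cap B_{k+1}(x)$ plus a pointwise lower bound on $J$ does not bound the number of preimages. The reason is that $J>0$ gives no lower bound on the size of a patch on which $\Phi(z,s)=z+s\nu(z)$ is injective, so preimages could a priori cluster. What you need is second-order control, and it is available:
\begin{itemize}
\item The relative jets of Lemma~\ref{lem:audited-relative-defect} give $|\nabla A|\le C$ on good charts. Hence $|D^2\Phi|\le C(1+|s|)$, and $|s|a\le2\alpha$ on base patches of radius $c\alpha/(1+|s|)$.
\item So $\Phi$ is injective on $(y,s)$-neighborhoods of radius $c_\alpha/(1+|s|)$ in a graph chart.
\item Two preimages of $x$ with nearby bases satisfy $|s_1-s_2|\le C|z_1-z_2|$. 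Thus preimages in one unit layer are $c_\alpha/(1+|s|)$-separated on $\Sigma$.
\item Their disjoint patches each have area at least $c(1+|s|)^{-m}$. They lie in $\Sigma\cap\{D\le C\delta\}\cap B_{|s|+2}(x)$; by relative Harnack the patches may leave $\Sigma_g$ but stay in this slightly enlarged good set. That set has area $O((1+|s|)^n)$ by Lemma~\ref{lem:r9-zero-baseline}.
\end{itemize}
This gives multiplicity $O((1+|s|)^{n+m})$, which $e^{-a|s|}$ absorbs, and your argument then closes.

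Your fallback via \eqref{eq:physical-same-weight-kernel} would not rescue the proof:
\begin{itemize}
\item That estimate evaluates the weight at the $\nu_1$-variable, so you still need either the base transport or the pushforward it was meant to avoid.
\item Its remainder is $V_p$ rather than $R_{\rm loc}$.
\item Dominating $\mathbf 1\{s_X(z+s\nu)\le3L\}e^{-a|s|}$ by a kernel in the base-to-net distance $|z-v|$ costs a factor $e^{a(3L+1)}$. This destroys the polynomial dependence on $L$ that the RSS partition step, with $M_0=C_1\log L_0$, relies on.
\end{itemize}
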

\begin{proof}
Cover the collar by $B_{b+2}(v)$, $v\in\mathcal X$, and apply
\eqref{eq:s8-one-sided-weight-pair}. Its ball mass is at most
$C(1+b)^n$, and the remainder is bounded by the net sum inside the physical core.
This proves the stronger exponent $n+2p$ in
\eqref{eq:s5-rss-growing-collar}; it also gives
\[
 T^{-2p}\int_{B_{2T}\cap\{s_X\leq b\}}\chi_S^2d\nu_{\rm out}
 \leq C(1+b)^ne^{C(1+b)/S}R_{\rm loc}.
\]
For this last bound, the net points in the physical core contribute $T^{-2p}N_{\rm core}$;
all other net points have $\zeta_T=1$ and $T^{-2p}\leq1$.
Phase decay proves the first tail. For the normal tail,
$s_X(z)\leq3L+|s|$ and $J\leq(1+\alpha)^m$. Transporting the
weight to $z$ uses the collar bound for its principal part and the
last core bound for its remainder. The resulting integral is
\[
 CR_{\rm loc}\int_M^\infty
 (1+L+s)^{n+2p+j}e^{-as+C(1+L+s)/S}ds.
\]
Since $L\leq T\leq S/16$, it has the stated polynomial-exponential
bound for large $T$. For the version restricted at the base point use $b=3L$
instead of $3L+|s|$ in the same integral over the parametrizing tube.
\end{proof}
In particular, for each fixed $b$,
\begin{equation}\label{eq:r9-collar}
 \int_{\{s_X\leq b\}}\omega_p(dS+dx)\leq C_b(N_p+V_p).
\end{equation}
The corresponding uncutoff cover gives
\[
 \int_{\{s_X\leq b\}}\chi_S^2(dS+dx)
 \leq C(1+b)^ne^{C(1+b)/S}\sum_{v\in\mathcal X}\chi_S(v)^2.
\]

\begin{lemma}
\label{lem:physical-compact-cutoff}
Fix a finite averaging radius $s_0\geq1$. Choose $m_{\rm bad}=0$ on
$s_X\leq3s_0$, $m_{\rm bad}=1$ on $s_X\geq4s_0$ and
$|\nabla m_{\rm bad}|\leq C/s_0$. Let $\xi_B$ be an outer cutoff,
one on $B_B(0)$ and zero off $B_{2B}(0)$, with gradient $O(B^{-1})$.
For $\eta_B=\zeta_Tm_{\rm bad}\xi_B\chi_S^{1/p}$ put
$\eta_B^\sharp(z)=\sup_{y\in\Sigma\cap B_{2s_0}(z)}\eta_B(y)$.
For large $T$, these cutoffs and their enlargements stay on the good
set and outside the unstable core. With $d_c=|z|-R_0$,
\begin{align}\label{eq:physical-compact-cutoff}
 \int\{|\nabla\eta_B|^{2p}+e^{-pd_c/8}\eta_B^{2p}\}
   &\leq C_{s_0,p}(N_p+V_p)+e_B,\notag\\
 \int D^p(\eta_B^\sharp)^{2p}
   &\leq 2\int D^p\omega_p+C_{s_0,p}V_p,
 \qquad e_B\longrightarrow0.
\end{align}
The constant $C_{s_0,p}$ is independent of $B,S,T,x_c$; the error
$e_B$ tends to zero for each fixed $S,T,x_c$.
\end{lemma}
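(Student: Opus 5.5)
The plan is to expand $\nabla\eta_B$ by the product rule into the four factors $\zeta_T$, $m_{\rm bad}$, $\xi_B$, $\chi_S^{1/p}$ and to bound each resulting term by one of the envelopes already available. First I check admissibility. The support of $\eta_B$ lies in $\{s_X\geq3s_0\}$, so every zero there and in its $2s_0$-enlargement is good. It also lies outside $B_T(0)$, and this stays outside $\overline B_{R_0}$ with margin once $T\geq T_0(R_0,s_0)$. Then
\[
|\nabla\eta_B|^{2p}\leq C\bigl(|\nabla\zeta_T|^{2p}m_{\rm bad}^{2p}\chi_S^2+|\nabla m_{\rm bad}|^{2p}\zeta_T^{2p}\chi_S^2+|\nabla\xi_B|^{2p}\chi_S^2+|\nabla\chi_S^{1/p}|^{2p}\zeta_T^{2p}\bigr).
\]
I integrate over the good surface, which is where $\eta_B$ is a test function.

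The four terms are handled as follows.
\begin{itemize}
\item The $\zeta_T$ term is $O(T^{-2p})\mathbf 1_{A_{T,2T}}\chi_S^2$. Its surface integral is bounded by $T^{-2p}\int_{B_{4T}}\chi_S^2\,d\mu_{\rm loc}\leq V_p$.
\item The $m_{\rm bad}$ term is supported in the collar $\{s_X\leq4s_0\}$ and has size $Cs_0^{-2p}\omega_p$. The collar estimate \eqref{eq:r9-collar} bounds it by $C_{s_0}(N_p+V_p)$.
\item The $\chi_S$ term satisfies $|\nabla\chi_S^{1/p}|\leq\chi_S^{1/p}/(pS)$, so it is at most $S^{-2p}\chi_S^2$. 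This is controlled by the first summand of $V_p$.
\item The $\xi_B$ term is $O(B^{-2p})\int_{B_{2B}\setminus B_B}\chi_S^2\,dS$. For fixed $S,x_c$ the weight $\chi_S^2$ decays exponentially, while the polynomial area bound of Lemma~\ref{lem:r9-zero-baseline} controls the area. Hence this term tends to zero, and I call it $e_B$.
\item The potential term $e^{-pd_c/8}\eta_B^{2p}$ is at most $e^{-pd_c/8}\chi_S^2\mathbf 1_{|z|\geq T}$. Since $d_c\geq|z|-R_0$, this is the third density in \eqref{eq:r9-halo}, so it is $\leq CV_p$.
\end{itemize}

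For the second inequality, take $y\in\Sigma\cap B_{2s_0}(z)$ realizing, or nearly realizing, the supremum. Then $\eta_B(y)^{2p}\leq\omega_p(y)$ because $m_{\rm bad},\xi_B\leq1$. I transfer $\omega_p(y)$ to $z$ with the one-sided comparison \eqref{eq:s8-one-sided-weight-pair}, taking $d\leq2s_0$ and $v=1/2$, and choosing $T$ so large that $(1+v)e^{4s_0/S}\leq2$. This gives
\[
(\eta_B^\sharp)^{2p}(z)\leq2\omega_p(z)+C_{s_0}T^{-2p}\chi_S(z)^2\mathbf 1_{B_{2T+2s_0}}(z).
\]
The remainder is nonzero only where the cutoff is in transition or the support is at the scale $B_{4T}$. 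Multiplying by $D^p\leq1$ and integrating bounds that part by $C_{s_0}T^{-2p}\int_{B_{4T}}\chi_S^2\,d\mu_{\rm loc}\leq C_{s_0}V_p$.

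The main obstacle is that $\eta_B^\sharp(z)$ may be positive at points $z$ where $D$ is not small, namely near the bad set. If $\eta_B^\sharp(z)>0$, then some $y$ within $2s_0$ has $s_X(y)\geq3s_0$, so $s_X(z)\geq s_0$. I first try to show that $z$ stays in the good set, where the integral is taken. If that fails, the integration region reduces to the collar $\{s_X\leq4s_0\}$, and I bound it again through \eqref{eq:r9-collar}. That is harmless on the right side, since $N_p$ is absorbed on the left in the eventual application. Failing that, I would simply integrate only over $\Sigma_g$, as the section convention stipulates.
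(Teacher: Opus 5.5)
Your proposal is correct and follows essentially the same route as the paper. You expand $\nabla\eta_B$ over the four factors, bound the resulting terms by the annular $T^{-2p}$ mass, the collar estimate \eqref{eq:r9-collar}, the vanishing outer tail $e_B$ and the $S^{-2p}$ summand of $V_p$, and treat the core factor by \eqref{eq:r9-halo}. For the enlargement you use a weight comparison at displacement $2s_0$ with $(1+v)e^{4s_0/S}\leq2$ for large $T$; the paper uses the symmetric comparison \eqref{eq:physical-weight-pair} with $v=1/4$, and your one-sided version with $v=1/2$ works equally well.

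The obstacle in your last paragraph is vacuous, and your own admissibility paragraph already settles it. A contributing $z$ has $s_X(z)\geq s_0>0$, so $z\notin X$ and hence $z\in\Sigma_g$. Moreover, multiplying the pointwise bound $(\eta_B^\sharp)^{2p}\leq2\omega_p+\text{remainder}$ by $D(z)^p$ uses no smallness of $D$. So the fallback that adds $N_p$ to the second inequality, which the statement does not permit, is never needed.
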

\begin{proof}
Differentiating the four factors gives, respectively, $T^{-2p}$ times
annular area, a fixed bad-collar integral, $B^{-2p}$ times the outer
tail weighted by $\chi_S^2$, and $S^{-2p}$ times the $\chi_S^2$-weighted area. Equations
\eqref{eq:r9-measure-ledger} and \eqref{eq:r9-collar} bound the first,
second and fourth terms, after multiplication by their displayed powers
of $T^{-1}$ or $S^{-1}$; the uniform local measure bound and exponential
$\chi_S$ make the third $e_B\to0$. The core factor has an exponential tail centered at the
origin, which is bounded by \eqref{eq:r9-halo}. A contributing enlargement
point is at distance at least $s_0$ from $X$ and at least $T-2s_0$
from the origin. For $|y-z|\leq2s_0$, the full-power comparison
\eqref{eq:physical-weight-pair} bounds its maximal weight by
$(1+v)e^{4s_0/S}\omega_p(z)$ plus
$C_vs_0^{2p}T^{-2p}\chi_S(z)^2\mathbf1_{\mathcal A_T}(z)$.
Fix $v=1/4$ first and increase $T$ after $s_0$ so that
$(1+v)e^{4s_0/S}\leq2$ uniformly for $S\geq16T$.
This remains valid after taking the supremum; $D\leq1$ and the
core measure integral bound the second term, whose constant may depend
on $s_0$. Thus the principal enlargement coefficient is the fixed
number $2$, independently of the averaging window. All finite-$B$
integrals are legitimate compact exterior tests; bounded $a,D$ and
the observation-weight tail also justify the subsequent exhaustion by dominated convergence.
\end{proof}

\vskip0.12in

\subsubsection{Normal-ray estimates and the partition of unity}
Fix $0<\tau\leq10^{-3}$. First choose finite normal windows $P,Q$ and a
curvature-tube parameter $\alpha$, then decrease $\delta$, so that
$I_P\geq.999a^2$, $I_P\geq c_ID$, and the lifted sheet gain is as
close as prescribed to $\sigma S_0$. The local normal-form calculation
is \eqref{eq:r3-lift-exact}; it is not repeated in the integrated proof.
Normalized compactness also gives any prescribed fixed lower reach
$t_{\min}$ on $\Sigma_g$. These choices are compatible with the
profile-independent pointwise and averaging constants.
For partition cancellation we use the following accurate-ray estimate.
For every $\xi>0$ there is $b_\xi>0$, depending also on the fixed
curvature-tube parameter $\alpha$, such that
\begin{equation}\label{eq:r9-accurate}
 \begin{gathered}
 B_z(s):=D(z)e^{(2\sqrt2+\xi)|s|}\leq b_\xi\\
 \Longrightarrow\quad
 |q(z+s\nu)/G(s)-1|+
       |\nabla_x\log(G(s)/q(x))|\leq C_\xi B_z(s).
 \end{gathered}
\end{equation}
Here $G=1-\tanh^2(s/\sqrt2)$; the ray lies inside
$|s|a<\alpha/2$, and a physical point has at most one
accurate preimage, including signed depths.
This is Lemma~\ref{lem:r4-accurate-ray}, with exactly the displayed
$B_z(s)$ and the same curvature-tube parameter. Its stronger profile
bound includes $e^{-\xi|s|/2}$; its uniqueness statement already
covers both signs of the normal depth, so no global tube assumption
is introduced here.

\vskip0.12in

Fix $L_0$ as below.  On the physical space choose
$\theta=0$ for $s_X\leq2L_0$, $\theta=\pi/2$ for
$s_X\geq3L_0$, $|\nabla\theta|\leq C/L_0$,
and write $\alpha_0=\cos\theta$, $b=\sin\theta$.
On the base surface choose $\mu=0$ for $s_X\leq L_0/2$,
$\mu=1$ for $s_X\geq L_0$, $|\nabla_\Sigma\mu|\leq C/L_0$.
Let $\beta(z,s)$ be an even curvature cutoff equal to one
on $|s|a\leq\alpha/2$ and supported in $|s|a<\alpha$.
Relative Harnack and tangent phase balls bound $d$ on the
$10L_0$ collar: the former gives
$D\geq c\delta e^{-C(10L_0+1)}$ and the latter gives
$D\leq C_\Gamma/d$ for large reach.  The receiving-defect
inequality likewise bounds the reaches at base points whose selected feet
enter that collar.  Fix $t_*$ above both finite bounds.

\vskip0.12in

\subsection{Curvature estimates from stability}
\label{sec:r12-higher-moment-attack}
We first derive a higher-power curvature estimate by combining
the Simons identity with stability. The nonminimal Codazzi estimate in
Lemma~\ref{lem:nonminimal-kato-simons} gives, for every fixed $e>0$,
\begin{equation}\label{eq:r12-optimized-kato}
 |\nabla A|^2\geq\kappa_K|\nabla a|^2-C_{K,m,e}|\nabla H|^2,
 \quad\kappa_K=\frac{m+2}{m(1+e)},\quad
 C_{K,m,e}=\frac3{m+2}\{\kappa_K(1+e^{-1})-1\}.
\end{equation}
Thus any $1<\kappa_K<1+2/m$ is available; $\kappa_K$ is distinct
from the interaction rate $\kappa$. The endpoint is sharp for
trace-free Codazzi jets: take $A=\operatorname{diag}(m-1,-1,\ldots,-1)$
and symmetric $T$ with $T_{111}=(m-1)b$, $T_{1ii}=-b$ for $i>1$,
and their permutations. Then $\operatorname{tr}_{jk}T_{ijk}=0$ and
$|T|^2/|\nabla a|^2=(m+2)/m$.

For $p\geq2$, let $\zeta$ be a compactly supported Lipschitz cutoff
on the regular good set, with support in $\{|z|-R_0>8Q\}$ for the fixed window $Q$ above, and put
$r=p-1$. Write
\[
 \begin{gathered}
 F_p=\int a^{2p}\zeta^2,\quad G_p=\int a^{2p-4}|\nabla a|^2\zeta^2,
 \quad B_p=\int a^{2p-2}|\nabla\zeta|^2,\\
 C_p=\int a^{2p-3}\zeta\nabla a\cdot\nabla\zeta,\quad
 H_p=\int a^{2p-4}|\nabla H|^2\zeta^2,\quad
 L_p=\int |H|a^{2p-1}\zeta^2.
 \end{gathered}
\]
The Simons identity \cite{Simons1968}, in the curvature absorption
method of \cite{SchoenSimonYau1975}, is
\begin{align}\label{eq:r12-power-simons-exact}
 F_p={}&\int a^{2p-4}|\nabla A|^2\zeta^2+(2p-4)G_p+2C_p\notag\\
 &+\int a^{2p-4}\zeta^2A_{ij}H_{ij}
   +\int a^{2p-4}\zeta^2H\operatorname{tr}A^3.
\end{align}
Integration by parts turns its mean Hessian term into
\[
 -H_p-(2p-4)\int a^{2p-5}\zeta^2A_{ij}a_iH_j
       -2\int a^{2p-4}\zeta A_{ij}\zeta_iH_j.
\]
Together with \eqref{eq:r12-optimized-kato} this yields, for fixed $v>0$,
\begin{equation}\label{eq:r12-power-simons-bound}
 F_p\geq(\kappa_K+2p-4-v)G_p+2C_p-C_{m,p,e,v}(H_p+B_p)-L_p.
\end{equation}
The stability inequality on the interface, with any prescribed $\beta<1$ sufficiently
close to one, applied to $a^{p-1}\zeta$ gives
\begin{equation}\label{eq:r12-power-stability}
 \beta F_p\leq r^2G_p+2rC_p+B_p+E_p,\qquad
 E_p=C\int e^{-d_c/8}a^{2p-2}\zeta^2,
 \quad d_c=|z|-R_0.
\end{equation}
These calculations use regularized powers at $a=0$. Comparing the
last two inequalities leads to the condition
\begin{equation}\label{eq:r12-moment-gate}
 \beta(\kappa_K+2p-4)>(p-1)^2.
\end{equation}
Indeed choose $v$ with
$\Delta_p=\beta(\kappa_K+2p-4-v)-r^2>0$ and use
$|C_p|\leq\sqrt{G_pB_p}$:
\[
 \Delta_pG_p\leq2|r-\beta|\sqrt{G_pB_p}
      +C(B_p+H_p+L_p+E_p)
 \leq\tfrac12\Delta_pG_p+C'(B_p+H_p+L_p+E_p).
\]
Stability then also bounds $F_p$, proving
\begin{equation}\label{eq:r12-power-curvature-pre}
 F_p+G_p\leq C(B_p+H_p+L_p+E_p).
\end{equation}
This condition is sufficient and necessary for the preceding absorption of
$G_p$ with a positive coefficient. It is a condition on this argument,
not on all solutions of the equation. For fixed $\beta,\kappa_K$
its upper root is
\begin{equation}\label{eq:r12-fixed-beta-range}
 p<1+\beta+\sqrt{\beta(\kappa_K+\beta-2)}.
\end{equation}
Letting $\beta\uparrow1$ gives $(p-2)^2<\kappa_K-1$;
$\kappa_K=1+1/m$ permits $p<2+1/\sqrt m$, while the optimized range is
\begin{equation}\label{eq:r12-ssy-range}
 2\leq p<2+\sqrt{2/m}.
\end{equation}
The upper endpoint of the range used here is not attained by this strict absorption argument.

\vskip0.12in

Set $\zeta=\eta^p$. Young's inequality bounds $B_p,H_p,L_p,E_p$,
respectively, by a small multiple of $F_p$ plus
\[
 C\int|\nabla\eta|^{2p},\quad
 C\int|\nabla H|^p\eta^{2p},\quad
 C\int|H|^{2p}\eta^{2p},\quad
 C\int e^{-pd_c/8}\eta^{2p}.
\]
For $H_p$ the conjugates are $p/(p-2),p/2$, with $p=2$ direct.
For each prescribed finite $s_0$, Proposition~\ref{prop:r3-small-mean}
and Fubini on the good charts give
\[
 |\nabla H(z)|^p\leq\frac{C_{m,p}}{s_0^{m+p}}
          \int_{\Sigma\cap B_{2s_0}(z)}D^p,\qquad
 \int|\nabla H|^p\eta^{2p}\leq\frac{C_{m,p}}{s_0^p}
          \int D^p(\eta^\sharp)^{2p}.
\]
Also $|H|^{2p}\leq C\delta^pD^p$. Choosing $s_0$ before $\delta$
and absorbing the small $F_p$ terms proves, for every $\varepsilon>0$,
\begin{equation}\label{eq:r12-curvature-only-higher-moment}
 \int a^{2p}\eta^{2p}\leq C_{\varepsilon,p}\int|\nabla\eta|^{2p}
 +\varepsilon\int D^p(\eta^\sharp)^{2p}
 +C_{\varepsilon,p}\int e^{-pd_c/8}\eta^{2p}.
\end{equation}
The defect term on the right is evaluated on the enlarged cutoff,
with the same exterior guard.

\vskip0.12in

\begin{corollary}
\label{cor:r13-nine-quarter-curvature}
For $3\leq m\leq8$, estimate
\eqref{eq:r12-curvature-only-higher-moment} holds at $p=9/4$ with
arbitrarily small prescribed defect tolerance.
\end{corollary}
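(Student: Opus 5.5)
The plan is to check that $p=9/4$ satisfies the absorption condition \eqref{eq:r12-moment-gate} for every $3\leq m\leq8$, with admissible choices of the Kato parameter $e$ and the stability coefficient $\beta$. Once this holds, \eqref{eq:r12-curvature-only-higher-moment} follows by repeating the general chain \eqref{eq:r12-power-simons-exact}--\eqref{eq:r12-power-curvature-pre} and the subsequent Young and averaging steps verbatim.

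First, the numerics. At $p=9/4$ we have $2p-4=1/2$ and $(p-1)^2=25/16$, so the gate reads $\beta(\kappa_K+\tfrac12)>\tfrac{25}{16}$. Since $\kappa_K=(m+2)/(m(1+e))$ decreases in $m$, the worst case is $m=8$, where $\kappa_K=\tfrac54(1+e)^{-1}$. I would fix $e=1/10$ once for all $m$, which gives $\kappa_K\geq 1.136$ and hence $\kappa_K+\tfrac12\geq1.636$. It then suffices to have $\beta>25/(16\cdot1.636)\approx0.955$. Proposition~\ref{prop:r3-actual-transfer} supplies any $\beta<1$ via \eqref{eq:r3-beta99}, for instance $\beta=.99$ applied to $\psi=a^{p-1}\zeta$. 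This test is admissible because $\zeta$ is supported in the good regular set with $|z|-R_0>8Q$, and the exponential error $E_p$ appears there exactly as in \eqref{eq:r12-power-stability}. Equivalently, $9/4<2+\sqrt{2/m}$ for $m\leq8$, since $\sqrt{2/8}=1/2$, so the strict inequality \eqref{eq:r12-fixed-beta-range} leaves a margin at the fixed $\beta,\kappa_K$. With $\Delta_p>0$ fixed, the absorption yields \eqref{eq:r12-power-curvature-pre}; its constants depend only on $m$, $e$, $\beta$ and $v$.

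Second, I would take $\zeta=\eta^p$ and apply Young's inequality to $B_p,H_p,L_p,E_p$ with the conjugate exponents listed above. The resulting small multiples of $F_p$ are absorbed on the left. The term $\int|\nabla H|^p\eta^{2p}$ is controlled by Proposition~\ref{prop:r3-small-mean} together with Fubini on good charts, which gives the coefficient $C_{m,p}s_0^{-p}$ in front of $\int D^p(\eta^\sharp)^{2p}$. The term $\int|H|^{2p}\eta^{2p}$ is bounded by $C\delta^p\int D^p\eta^{2p}\leq C\delta^p\int D^p(\eta^\sharp)^{2p}$ using Lemma~\ref{lem:r9-zero-baseline}. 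Given $\varepsilon>0$, I would choose $s_0$ with $C_{m,p}s_0^{-p}\leq\varepsilon/2$, and only afterwards decrease $\delta$ so that $C\delta^p\leq\varepsilon/2$, which is the parameter order of Remark~\ref{rem:s5-nine-quarter-choice-order}. Doing so does not affect $\Delta_p$, which is independent of $\delta$ and $s_0$.

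The step I expect to be the main obstacle is justifying the regularized powers and the integrations by parts at $a=0$ for the noninteger exponent $2p-4=1/2$. In particular, the term $(2p-4)\int a^{2p-5}\zeta^2A_{ij}a_iH_j$ carries the singular weight $a^{-1/2}$. I would handle it by working with $a_\epsilon=\sqrt{a^2+\epsilon^2}$ throughout and using $|A_{ij}a_i|\leq a|\nabla a|$, so the weight becomes $a^{2p-4}|\nabla a||\nabla H|$. Cauchy--Schwarz then bounds this by $vG_p+C_vH_p$, uniformly in $\epsilon$, after which one passes $\epsilon\downarrow0$ by monotone convergence.
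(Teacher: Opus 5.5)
Your proposal is correct and follows the paper's route. You verify the strict gate \eqref{eq:r12-moment-gate} at $p=9/4$ with $\beta=.99$, then rerun the Simons--stability absorption with $\zeta=\eta^p$, choosing $s_0$ before $\delta$. The only difference is that the paper takes $\kappa_K=1+1/m$ (that is, $e=1/(m+1)$), giving the margin $(396-31m)/(400m)\geq37/800$, whereas your uniform choice $e=1/10$ gives a slightly larger margin at $m=8$; either choice suffices.
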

\begin{proof}
Choose $\beta=.99$, $\kappa_K=1+1/m$. The strict margin in \eqref{eq:r12-moment-gate} is
$\{396-31m\}/(400m)\geq37/800>0$. Proposition~\ref{prop:r13-nine-dimensional-local-package}
supplies the local estimates required for these choices.
\end{proof}

\begin{corollary}
\label{cor:s3-same-weight-curvature}
For $p=9/4$ and every $b>0$, there are a finite averaging window,
a good threshold and $C_b<\infty$, all independent of the reach
profile, such that, for large $T$ and every $S\geq16T$,
\begin{equation}\label{eq:r12-p-small-curvature}
 A_p:=\int a^{2p}\omega_p
 \leq b\int(D^p+\widehat E^p)\omega_p+C_b(N_p+V_p).
\end{equation}
\end{corollary}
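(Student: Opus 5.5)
The plan is to apply the curvature-only estimate \eqref{eq:r12-curvature-only-higher-moment} at $p=9/4$, which Corollary~\ref{cor:r13-nine-quarter-curvature} makes available, with the test amplitude $\eta=\eta_B=\zeta_Tm_{\rm bad}\xi_B\chi_S^{1/p}$ of Lemma~\ref{lem:physical-compact-cutoff}. We then let $B\to\infty$ and put the bad collar back.

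\textbf{Step 1 (main term).} With this amplitude, $\eta_B^{2p}=\omega_pm_{\rm bad}^{2p}\xi_B^{2p}$. Hence $\int a^{2p}\eta_B^{2p}$ is the integral of $a^{2p}\omega_p$ over the region where $m_{\rm bad}=\xi_B=1$, minus a collar piece. Corollary~\ref{cor:r13-nine-quarter-curvature} gives
\[
 \int a^{2p}\eta_B^{2p}\leq C_{\varepsilon}\int|\nabla\eta_B|^{2p}
 +\varepsilon\int D^p(\eta_B^\sharp)^{2p}
 +C_{\varepsilon}\int e^{-pd_c/8}\eta_B^{2p}.
\]
The first and third terms on the right are bounded by $C_{s_0,p}(N_p+V_p)+e_B$, using the first line of \eqref{eq:physical-compact-cutoff}. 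The defect term is at most $2\varepsilon\int D^p\omega_p+\varepsilon C_{s_0,p}V_p$ by the second line.

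\textbf{Step 2 (parameter order).} Fix the target $b$. Choose $\varepsilon=b/2$. This forces a finite averaging window $s_0$ and then a good threshold $\delta$, in the order of the curvature-only estimate (choose $s_0$ before $\delta$). None of these choices involves the reach profile, since neither Proposition~\ref{prop:r3-small-mean} nor the Simons/stability absorption sees $d$. The $\widehat E^p$ term on the right of \eqref{eq:r12-p-small-curvature} is not needed at all; it only enlarges the bound.

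\textbf{Step 3 (remove the cutoffs).} On the complement of $\{m_{\rm bad}=1\}$ we have $s_X\leq4s_0$ and $a\leq C$ on $\Sigma_g$. The collar bound \eqref{eq:r9-collar} with $b=4s_0$ therefore controls $\int_{\{s_X\leq4s_0\}}a^{2p}\omega_p\,dS$ by $C_{s_0}(N_p+V_p)$. For $\xi_B$, let $B\to\infty$ using monotone convergence on the left and $e_B\to0$ on the right. This is legitimate because $\chi_S$ decays exponentially and the local measure has polynomial growth.

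\textbf{Step 4 (admissibility).} We must check that $\eta_B$ and its $2s_0$-enlargement are supported in the good set and in $\{|z|-R_0>8Q\}$ once $T\geq T_0$. This holds because $\zeta_T$ vanishes on $B_T$, and $m_{\rm bad}=0$ within $3s_0$ of $X$ while enlargement moves points by at most $2s_0$. The interface stability transfer, Proposition~\ref{prop:r3-actual-transfer}, then applies.

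The main obstacle is Step 2: keeping the principal coefficient $b$ independent of the later reach profile. This works only because the enlargement coefficient in Lemma~\ref{lem:physical-compact-cutoff} is the fixed number $2$, while all $s_0$-dependence goes into the $V_p$ remainder and the collar constant $C_b$.
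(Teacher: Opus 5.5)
Your proposal is correct and matches the paper's proof step for step. Both use tolerance $b/2$ in \eqref{eq:r12-curvature-only-higher-moment} with the compact amplitude $\eta_B$ of Lemma~\ref{lem:physical-compact-cutoff}, whose enlargement coefficient $2$ (after increasing $T$ past $s_0$) turns the defect term into $b\int D^p\omega_p+C_bV_p$; both then bound the gradient and core terms by that lemma, exhaust in $B$, and handle the remaining bad collar by boundedness of $a$ and \eqref{eq:r9-collar}. One cosmetic point: $\xi_B$ need not increase with $B$, so the exhaustion is better justified by dominated convergence (as the paper does) or Fatou's lemma than by monotone convergence.
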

\begin{proof}
Use tolerance $b/2$ in \eqref{eq:r12-curvature-only-higher-moment}
and the compact $\eta_B$ of Lemma~\ref{lem:physical-compact-cutoff}.
Its enlargement coefficient is at most $2$ after increasing $T$
after $s_0$. Hence the defect term is at most
$b\int D^p\omega_p+C_bV_p$. The gradient and core terms are bounded by
$C_b(N_p+V_p)+e_B$. Let $B\to\infty$ using the same lemma. The remaining fixed bad
collar is bounded by boundedness of $a$ and \eqref{eq:r9-collar}.
\end{proof}

\vskip0.12in

\subsection{A weighted stability inequality}
\label{sec:r13-rss-power}
In this subsection we reduce the stability inequality to the zero
set. To this end we use the root-sum-squares construction of
\cite[Section~9.2]{OpenAI2026}, with a single ambient scalar weight. Throughout,
$3\leq m\leq8$, $2\leq p\leq9/4$, and $S\geq16T$; the cutoffs,
cap, net and localization quantity $V_p$ remain those fixed above.

\vskip0.12in

\begin{theorem}
\label{thm:r13-actual-p-rss}
Fix $0<\tau\leq10^{-3}$. There are $c_0,L_0,t_*>0$ and
$C<\infty$, determined by $\tau$ and the fixed local choices,
uniformly for $2\leq p\leq9/4$, with the following property.
Let $f\in C^1$ be nonincreasing, $0<f\leq1$, equal to one on
$[0,t_*]$, with $|f'|\leq K_ff$ for some $K_f<\infty$ and eventual tail
$f(t)=c_fe^{-\lambda_ft}$ for some $\lambda_f>0$. Choose the halo
rate $c_h<2\epsilon\lambda_f$ as above. There are
$T_0,C_f<\infty$ such that, for every $T\geq T_0$,
\begin{align}\label{eq:r13-p-rss-interface}
 (1-\tau)\int_{\{s_X\geq4L_0\}}S_0f(t)^2\omega_p+c_0N_p
 \leq{}&(1+\tau)\int f'(t)^2|\nabla t|^2\omega_p
               +CS^{-2}\int f(t)^2\omega_p\notag\\
 &+CT^{-2}\int_{\mathcal A_T}\zeta_T^{2p-2}\chi_S^2f(t)^2+C_fV_p.
\end{align}
The constants $C_f,T_0$ may depend on the chosen profile, its bound
$K_f$, the halo rate and the fixed solution and local data. The threshold
$T_0$ is enlarged as needed for $R_0$ and the fixed exterior guard.
The constants $c_0,L_0,t_*$ and the leading coefficients are independent
of the profile and halo rate.
\end{theorem}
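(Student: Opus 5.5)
We build one ambient test function from the interface data and feed it into exterior stability. For each good base point $z$ with $s_X(z)\geq L_0/2$ we take the lifted normal amplitude
\[
 \xi_z(s)=G(s)\beta(z,s)\,b(z+s\nu)\,\mu(z)\,f(t(z))\,\omega_p(z)^{1/2},
\]
where $\beta$ is the even curvature cutoff, $b=\sin\theta$ vanishes near the bad set, and $\mu$ is the base collar cutoff. Near $X$ we use instead the profile-free phase amplitude $\alpha_0\,q\,\omega_p^{1/2}$, with $\alpha_0=\cos\theta$. The ambient test $\xi$ is the fiberwise root sum of squares of all branch amplitudes, as in the proof of Proposition~\ref{prop:r3-actual-transfer}. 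Uniqueness of accurate preimages (Lemma~\ref{lem:r4-accurate-ray}) makes the fibers finite. The Cauchy--Schwarz bound $|\nabla\sqrt{\sum\xi_j^2}|^2\leq\sum|\nabla\xi_j|^2$ then reduces $Q_u(\xi)\geq0$ to a sum of branchwise forms. Before applying stability we insert the outer cutoff $\xi_B$ and the core cutoff $\zeta_T$, so that every test is a compact exterior test. Afterwards we let $B\to\infty$ exactly as in Lemma~\ref{lem:physical-compact-cutoff}.

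\emph{Normal branches.} For the branches based on the surface we use the exact form \eqref{eq:r3-lift-exact}. The curvature and profile terms give at least $(1-\tau)\sigma S_0 f(t)^2\omega_p\mu^2$, after fixing $P,Q,\alpha$ and then $\delta$. The gradient term $G^2J|\nabla f|^2$ has three kinds of contribution. First, the derivative of $f(t)$, which is $f'(t)^2|\nabla t|^2$ times a normal integral equal to $\sigma(1+o(1))$; this gives the leading $(1+\tau)$ term. Second, the derivative of $\chi_S^{1/p}$ and of $\zeta_T^p$. We split these off with Young's inequality at tolerance $\tau$. The $\chi_S$ part costs $CS^{-2}f^2\omega_p$. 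The $\zeta_T$ part is transported along the normal by \eqref{eq:r13-full-power-transport}, which gives the annular $T^{-2}\zeta_T^{2p-2}$ term plus $CR_{p,T}$. Third, the derivatives of $\beta$, $\mu$ and $b$. The $\beta$ terms are $o_\delta(D)\psi^2$ and are absorbed into $S_0$ using $I_P\geq c_ID$. The $\mu$ and $b$ derivatives are supported on the collars $\{s_X\leq3L_0\}$, so \eqref{eq:r9-collar} bounds them by $C(N_p+V_p)$. Because $f=1$ on $[0,t_*]$ and reaches on the collar are below $t_*$, these collar constants do not depend on $f$.

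\emph{Bad-net gain and tails.} The gain $c_0N_p$ comes from the phase amplitude near $X$. By \eqref{eq:audited-exact-q-test}, $Q_u(q\eta)=\int q^2|\nabla\eta|^2-\int P\eta^2$. Lemma~\ref{lem:audited-relative-defect} gives $P\geq c_\delta>0$ on a unit ball around each net point $b\in\mathcal X$, and these balls are disjoint, so the potential term yields a positive multiple of $\sum_b\omega_p(b)$. The gradient cost of $\alpha_0$ is $C/L_0^2$ on a collar, so we must take $L_0$ large relative to that gain. This is why $c_0$ and $L_0$ are fixed before the profile is chosen.

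\emph{Remaining errors and the main obstacle.} Overlap errors in the RSS gradient, and the far normal tails where the accurate-ray hypothesis fails, are controlled by Lemma~\ref{lem:s5-rss-growing-collars}. On accurate rays, \eqref{eq:r9-accurate} makes $q/G$ and $\nabla\log(G/q)$ small. The cap branch, where $t=\epsilon\rho_o$, uses the fixed multiplier $f$. Its cost is exponentially small by Lemma~\ref{lem:physical-cap-weighted-error}, since $c_h<2\epsilon\lambda_f$, and it goes into $C_fV_p$. I expect the main obstacle to be keeping the leading coefficients $1\pm\tau$ and $c_0$ independent of $f$. The plan is to route every profile-dependent error through cutoff commutators with exponential decay, namely \eqref{eq:physical-foot-remainder} and \eqref{eq:physical-same-weight-kernel}. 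All such errors then land in $C_fV_p$, while the principal transports keep the fixed inflation $(1+v)e^{C/S}$. Dividing the resulting inequality by $\sigma$ gives \eqref{eq:r13-p-rss-interface}.
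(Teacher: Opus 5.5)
There is a genuine gap, located where you reduce via $|\nabla\sqrt{\sum\xi_j^2}|^2\le\sum|\nabla\xi_j|^2$ to branchwise forms. That inequality discards the cancellation the switch collar $\{2L_0\le s_X\le3L_0\}$ requires. There both $\alpha_0=\cos\theta$ and $b=\sin\theta$ are nonzero. After Cauchy--Schwarz the partition costs $(q^2\sin^2\theta+G^2\cos^2\theta)|\nabla\theta|^2\approx q^2L_0^{-2}$ times the weight, integrated over the whole collar. This is of order $L_0^{-2}$ times the weighted good area within $3L_0$ of $X$, and that area is not a small multiple of $N_p$. Lemma~\ref{lem:s5-rss-growing-collars} only gives $C(1+L_0)^NR_{\rm loc}$, and a single sheet passing a bounded bad region already has area of order $L_0^m$ there. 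So the cost relative to the fixed gain $c_\delta\sum_b\omega_p(b)$ grows like $L_0^{m-2}$ for $m\ge3$, and enlarging $L_0$ makes it worse. The interface gain cannot pay for it either: on the collar that gain is of size $\int D\omega_p$, and $D$ there is only bounded below by $c\delta e^{-CL_0}$. The same problem affects your bound of the $\mu$- and $b$-derivative terms by $C(N_p+V_p)$ through \eqref{eq:r9-collar}. That constant depends on $L_0$, is not below $c_0$, and the statement permits no $N_p$ on the right-hand side.

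The paper avoids both losses as follows.
\begin{itemize}
\item All components carry one common ambient scalar $h_p\rho_B$ evaluated at the physical point $z+s\nu$, not $\omega_p(z)^{1/2}$ at the base. This is what makes the exact variance identity \eqref{eq:s5-rss-variance-common} applicable.
\item For the pair $q\alpha_0$, $Gb$, the RSS gradient then carries only $\alpha_0^2b^2(q^2-G^2)^2(q^2\alpha_0^2+G^2b^2)^{-1}|\nabla\theta|^2$, which vanishes when $q=G$.
\item On accurate rays with $|s|\le M_0=C_1\log L_0$, \eqref{eq:r9-accurate} makes this term and its cross terms $O(D(z)e^{\xi|s|}/L_0)$. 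Inaccurate short rays are likewise bounded by $D(z)e^{2\xi M_0}$ via phase decay. Both are paid by a reserved positive piece $c_s\int_{\Sigma_{\rm sw}}D\omega_p$ of the normal gain on the switch region, which is not used twice.
\item For $|s|>M_0$ the tails of Lemma~\ref{lem:s5-rss-growing-collars} apply, and their factor $e^{-cM_0}$ beats $(1+L_0)^N$ once $C_1$ is large.
\item For the base mask, the $\mu$-transition is placed in $[L_0/2,L_0]$ and the $\theta$-transition in $[2L_0,3L_0]$. Then $\nabla\mu\ne0$ together with $b(z+s\nu)\ne0$ forces $|s|\ge L_0$, and the base-restricted tail \eqref{eq:s5-rss-source-collar-tail} supplies $e^{-a'L_0}$. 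This makes those terms at most $\frac{c_b}{8}N_B$; the profile--$b$ cross similarly has $|s|>7L_0$.
\end{itemize}
Without these two mechanisms your scheme cannot retain a positive bad-net coefficient $c_0$ that is fixed before $L_0$ and the profile.
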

\begin{proof}
We construct a compact ambient test and then estimate the errors
from its cutoffs. Use the partition $\alpha_0=\cos\theta$,
$b=\sin\theta$, the base mask $\mu$, and curvature cutoff $\beta$
fixed above; put
$h_p=\zeta_T^p\chi_S$. Insert physical and normal outer cutoffs
$\rho_B,\gamma_B$, equal to one up to $B$, zero above $2B$, with
first derivatives $O(B^{-1})$. A base cutoff $\kappa_B$, one on
$B_{4B}$ and zero off $B_{8B}$, is inactive because the other
supports imply $|z|\leq|z+s\nu|+|s|\leq4B$. Take the RSS of
\[
 qh_p\rho_B\alpha_0,\qquad
 G\beta\gamma_B\kappa_Bh_p(z+s\nu)\rho_B(z+s\nu)
                b(z+s\nu)\mu(z)f(t(z)).
\]
The compact parametrizing support lies in the locally invertible
curvature tube, so every nonzero fiber is finite; cutoff zero
extensions give a compact Lipschitz exterior test.

\vskip0.12in

For components $\xi_j=ha_j$, the exact variance is
\begin{equation}\label{eq:s5-rss-variance-common}
 \sum_j|\nabla\xi_j|^2-|\nabla(\sum_j\xi_j^2)^{1/2}|^2
       =h^2\min_w\sum_j|\nabla a_j-a_jw|^2.
\end{equation}
Use $h=h_p\rho_B$, the ambient identity
$Q_u(qv)=\int q^2|\nabla v|^2-\int q^3Dv^2$, and the lifted
identity \eqref{eq:r3-lift-exact} with its full scalar
$h_p\rho_Bb\mu f\beta\gamma_B$. The latter gives exactly
$-J_sGG'$ after integration by parts, with inverse tangential metric
$(I+sA)^{-2}$; no derivative of the common scalar enters the variance.
All outer-cutoff errors are $e_B\to0$ at fixed $S,T,x_c,f$:
polynomial area, bounded local jets, the coarse gradient
$|\nabla t|\leq C(1+t)^2$, and the exponential tail give joint domination
\[
 \chi_S(z+s\nu)^2G(s)^2\leq C_{S,x_c}e^{-c_S(|z|+|s|)}.
\]
This also justifies moving $\rho_B$ between base, physical point and
net center in all the collar bounds, with error $e_B$.

\vskip0.12in

Disjoint fixed balls about the bad net have $q^3D\geq c_\delta>0$
and $\alpha_0=1$. Reverse fixed-displacement weight transport gives
the lower bound $c_bN_B-CV_p-e_B$, where $c_b>0$ is fixed before $L_0,f$.
On $\Sigma_{\rm sw}=\{3L_0/2\leq s_X\leq4L_0\}$ use a fixed short
jointly injective tube. Ambient and normal gains are complementary
because $f=\mu=1$ there. Replacing $b(z+s\nu)$ by $b(z)$ introduces an error $O(L_0^{-1})D$:
$|b(z+s\nu)-b(z)|\leq C\min(1,|s|/L_0)$ and
$|J_s|\leq CD(1+|s|)^{m-1}$, using $H=O(D)$ and $a^2\leq CD$.
The remaining part of the normal gain gives the positive term
$c_s\int_{\Sigma_{\rm sw}}D\omega_p$ with $c_s>0$ fixed small
relative to the ambient gain and $\tau c_I$. At the level
$s_X=4L_0$ it is included in that remaining fraction, so the same gain is
not used twice, even for positive-area level sets.

Write
\[
 \begin{gathered}
 \mathcal S_B=\int_{s_X\geq4L_0}\rho_B^2S_0f^2\omega_p,
 \quad \mathcal F_B=\int\rho_B^2f'^2|\nabla t|^2\omega_p,
 \quad N_B=\sum_{v\in\mathcal X}\rho_B(v)^2\omega_p(v),\\
 \mathcal R_B=\int_{\Sigma_{\rm sw}}\rho_B^2D\omega_p,\qquad
 \mathcal C_B=S^{-2}\int\rho_B^2f^2\omega_p
 +T^{-2}\int_{\mathcal A_T}\rho_B^2\zeta_T^{2p-2}\chi_S^2f^2.
 \end{gathered}
\]
Normalize by $\sigma$. Choose the Young cross-term, normal-mass and inverse-metric errors
to have coefficients at most $\tau/16$ each,
so their total coefficient is at most $1+\tau/4$.
The exact identities and variance then give
\begin{equation}\label{eq:s5-rss-finite-master}
 (1-\tau/4)\mathcal S_B+c_bN_B+c_s\mathcal R_B
 \leq(1+\tau/4)\mathcal F_B+C\mathcal C_B
         +\sum_{j=1}^7\mathcal E_{j,B}+C_fV_p+e_B.
\end{equation}
The first six nonnegative error terms come from variation of the
normal weight in the gain terms, the curvature cutoff, the partition
of unity, the base cutoff, the profile--partition cross terms, and
the ambient scalar derivatives. The seventh term records transport
of the spatial weight in the profile-gradient contribution.
All fixed Young and normal-comparison factors are included in its
definition. We estimate these terms below.

\vskip0.12in

First, the two weight comparisons give
\[
 \begin{aligned}
 |h_p(z+s\nu)^2-h_p(z)^2|
 &\leq[v+C|s|e^{2|s|/S}/S]h_p(z)^2\\
 &\quad+C_ve^{2|s|/S}\chi_S(z)^2\min(1,(|s|/T)^{2p})
            \mathbf1_{\{\zeta_T(z+s\nu)\ne\zeta_T(z)\}}.
 \end{aligned}
\]
The last term is bounded by $C_vV_p$ by the localization estimates;
the first coefficient is arbitrarily small. On the curvature boundary,
$|\nabla\beta|\leq C_\alpha(1+|s|)$ and $|s|a\geq\alpha/2$,
so its square and fixed-Young crosses integrate to
$Ce^{-c_\alpha/a}=o_\delta(D)$. Use $I\geq c_ID$ for the
terms on the clean and switch regions. The tail restricted at the base point bounds
the remaining contribution after $L_0$ is chosen. This gives
\begin{equation}\label{eq:s5-rss-local-budget}
 \mathcal E_{1,B}+\mathcal E_{2,B}
 \leq\tfrac\tau4\mathcal S_B+\tfrac{c_s}4\mathcal R_B
              +\tfrac{c_b}8N_B+C_fV_p+e_B.
\end{equation}
All small fractions precede $f$.

\vskip0.12in

Next consider the partition of unity. Put $M_0=C_1\log L_0$. At an accurate
switch point with $|s|\leq M_0$ there is a unique accurate inverse
branch and $\beta=\mu=f=1$. Change its integral to physical measure,
cancelling $J$, before using the two-component variance. With
$A=q^2,B=G^2$, its pure partition remainder is exactly
\[
 Ab^2+B\alpha_0^2-\frac{AB}{A\alpha_0^2+Bb^2}
 =\frac{\alpha_0^2b^2(A-B)^2}{A\alpha_0^2+Bb^2},
\]
and its variance is
\[
 \frac{AB\alpha_0^2b^2}{A\alpha_0^2+Bb^2}
 \left|\nabla\log(q/G)-\frac{\nabla\theta}{\alpha_0b}\right|^2,
\]
interpreted continuously at vanishing factors. The ratio-gradient
square is favorable; its cross and the common-scalar cross are at most
\[
 C_\xi G^2B_z(s)
 \{h_p^2(L_0^{-1}+L_0^{-2})+h_p|\nabla h_p|/L_0\}.
\]
Other inverse branches only increase the variance.
Use $G^2B_z(s)\leq CD(z)e^{\xi|s|}$, observer derivative
$O(S^{-1})h_p^2$, and the second transport inequality for the
physical derivative. The short-ray terms satisfy the bound below.
For an inaccurate short lifted ray,
$G^2\leq C_\xi D(z)e^{\xi M_0}$. At a switch point without
an accurate preimage choose a nearest zero. If its distance
$r\leq M_0$, its regular base lies in $\Sigma_{\rm sw}$ and
$D(z)e^{(2\sqrt2+\xi)r}>b_\xi$, so phase decay gives
$q^2\leq C_\xi D(z)e^{2\xi M_0}$. The normal area formula,
with $J\leq C(1+M_0)^m$, permits overcounting eligible nearest
rays; no single-valued projection is required. The remaining normal
and ambient rays are precisely the two tails of
Lemma~\ref{lem:s5-rss-growing-collars}. Altogether,
\[
 \mathcal E_{3,B}\leq
 \frac{C_\xi(1+M_0)^{N_0}e^{2\xi M_0}}{L_0}\mathcal R_B
 +C_\xi(1+L_0+M_0)^{N_0}e^{-cM_0}N_B+C_vV_p+e_B.
\]
Choose $C_1>(N_0+3)/c$, then $2\xi C_1<1/4$, its accurate-ray
threshold, and finally large $L_0$, to obtain
\begin{equation}\label{eq:s5-rss-partition-budget}
 \mathcal E_{3,B}\leq\tfrac{c_s}4\mathcal R_B+
                    \tfrac{c_b}8N_B+C_vV_p+e_B.
\end{equation}

\vskip0.12in

If $\nabla\mu$ and $b$ are nonzero, then $|s|\geq L_0$;
its base is in the $L_0$ collar, but its endpoint need not be.
The tail restricted at the base point therefore bounds its squared derivatives and
cross terms. In a profile--$b$ cross, $f'\ne0$ forces
$s_X(z)>10L_0$, whereas $s_X(x)<3L_0$, so $|s|>7L_0$.
Young's inequality gives $(\tau/4)\mathcal F_B$ plus another square,
which is bounded by the same tail estimate. Increasing $L_0$ in the preceding geometric choice gives
\begin{equation}\label{eq:s5-rss-mask-profile-budget}
 \mathcal E_{4,B}+\mathcal E_{5,B}
 \leq\tfrac\tau4\mathcal F_B+\tfrac{c_b}8N_B+C_fV_p+e_B.
\end{equation}
The profile--$\mu$ cross vanishes; curvature crosses were already
included. We estimate the remaining profile-gradient weight transport
separately. Put
\[
 F_f(z)=f'(t(z))^2|\nabla_\Sigma t(z)|^2.
\]
The coarse reach-gradient bound and the eventual exponential tail give
\[
 0\leq F_f(z)\leq C(1+t(z))^4f'(t(z))^2\leq C_f.
\]
On the curvature tube, $J$ and $(I+sA)^{-2}$ are uniformly bounded.
After extracting $F_f(z)$, the normal kernel of this contribution is
nonnegative and bounded by $C(1+|s|)^Ne^{-a_0|s|}$ for a fixed
$a_0>0$, with constants independent of the profile. The weight comparison displayed above and
Lemma~\ref{lem:physical-transport-calculus} therefore give
\begin{equation}\label{eq:s5-rss-profile-transport}
 \mathcal E_{7,B}
 \leq C(v+S^{-1})\mathcal F_B+C_{f,v}V_p+e_B.
\end{equation}
The relative observer-weight error is integrated using the first
moment of the normal kernel. The additive physical-cutoff error gives
$T^{-2p}$ on $\mathcal A_T$ and an exponential halo outside it;
using $F_f\leq C_f$, these terms are bounded by $C_{f,v}V_p$.
Moving the finite outer cutoff contributes only $e_B$ by the joint
exponential domination established above. Choose $v>0$ sufficiently
small and then increase $T$ after the profile is fixed, so that
$S\geq16T$ implies
\[
 \mathcal E_{7,B}\leq\tfrac\tau8\mathcal F_B+C_fV_p+e_B.
\]
This choice leaves the previously fixed local geometric parameters
and leading coefficients unchanged.

\vskip0.12in

Finally, the scalar derivatives on the lifted pieces are bounded by
$C\mathcal C_B+C_fV_p+e_B$ by the normal derivative estimate.
On the ambient piece they are supported in the $3L_0$ bad collar.
The observation-weight term is
$C_{L_0}S^{-2}N_B+CV_p+e_B$; the physical-cutoff term is
\[
 C_{L_0,p}T^{-2}\sum_{v\in\mathcal X\cap\mathcal A_T'}
                   \zeta_T(v)^{2p-2}\chi_S(v)^2+CV_p+e_B,
\]
where $\mathcal A_T'\subset B_{4T}$ for large $T$.
By \eqref{eq:physical-young-power} with $j=2,E=1$, the latter is
bounded by $c_bN_B/16$ plus a localization remainder $CV_p$.
Increasing $T$ bounds the observation-weight term by another $c_bN_B/16$. Thus
\begin{equation}\label{eq:s5-rss-ambient-budget}
 \mathcal E_{6,B}\leq\tfrac{c_b}8N_B+C_fV_p+e_B.
\end{equation}
This accounts for derivatives of every scalar factor and cutoff.

\vskip0.12in

Combining the preceding estimates gives
\[
 \sum_{j=1}^7\mathcal E_{j,B}
 \leq\tfrac\tau4\mathcal S_B+\tfrac{3\tau}8\mathcal F_B
       +\tfrac{c_s}2\mathcal R_B+\tfrac{c_b}2N_B+C_fV_p+e_B.
\]
Inserting this bound in \eqref{eq:s5-rss-finite-master} gives
\[
 (1-\tau/2)\mathcal S_B+\tfrac{c_b}2N_B+\tfrac{c_s}2\mathcal R_B
 \leq(1+5\tau/8)\mathcal F_B+C\mathcal C_B+C_fV_p+e_B.
\]
Discard the positive switch term and weaken the clean and profile
coefficients to $1-\tau$ and $1+\tau$.
Joint exponential domination exhausts the displayed
quantities, and Fatou's lemma passes the variance lower bound to the limit. The positive bad-net
coefficient is $c_0=c_b/2$. All choices of small leading fractions,
$L_0$ and $t_*$ precede $f$; only $T$ increases afterwards.
\end{proof}

\vskip0.12in

\subsection{A profile for the reach function}
\label{sec:r12-moment-profile-pay}

In this subsection we estimate the reach-gradient term in
Theorem~\ref{thm:r13-actual-p-rss}. We fix $3\leq m\leq8$ and
$p=9/4$, and use the weak reach and selected positive-foot estimates
from Proposition~\ref{prop:r13-nine-dimensional-local-package}.
The multiplier and signed-foot argument is adapted from
\cite[Sections~9.3--9.4]{OpenAI2026}. The receiving point is evaluated
at its own capped reach, and the physical-cutoff remainders are
estimated with power $2p$.

\vskip0.12in

In this subsection, write $\gamma>0$ for the decay exponent $\theta$
in Proposition~\ref{prop:r7-actual-weak-reach}. Put $p=9/4$,
$r=p-1=5/4$ and $\widehat E=e^{-\kappa t}$,
where $t=\min(d,\epsilon\rho_o)$ and
$\kappa=\sqrt2/G_1$, $1\leq G_1\leq1.001$.
We use the following weak branch bounds:
\[
 \Delta t\leq\mathcal A+B|\nabla t|^2/t
\]
with favorable singular part. On the nonplateau branch,
$\mathcal A\leq(1+\omega+Ct^{-\gamma})a^2t$ and
$B\leq1+Ct^{-\gamma}$; when $at$ is sufficiently small,
$B\leq\omega+Cat+Ct^{-\gamma}$ and
$|\nabla t|\geq c_\Gamma>0$.  On a plateau,
$\mathcal A=G_1(|H|+H_{N_y,+})$, $B\leq\omega$.
On the cap we have
$\Delta t\leq m\epsilon/\rho_o+\epsilon|H|$.
On clean zeros assume
\[
 I\geq.999a^2,\qquad I\geq c_ID,\qquad |H|\leq.355I.
\]
The selected positive feet have multiplicity one, bounded inverse
Jacobian $J_\Gamma$, receiving order $t(y)\leq t(z)$,
and $H_{N_y,+}\leq C_\Gamma/t(z)$.
For $a(y)t(z)$ sufficiently small, the inverse Jacobian with its
fixed small weight inflation is at most $1.05$.
All thresholds and these constants are fixed before the profile.

\vskip0.12in
For $S\geq16T$ use
\[
 \omega_p=\zeta_T^{2p}\chi_S^2,\qquad
 V_p=\mathcal V_{S,T,c_h}[\mu_{\rm loc}].
\]
Unit good charts and the fixed-ball positive energy lower bound at zeros suffice for
every local area and net estimate below. Transition terms are bounded by
$T^{-2p}\int_{B_{4T}}\chi_S^2d\mu_{\rm loc}$; tails based at the origin
are bounded by $\int h_{c_h,T}\chi_S^2d\mu_{\rm loc}$. Both estimates
follow directly from the local measure $\mu_{\rm loc}$.

\begin{lemma}
\label{lem:r12-nine-quarter-profile}
There is $t_{\min}$, depending only on the fixed local geometry and
the $.91$, $.01$ table tolerances, which can be fixed before the final
good threshold and coupled-absorption constant. For every subsequently
prescribed $s_*,e_3>0$, there is a nonincreasing $C^1$ profile
$0<f\leq1$, equal to one on the prescribed compact collar-reach
interval, with the following properties. In the definition of $B_*$
below, $C$ is one fixed coefficient dominating the weak nonplateau
branch bounds. Set $k=1.78$ and
\[
 B_*(t)=1+Ct^{-\gamma},\qquad
 \pi(t)=\exp\int_{t_{\min}}^tB_*(v)\,dv/v,\qquad
 P(t)=\pi(t)^{-1}\int_t^\infty\pi(v)f'(v)^2\,dv.
\]
There is a fixed $t_0$ for which $P$ is decreasing on
$[t_{\min},\infty)$ and, on this interval,
\begin{align}\label{eq:r12-nine-quarter-package}
 -P'&=f'^2+B_*P/t,\nonumber\\
 tP&\leq.91f^2\quad(t<t_0),&
 P&\leq(5\kappa/16+.001)f^2\quad(t\geq t_0),\nonumber\\
 t^{7/2}f^2&\leq e_3\quad(t\geq t_0),&
 \sup_{[t_{\min},t_*]}P&\leq s_*.
\end{align}
Moreover
\begin{equation}\label{eq:r12-nine-quarter-tail}
 \widehat E^{5/4}\leq f^2\leq H_f\widehat E^{5/4},\qquad
 \Psi(t):=\int_t^\infty P\leq C_ff^2,\qquad
 \sup t^{7/2}P(t)<\infty.
\end{equation}
The eventual exponential rate is $5\kappa/8$, and $P$ has two-sided
exponential bounds on that branch. The constants $H_f,C_f$, the threshold $t_0$, and the starting point
of the exponential tail may depend on the chosen profile; $B_*$ and
$t_{\min}$ do not.
\end{lemma}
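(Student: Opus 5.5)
The proof will be a direct construction: prescribe the logarithmic decay rate of $f$ rather than $f$ itself, and verify every inequality in \eqref{eq:r12-nine-quarter-package} by a supersolution comparison for the linear first-order equation satisfied by $P$.

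\emph{The equation for $P$.} Since $\pi'/\pi=B_*/t$, differentiating $\pi P=\int_t^\infty\pi f'^2$ gives $-P'=f'^2+B_*P/t$ at once. Moreover, $P$ is the unique solution of this equation with $\pi P\to0$ at infinity. Hence any $Q\geq0$ with $\pi Q\to0$ and
\[
 -Q'\geq f'^2+B_*Q/t
\]
satisfies $P\leq Q$, because $(\pi(Q-P))'\leq0$ and $\pi(Q-P)\to0$. Since $\pi(t)\asymp t$, I would write $f=\exp(-\int g)$ with a rate $g\geq0$ and use candidates of the form $Q=(1+\eta)\tfrac{g}{2}f^2$ (with an additive $\eta f^2$ on the tail). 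Then $f'^2=g^2f^2$, and the supersolution inequality reduces to the pointwise condition
\[
 \eta g^2-\tfrac{1+\eta}{2}g'\geq(1+\eta)\tfrac{gB_*}{2t}.
\]
In the power range $g=\beta/t$, one has $g'<0$, and the condition holds for large $t$ once $\eta$ absorbs $Ct^{-\gamma}$. This gives $tP\leq(1+\eta)\tfrac{\beta}{2}f^2$, which is at most $.91f^2$ as long as $\beta\leq k=1.78$. The fixed $t_{\min}$ is chosen so large that $Ct_{\min}^{-\gamma}$ is covered by the slack between $1.78$ and $1.82$.

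\emph{The profile, in four stages.} First, $f=1$ up to the prescribed collar-reach interval. Second, a long stretch with $g=\beta_0/t$, where $\beta_0$ is tiny. Third, a slow logarithmic increase of $\beta$ up to $k$, so that $g=k/t$ eventually. Fourth, a slow interpolation of $g$ from $k/t$ up to the constant $5\kappa/8$, after which $f=c_fe^{-5\kappa t/8}$ exactly.

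The requirement $\sup_{[t_{\min},t_*]}P\leq s_*$ is handled by the first two stages. Below the first point where $f<1$, we have $P(t)\approx t^{-1}\int v f'^2$, and in the variable $s=\log v$ this integral is $\int\beta^2e^{-2\int\beta\,ds}ds$. Its contribution from the $\beta_0$-stretch is at most $\beta_0/2$. The later part is bounded by $k$ times $f^2$ at the end of the stretch, which is made small by taking the stretch long. Both are below $s_*$.

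The decay $t^{7/2}f^2\to0$ holds because $2k=3.56>7/2$. So, after fixing $t_0$ large, $t^{7/2}f^2\leq e_3$ for $t\geq t_0$, and $\sup t^{7/2}P<\infty$ follows from $P\leq Cgf^2$. I define $t_0$ as the point where $tg$ first exceeds a fixed number slightly below $1.82$. Below $t_0$ the first bound $tP\leq(1+\eta)\tfrac{tg}{2}f^2\leq.91f^2$ applies. Above $t_0$ we have $g\leq5\kappa/8$, so $P\leq(g/2+\eta)f^2\leq(5\kappa/16+.001)f^2$.

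On the exponential tail, $f^2=c_f^2e^{-5\kappa t/4}$ and $\widehat E^{5/4}=e^{-5\kappa t/4}$. This gives $\widehat E^{5/4}\leq f^2\leq H_f\widehat E^{5/4}$ with $H_f=c_f^2e^{(5\kappa/4)t_1}$, where $t_1$ is the start of the tail. The lower bound needs $f^2\geq e^{-5\kappa t/4}$ everywhere, which holds if the accumulated rate $\int g$ stays below $5\kappa t/8$. The rates in the first three stages are $O(1/t)$, so this is automatic after increasing $t_{\min}$ if necessary. In the same region, $P$ has two-sided exponential bounds, obtained from the explicit integral with constant $g$, and $\Psi=\int_t^\infty P\leq C_ff^2$. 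Monotonicity of $P$ is immediate from $-P'\geq0$.

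\emph{Main obstacle.} The hard step is the fourth stage, where $g$ increases. There $g'>0$ works against the supersolution inequality, and the tail's large $P$ (about $\tfrac{5\kappa}{16}f^2$) is transported backwards into the range $t<t_0$, where $tP$ must stay below $.91f^2$. I would handle this with the single shape parameter $\delta_f$, interpolating
\[
 \log g\ \text{ linearly in } \log t \text{ with slope } \leq\delta_f,
\]
so that $g'\leq\delta_fg/t$. Then the supersolution condition requires only $\eta g\geq O(\delta_f+Ct^{-\gamma})/t$. This holds throughout the interpolation, since there $tg\geq k$ is bounded below, and it leaves the coefficients $.91$ and $5\kappa/16+.001$ intact. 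The points $t_0$ and $t_e$ are then fixed by the construction.
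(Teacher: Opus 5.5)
There is a genuine gap at the onset of your fourth stage, and it comes from where you place $t_0$. Your comparison principle for $P$ is correct, and your overall plan (plateau, a power stretch with $tg\to k$, an exponential tail) matches the paper's. But your stage-four check drops the constant $1$ in $B_*=1+Ct^{-\gamma}$. With $g'\le\delta_f g/t$, the condition $\eta g^2-\tfrac{1+\eta}{2}g'\ge(1+\eta)gB_*/(2t)$ reduces to $\eta\,tg\ge\tfrac{1+\eta}{2}(1+\delta_f+Ct^{-\gamma})$, not to $\eta\,tg\ge O(\delta_f+Ct^{-\gamma})$. In the power range the $1$ is cancelled by $-g'/2=g/(2t)$; once $g'\ge0$ nothing cancels it. At $tg\approx1.78$ you are forced to take $\eta\approx0.39$, and your candidate then gives only $tP\le1.24f^2$.

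This is not an artifact of the candidate. Since $B_*\ge1$, one has $\pi(v)/\pi(t)\ge v/t$, hence $tP(t)\ge\int_t^\infty vf'^2\,dv$. With $\beta=vg$ and $s=\log v$, integration by parts at the onset $t_a$ of stage four gives
\[
 \frac{t_aP(t_a)}{f(t_a)^2}\ \ge\ \frac{\beta(t_a)}{2}
 +\frac12\int_{\log t_a}^\infty e^{-2\int_{\log t_a}^s\beta}\,d\beta
 \ \ge\ \frac k2+\frac14=1.14,
\]
because $d\beta/ds\ge\beta$ wherever $g$ is nondecreasing. Your $t_0$ is the first point where $tg$ exceeds a number larger than $k$, so $t_a<t_0$. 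Hence the row $tP\le.91f^2$ fails at $t_a$, and by the same computation on an interval to its left. No choice of $\delta_f$ repairs this, since your $g$ is nondecreasing from the start of stage four.

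The fix, which is what the paper does, is to place $t_0$ inside the long stretch where $tg$ is close to $k$, well before any increase of the rate. The paper takes $\alpha_f=-tF_{\delta_f}'/F_{\delta_f}\ge1.76$ and $t_0\ge4000$ on the soft-power branch, and splices to the tail only at $t_e\gg t_0$. For $t<t_0$, the contribution of $[t_e,\infty)$ to $tP/f^2$ is at most $C_\pi t_eQ_eF_{\delta_f}(t_e)^2/F_{\delta_f}(t)^2$, where $Q_e=P(t_e)/f(t_e)^2$. This tends to zero as $t_e\to\infty$ because $2\alpha_f>1$. For $t\ge t_0$ only the weaker row $P\le(5\kappa/16+.001)f^2$ is needed, and the excess of order $1/t$ caused by the rate increase is harmless there because $t_0$ is large. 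With this placement no slow interpolation is needed: the paper splices abruptly to $F_{\delta_f}(t_e)e^{-\lambda_f(t-t_e)}$ and smooths afterwards.

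Two smaller points. First, your candidate $(1+\eta)gf^2/2$ on the $\beta_0$-stretch is a supersolution only if $\beta_0>Ct^{-\gamma}/2$. This can fail, since $\beta_0$ is chosen after $t_{\min}$. The paper avoids additive corrections altogether by using the multiplicative bound $\pi(v)/\pi(t)\le C_\pi v/t$ with $C_\pi\le1.001$, together with the exact flux identity $\int_t^\infty vF_{\delta_f}'^2\,dv=F_{\delta_f}^2Q_{\delta_f}$. Second, $f\in C^1$ requires $g$ to be continuous at the junctions, so the jump from $g=0$ to $g=\beta_0/t$ must be smoothed.
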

\begin{proof}
We construct the profile in three steps, and then smooth it without
changing the stated bounds. Use the fixed coefficient $C$ in $B_*$
and put $\lambda_f=5\kappa/8$ and fix $t_{\min}$ so that
\[
 C_\pi:=\exp\{C/(\gamma t_{\min}^{\gamma})\}\leq1.001,
 \qquad k/t_{\min}<\lambda_f.
\]
The fixed lower threshold was chosen so that
$\omega+Ct_{\min}^{-\gamma}\leq.01$, as required for the $t<t_0$ rows of the table below.
Then $\pi(v)/\pi(t)\leq C_\pi v/t$ for every $v\geq t$.
The construction has one shape parameter and three branches:
a plateau, a soft power, and an exponential tail.

\vskip0.12in

Fix $t_s>\max(t_*,t_{\min})$.  For $0<\delta_f<1/4$ put
\[
 F_{\delta_f}(t)=
 \begin{cases}
 1,&t\leq t_s,\\
 [1-\delta_f+\delta_f(t/t_s)^{\delta_f}]^{-k/\delta_f},&t>t_s.
 \end{cases}
\]
Its logarithmic slope is zero on the plateau and, for $t>t_s$,
\[
 \alpha_f=-tF_{\delta_f}'/F_{\delta_f}
 =\frac{kx}{1-\delta_f+x},\qquad
 x=\delta_f(t/t_s)^{\delta_f},\qquad
 \frac{d\alpha_f}{d\log t}
       =\delta_f\alpha_f(1-\alpha_f/k).
\]
Define
\[
 Q_{\delta_f}(t)=\frac{k(\delta_f+2\alpha_f(t))}
                        {2(\delta_f+2k)}\quad(t>t_s),\qquad
 Q_{\delta_f}(t)=E_{\delta_f}:=
 \frac{k\delta_f(1+2k)}{2(\delta_f+2k)}\quad(t\leq t_s).
\]
Direct differentiation gives the Riccati identity
$dQ_{\delta_f}/d\log t=2\alpha_fQ_{\delta_f}-\alpha_f^2$
on the soft-power branch.  Consequently
\begin{equation}\label{eq:s7-soft-profile-flux}
 \int_t^\infty vF_{\delta_f}'(v)^2\,dv
       =F_{\delta_f}(t)^2Q_{\delta_f}(t),\qquad
 0<Q_{\delta_f}\leq k/2,\qquad E_{\delta_f}\leq1.14\delta_f.
\end{equation}
Indeed the derivative of $F_{\delta_f}^2Q_{\delta_f}$ is
$-tF_{\delta_f}'^2$, and this product tends to zero at infinity.
The equality extends across the plateau because both sides are
constant there.  Choose $\delta_f$ so that
$C_\pi E_{\delta_f}/t_{\min}\leq s_*/4$.

Since $\alpha_f\to k>7/4$, choose $t_0\geq\max(t_s,4000)$ with
\[
 \alpha_f(t)\geq1.76,\qquad
 t^{7/2}F_{\delta_f}(t)^2\leq e_3/2\quad(t\geq t_0).
\]
The latter product tends to zero and decreases once
$\alpha_f>7/4$.  No further change of $\delta_f$ or $t_{\min}$
is needed.

\vskip0.12in

At $t_e>t_0$ splice to
$f(t)=F_{\delta_f}(t_e)e^{-\lambda_f(t-t_e)}$;
before $t_e$ use $f=F_{\delta_f}$.  Initially work with this
piecewise smooth profile.  On its exponential branch,
\[
 Q_e:=P(t_e)/f(t_e)^2
 \leq\frac{\lambda_f^2}{2\lambda_f-B_*(t_e)/t_e}.
\]
Choose $t_e$ sufficiently large that
$Q_e\leq\lambda_f/2+10^{-4}$ and $t_e>7/(4\lambda_f)$.
The same upper bound holds for $P(t)/f(t)^2$ for all $t\geq t_e$,
since $B_*(t)/t$ is decreasing.  Set
$\Theta_e=t_eQ_eF_{\delta_f}(t_e)^2$.  As $t_e\to\infty$,
$\Theta_e\to0$, because $F_{\delta_f}(t)^2=O(t^{-2k})$.
Enlarge $t_e$ so that
\[
 \frac{C_\pi\Theta_e}{F_{\delta_f}(t_0)^2}\leq.005,
 \qquad \frac{C_\pi\Theta_e}{t_{\min}}\leq s_*/4.
\]
The kernel comparison and \eqref{eq:s7-soft-profile-flux} now give
one bound for every $t_{\min}\leq t\leq t_e$:
\begin{equation}\label{eq:s7-soft-profile-terminal}
 \frac{tP(t)}{f(t)^2}
 \leq C_\pi\left\{Q_{\delta_f}(t)
                       +\frac{\Theta_e}{F_{\delta_f}(t)^2}\right\}.
\end{equation}
For $t<t_0$ it gives
$tP/f^2\leq1.001(.89)+.005=.89589<.91$.
At $t=t_{\min}$ it gives $P(t_{\min})\leq s_*/2$.
For $t_0\leq t\leq t_e$, use $\alpha_f\geq1.76$ to obtain
\[
 \frac{\Theta_e}{tF_{\delta_f}(t)^2}
 =\frac{t_e}{t}Q_e\frac{F_{\delta_f}(t_e)^2}{F_{\delta_f}(t)^2}
 \leq Q_e(t_e/t)^{1-3.52}\leq Q_e.
\]
It follows that
\[
 P/f^2\leq C_\pi\{k/(2t_0)+\lambda_f/2+10^{-4}\}
 <\lambda_f/2+.000766<5\kappa/16+.001,
\]
uniformly for $\kappa\in[\sqrt2/1.001,\sqrt2]$.
The exponential bound already treats $t\geq t_e$.

\vskip0.12in

The bound on $t^{7/2}f^2$ is preserved by this change because
$t^{7/2}f^2$ decreases on the exponential branch by
$t_e>7/(4\lambda_f)$.  Also $-f'/f\leq\lambda_f$, since
$\alpha_f\leq k$ and $t\geq t_{\min}$.  Thus
$f^2e^{2\lambda_f t}$ is nondecreasing before the tail and constant
on it, proving the two exponential comparisons in
\eqref{eq:r12-nine-quarter-tail}.

\vskip0.12in

Smooth the logarithmic derivative in small neighborhoods of $t_s,t_e$,
keeping it in $[0,\lambda_f]$, leaving the plateau through
$\max(t_*,t_{\min})$,
and making it exactly $\lambda_f$ on the final tail.  The smoothed
profiles converge uniformly on intervening compact intervals, and
$f_j'\to f'$ in $L^2([t_{\min},\infty),\pi(t)\,dt)$.
Outside the shrinking neighborhoods the soft-power and exponential
branches each keep their shape multiplied by a constant tending to
one; their original weighted derivative energy is finite.
Cauchy--Schwarz therefore gives uniform convergence of
$\int_t^\infty\pi f_j'^2$ in $t$, and uniform convergence of
$P_j/f_j^2$ on compact intervals.  On the final exponential
branch its explicit bound is independent of the multiplier.
The strict $.91$, $.001$ margins and the half-size compact-flux
and curvature bounds preserve all required inequalities under a
sufficiently small smoothing.  The logarithmic derivative bound
preserves the exponential floor.
Finally $-P'=f'^2+B_*P/t$ makes $P$ decreasing.  On the final tail
$P$ is comparable to $f^2$, so its primitive and the remaining
finite moment follow there by integration and on the finite
complement by continuity. The construction therefore leaves the good
threshold, averaging windows, and previously fixed $C_A$ unchanged.
\end{proof}

\vskip0.12in

\begin{lemma}
\label{lem:r12-receiver-own-reach}
With $t_{\min}$ and the branch tolerances fixed for the preceding
profile construction, the sum of the outgoing and clean incoming
densities, divided by the
single common factor $f(t)^2\omega_p$, has the following upper
bounds, up to terms with arbitrarily small $a^{2p}$ coefficient and
terms bounded by the net sum in a fixed bad collar:
\begin{equation}\label{eq:r12-nine-quarter-density-table}
 \begin{array}{c|c}
 \text{type of evaluation point}&\text{upper bound}\\\hline
 \text{nonplateau, own }t<t_0&.92a^2+.01I\\
 \text{plateau, own }t<t_0&.02I\\
 \text{nonplateau, own }t\geq t_0&.17I\\
 \text{plateau, own }t\geq t_0&.33I
 \end{array}
\end{equation}
At a base point on the nonplateau branch with $t\geq t_0$, the outgoing term is absorbed by
the improved drift, or is bounded by a small multiple of $a^{2p}$.
\end{lemma}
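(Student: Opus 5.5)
The plan is to derive the table from the integration by parts that converts the profile-gradient term of Theorem~\ref{thm:r13-actual-p-rss} into a divergence plus reach-Laplacian densities. Write $\operatorname{div}_\Sigma(P(t)\nabla t)=P'(t)|\nabla t|^2+P(t)\Delta t$. With $-P'=f'^2+B_*P/t$ from \eqref{eq:r12-nine-quarter-package}, this gives the pointwise identity
\[
 f'^2|\nabla t|^2=-\operatorname{div}(P\nabla t)+P\,\Delta t-B_*P|\nabla t|^2/t .
\]
Against $\omega_p$, the divergence term is moved onto $\nabla\omega_p$. Its observer part is $O(S^{-1})$, and its physical-cutoff part is controlled by \eqref{eq:r13-full-power-transport} and \eqref{eq:physical-young-power}. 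Both are further controlled by $\Psi\leq C_ff^2$ and the bound on $\sup t^{7/2}P$, so they produce only $V_p$-type remainders. In $P\Delta t$ I insert the weak branch bound $\Delta t\leq\mathcal A+B|\nabla t|^2/t$, whose singular part has the favorable sign. Since $B\leq B_*$ on every actual branch, the drift terms cancel or have a favorable sign. The remaining density is $P\mathcal A$ at the base point, the \emph{outgoing} term. On the cap branch, Lemma~\ref{lem:physical-cap-weighted-error} makes its contribution $O(V_p)$, because $P$ has an exponential tail. On the plateau branch, $\mathcal A$ contains $G_1H_{N_y}(y)_+$. I transfer this term to the receiving foot by Corollary~\ref{cor:r4-weighted-foot-transfer}, using multiplicity one and Jacobian inflation $1.05$. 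The cutoff commutator is handled by \eqref{eq:physical-foot-remainder}. Since $P$ is decreasing and $t(y)\leq t(z)$, we have $P(t(z))\leq P(t(y))$, so the transferred term is evaluated at the receiver's own reach. This gives the \emph{incoming} density. Feet landing in a bad collar are bounded by the net sum via \eqref{eq:r9-collar}, and at clean receivers $H_{N_y}(y)_+\leq|H(y)|\leq .355I(y)$.

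I then go through the four rows. For a nonplateau point with $t<t_0$, the outgoing term is at most $(1+\omega+Ct^{-\gamma})a^2\,tP\leq 1.01\cdot .91\,a^2f^2<.92a^2f^2$. The incoming term at such a receiver is at most $1.05\,G_1\,.355\,I\cdot P(t)$. Here $P(t)\leq .91f^2/t\leq .91f^2/t_{\min}$, which is below $.01If^2$ once $t_{\min}$ is large; I will enlarge $t_{\min}$ at the preliminary stage if needed, as Remark~\ref{rem:s5-nine-quarter-choice-order} allows. For a plateau point with $t<t_0$, the outgoing term $G_1|H|P\leq 1.001\cdot.355\,I\cdot .91f^2/t_{\min}$ is also small, and adding the incoming term gives $.02I$. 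For $t\geq t_0$ I use $P\leq(5\kappa/16+.001)f^2\leq .444f^2$. The incoming term is then at most $1.05\cdot1.001\cdot.355\cdot.444\,I\approx .166I<.17I$. On a plateau, the outgoing term $1.001\cdot .355\cdot .444\,I\approx .158I$ is added to this, for a total of about $.324I<.33I$.

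The remaining case, and the one I expect to be the main obstacle, is the outgoing term at a nonplateau point with $t\geq t_0$. I split it by the size of $at$. If $at<c_*$, then \eqref{eq:r7-small-ad-drift} gives $B\leq\omega+Cat+Ct^{-\gamma}$, which is far below $B_*$, and $|\nabla t|\geq c_\Gamma$. The term $(B_*-B)P|\nabla t|^2/t\geq c\,P/t$ should then dominate $a^2tP\leq c_*a\,P$ when $c_*$ is small, though this step needs checking. If $at\geq c_*$, I write $a^2t\leq c_*^{-5/2}a^{9/2}t^{7/2}$. Together with $t^{7/2}f^2\leq e_3$ this bounds $a^2tP$ by $Ce_3a^{2p}\omega_p$, which is an arbitrarily small multiple of $a^{2p}$ once $e_3$ is prescribed after $C_A$. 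The difficulty is that every numerical constant ($.91$, $.355$, $1.05$, $G_1$) must be fixed before the profile, while $e_3$ and $s_*$ are chosen afterward, so the order of choices has to be tracked carefully.
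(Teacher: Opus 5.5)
Your decomposition matches the paper's. You evaluate the receiver at its own reach via $P(t_z)\le P(t_y)$. You use the $.91$ bound with a large $t_{\min}$ for $t<t_0$, and $P\le(5\kappa/16+.001)f^2$ for $t\ge t_0$. You split the nonplateau outgoing term by the size of $at$.

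There is, however, a gap in the incoming term at receivers with own $t_y\ge t_0$, that is, in rows three and four. You bound it by $1.05\,G_1(.355)(5\kappa/16+.001)I<.17I$. The inverse Jacobian bound $1.05$, including its small weight inflation, is available only where $a(y)t(z)$ is small. In general Theorem~\ref{thm:r4-positive-foot-transfer} gives only a fixed $J_\Gamma$, for instance $(1+\ell a(y))^m/(1-\delta_2)$ with $\ell a(y)\le C_\Gamma$. The product $J_\Gamma G_1(.355)(5\kappa/16+.001)$ need not be below $.17$, so on that set your row bounds do not follow.

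The missing case $a(y)t_z>c_3$ must be handled differently:
\begin{itemize}
\item replace $|H(y)|\le .355I(y)$ by the tangent-ball trace bound $H_{N_y}(y)_+\le C_\Gamma/t_z$ of Corollary~\ref{cor:r5-incoming-trace};
\item keep only the bounded Jacobian;
\item use the tail $t^{7/2}f^2\le e_3$.
\end{itemize}
The contribution is then at most $CP(t_z)/t_z\le Ce_3t_z^{-9/2}\le C_{c_3}e_3a(y)^{2p}$. This is an arbitrarily small $a^{2p}$ coefficient at the receiver, obtained by the same device you already use for the outgoing nonplateau term with $at\ge c_*$.

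Three smaller points. First, for receivers with $t_y<t_0$ you also quote $1.05$. That is unjustified there too, but harmless: use $J_\Gamma$ and absorb it in the choice of $t_{\min}$, which is made after the penalty is fixed. Second, for feet in the bad collar a bare appeal to the collar estimate is not enough. You need the receiving-defect bound $D(y)\le C_\Gamma/d(z)$, together with the lower bound for $D$ on the collar, to get $t_z\le t_*$. Then the base points lie in a fixed enlarged collar and $P(t_z)\le s_*$, which gives the net-sum bound without any regularity of bad receivers. Third, the step you flagged does work. Write $a^2tP=(at)^2P/t\le c_*^2P/t$ and choose $c_*$ with $L_Ac_*^2\le c_\Gamma^2/100$. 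Then the drift $(B_*-B)P|\nabla t|^2/t$, with $B_*-B\ge3/4$ and $|\nabla t|\ge c_\Gamma$, absorbs it.
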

\begin{proof}
For a clean incoming point $y$ write $t_y=t(y)$ for its own
capped reach and $t_z=t(z)$ for the capped reach at the selected base point.
The reverse chord and cap order give $t_y\leq t_z$.  Since $P$
is decreasing, we have the stronger comparison
\begin{equation}\label{eq:r12-P-monotone-foot}
 P(t_z)\leq P(t_y).
\end{equation}
If $t_y<t_0$, use the full bounded inverse Jacobian $J_\Gamma$
and $P(t_y)\leq.91f(t_y)^2/t_y$.  Choosing $t_{\min}$ first
large enough makes the incoming density at most $.01If(t_y)^2$.
This includes base points whose capped reach $t_z$ lies in the
final exponential branch.  Replacing \eqref{eq:r12-P-monotone-foot}
only by a uniform exponential bound would give a coefficient of
at least $5/32$ at this receiver with $t_y<t_0$, which is too large for the
following absorption.

\vskip0.12in

If $t_y\geq t_0$, then $t_z\geq t_0$.  Where $a(y)t_z$ is small,
the inflated inverse Jacobian is at most $1.05$, and
\[
 1.05G_1(.355)(5\kappa/16+.001)<.17.
\]
Monotonicity of $f$ gives the incoming bound for $t_y\geq t_0$.
Where $a(y)t_z>c_3$, use the tangent-ball trace bound
$H_{N_y,+}\leq C_\Gamma/t_z$ and bounded Jacobian.  Its contribution is
\[
 CP(t_z)/t_z\leq Ce_3t_z^{-2p}\leq C_{c_3}e_3a(y)^{2p}.
\]
Base points whose receiving points enter the fixed bad collar have
bounded $t_z\leq t_*$ by the receiving-defect input; their compact
$P$ flux gives the stated bad-net bound without using bad-receiver
surface regularity.

\vskip0.12in

Using the accuracy fixed for the $t<t_0$ rows,
$\omega+Ct_{\min}^{-\gamma}\leq.01$, put
$L_A=1+\omega+Ct_{\min}^{-\gamma}\leq1.01$.
At a base point on the nonplateau branch with $t<t_0$, \eqref{eq:r12-nine-quarter-package} gives
$\mathcal AP\leq.91L_Aa^2f^2\leq.9191a^2f^2<.92a^2f^2$.
Choose $c_*>0$ below the improved-drift threshold so that
\[
 \omega+Cc_*\leq1/4,\qquad
 L_Ac_*^2\leq c_\Gamma^2/100.
\]
This choice uses fixed local data and precedes the profile.
At a base point on the nonplateau branch with $t\geq t_0$ and $at\leq c_*$, the improved bounds
and the present $B_*=1+Ct^{-\gamma}$ give
\[
 B_*-B\geq1-\omega-Cat\geq3/4,\qquad
 \mathcal AP\leq L_A(at)^2P/t
       \leq .01P|\nabla t|^2/t.
\]
Thus the nonnegative drift term absorbs $\mathcal AP$ with the same $B_*$
on both ranges $t<t_0$ and $t\geq t_0$.
Otherwise $at\geq c_*$ and
\[
 \mathcal AP\leq Ca^2tf^2
   \leq Ce_3a^2t^{-2p+2}\leq C_{c_*}e_3a^{2p}.
\]
By the initial choice of $t_{\min}$, the outgoing plateau bound is
at most $.01If^2$ for $t<t_0$, and at most $.16If^2$ for
$t\geq t_0$, since
$G_1(.355)(5\kappa/16+.001)<.16$.
Adding the outgoing and incoming estimates proves the table.
\end{proof}

For $\tau\leq.001$ and $S_0=(a^2+I)/2$, subtracting
$(1+\tau)$ times each row's upper bound from $(1-\tau)S_0$
leaves a strictly positive lower bound.
The nonplateau row with $t<t_0$ is the limiting one:
\[
 .48949I-.42142a^2
 \geq(.48949-.42142/.999)I>.06I.
\]
The other rows leave more than $.47I$, $.32I$ and $.16I$,
respectively. Thus these positive margins are independent of the tolerances
$s_*$ and $e_3$ used to construct the profile.

\begin{proposition}
\label{prop:r12-nine-quarter-profile-pay}
Put $Z_p=\int(D^p+\widehat E^p)\omega_p$. There is $c_p>0$,
determined by the fixed local choices and the RSS coefficients, such
that for every $e_4,v>0$ one can choose the preceding profile so that
\begin{equation}\label{eq:r12-nine-quarter-pay}
 c_p(M_p+N_p)\leq e_4\int a^{2p}\omega_p+vZ_p
                  +C_fS^{-2}\int f^2\omega_p+C_{f,v}V_p,
 \qquad M_p=\int_{\{s_X\geq4L_0\}}Df^2\omega_p,
\end{equation}
The constants $C_f,C_{f,v}$ and the lower threshold for $T$ may
depend on the chosen profile. All cap and signed-foot physical errors
have scale $V_p$.
\end{proposition}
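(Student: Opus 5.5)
We combine the RSS inequality of Theorem~\ref{thm:r13-actual-p-rss} with a multiplier identity for the reach profile. The profile of Lemma~\ref{lem:r12-nine-quarter-profile} is used in the RSS test. The main step is to bound the reach-gradient term $(1+\tau)\int f'(t)^2|\nabla t|^2\omega_p$ by the densities tabulated in Lemma~\ref{lem:r12-receiver-own-reach}.

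For this bound, we test the weak capped-reach inequality of Theorem~\ref{thm:r7-all-branch-capped-reach} against the nonnegative function $P(t)\omega_p$. Since the singular part of $\Delta t$ is nonpositive and $P\geq0$, integration by parts gives
\[
 \int -P'(t)|\nabla t|^2\omega_p\leq\int P(t)\,\mathcal A\,\omega_p
 +\int B\,P|\nabla t|^2\omega_p/t+\int P|\nabla t||\nabla\omega_p|.
\]
The Riccati identity $-P'=f'^2+B_*P/t$ then bounds $\int f'^2|\nabla t|^2\omega_p$ by $\int\mathcal AP\omega_p$ plus the weight-derivative term. The leftover drift $\int(B-B_*)P|\nabla t|^2\omega_p/t$ is nonpositive, and on the improved-drift branch it is strictly favorable.

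The source $\mathcal A$ is handled branch by branch. The nonplateau part is the outgoing term. For the plateau receiving traces $H_{N_y,+}$ we use Corollary~\ref{cor:r4-weighted-foot-transfer}, which transfers them to the receiving foot at its own capped reach via the signed positive-foot area formula and the monotonicity $P(t_z)\leq P(t_y)$. The cap branch contributes $\mathcal F_{\rm cap}$, which is controlled by Lemma~\ref{lem:physical-cap-weighted-error}. Cutoff displacements are estimated by the foot-remainder bound \eqref{eq:physical-foot-remainder} and the normal-transport inequalities, and produce $C_fV_p$. This yields exactly the table \eqref{eq:r12-nine-quarter-density-table}, multiplied by $f^2\omega_p$, up to three kinds of error: $e_4$-small multiples of $a^{2p}\omega_p$ (coming from $e_3$), bad-collar net terms bounded by \eqref{eq:r9-collar}, and $C_fV_p$.

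The weight derivative satisfies $|\nabla\omega_p|\leq\omega_p(2/S)+C\zeta_T^{2p-1}\chi_S^2|\nabla\zeta_T|$. By Young's inequality and $|\nabla t|\leq C$ from \eqref{eq:r7-capped-gradient} on bounded $t$, together with $\Psi\leq C_ff^2$, we absorb half into $\int f'^2|\nabla t|^2\omega_p$ and leave $C_fS^{-2}\int f^2\omega_p$. We also leave a $T^{-2}\mathcal A_T$ term, which \eqref{eq:physical-young-power} with $j=2$ and $f^2\leq H_f\widehat E^{5/4}$ converts into $v\int\widehat E^p\omega_p+C_{f,v}V_p$. The $\mathcal C$-term in the RSS inequality is treated in the same way.

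Inserting these bounds into \eqref{eq:r13-p-rss-interface}, the left side contains $(1-\tau)S_0f^2\omega_p$ on $\{s_X\geq4L_0\}$. Subtracting $(1+\tau)$ times the tabulated bounds leaves at least $.06If^2\omega_p\geq.06c_IDf^2\omega_p$, as computed after Lemma~\ref{lem:r12-receiver-own-reach}. Together with $c_0N_p$ this gives $c_p(M_p+N_p)$, with $c_p$ independent of the profile. The remaining bad-collar terms are bounded by a small multiple of $N_p$ plus $V_p$ and are absorbed.

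We expect the main obstacle to be the incoming transfer. Keeping the coefficient in each row of the table requires evaluating $P$ at the receiver's own reach, and requires that the cutoff and observer weight transfers from $z$ to $y$ cost only a factor $1+o(1)$ and errors of size $V_p$. Since no inverse-map hypothesis is available for the core indicator, we would use the one-sided weight comparison \eqref{eq:s8-one-sided-weight-pair} in the forward direction, and the symmetric commutator \eqref{eq:physical-foot-remainder} for the remainder.
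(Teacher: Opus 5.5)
Your architecture matches the paper's: test the weak capped-reach inequality with $P(t)$ times the weight, use $-P'=f'^2+B_*P/t$, transfer incoming plateau traces by the signed area formula at the receiver's own reach, and close against \eqref{eq:r13-p-rss-interface} with the table margins. There are two genuine gaps, however.

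\emph{The weight-derivative term.} You bound $\int P|\nabla t||\nabla\omega_p|$ by Young and absorb it into $\int f'^2|\nabla t|^2\omega_p$. On the plateau of the profile (in particular on $[t_{\min},t_*]$) one has $f'=0$ while $P(t)=\pi(t)^{-1}\int_t^\infty\pi f'^2>0$. So there is nothing to absorb into, and the observer part $2S^{-1}P|\nabla t|\omega_p$ remains first order in $S^{-1}$. An error $S^{-1}\int f^2\omega_p$ is not admissible. Only $S^{-2}$ becomes $S^{-2p}\int\chi_S^2\leq V_p$ in the final Young step with $f^2\leq H_f\widehat E^{p-1}$. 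Moreover, where $f'\neq0$, absorbing a fixed fraction $\theta$ (you say one half) multiplies every table row by $(1-\theta)^{-1}$. The $.06I$ margin you invoke cannot afford a factor $2$; only an arbitrarily small fraction is admissible.

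The paper never absorbs into this term. Since $\nabla\Psi(t)=-P\nabla t$, it integrates by parts:
$\int m_0P\nabla t\cdot\nabla\omega_p=\int\Psi\{m_0\Delta_\Sigma\omega_p+\nabla m_0\cdot\nabla\omega_p\}$.
It then uses $\Psi\leq C_ff^2$ (which you quote but never use) together with
$|\Delta_\Sigma\omega_p|\leq C(S^{-2}+S^{-1}D)\omega_p+C\chi_S^2\mathbf{1}_{\mathcal A_T}(T^{-2}\zeta_T^{2p-2}+T^{-1}D\zeta_T^{2p-1})$.
This produces exactly $C_fS^{-2}\int f^2\omega_p$, a term $O_f(S^{-1})M_p$ absorbed after increasing $T$, and physical terms handled by \eqref{eq:physical-young-power}. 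Your route could be repaired in two ways: use a small Young fraction only where $P\leq C_f|f'|f$, and bound the bounded-reach part by $C_fS^{-1}Z_p$ using $\widehat E\geq e^{-\kappa t_s}$ there. That case split is missing.

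\emph{The bad collar.} The test $P(t)\omega_p$ does not vanish near $X$, so it is not admissible for the weak inequality on $\mathcal U_R$. Integrating over $\Sigma_g$ would leave boundary terms on $\{D=\delta\}$. The paper multiplies by $m_0$, equal to $0$ on $s_X\leq5L_0$ and $1$ on $s_X\geq6L_0$. Reaches in the $10L_0$ collar are at most $t_*$ and $f\equiv1$ on $[0,t_*]$, so $f'(t)=0$ wherever $m_0\neq1$ and the reach-gradient term is unchanged. The paper then bounds:
\begin{itemize}
\item the collar derivative $P\nabla t\cdot\nabla m_0$ by $C_{\rm col}s_*N_p+C_fV_p$;
\item the mixed term $\Psi\nabla m_0\cdot\nabla\omega_p$ by $C_fS^{-1}N_p+\varepsilon N_p+C_{f,\varepsilon}V_p$.
\end{itemize}
Your claim that the leftover collar terms are a \emph{small} multiple of $N_p$ has no source of smallness, since \eqref{eq:r9-collar} only gives $C_b(N_p+V_p)$. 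The smallness comes from the profile property $\sup_{[t_{\min},t_*]}P\leq s_*$ of Lemma~\ref{lem:r12-nine-quarter-profile}, with $s_*$ chosen after $c_0$ is fixed. This is also what absorbs the bad-receiver flux in the table into $c_0N_p$.

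A smaller point: Lemma~\ref{lem:physical-cap-weighted-error} is stated for the weight $\chi_R$ with bound $Ce^{-cR}K(R)R^m$, not $V_p$. For the cap you need a direct core/halo split instead, with $c_h$ below the profile's decay rate.
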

\begin{proof}
We apply the weak reach inequality with the profile from the previous
lemma and then use the signed-foot area formula. Choose $m_0=0$ on $s_X\leq5L_0$, $m_0=1$ on
$s_X\geq6L_0$, with $|\nabla m_0|\leq C/L_0$.
The constant part of $f$ covers the reaches at both base and receiving
points in the $10L_0$ bad collar; for large $T$ its cap also exceeds those finite bounds.
Thus $f'(t)=0$ wherever $m_0\ne1$.
Test the weak reach bound with $P(t)m_0\omega_p$; the singular measure
contributes with the favorable sign. The ODE gives exactly
\[
 \int f'^2|\nabla t|^2\omega_p
 +\int m_0\omega_p(B_*-B)P|\nabla t|^2/t
 \leq\int m_0\omega_p\mathcal AP
             +\int P\nabla t\cdot\nabla(m_0\omega_p).
\]
For the last term use $\nabla\Psi=-P\nabla t$ and
\[
 \int m_0P\nabla t\cdot\nabla\omega_p
 =\int\Psi\{m_0\Delta_\Sigma\omega_p+
                            \nabla m_0\cdot\nabla\omega_p\}.
\]
The direct collar derivative $P\nabla t\cdot\nabla m_0$ is bounded by
$C_{\rm col}s_*N_p+C_fV_p$. For its mixed primitive term,
\begin{equation}\label{eq:s3-profile-collar-mixed}
 \left|\int\Psi\nabla m_0\cdot\nabla\omega_p\right|
 \leq C_fS^{-1}N_p+\varepsilon N_p+C_{f,\varepsilon}V_p.
\end{equation}
Indeed $\Psi\leq C_f$ on this fixed collar; the observer derivative
uses its collar cover, while the physical derivative gives an annular bad-net sum.
Pointwise Young gives
\[
 C_fT^{-1}\zeta_T(b)^{2p-1}
 \leq\varepsilon\zeta_T(b)^{2p}+C_{f,\varepsilon}T^{-2p};
\]
after multiplying by $\chi_S(b)^2$ and summing, its remainder is
$C_{f,\varepsilon}T^{-2p}\int_{B_{4T}}\chi_S^2d\mu_{\rm loc}$.
The fixed collar cover and the transport estimates for the cutoff
power $\zeta_T^{2p}$ yield the same local bound. Fix $\varepsilon$ below the positive bad-net coefficient before the
profile; after constructing $f$, increase $T$ so that
$C_f/S\leq\varepsilon$ for every $S\geq16T$.
The surface Laplacian of $\omega_p$ is bounded by
\[
 C(S^{-2}+S^{-1}D)\omega_p
 +C\chi_S^2\mathbf1_{\mathcal A_T}
       (T^{-2}\zeta_T^{2p-2}+T^{-1}D\zeta_T^{2p-1}).
\]
The needed physical Young inequalities are
\begin{align*}
 T^{-2}\zeta_T^{2p-2}f^2
   &\leq v\zeta_T^{2p}\widehat E^p+C_{f,v}T^{-2p},\\
 T^{-1}m_0\zeta_T^{2p-1}Df^2
   &\leq v m_0\zeta_T^{2p}Df^2+C_{f,v}T^{-2p}.
\end{align*}
The first uses $f^2\leq H_f\widehat E^{p-1}$. For the second, use
$Df^2\leq1$ with the same factor $m_0$ on both sides.  Integrating these remainders on the good area in the transition region
or bad net gives
$C_{f,v}T^{-2p}\int_{B_{4T}}\chi_S^2d\mu_{\rm loc}\leq C_{f,v}V_p$.
Fixed collar enlargements obey the same $2p$-power Young comparison
and remain in $B_{4T}$ for large $T$.  Spatial mean terms are
$O_f(S^{-1})M_p$ and are absorbed after fixing $f$.

\vskip0.12in

For clean positive feet use the signed multiplicity-one area
formula, Theorem~\ref{thm:r4-positive-foot-transfer}. At a base point $z$ with an uncapped plateau contact,
$v=t(z)=d(z)$ and $|z-y|=v/G_1\leq v$; the
incoming coefficient is bounded by a fixed polynomial times $P(v)$.
Equation~\eqref{eq:physical-foot-remainder} therefore bounds the full
physical commutator by $C_fV_p$. Its observer split at $\sqrt S$
has arbitrarily small inflation on the short part. The long part is
bounded pointwise at its base point by $C_fS^{-2p}$ and hence contributes at most
$C_fS^{-2p}\int\chi_S^2dS\leq C_fV_p$.
Estimate the cutoff remainder at the base points using the local core and
exponential halo. Apply Lemma~\ref{lem:r12-receiver-own-reach} to the
principal transferred part, with its inverse Jacobian, own-reach order
and bad-receiver compact-flux term. Receiving-core terms are transferred
only where an inverse-map bound is available.

\vskip0.12in

On the smooth cap $t=\epsilon\rho_o$, $P$ and $f'^2$ are
fixed exponentially small functions of physical radius.
The cap Laplacian and all mean factors have fixed polynomial
growth. Their densities with respect to $dS_z$ are bounded by
$C_f(1+|z|)^Ne^{-c_f|z|}\mathbf1_{|z|\geq T}$ on good area.
Split at $B_{4T}$: the inside part is bounded by
$C_fT^{-2p}\int_{B_{4T}}\chi_S^2dS$, while the outside part is
bounded by $C_f\int h_{c_h,T}\chi_S^2dS$. Thus the total is at
most $C_fV_p$, with $c_h$ fixed below the common eventual decay
rate before the profile shape parameter is selected.
The receiving order gives the same $C_fV_p$ bound for
receiver cap branches, rather than a fixed $C_u$ term.

\vskip0.12in

Combine the table's strict pointwise margins with
\eqref{eq:r13-p-rss-interface}.  Use $I\geq c_ID$ for $M_p$,
choose $s_*$ and the collar Young fraction so that their terms are
absorbed by the positive bad-net term. Choose $e_3$ so that the coefficients of
$a^{2p}$ in the preceding curvature remainders, at base points with
$t_z\geq t_0$ and receiving points with $t_y\geq t_0$, have sum less than
$e_4$. Choose the Young fractions for the
remaining $M_p,N_p,Z_p$ terms, then increase $T$ after the
profile is fixed.  This proves \eqref{eq:r12-nine-quarter-pay}.
All weak calculations begin with finite supports; fixed exponential
tails and the coarse reach gradient justify their exhaustion.
\end{proof}

\vskip0.12in

\subsection{Proof of the moment estimate}
\label{sec:r12-higher-moment-closure}

We now combine the estimates of the preceding subsections to prove
Theorem~\ref{thm:r13-natural-nine-quarter-budget}. Throughout this
subsection, $p=9/4$ and $3\leq m\leq8$.
Proposition~\ref{prop:r13-nine-dimensional-local-package} gives the local comparison $D\leq C(J_\lambda+E)$ and $E\leq C(J_\lambda+D)$,
where $E=e^{-\kappa d}$ and
$J_\lambda(z)=\sup_w e^{-\lambda|z-w|}\bar a(w)^2$, with
$\bar a=a$ on the good set and $\bar a=1$ on the bad set.
Put $\widehat E=e^{-\kappa\min(d,\epsilon\rho_o)}$ and
\[
 \omega_p=\zeta_T^{2p}\chi_S^2,\quad
 A_p=\int a^{2p}\omega_p,\quad
 Z_p=\int(D^p+\widehat E^p)\omega_p,\quad
 N_p=\sum_{b\in\mathcal X}\omega_p(b),
\]
\[
 V_p=\mathcal V_{S,T,c_h}[\mu_{\rm loc}],\qquad S\geq16T.
\]
The required same-weight curvature estimate is Corollary~\ref{cor:s3-same-weight-curvature};
the Simons--Kato condition has a strict margin at least $37/800$.
The weighted stability inequality is
Theorem~\ref{thm:r13-actual-p-rss}. In the following argument the
good threshold and curvature windows are fixed before the reach profile.

\begin{lemma}
\label{lem:r12-higher-moment-coupling}
The local comparison implies
\begin{equation}\label{eq:r12-p-pointwise}
 D^p+E^p\leq C_pJ_\lambda^p+C_pDE^{p-1}.
\end{equation}
For the fixed good threshold and local curvature-absorption windows,
there is $C_A\geq1$, depending only on the fixed local choices and
these windows, such that every subsequently chosen profile with
$\widehat E^{p-1}\leq f^2\leq H_f\widehat E^{p-1}$ satisfies
\begin{equation}\label{eq:r12-p-coupled}
 A_p+Z_p\leq C_A(M_p+N_p+V_p),\qquad
 M_p=\int_{\{s_X\geq4L_0\}}Df^2\omega_p.
\end{equation}
In particular, the upper comparison constant $H_f$ does not enter $C_A$.
\end{lemma}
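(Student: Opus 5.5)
The pointwise inequality \eqref{eq:r12-p-pointwise} is Corollary~\ref{cor:s2-pointwise-moments} with $r=p$, applied on $\Sigma\setminus X_\delta$ where Theorem~\ref{thm:r5-actual-two-sided} holds. Bad points can be absorbed into the net: on $X$ both sides are bounded, and $J_\lambda\geq e^{-\lambda|z-b|}$ for a nearby net point $b$.

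For \eqref{eq:r12-p-coupled} I would integrate \eqref{eq:r12-p-pointwise} against $\omega_p$ on $\Sigma_g$. First replace $E$ by $\widehat E$. On the actual branch they agree. On the cap branch, Corollary~\ref{cor:physical-cap-interaction} bounds the difference by $Ce^{-cR}$ times the weighted area, and this is absorbed into $V_p$ through \eqref{eq:r9-halo}, since such points satisfy $\rho_o\geq\eta T$ or lie in the core. Next, the mixed term $D\widehat E^{p-1}$ is at most $Df^2$ by the lower profile comparison $\widehat E^{p-1}\leq f^2$; this is why $H_f$ never enters. Split it into the part with $s_X\geq4L_0$, which equals $M_p$, and the collar part, which \eqref{eq:r9-collar} bounds by $C(N_p+V_p)$. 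It remains to bound the envelope term $\int J_\lambda^p\omega_p$.

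That envelope bound is the main step. I would estimate the supremum by a sum, $J_\lambda^p(z)\leq\sum_{w}e^{-p\lambda|z-w|}\bar a(w)^{2p}$, discretized over unit cells. The relative jets in Lemma~\ref{lem:audited-relative-defect} let pointwise values of $a^{2p}$ on a cell be controlled by the cell average, up to a bounded factor. This turns the envelope into a kernel $k(z,w)\leq Ce^{-p\lambda|z-w|}$ acting on $a^{2p}\,dS\lfloor\Sigma_g$ plus the counting measure of the net. The same-weight transport \eqref{eq:physical-same-weight-kernel} then gives
\[
 \int J_\lambda^p\omega_p\leq C_k\bigl(A_p+N_p\bigr)+CV_p,
\]
with $C_k$ depending only on $\lambda$, the dimension and the local mass bounds, hence independent of the profile. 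Combining the pieces gives $Z_p\leq C(A_p+M_p+N_p+V_p)$.

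Finally, insert Corollary~\ref{cor:s3-same-weight-curvature} with tolerance $b=1/(2C)$, which yields $A_p\leq\tfrac1{2C}Z_p+C_b(N_p+V_p)$. Substituting and absorbing half of $Z_p$ on the left gives $Z_p\leq C'(M_p+N_p+V_p)$, and then the same corollary bounds $A_p$.

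Every constant used was fixed before the profile: the comparison constants, $C_k$, the collar bound and the curvature window. So $C_A$ is profile independent. The integrals are finite because $\omega_p$ decays exponentially and the area grows polynomially, so the absorptions are legitimate. If the cell-average step fails near $X$, I would use $\bar a=1$ there and charge those cells to the net.
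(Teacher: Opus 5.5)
Your handling of \eqref{eq:r12-p-pointwise}, of the passage from $E$ to $\widehat E$, of $\widehat E^{p-1}\leq f^2$, and of the bad collar agrees with the paper. The replacement of $E$ by $\widehat E$ is free in the mixed term, since $E\leq\widehat E$; only $\widehat E^p$ on the cap branch costs an exponential halo term.

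The gap is the envelope step. You assert that the relative jets of Lemma~\ref{lem:audited-relative-defect} control $a(w)^{2p}$ by the cell average of $a^{2p}$ up to a bounded factor. They do not. They bound $\nabla A$ and its higher derivatives by $C\sqrt{D}$, not by $a$. So $a$ has no Harnack inequality on cells, and when $a(w)\ll\sqrt{D(w)}$ these bounds do not keep $a$ comparable to $a(w)$ on most of the cell. What averaging over a disk of small radius $\mu$ actually gives, from $a(w)\leq a(v)+C\mu\sqrt{D(w)}$ and relative Harnack for $D$, is
\[
 a(w)^{2p}\leq C\mu^{-m}\int_{\Sigma_g\cap B_1(w)}a^{2p}
 +C\mu^{2p}\int_{\Sigma_g\cap B_1(w)}D^p .
\]
The additive defect term cannot be dropped. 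Through \eqref{eq:physical-same-weight-kernel} the envelope bound therefore reads
\[
 \int\omega_pJ_\lambda^p\leq C_\lambda\mu^{-m}A_p+C_\lambda\mu^{2p}Z_p
 +C_\lambda N_p+C_{\lambda,\mu}V_p .
\]
This contains a $Z_p$ feedback term that your single-constant absorption with $b=1/(2C)$ never accounts for.

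The missing idea is the order of choices that absorbs this term.
\begin{enumerate}
\item After multiplying by the constant $C_p$ of \eqref{eq:r12-p-pointwise}, first choose $\mu$ with $C_pC_\lambda\mu^{2p}\leq1/4$. This fixes the now large coefficient $C_pC_\lambda\mu^{-m}$ of $A_p$.
\item Only then choose the tolerance $b$ of Corollary~\ref{cor:s3-same-weight-curvature} with $b\,C_pC_\lambda\mu^{-m}\leq1/4$.
\item Afterwards fix its averaging window and the good threshold.
\end{enumerate}
This order is legitimate because $C_p$, $\lambda$ and $C_\lambda$ do not depend on the good threshold.

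For the same reason, $b$ must be chosen against the coefficient of $A_p$ alone. Your constant $C$ also contains the collar constant from \eqref{eq:r9-collar}, which depends on $L_0$. Since $L_0$ is fixed only after the good threshold, taking $b=1/(2C)$ with that $C$ is circular. The paper avoids this by absorbing both quarters while $\int D\widehat E^{p-1}\omega_p$ is still intact, and splitting off the collar only afterwards. With these corrections your argument becomes the paper's.
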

\begin{proof}
Apply Corollary~\ref{cor:s2-pointwise-moments} with $r=p$ to obtain
\eqref{eq:r12-p-pointwise}.

\vskip0.12in

For every fixed small disk radius $\mu>0$, the relative defect
derivative estimates and uniform good charts give
\[
 a(w)^{2p}\leq C\mu^{-m}\int_{\Sigma_g\cap B_1(w)}a^{2p}
                    +C\mu^{2p}\int_{\Sigma_g\cap B_1(w)}D^p
\]
away from the fixed bad collar.  Indeed average
$a(w)\leq a(v)+C\mu\sqrt{D(w)}$ on the $\mu$-disk, then
replace $D(w)^p$ by its relative-comparable fixed-disk average.
The second coefficient does not acquire $\mu^{-m}$.
Near the bad set use the same separated net.  Taking the weighted
supremum and then bounding the supremum of nonnegative disk integrals
by their sum proves
\[
 J_\lambda(z)^p\leq C_\lambda\int e^{-p\lambda|z-v|}
       \{\mu^{-m}a(v)^{2p}+\mu^{2p}D(v)^p\}\,dS_v
       +C_\lambda\sum_{b\in\mathcal X}e^{-p\lambda|z-b|}.
\]
Apply the identical-weight kernel bound
\eqref{eq:physical-same-weight-kernel} to the two terms, with
$F=\mu^{-m}a^{2p}+\mu^{2p}D^p$ on the good surface and $F=1$
on the bad net. These functions are bounded before any moment
estimate. It gives
\begin{equation}\label{eq:r12-p-envelope}
 \int\omega_p J_\lambda^p
 \leq C_\lambda\mu^{-m}A_p+C_\lambda\mu^{2p}Z_p
                  +C_\lambda N_p+C_{\lambda,\mu,p}V_p.
\end{equation}
The kernel estimate is applied to surface area on the good set and the bad-net counting measure;
its annular remainder is $C_{\lambda,\mu,p}T^{-2p}$ times their
weighted core masses, and its observer ratio has the same bounded
displacement integral as in the RSS test. Thus its remainder has the same $V_p$, and its principal coefficient
is fixed before the profile is chosen.

\vskip0.12in

For the curvature term use Corollary~\ref{cor:s3-same-weight-curvature},
whose compact-test and maximal-enlargement bounds are established
before the profile is chosen.

\vskip0.12in

First choose $\mu$ so the coefficient of $Z_p$ in
\eqref{eq:r12-p-envelope} after \eqref{eq:r12-p-pointwise} is below
$1/4$.  Its coefficient of $A_p$ is then fixed; choose $b$ in
\eqref{eq:r12-p-small-curvature} so its resulting $Z_p$ coefficient
is below $1/4$.  Fix its finite averaging windows and then the good
threshold.  Absorbing both quarters proves
\[
 A_p+Z_p\leq C\left\{\int D\widehat E^{p-1}\omega_p+N_p+V_p\right\}.
\]
Replacing $E$ by $\widehat E$ introduces only the good-area
integral
$C\int_{|z|\geq T}e^{-c|z|}\chi_S^2dS$, because their difference
occurs at $d\geq\epsilon\rho_o$. Its core/halo split is bounded
by $CV_p$ directly from the local measure $\mu_{\rm loc}$.
On the fixed bad collar, the integrand is bounded and the collar
cover gives $C(N_p+V_p)$.  On its complement use
$\widehat E^{p-1}\leq f^2$.  This proves
\eqref{eq:r12-p-coupled}, with the principal coupling constant fixed by
the local estimates and the two absorptions above.
\end{proof}

\vskip0.12in

We complete the proof of Theorem~\ref{thm:r13-natural-nine-quarter-budget}.
\begin{proof}
Put $X=A_p+Z_p$, $Y=M_p+N_p$ and $V=V_p$.
Enlarge the profile-independent constant $C_A$ to be at least one.
The coupling \eqref{eq:r12-p-coupled} is
\[
 X\leq C_A(Y+V).
\]
The positive constant $c_p>0$ in
Proposition~\ref{prop:r12-nine-quarter-profile-pay} is also fixed
before the profile.  Prescribe
$e_4=v=\nu=c_p/(8C_A)$ and construct the profile with these
fractions. With this profile, Young's inequality
with exponents $p/(p-1),p$ gives
\[
 C_fS^{-2}\int f^2\omega_p
 \leq C_fH_fS^{-2}\int\widehat E^{p-1}\omega_p
 \leq\nu Z_p+C_{f,\nu}S^{-2p}\int\chi_S^2
 \leq\nu Z_p+C_{f,\nu}V.
\]
The profile estimate therefore gives
\[
 c_pY\leq\frac{c_p}{8C_A}A_p+
             \frac{c_p}{4C_A}Z_p+C_fV,
 \qquad Y\leq X/(4C_A)+C_f'V.
\]
For the single nonnegative functional
$\mathfrak E_f=X+2C_AY$, the two estimates imply
\[
 \mathfrak E_f
 \leq C_AY+C_AV+2C_A\{X/(4C_A)+C_f'V\}
 =\tfrac12\mathfrak E_f+C_A(1+2C_f')V.
\]
Thus $\mathfrak E_f\leq C_f''V$.  Since it controls
$A_p+Z_p+N_p$, this proves the claimed moment estimate.
The fractions precede $f$; the Young remainder constants and the
lower threshold for $T$ may depend on it.  All calculations use
the common weight, cap and bad net, with the already established
finite-support estimates and exhaustion.
\end{proof}

\begin{corollary}
\label{cor:s8-energy-only-moment}
The right side of Theorem~\ref{thm:r13-natural-nine-quarter-budget}
is bounded by
\begin{equation}\label{eq:s8-energy-only-moment}
 \begin{gathered}
 C\left\{S^{-2p}\int\chi_S^2e_u\,dx
 +T^{-2p}\int_{B_{5T}}\chi_S^2e_u\,dx
 +\int h_{c_h,T}^{\sharp}\chi_S^2e_u\,dx\right\},\\
 h_{c_h,T}^{\sharp}=e^{-c_hT-c_h\operatorname{dist}(\cdot,B_{5T})}.
 \end{gathered}
\end{equation}
If $c_hT\geq2p\log S$, the last term can be omitted.
In particular it can be omitted for all sufficiently large $R$
when $T=\eta R$ with fixed $\eta>0$ and $S$ is at most any fixed
power of $R$.
\end{corollary}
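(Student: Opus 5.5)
The plan is to bound the three pieces of $V_p=\mathcal V_{S,T,c_h}[\mu_{\rm loc}]$ separately. For each piece, the good area and the bad-net count in $\mu_{\rm loc}$ are converted into diffuse energy over a slightly enlarged region. The diffuse-energy part $e_u\,dx$ of $\mu_{\rm loc}$ already has the required form, so only $dS\lfloor\Sigma_g$ and $\sum_{b\in\mathcal X}\delta_b$ need treatment.

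\textbf{Step 1: a local comparison with energy.} We will show that for every fixed radius $r_1$ (take $r_1=1$) and every point $x$,
\[
 \mathcal H^m(\Sigma_g\cap B_1(x))+\#(\mathcal X\cap B_1(x))
 \leq C\int_{B_{2}(x)}e_u\,dx .
\]
This comes directly from Lemma~\ref{lem:r9-zero-baseline}. Each zero carries energy at least $c$ in its $r_*$-ball, so the unit-separated net points in $B_1(x)$ number at most $CE(u;B_2(x))$. The good-area bound $\mathcal H^m(\Sigma_g\cap B_r)\leq CE(u;B_{r+1})$ gives the area part.

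\textbf{Step 2: integration against the weights.} We cover space by unit balls centred on a lattice and sum. The weights change by bounded factors across a unit displacement. First, $\chi_S(y)^2\leq e^{2/S}\chi_S(x)^2$ for $|x-y|\leq 2$. Second, $h_{c_h,T}(y)\leq e^{2c_h}h^\sharp_{c_h,T}(x)$: since $B_{4T}\subset B_{5T}$ and the displacement is at most $2$, we have $\operatorname{dist}(y,B_{4T})\geq\operatorname{dist}(x,B_{5T})-2$. Third, the indicator of $B_{4T}$ at the evaluation point is dominated by the indicator of $B_{5T}$ at any point within distance $2$, once $T\geq2$. Bounded overlap of the doubled lattice balls then gives each of the three terms of \eqref{eq:s8-energy-only-moment}. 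Combining with Theorem~\ref{thm:r13-natural-nine-quarter-budget} yields the main claim.

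\textbf{Step 3: omitting the halo term when $c_hT\geq2p\log S$.} We compare the third term with the first. Pointwise $h^\sharp_{c_h,T}\leq e^{-c_hT}\leq S^{-2p}$, so the halo integral is at most $S^{-2p}\int\chi_S^2e_u$ and is absorbed by the first term after doubling $C$. Now take $T=\eta R$ with $S\leq R^k$. Then $2p\log S\leq2pk\log R\leq c_h\eta R$ for large $R$, which gives the final sentence. The constraint $S\geq16T$ from the theorem is inherited and is not touched here.

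\textbf{Main obstacle.} The only delicate point is Step 1 near bad zeros. There the local graph structure is unavailable, and we rely solely on the energy lower bound at arbitrary (possibly singular) zeros from Lemma~\ref{lem:r9-zero-baseline}. Since that lemma explicitly covers singular zeros, I expect this step to close without further work. The rest is bookkeeping of the unit-displacement inflation factors, which are uniform in $x_c$, $S$ and $T$.
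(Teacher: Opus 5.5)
Your proposal is correct and follows the paper's argument. Both convert good area and bad-net counts into diffuse energy through the fixed-ball energy lower bound at zeros and the local good-area bound, transfer the weights $\chi_S^2$ and $h_{c_h,T}\chi_S^2$ across bounded displacements using their logarithmic Lipschitz constants (enlarging $B_{4T}$ to $B_{5T}$), and absorb the halo term via $h^\sharp_{c_h,T}\leq e^{-c_hT}\leq S^{-2p}$. The paper covers $\Sigma_g$ by a maximal unit-separated family with disjoint small energy balls rather than your ambient lattice plus Lemma~\ref{lem:r9-zero-baseline}, which is only a cosmetic difference; also, a point of $B_1(x_k)$ and a point of $B_2(x_k)$ can be $3$ apart rather than $2$, which merely changes constants and asks for $T\geq3$.
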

\begin{proof}
For a positive weight $w$ with logarithmic Lipschitz constant at
most $L$, the fixed-ball positive energy lower bound at zeros and
the uniform local area bound on the good set give
\[
 \int w\,dS\lfloor\Sigma_g+
       \sum_{b\in\mathcal X}w(b)\leq C_L\int we_u\,dx.
\]
Indeed choose a maximal unit-separated cover of the good zero set.
Its unit balls cover the surface and have uniformly bounded good
area. Concentric balls of one fixed radius less than $1/4$ are
disjoint and each has uniformly positive diffuse energy.
The weight is comparable to its center value on both balls, proving
the assertion. The bad net has the same direct argument.
For a restriction to $B_{4T}$ the small energy balls lie in $B_{5T}$
for large $T$. Apply this comparison to $w=\chi_S^2$ and to
$w=h_{c_h,T}\chi_S^2$, whose logarithmic Lipschitz constants are
at most $2/S$ and $c_h+2/S$. This proves
\eqref{eq:s8-energy-only-moment}; the enlargement of the halo ball
only increases its weight. Finally
$h_{c_h,T}^{\sharp}\leq e^{-c_hT}\leq S^{-2p}$ under the
stated scale restriction, so its entire integral is bounded by the
first term. This proves the corollary.
\end{proof}

\begin{corollary}
\label{cor:s8-density-envelope-moment}
With $K(r)=1+\sup_xr^{-m}E(u;B_r(x))$, the moment estimate takes the following density-envelope form:
\begin{equation}\label{eq:s8-density-envelope-moment}
 \int\omega_p(a^{2p}+D^p+\widehat E^p)+N_p
 \leq C_p\{K(S)S^{m-2p}
       +K(8T)T^{m-2p}\chi_S(0)^2\}.
\end{equation}
\end{corollary}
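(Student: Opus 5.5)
The plan is to start from Theorem~\ref{thm:r13-natural-nine-quarter-budget} combined with Corollary~\ref{cor:s8-energy-only-moment}. Together they bound the left side by
\[
 C\Bigl\{S^{-2p}\int\chi_S^2e_u\,dx+T^{-2p}\int_{B_{5T}}\chi_S^2e_u\,dx+\int h^\sharp_{c_h,T}\chi_S^2e_u\,dx\Bigr\}.
\]
It then suffices to estimate each of these three diffuse-energy integrals by the density envelope $K$, using only ball covering and the exponential decay of the weights.

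\textbf{First term.} Decompose $\mathbb R^n$ into the ball $B_S(x_c)$ and the dyadic shells $\{2^{j-1}S\leq|x-x_c|<2^jS\}$, $j\geq1$. On the $j$-th shell, $\chi_S^2\leq e^{-2^j}$. Its energy is at most $E(u;B_{2^jS}(x_c))\leq K(2^jS)(2^jS)^m$. Lemma~\ref{lem:r9-density-baseline} gives $K(2^jS)\leq C2^jK(S)$, so the shell contributes at most $C2^{j(m+1)}e^{-2^j}K(S)S^m$. This sums to $CK(S)S^m$, and after the factor $S^{-2p}$ we obtain $CK(S)S^{m-2p}$, uniformly in $x_c$.

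\textbf{Second term.} On $B_{5T}(0)$ the weight $\chi_S^2$ is comparable to $\chi_S(0)^2$. Indeed, its logarithmic Lipschitz constant is $2/S$ and $5T\leq 5S/16$, so $\chi_S(x)^2\leq e^{10T/S}\chi_S(0)^2\leq C\chi_S(0)^2$. Also $E(u;B_{5T}(0))\leq E(u;B_{8T}(0))\leq K(8T)(8T)^m$. Together these give $CK(8T)T^{m-2p}\chi_S(0)^2$. The radius $8T$ is used only because it is convenient for the covering; monotonicity of $K$ makes any larger radius harmless.

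\textbf{Halo term (main obstacle).} Here I would split $\mathbb R^n$ into $B_{5T}(0)$ and the shells $\{5T+k\leq|x|<5T+k+1\}$ around the origin. On $B_{5T}$ the halo factor equals $e^{-c_hT}\leq C_{c_h,p}T^{-2p}$, so that part is absorbed into the second term. On the $k$-th shell the halo factor is $e^{-c_hT-c_hk}$ and $\chi_S^2\leq e^{2(5T+k)/S}\chi_S(0)^2$. With $S\geq16T$ the growth factor is at most $e^{1+2k/S}$, and it must be dominated by $e^{-c_hk/2}$. This requires $2/S<c_h/2$, which holds for $T\geq T_0$ with $T_0$ large in terms of $c_h$.

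The shell energy is bounded using $E(u;B_{5T+k+1}(0))\leq K(8T)(8T)^m\,C(1+k/T)^{m+1}$, which follows from the covering bound in Lemma~\ref{lem:r9-density-baseline}. Summing over $k$ leaves a bound of the form
\[
 Ce^{-c_hT/2}\,\mathrm{poly}(T)\,K(8T)T^m\chi_S(0)^2\leq CK(8T)T^{m-2p}\chi_S(0)^2.
\]
The care needed in this step is that the center of $\chi_S$ is $x_c$ while the halo is centered at the origin, and the $(1+k/T)^{m+1}$ growth must be absorbed by half of the halo's exponential decay. Once these are handled, adding the three bounds gives \eqref{eq:s8-density-envelope-moment} with $C_p$ independent of $S,T,x_c$.
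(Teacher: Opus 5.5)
Your proposal is correct and takes essentially the paper's approach. The paper also converts $\mu_{\rm loc}$ into diffuse energy by the fixed-ball comparison, which is exactly what Corollary~\ref{cor:s8-energy-only-moment} packages. It then bounds the global, core and halo integrals by shell summation, using $K(s)\leq C(s/R)K(R)$ and letting the observer factor $e^{2|z|/S}$ consume half of the halo's exponential rate once $T$ is large.
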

\begin{proof}
The fixed-ball energy comparison and exponential shell summation
give
\[
 \int\chi_S^2d\mu_{\rm loc}\leq CK(S)S^m,\qquad
 \int_{B_{4T}}\chi_S^2d\mu_{\rm loc}
       \leq CK(8T)T^m\chi_S(0)^2.
\]
For the halo, use the same centered shell decomposition. Inside
$B_{4T}$ the observer ratio is bounded. Outside, the factor
$e^{2|z|/S}$ consumes at most half the fixed exponential rate once
$T$ is large, while covering by $8T$ balls bounds all shell masses.
Consequently
\[
 \int h_{c_h,T}\chi_S^2d\mu_{\rm loc}
 \leq C_{c_h}K(8T)T^me^{-c_hT}\chi_S(0)^2
 \leq CK(8T)T^{m-2p}\chi_S(0)^2.
\]
Insert these evaluations into the local moment theorem.
\end{proof}

\vskip0.12in

\section{\bf Penalized reach and ambient stability}
\label{sec:s12-radial}

\vskip0.12in

\subsection{The radial calculus}
\label{sec:r9-radial-core}\label{sec:r10-optimized-radial-core}

We construct a radial test function and compare two inequalities for
its restriction to the zero set. The resulting comparison is the main
step in the energy growth proof. Throughout this section and
Section~\ref{sec:s12-growth-criterion}, we use the zero set and local estimates of
Proposition~\ref{prop:r13-nine-dimensional-local-package}, with
$3\leq m\leq8$, $n=m+1$, $p=9/4$, energy measure $\mu=e_u\,dx$,
$a=|A|$, $H=\operatorname{tr}A$, $\Delta_\Sigma x=-H\nu$,
$a_m=(m-2)/2$, and $I=I_P$. Write
$K(r)=1+\sup_x r^{-m}\mu(B_r(x))$ for $r\geq1$.
Fix the penalty with $G_1\leq1.001$, the flat
positive-foot allowance $J_*\leq1.01$, and $\kappa=\sqrt2/G_1$.
The good set $\Sigma_g=\{D<\delta\}\cap\Sigma$ and separated bad net
$\mathcal X$ are those used in the moment theorem. In particular,
the unit good charts and fixed-ball positive energy lower bounds at zeros are uniform,
$a^2+|H|\leq CD$, phase energy decays exponentially away from
$\Sigma$, and $D\to0$ forces actual reach $d\to\infty$.

\vskip0.12in
Write
\begin{equation}\label{eq:r9-capped-reach}
 t=\min(d,\epsilon r_o),\qquad r_o=\sqrt{1+|z|^2},\qquad
 \widehat E=e^{-\kappa t},\qquad0<\epsilon\leq1/8.
\end{equation}
The capped reach is locally Lipschitz and semiconcave, with
nonpositive singular Laplacian. On its uncapped branch use the
nonplateau, small-$ad$ and plateau inequalities of
Proposition~\ref{prop:r7-actual-weak-reach}; on its cap branch use
$|\nabla t|\leq\epsilon$ and
$\Delta_\Sigma(\epsilon r_o)\leq m\epsilon/r_o+\epsilon|H|$.
We parametrize the selected positive feet with their prescribed
orientation and multiplicity one. Their inverse Jacobians are bounded, and
\begin{equation}\label{eq:r9-primitive-feet}
 |z-y|\leq d(z),\quad d(y)\leq d(z),\quad
 D(y)+a(y)+H_{N_y,+}\leq C_\Gamma/d(z).
\end{equation}
If $a(y)d(z)\leq a_1$, their inverse Jacobian is at most $J_*$.
The chord geometry used below is
\begin{equation}\label{eq:r9-chord-geometry}
 |\nu(z)\cdot(y-z)|\leq C_\Gamma|y-z|^2/d(z),\qquad
 |\lambda_i(z)|\leq C_\Gamma/d(z).
\end{equation}
These local inputs supply the estimates used in the moment and
radial arguments.

\vskip0.12in
Fix $\gamma,c,q_*>0$, $0<a_*<c$ and $0<\vartheta<1/2$ satisfying
the strict conditions in \eqref{eq:r13-joint-p-parameters}, with
$c_0=c(a_*/c)^\vartheta>\gamma(m-2)/4$. Fix
$\alpha,\beta_0>0$ with
$b_p=p\alpha+(m+1)\beta_0<\min\{2p-3,m-2\}$.
The local parameters, normal windows and radial parameters are fixed
throughout these two sections. Unless another dependence is indicated,
comparison constants may depend on these choices, but not on
$R,B,x_c$. The guard $\eta$ is fixed before $R$; independence of
small $\eta$ is stated in the estimates where it is needed.
We follow the moving-weight calculations in
\cite[Sections~10.1--10.4]{OpenAI2026}, keeping the outer cutoff in
place while absorbing the coefficients in each comparison.

\vskip0.12in

\subsubsection{The radial amplitude}
Suppose that $K(R)\to\infty$.  Monotonicity, covering and bounded
density give
\begin{equation}\label{eq:r9-envelope-facts}
 K(R)\leq C(1+R),\qquad K(s)\leq C(s/R)K(R)\ (s\geq R),
 \quad L:=R/\sqrt{K(R)}\geq c\sqrt R.
\end{equation}
Put $T=\eta R$, where $\eta>0$ will be fixed sufficiently small,
and define
\[
 \begin{gathered}
 \rho=\sqrt{R^2+|z-x_c|^2},\quad\tau=\log\rho,\quad
 h=\rho^{-a_m},\quad\varphi=h^2=\rho^{-(m-2)},\\
 T_\rho=1-|\nabla_\Sigma\rho|^2,\qquad
 \mathscr A=K(R)L^{1-\vartheta}.
 \end{gathered}
\]
Fix a smooth nondecreasing $f_*:\mathbb R\to(0,1]$, with
$0\leq(\log f_*)'\leq q_*$, $f_*=1$ on $[1,\infty)$ and
$f_*(s)=C_*e^{q_*s}$ on $(-\infty,-1]$.  Let
\begin{equation}\label{eq:r9-moving-family}
 \begin{gathered}
 b_L(t)=(t^{1-\vartheta}-L^{1-\vartheta})_+^{1/(1-\vartheta)},
 \quad s=c\tau-b_L(t),\quad W=t^{-\vartheta}f_*(s),\\
 F(\tau,t)=\int_t^\infty u^{-\vartheta}f_*(c\tau-b_L(u))\,du,
 \qquad\lambda=(\log f_*)'(s).
 \end{gathered}
\end{equation}
Every evaluation uses $t=\widehat d$.  Let
$Z_R=\{z\in\Sigma:D(z)\geq\rho(z)^{-\alpha}\}$ be the
\emph{global} rough set.  A fixed-scale smoothing of a distance
cutoff gives a smooth ambient $m_H$ such that
\begin{equation}\label{eq:r9-rough-mask}
 \begin{gathered}
 m_H=0\ \text{if }\operatorname{dist}(x,Z_R)<\rho^{\beta_0},
 \quad m_H=1\ \text{if }\operatorname{dist}(x,Z_R)>4\rho^{\beta_0},\\
 |\nabla m_H|+|D^2m_H|\leq C,
 \quad\operatorname{supp}(1-m_H)\subset
       \mathcal H_R:=\{\operatorname{dist}(x,Z_R)\leq5\rho^{\beta_0}\}.
 \end{gathered}
\end{equation}
Use $m_H=1$ if $Z_R$ is empty.  One obtains the displayed mask by
first using cutoff levels $3/2,7/2$ for the Lipschitz distance
divided by $\rho^{\beta_0}$ and then convolving at a fixed small
scale.  The gaps preserve the zero and one regions, and yield the
stronger bounds $|\nabla m_H|+|D^2m_H|\leq C\rho^{-\beta_0}$.
All singular zeros lie in its open zero collar.

\vskip0.12in

Let $\zeta_T=0$ on $B_T(0)$, $\zeta_T=1$ outside
$B_{2T}(0)$, with $|D^j\zeta_T|\leq C_jT^{-j}$.
For $B\geq4R$, take $m_B=1$ on $B_B(x_c)$ and zero outside
$B_{2B}(x_c)$, with $|D^jm_B|\leq C_jB^{-j}$, and use exactly
\begin{equation}\label{eq:r9-common-amplitude}
 \mathfrak{m}=\zeta_T^pm_Hm_B,\qquad\psi=\mathfrak{m}h\sqrt W,\qquad
 \mathscr L_B=\int I\psi^2.
\end{equation}
Define the common Dirichlet quantity by
$\mathscr D_B=\int|\nabla_\Sigma\psi|^2$.

\vskip0.12in

The symbol $\mathfrak{m}$ denotes the mask; $m$ continues to denote
the fixed interface dimension.
On its support $D<\rho^{-\alpha}\to0$ and the actual small-defect
reach criterion gives $t\geq t_{\min}(R)\to\infty$; the cap
itself is at least $\epsilon T$.  All supported integrals are
finite at finite $B$, and are supported on regular zeros.

\begin{lemma}
\label{lem:r9-radial-calculus}
With
\[
 J_1=\int_{b_L(t)}^\infty u^{-\vartheta-1}f_*'(c\tau-u)\,du,
\]
one has
\begin{align}\label{eq:r9-weight-chain}
 \Delta_\Sigma F\geq{}&F_\tau\Delta_\Sigma\tau
 -W(\Delta_\Sigma t)^{\rm upper}
 -c^2\vartheta J_1|\nabla\tau|^2\\
 &+\frac{\vartheta W}{t}|\nabla t|^2
                  +\frac{W\lambda}{b_L'}|\nabla s|^2.\nonumber
\end{align}
Here $J_1\leq CF_\tau/\tau$, and the last square is zero where
$b_L'=0$.  Uniformly for $\tau\geq\log R$ and supported $t$,
\begin{align}\label{eq:r9-weight-ratios}
 &F/W\geq(q_*+\vartheta/t)^{-1},\quad
 (F+F_\tau)/W\leq C(L+\tau),\quad
 F+F_\tau\leq C(L+\tau)^{1-\vartheta},\nonumber\\
 &F_\tau/W=(1+o_R(1))c
       [t/(c\tau\vee b_L(t))]^\vartheta,\quad
 \sup_{t>0}t^{2p-1}W\leq C(L+\tau)^{2p-1-\vartheta}.
\end{align}
The exact geometric identities are
\begin{align}\label{eq:r9-radial-identities}
 \Delta_\Sigma\tau&=(m-2+2T_\rho-HX\cdot\nu)/\rho^2,\nonumber\\
 -\Delta_\Sigma\varphi/\varphi
   &=(m-2)(mT_\rho-HX\cdot\nu)/\rho^2,\\
 -\Delta_\Sigma h/h
   &=[a_m^2+a_m(a_m+2)T_\rho-a_mHX\cdot\nu]/\rho^2,
 \qquad X=z-x_c.\nonumber
\end{align}
\end{lemma}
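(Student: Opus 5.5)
The plan is to differentiate $F(\tau,t)$ explicitly, apply the chain rule for the surface Laplacian of $F(\tau(z),\widehat d(z))$, and reorganize the second-order terms into the displayed square. The ratio bounds then follow from elementary one-dimensional integral estimates, and the geometric identities from the standard formula $\Delta_\Sigma x=-H\nu$.

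\emph{Derivatives of $F$.} Directly from \eqref{eq:r9-moving-family},
\[
 F_t=-W,\qquad
 -W_t=\frac{\vartheta W}{t}+W\lambda b_L'(t),\qquad
 F_{\tau t}=-W_\tau=-cW\lambda,\qquad
 F_{\tau\tau}=c^2\int_t^\infty u^{-\vartheta}f_*''(c\tau-b_L(u))\,du .
\]
The chain rule then gives
\[
 \Delta_\Sigma F=F_\tau\Delta_\Sigma\tau-W\Delta_\Sigma t
 +F_{\tau\tau}|\nabla\tau|^2-2cW\lambda\,\nabla\tau\cdot\nabla t
 +\Bigl(\frac{\vartheta W}{t}+W\lambda b_L'\Bigr)|\nabla t|^2 .
\]
Since $-W\leq0$ and $t=\widehat d$ is semiconcave with nonpositive singular Laplacian (Lemma~\ref{lem:physical-cap-calculus}), the singular part of $-W\Delta_\Sigma t$ is nonnegative. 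Hence $\Delta_\Sigma t$ may be replaced by its absolutely continuous upper bound.

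\emph{Completing the square.} Expanding $|\nabla s|^2=|c\nabla\tau-b_L'\nabla t|^2$ shows that
\[
 \frac{W\lambda}{b_L'}|\nabla s|^2
 =W\lambda b_L'|\nabla t|^2-2cW\lambda\,\nabla\tau\cdot\nabla t
 +\frac{c^2W\lambda}{b_L'}|\nabla\tau|^2 .
\]
It therefore suffices to prove
\[
 F_{\tau\tau}\geq\frac{c^2W\lambda}{b_L'}-c^2\vartheta J_1 .
\]
For $u>L$ we have $b_L^{-\vartheta}b_L'=u^{-\vartheta}$, so $u^{-\vartheta}f_*''(c\tau-b_L)=-b_L^{-\vartheta}\,\partial_u[f_*'(c\tau-b_L)]$. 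Integrating by parts in $u$, the boundary term at $u=t$ is exactly $t^{-\vartheta}f_*'(s)/b_L'(t)=W\lambda/b_L'$. The remaining integral is $-\vartheta\int b_L^{-\vartheta-1}b_L'f_*'\,du$, which after the substitution $v=b_L(u)$ equals $-\vartheta J_1$ and has the correct sign. The boundary term at infinity vanishes because of the exponential factor of $f_*$.

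On the portion $u\leq L$ of the integral, and wherever $b_L'(t)=0$, the argument is $c\tau\geq c\log R>1$ for large $R$. There $f_*\equiv1$, so $f_*'=f_*''=0$ and those contributions, together with the square, vanish. The bound $J_1\leq CF_\tau/\tau$ follows by comparing $v^{-\vartheta-1}$ with $v^{-\vartheta}$ on the region where $f_*'(c\tau-v)\neq0$. That region forces $v\geq c\tau-1$, so $v\gtrsim\tau$.

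\emph{Weight ratios.} The ratio estimates in \eqref{eq:r9-weight-ratios} are one-variable estimates.
\begin{itemize}
\item For the lower bound on $F/W$, note that $-\partial_u\log\bigl(u^{-\vartheta}f_*(c\tau-b_L(u))\bigr)\leq\vartheta/u+q_*b_L'$. Comparing with an exponential in $u$ gives $F/W\geq(q_*+\vartheta/t)^{-1}$, after handling the slope of $b_L$ through $b_L'\leq1$ for large $u$.
\item For $F_\tau/W$, the mass of $f_*'(c\tau-b_L(u))$ concentrates in $u$ where $b_L(u)\approx c\tau$. This yields the factor $c\,[t/(c\tau\vee b_L(t))]^\vartheta$ by Laplace-type localization with relative error $o_R(1)$, using $L\to\infty$ and $\tau\geq\log R$.
\item The upper bounds on $F+F_\tau$ and on $(F+F_\tau)/W$ come from cutting the integral at $u\sim L+\tau$, beyond which $f_*$ decays exponentially.
\item The bound $\sup_t t^{2p-1}W\leq C(L+\tau)^{2p-1-\vartheta}$ follows because $W$ decays exponentially once $b_L(t)>c\tau+1$, which holds for $t\gtrsim L+\tau$.
\end{itemize}

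\emph{Geometric identities.} With $X=z-x_c$, we have $\Delta_\Sigma|X|^2=2m-2HX\cdot\nu$ and $|\nabla_\Sigma\rho|^2=|X^T|^2/\rho^2=1-T_\rho$. Computing $\Delta_\Sigma\log\rho$, then $\rho^{-(m-2)}$ and $\rho^{-a_m}$, via $\Delta g(\rho)=g'\Delta\rho+g''|\nabla\rho|^2$ gives \eqref{eq:r9-radial-identities}. This is routine algebra.

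The main obstacle is the integration-by-parts identity for $F_{\tau\tau}$, including the correct sign of $J_1$ and the treatment of the kink of $b_L$ at $u=L$. The kink is harmless because $f_*$ is locally constant there; I would verify this first.
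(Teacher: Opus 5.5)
Your proposal is correct and follows essentially the same route as the paper. You use the same derivatives $F_t=-W$, $F_{tt}=W(\vartheta/t+\lambda b_L')$, $F_{\tau t}=-cW\lambda$, obtain the identity $F_{\tau\tau}=c^2W\lambda/b_L'-c^2\vartheta J_1$ by integrating by parts in the original variable (the paper substitutes $u=b_L(v)$ first, which is equivalent), complete the same square, and handle the kink via $f_*'=0$ when $c\tau-b_L>1$. The ratio bounds follow from the same concentration and exponential-tail arguments, and the identities from $\Delta_\Sigma x=-H\nu$.
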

\begin{proof}
For $t>L$, $b_L'=(b_L/t)^\vartheta$ and the substitution
$u=b_L(v)$ gives $v^{-\vartheta}dv=u^{-\vartheta}du$.
Consequently
\[
 F_t=-W,\quad F_{tt}=W(\vartheta/t+\lambda b_L'),\quad
 F_{\tau t}=-cW\lambda,\quad
 F_{\tau\tau}=c^2b_L^{-\vartheta}f_*'(c\tau-b_L)
                                      -c^2\vartheta J_1.
\]
Since $b_L^{-\vartheta}f_*'=W\lambda/b_L'$, the quadratic
form is the completed square in \eqref{eq:r9-weight-chain}.
Near $t=L$ the argument $c\tau-b_L(t)$ exceeds one, so all
terms involving $\lambda/b_L'$ vanish on a neighborhood.  The
nonpositive singular part of $\Delta t$, multiplied by $-W$,
is nonnegative.  Local semiconcave smoothing therefore proves
the distributional form without differentiating a branch selection.

\vskip0.12in

The measure $f_*'(c\tau-u)du$ has total mass
$f_*(c\tau-b_L(t))$, is supported above $c\tau-1$, and after
normalization is exponentially concentrated at
$c\tau\vee b_L(t)$ with bounded mean displacement.  Compare
$u^{-\vartheta}$ and $u^{-\vartheta-1}$ there to obtain the
uniform $F_\tau$ asymptotic and the $J_1$ bound.  The logarithmic
decrease of $W$ in $t$ is at most $q_*+\vartheta/t$, proving the
lower ratio.  Split its remaining integrals at $C(L+\tau)$ and
use the exponential tail to obtain the upper bounds and the
$(2p-1)$st moment.  Finally trace the radial ambient Hessians on an
$m$-plane and use $\Delta_\Sigma x=-H\nu$.
\end{proof}

\vskip0.12in

For later comparison the exact ground-state identity, valid for the
compact amplitude above, is
\begin{equation}\label{eq:s1-ground-state}
 \int|\nabla_\Sigma\psi|^2
 =\int\varphi|\nabla_\Sigma(\mathfrak m\sqrt W)|^2
   +\int\varphi\mathfrak m^2W(-\Delta_\Sigma h/h).
\end{equation}
The masks vanish near singular zeros, so regular surface integration
by parts proves the identity by expansion. The cutoff power $\zeta_T^p$ is $C^2$.

\vskip0.12in

Define the radial form
\begin{align}\label{eq:s7-raw-radial-form}
 \mathscr P_B=\int\varphi\mathfrak m^2\bigg[
 &\frac{(m-2)F_\tau+\{m(m-2)F+2F_\tau\}T_\rho}{\rho^2}
 +\frac{3\vartheta W}{4t}|\nabla_\Sigma t|^2\notag\\
 &+\frac{W\lambda}{b_L'}|\nabla_\Sigma s|^2
       -c^2\vartheta J_1|\nabla_\Sigma\tau|^2\bigg].
\end{align}
The term with $\lambda/b_L'$ is zero where $b_L'=0$.
This form is finite at finite support; its sign is determined by
the comparison below.  Put $M=m-2$ and define
\begin{equation}\label{eq:s3-radial-charges}
 \mathscr V_B=\int\frac{\varphi\mathfrak m^2F T_\rho}{\rho^2},
 \qquad
 \mathscr C_B=\int\frac{\varphi\mathfrak m^2W}{\rho^2}
                         \left(1+\frac{2T_\rho}{M}\right).
\end{equation}

\begin{lemma}\label{lem:s7-raw-ground-state}
Suppose $m>2$, $\gamma,\vartheta>0$, $\gamma q_*/4<1$,
$t_{\min}(R)\to\infty$. Let $h_0>0$ and let $Q\geq0$ be locally integrable on the supported
regular bases, with $|H|\leq h_0Q$, and put $\mathscr M_B=\int Q\psi^2$.  Suppose also
$c_0=c(a_*/c)^\vartheta>\gamma M/4$.  Set
\[
 d_H=\frac M2(c_0-\gamma M/4)>0,\quad
 c_*=\frac{mM}{2}(1-\gamma q_*/4)>0,\quad
 C_\gamma=\frac{2\gamma}{1-\gamma q_*/4}.
\]
For each fixed small $\eta$, all sufficiently large $R$ and finite
$B$, one has
\begin{align}\label{eq:s7-raw-ground-lower}
 \mathscr P_B-\gamma\mathscr D_B\geq{}&
 c_*\mathscr V_B+d_H\mathscr C_B
 -\frac{\gamma a_mh_0}{R}\mathscr M_B
 -C_\gamma\int\varphi W|\nabla_\Sigma\mathfrak m|^2\notag\\
 &-C_H\int_{t<a_*\tau}
                     \frac{\varphi\mathfrak m^2W}{\rho^2},
\end{align}
where $C_H=(d_H+\gamma M^2/4)(1+2/M)$.
All displayed constants are independent of small $\eta$;
the threshold for $R$ may depend on it.  The pointwise mask bound \eqref{eq:s7-mask-schur} below follows
under the other hypotheses alone; the condition on $c_0$ enters only
when estimating the radial coefficient.
\end{lemma}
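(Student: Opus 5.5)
The plan is to expand $\mathscr D_B$ by the ground-state identity \eqref{eq:s1-ground-state}, subtract $\gamma$ times it from $\mathscr P_B$, and show that the radial coefficient dominates everything pointwise after absorbing the mask gradient. First write
\[
 |\nabla(\mathfrak m\sqrt W)|^2=\mathfrak m^2|\nabla\sqrt W|^2
 +2\mathfrak m\sqrt W\nabla\mathfrak m\cdot\nabla\sqrt W
 +W|\nabla\mathfrak m|^2 .
\]
Apply Young to the cross term with weight $\gamma q_*/4$. This produces the factor $(1-\gamma q_*/4)^{-1}$ and hence the mask term $C_\gamma\int\varphi W|\nabla\mathfrak m|^2$; this is the pointwise mask bound. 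Next, $\nabla\sqrt W=\tfrac12\sqrt W(-\vartheta\nabla t/t+\lambda\nabla s)$ with $\nabla s=c\nabla\tau-b_L'\nabla t$. Its square is bounded by two pieces. The first is $\tfrac{\vartheta^2}{2t^2}|\nabla t|^2W$, which is absorbed by $\tfrac{3\vartheta W}{4t}|\nabla t|^2$ since $t\geq t_{\min}(R)\to\infty$. The second is $\tfrac{\lambda^2}{2}|\nabla s|^2W$. Because $\lambda\leq q_*$, on the region where $b_L'$ is bounded I bound it by $\tfrac{\lambda q_*}{2b_L'}$ times the completed square; otherwise ($b_L'=0$) I write $\nabla s=c\nabla\tau$, so $\lambda^2c^2|\nabla\tau|^2\leq\lambda^2c^2/\rho^2$. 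That last contribution is part of the radial $\rho^{-2}$ budget.

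Second, the potential term $\gamma\int\varphi\mathfrak m^2W(-\Delta h/h)$ is expanded by \eqref{eq:r9-radial-identities}. This gives $\gamma[a_m^2+a_m(a_m+2)T_\rho]W/\rho^2$ plus the mean term $\gamma a_m|H||X\cdot\nu|W/\rho^2$. Using $|X|\leq\rho$, $\rho\geq R$ and $|H|\leq h_0Q$, the mean term is at most $\gamma a_mh_0R^{-1}Q\,\mathfrak m^2\varphi W$, which is $\tfrac{\gamma a_mh_0}{R}\mathscr M_B$. The remaining term $-c^2\vartheta J_1|\nabla\tau|^2$ is bounded by \eqref{eq:r9-weight-ratios}, $J_1\leq CF_\tau/\tau$, so it is $o_R(1)F_\tau/\rho^2$ because $\tau\geq\log R$.

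Third, and this is the main obstacle, comes the coefficient bookkeeping against $\mathscr V_B$ and $\mathscr C_B$. On $\{t\geq a_*\tau\}$ the asymptotic $F_\tau/W=(1+o(1))c[t/(c\tau\vee b_L)]^\vartheta$ gives $F_\tau\geq(c_0-o(1))W$ when $b_L\leq c\tau$. When $b_L>c\tau$, I would try to show that the gain from $\lambda/b_L'$ or the $F$ term covers the deficit; if that fails, restrict to the region $b_L\leq c\tau$ (near $t\approx L$, where $f_*$ is tiny) and treat the rest as a lower-order term. With $F_\tau\geq c_0W$ in hand, $MF_\tau+2F_\tau T_\rho-\gamma(M^2/4)(1+2T_\rho/M)W\geq2d_H(1+2T_\rho/M)W/M$ up to $o(1)$. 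Then $d_H=\tfrac M2(c_0-\gamma M/4)$, after the $M/2$ normalization, yields $d_H\mathscr C_B$. The $mMF T_\rho$ term, reduced by the $\gamma\lambda^2c^2$ piece through $F/W\geq(q_*+\vartheta/t)^{-1}$, leaves $c_*\mathscr V_B$ with $c_*=\tfrac{mM}2(1-\gamma q_*/4)$. On $\{t<a_*\tau\}$ I only use $F_\tau\geq0$ and pay the full $\gamma$-potential plus $d_H$ charge, which is at most $C_H\varphi\mathfrak m^2W/\rho^2$. Every $o_R(1)$ term is absorbed for $R$ large depending on $\eta$. The constants are independent of $\eta$ because $\eta$ enters only through the supports.
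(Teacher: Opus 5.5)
Your proposal has a genuine gap in the absorption of $\gamma\mathfrak m^2|\nabla_\Sigma\sqrt W|^2$ by the gains in
$Z=\frac{\lambda}{b_L'}|\nabla_\Sigma s|^2+\frac{3\vartheta}{4t}|\nabla_\Sigma t|^2$.

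You bound $|\nabla\sqrt W|^2=\frac W4|\lambda\nabla s-\vartheta\nabla t/t|^2$ by the symmetric split $\frac W2(\lambda^2|\nabla s|^2+\vartheta^2|\nabla t|^2/t^2)$. This doubles the dominant piece. Write $\frac{\lambda^2}2|\nabla s|^2=\frac{\lambda b_L'}2\cdot\frac{\lambda}{b_L'}|\nabla s|^2$. Absorbing $\gamma$ times this into the gain requires $\gamma\lambda b_L'<2$, even before the Young factor from the mask cross term is included. On $\{s\le-1\}$ one has $\lambda=q_*$, and $b_L'=(b_L/t)^\vartheta\to1$ as $t/L\to\infty$. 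So your argument needs $\gamma q_*<2$, whereas the lemma assumes only $\gamma q_*<4$.

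This matters in practice. For the parameters used later, $\gamma q_*=\frac{363}{500}\cdot\frac{157}{50}\approx2.28$, so the pointwise mask bound \eqref{eq:s7-mask-schur}, and with it the lemma, is not established by your route. The split also contradicts your own Young weight $\gamma q_*/4$, which presupposes the sharp ratio.

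The missing idea is to weight Cauchy--Schwarz asymmetrically, putting essentially all the weight on the $\lambda$ piece because the $\vartheta/t$ piece is negligible. Using $|x+y|^2\le(a+b)(|x|^2/a+|y|^2/b)$ with $a=\lambda b_L'$ and $b=4\vartheta/(3t)$ gives, for large $R$,
\[
 \gamma|\nabla_\Sigma\log\sqrt W|^2\le\Bigl(\frac{\gamma\lambda b_L'}4+\frac{\gamma\vartheta}{3t}\Bigr)Z\le\mu_0Z,\qquad \mu_0=\tfrac12\bigl(1+\gamma q_*/4\bigr)<1 .
\]
Then apply $|g+v|^2\le|g|^2/(1-\mu_0)+|v|^2/\mu_0$ with $g=\nabla_\Sigma\mathfrak m$ and $v=\mathfrak m\nabla_\Sigma\log\sqrt W$. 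This yields exactly $C_\gamma=\gamma/(1-\mu_0)$.

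There are also smaller bookkeeping problems.

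\emph{The case $b_L>c\tau$.} You leave this open, but it is immediate. Since $b_L(t)\le t$ and $a_*<c$, on $\{t\ge a_*\tau\}$ one has $[t/(c\tau\vee b_L)]^\vartheta\ge(a_*/c)^\vartheta$. Then \eqref{eq:r9-weight-ratios} gives $F_\tau/W\ge c_0-o_R(1)$ on all of that set. Discarding a region instead would create an error term the lemma does not allow.

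\emph{The set $\{b_L'=0\}=\{t\le L\}$.} There $s=c\tau>1$, hence $\lambda=0$, so there is nothing to absorb. In any case, a multiple of $|\nabla_\Sigma\tau|^2=(1-T_\rho)/\rho^2$ could not be charged to $mMFT_\rho$.

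\emph{The origin of $c_*$.} It does not come from a $\lambda^2c^2$ piece. Since $\gamma a_m(a_m+2)=\gamma M/2+\gamma mM/4$, the $T_\rho$-potential splits into two parts:
\begin{itemize}
\item The part $\gamma\frac M2W$ is paid by $2F_\tau$, together with the constant $\gamma M^2W/4$ paid by $MF_\tau$. This gives $d_H\mathscr C_B$ on $\{t\ge a_*\tau\}$ and the $C_H$ term on its complement.
\item The part $\gamma\frac{mM}4W$ is paid globally by $mMF$ via $W\le(q_*+o_R(1))F$. This leaves $c_*\mathscr V_B$.
\end{itemize}

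With these corrections your outline coincides with the paper's proof.
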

\begin{proof}
Write $E_\gamma=F_\tau-\gamma MW/4$ and
\[
 Z=\frac{\lambda}{b_L'}|\nabla_\Sigma s|^2+
                   \frac{3\vartheta}{4t}|\nabla_\Sigma t|^2.
\]
The ground-state identity \eqref{eq:s1-ground-state} gives exactly
\begin{align}\label{eq:s7-raw-ground-identity}
 \mathscr P_B-\gamma\mathscr D_B={}&
 \int\frac{\varphi\mathfrak m^2}{\rho^2}
 \big[ME_\gamma+\{2E_\gamma+mM(F-\gamma W/4)\}T_\rho
                +\gamma a_mH(X\cdot\nu)W\big]\notag\\
 &-c^2\vartheta\int\varphi\mathfrak m^2
                  J_1|\nabla_\Sigma\tau|^2
 +\int\varphi\{\mathfrak m^2WZ-
                   \gamma|\nabla_\Sigma(\mathfrak m\sqrt W)|^2\}.
\end{align}
Thus $E_\gamma$ controls both the constant and trace coefficients.

\vskip0.12in

Weighted Cauchy--Schwarz yields
\[
 \gamma|\nabla_\Sigma\log\sqrt W|^2
 \leq\left(\frac{\gamma\lambda b_L'}4+
                   \frac{\gamma\vartheta}{3t}\right)Z.
\]
For large $R$ the factor on the right is at most
$\mu_0=(1+\gamma q_*/4)/2<1$, because $\lambda b_L'\leq q_*$.
Apply $|g+v|^2\leq |g|^2/(1-\mu_0)+|v|^2/\mu_0$ to
$g=\nabla_\Sigma\mathfrak m$ and
$v=\mathfrak m\nabla_\Sigma\log\sqrt W$ to obtain
\begin{equation}\label{eq:s7-mask-schur}
 \mathfrak m^2WZ-
 \gamma|\nabla_\Sigma(\mathfrak m\sqrt W)|^2
 \geq-C_\gamma W|\nabla_\Sigma\mathfrak m|^2.
\end{equation}
This proves the pointwise bound for arbitrary $c_0$.

The bounds $J_1\leq CF_\tau/\tau$ and
$|\nabla_\Sigma\tau|^2\leq\rho^{-2}$ replace the constant
radial coefficient by
$(M-O(1/\log R))F_\tau-\gamma M^2W/4$.
On $t\geq a_*\tau$, use
$F_\tau/W\geq c_0-o_R(1)$ to bound it below by $d_HW$,
and $2E_\gamma\geq2d_HW/M$ for large $R$.
On the complementary set, the remaining $F_\tau$ terms are
nonnegative; omitting them introduces an error at most
$(d_H+\gamma M^2/4)W(1+2T_\rho/M)$ relative to this lower
bound. This is bounded by the last term of
\eqref{eq:s7-raw-ground-lower} since $0\leq T_\rho\leq1$.
The global bound $W\leq(q_*+o_R(1))F$ supplies
$c_*\mathscr V_B$.  Finally the mean term has absolute value at
most $\gamma a_mh_0\mathscr M_B/R$, since
$|X\cdot\nu|/\rho^2\leq1/R$ and
$\psi^2=\varphi\mathfrak m^2W$.  These comparisons and
\eqref{eq:s7-mask-schur} prove the result at finite support.
\end{proof}

\vskip0.12in

\subsection{Localization and the physical test}
\label{sec:r11-interpolated-stability}

We now construct the ambient test that yields the radial stability
inequality. The same amplitude \eqref{eq:r9-common-amplitude} is used
in every estimate. In addition to the preceding local estimates, the construction uses the clean
adjacent atlas and moving-band geometry of
Lemmas~\ref{lem:r11-clean-window-geometry} and
\ref{lem:r11-moving-band-geometry}. The sharp estimates of
Corollary~\ref{cor:r13-master-consequences} and
Proposition~\ref{prop:r11-sharp-tail4} give, for logarithmic
first-neighbor gaps and their balanced bands,
\begin{equation}\label{eq:r11-switch-sharp-inputs}
 d=G_1\ell_i,\qquad I_P\geq47.999e^{-\sqrt2\ell_i},\qquad
 q(x)\leq4.001(e^{-\sqrt2s_i}+e^{-\sqrt2s_j}),\quad q=1-u^2.
\end{equation}
Here $I_P=\sigma^{-1}\int_{-P}^Pq^3D\,ds$ and
$\sigma=\int_{\mathbb R}G^2=4\sqrt2/3$.
Use the common normal window $P$ fixed before the moment argument,
chosen large enough for these estimates and the remaining-mass theorem.
Next fix a large depth constant $M_{\rm dep}$, then
$C_0>4M_{\rm dep}$. Let $\mu_\pm=e^{-\sqrt2\ell_\pm}$
for the first gap on each side when $\ell_\pm\leq C_0\log\rho$
in the buffered clean polynomial window, and zero on an absent or
unselected side. Define
\begin{equation}\label{eq:r12-common-bonus}
 b_R=\min(\mu_+,\mu_-),\qquad
 \mathscr N_B=\int b_R\psi^2.
\end{equation}
Adjacent graphs agree on overlaps, so the selection is measurable.
We use $b_R$ as a measurable weight in the remaining-mass estimate, which gives
\begin{equation}\label{eq:r12-global-positive-improved-density}
 |H|\leq B_R:=.355I_P-24\sqrt2b_R,\qquad B_R\geq0.
\end{equation}
To combine the switching estimate with the reach inequality, define the positive density
\begin{equation}\label{eq:s8-main-positive-mass}
 k_{\rm sw}=25/.998,\quad J_R=I_P-k_{\rm sw}b_R,\quad
 \mathscr J_B=\int J_R\psi^2=\mathscr L_B-k_{\rm sw}\mathscr N_B.
\end{equation}
Since $b_R\leq.355I_P/(24\sqrt2)$ and
$.355k_{\rm sw}<24\sqrt2$,
\begin{equation}\label{eq:s8-main-positive-density}
 .737I_P\leq J_R\leq I_P,\qquad |H|\leq B_R\leq.355J_R.
\end{equation}
These bounds hold also at clean receiving feet: a ball of radius
$\rho^{\omega_{\rm cl}}$, with
$0<\omega_{\rm cl}<\min(\alpha,\beta_0)/2$, avoids $Z_R$ and
has $D\leq C\rho^{-\alpha}<\rho^{-\omega_{\rm cl}}$.
Thus the same selected window applies there. This density will be
used directly in the selected positive-foot transfer; the two terms in its definition are kept separate when estimating
the ambient test.

\vskip0.12in

\subsubsection{Cap and physical-cutoff estimates}

We first prove the cap and physical-cutoff estimates independently of
the compact interpolated test. Throughout, fixed normal depth gives $0\leq I_P\leq2P/\sigma$.

\begin{lemma}
\label{lem:r11-switch-cap-payment}
On $r_o\geq T/2$ let $\mathcal C=\{d\geq\epsilon r_o/4\}$.
For each fixed polynomial power $N$,
\begin{equation}\label{eq:r11-cap-master}
 \int_{\mathcal C\cap\Sigma_g}\varphi W(1+r_o+L+\tau)^N
       \leq C_{N,\eta}e^{-a_\eta R}.
\end{equation}
Consequently the cap part of $\mathscr L_B$, and the cap part of
$\int|\nabla\psi|^2$, have this bound, uniformly in $B,x_c$.
Moreover every switching band with at least one foot in $\mathcal C$
has an exponentially small switching contribution, including the term
with the other foot's amplitude. The mass estimates below use the
boundedness of $I_P$ itself.
\end{lemma}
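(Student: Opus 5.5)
The plan is to exploit that on $\mathcal C$ the reach is comparable to the physical radius, $t=\widehat d\geq\min(d,\epsilon r_o)\geq\epsilon r_o/4\geq\epsilon T/8=\epsilon\eta R/8$. This forces the weight $W=t^{-\vartheta}f_*(s)$ to be exponentially small in $t$ once $t$ is large compared with $L+\tau$. Recall $s=c\tau-b_L(t)$ and $f_*(s)=C_*e^{q_*s}$ for $s\leq-1$. On $\mathcal C$ we have $t\gtrsim\epsilon\eta R\gg L\geq c\sqrt R$, so $b_L(t)\geq t-CL^{\vartheta}t^{\vartheta}\cdots$. More simply, $b_L(t)\geq(t^{1-\vartheta}-L^{1-\vartheta})^{1/(1-\vartheta)}\geq t/2$ once $t\geq C_\vartheta L$, and this holds on $\mathcal C$ because $L/R\to0$ by \eqref{eq:r9-envelope-facts}.

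The key pointwise bound is then $W\leq C e^{q_*(c\tau-t/2)}$. The factor $e^{q_*c\tau}=\rho^{q_*c}$ is a fixed polynomial in $\rho\leq C(1+r_o+|x_c|)$. The dependence on the observation center is the one delicate point. I will handle it by splitting each term as $e^{-q_*t/4}\cdot e^{-q_*t/4}\rho^{q_*c}$ and using $\rho\leq R+|z-x_c|$ together with $\varphi=\rho^{-(m-2)}\leq R^{-(m-2)}$. On $\mathcal C$ only $r_o$ enters the exponential, and the polynomial factors $(1+r_o+L+\tau)^N$ and $\rho^{q_*c}$ are in $|z|$, $R$ and $|x_c|$. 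To remain uniform in $x_c$ I will not integrate against a center-dependent measure. Instead I will bound the integrand pointwise by $C_N(1+r_o)^{N'}\rho^{q_*c}e^{-q_*\epsilon r_o/8}$, and absorb $\rho^{q_*c}$ using the remaining exponential in $t$ only if $\rho\lesssim e^{ct}$. That fails for far observers with $|x_c|\gg e^{r_o}$.

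Since $\tau$ then appears in $s$, I will treat this case separately. When $c\tau>b_L(t)+1$ we have $f_*\leq1$, so $W\leq t^{-\vartheta}$. The required smallness must then come from $\varphi=\rho^{-(m-2)}$, which is tiny when $\rho$ is exponentially large compared with $r_o$. Here I will use $\tau\geq\log\rho$ and $t\lesssim\tau$, so that $\varphi\leq e^{-(m-2)\tau}\leq e^{-c' t}$. This gives the same exponential decay in $r_o$, now with a polynomial factor in $\tau$ that is again absorbed.

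In both cases the result is a pointwise bound $\varphi W(1+r_o+L+\tau)^N\leq C_N e^{-c_1 r_o}$ on $\mathcal C\cap\{r_o\geq T/2\}$, with $c_1$ depending on $\epsilon,q_*,c,m$. Integrating against good area, using the polynomial area growth $\mathcal H^m(\Sigma_g\cap B_r(0))\leq C(1+r)^{m+1}$ from Lemma~\ref{lem:r9-zero-baseline} centered at the fixed origin, gives $\int_{r_o\geq T/2}(1+r_o)^{m+1}e^{-c_1r_o}\leq C e^{-c_1\eta R/4}$. This proves \eqref{eq:r11-cap-master} uniformly in $B$ and $x_c$, since $\mathfrak m\leq1$.

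For the consequences, the cap part of $\mathscr L_B=\int I\psi^2$ uses $0\leq I_P\leq2P/\sigma$ and $\psi^2\leq\varphi W$. For $\int|\nabla\psi|^2$ I will expand $\nabla\psi$ into mask, $h$ and $\sqrt W$ derivatives. Each of these is bounded by $\varphi W$ times a fixed polynomial in $(1+r_o+L+\tau)$ and in $|\nabla t|$. The gradient $|\nabla t|$ is controlled by \eqref{eq:r7-capped-gradient} together with $K\leq C(1+R)$, or is at most $\epsilon$ on the cap branch. For $|\nabla\log W|$ I will use $\lambda\leq q_*$, $b_L'\leq1$ and $|\nabla\tau|\leq\rho^{-1}$. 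For a switching band with one foot $z_i\in\mathcal C$, both feet lie within $O(\log\rho)$ of each other by Lemma~\ref{lem:r11-moving-band-geometry}. The reach at the other foot satisfies $d\leq G_1(\ell+o(1))$ and $t$ changes by at most the logarithmic gap, so its amplitude obeys the same pointwise bound up to a polynomial factor; the ambient contribution carries bounded $q^2$ and logarithmic band volume. The main obstacle I expect is the far-observer regime, where uniformity in $x_c$ must come from $\varphi$ rather than from $W$.
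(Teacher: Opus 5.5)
\textbf{Main estimate and mass/gradient consequences.} Your proof of \eqref{eq:r11-cap-master} is essentially the paper's argument. On $\mathcal C$ one has $\epsilon r_o/4\leq t\leq\epsilon r_o$ and $t/L\to\infty$, hence $b_L(t)\geq t/2$. If $c\tau\leq t/4$, the tail of $f_*$ gives $W\leq Ce^{-q_*t/4}$; otherwise $\tau\gtrsim t\gtrsim r_o$ and $\varphi=e^{-(m-2)\tau}$ supplies the decay. Summation over origin-centred annuli then gives uniformity in $x_c$. Your two thresholds, $\rho\lesssim e^{ct}$ and $c\tau>b_L(t)+1$, do not literally complement each other. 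This is only cosmetic, because your $\varphi$-argument covers all of $c\tau>t/4$. The mass and gradient consequences also match the paper. For $|\nabla t|$ the unconditional coarse bound \eqref{eq:coarse-actual-reach-gradient} already suffices, so you do not need the guarded estimate \eqref{eq:r7-capped-gradient}.

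\textbf{Gap in the switching-band assertion.} This part is not established by what you wrote. You apply the competitor bound $d\leq G_1(\ell+o(1))$ at the \emph{other} foot $z_j$. That only bounds its reach from above. You then assert that $t$ is comparable at the two feet and that the pointwise bound transfers up to a polynomial factor, and neither holds:
\begin{itemize}
\item $z_j$ may have a much closer neighbour on its far side, so $t(z_j)$ can be far smaller than $t(z_i)$, and $W(z_j)$ need not be exponentially small.
\item This can happen precisely when your bound at $z_i$ came from the smallness of $W(z_i)$ (the case $c\tau\leq t/4$). In that case the ratio $W(z_j)/W(z_i)$ can be a positive power of $\rho$. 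That is not a polynomial in $1+r_o+L+\tau$, and for far observers it is not absorbed by $e^{-cr_o}$.
\item You have also replaced $q^2$ by an $O(1)$ bound, so nothing else supplies smallness for the other foot's term.
\end{itemize}
(Minor: the bands have bounded normal width, so band volume per unit base area is bounded, not logarithmic.)

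\textbf{The missing idea.} Use the chord competitor at the foot that lies in $\mathcal C$:
\[
 \epsilon r_o(z_i)/4\leq d(z_i)\leq G_1(\ell+o(1)),
\]
so $\ell\geq c_\epsilon r_o(z_i)$. Together with $r_o(z_j)\leq r_o(z_i)+\ell+o(1)$ this gives $\ell\geq c_\epsilon\max\{r_o(z_i),r_o(z_j)\}$. The phase bound in the fixed-width band then gives $q^2\leq Ce^{-\sqrt2\ell}\leq Ce^{-ar_o(z)}$ for either base $z$, while $\psi(z)^2\leq\varphi(z)W(z)$. To finish:
\begin{itemize}
\item bound the amplitude difference by $2\psi(z_i)^2+2\psi(z_j)^2$;
\item parametrize each term by its own normal rays (at most two bands per base, bounded width, Jacobian $1+o(1)$);
\item sum over origin-centred annuli.
\end{itemize}
Equivalently, since $\ell\lesssim\log\rho$ inside the depth window, $\ell\gtrsim r_o$ forces $\tau\gtrsim r_o$, so $\varphi$ is exponentially small at both feet. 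Either way, the indispensable inequality is $\ell\gtrsim r_o$, which your proposal never derives.
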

\begin{proof}
On $\mathcal C$, $\epsilon r_o/4\leq t\leq\epsilon r_o$ and
$t/L\geq c\epsilon\eta\sqrt{K(R)}\to\infty$. Thus
$b_L(t)\geq t/2$ for all sufficiently large $R$.
If $c\tau\leq t/4$, the exponential tail of $f_*$ gives
$W\leq Ce^{-q_*t/4}$; all polynomial factors are absorbed by this
exponential, since $L\leq C_\eta r_o$ and $\tau\leq C t$.
Otherwise $\tau\geq t/(4c)\geq c_\epsilon r_o$, and
$\varphi=e^{-(m-2)\tau}$ absorbs every fixed polynomial in
$r_o,L,\tau$, leaving $Ce^{-a r_o}$. Uniform unit charts give
$\mathcal H^m(\Sigma_g\cap B_s(0))\leq C(1+s)^{m+1}$;
physical annulus summation proves \eqref{eq:r11-cap-master}.
This bound is uniform in arbitrary translating observers.

\vskip0.12in

On a noncap branch with $t\leq\epsilon r_o$ the coarse reach-gradient
bound gives $|\nabla t|\leq C(1+r_o)^2$; on the smooth cap branch
$|\nabla t|\leq\epsilon$. Since
$|\nabla\log W|\leq C(|\nabla t|+1)$ on the region in question,
and all mask derivatives are bounded, $|\nabla\psi|^2$ is bounded by
$C\varphi W$ times a fixed polynomial. This proves the gradient claim;
the mass claim uses only the boundedness of $I_P$.

\vskip0.12in

For the last claim fix a band between two feet $z_i,z_j$, and write
$\ell=\ell_{ij}$ for the gap measured from a fixed base on its normal
ray. If $z_j\in\mathcal C$, the chord competitor in
\eqref{eq:r11-moving-foot-reach} gives
$\ell+o(1)\geq d(z_j)/G_1\geq c_\epsilon r_o(z_j)$.
Also $r_o(z_i)\leq r_o(z_j)+|z_i-z_j|\leq r_o(z_j)+\ell+o(1)$.
Thus $\ell\geq c_\epsilon\max(r_o(z_i),r_o(z_j))$.
In the fixed-width band the sharp phase bound, or its coarse phase
consequence, gives $q^2\leq Ce^{-\sqrt2\ell}$. Bound the amplitude
difference by $2\psi(z_i)^2+2\psi(z_j)^2$, and parametrize each term by
its own normal rays. Each nonzero base has $|z|\geq T$, at most two
bands, bounded band width, and normal Jacobian $1+o(1)$. Its contribution is
therefore at most $C\varphi(z)W(z)e^{-a r_o(z)}$.
Physical annulus summation proves the claimed uniform exponential
bound. This estimates both the uncapped and the capped foot.
\end{proof}

\begin{lemma}
\label{lem:r11-switch-physical-difference}
In the clean, noncap bands, replacing two physical factors
$\zeta_T(z_i)^p,\zeta_T(z_j)^p$ by a common assigned factor introduces an error
at most $C\eta^{m-2}K(R)$. The estimate is uniform in $B,x_c$.
\end{lemma}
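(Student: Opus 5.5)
The plan is to reduce everything to the transition annulus $A_{T,2T}(0)$ and then bound the relevant surface integral by the energy in a ball of radius $O(T)=O(\eta R)$, using the density envelope. The difference $\zeta_T(z_i)^p-\zeta_T(z_j)^p$ vanishes unless at least one foot lies in $B_{2T}(0)\setminus B_T(0)$, or unless the band straddles the annulus.

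\textbf{Step 1: pointwise bound on the difference.} On a clean, noncap band the gap satisfies $\ell=\ell_{ij}\leq C_0\log\rho$ by Lemma~\ref{lem:r11-moving-band-geometry}, so $|z_i-z_j|\leq\ell+o(1)$. Since $|\nabla\zeta_T^p|\leq CT^{-1}$, the difference is at most $C\log\rho/T$. It is supported where one foot lies within $C\log\rho$ of $A_{T,2T}$.

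\textbf{Step 2: form of the switching error.} The error is bounded by integrals of $q^2$, or of $G^2$ on lifted rays, times $h^2W m_H^2m_B^2$, times the square (or first power, in cross terms) of the cutoff difference. The cross terms multiply $\psi^2$ by the factor difference, so I will bound the whole error by
\[
 \int_{\Sigma_g\cap A'}\varphi W\,dS,
\]
where $A'$ is a $C\log R$ enlargement of $A_{T,2T}$. This uses bounded band width and normal Jacobians $1+o(1)$, together with the band parametrization through each foot.

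\textbf{Step 3: estimate that integral.} On $A'$ we have $\rho\geq R$, so $\varphi\leq R^{-(m-2)}$. The weight satisfies $W\leq t^{-\vartheta}\leq1$, since $t\geq t_{\min}(R)\geq1$ on the support. The good area in $B_{3T}(0)$ is at most $C\,E(u;B_{3T+1}(0))$ by Lemma~\ref{lem:r9-zero-baseline}, and this is at most $CK(R)(3T)^m$ by Lemma~\ref{lem:r9-density-baseline}, since $3T\leq R$ for small $\eta$. Multiplying gives
\[
 C\,R^{-(m-2)}K(R)\,\eta^mR^m = C\eta^mR^2K(R).
\]
This is too large by a factor $R^2$, so it is not enough on its own.

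\textbf{Step 4: extra decay (the main obstacle).} I must use the amplitude being squared against $\rho^{-2}$, or a factor $T^{-2}$ from the cutoff difference. The honest source is Step 1: squaring the difference gives $(\log\rho)^2T^{-2}$. This yields
\[
 C\eta^{m-2}K(R)(\log R)^2 .
\]
To remove the logarithm I would use that the relevant $q^2$ or $G^2$ weight decays like $e^{-\sqrt2 s}$ along the band. Only depths $s=O(1)$ carry mass, and these see a cutoff difference of order $\ell/T$ times an amplitude comparable on both feet.

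More precisely, I would expand the difference as a gradient along the chord, $|\zeta_T^p(z_i)-\zeta_T^p(z_j)|\leq C\ell/T$. I would then pair $\ell^2e^{-\sqrt2\ell}$-type weights with the bound $q^2\leq Ce^{-\sqrt2\ell}$ from \eqref{eq:r11-switch-sharp-inputs}, so that the band mass times $\ell^2$ stays bounded. After this the error is $C\eta^{m-2}K(R)$, uniformly in $B$ and $x_c$, because no observer weight enters beyond $\rho\geq R$.

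If the first-power cross terms resist this treatment, the fallback is Young's inequality: absorb them into a small multiple of the principal switching term plus the squared difference. This is the step I expect to be delicate.
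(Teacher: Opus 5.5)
Your final mechanism is the paper's: square the cutoff difference to get $CT^{-2}\ell_{ij}^2$, use the band phase bound $q^2\leq Ce^{-\sqrt2\ell_{ij}}$ so that $\ell_{ij}^2e^{-\sqrt2\ell_{ij}}$ stays bounded, use $\varphi\leq R^{-(m-2)}$ and $W\leq1$, and bound the good area near the core by $CK(R)T^m$. This gives $CT^{-2}R^{-(m-2)}K(R)T^m=C\eta^{m-2}K(R)$. There is, however, one gap, and it sits exactly at the uniformity in $x_c$ that the statement asserts.

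\textbf{The gap.} In Step~2 you place all assigned feet in a $C\log R$ neighbourhood of $A_{T,2T}(0)$, and you justify uniformity by saying that no observer weight enters beyond $\rho\geq R$. The observer does enter, through the gap bound. Gaps are only bounded by $C_0\log\rho$, and near the physical core $\rho\approx\sqrt{R^2+|x_c|^2}$, which is unbounded as $x_c$ varies. So a band whose cutoff values differ can pair a foot in $B_{2T}(0)$ with an assigned foot arbitrarily far out. Your area bound over $B_{3T}(0)$ then does not cover the integral over assigned feet.

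The paper closes this with one observation. If the assigned foot has $|z_i|>4T$ and the cutoff values differ, then $|z_j|<2T$. Hence $\ell_{ij}+o(1)\geq|z_i|/2$, and so $\ell_{ij}^2e^{-\sqrt2\ell_{ij}}\leq Ce^{-a|z_i|}$. The error is therefore at most
\[
 CT^{-2}R^{-(m-2)}\Bigl[\mathcal H^m(\Sigma_g\cap B_{4T}(0))+\int_{|z|>4T}e^{-a|z|}\,dS_z\Bigr].
\]
The centred envelope $CK(8T)(1+s)^m[1+(1+s)/(8T)]$ bounds the second term by $CK(8T)T^me^{-a'T}$. Alternatively, you could parametrize each such band by the normal rays of its foot lying in $B_{2T}(0)$. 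Such a foot exists whenever the cutoff values differ, and your integrand bound does not depend on which foot is used. With either addition your argument is complete.

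\textbf{Two clarifications of Step~4.}
\begin{enumerate}
\item The heuristic that only depths $s=O(1)$ carry mass is not the mechanism. Band points lie at depth about $\ell_{ij}/2$ from both sheets. The sharp tail $q\leq4.001(e^{-\sqrt2s_i}+e^{-\sqrt2s_j})$, together with $s_i+s_j=\ell_{ij}+o(1)$ and $|s_i-s_j|\leq6$, gives $q^2\leq Ce^{-\sqrt2\ell_{ij}}$ uniformly on the band. That is what makes the band mass times $\ell_{ij}^2$ bounded.
\item No first-power cross terms and no amplitude comparability arise. The Young splitting \eqref{eq:r11-switch-cutoff-difference} already isolates the squared cutoff difference with coefficient $C_{\delta_s}V_i^2$, and $V_i^2=\varphi(z_i)W(z_i)\leq\varphi(z_i)$.
\end{enumerate}
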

\begin{proof}
The squared difference is at most
$CT^{-2}|z_i-z_j|^2\leq CT^{-2}(\ell_{ij}+o(1))^2$.
By the band phase bound and normal parametrization, the error
assigned to one fixed base is bounded by
\[
 CT^{-2}\varphi(z)\ell_{ij}^2e^{-\sqrt2\ell_{ij}}
 \mathbf1_{\{\zeta_T(z_i)\ne\zeta_T(z_j)\}}.
\]
Here $W\leq1$ on noncap clean bases for large $R$, and at most two
bands contribute per base. If $|z_i|>4T$ and the cutoff values differ,
then $|z_j|<2T$ and $\ell_{ij}+o(1)\geq|z_i|/2$.
Consequently the integral is at most
\begin{equation}\label{eq:r11-switch-physical-located}
 CT^{-2}R^{-(m-2)}\left[
  \mathcal H^m(\Sigma_g\cap B_{4T}(0))+
  \int_{|z|>4T}e^{-a|z|}\,dS_z\right].
\end{equation}
Uniform good charts and the energy lower bound in fixed-radius balls centered at zeros give
area at most $CK(8T)T^m$ in the first term. Centered monotonicity
below $8T$ and ambient covering above it give the physical envelope
$CK(8T)(1+s)^m[1+(1+s)/(8T)]$. It bounds the second term by
$CK(8T)T^me^{-a'T}$. Since $8T\leq R$, the result is at most
$CR^{-(m-2)}T^{m-2}K(R)=C\eta^{m-2}K(R)$.
The cutoff difference is therefore controlled by the physical-core term
in \eqref{eq:r11-switch-physical-located} and its exponentially decaying tail.
\end{proof}

\vskip0.12in

\subsubsection{Radial localization from the moment estimate}

\begin{lemma}
\label{lem:s6-radial-localization}
Use the fixed local data and moving family of
Section~\ref{sec:r9-radial-core}, the moment estimate
\eqref{eq:s8-density-envelope-moment}, and the cap and physical-cutoff
estimates above.  Put $p=9/4$, $T=\eta R$,
$L=R/\sqrt{K(R)}$, and $\mathscr A=K(R)L^{1-\vartheta}$ along a
putative sequence with $K(R)\to\infty$.  In the radial formulas
use the profile slope $q_*$.  Suppose
\[
 \begin{gathered}
 q_*<p\kappa,\qquad q_*(2/\kappa-c)>1,\\
 p\kappa a_*<\min\{2p-1,m\},\qquad
 b_p=p\alpha+(m+1)\beta_0<\min\{2p-3,m-2\}.
 \end{gathered}
\]
with $0<a_*<c$ and the other fixed moving-weight parameters as in
Section~\ref{sec:r9-radial-core}.
Then the estimates for the rough set, physical cutoff, cap branch,
large-$at$ terms, small-reach deficits, selected positive feet and outer cutoff
hold for the same amplitude and masks.  In particular they give
\eqref{eq:r13-p-rough-count}, \eqref{eq:r13-p-large-at-payment},
\eqref{eq:r13-p-outer-mass}, and \eqref{eq:r13-p-physical-foot}.
After reserving the mean-curvature terms proportional to the interaction
mass for absorption at finite support, the remaining localization errors
are bounded by
\begin{equation}\label{eq:s6-radial-localization-ledger}
 C\eta^{m-2}\mathscr A
 +C\eta^{m-3}\mathscr A/R
 +C\eta^{m-2p}\mathscr A/K(R)^{p-1}
 +o_{R;\eta}(\mathscr A)+e_{R,\eta,x_c}(B).
\end{equation}
The leading $C$ is independent of small fixed $\eta$;
the little-oh is uniform in $x_c,B$; and $e(B)\to0$ at fixed
$R,\eta,x_c$. All interaction masses in these estimates carry the
finite outer cutoff.
\end{lemma}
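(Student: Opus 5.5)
The plan is to reduce every localization error to one of three scale-weighted integrals that the moment estimate \eqref{eq:s8-density-envelope-moment} controls, and then to convert these into powers of $\eta$, $R$ and $K(R)$ using the envelope facts \eqref{eq:r9-envelope-facts}. We apply Corollary~\ref{cor:s8-density-envelope-moment} with $T=\eta R$ and $S=R$ (so $S\geq16T$ for small $\eta$). Lemma~\ref{lem:r9-density-baseline} gives $K(8T)\leq K(R)$, and the moment bound becomes
\[
 \int\omega_p(a^{2p}+D^p+\widehat E^p)+N_p\leq C K(R)R^{m-2p}(1+\eta^{m-2p}),
\]
with the $\eta^{m-2p}$ factor coming from the core term. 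The radial weight $\varphi W\leq\rho^{-(m-2)}W$ must then be compared with $\omega_p$. Since $\chi_R^2$ and $\rho^{-(m-2)}$ are comparable on $B_{CR}(x_c)$, a dyadic decomposition in $\rho$ handles the rest: on $\rho\simeq2^jR$ we use $K(2^jR)\leq C2^jK(R)$, and the factor $\rho^{-(m-2)}$ sums because $2p-1-\vartheta<m-2+\ldots$. The required strict inequalities are exactly $p\kappa a_*<\min\{2p-1,m\}$ and $b_p<\min\{2p-3,m-2\}$.

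The main steps are as follows.

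\begin{enumerate}
\item \textbf{Rough set.} Count the $\rho^{\beta_0}$-collar of $Z_R$. Each point of $Z_R$ has $D\geq\rho^{-\alpha}$, so Chebyshev with $D^p$ bounds the $\omega_p$-mass of $Z_R$ by $\rho^{p\alpha}$ times the moment. The collar volume adds $\rho^{(m+1)\beta_0}$ through the unit-ball covering \eqref{eq:r9-collar}. Multiplying by $\sup t^{2p-1}W$ from \eqref{eq:r9-weight-ratios} gives a bound $R^{b_p}K(R)R^{m-2p}(L+\tau)^{2p-1-\vartheta}R^{-(m-2)}$. This is $o(\mathscr A)$ because $b_p<2p-3$.
\item \textbf{Physical cutoff.} Apply Lemma~\ref{lem:r11-switch-physical-difference} and the derivative bound $|\nabla\zeta_T^p|\leq CT^{-1}$ on $\mathcal A_T$. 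Together with $\varphi\leq R^{-(m-2)}$, the area bound $K(8T)T^m$, and $W\leq CL^{-\vartheta}\cdots$, these give the $\eta^{m-2}\mathscr A$ and $\eta^{m-3}\mathscr A/R$ terms.
\item \textbf{Cap branch.} This is exponentially small by Lemma~\ref{lem:r11-switch-cap-payment}.
\item \textbf{Large-$at$ and small-reach deficits.} Where $at\geq c_*$, write $a^2tW\leq a^{2p}t^{2p-1}W$ and apply the moment bound with the $t^{2p-1}W$ supremum. Where $t<a_*\tau$, use $W\leq1$ and $\widehat E^p\geq e^{-p\kappa a_*\tau}=\rho^{-p\kappa a_*}$; the condition $p\kappa a_*<\min\{2p-1,m\}$ makes the dyadic sum converge and yields $C\eta^{m-2p}\mathscr A/K(R)^{p-1}$. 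The conditions $q_*<p\kappa$ and $q_*(2/\kappa-c)>1$ govern the region $t\gtrsim c\tau$, where $f_*$ decays; there $W$ is dominated by $\widehat E^{q_*/\kappa}$, and Hölder against $\widehat E^p$ gives the bound.
\item \textbf{Positive feet.} Transfer via Corollary~\ref{cor:r4-weighted-foot-transfer} with inflation $e^{2L/R}\to1$, keeping receiving order so that $W(t_z)\leq W(t_y)$ up to the moving factor. The core-landing pairs give the located error \eqref{eq:r4-core-foot-located}, of order $\eta^{m-2}\mathscr A$.
\item \textbf{Outer cutoff.} The $m_B$ derivatives are $O(B^{-1})$ on $B_{2B}$, so they contribute $e(B)\to0$ by exponential decay of $W$ in $t$ against the polynomial area growth.
\end{enumerate}

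I expect the main obstacle to be the bookkeeping in step 4. The moving family $b_L$ means the profile $W$ is not monotone in $t$ against $\widehat E^{p-1}$ uniformly in $\tau$, so the Hölder pairing must be done separately on $\{c\tau\geq b_L(t)\}$ and its complement, using the concentration asymptotic for $F_\tau/W$. I would first verify the dyadic exponent arithmetic at $m=8$, $p=9/4$, where the margins are thinnest.
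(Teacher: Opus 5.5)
Your decomposition matches the paper's (rough masks, physical cutoff, cap, large-$at$ and small-reach deficits, positive feet, outer exhaustion). However, the outer-cutoff step fails as stated, and that step is where the hypotheses $q_*<p\kappa$ and $q_*(2/\kappa-c)>1$ are really needed.

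At fixed $R$ the weight $W=t^{-\vartheta}f_*(c\tau-b_L(t))$ does not decay in $\rho$. Wherever $b_L(t)\le c\tau-1$, in particular for every $t\le L$, one has $f_*=1$ and $W=t^{-\vartheta}$. Nothing forces $t$ to grow quantitatively with $\rho$ on the support of $m_H$. Since $K$ is assumed unbounded, the only a priori area bound on a shell $\rho\asymp B$ is the unconditional $O(B^{m+1})$. So a term such as $\int\varphi W|\nabla m_B|^2$, with $\varphi\asymp B^{-(m-2)}$ and $|\nabla m_B|\le C/B$, is only $O_R(B)$. The terms of $\int F\Delta_\Sigma(\varphi\mathfrak m^2)$ in which derivatives fall on $m_B$ (with $F\le C_R(1+\log B)W$) are no better. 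Even an $O(B^m)$ area bound would not make them vanish, and exponential decay of $W$ in $t$ does not help.

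What is needed is the moment estimate on each distant shell $\rho\asymp s$. Apply it with auxiliary physical radius $s/16$ and observer radius $8s$, so that the non-decaying inner-core remainder never enters; this matters when $m-2p+1<0$. It gives $\int(\widehat E^p+D^p)\le C_Rs^{m-2p+1}$ there. Combined with $W\le C_Rs^{cq_*}\widehat E^{\,r}$, where $r=q_*(1-\delta)/\kappa<p$, H\"older yields \eqref{eq:r13-p-outer-mass} with $\varepsilon'=q_*[2(1-\delta)/\kappa-c]-1>0$. The outer mean term is handled by $(\int D^p)^{1/p}(\int W^{p/(p-1)})^{(p-1)/p}$. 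You instead assigned the two $q_*$ conditions to a bulk region $t\gtrsim c\tau$, where no such argument is needed: once $t>M_0(L+\tau)$, the exponential profile tail absorbs any fixed power of $\rho$ against unconditional area.

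Several other steps also need repair.

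\emph{Step 1.} Multiplying the rough count by $\sup t^{2p-1}W\le C(L+\tau)^{2p-1-\vartheta}$ gives, after division by $\mathscr A$, about $R^{b_p+2-2p}L^{2p-2}=R^{b_p}K(R)^{1-p}$, which need not tend to zero. The rough-mask terms only carry $\varphi(F+F_\tau)\le C\varphi(L+\tau)^{1-\vartheta}$ and bounded factors such as $W|\nabla\log W|^2+W|\nabla t|^2/t$. With shell moments at observer scale $8s_j$ and $K(s_j)\le C2^jK(R)$, the ratio $2^{3-2p+b_p}$ then gives $o_{R;\eta}(\mathscr A)$.

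\emph{Step 2.} The term $C\eta^{m-3}\mathscr A/R$ comes from the mean-curvature part of $|\Delta_\Sigma\zeta_T^{2p}|\le C(T^{-2}+|H|/T)$. It requires $\int_{A_{T,2T}}|H|\le C\int D\le CK(R)T^{m-2}$, obtained by H\"older from the annular moment bound (observer $0$, radius $8T$, core $T/32$). A gradient bound on $\zeta_T^p$ cannot produce it. Also, Lemma~\ref{lem:r11-switch-physical-difference} concerns the ambient test, not the radial form.

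\emph{Step 5.} \eqref{eq:r4-core-foot-located} is a fixed-core statement whose constants depend on $T$ through $\min_{\overline B_T}D$, so it gives nothing at $T=\eta R$. The physical mismatch must be estimated directly from $\zeta_T(z)^{2p}\le(1+v)\zeta_T(y)^{2p}+C\min\{1,(|z-y|/T)^{2p}\}$, $|z-y|\le t(z)$, $H_{N_y,+}\le C/t(z)$ and $\sup t^{2p-1}W$. Do this after discarding $t>M_0(L+\tau)$, and handle rough receivers and receivers with large $a(y)t$ separately. In the paper this computation, not the small-reach deficit (which is $o_{R;\eta}(\mathscr A)$), produces the term $C\eta^{m-2p}\mathscr A/K(R)^{p-1}$.
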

\begin{proof}
\emph{Shell estimates and rough masks.}
For $s_j=2^jR$, apply \eqref{eq:s8-density-envelope-moment} with auxiliary
physical radius $T/32$ and observer scale $8s_j$.  On the support of the physical cutoff this gives the unweighted shell moment and bad-net bound
\[
 V_{p,j}=C_p\{K(s_j)s_j^{m-2p}
       +K(R)T^{m-2p}\chi_{8s_j}(0)^2\}.
\]
A separated rough center with $D\geq\rho^{-\alpha}$ either
belongs to a bounded-multiplicity enlargement of the bad net or
has a fixed good disk contributing $cs_j^{-p\alpha}$ to
$\int D^p$.  Its count is at most $Cs_j^{p\alpha}V_{p,j}$.
Enlargement to radius $Cs_j^{\beta_0}$ therefore has area and
ambient volume at most
\begin{equation}\label{eq:r13-p-rough-count}
 Cs_j^{b_p}V_{p,j}
 +Cs_j^{(m+1)\beta_0}
                 \mathbf1_{s_j\geq cT^{1/\beta_0}},
 \qquad b_p=p\alpha+(m+1)\beta_0.
\end{equation}
Choose $\alpha,\beta_0>0$ so that
$b_p<\min\{2p-3,m-2\}$.  Multiplying this count by
$s_j^{-(m-2)}(L+\log s_j)^{1-\vartheta}(1+\log s_j)^N$
and summing gives $o_{R;\eta}(\mathscr A)$: the main shell
ratio is $2^{3-2p+b_p}<1$, the physical ratio is
$2^{-(m-2)+b_p}<1$, and the initial normalized factor is
$O_\eta(R^{2-2p+b_p}(1+\log R)^N)$.  The origin-core halo
starts at $T^{1/\beta_0}$ and has the negative exponent
$-(m-2)/\beta_0+m+1$.  This bounds the rough-mask derivatives, their mixed terms, and
the rough receiving collars.  The coarse reach
gradient is subpolynomial in $t$. Together with $W\leq t^{-\vartheta}$
and the fixed logarithmic slope, this gives
\[
 W|\nabla\log W|^2+W|\nabla t|^2/t+W|\nabla t|\leq C,
 \qquad |\nabla F|\leq C\{1+(L+\tau)^{1-\vartheta}/\rho\}.
\]
Also $|\Delta_\Sigma m_H|\leq C$ on good zeros. Hence first,
second and mixed mask derivatives are bounded by the displayed
weighted count. Each moment estimate is applied outside its own
physical cutoff.

\emph{Physical derivatives and cap.}
On $A_{T,2T}(0)$ apply the moment input with observer $0$,
observer radius $8T$ and auxiliary physical core $T/32$. The
observer weight is bounded below there, so it gives $\int D^p\leq CK(R)T^{m-2p}$ and area
$CK(R)T^m$.  H\"older implies
$\int |H|\leq C\int D\leq CK(R)T^{m-2}$.
The exact surface Laplacian of $\zeta_T^{2p}$ is bounded by
$C_p(T^{-2}+|H|/T)$ on this annulus.  With
$\varphi=\rho^{-(m-2)}$ and the uniform suprema
\[
 \sup\varphi(F+F_\tau)\leq CR^{-(m-2)}L^{1-\vartheta},
 \qquad
 \sup\rho^{-1}\varphi(F+F_\tau)
              \leq CR^{-(m-1)}L^{1-\vartheta},
\]
the total physical-Hessian and mask-gradient error is
$C_p\eta^{m-2}\mathscr A+C_p\eta^{m-3}\mathscr A/R$.
The constants are independent of small $\eta$.
For $t_c=\epsilon(1+|z|^2)^{1/2}$, either
$c\tau\leq t_c/4$, when $W,F,F_\tau$ are exponentially small
in physical radius, or $\tau\geq t_c/(4c)$, when
$\varphi$ has that decay.  Summation using the unconditional
area $O(r^{m+1})$ bounds the cap contributions to the radial, mass,
gradient and mixed terms uniformly in the translating observer. The switching bands use
the two-foot cap estimate of
Lemma~\ref{lem:r11-switch-cap-payment}.

\vskip0.12in

\emph{Large-$at$ curvature and small-reach deficits.}
At a remaining base point with $at\geq a_0$,
\[
 a^2t+|\nabla t|^2/t
 \leq C_{a_0}a^{2p}t^{2p-1}
               e^{C\sqrt{\log(K(t)+1)}}.
\]
The same bound applies to a positive foot with large $a(y)t$, using
$H_{N_y,+}\leq C/t\leq C_{a_1}a(y)^{2p}t^{2p-1}$.
The profile has
$\sup_{t>0}t^{2p-1}W\leq C(L+\tau)^{2p-1-\vartheta}$,
by \eqref{eq:r9-weight-ratios}.  Their total normalized shell contribution is at most
\begin{equation}\label{eq:r13-p-large-at-payment}
 C_\eta\frac{e^{C\sqrt{\log(K(R)+1)}}}{K(R)^{p-1}}
 \sum_{j\geq0}2^{(3-2p)j}e^{C\sqrt{j+1}}(1+j)^C=o_R(1).
\end{equation}
Its physical contribution has the additional fixed factor
$\eta^{m-2p}$ and the faster geometric ratio
$2^{-(m-2)j}$.  On $t<a_*\log\rho$ one has
$\widehat E^p\geq\rho^{-p\kappa a_*}$, so the constant and
trace deficits in the radial comparison are bounded by
\[
 C\sum_j s_j^{-m+p\kappa a_*}V_{p,j}
       =o_{R;\eta}(\mathscr A).
\]
Here the main shell ratio is $2^{1-2p+p\kappa a_*}<1$;
the fixed physical term has ratio $2^{-m+p\kappa a_*}<1$.
Both strict bounds are explicitly assumed.  Their initial normalized factor is
$C_\eta R^{-2p+p\kappa a_*}/L^{1-\vartheta}$.
All finitely many polynomial reach factors on
$t>M_0(L+\tau)$ are bounded by increasing the fixed $M_0$:
the exponential profile tail absorbs them and any prescribed
power of $\rho$, before unconditional area is summed.

\vskip0.12in

\emph{Finite outer exhaustion.}
Choose fixed $\delta>0$ so that
$r=q_*(1-\delta)/\kappa<p$ and
$\varepsilon'=q_*[2(1-\delta)/\kappa-c]-1>0$.
At fixed $R,T,L$,
$W\leq C_Rs^{c q_*}\widehat E^r$ on a distant shell.
For this distant shell use the moment input with the auxiliary
physical radius $s/16$ and observer radius $8s$.  Once $s$ exceeds
a fixed multiple of $|x_c|+T$, its physical cutoff is identically
one on the shell.  Both terms of that moment input are then bounded
by $CK(8s)s^{m-2p}\leq Cs^{m-2p+1}$.  This avoids treating the remainder from the fixed inner core as
decaying when $m-2p+1<0$.
Thus $\int\widehat E^p+\int D^p\leq C_Rs^{m-2p+1}$,
and the unconditional area bound is $O(s^{m+1})$.
H\"older's inequality then gives
\begin{equation}\label{eq:r13-p-outer-mass}
 \int_{\rho\asymp s}W\leq C_Rs^{m-\varepsilon'}.
\end{equation}
Pure outer terms are $C_Rs^{-m}(1+\log s)^N\int W=o(1)$.
Since $F\leq C_R(1+\log s)W$ and $W\leq C_R$,
the outer mean term is at most
\[
 C_R(1+\log s)s^{-(m-1)}
 (\int D^p)^{1/p}(\int W^{p/(p-1)})^{(p-1)/p}
 \leq C_R(1+\log s)
 s^{-(p-1)(1+\varepsilon')/p}=o(1).
\]
A mismatch between the outer cutoffs at the endpoints of a selected
positive-foot pair adds only a fixed power of $L+\log s$, hence has the
same limit.  An outer amplitude difference, bounded by $C\log B/B$ on
$\rho\asymp B$, consequently contributes at most
\begin{equation}\label{eq:r11-switch-outer-mass}
 C_{R,\eta,x_c}(1+\log B)^N B^{-\varepsilon'}\longrightarrow0.
\end{equation}
These bounds allow outer exhaustion after the finite-support
absorptions; $\int I_P\psi^2$ carries the cutoff throughout those steps.

\vskip0.12in

\emph{Positive-foot weight comparison and cutoff errors.}
A selected clean positive foot has $t(y)\leq t(z)$ and lies at
$o_R(\rho)$ distance from its base point $z$ after removing
$t>M_0(L+\tau)$. Therefore monotonicity of $W$ in $t$ and
$|\partial_\tau\log W|\leq c q_*$ give
\[
 \varphi(z)W(\tau(z),t(z))
 \leq(1+o_R(1))\varphi(y)W(\tau(y),t(y)).
\]
Rough receiving feet and feet with large $a(y)t$ have already been
estimated at the receiving points. Transfer the physical mask by
\[
 \zeta_T(z)^{2p}\leq(1+v)\zeta_T(y)^{2p}
       +C_{p,v}\min\{1,(|z-y|/T)^{2p}\}.
\]
If $\zeta_T(z)\ne\zeta_T(y)$, then $z\in B_{3T}(0)$ and $y\in B_{4T}(0)$, because
$|z-y|\leq t(z)\leq\epsilon r_o(z)$. The signed positive-foot area
formula, $H_{N_y,+}\leq C/t(z)$, and
\eqref{eq:r9-weight-ratios} give the located remainder
\begin{align}\label{eq:r13-p-physical-foot}
 CT^{-2p}K(R)T^mR^{-(m-2)}L^{2p-1-\vartheta}
   =C\eta^{m-2p}\mathscr A/K(R)^{p-1}.
\end{align}
The bound is uniform over observers since a negative power of
$\rho/R$ absorbs each fixed power of $L+\log\rho$. Outer mask
mismatches were estimated above. Apply this trace estimate to the cutoff remainder. In the principal
radial comparison, leave the negative neighbor term in its signed form.

\end{proof}

For fixed $\eta$, the second and third terms of
\eqref{eq:s6-radial-localization-ledger} are
$o_{R;\eta}(\mathscr A)$ along the putative unbounded-density
sequence. The leading constant is independent of small $\eta$.
In the proof below we first choose $\eta$ from the area lower bound,
then let $B\to\infty$ at fixed $R,\eta,x_c$, and finally let
$R\to\infty$.

\vskip0.12in

\subsubsection{Construction of the ambient test}

\begin{proposition}
\label{prop:r13-physical-construction}
For sufficiently large $R$ and subsequent finite $B$, one admissible
compact exterior test gives
\begin{equation}\label{eq:r13-physical-construction-output}
 .998\mathscr L_B\leq1.005\int_\Sigma|\nabla_\Sigma\psi|^2
       +25\mathscr N_B+C\eta^{m-2}K(R)
       +o_{R;\eta}(\mathscr A)+e_{R,\eta,x_c}(B).
\end{equation}
Here $e(B)\to0$ at fixed $R,\eta,x_c$, and the little-oh is uniform
in the observer and finite support. The normalization is by the
normal mass $\sigma=4\sqrt2/3$.
\end{proposition}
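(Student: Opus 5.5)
We build one compact exterior test on $\mathbb R^n$ whose quadratic form, after normal integration, reproduces $\mathscr L_B$ on the left and $\int|\nabla_\Sigma\psi|^2$ plus the interaction term on the right. The construction mirrors the RSS test of Theorem~\ref{thm:r13-actual-p-rss}, but with the radial amplitude $\psi=\mathfrak m h\sqrt W$ of \eqref{eq:r9-common-amplitude} in place of $f(t)h_p$. The lifted branches are $G(s)\beta\gamma_B\,\psi(z)$ over base points of $\Sigma_g$ in the support of $\mathfrak m$. Since $m_H$ vanishes within $\rho^{\beta_0}$ of the rough set and $\zeta_T$ vanishes near the core, the support lies in the clean polynomial windows of Corollary~\ref{cor:r11-rough-mask-clean-window}. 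There the zero set is an ordered family of graphs with logarithmic separations and the moving-band geometry of Lemma~\ref{lem:r11-moving-band-geometry}.

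We replace the curvature-tube cutoff by a \emph{balanced} normal partition. Each base $z_i$ carries its profile out to the band where $s_i\approx s_j$ with the first adjacent graph $z_j$, and there we interpolate between the two amplitudes. Where no adjacent graph lies within $C_0\log\rho$, we use the curvature cutoff and the depth $M_{\rm dep}$ as before. Admissibility follows as in Proposition~\ref{prop:r3-actual-transfer}: the map is locally invertible with finite fibers, the RSS of branches is Lipschitz, and Cauchy--Schwarz gives the variance inequality \eqref{eq:s5-rss-variance-common}.

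Expanding the lifted form \eqref{eq:r3-lift-exact} on each piece gives three contributions. The gradient term is $(1+o(1))\sigma\int|\nabla_\Sigma\psi|^2$ after the $1+o(1)$ metric and Jacobian bounds of the clean atlas; we inflate it to $1.005$ to absorb Young crosses. The curvature term $-J_sGG'$ integrates to $\sigma(H^2-a^2)/2+O(a^3)$, which is $o(D)\le o(I_P)$. The profile potential contributes at least $(1-\eta')\sigma I_P/2$ per side over the fixed window $P$, by the argument of Proposition~\ref{prop:r3-actual-transfer}. Summing the two sides and normalizing by $\sigma$ produces the gain $.998\,\mathscr L_B$.

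The new error is the \emph{switching} error in each balanced band. It comes from the mismatch $\psi(z_i)\neq\psi(z_j)$ and from the fact that $q$ differs from $G(s_i)$ once the other graph's tail enters. We bound it using the sharp tail $q\le 4.001(e^{-\sqrt2 s_i}+e^{-\sqrt2 s_j})$ of \eqref{eq:r11-switch-sharp-inputs}, integrated over a band of fixed width at $s_i\approx\ell/2$. Each side then contributes a multiple of $e^{-\sqrt2\ell_\pm}\psi^2$. Since both sides are interpolated at the same base with the same $P$, the per-base cost is a coefficient times $\min(\mu_+,\mu_-)$. Bounding $(\psi(z_i)-\psi(z_j))^2\le 2\psi(z_i)^2+2\psi(z_j)^2$ and reparametrizing by each foot's normal rays should make that coefficient at most $25$, which yields the term $25\mathscr N_B$.

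The remaining errors are localization terms, handled by the earlier lemmas:
\begin{enumerate}
\item bands touching the cap are exponentially small by Lemma~\ref{lem:r11-switch-cap-payment};
\item mismatched $\zeta_T^p$ factors cost $C\eta^{m-2}K(R)$ by Lemma~\ref{lem:r11-switch-physical-difference};
\item rough-mask derivatives, large-$at$ pieces and outer-cutoff mismatches are $o_{R;\eta}(\mathscr A)+e(B)$ by Lemma~\ref{lem:s6-radial-localization};
\item the partition-of-unity variance at accurate points is handled as in \eqref{eq:s5-rss-partition-budget}, with the accurate-ray estimate \eqref{eq:r9-accurate} controlling $\nabla\log(q/G)$.
\end{enumerate}

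The main obstacle is the switching constant. We must show that the balanced-band cost is at most $25\min(\mu_+,\mu_-)$ rather than something proportional to $\mu_++\mu_-$, with the sharp factor $4.001$ and the depth $M_{\rm dep}$ chosen so that the numerical constant lands below $25$. Otherwise the later comparison with $k_{\rm sw}=25/.998$ fails. I would first compute the one-dimensional model: two tanh layers at gap $\ell$, with the RSS interpolation at the midpoint, evaluating the exact excess energy per unit area as a multiple of $e^{-\sqrt2\ell}$. I would then transfer that computation through the clean atlas using the $o(1)$ distance and Jacobian errors.
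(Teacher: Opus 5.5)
Your proposal has a genuine gap, located exactly where the constants $.998$ and $25$ are decided. There are two problems.

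\emph{The gain.} Lifting $G(s)\psi(z)$ into \eqref{eq:r3-lift-exact} cannot produce $\sigma\mathscr L_B$. In the proof of Proposition~\ref{prop:r3-actual-transfer}, the profile term on the side $\pm$ is $\frac12\int_0^PG^3D(z\pm s\nu)\,ds$. So the two sides \emph{together} give $\sigma I_P/2$, not $\sigma I_P/2$ per side. The factor $\frac12$ comes from $t-U_\pm(t)\geq\frac12\int_0^tD$ and is sharp to leading order.

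Also, $\sigma(H^2-a^2)/2$ is not $o(D)$. Only $H^2$ and $a^3$ are, while $a^2$ is comparable to $D$ and supplies the other half of the gain. The lifted form therefore yields only $\int\frac{a^2+I_P}{2}\psi^2$; this is why Theorem~\ref{thm:r13-actual-p-rss} has $S_0$ on its left side. In the interaction-dominated regime $a^2\ll I_P$, which is precisely where $\mathscr N_B$ matters, this is about $\frac12\mathscr L_B$. The final comparison, $\gamma\cdot.998/1.005$ against $.718$, has no room for that loss.

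The paper instead tests with $q\eta$. Here $\eta$ is the nearest-sheet extension of $\psi$: constant along normal rays, and interpolated convexly across the balanced bands. The exact identity \eqref{eq:audited-exact-q-test} then gives $\int q^3D\eta^2\leq\int q^2|\nabla\eta|^2$ with no profile error at all. The fixed injective tubes $|s|\leq P$ give at least $(1-o_R(1))\sigma\mathscr L_B$ with the full $I_P$. Nearest-ray regions cost $(1+o_R(1))\sigma\int|\nabla_\Sigma\psi|^2$, since $\int q^2\,ds\leq\sigma+o_R(1)$ there.

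\emph{The switching constant.} You flag this yourself but resolve it incorrectly. Take $\mathcal U'\propto\operatorname{sech}^2(\sqrt2t)$ on $|t|\leq3$ together with the sharp tail. A band with gap $\ell$ then costs, after division by $\sigma$, about $\frac{24}{\tanh(3\sqrt2)}e^{-\sqrt2\ell}|\psi(z_i)-\psi(z_j)|^2$ per unit base area.

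Your bound $|\psi(z_i)-\psi(z_j)|^2\leq2\psi(z_i)^2+2\psi(z_j)^2$ therefore charges about $48e^{-\sqrt2\ell}\psi^2$ to each foot, i.e.\ about $48(\mu_++\mu_-)\psi^2$ per base. This consumes the entire interaction mass $I_P\geq(48-\varepsilon)(\mu_++\mu_-)$, and it is not of the form $25\min(\mu_+,\mu_-)$. Interpolating both sides at one base with one window does not by itself produce a minimum.

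The missing mechanism is that $\psi$ depends on the reach through $W$, which decreases in $t$. The argument runs as follows.
\begin{enumerate}
\item Split off the cutoff factors $\zeta_T^pm_B$ first, as in \eqref{eq:r11-switch-cutoff-difference}. This is where $C\eta^{m-2}K(R)$ enters, via Lemma~\ref{lem:r11-switch-physical-difference}.
\item Assign each band point to the foot with the larger $V=h\sqrt W$, and set $r=V_j/V_i\leq1$.
\item The moving-foot bound $d(z_j)\leq G_1(\ell_{ij}+o(1))$ of \eqref{eq:r11-moving-foot-reach}, together with $|\partial_d\log\sqrt W|\leq q_*/2+\vartheta/(2d)$, gives \eqref{eq:r11-switch-moving-ratio}.
\item On the assigned base's nearest side, $\ell_{ij}=d(z_i)/G_1$, so $1-r=o_R(1)$. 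That band costs $o_R(1)e^{-\sqrt2\ell_{ij}}\psi^2\leq o_R(1)I_P\psi^2$, by the single-side mass bound in \eqref{eq:r11-switch-sharp-inputs}.
\item Only the nonnearest side remains. There $e^{-\sqrt2\ell_{ij}}=\min(\mu_+,\mu_-)$ and $(1-r)^2\leq1$, giving less than $24.2\,b_R\psi^2$ and hence $25\mathscr N_B$.
\end{enumerate}
The crude bound $2\psi_i^2+2\psi_j^2$ is affordable only on cap bands, where Lemma~\ref{lem:r11-switch-cap-payment} makes everything exponentially small.
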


\begin{proof}

Use $M=M_{\rm dep}$, chosen sufficiently large below. In the region
\begin{equation}\label{eq:r11-ambient-clean-domain}
 \operatorname{dist}(x,\Sigma)<M\log\rho(x)+1,
 \qquad\operatorname{dist}(x,Z_R)>\tfrac12\rho(x)^{\beta_0},
\end{equation}
use buffered graph families containing every candidate within
$2M\log\rho+10$. The clean-window corollary gives them, because
every participating foot has distance at least
$\rho(z)^{\beta_0}/3$ from $Z_R$ for large $R$.
Let $s_i,s_j$ be the two smallest distances from $x$ to the local graphs,
and suppose $|s_i-s_j|\leq6$. The corresponding graphs are adjacent and
lie on opposite sides of $x$; no third graph has distance within six
of the minimum. Define
\[
 \mathcal U(t)=\frac{\int_t^3\operatorname{sech}^2(\sqrt2u)\,du}
                   {\int_{-3}^3\operatorname{sech}^2(\sqrt2u)\,du},
 \qquad -3\leq t\leq3,
\]
extended constantly by one and zero at the two ends. Set
\[
 \begin{aligned}
 \widetilde\eta(x)&=\psi(z_i(x))&&\text{off the bands},\\
 \widetilde\eta(x)&=\mathcal U(t)\psi(z_i(x))+
       [1-\mathcal U(t)]\psi(z_j(x))&&\text{on a band},\\
 t&=(s_i-s_j)/2.
 \end{aligned}
\] The symmetry $\mathcal U(-t)=1-\mathcal U(t)$ makes
this independent of labeling. Endpoint values match the nearest-sheet
formula, and the projections agree on overlaps. Near a zero only
its smooth projection is used. Thus this is one locally Lipschitz
function; there is no differentiation of a global layer enumeration.

\vskip0.12in

Take a Lipschitz depth cutoff $\zeta_{\rm dep}$ equal to one for
$\operatorname{dist}(x,\Sigma)\leq M\log\rho$ and zero above
$M\log\rho+1$, with bounded derivative. Take an additional physical
cutoff $\chi_T$, zero on $B_{T/4}(0)$, one outside $B_{T/2}(0)$,
with $|\nabla\chi_T|\leq C/T$. Define
\[
 \eta=\chi_T\zeta_{\rm dep}\widetilde\eta
\]
in \eqref{eq:r11-ambient-clean-domain} and extend it by zero.
At the auxiliary rough boundary, participating feet are at distance
less than $\rho(z)^{\beta_0}$ from $Z_R$, so all their $m_H$
factors vanish on a full neighborhood. Thus extension by zero is
locally Lipschitz. The factor $1/2$ in
\eqref{eq:r11-ambient-clean-domain} ensures this neighborhood.

\vskip0.12in

If $\eta(x)\ne0$, a participating base has
$|z-x_c|\leq2B$ and $|x-z|\leq C(1+\log\rho(x))$.
Therefore $|x-x_c|\leq2B+C[1+\log(R+|x-x_c|)]$, which bounds
its support for fixed $R,B,x_c$. A finite cover by projection charts
gives a finite Lipschitz constant. Also $T/4>R_0$ for large $R$.
Thus $q\eta$ is an admissible compactly supported test function for
exterior stability, and the exact
identity $(-\Delta+3u^2-1)q=-q^2D$ yields
\begin{equation}\label{eq:r11-exact-ambient-q-test}
 \int q^3D\eta^2\leq\int q^2|\nabla\eta|^2.
\end{equation}

\vskip0.12in

We first estimate the two auxiliary cutoff errors. On the depth-cutoff support,
the unconditional phase bound gives $q^2\leq C\rho^{-aM}$, while
$\widetilde\eta^2\leq C\rho^{-(m-2)}$. Ambient shell volume is
$O(\rho^{m+1})$, so the resulting error is at most
$C\sum_j s_j^{3-aM}=o_R(\mathscr A)$ after fixing $M$ large.
On $\operatorname{supp}\nabla\chi_T$, any nonzero participating
amplitude has a foot outside $B_T(0)$, at distance at least $T/2$
from the ambient point. In a band the nearest distance differs from
that distance by at most six. Hence
$\operatorname{dist}(x,\Sigma)\geq T/2-6$ wherever this cutoff term is nonzero.
It is bounded by
$CR^{-(m-2)}T^{m-1}e^{-aT}$, uniformly over observers.
All fixed Young losses for these cutoffs can be prescribed arbitrarily
small before $R$ increases; their reciprocal constants multiply only
the negligible terms just estimated.

\vskip0.12in

\subsubsection{Mass, gradients, and the normal-band error}

For all supported bases the fixed tubes $|s|\leq P$ are jointly
injective, since their actual reaches tend uniformly to infinity.
They have Jacobian $1+o_R(1)$. There $\chi_T=\zeta_{\rm dep}=1$,
the base is nearest, and $\eta(z+s\nu)=\psi(z)$ exactly.
Integrating over these tubes and using positivity elsewhere gives
\begin{equation}\label{eq:r11-ambient-full-mass}
 \int q^3D\eta^2\geq(1-o_R(1))\sigma\mathscr L_B.
\end{equation}
The fixed-depth integration includes capped bases with their full $I_P$.

\vskip0.12in

In a nearest-sheet region the projected gradient is
$(\operatorname{Id}+sA)^{-1}\nabla_\Sigma\psi$. Metric and volume
factors are $1+o_R(1)$ through the logarithmic window. Along each
nearest ray, $\operatorname{dist}(x,\Sigma)=|s|$, and
\[
 \int_{\rm nearest}q^2\,ds\leq\sigma+o_R(1).
\]
For this uniform limit first choose a large fixed normal depth to
make the phase tail small, then use profile convergence on that fixed
interval as $R$ increases.
In a band the tangential part is a convex combination of the two
projected gradients. Its square is bounded by the corresponding
combination of squares; the total $q^2$ mass of a fixed-width band
tends uniformly to zero since sheet separation is at least
$c\log\rho$. Each base has at most two bands. After a fixed Young
inequality separating its normal switch derivative, these tangential
band terms are $o_R(1)\int|\nabla\psi|^2$ and are absorbed into
the eventual coefficient $1.005$. They are not called absolute
$o_R(\mathscr A)$ errors.

\vskip0.12in

The remaining term is
\begin{equation}\label{eq:r11-switch-integral}
 S_{\rm sw}=\int_{\rm bands}q^2\mathcal U'(t)^2
                  |\psi(z_i)-\psi(z_j)|^2\,dx,
\end{equation}
using $|\nabla t|\leq1$. For bands touching the region where the rough mask is nonconstant,
the rough-set estimate applies: every foot with nonzero amplitude
$\psi$ lies in a fixed logarithmic enlargement of the rough collar,
and normal parametrization
has bounded width and at most two traversals per base. Their contribution
is at most $C\int_{\rm rough}\varphi=o_R(\mathscr A)$.
The switching contribution of bands touching $\mathcal C$ is exponentially small by
Lemma~\ref{lem:r11-switch-cap-payment}.

\vskip0.12in

On the remaining bands both $m_H$ equal one and both reaches are
uncapped. Put
$V_i=h(z_i)\sqrt{W(\tau(z_i),t(z_i))}$ and
$a_i=\zeta_T(z_i)^pm_B(z_i)$, so $\psi(z_i)=a_iV_i$.
Assign each physical point to a foot $i$ for which
$V_i\geq V_j$, breaking ties measurably; this partitions an integral,
not the function being differentiated. Write $r=V_j/V_i\leq1$. Then
\begin{equation}\label{eq:r11-switch-cutoff-difference}
 |a_iV_i-a_jV_j|^2\leq(1+\delta_s)a_i^2V_i^2(1-r)^2
       +C_{\delta_s}V_i^2\{ |\zeta_T(z_i)^p-\zeta_T(z_j)^p|^2
                             +|m_B(z_i)-m_B(z_j)|^2\}.
\end{equation}
The physical term is bounded by
Lemma~\ref{lem:r11-switch-physical-difference}, and the outer term
by \eqref{eq:r11-switch-outer-mass}. A base with $a_i=0$ contributes zero to the principal term.

\vskip0.12in

Fix an assigned base $z_i$ and one side. Its partner is the first
adjacent graph and is fixed during this normal traversal. Put
$\ell_i=d(z_i)/G_1$, $u=\ell_{ij}-\ell_i\geq0$. The moving-foot
geometry gives $d(z_j)\leq G_1(\ell_{ij}+o_R(1))$.
For uncapped reaches,
$|\partial_d\log\sqrt W|\leq q_*/2+\vartheta/(2d)$ and
$|\partial_\tau\log\sqrt W|\leq cq_*/2$.
Their reaches diverge uniformly, their observer radii have ratio
$1+o_R(1)$, and their logarithms differ by $O(\log\rho/\rho)$.
Monotonicity in $d$ and integration of these inequalities therefore give
\begin{equation}\label{eq:r11-switch-moving-ratio}
 r\geq(1-\epsilon_R)
       \exp[-(q_*G_1/2+\epsilon_R)u],\qquad\epsilon_R\to0,
\end{equation}
uniformly over the bands. If the receiving reach is smaller, its weight
only increases and the same conclusion holds. The additive chord
error changes the ratio by a factor tending to one.

\vskip0.12in

The sharp tail gives
\[
 q^2\leq64(1+.001/4)^2(1+o_R(1))
        e^{-\sqrt2\ell_{ij}}\cosh^2(\sqrt2t).
\]
Both the normal volume Jacobian and $ds_i/dt$ tend to one. The exact
finite-width calculation is
\begin{equation}\label{eq:r11-switch-normal-integral}
 \mathcal U'(t)=-\frac{\operatorname{sech}^2(\sqrt2t)}
                         {\sqrt2\tanh(3\sqrt2)},\qquad
 64\int_{-3}^3\cosh^2(\sqrt2t)\mathcal U'(t)^2\,dt
                  =\frac{32\sqrt2}{\tanh(3\sqrt2)}.
\end{equation}
Choose the finitely many fixed normal Young and geometry tolerances
so that their combined comparison factor is at most $(1.001)^6$.
The normal term, divided by the normal mass $\sigma$, is
therefore at most
\[
 \frac{24}{\tanh(3\sqrt2)}
       (1+.001/4)^2(1.001)^6
       e^{-\sqrt2\ell_{ij}}a_i^2V_i^2(1-r)^2
 \ <24.2 e^{-\sqrt2\ell_{ij}}\psi(z_i)^2(1-r)^2.
\]
All reciprocal Young constants multiply only the rough, cap, physical,
depth or outer errors already estimated, or a tangential band coefficient
that tends to zero.  Normal Jacobians and the exponential of the
additive distance error are included in these fixed factors by
increasing $R$.

\vskip0.12in

It remains to estimate this band term before concluding the test inequality.
There are at most two contributing sides at a fixed base. On a nearest
assigned side $u=0$ in \eqref{eq:r11-switch-moving-ratio}, hence
$r=1-o_R(1)$ for the fixed finite slope. The mass bound for a single selected side,
$I_P\geq47.999e^{-\sqrt2\ell_{ij}}$, bounds all such terms by
$o_R(1)\mathscr L_B$. Choose $R$ so large that their total is at most
$.001\mathscr L_B$.

\vskip0.12in

There is at most one nonnearest side. Its first-neighbor gap is
logarithmic because the band lies in the fixed depth window; the
opposite nearest gap is no larger. The choice $C_0>4M_{\rm dep}$
therefore includes both in the definition of $b_R$, and
\[
 e^{-\sqrt2\ell_{ij}}=\min(\mu_+,\mu_-)=b_R(z_i).
\]
Since $0\leq1-r\leq1$, its normalized contribution is at most
$24.2b_R(z_i)\psi(z_i)^2$. One normal traversal on that side and
measurable assignment to the larger $V_i$ give a total at most
$25\mathscr N_B$. We integrate over the assigned base throughout,
using its normal Jacobian once; the measurable assignment only
partitions this integral.

\vskip0.12in

The fixed tubes contribute at least $.999\sigma\mathscr L_B$,
while the tangential gradient contribution is at most
$1.005\sigma\int|\nabla\psi|^2$. The exact ambient inequality
\eqref{eq:r11-exact-ambient-q-test}, the normal estimate above and
the cap, rough-set, physical-cutoff, depth and outer estimates give
\eqref{eq:r13-physical-construction-output}, after subtracting the
$.001$ nearest-side allowance. Every absorption is at finite support.
\end{proof}

\vskip0.12in

\subsection{Comparison of the radial and stability inequalities}
\label{sec:r13-physical-master}

\begin{theorem}
\label{thm:s1-source-switch}\label{thm:r13-common-physical-test}
Assume the local estimates and the moment bound with exponent $9/4$, and use
the same $\psi,J_R$ from \eqref{eq:r9-common-amplitude} and
\eqref{eq:s8-main-positive-mass}. Assume the strict localization
conditions of Lemma~\ref{lem:s6-radial-localization}. At finite
outer support,
\begin{align}\label{eq:s8-main-positive-physical}
 .998\mathscr J_B&\leq1.005\mathscr D_B+\mathcal E_B^{\rm test},
 \notag\\
 \mathscr P_B&\leq.718\mathscr J_B+\mathcal E_B^{\rm rad},
\end{align}
where
\begin{align}\label{eq:r13-two-ledgers}
 \mathcal E_B^{\rm test}
 &=C\eta^{m-2}K(R)+o_{R;\eta}(\mathscr A)+e_B,\notag\\
 \mathcal E_B^{\rm rad}
 &=C\eta^{m-2}\mathscr A+C\eta^{m-3}\mathscr A/R
  +C\eta^{m-2p}\mathscr A/K(R)^{p-1}
  +o_{R;\eta}(\mathscr A)+e_B.
\end{align}
The little-oh is uniform in $x_c,B$, $e_B\to0$ at fixed
$R,\eta,x_c$, and the leading physical constant is independent
of small $\eta$. If the radial ground-state conditions hold and
$\delta_J=\gamma(.998)/1.005-.718>0$, then
\begin{equation}\label{eq:s8-main-positive-comparison}
 c_*\mathscr V_B+d_H\mathscr C_B+
 (\delta_J-\gamma a_m(.355)/R)\mathscr J_B
 \leq\mathcal E_B^{\rm rad}
       +\frac\gamma{1.005}\mathcal E_B^{\rm test}+\mathcal G_B,
\end{equation}
where $\mathcal G_B$ is the mask and small-reach error in
\eqref{eq:s7-raw-ground-lower}.
\end{theorem}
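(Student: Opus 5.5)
The plan has three steps: derive the test inequality, derive the radial inequality, and then combine them through the ground-state lemma.

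\emph{Test inequality.} Start from Proposition~\ref{prop:r13-physical-construction}. Use the definition \eqref{eq:s8-main-positive-mass}, $\mathscr L_B=\mathscr J_B+k_{\rm sw}\mathscr N_B$ with $.998k_{\rm sw}=25$. Then $.998\mathscr L_B-25\mathscr N_B=.998\mathscr J_B$, so the first line of \eqref{eq:s8-main-positive-physical} follows directly, with $\mathcal E^{\rm test}_B$ equal to the error ledger in \eqref{eq:r13-physical-construction-output}.

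\emph{Radial inequality.} Test the weak capped-reach inequality of Theorem~\ref{thm:r7-all-branch-capped-reach} against $\varphi\mathfrak m^2$ inside the chain rule \eqref{eq:r9-weight-chain}. Integrating $\int\varphi\mathfrak m^2\Delta_\Sigma F$ by parts and using the radial identities \eqref{eq:r9-radial-identities} produces $\mathscr P_B$ on the left. The right side collects $\int\varphi\mathfrak m^2W\mathcal A$ together with the drift terms $B|\nabla t|^2/t$, minus the $\vartheta W|\nabla t|^2/t$ reserve.

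On the nonplateau branch, three sources are handled as follows. Small $at$ gives improved drift, which is absorbed by the $\tfrac34\vartheta W/t$ square and the positive slope of $W$. Large $at$ is handled by the large-$at$ localization estimate. The term $a^2t$ has a factor $W$, and I would bound it using $I\geq.999a^2$ together with $F_\tau/W$ and $F/W$ from \eqref{eq:r9-weight-ratios}.

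On the plateau branch, the contribution is $G_1(|H(z)|+H_{N_y}(y)_+)W$. The receiving term is transferred by the signed positive-foot area formula, with inverse Jacobian at most $J_*$ and with $W(z)\leq(1+o(1))W(y)$, which is the weight comparison in Lemma~\ref{lem:s6-radial-localization}. At both points use $|H|\leq B_R\leq.355J_R$ from \eqref{eq:s8-main-positive-density}, valid at clean receivers as well. This gives the coefficient $G_1(1+J_*)(.355)(1+o(1))\le .718$.

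The cap branch, physical cutoffs, rough masks, outer cutoff, and small-reach deficits all go into $\mathcal E^{\rm rad}_B$ via \eqref{eq:s6-radial-localization-ledger}.

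\emph{Main obstacle.} The hardest step is making sure the nonplateau contribution $W(1+\omega)a^2t$ is genuinely dominated. It has to fit inside $.718\mathscr J_B$, or else be absorbed by the ground-state and drift terms. I would first try to show that on $t\geq a_*\tau$ this term is controlled, using $I$ and the large-$at$ payment. Failing that, I would show that the reserved terms in $\mathscr P_B$ cover it, keeping the coefficient bookkeeping consistent with Proposition~\ref{prop:r13-nine-dimensional-local-package}.

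\emph{Comparison.} Multiply the first inequality by $\gamma/1.005$, giving $\gamma\mathscr D_B\geq\gamma(.998/1.005)\mathscr J_B-\tfrac{\gamma}{1.005}\mathcal E^{\rm test}_B$. Apply Lemma~\ref{lem:s7-raw-ground-state} with $Q=.355J_R$ and $h_0=1$, which is allowed because $|H|\leq.355J_R$; then $\mathscr M_B=.355\mathscr J_B$. This gives
\[
\mathscr P_B\geq\gamma\mathscr D_B+c_*\mathscr V_B+d_H\mathscr C_B-\tfrac{\gamma a_m(.355)}{R}\mathscr J_B-\mathcal G_B.
\]
Combine this lower bound with the upper bound $\mathscr P_B\leq.718\mathscr J_B+\mathcal E^{\rm rad}_B$ from the radial step, and substitute the bound on $\gamma\mathscr D_B$. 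Rearranging yields \eqref{eq:s8-main-positive-comparison} with $\delta_J=\gamma(.998)/1.005-.718$.
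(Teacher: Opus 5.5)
\textbf{Verdict.} Your test inequality and the final comparison step are correct. They match the paper; your choice $Q=.355J_R$, $h_0=1$ in Lemma~\ref{lem:s7-raw-ground-state} is equivalent to the paper's $Q=J_R$, $h_0=.355$. The gap is in the radial step, at exactly the point you flag as the main obstacle: the nonplateau contribution $(1+\omega+Ct^{-\gamma})a^2t\,W$ coming from $-W(\Delta_\Sigma t)^{\rm upper}$.

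\textbf{Why your proposed bounds fail.} Bounding this term through $I\geq.999a^2$ only gives $\int\varphi\mathfrak m^2Wa^2t\lesssim\int t\,J_R\psi^2$. On the support, $t\geq t_{\min}(R)\to\infty$, so this is not bounded by any fixed multiple of $\mathscr J_B$. In any case the budget $.718$ is already used up by the two plateau traces, which come to about $.715$. Your fallback of covering the term by the radial coefficients of $\mathscr P_B$ also fails. Those coefficients carry $\rho^{-2}$, whereas at small $at$ the term is only $O(1/t)$ with $t\leq\epsilon r_o$.

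\textbf{The missing idea: the gradient lower bound on the nonplateau branch.} Split at a fixed flatness threshold $a_0$.
\begin{itemize}
\item If $at\geq a_0$, the whole term, including $|\nabla t|^2/t$, is at most $C_{a_0}a^{2p}t^{2p-1}e^{C\sqrt{\log(K(t)+1)}}$. It is paid by the moment estimate through \eqref{eq:r13-p-large-at-payment}.
\item If $at<a_0$, a diagonal contact is impossible. Then \eqref{eq:r7-small-ad-drift} gives $|\nabla_\Sigma t|\geq c_\Gamma$ and $B\leq\omega+Cat+Ct^{-\gamma}$. Hence $a^2t=(at)^2/t\leq(a_0/c_\Gamma)^2|\nabla t|^2/t$, so both this term and the drift are small multiples of $|\nabla t|^2/t$.
\end{itemize}
Choose $a_0$ and the contact accuracy $\omega$ small, then $R$ large. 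These terms are then absorbed by the $\vartheta/4$ share of the reserve $\vartheta W|\nabla t|^2/t$ in \eqref{eq:r9-weight-chain}. The same share must also absorb the plateau drift $\omega|\nabla t|^2/t$, which your plateau step omits.

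\textbf{Two further bookkeeping points.}
\begin{itemize}
\item The remaining $3\vartheta/4$ is what is kept inside $\mathscr P_B$. You therefore cannot also spend the $\tfrac34\vartheta$ square on the small-$at$ drift, as you wrote.
\item The identity for $\Delta_\Sigma\tau$ in \eqref{eq:r9-radial-identities} leaves a mean term $-F_\tau HX\cdot\nu/\rho^2$ that is not part of $\mathscr P_B$. Using $|H|\leq.355J_R$ and $(F+F_\tau)/W\leq C(L+\tau)$, it is $o_R(1)\mathscr J_B$. It has to be absorbed in the slack between your trace coefficient (about $.715$ after the $(1+v)^2$ mask transfer) and $.718$.
\end{itemize}
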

\begin{proof}
Proposition~\ref{prop:r13-physical-construction} gives
$.998\mathscr L_B\leq1.005\mathscr D_B+25\mathscr N_B+
\mathcal E_B^{\rm test}$. Subtract $25\mathscr N_B$ to obtain
the first inequality with exactly the density $J_R$.

\vskip0.12in

For the radial inequality, control the cap terms, terms with $t>M_0(L+\tau)$,
large-$at$ terms and rough-receiver terms using
Lemma~\ref{lem:s6-radial-localization}. Small-$at$
nonplateau terms and plateau drift contribute at most
$\vartheta|\nabla t|^2/(4t)$: choose their fixed flatness
threshold and contact accuracy sufficiently small, then increase
$R$. The local plateau term is at most
$.355G_1\mathscr J_B$ by \eqref{eq:s8-main-positive-density}.

\vskip0.12in

For the incoming term, apply the signed positive-foot area formula
to the subset where $H_{N_y,+}>0$. Rough receivers have already been estimated;
receivers with $a(y)t(z)>a_1$ are estimated using
$H_{N_y,+}\leq C/t(z)\leq C_{a_1}a(y)^{2p}t(z)^{2p-1}$.
On the remaining pairs the inverse Jacobian is at most $J_*$.
The reach order transfers the unmasked weight to its receiving
value with factor $1+o_R(1)$. Split physical and outer cutoff
mismatches before applying the mean bound, and bound their remainders using $H_{N_y,+}\leq C/t$, as in
\eqref{eq:r13-p-physical-foot} and \eqref{eq:r13-p-outer-mass}.
The principal incoming term is therefore at most
$.355G_1J_*(1+v)^2(1+o_R(1))\mathscr J_B$.
The subsets of base points and receiving points may both be extended to the whole
surface because $J_R\geq0$; no density is differentiated.
When combining the trace terms below, we enlarge the coefficient of
the nonnegative density $B_R$ before expanding its two terms.
With $v=.0005$ and $R$ large,
\[
 1+1.01(1.0005)^2<2.012,\qquad
 .355(1.001)(2.012)<.715.
\]
Before bounding $B_R$ by $.355J_R$, the principal local and
incoming trace sum $\mathcal T_B$ satisfies
\begin{equation}\label{eq:s9-principal-source-trace}
 \mathcal T_B\leq G_1(2.012)\int\psi^2B_R.
\end{equation}
The full negative neighbor term in this estimate will also be used
in the signed comparison below.
Here the upper branch is the absolutely continuous bound from the
weak capped-reach estimate. Multiplication by $-W$ makes its nonpositive singular Laplacian
contribute nonnegatively. The cap terms enter $\mathcal E_B^{\rm rad}$. Consequently
\begin{equation}\label{eq:s9-direct-positive-source}
 \int\psi^2(\Delta_\Sigma t)^{\rm upper}
 \leq\frac\vartheta4\int\psi^2|\nabla t|^2/t
       +.716\mathscr J_B
       +C\eta^{m-2p}\mathscr A/K(R)^{p-1}
       +o_{R;\eta}(\mathscr A)+e_B.
\end{equation}

\vskip0.12in

Test \eqref{eq:r9-weight-chain} against $\varphi\mathfrak m^2$
and integrate twice by parts at finite support. The identities
\eqref{eq:r9-radial-identities} produce exactly the constant and
$T_\rho$ coefficients of $\mathscr P_B$; substitution of
\eqref{eq:s9-direct-positive-source} leaves its
$3\vartheta/4$ gradient coefficient together with the negative
$J_1$ term. Physical, rough and outer derivatives have the
bounds of Lemma~\ref{lem:s6-radial-localization}.
The remaining mean term is bounded by
\[
 C\sup_{\rho\geq R}\frac{L+\log\rho}{\rho}\,\mathscr J_B
      =o_R(1)\mathscr J_B,
\]
using $|H|\leq.355J_R$ and
$(F+F_\tau)/W\leq C(L+\tau)$.
Absorb this between $.716$ and $.718$ to obtain the second
inequality in \eqref{eq:s8-main-positive-physical}.
Finally apply Lemma~\ref{lem:s7-raw-ground-state} with
$Q=J_R$, $h_0=.355$, and combine the two inequalities after
multiplying the stability inequality by $\gamma/1.005$.
This proves \eqref{eq:s8-main-positive-comparison} by comparing the
same positive density in the two finite-support inequalities.
\end{proof}

\vskip0.12in

\section{\bf The energy growth criterion}
\label{sec:s12-growth-criterion}

In this section we prove the energy growth bound from the radial
comparison established above. The main point is to obtain a lower
bound for the area on which the radial test remains nonzero. This proves
the criterion used in Section~\ref{sec:r13-actual-growth}.

\subsection{The radial comparison and a lower area bound}
\label{sec:r13-moment-radial-master}

\begin{theorem}
\label{thm:r13-moment-radial-master}
\label{thm:r13-joint-radial-criterion}
\label{prop:r13-radial-ledger-closure}
Let $3\leq m\leq8$, $p=9/4$, and let $u$ be a bounded
entire Allen--Cahn solution stable outside a fixed ball. Use the
local estimates, selected finite-window remaining mass and
natural moment of
Proposition~\ref{prop:r13-nine-dimensional-local-package} and
Corollary~\ref{cor:s8-density-envelope-moment}, with the fixed
penalty, cap and net of Section~\ref{sec:r9-radial-core}.
Suppose $\gamma,c,q_*,a_*>0$ satisfy
\begin{align}\label{eq:r13-joint-p-parameters}
 \gamma\frac{.998}{1.005}&>.718,&&\notag\\
 (m-2)c&>\gamma a_m^2,&\gamma q_*/4&<1,\notag\\
 q_*&<p\kappa,&q_*(2/\kappa-c)&>1,\\
 0<a_*&<c,&p\kappa a_*&<\min\{2p-1,m\}.\notag
\end{align}
Choose $\vartheta>0$ small enough that the constant condition
remains strict with $c_0=c(a_*/c)^\vartheta$ in place of $c$, and fix the contact accuracy $0<\omega\leq\vartheta/16$.
Then $K$ is bounded. Thus the corresponding energy has uniform
natural growth.
\end{theorem}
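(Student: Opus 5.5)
The proof is by contradiction, following the outline of Section~\ref{sec:s12-proof-outline}. Suppose $K$ is unbounded. By Lemma~\ref{lem:r9-density-baseline} we choose $R\to\infty$ with $K(R)\to\infty$, centers $x_c=x_R$ satisfying $E_u(B_R(x_R))\geq K(R)R^m/4$, and $L=R/\sqrt{K(R)}$ with $L\to\infty$ and $L/R\to0$. With this observer we form the common amplitude $\psi$ of \eqref{eq:r9-common-amplitude}. The parameter conditions \eqref{eq:r13-joint-p-parameters}, together with the choices of $\vartheta,\omega$, are exactly the hypotheses of Lemma~\ref{lem:s7-raw-ground-state}, Lemma~\ref{lem:s6-radial-localization} and Theorem~\ref{thm:r13-common-physical-test}. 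Indeed, $(m-2)c>\gamma a_m^2$ with $c$ replaced by $c_0$ is the statement $c_0>\gamma M/4$, and $\gamma(.998)/1.005>.718$ gives $\delta_J>0$. Theorem~\ref{thm:r13-common-physical-test} therefore yields \eqref{eq:s8-main-positive-comparison}. For large $R$ the coefficient $\delta_J-\gamma a_m(.355)/R$ is positive, so we may drop $\mathscr J_B$ and obtain
\[
 c_*\mathscr V_B+d_H\mathscr C_B\leq \mathcal E_B^{\rm rad}+\tfrac{\gamma}{1.005}\mathcal E_B^{\rm test}+\mathcal G_B .
\]
The mask part of $\mathcal G_B$ and the small-reach term $t<a_*\tau$ are estimated by the localization lemma: rough masks by \eqref{eq:r13-p-rough-count}, and small-reach deficits by the shell sums using $p\kappa a_*<\min\{2p-1,m\}$. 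The physical mask gives $C\eta^{m-2}\mathscr A$, and the outer mask tends to $0$ as $B\to\infty$. We then let $B\to\infty$ at fixed $R,\eta,x_c$. Since $K(R)\leq\mathscr A$, the result is the upper bound
\[
 c_*\mathscr V+d_H\mathscr C\leq C\eta^{m-2}\mathscr A+o_{R;\eta}(\mathscr A),
\]
with $C$ independent of small $\eta$.

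The second half is a matching lower bound $\mathscr V+\mathscr C\geq c_1\eta^{\,\beta}\mathscr A$ with an exponent $\beta<m-2$, or at least $c_1\mathscr A$ with $c_1$ independent of $\eta$. We first fix $\eta$ small so that $C\eta^{m-2}$ is below this lower bound, and then let $R\to\infty$ to reach a contradiction. To get the lower bound we restrict to the ball $B_R(x_R)$, where $\rho\leq\sqrt2R$ and so $\varphi\gtrsim R^{-(m-2)}$ and $T_\rho$, $1/\rho^2$ are comparable to $R^{-2}$. On the region $\{t\leq L\}$ we have $b_L=0$, so $s=c\tau\geq1$, $f_*=1$ and $W=t^{-\vartheta}$. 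There $F\geq\int_t^{L}u^{-\vartheta}\,du\gtrsim L^{1-\vartheta}$ whenever $t\leq L/2$. It therefore suffices to show that the good, unmasked, interface area in $B_R(x_R)\setminus B_{2T}(0)$ with $t\leq L/2$ is at least $cK(R)R^m$, because then $\mathscr V\gtrsim R^{-(m-2)}R^{-2}L^{1-\vartheta}K(R)R^m\cdot\langle T_\rho\rangle$. The factor $T_\rho$ may degenerate, but $\mathscr C$ carries the weight $W\geq L^{-\vartheta}$ without $T_\rho$, and the combination $\mathscr V+\mathscr C$ can be handled using $F\lesssim L W$ on the relevant range.

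The area lower bound proceeds in several steps. First, $E_u(B_R(x_R))\gtrsim K(R)R^m$. Second, the core $B_{2T}(0)$ carries at most $CK(8T)T^m\leq C\eta^{m}K(R)R^m$ (using $K(8T)\leq K(R)$, from monotonicity), which is negligible for small $\eta$. Third, phase decay (Theorem~\ref{lem:near-critical-phase-decay}) bounds the energy farther than a fixed distance from $\Sigma$, so most of the energy lies in a normal tube of bounded width. Fourth, the bad net and rough collars cost $o(K(R)R^m)$ by the moment estimate of Corollary~\ref{cor:s8-density-envelope-moment} and \eqref{eq:r13-p-rough-count}. Fifth, the set $\{t\geq L/2\}$ has area at most $CR^{m+1}/L=CK(R)^{1/2}R^m$ by the ordinary-tube bound \eqref{eq:r5-reach-area} (Lemma~\ref{lem:r5-ordinary-tubes}), which is $o(K(R)R^m)$. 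The remaining energy is carried by good interface with reach below $L/2$, and through the normal Jacobian bounds this gives area at least $cK(R)R^m$.

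I expect the main obstacle to be the lower bound for $\mathscr V$. The issue is controlling $T_\rho=1-|\nabla_\Sigma\rho|^2$, which can vanish where the sheets are radial. If the $\mathscr C$-route gives only $R^{-m}L^{-\vartheta}K R^m$, this is smaller than $\mathscr A$ by $L$. To handle this, I would try an integrated monotonicity argument. The stress-tensor identity \eqref{eq:audited-monotonicity} shows that energy concentrated on interface tangent to radial directions forces growth of $M_r$ in the annulus. Since $K(s)\leq C(s/R)K(R)$, the normalized energy cannot grow along a substantial dyadic range. Hence $\int T_\rho$ over the interface is comparable to the area on a positive fraction of the dyadic shells between $R$ and a large multiple of $R$, which recovers the factor $L^{1-\vartheta}K(R)$ with a constant independent of $\eta$.
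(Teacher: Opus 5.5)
Your architecture is the paper's. The comparison \eqref{eq:s8-main-positive-comparison}, with the localization of Lemma~\ref{lem:s6-radial-localization}, gives $c_*\mathscr V_B\le C\eta^{m-2}\mathscr A+o_{R;\eta}(\mathscr A)+e(B)$. The opposite bound comes from the area of good, unmasked interface of reach below $L/2$ near an energy-concentrating ball. The ingredients are the same: core removal, phase decay to a fixed normal tube, rough-collar removal via \eqref{eq:r13-p-rough-count}, the tube bound $CR^{m+1}/L=C\sqrt{K(R)}R^m$ for large reach, and $F\ge\int_{L/2}^Lu^{-\vartheta}\,du$.

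The gap is the step you call the main obstacle. You leave $\mathscr V\gtrsim\mathscr A$ conditional on an integrated monotonicity argument, and that argument runs in the wrong direction. On the interface, $|\partial_ru|^2=|\nabla u|^2(\nu\cdot\hat r)^2$ and $(\nu\cdot\hat r)^2=1-|\nabla_\Sigma r|^2$. So \eqref{eq:audited-monotonicity} bounds the $T$-weighted energy in an annulus from \emph{above} by the density increment. Radially tangent (conical) interface is exactly the equality case, with no growth. Hence $K(s)\le C(s/R)K(R)$ gives no lower bound on $\int T_\rho$.

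No such argument is needed, because $\rho=\sqrt{R^2+|z-x_c|^2}$ is regularized at scale $R$. Indeed $|\nabla\rho|^2=|z-x_c|^2/\rho^2=1-R^2/\rho^2$, hence $T_\rho=1-|\nabla_\Sigma\rho|^2\ge R^2/\rho^2\ge1/5$ on $B_{2R}(x_c)$. This is what the paper uses. On the retained set, of area at least $c_{\rm ret}K(R)R^m$, one has $\varphi T_\rho/\rho^2\ge cR^{-m}$, $\mathfrak m=1$ and $F\ge c_\vartheta L^{1-\vartheta}$. This gives $\mathscr V_B\ge c'_{\rm ret}\mathscr A$ with $c'_{\rm ret}$ independent of small $\eta$.

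The detour through $\mathscr C_B$ is unnecessary and, as you observed, loses a factor $L$; the paper simply drops $d_H\mathscr C_B\ge0$. Two bookkeeping points remain. First, remove a slightly larger core: the paper discards the energy in $B_{4T}(0)$, so nearest zeros satisfy $|z|>3T$ and $\zeta_T=1$ there. Second, do the large-reach removal as an area subtraction after the normal-area-formula lower bound for the good base set, not as a removal of energy.
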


\begin{proof}
Suppose $K(R)\to\infty$ and use
\eqref{eq:r9-common-amplitude}, with $T=\eta R$,
$L=R/\sqrt{K(R)}$ and $\mathscr A=K(R)L^{1-\vartheta}$.
Lemma~\ref{lem:s6-radial-localization} bounds the mask and
small-reach errors in
\eqref{eq:s8-main-positive-comparison}. Absorbing its $R^{-1}$
mass term at finite support and dropping $d_H\mathscr C_B\geq0$
gives
\begin{equation}\label{eq:s1-final-radial-upper}
 c_*\mathscr V_B+(\delta_J/2)\mathscr J_B
 \leq C\eta^{m-2}\mathscr A+o_{R;\eta}(\mathscr A)
                  +e_{R,\eta,x_c}(B),
\end{equation}
where $e(B)\to0$ at fixed $R,\eta,x_c$.

\vskip0.12in

\emph{A lower bound for the good area.}
We next derive an opposite bound from the energy of a ball on which
the density is large.  Choose
$x_c$ so that $\mu(B_R(x_c))\geq a_0K(R)R^m$, for a fixed
$a_0>0$, along the unbounded-density sequence.  First remove
$B_{4T}(0)$, whose energy is at most
$C\eta^mK(R)R^m$ by centered monotonicity.  Choose a fixed
large $P_0$ so that energy at distance more than $P_0$ from
$\Sigma$ is less than a fixed small fraction of this selected
energy.  This choice is uniform over all centers: a separated net
of zeros has at most
$CK(R)R^m(1+j/R)^{m+1}$ elements in $B_{R+j+C}(x_c)$,
and phase decay bounds that far energy by
\[
 CK(R)R^m\sum_{j\geq P_0-C}(1+j)^C e^{-cj}.
\]
Remove also the $8\rho^{\beta_0}+2P_0$ rough collar outside
$B_{4T}(0)$. A rough center $y$ whose collar meets $B_R(x_c)$ obeys
$R\leq\rho(y)\leq\sqrt2R+C\rho(y)^{\beta_0}\leq2\sqrt2R$
for large $R$, since $\rho$ is $1$-Lipschitz and $\beta_0<1$.
Thus its observer radius $\rho(y)$ is comparable to $R$, uniformly in $x_c$, and
$R^{\beta_0}\ll T$ makes every needed center exterior to the
physical core. The preceding counting gives volume
$C_\eta K(R)R^{m-2p+b_p}=o(K(R)R^m)$; bounded $e_u$ gives
the same energy loss.  With $\eta$ below a fixed threshold and
$R$ large, a diffuse set of energy at least
$(a_0/2)K(R)R^m$ remains.

Every remaining diffuse point has a nearest zero at distance at
most $P_0$, lying in $B_{2R}(x_c)\setminus B_{3T}(0)$, with
$m_H=1$ and $D\to0$. Indeed, if $x$ is a retained diffuse point
and $z$ is a nearest zero, then $|x-z|\leq P_0$ and
$|\rho(x)-\rho(z)|\leq P_0$. The removed collar of width
$8\rho^{\beta_0}+2P_0$ therefore places $z$ beyond the mask's
$4\rho(z)^{\beta_0}$ collar for large $R$. It is regular, and distance minimization
on its graph makes the point a normal image $z+s\nu(z)$,
$|s|\leq P_0$.  The normal area formula in the direction which
counts \emph{all} preimages yields
\[
 (a_0/2)K(R)R^m
 \leq\int_{\mathcal G_R^0}\int_{-P_0}^{P_0}
        e_u(z+s\nu(z))|\det(I+sA)|\,ds\,dS_z
 \leq C_{m,P_0}\mathcal H^m(\mathcal G_R^0),
\]
where
$\mathcal G_R^0=\Sigma_g\cap B_{2R}(x_c)\cap\{|z|>3T,\ m_H=1\}$
is a measurable good base set covering all selected nearest zeros,
with unrestricted reach. Since $a^2\leq CD\to0$, the Jacobian is
uniformly bounded. Counting all preimages gives the required upper
bound even where distinct normal images overlap.

For removing large \emph{actual} reach, injectivity is needed and
follows directly from \eqref{eq:r9-chord-geometry}.  If
$x=z+s\nu(z)$, $d(z)>\ell$, and $|s|<c_\Gamma\ell$, then
for every different zero $y$,
\[
 |x-y|^2-|x-z|^2
 \geq[1-2C_\Gamma|s|/d(z)]|y-z|^2>0.
\]
Thus $z$ is the unique nearest zero, simultaneously over all
bases and both normal signs.  The curvature bound also gives a
positive Jacobian lower bound.  Disjoint images of bases in
$B_{2R}(x_c)$, with depths of length $c_\Gamma\ell$, fit in
$B_{3R}(x_c)$ and imply
\[
 \mathcal H^m\{z\in\Sigma_g\cap B_{2R}(x_c):d(z)>\ell\}
                                    \leq C_\Gamma R^{m+1}/\ell.
\]
Taking $\ell=L/2$ removes only
$C\sqrt{K(R)}R^m=o(K(R)R^m)$.  Hence the set
\begin{equation}\label{eq:r9-retained-set}
 \mathcal G_R=\{z\in\Sigma_{\rm reg}\cap B_{2R}(x_c):
             |z|>3T,\ m_H=1,\ d(z)\leq L/2\}
\end{equation}
has area at least $c_{\rm ret}K(R)R^m$, where $c_{\rm ret}>0$ is
independent of sufficiently small fixed $\eta$.

On $\mathcal G_R$, $\mathfrak{m}=1$ for $B\geq4R$,
$\widehat d\leq L/2$, and
\[
 F\geq\int_{L/2}^Lu^{-\vartheta}\,du
       =c_\vartheta L^{1-\vartheta},\qquad
 T_\rho\geq R^2/\rho^2\geq1/5,\quad
 \varphi T_\rho/\rho^2\geq cR^{-m}.
\]
Therefore
\begin{equation}\label{eq:r9-final-radial-lower}
 \int\varphi \mathfrak{m}^2F T_\rho/\rho^2\geq c_{\rm ret}'\mathscr A,
 \qquad c_{\rm ret}'>0,
\end{equation}
uniformly for sufficiently small fixed $\eta$.
Choose $\eta$ so $C\eta^{m-2}<c_*c'_{\rm ret}/2$, exhaust $B$ at
fixed $R,\eta,x_c$, and let $R$ increase.  The contradiction in
\eqref{eq:s1-final-radial-upper} bounds the density envelope.

\end{proof}

\begin{corollary}
\label{cor:r13-nine-dimensional-fixed-window}
\label{cor:r13-uniform-growth-certificate}
For every $3\leq m\leq8$, the criterion of
Theorem~\ref{thm:r13-moment-radial-master} admits
\[
 p=\frac94,\quad\gamma=\frac{363}{500},\quad c=\frac{547}{500},
 \quad q_*=\frac{157}{50},\quad a_*=\frac12,
 \quad\vartheta=\frac1{1000},\quad
 \alpha=\beta_0=\frac1{100}.
\]
One may prescribe the contact accuracy $\omega\leq1/32000$.

\end{corollary}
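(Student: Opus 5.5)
The corollary is a finite numerical verification. I would take the listed values and check, one by one, every strict inequality that Theorem~\ref{thm:r13-moment-radial-master} and Lemma~\ref{lem:s6-radial-localization} require. All conditions must hold uniformly for $3\leq m\leq8$ and for every admissible penalty plateau value $1\leq G_1\leq1.001$. Thus $\kappa=\sqrt2/G_1\in[\sqrt2/1.001,\sqrt2]$, and each condition has to be checked at its worst endpoint in $m$ and in $\kappa$. No new analysis enters. The local package (Proposition~\ref{prop:r13-nine-dimensional-local-package}) and the moment estimate are already available for $p=9/4$, so only the arithmetic has to be certified.

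First the conditions that are independent of $\kappa$.
\begin{itemize}
\item \emph{Stability comparison.} We have $\gamma\cdot .998/1.005=.726\cdot .99303\ldots\approx .72094>.718$. Hence $\delta_J>0$.
\item \emph{Constant radial condition.} $(m-2)c>\gamma a_m^2$ is equivalent to $c>\gamma(m-2)/4$. This is worst at $m=8$, where it reads $1.094>1.089$.
\item \emph{Condition after shrinking $\vartheta$.} With $c_0=c(a_*/c)^\vartheta$ and $\vartheta=10^{-3}$,
\[
 c_0=1.094\exp\bigl(10^{-3}\log(.5/1.094)\bigr)\geq1.094(1-.00079)>1.0931>1.089 .
\]
So $c_0>\gamma M/4$, $d_H>0$, and the constant condition stays strict.
\item \emph{Schur factor.} $\gamma q_*/4=.726\cdot 3.14/4\approx .570<1$.
\item \emph{Rough-set exponent.} $b_p=\tfrac94\cdot\tfrac1{100}+\tfrac{m+1}{100}\leq .1125$. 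This is below $\min\{2p-3,m-2\}=\min\{3/2,m-2\}\geq1$.
\item \emph{Contact accuracy.} $\omega\leq1/32000<\vartheta/16=1/16000$, so the prescribed accuracy is admissible.
\end{itemize}

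Next the conditions that involve $\kappa$.
\begin{itemize}
\item \emph{Upper bound on $q_*$.} $q_*<p\kappa$ is worst at $\kappa=\sqrt2/1.001$, where $p\kappa\geq 2.25\cdot1.4128>3.178>3.14$.
\item \emph{Small-reach deficit.} $p\kappa a_*\leq 2.25\cdot\sqrt2/2<1.6$, which is below $\min\{2p-1,m\}=\min\{7/2,m\}\geq3$. Also $a_*=1/2<c$.
\end{itemize}

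The delicate check is the outer-exhaustion condition $q_*(2/\kappa-c)>1$. Its margin is only about half a percent. Since $2/\kappa=\sqrt2\,G_1\geq\sqrt2$, the worst case is $G_1=1$, where
\[
 q_*(\sqrt2-c)\geq 3.14\,(1.414213-1.094)=3.14\cdot .320213>1.0054>1 .
\]
I expect this to be the main obstacle in the sense of precision. The rounding of $\sqrt2$ must be kept to at least six digits. The direction of monotonicity in $G_1$ must also be right: larger $G_1$ only helps this inequality, while it hurts $q_*<p\kappa$, which still holds with room.

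Once all these strict inequalities are recorded, the hypotheses \eqref{eq:r13-joint-p-parameters} hold together with the choice of $\vartheta$, $\omega$, $\alpha$ and $\beta_0$. The corollary then follows by invoking Theorem~\ref{thm:r13-moment-radial-master}.
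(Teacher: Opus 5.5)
Your proposal is correct and is essentially the paper's proof: a direct check of each strict hypothesis of Theorem~\ref{thm:r13-moment-radial-master} at the worst dimension $m=8$ and uniformly over $\kappa\in[\sqrt2/1.001,\sqrt2]$, with the same worst-case directions in $G_1$. The paper only adds exact rational margins (e.g.\ $\delta_J=493/167500$, $d_H>3/250$) and checks the outer condition in the slightly shrunk form $q_*\{2(1-10^{-4})/\kappa-c\}>1$ used to fix $\delta$ in Lemma~\ref{lem:s6-radial-localization}; you check the stated strict inequality instead, which suffices because that $\delta$ is chosen using strictness alone.
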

\begin{proof}
Put $M=m-2$ and $c_0=c(a_*/c)^\vartheta$.
Since $\log(547/250)<4/5$, one has
\[
 c_0>\frac{547}{500}\left(1-\frac1{1250}\right)
       =\frac{683203}{625000}.
\]
The first four positive terms of the series for $e^{4/5}$ already
give the stated logarithmic bound.
For $1\leq M\leq6$, the concave quadratic
$M(c_0-363M/2000)/2$ takes its smallest endpoint value at $M=6$;
therefore
\[
 d_H=\frac M2(c_0-\gamma M/4)
       >\frac{7734}{625000}>\frac3{250}.
\]
The coefficient margin $\delta_J$ is $493/167500$ and
$\gamma q_*/4=56991/100000<1$, so the explicit
$C_\gamma$ in Lemma~\ref{lem:s7-raw-ground-state} is finite.
The positive-foot Jacobian has already entered the coefficient $.718$ in the radial
inequality and does not multiply this gradient term.
Uniformly for $\kappa\in[\sqrt2/1.001,\sqrt2]$,
\[
 p\kappa-q_*>.0387,\qquad
 q_*\{2(1-10^{-4})/\kappa-c\}-1>.00498.
\]
Moreover $p\kappa a_*\leq9\sqrt2/8<8/5<3
\leq\min\{2p-1,m\}$, and
\[
 p\alpha+(m+1)\beta_0\leq\frac9{80}<1
                 \leq\min\{2p-3,m-2\}.
\]
Finally $\omega\leq\vartheta/16$. Every required margin is strict.
\end{proof}

\vskip0.12in

\appendix

\section{\bf The isolated-layer mean-curvature estimate}
\label{app:isolated-parity}

\begin{lemma}
\label{lem:s7-normal-darboux}\label{lem:s2-normal-schur-trace}
Let $L_t=-\partial_t^2+3g^2-1$ and suppose
\[
 (-\Delta_y+L_t)\phi=Hg'+E,\qquad \phi(y,0)=0.
\]
For each requested horizontal multi-index $\alpha$, assume
$\partial_y^\alpha\phi$, $\partial_t\partial_y^\alpha\phi$ and
$\partial_y^\alpha E$ are bounded on the whole space. Set
$E_e(y,t)=[E(y,t)+E(y,-t)]/2$. Then
\begin{align}
 \|\partial_y^\alpha\phi\|_\infty+
 \|\partial_t\partial_y^\alpha\phi\|_\infty
 &\leq C\|\partial_y^\alpha E\|_\infty,
 \label{eq:s7-darboux-profile-bound}\\
 \|\partial_y^\alpha H\|_\infty
 &\leq C(\|\partial_y^\alpha E_e\|_\infty+
          \|\partial_t\partial_y^\alpha E_e\|_\infty),
 \label{eq:s7-darboux-mean-bound}
\end{align}
where the second bound additionally assumes its right side finite.
In these linear estimates, $C$ may depend on $m$, the requested
horizontal derivative order and the fixed profile $g$.
\end{lemma}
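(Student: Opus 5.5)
The plan is to use the Darboux factorization \eqref{eq:s7-darboux-factorization}, $L_t=\mathcal D^*\mathcal D$ with $\mathcal D=\partial_t+\sqrt2g$ and $\mathcal Dg'=0$. Applying $\mathcal D$ removes the translational mode. Set $Z=\mathcal D\phi=\partial_t\phi+\sqrt2g\phi$. Since $\mathcal D$ commutes with $\Delta_y$ and $\mathcal D\mathcal D^*=-\partial_t^2+2-G$, we get
\[
 (-\Delta_y-\partial_t^2+2-G)Z=\mathcal D E
\]
in distributions on the whole space. The source $Hg'$ is annihilated exactly. Because $2-G\geq1$, this partner operator has a strictly positive potential. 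Its inverse is therefore bounded on $L^\infty$ and, in divergence form, on $\mathcal D$ of bounded functions. Concretely, I would repeat the resolvent fixed point \eqref{eq:s7-darboux-resolvent}: write $Z=(\partial_tK)*E+\sqrt2K*(gE)+K*(GZ)$ with $\int K=1/2$, and obtain $\|Z\|_\infty\leq C\|E\|_\infty$. Uniqueness of bounded solutions follows from the bounded-below maximum principle used in Theorem~\ref{thm:r13-local-master}.

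To recover $\phi$, I would use $\phi(y,0)=0$ and the pinned formula \eqref{eq:s7-pinned-recovery}, $\phi=g'(t)\int_0^tZ/g'$. The ratio bound $g'(t)/g'(s)\leq4e^{-\sqrt2(|t|-|s|)}$ gives $\|\phi\|_\infty\leq C\|Z\|_\infty$. Then $\partial_t\phi=Z-\sqrt2g\phi$ gives the same bound for $\partial_t\phi$. Horizontal derivatives commute with every operator involved, so applying this argument to $\partial_y^\alpha\phi$ and $\partial_y^\alpha E$ proves \eqref{eq:s7-darboux-profile-bound}. The boundedness hypotheses justify the uniqueness step at each order.

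For \eqref{eq:s7-darboux-mean-bound}, I would derive a trace identity by pairing with the kernel element. Multiply the equation by $g'$ and integrate in $t$ over $\mathbb R$. Since $L_tg'=0$ and the $t$-boundary terms vanish only if $\phi$ is suitably localized, I would instead pair on each half-line. Using $\phi(y,0)=0$, integration by parts on $(0,\infty)$ leaves the boundary term $g'(0)\partial_t\phi(y,0^+)$, and the same on $(-\infty,0)$. The reflection $\phi(y,-t)$ satisfies the same equation with $E$ reflected. Averaging the two half-line identities, with $g'$ even, expresses $H\int_0^\infty(g')^2$ as an even-part combination. It equals $\int_0^\infty E_eg'\,dt$ plus $\Delta_y$ applied to the horizontal average $\int_0^\infty\phi_e g'$, plus the boundary slope $\partial_t\phi_e(y,0)$.

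The main obstacle is that the $\Delta_y$ term and the boundary slope are not obviously controlled by $E_e$ alone. I would handle this by applying the profile bound to the even part $\phi_e$. It solves the same problem with source $Hg'+E_e$, and the Darboux transform kills $Hg'$, so $\mathcal D\phi_e$ is controlled by $E_e$. Then I would express $\partial_t\phi_e(y,0)=Z_e(y,0)$ through the representation formula. This requires one $t$-derivative of $E_e$, which explains the extra term in \eqref{eq:s7-darboux-mean-bound}. The $\Delta_y$ contribution is handled by inverting $-\Delta_y+\text{const}$ after rearranging, or it is bounded by the same resolvent since the partner gap is positive.
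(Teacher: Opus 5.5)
Your proof of \eqref{eq:s7-darboux-profile-bound} is the paper's argument: the Darboux transform, the divergence-form resolvent fixed point for $L_0=-\Delta+2-G$, and the pinned recovery with $\phi_t=Z-\sqrt2g\phi$. The gap is in \eqref{eq:s7-darboux-mean-bound}. Your averaged half-line pairing gives $H\int_0^\infty(g')^2=-\int_0^\infty E_eg'\,dt-\Delta_y\Phi$, where $\Phi=\int_0^\infty\phi_eg'\,dt$.

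The boundary slope you worry about is identically zero, since $\phi_e$ is even and $C^1$. Your identification $\partial_t\phi_e(y,0)=Z_e(y,0)$ is also wrong: $Z_e(y,0)=\partial_t\phi(y,0)$ belongs to the odd part $\phi_o$ and cancels on averaging. So everything rests on $\Delta_y\Phi$, and neither remedy you suggest works. There is no closed equation $(-\Delta_y+k)\Phi=\cdots$, because $H$ is a second unknown in the same identity. The zeroth-order partner bound $\|Z_o\|_\infty\leq C\|E_e\|_\infty$ controls $\Delta_y\Phi$ only by spending two extra horizontal derivatives of $E_e$, which the lemma does not allow.

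The missing idea is to evaluate the equation at $t=0$ instead of pairing it. Pinning gives $\Delta_y\phi(y,0)=0$, and $g(0)=0$, $g'(0)=1/\sqrt2$, so $-\phi_{tt}(y,0)=H/\sqrt2+E(y,0)$. Since $\partial_tZ(y,0)=\phi_{tt}(y,0)$ and $\partial_tZ_e(y,0)=0$, this is the trace identity $H=-\sqrt2\{E_e(y,0)+\partial_tZ_o(y,0)\}$. Because $\mathcal D$ reverses parity and $L_0$ preserves it, $L_0Z_o=\mathcal DE_e$. A \emph{gradient} bound for the positive partner operator then yields $|\partial_tZ_o|\leq C\|\mathcal DE_e\|_\infty\leq C(\|E_e\|_\infty+\|\partial_tE_e\|_\infty)$; this is where $\partial_tE_e$ genuinely enters. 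The paper gets this bound from the maximum principle for $2-G\geq1$ plus unit-scale $W^{2,p_0}$ estimates; you could also differentiate your kernel representation once, using $\nabla K\in L^1$.

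Your pairing identity is actually equivalent to this. Write $\Phi=\int_0^\infty wZ_o\,dt$ with $w=(g')^{-1}\int_t^\infty(g')^2$. Then $\mathcal D^*w=g'$, so $(-\partial_t^2+2-G)w=\mathcal Dg'=0$. Substituting $\Delta_yZ_o$ from the partner equation and integrating by parts in $t$ turns your identity into exactly the trace identity above, with boundary term $w(0)\{\partial_tZ_o(y,0)+E_e(y,0)\}$. Either way, the quantity you must control is $\partial_tZ_o(y,0)$, not $\partial_t\phi_e(y,0)$ or $Z_e(y,0)$.
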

\begin{proof}
By \eqref{eq:s7-darboux-factorization}, $Z=\mathcal D\phi$ solves
$L_0Z=\mathcal DE$, where $L_0=-\Delta+2-G$. For
$R_2=(-\Delta+2)^{-1}$ the bounded distributional identity is
\[
 Z=\partial_tR_2E+R_2(\sqrt2gE)+R_2(GZ),\qquad
 \|R_2\|_{\infty\to\infty}=\tfrac12,
 \quad\|\partial_tR_2\|_{\infty\to\infty}\leq1/\sqrt2.
\]
The last estimate follows by integrating
$e^{-2s}(\pi s)^{-1/2}$ in the heat representation. Absorption gives
$\|Z\|_\infty\leq2\sqrt2\|E\|_\infty$; the reconstruction formula
\eqref{eq:s7-pinned-recovery}, with $\phi(y,0)=0$, and $\phi_t=Z-\sqrt2g\phi$ give
the first bound directly in terms of $E$.

\vskip0.12in

The maximum principle for $2-G\geq1$, followed by unit-scale
$W^{2,p_0}$ estimates with $p_0>m+1$, gives
\begin{equation}\label{eq:s7-positive-partner-bound}
 \|U\|_\infty+\|\nabla U\|_\infty\leq C\|L_0U\|_\infty
 \quad(U,L_0U\text{ bounded}).
\end{equation}
Since $\mathcal D$ reverses parity, $L_0Z_o=\mathcal DE_e$.
At $t=0$, pinning gives $\Delta_y\phi=0$, $Z_t=\phi_{tt}$, hence
\begin{equation}\label{eq:s7-darboux-mean-trace}
 H=-\sqrt2\{E_e(y,0)+\partial_tZ_o(y,0)\}.
\end{equation}
This and \eqref{eq:s7-positive-partner-bound} prove the second bound.
Horizontal differentiation commutes with every step.
\end{proof}

\vskip0.12in
We next explain how to apply these estimates on nested horizontal boxes
of radius $R_h$.
After the smooth normal cutoff and extension, take $\chi=1$ on an
intermediate box, supported strictly inside the larger box where the
initial profile bounds hold.
For $\partial_y^\alpha\phi$, multiplication by this cutoff
changes its error source to $F+C$, where
$F=\chi\partial_y^\alpha E$ and
$C=-2\nabla\chi\cdot\nabla\partial_y^\alpha\phi
-(\Delta\chi)\partial_y^\alpha\phi$.
The transformed solution generated by $F$ is defined by
\[
 Z_F=(I-R_2G)^{-1}
       \{\partial_tR_2F+R_2(\sqrt2gF)\}.
\]
Since $\|R_2G\|_{\infty\to\infty}\leq1/2$, this gives
$\|Z_F\|_\infty\leq C\|F\|_\infty$; the resolvent derivative acts
on its kernel, so $\partial_tF$ does not enter this bound. Since $\chi$ depends only on $y$, $\mathcal D$
annihilates $\chi(\partial_y^\alpha H)g'$. The remaining transformed
solution satisfies
$L_0Z_C=\mathcal DC$, with source supported at distance $cR_h$
from the inner box. Kato's inequality gives
$|L_0^{-1}\mathcal DC|\leq(-\Delta+1)^{-1}|\mathcal DC|$.
Its exponential tails, unit interior gradient estimates and the reconstruction
formula \eqref{eq:s7-pinned-recovery} make this contribution $O(h^Me^{-cR_h})$ when the
commutator derivatives are $O(h^M)$. At horizontal order four,
$\mathcal DC$ uses at most six total profile derivatives.
The mean estimate uses the same argument with the stated bound on
the normal derivative of the even part of the source. The fourth-order profile estimate combines the norm of
the fourth horizontal source derivative with the exponentially small
commutator term estimated above; it requires no smallness assumption on
$\partial_t\partial_y^4E$.

\begin{proof}
We prove Theorem~\ref{thm:actual-isolated-parity-jets}. We work on inner portions of the graph. At each application of an
interior estimate, shrink the portion by a fractional margin chosen
independently of $h$. The constants may depend on the geometric bounds,
derivative orders and these margins, independently of $h$. The hypotheses give
\begin{equation}\label{eq:parity-starting-data}
 |A|+h^{1/2}[A]_{C^{1/2}}\leq C/h,\qquad
 \|H\|_{C^k}\leq C_kh^{-2}\quad(k\text{ fixed}).
\end{equation}
The defect bound gives $|\nabla u|\geq c$ on the graph; ambient
interior estimates and the implicit function theorem give uniform
fixed-order unit-chart derivatives. Rotate at an inner center and
use its tangent graph $X(y)$ and Fermi coordinates $X(y)+t\nu(y)$.
Nested boxes of radius $R_h=h^\epsilon\ll h$ are injective and
isolated; the curvature bound and small slope give distance to the
whole zero set comparable to $|t|$ on inner boxes.

\vskip0.12in

\emph{A quadratic graph comparison.}
Write the prescribed-mean-curvature equation as
$a^{ij}(Df)f_{ij}=-H$, with
$a^{ij}(p)=\delta_{ij}/\sqrt{1+|p|^2}-p_ip_j/(1+|p|^2)^{3/2}$.
For the quadratic Taylor polynomial $P$ at a unit-chart center, put
$B=D^2P$ and $v=f-P$. Then $\|v\|_\infty\leq Ch^{-3/2}$ and
\[
 a^{ij}(Df)v_{ij}+b^lv_l=-H-a^{ij}(DP)B_{ij},\qquad
 b^l=B_{ij}\int_0^1a^{ij}_{p_l}(DP+\theta Dv)\,d\theta.
\]
Uniform $C^{6,1/2}$ graph bounds give uniformly elliptic
$C^{5,1/2}$ coefficients. The right side vanishes at the center
and has $C^{5,1/2}$ norm $O(h^{-2})$. Indeed, on the fixed unit chart,
the coarse derivatives of $H$ through order six and the uniform graph
bounds give
\[
 \|H-H(0)\|_{C^{5,1/2}}
 +\|a^{ij}(DP)B_{ij}-a^{ij}(DP(0))B_{ij}\|_{C^{5,1/2}}
 \leq Ch^{-2};
\]
every horizontal derivative of positive order in the second term contains two factors
$B=O(h^{-1})$. At the center, $a^{ij}(DP(0))B_{ij}=-H(0)$, so the constants cancel.
One Schauder estimate gives
$\|v\|_{C^{7,1/2}}\leq Ch^{-3/2}$, and therefore
\begin{equation}\label{eq:parity-curvature-interpolation}
 \|\partial_y^kA\|_{C^{0,1/2}}\leq Ch^{-3/2},\qquad1\leq k\leq5.
\end{equation}
Metric and frame derivatives add products of two small graph factors.

\vskip0.12in

\emph{Initial estimate for the profile error.}
Choose the upper positive phase and put
$\phi(y,t)=u(X(y)+t\nu(y))-g(t)$. For every fixed $M>0$, the fixed-window defect estimates give
$|\phi|\leq C_Mh^{-2}$ on $|t|\leq M$, uniformly in the inner
horizontal centers. On $M\leq|t|\leq R_h$, the isolated graph and
the fixed fractional margins give
$\operatorname{dist}(X(y)+t\nu(y),\Sigma)\geq c|t|$.
The phase-decay estimate therefore gives $|\phi|\leq Ce^{-cM}$
on this part. First let $h\to\infty$ with $M$ fixed and then let
$M\to\infty$; this proves $\phi=o(1)$ uniformly on the growing
boxes. At the distant normal faces, phase decay and the fixed-order
Fermi-coordinate bounds give $O(h^{M_0}e^{-cR_h})$ for the requested
derivatives, with a fixed finite $M_0$. The exact equation is
\begin{align}\label{eq:parity-exact-fermi}
 (-\Delta_y+L_t)\phi={}&K_tg'+K_t\phi_t+
 (g_t^{ij}-\delta^{ij})\phi_{ij}+b_t^i\phi_i-3g\phi^2-\phi^3,
 \\
 K_t={}&\operatorname{tr}\{A(I+tA)^{-1}\}.\notag
\end{align}
Here subscripts $t$ on metric coefficients denote the parallel metric.
Its matrix expansion and the tangent slope $O(h^{-1+\epsilon})$ give
\begin{equation}\label{eq:parity-coefficient-ledger}
 \begin{gathered}
 K_t=H-t|A|^2+t^2\operatorname{tr}A^3+O(h^{-4+C\epsilon}),
 \qquad K_t=O(h^{-2+C\epsilon}),\\
 g_t^{ij}-\delta^{ij},\ b_t^i=O(h^{-1+C\epsilon})
 \end{gathered}
\end{equation}
through the derivative orders used below. Fix the derivative orders,
cutoff shapes, inner margins and auxiliary comparison parameters independently
of $\epsilon$. For some $\epsilon_0>0$, every exponent loss in the
finite-order estimates below is at most $C_{\rm exp}\epsilon$ for
$0<\epsilon\leq\epsilon_0$, with $C_{\rm exp}$ independent of $\epsilon$.
Multiplicative constants and the lower threshold for $h$ may depend on
the chosen $\epsilon$.

On either normal half-box use $v_0$ from
Lemma~\ref{lem:r5-sharp-barrier}, with $L_tv_0=1$ and
$0<c\leq v_0\leq C$, reflected separately on the lower half-line.
The comparison is carried out on each half-line separately. The
operator in the exact equation, with
the metric, drift and small nonlinear potential moved left, sends
$v_0$ to at least $1/2$. For small fixed $c_0>0$ the face functions
$v_0e^{-c_0\operatorname{dist}(\cdot,F)}$ remain supersolutions:
their additional terms are $O(c_0+c_0^2)$.
Their sum plus $Ch^{-2+C\epsilon}v_0$ dominates both signs of
$\phi$, which vanishes at $t=0$. Division by $v_0$ gives the
comparison principle. Inner face errors are exponentially small;
semilinear unit-scale Schauder estimates for the smooth equation,
including across $t=0$, give
\begin{equation}\label{eq:parity-profile-initial}
 \|\phi\|_{C^{6,1/2}}\leq Ch^{-2+C\epsilon}.
\end{equation}
The source and principal coefficients need four derivatives, while
$b_t$ needs curvature through order five, all supplied above.

\vskip0.12in

\emph{Two source estimates.}
For an even cutoff $\chi_h(t)$ whose derivatives lie at $|t|\asymp R_h$,
extend $\Phi=\chi_h\phi$ by zero. Its exact error source is
$E=\chi_hE_{\rm raw}+(\chi_h-1)Hg'-2\chi_h'\phi_t-\chi_h''\phi$,
where $E_{\rm raw}$ is the preceding right side minus $Hg'$.
All added terms are superpolynomially small. Restore the leading odd
source on the whole normal line and extend the coefficients outside
the cutoff interval smoothly by their finite matrix expansions;
all differences enter $\mathcal R$. Writing $\phi$ for $\Phi$,
\begin{equation}\label{eq:s3-parity-source-decomposition}
 \begin{aligned}
 E&=\mathcal O+\mathcal C+\mathcal B+\mathcal N+\mathcal R,\\
 \mathcal O&=-t|A|^2g',&\mathcal C&=(K_t-H+t|A|^2)g',\\
 \mathcal B&=(g_t^{ij}-\delta^{ij})\phi_{ij}+b_t^i\phi_i,
 &\mathcal N&=K_t\phi_t-3g\phi^2-\phi^3.
 \end{aligned}
\end{equation}
The remainder and its requested derivatives are $O(h^Me^{-cR_h})$.
For $1\leq|\alpha|\leq4$, a derivative of $|A|^2$ has factors
$h^{-3/2}$ and $h^{-1}$; $\mathcal C$ starts cubically in $A$,
$\mathcal B$ is $O(h^{-3+C\epsilon})$, and $\mathcal N$ is
$O(h^{-4+C\epsilon})$. Hence
\begin{equation}\label{eq:parity-first-error}
 \sup|\partial_y^\alpha E|\leq Ch^{-5/2+C\epsilon},
 \qquad1\leq|\alpha|\leq4.
\end{equation}
The highest derivative of the profile error required in this estimate is $\partial_y^6\phi$.
The localized first estimate of the lemma gives
\begin{equation}\label{eq:parity-positive-profile-gain}
 \sup(|\partial_y^\alpha\phi|+|\partial_t\partial_y^\alpha\phi|)
 \leq Ch^{-5/2+C\epsilon},\qquad1\leq|\alpha|\leq4.
\end{equation}
The source involving $H$ is eliminated by $\mathcal D(Hg')=0$
before the horizontal derivative estimates are applied.

\vskip0.12in

The leading term $\mathcal O$ is odd. Taking the even part $E_e$ of $E$,
\begin{equation}\label{eq:s7-parity-even-zero}
 \sup(|E_e|+|\partial_tE_e|)\leq Ch^{-3+C\epsilon}.
\end{equation}
A horizontal derivative of order one or two of the cubic curvature term in $\mathcal C$ has size $O(h^{-7/2})$;
every profile factor in $\mathcal B$, through horizontal order two
and normal order one, has horizontal derivative order between one and four
and now gains $h^{-1/2}$. The metric identity
$g_t=g_0(I+tA)^2$ produces a curvature factor under normal
differentiation; $\mathcal N,\partial_t\mathcal N$ remain
$O(h^{-4+C\epsilon})$, using at most $\partial_y^2\phi_{tt}$.
Taking the normal derivative after the even part therefore gives
\begin{equation}\label{eq:parity-second-error}
 \sup(|\partial_y^\alpha E_e|+|\partial_t\partial_y^\alpha E_e|)
 \leq Ch^{-7/2+C\epsilon},\qquad1\leq|\alpha|\leq2.
\end{equation}
Applying \eqref{eq:s7-darboux-mean-bound} for $1\leq|\alpha|\leq2$ gives the same bound for $H_y,H_{yy}$;
the zeroth-order case gives $|H|\leq Ch^{-3+C\epsilon}$.
For prescribed $0<\gamma<1/2$, choose $0<\epsilon\leq\epsilon_0$
after the fixed choices above so that
$C_{\rm exp}\epsilon<1/2-\gamma$. Bounded connection coefficients
give the asserted covariant derivative estimates. For a prescribed
$\varepsilon_1>0$, the same argument with the additional restriction
$C_{\rm exp}\epsilon<\varepsilon_1$ gives the stated estimate for $H$,
with its own constants and lower threshold for $h$.

\vskip0.12in

Finally rescale the original graph by $h^{-1}$. Its $C^{2,1/2}$ norm
is uniform, and $hH$ has bounded $C^{1,1}$ norm because its first
two derivatives scale by $h^2,h^3$. Differentiating its prescribed
mean-curvature equation once and applying Schauder gives a uniform
third graph derivative, hence $|\nabla_\Sigma A|\leq Ch^{-2}$.
All centers use the same fractional margins.
\end{proof}

\vskip0.12in

\section{\bf Scalar feasibility for the radial comparison}
\label{app:s6-scalar-feasibility}

\begin{theorem}\label{thm:s2-exact-feasibility}
For $m\geq3$, $p\geq2$, $\kappa>0$ and the coefficients of
\eqref{eq:s2-recovery-input}, put
$g_0=\kappa dA/s$ and $g_1=\kappa dB/b$ when $b>0$, otherwise
$g_1=\infty$. The following scalar conditions
\begin{gather}
 \gamma s/d>A,\quad\gamma b/d\leq B,\quad
 c>\gamma(m-2)/4,\notag\\
 \gamma q_*/4<1,\quad q_*<p\kappa,\quad q_*(2/\kappa-c)>1
 \label{eq:s2-scalar-conditions}
\end{gather}
are feasible exactly when
\begin{equation}\label{eq:s2-exact-gate}
 g_0<\min\{g_1,8/(m-1),(8-4/p)/(m-2)\}.
\end{equation}
All auxiliary strict localization margins can then be chosen if
contact accuracy is arbitrarily prescribable.
\end{theorem}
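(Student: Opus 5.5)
The statement is a pure scalar feasibility claim, so I would eliminate the variables $\gamma,c,q_*$ one at a time and read off the surviving inequality. The coefficients $A,B,s,b,d$ of \eqref{eq:s2-recovery-input} are fixed positive data, with $b\geq0$. I expect $A,s$ to play the roles of $.718$ and $.998/1.005$, and $d$ to play the role of the Jacobian/penalty factor. The first two conditions say exactly that
\[
 \frac{dA}{s}<\gamma\leq\frac{dB}{b}.
\]
Multiplying by $\kappa$ rewrites this as $g_0<\kappa\gamma\leq g_1$, with no upper constraint when $b=0$. So I set $x=\kappa\gamma$ and treat $x\in(g_0,g_1]$ as the free parameter; the strict inequality on the left is what makes the gate strict.

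Next I eliminate $q_*$. The conditions on $q_*$ are
\[
 q_*<\frac4\gamma,\qquad q_*<p\kappa,\qquad q_*\Bigl(\frac2\kappa-c\Bigr)>1.
\]
The third requires $c<2/\kappa$ and then $q_*>\kappa/(2-c\kappa)$. Hence a feasible $q_*$ exists if and only if
\[
 \frac{\kappa}{2-c\kappa}<\min\Bigl\{\frac4\gamma,\ p\kappa\Bigr\}.
\]
Writing $y=c\kappa\in(0,2)$, this becomes
\[
 \frac{1}{2-y}<\frac{4}{x}\qquad\text{and}\qquad \frac1{2-y}<p,
\]
that is, $y<2-x/4$ and $y<2-1/p$.

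Then I eliminate $c$. The remaining condition $c>\gamma(m-2)/4$ reads $y>x(m-2)/4$. So $c$ exists if and only if
\[
 \frac{x(m-2)}4<\min\Bigl\{2-\frac x4,\ 2-\frac1p\Bigr\}.
\]
The first alternative gives $x<8/(m-1)$, and the second gives $x<(8-4/p)/(m-2)$, using $m\geq3$. Both constraints on $x$ are open upper bounds. Together with $x\in(g_0,g_1]$, feasibility holds exactly when $g_0$ lies below all three upper bounds, which is \eqref{eq:s2-exact-gate}. For necessity I run the chain backwards: any feasible tuple produces $x=\kappa\gamma$ satisfying all of these inequalities. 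I must check that each elimination step is an equivalence, i.e.\ that the interval for the eliminated variable is nonempty exactly under the stated condition. This holds because each step is an intersection of open or half-open intervals.

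The final sentence of the statement I handle by strictness. Since every inequality in \eqref{eq:s2-exact-gate} is strict, I can pick $x,y,q_*$ in the interiors of their intervals. The remaining auxiliary parameters can then be fitted with positive room, in the same way as the choices in Corollary~\ref{cor:r13-uniform-growth-certificate}:
\begin{itemize}
\item choose $a_*<c$ small, so that $p\kappa a_*<\min\{2p-1,m\}$;
\item choose $\vartheta$ small, so that $c_0=c(a_*/c)^\vartheta$ still exceeds $\gamma(m-2)/4$;
\item choose $\alpha,\beta_0$ small;
\item prescribe the contact accuracy $\omega\leq\vartheta/16$.
\end{itemize}

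The main obstacle I expect is bookkeeping rather than mathematics: identifying the coefficients in \eqref{eq:s2-recovery-input} correctly, and handling the case $b=0$ and the case where $\gamma\leq dB/b$ is attained with equality, which is allowed. The elimination itself is elementary.
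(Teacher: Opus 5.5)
Your elimination is correct and essentially matches the paper's proof. The paper likewise normalizes $g=\gamma\kappa$ and $C=c\kappa$, writes $U=\kappa/q_*$ so that the last four conditions become $C>(m-2)g/4$, $U>\max(g/4,1/p)$, $C+U<2$, reads off $g<\min\{8/(m-1),(8-4/p)/(m-2)\}$ together with $g_0<g\le g_1$, and fixes $a_*,\vartheta,\alpha,\beta_0,\omega$ in the same order you list. (Your guess about the data is slightly off: the paper's application uses $A=.718$, $s=.998$, $d=1.005$, $B=80/\kappa$, $b=25$. This does not affect your argument, since you treat them as abstract coefficients.)
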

\begin{proof}
Set $M=m-2$, $g=\gamma\kappa$, $C=c\kappa$, $U=\kappa/q_*$.
The last four conditions become
\[
 C>Mg/4,\qquad U>\max(g/4,1/p),\qquad C+U<2.
\]
They hold for some $C,U$ exactly when
$g<\min(8/(M+1),(8-4/p)/M)$; the first two conditions in
\eqref{eq:s2-scalar-conditions} require
$g>g_0$ and $g\leq g_1$. This proves necessity.
Conversely choose $g$ strictly between $g_0$ and the right side
of \eqref{eq:s2-exact-gate},
put $h_g=\max(g/4,1/p)$, $r_g=2-Mg/4-h_g>0$, and take
\[
 C=Mg/4+r_g/3,\qquad U=h_g+r_g/3.
\]
The four normalized radial, gradient, moment and outer margins
are at least $r_g/3$.
Recover $\gamma,c,q_*$, choose $a_*>0$ sufficiently small that
$a_*<c$ and $p\kappa a_*<\min(2p-1,m)$, then choose
$\vartheta>0$ small so $c(a_*/c)^\vartheta>\gamma M/4$.
Finally take small $\alpha,\beta_0>0$ with
$p\alpha+(m+1)\beta_0<\min(2p-3,M)$ and prescribe
$\omega\leq\vartheta/16$. These are parameter choices only;
the moment and physical tests remain separate analytic inputs.
\end{proof}

\vskip0.12in

For the actual coefficients $A=.718$, $s=.998$, $d=1.005$,
$B=80/\kappa$, $b=25$, one has $g_1=402/125$ and
$g_0\leq(72159/99800)\sqrt2<28/27$, the most restrictive moment
threshold for $m\leq8$, $p=9/4$. The explicit uniform parameter
choice is verified in Corollary~\ref{cor:r13-uniform-growth-certificate}.

\section{\bf The dimension restriction in the radial comparison}
\label{sec:r13-final-gap}
In this section we examine the dimension range of the radial argument.
The scalar criterion in Appendix~\ref{app:s6-scalar-feasibility}
concerns the chosen family of test functions. Its failure does not give
an obstruction for solutions of the equation.

\begin{corollary}
\label{prop:r13-ten-dimensional-obstruction}\label{cor:s2-combined-obstruction}
For fixed coefficients in \eqref{eq:s2-recovery-input}, scalar feasibility for some
$2\leq p<p_{\max}(m)=2+\sqrt{2/m}$ is equivalent to
\[
 g_0<\min\{g_1,8/(m-1),(8-4/p_{\max}(m))/(m-2)\}.
\]
In ambient dimension ten, the moment endpoint is
\begin{equation}\label{eq:s2-combined-threshold}
 (8-4/(2+\sqrt2/3))/7=(100+6\sqrt2)/119<1;
\end{equation}
at $p=9/4$ it is $8/9$. Even arbitrary moments do not give feasibility
for $m\geq9$ with the ideal coefficients $s=d=1$, $A=1/\kappa$,
$g_1=\infty$.
\end{corollary}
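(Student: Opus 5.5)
The plan is to reduce everything to Theorem~\ref{thm:s2-exact-feasibility}, which characterizes feasibility at a single fixed exponent $p$, and then to optimize over $p$. The first thing to check is that $g_0=\kappa dA/s$ and $g_1=\kappa dB/b$ depend only on the fixed coefficients in \eqref{eq:s2-recovery-input} and on $\kappa$, not on $p$. With $M=m-2$, the only $p$-dependent threshold in \eqref{eq:s2-exact-gate} is then $\phi(p)=(8-4/p)/M$, which is continuous and strictly increasing on $[2,\infty)$.

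For the equivalence, suppose feasibility holds at some $p\in[2,p_{\max})$. Then \eqref{eq:s2-exact-gate} gives $g_0<\min\{g_1,8/(m-1),\phi(p)\}$, and $\phi(p)<\phi(p_{\max})$ yields the displayed strict inequality. Conversely, suppose $g_0<\min\{g_1,8/(m-1),\phi(p_{\max})\}$. Continuity of $\phi$ at $p_{\max}$ gives some $p<p_{\max}$, which may also be taken $\geq2$, with $g_0<\phi(p)$. Theorem~\ref{thm:s2-exact-feasibility} then gives feasibility at that $p$. The openness of the strict inequality is what lets the endpoint $p_{\max}(m)=2+\sqrt{2/m}$ from \eqref{eq:r12-ssy-range} enter only as a supremum.

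For ambient dimension ten, take $m=9$ and $M=7$. Rationalizing gives $4/(2+\sqrt2/3)=12/(6+\sqrt2)=6(6-\sqrt2)/17$. Hence $8-4/p_{\max}=(100+6\sqrt2)/17$, and dividing by $7$ gives $(100+6\sqrt2)/119$, which is less than $1$ because $6\sqrt2<19$. At $p=9/4$ one gets $(8-16/9)/7=8/9$.

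For the ideal coefficients $s=d=1$, $A=1/\kappa$, $g_1=\infty$, one has $g_0=1$. The threshold $8/(m-1)$ does not depend on $p$ at all, and it satisfies $8/(m-1)\leq1$ for $m\geq9$. So \eqref{eq:s2-exact-gate} fails for every $p\geq2$, whatever moment exponent is allowed.

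There is no serious obstacle. The only point needing care is the use of strict inequalities and continuity in passing from a single exponent to the whole range $p<p_{\max}$. The rest is exact arithmetic.
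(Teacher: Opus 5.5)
Your proposal is correct and follows the paper's own route. The paper likewise takes the increasing union over $p$ of the open intervals given by Theorem~\ref{thm:s2-exact-feasibility}, and notes that for the ideal coefficients $g_0=1$ violates the $p$-independent condition $g_0<8/(m-1)$ when $m\geq9$; you have filled in the monotonicity and continuity details and the rationalization arithmetic, all of which check out.
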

\begin{proof}
Take the increasing union of the open scalar intervals in
\eqref{eq:s2-exact-gate}. For the ideal coefficients $g_0=1$,
whereas the all-moment condition $g_0<8/(m-1)$ fails for $m\geq9$.
\end{proof}
Only the $p=9/4$ full moment is proved here; taking this scalar union
does not prove the weighted stability or profile estimates at other powers.

\vskip0.12in

\subsection{A conditional estimate for the signed deficit}
We give a conditional form of the radial argument that keeps the
signed constant coefficient. At the same finite support, define
\begin{align}\label{eq:s1-radial-positive}
 \mathscr Q_B=\int\varphi\mathfrak m^2\bigg[
 &\frac{A_0W+\{m(m-2)F+B_0W\}T_\rho}{\rho^2}\notag\\
 &+\frac{3\vartheta W}{4t}|\nabla t|^2+
       \frac{W\lambda}{b_L'}|\nabla s|^2\bigg],
\end{align}
with the last quotient zero when $b_L'=0$.

\begin{lemma}
\label{lem:s3-signed-ground-state}\label{lem:s2-trace-reserve}
Put $M=m-2>0$, $a_m=M/2$, $B_0=2A_0/M$ and
$\Delta_0=A_0-\gamma M^2/4$, of either sign. If
$\gamma,\vartheta>0$, $\gamma q_*/4<1$, $t_{\min}(R)\to\infty$
and $|H|\leq h_0I_P$, then at finite support, for large $R$,
\begin{equation}\label{eq:s3-signed-ground-state}
 \mathscr Q_B-\gamma\mathscr D_B\geq
 c_*\mathscr V_B+\Delta_0\mathscr C_B
 -\gamma a_mh_0\mathscr L_B/R
 -C_\gamma\int\varphi W|\nabla\mathfrak m|^2,
\end{equation}
where $c_*=mM(1-\gamma q_*/4)/2$ and
$C_\gamma=2\gamma/(1-\gamma q_*/4)$ are independent of small $\eta$.
\end{lemma}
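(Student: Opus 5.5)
The plan is to repeat the proof of Lemma~\ref{lem:s7-raw-ground-state}, with $A_0W$ and $B_0W$ in place of the $F_\tau$ coefficients. Because the radial coefficients in $\mathscr Q_B$ are explicit multiples of $W$, no $J_1$ term and no lower bound on $F_\tau/W$ are needed. The signed constant $\Delta_0$ therefore passes through unchanged, which is why no condition on $c_0$ is imposed.

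\emph{Step 1: exact identity.} Expand $\gamma\mathscr D_B$ by the ground-state identity \eqref{eq:s1-ground-state}, and insert the third identity of \eqref{eq:r9-radial-identities} for $-\Delta_\Sigma h/h$. Use $a_m^2=M^2/4$ and $a_m(a_m+2)=M^2/4+M$. The expected result is that $\mathscr Q_B-\gamma\mathscr D_B$ equals
\[
 \int\frac{\varphi\mathfrak m^2}{\rho^2}\Big[(A_0-\tfrac{\gamma M^2}{4})W
 +\{B_0W+mMF-\gamma(\tfrac{M^2}{4}+M)W\}T_\rho+\gamma a_mH(X\cdot\nu)W\Big]
 +\int\varphi\{\mathfrak m^2WZ-\gamma|\nabla_\Sigma(\mathfrak m\sqrt W)|^2\},
\]
where $Z=\lambda|\nabla s|^2/b_L'+3\vartheta|\nabla t|^2/(4t)$ as before.

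\emph{Step 2: the bracket.} The trace coefficient is regrouped using $B_0=2A_0/M=2\Delta_0/M+\gamma M/2$ and $mM\gamma/4=\gamma M^2/4+\gamma M/2$. This turns it exactly into
\[
 \frac{2\Delta_0}{M}W+mM\Big(F-\frac{\gamma W}{4}\Big).
\]
The whole bracket is then
\[
 \Delta_0W\Big(1+\frac{2T_\rho}{M}\Big)+mM\Big(F-\frac{\gamma W}{4}\Big)T_\rho+\gamma a_mH(X\cdot\nu)W .
\]
The first term integrates to $\Delta_0\mathscr C_B$ exactly, whatever the sign of $\Delta_0$. For the second term use $W\leq(q_*+\vartheta/t)F$ from \eqref{eq:r9-weight-ratios}, together with $t\geq t_{\min}(R)\to\infty$. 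This gives
\[
 F-\frac{\gamma W}{4}\geq\Big(1-\frac{\gamma q_*}{4}-o_R(1)\Big)F\geq\tfrac12\Big(1-\frac{\gamma q_*}{4}\Big)F
\]
for large $R$, hence the contribution $c_*\mathscr V_B$. For the mean term, $|X\cdot\nu|/\rho^2\leq1/R$ and $|H|\leq h_0I_P$ bound it below by $-\gamma a_mh_0R^{-1}\int I_P\varphi\mathfrak m^2W=-\gamma a_mh_0\mathscr L_B/R$.

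\emph{Step 3: the gradient term.} This is the only step needing care. Reuse the weighted Cauchy--Schwarz argument leading to \eqref{eq:s7-mask-schur}. The inequality $\gamma|\nabla\log\sqrt W|^2\leq\mu_0Z$ holds with $\mu_0=(1+\gamma q_*/4)/2<1$ for large $R$, using $\lambda b_L'\leq q_*$, $\gamma\vartheta/(3t)\to0$, and the factor $3\vartheta/4$ in $Z$. The Young split then gives $\mathfrak m^2WZ-\gamma|\nabla(\mathfrak m\sqrt W)|^2\geq-C_\gamma W|\nabla\mathfrak m|^2$, which yields the final term of \eqref{eq:s3-signed-ground-state}. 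All constants so far come only from $\gamma,q_*,M$, so they are independent of small $\eta$; only the threshold for $R$ may depend on $\eta$. At finite support the integrations by parts are justified exactly as in the earlier lemma, because the masks vanish near singular zeros.
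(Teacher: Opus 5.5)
Your proposal is correct and follows the paper's proof. The paper likewise expands $\gamma\mathscr D_B$ by the ground-state identity and regroups the trace coefficient via $mMF+\{B_0-\gamma a_m(a_m+2)\}W=mM(F-\gamma W/4)+(2\Delta_0/M)W$ to obtain $\Delta_0\mathscr C_B$ exactly. It then uses $W\leq(q_*+o_R(1))F$, the mask inequality \eqref{eq:s7-mask-schur}, and the mean bound from $|X\cdot\nu|/\rho^2\leq1/R$, and your explicit check of the algebra and of the constants $c_*$, $C_\gamma$ just spells out those terse steps.
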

\begin{proof}
The constant coefficient is $\Delta_0W$, and
\begin{equation}\label{eq:s2-trace-identity}
 mMF+\{B_0-\gamma a_m(a_m+2)\}W
 =mM(F-\gamma W/4)+(2\Delta_0/M)W.
\end{equation}
Thus the signed term is exactly $\Delta_0\mathscr C_B$.
Use $W\leq(q_*+o_R(1))F$, the mask inequality
\eqref{eq:s7-mask-schur}, and the mean bound from the exact
ground-state identity. The signed coefficient remains explicit
throughout the calculation.
\end{proof}

\begin{lemma}\label{lem:s7-frozen-recovery}
The established localization and radial estimates imply, whenever
$A_0<Mc_0$ and $B_0=2A_0/M$,
\[
 \mathscr Q_B\leq.718\mathscr L_B-(80/\kappa)\mathscr N_B
                  +\mathcal E_B^{\rm rad}.
\]
\end{lemma}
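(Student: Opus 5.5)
Lemma~\ref{lem:s7-frozen-recovery} can be obtained by rerunning the radial derivation in the proof of Theorem~\ref{thm:r13-common-physical-test}, with two changes. First, the frozen coefficients $A_0W$ and $B_0W$ replace $(m-2)F_\tau$ and $2F_\tau$ in the constant and trace parts. Second, the signed neighbor term from the incoming and local traces is kept rather than being bounded by $.355J_R$.

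\emph{Step 1: reduce to the $\mathscr P_B$ chain.} Test \eqref{eq:r9-weight-chain} against $\varphi\mathfrak m^2$ and integrate twice by parts at finite support. The identities \eqref{eq:r9-radial-identities} then produce the constant coefficient $(m-2)F_\tau/\rho^2$ and the trace coefficient $\{m(m-2)F+2F_\tau\}T_\rho/\rho^2$. On $t\geq a_*\tau$ one has $F_\tau/W\geq c_0-o_R(1)$. Since $A_0<Mc_0$, this gives $MF_\tau\geq A_0W$ and $2F_\tau\geq B_0W$ there, with a strict margin absorbing the $o_R(1)$ and the $J_1\leq CF_\tau/\tau$ correction. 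Hence on this region the frozen integrand of $\mathscr Q_B$ is dominated by that of $\mathscr P_B$. On the complementary region $t<a_*\tau$, the frozen terms are at most $C\varphi\mathfrak m^2W/\rho^2$. These are exactly the small-reach deficits already bounded by $o_{R;\eta}(\mathscr A)$ in Lemma~\ref{lem:s6-radial-localization}, so they enter $\mathcal E_B^{\rm rad}$. It therefore suffices to prove the claimed inequality with $\mathscr Q_B$ replaced by this dominating expression.

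\emph{Step 2: keep the signed trace.} Before bounding $B_R$, \eqref{eq:s9-principal-source-trace} gives $\mathcal T_B\leq G_1(2.012)\int\psi^2B_R$. Substitute $B_R=.355I_P-24\sqrt2\,b_R$ and expand:
\[
 \mathcal T_B\leq .355G_1(2.012)\mathscr L_B-24\sqrt2\,G_1(2.012)\mathscr N_B .
\]
Using $G_1\leq1.001$, the first coefficient is below $.716$, and the $o_R(1)\mathscr L_B$ mean-term error is absorbed up to $.718$. For the second term, $24\sqrt2\,G_1(2.012)\geq 24\sqrt2\cdot2.012>80/\kappa=80G_1/\sqrt2$, because $48\cdot2.012>80\cdot1.001$. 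This yields the term $-(80/\kappa)\mathscr N_B$.

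\emph{Main obstacle.} The plateau, drift and small-$at$ contributions are bounded by $\vartheta|\nabla t|^2/(4t)$ as before, and the cap, rough, large-$at$, physical, foot and outer errors are exactly those listed in \eqref{eq:r13-two-ledgers}. What needs care is the incoming term: after transfer by the signed positive-foot formula, the negative part $-24\sqrt2\,b_R$ must still be available. I would check that $b_R$ is evaluated at the receiving point, which the clean receiving-window remark after \eqref{eq:s8-main-positive-density} justifies. I would also check that the Jacobian inflation $J_*(1+v)^2\geq1$ multiplies the negative term in the favorable direction; it does, since multiplying a nonpositive quantity by a factor at least one only decreases it. If this fails, I would fall back to keeping the negative term from the local trace alone. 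That gives coefficient $24\sqrt2\,G_1\geq 24\sqrt2$, which still exceeds $80/\kappa\approx56.6$ given the margin computed above.
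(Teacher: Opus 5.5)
Your Steps~1 and~2 are essentially the paper's proof, with the two steps in the opposite order. The paper first keeps the signed trace $\mathcal T_B\le G_1(2.012)\int\psi^2B_R$, where $24\sqrt2\,G_1(2.012)=96.576/\kappa$, and obtains $\mathscr P_B\le.718\mathscr L_B-(80/\kappa)\mathscr N_B+\mathcal E_B^{\rm rad}$. It then compares $\mathscr Q_B$ with $\mathscr P_B$: on $t\ge a_*\tau$ the difference is absorbed using $A_0<Mc_0$ and $J_1|\nabla\tau|^2\le CF_\tau/(\tau\rho^2)$, and on $t<a_*\tau$ it is an $o_{R;\eta}(\mathscr A)$ error. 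Your main line reproduces this correctly.

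Your ``main obstacle'' paragraph, however, is wrong in two places.

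\emph{The justification is backwards.} Replacing the negative piece $-24\sqrt2\,b_R$ by $-J_*(1+v)^2\,24\sqrt2\,b_R$ makes an upper bound \emph{smaller}, so it is the unfavorable direction and is not automatically allowed. What makes it legitimate is $B_R\ge0$ from \eqref{eq:r12-global-positive-improved-density}. After the positive-foot transfer, $J\,B_R\le J_*(1+v)^2(1+o_R(1))\,B_R$ holds pointwise, and $\int_{Y_+}\psi^2B_R\le\int_\Sigma\psi^2B_R$. Only after these two steps is $B_R$ expanded into its two terms. This is the paper's ``enlarge the coefficient of the nonnegative density $B_R$ before expanding.''

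\emph{Your fallback fails numerically.} The local trace alone gives the coefficient $24\sqrt2\,G_1=48/\kappa\approx33.9$, well below $80/\kappa\approx56.6$. You need a total factor exceeding $5/3$ in front of $48/\kappa$. So the negative neighbor term carried by the incoming trace is indispensable, and it is available only through the nonnegativity argument above.
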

\begin{proof}
Section~\ref{sec:r13-physical-master} bounds the principal trace
by $G_1(2.012)\int\psi^2B_R$, where
$B_R=.355I_P-24\sqrt2b_R$. Substituting $24\sqrt2G_1=48/\kappa$ gives
\[
 \mathcal T_B\leq.715\mathscr L_B
                    -(96.576/\kappa)\mathscr N_B.
\]
The remaining radial mean term is bounded by
$\varepsilon_R\mathscr L_B$, where $\varepsilon_R\to0$, using
$|H|\leq.355I_P$. For $\varepsilon_R<.003$, the form $\mathscr P_B$ therefore
satisfies
\[
 \mathscr P_B\leq.718\mathscr L_B-(80/\kappa)\mathscr N_B
                    +\mathcal E_B^{\rm rad},
\]
with strict margins in both coefficients.
To estimate $\mathscr Q_B$, subtract \eqref{eq:s7-raw-radial-form} from
\eqref{eq:s1-radial-positive}. On $t\geq a_*\tau$, the strict
coefficient inequalities and $J_1|\nabla\tau|^2\leq CF_\tau/(\tau\rho^2)$
absorb the difference. On the complement its absolute value is at most
$C\int_{t<a_*\tau}\varphi\mathfrak m^2W/\rho^2=o_{R;\eta}(\mathscr A)$,
already bounded by the localization estimates. This proves the lemma.
\end{proof}

\vskip0.12in

\begin{lemma}
\label{thm:s2-finite-recovery}\label{cor:s3-deficit-absorption}
Let $A,B,b\geq0$, $s,d>0$, and at each finite support suppose
\begin{equation}\label{eq:s2-recovery-input}
 \mathscr P_B\leq A\mathscr L_B-B\mathscr N_B+\mathcal E_r,
 \qquad s\mathscr L_B\leq d\mathscr D_B+b\mathscr N_B+\mathcal E_t,
\end{equation}
where $\mathscr L_B,\mathscr N_B,\mathscr D_B\geq0$.
If $\delta_*=\gamma s/d-A>0$ and $\chi_*=B-\gamma b/d\geq0$, then
\begin{equation}\label{eq:s2-recovery-output}
 \mathscr P_B+\delta_*\mathscr L_B+\chi_*\mathscr N_B
 \leq\gamma\mathscr D_B+\mathcal E_r+(\gamma/d)\mathcal E_t.
\end{equation}
Suppose additionally
\begin{equation}\label{eq:s2-structural-closure-input}
 \mathscr P_B-\gamma\mathscr D_B\geq c_*\mathscr V_B
 -\mathscr H_B-\varepsilon_R\mathscr L_B-\mathcal E_g,\qquad
 \mathscr H_B\leq r_L\mathscr L_B+r_N\mathscr N_B+\mathcal E_H,
\end{equation}
along a sequence $R\to\infty$ with $\mathscr A_R>0$,
fixed $0\leq r_L<\delta_*$, $0\leq r_N\leq\chi_*$ and
$\varepsilon_R\to0$. If $\mathscr V_B\geq c_{\rm ret}\mathscr A_R$
and the total error is at most
$C\eta^{m-2}\mathscr A_R+o_{R;\eta}(\mathscr A_R)+e_{R,\eta}(B)$,
where $e_{R,\eta}(B)\to0$ for each fixed $R,\eta$, $m>2$, and
$c_*,c_{\rm ret}>0$ and $C$ are uniform for small fixed $\eta$,
then no such sequence exists.
\end{lemma}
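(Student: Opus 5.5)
The plan treats the lemma as pure bookkeeping: one linear combination of the two inequalities in \eqref{eq:s2-recovery-input}, followed by a contradiction argument of the same type as the proof of Theorem~\ref{thm:r13-moment-radial-master}.

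\emph{Recovery inequality.} Multiply the second inequality of \eqref{eq:s2-recovery-input} by $\gamma/d>0$:
\[
 \frac{\gamma s}{d}\mathscr L_B\leq\gamma\mathscr D_B+\frac{\gamma b}{d}\mathscr N_B+\frac\gamma d\mathcal E_t .
\]
Add this to the first inequality and move the $\mathscr L_B$ and $\mathscr N_B$ terms to the left. The coefficient of $\mathscr L_B$ on the left becomes $\gamma s/d-A=\delta_*$. The coefficient of $\mathscr N_B$ becomes $B-\gamma b/d=\chi_*$. This is exactly \eqref{eq:s2-recovery-output}. Only linearity and the signs $\gamma,d>0$ are used; the nonnegativity of $\mathscr L_B,\mathscr N_B,\mathscr D_B$ is not needed at this stage.

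\emph{Closure.} Rewrite \eqref{eq:s2-recovery-output} as
\[
 \delta_*\mathscr L_B+\chi_*\mathscr N_B\leq\gamma\mathscr D_B-\mathscr P_B+\mathcal E_r+(\gamma/d)\mathcal E_t .
\]
Bound $\gamma\mathscr D_B-\mathscr P_B$ from above by the first part of \eqref{eq:s2-structural-closure-input}, and then bound $\mathscr H_B$ by its second part. This gives
\[
 c_*\mathscr V_B+(\delta_*-r_L-\varepsilon_R)\mathscr L_B+(\chi_*-r_N)\mathscr N_B\leq\mathcal E_r+(\gamma/d)\mathcal E_t+\mathcal E_g+\mathcal E_H .
\]
Take $R$ large enough that $\varepsilon_R<(\delta_*-r_L)/2$. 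Then both middle coefficients are nonnegative, and since $\mathscr L_B,\mathscr N_B\geq0$ those terms can be dropped. Using the assumed bound on the total error and $\mathscr V_B\geq c_{\rm ret}\mathscr A_R$, we obtain
\[
 c_*c_{\rm ret}\mathscr A_R\leq C\eta^{m-2}\mathscr A_R+o_{R;\eta}(\mathscr A_R)+e_{R,\eta}(B).
\]

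\emph{Order of limits.} Since $m>2$ and $c_*,c_{\rm ret},C$ are uniform for small $\eta$, first fix $\eta$ with $C\eta^{m-2}\leq c_*c_{\rm ret}/2$. Next, with $R$ and $\eta$ fixed, let $B\to\infty$ so that $e_{R,\eta}(B)\to0$. Dividing by $\mathscr A_R>0$ then gives
\[
 c_*c_{\rm ret}/2\leq o_{R;\eta}(1).
\]
Letting $R\to\infty$ along the sequence gives a contradiction, so no such sequence exists.

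\emph{Main obstacle.} The only delicate point is the order of limits, which must match the convention set after Lemma~\ref{lem:s6-radial-localization}: $\eta$ first, then $B\to\infty$, then $R\to\infty$. This requires that the lower bound $\mathscr V_B\geq c_{\rm ret}\mathscr A_R$ holds for every $B\geq4R$ uniformly, as in \eqref{eq:r9-final-radial-lower}, rather than only in the limit $B\to\infty$.
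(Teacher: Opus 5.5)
Your proposal is correct and matches the paper's proof essentially step for step. You multiply the stability inequality by $\gamma/d$, add the radial upper bound, and insert the structural lower bound to reach $c_*\mathscr V_B+(\delta_*-r_L-\varepsilon_R)\mathscr L_B+(\chi_*-r_N)\mathscr N_B\leq\mathcal E_r+(\gamma/d)\mathcal E_t+\mathcal E_g+\mathcal E_H$. You then absorb $\varepsilon_R$ at finite support, fix $\eta$, exhaust $B$, and let $R\to\infty$. The sign $\gamma>0$ that you use is automatic, since $\delta_*>0$, $A\geq0$ and $s,d>0$ force $\gamma s/d>0$.
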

\begin{proof}
Multiply stability by $\gamma/d$ and substitute the upper bound for $\mathscr P_B$.
Combining the resulting inequality with the lower bound gives
\[
 c_*\mathscr V_B+(\delta_*-r_L-\varepsilon_R)\mathscr L_B
 +(\chi_*-r_N)\mathscr N_B
 \leq\mathcal E_r+(\gamma/d)\mathcal E_t+\mathcal E_g+\mathcal E_H.
\]
Absorb $\varepsilon_R$ at finite support, choose $\eta$ small,
exhaust $B$, and then increase $R$. Division by $\mathscr A_R>0$
contradicts the lower area bound. The case $\mathscr H_B=0$
is the ordinary recovery principle.
\end{proof}

\vskip0.12in

For extending the radial argument beyond the dimension range of
Theorem~A, an additional estimate remains unproved. With
$\lambda_{\rm H}=(-\Delta_0)_+$, it is
\begin{equation}\label{eq:s2-missing-coercivity}
 \lambda_{\rm H}\mathscr C_B\leq
 (\delta_*/4)\mathscr L_B+(\chi_*/2)\mathscr N_B
                    +o_{R;\eta}(\mathscr A).
\end{equation}
If this estimate and the other higher-dimensional inputs hold, the
preceding lemma gives the same contradiction with the area lower bound.
The estimate remains an additional hypothesis.

\begin{proposition}\label{prop:s2-no-local-coercivity}
Fix $P<\infty$ and selection depth $C_0\log R$. Clean-window
solutions without a stability hypothesis can satisfy
$b_{R_j}=0$ and $R_j^2I_P\to0$. Thus these local hypotheses imply
no uniform bound $R^{-2}\leq C(I_P+b_R)$.
\end{proposition}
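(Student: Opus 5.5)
The plan is to take the horizontally extended one-dimensional periodic solutions $u_A$ from the proof of Proposition~\ref{prop:periodic-sharpness48}. They are exact bounded entire solutions. They are not stable, which the statement allows. Their zero sheets are parallel hyperplanes with curvature zero and constant defect. The amplitude will be tuned to the scale $R_j$ so that two things happen at once: the defect is polynomially small in $R_j$, which gives the clean-window hypotheses and makes $R_j^2I_P\to0$; and the gap between adjacent sheets exceeds the selection depth $C_0\log R_j$, which forces $b_{R_j}=0$. The planar heteroclinic trivially gives $I_P=b_R=0$, but since $D\equiv0$ there it is excluded from the nonplanar discussion, so I would use the periodic family.

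\emph{Step 1 (quantities).} Recall from the proof of Proposition~\ref{prop:periodic-sharpness48}, with $\beta=1-A^2$:
\begin{itemize}
\item the defect is $D_A=\beta^2/q_A^2$, so at every zero $D_A=\beta^2$, and $D_A\le C\beta^2$ on each fixed normal window;
\item the normal mass is $I_{P,A}=\sigma^{-1}\beta^2\int_{-P}^Pq_A\,ds\le (2P/\sigma)\beta^2$;
\item the gap to the nearest zero on each side is $\ell_A=\sqrt2\log(8/\beta)+o(1)$;
\item the curvature vanishes identically.
\end{itemize}

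\emph{Step 2 (choice of scale).} Fix an integer exponent $k$ with $k>1$ and $\sqrt2k>C_0+1$. Set $\beta_j=R_j^{-k}$ with $R_j\to\infty$. Then $R_j^2I_P\le CR_j^{2-2k}\to0$. Also
\[
\ell_{A_j}=\sqrt2k\log R_j+O(1)>C_0\log R_j
\]
for large $j$. Hence neither first-adjacent sheet lies within the selection depth, both $\mu_\pm$ vanish, and $b_{R_j}=\min(\mu_+,\mu_-)=0$.

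\emph{Step 3 (clean window).} Choose $\omega$ with $0<\omega<k$ and take $N=R_j$. Since the defect is constant along sheets, $D\le R_j^{-2k}\le N^{-\omega}$ on $\Sigma\cap B_{N^\omega}(z_0)$ for every base point. So hypothesis \eqref{eq:r13-master-clean-ball} holds, and the rough set $\{D\ge\rho^{-\alpha}\}$ is empty for $\rho\ge R_j$ once $2k>\alpha$. The buffered atlas of Lemma~\ref{lem:r11-clean-window-geometry} is then exact: the sheets are parallel planes separated by $\ell_{A_j}$.

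\emph{Conclusion.} Combining Steps 2 and 3 gives
\[
R_j^{-2}\big/\bigl(I_P+b_{R_j}\bigr)\ \ge\ cR_j^{2k-2}\to\infty,
\]
which rules out any uniform bound $R^{-2}\le C(I_P+b_R)$.

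The only delicate point is compatibility. The paper's selection depth is sometimes $C_0\log\rho$ rather than $C_0\log R$. If the depth is measured with $\log\rho$ at bases where $\rho\gg R_j$, the gap condition could fail, and I would then restrict the claim to the stated fixed depth $C_0\log R$. Beyond this, the argument is explicit bookkeeping of exponents.
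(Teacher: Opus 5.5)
Your proposal is correct and is essentially the paper's proof. It uses the same horizontally extended periodic family, tuned so that the adjacent gap exceeds $C_0\log R_j$ (hence $b_{R_j}=0$) while $I_P=O(\beta^2)$ decays faster than $R_j^{-2}$ and the constant defect along flat sheets verifies the clean condition. Your choice $\beta_j=R_j^{-k}$ is the paper's $R_j=e^{\ell_j/C}$ with $C=\sqrt2k$, so your conditions $k>1$, $\sqrt2k>C_0$ are exactly the paper's $C>\max(C_0,\sqrt2)$.
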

\begin{proof}
Take the periodic solutions of Proposition~\ref{prop:periodic-sharpness48}
with adjacent gaps $\ell_j\to\infty$, and set $R_j=e^{\ell_j/C}$
for $C>\max(C_0,\sqrt2)$. Both neighbors are omitted and
$I_P=O(e^{-\sqrt2\ell_j})$, so $R_j^2I_P\to0$.
Their sheets have zero curvature and defect $O(R_j^{-\sqrt2 C})$;
any $0<\omega<\sqrt2 C$ verifies the clean condition on the entire
polynomial window. Hence new global information is needed for
\eqref{eq:s2-missing-coercivity}.
\end{proof}

\vskip0.63in


\section*{{\bf Declarations}}

\noindent\textbf{Funding.} {W. Zou is supported by the National Key R\&D Program of China (Grant No.~2023YFA1010001) and the National Natural Science Foundation of China (Grant No.~12171265).}

\smallskip

\vskip0.12in
\noindent\textbf{Availability of data and materials.} No datasets were generated or analysed during the current study.

\vskip0.12in
\smallskip
\noindent\textbf{Conflict of interest.} The authors declare no conflict of interest.

\vskip0.12in

\smallskip
\noindent\textbf{Use of artificial intelligence.}
The authors acknowledge the use of AI tools. The authors assume full
responsibility for the mathematical validity, accuracy, and integrity
of the proofs presented in the manuscript.

\end{document}